\renewcommand{\thechapter}{\arabic{chapter}}
\renewcommand{\thesection}{\arabic{chapter}.\arabic{section}}%
\renewcommand{\thesubsection}{\arabic{chapter}.\arabic{section}.\arabic{subsection}}%
\renewcommand{\thesubsubsection}{\arabic{chapter}.\arabic{section}.\arabic{subsection}.\arabic{subsubsection}}%
	\rule{\textwidth}{1pt}
\rule{\textwidth}{0.3pt}
\titlespacing*{\chapter}{0pt}{0pt}{40pt}
\definecolor{gray75}{gray}{0.75}
\newcommand{\HRule}{\rule{.9\linewidth}{.6pt}} 
\theoremstyle{plain}
\newtheorem{theorem}{Theorem}
\numberwithin{theorem}{chapter}
\newtheorem{lemma}[theorem]{Lemma}
\numberwithin{equation}{chapter}
\newtheorem{proposition}[theorem]{Proposition}
\newtheorem{corollary}[theorem]{Corollary}
\newtheorem{conjecture}[theorem]{Conjecture}
\theoremstyle{definition}
\newtheorem{construction}[theorem]{Construction}
\newtheorem{example}[theorem]{Example}
\newtheorem*{remark}{Remark}
\newcounter{sec}  
\numberwithin{propx}{section}
\numberwithin{lemmax}{chapter}
\newcounter{cas}
\newtheoremstyle{assert}
{.5\baselineskip±.2\baselineskip}   
{.5\baselineskip±.2\baselineskip}   
{\itshape}  
{0pt}       
{\bfseries} 
{.}         
{5pt plus 1pt minus 1pt} 
{(\thmnumber{#2})}          
\theoremstyle{assert}
\newcommand{\R}{\ensuremath{\mathbb{R}}} 
\newcommand{\N}{\ensuremath{\mathbb{N}}} 
\newcommand{\pB}{\scalebox{1.25}{$\Bowtie$}}
\newcommand{\pJ}{\ensuremath{Q_2^-}}
\newcommand{\pN}{\scalebox{1.25}{$\wedge$}\mkern-9.3mu\scalebox{1.25}{$\vee$}}
\newcommand{\pV}{\ensuremath{V_2}}
\newcommand{\pLa}{\ensuremath{\Lambda_2}}
\newcommand{\cA}{\ensuremath{\mathcal{A}}}
\newcommand{\cB}{\ensuremath{\mathcal{B}}}
\newcommand{\cC}{\ensuremath{\mathcal{C}}}
\newcommand{\cD}{\ensuremath{\mathcal{D}}}
\newcommand{\cF}{\ensuremath{\mathcal{F}}}
\newcommand{\cH}{\ensuremath{\mathcal{H}}}
\newcommand{\cK}{\ensuremath{\mathcal{K}}}
\newcommand{\cL}{\ensuremath{\mathcal{L}}}
\newcommand{\cN}{\ensuremath{\mathcal{N}}}
\newcommand{\cO}{\ensuremath{\mathcal{O}}}
\newcommand{\cP}{\ensuremath{\mathcal{P}}}
\newcommand{\QQ}{\ensuremath{\mathcal{Q}}}
\newcommand{\cS}{\ensuremath{\mathcal{S}}}
\newcommand{\cT}{\ensuremath{\mathcal{T}}}
\newcommand{\cU}{\ensuremath{\mathcal{U}}}
\newcommand{\cV}{\ensuremath{\mathcal{V}}}
\newcommand{\cW}{\ensuremath{\mathcal{W}}}
\newcommand{\bA}{\ensuremath{\mathbf{A}}}
\newcommand{\bU}{\ensuremath{\mathbf{U}}}
\newcommand{\bX}{\ensuremath{\mathbf{X}}}
\newcommand{\bY}{\ensuremath{\mathbf{Y}}}
\newcommand{\bZ}{\ensuremath{\mathbf{Z}}}
\newcommand{\EEE}{\ensuremath{\mathbb{E}}}
\newcommand{\PPP}{\ensuremath{\mathbb{P}}}
\newcommand{\Rw}{\ensuremath{R^{\text{\normalfont w}}}}
\newcommand{\Vol}{{\rm Vol}}
\newcommand{\qn}{\ensuremath{\lfloor qn \rfloor}}
\newcommand{\QN}{\ensuremath{\lceil qn \rceil}}
\newcommand{\bw}{\ensuremath{\!\!\between\!\!}}
\newcommand{\zero}{\ensuremath{\texttt{0}}}
\newcommand{\one}{\ensuremath{\texttt{1}}}
\newcommand{\subsetneql}{\ensuremath{\subset}}
\newcommand{\inc}{\ensuremath{\nsim}}
\newcommand\olt{\mathbin{\!\mathpalette\make@circled{\hspace{-1.05pt}\vcenter{\hbox{\scalebox{0.91}{$\m@th <$}}}}}}
\newcommand\opl{\mathbin{\!\mathpalette\make@circled{\vcenter{\hbox{\scalebox{0.65}{$\m@th ||$}}}}}}
\newcommand{\make@circled}[2]{\raisebox{1pt}{
  \ooalign{$\m@th#1\smallbigcirc{#1}$\cr\hidewidth$\m@th#1#2$\hidewidth\cr}}%
}
\newcommand{\smallbigcirc}[1]{%
  \vcenter{\hbox{\scalebox{0.76}{$\m@th#1\bigcirc$}}}%
}
\begin{document}
  \pagestyle{empty}
  
  \pagenumbering{Alph}

\begin{titlepage}
	\begin{center}

		\large
		
		\HRule \\[0.4cm] 
		{\huge \bfseries Ramsey numbers for partially ordered sets \par}\vspace{0.4cm} 
		\HRule \\[1.5cm] 

		\vfill
		
		Zur Erlangung des akademischen Grades eines \\[0.8cm] 
		\textsc{\LARGE Doktors der Naturwissenschaften} \\[0.8cm]
		von der KIT-Fakultät für Mathematik des\\[0.2cm]
		{Karlsruher Instituts für Technologie (KIT)}\\[0.2cm]
		{genehmigte}\\[0.8cm]
		\textsc{\LARGE Dissertation}\\[0.8cm]
		von  \\[0.8cm]
		\textsc{\LARGE Christian Malte Winter}\\[3cm] 
		
			Tag der mündlichen Prüfung: \\
		18. Juli 2024 \\[1cm]
		
		Promotionsausschuss: \\[0.5cm]
		\begin{tabular}{rl}
			1. Referentin: & Prof. Dr. Maria Axenovich\\
			2. Referent: & Prof. Dr. Ryan R. Martin\\
			3. Referent: &  PD Dr. Torsten Ueckerdt  \\
		\end{tabular}\\[2cm]
		
		\vfill

		
		\vfill
		\hfill
		
		
		\vfill
	\end{center}
\end{titlepage}

\includepdf{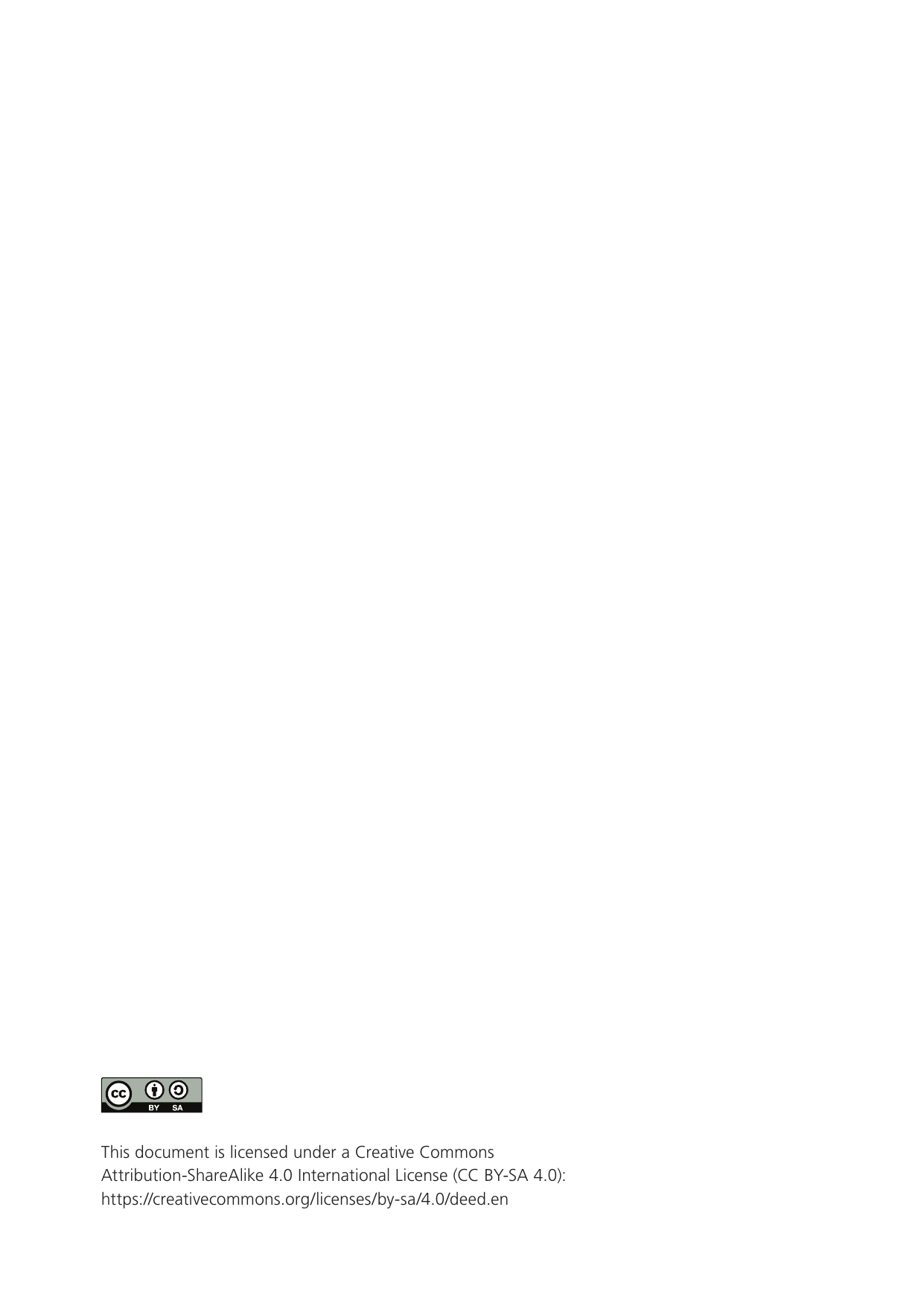}

\cleardoublepage

\pagestyle{plain}
\pagenumbering{roman}

\spacing{1.25}
\setlength{\parskip}{1em}

\section*{Abstract}

The main objective of Ramsey theory is to investigate the largest monochromatic substructure guaranteed in any coloring of a given discrete host structure.
Examples for such substructures are subgraphs hosted in a complete graph or arithmetic progressions in the natural numbers.
In this thesis, we present quantitative Ramsey-type results in the setting of finite sets that are equipped with a partial order, so-called posets.
A prominent example of a poset is the Boolean lattice $Q_n$, which consists of all subsets of $\{1,\dots,n\}$, ordered by inclusion.
For posets $P$ and $Q$, the poset Ramsey number $R(P,Q)$ is the smallest $N$ such that no matter how the elements of $Q_N$ are colored in blue and red, there is either an induced subposet isomorphic to $P$ in which every element is colored blue, or an induced subposet isomorphic to $Q$ in which every element is colored red.

The central focus of this thesis is to investigate $R(P,Q_n)$, where $P$ is fixed and $n$ grows large. 
Our results contribute to an active area of discrete mathematics, which studies the existence of large homogeneous substructures in host structures with local constraints,  introduced for graphs by Erd\H{o}s and Hajnal \cite{EH}.
We provide an asymptotically tight bound on $R(P,Q_n)$ for $P$ from several classes of posets, and show a dichotomy in the asymptotic behavior of $R(P,Q_n)$, depending on whether $P$ contains a subposet isomorphic to one of two specific posets.

A fundamental question in the study of poset Ramsey numbers is to determine the asymptotic behavior of $R(Q_n,Q_n)$ for large $n$.
In this dissertation, we present improvements on the known lower and upper bound on $R(Q_n,Q_n)$.
Moreover, we explore variations of the poset Ramsey setting, including Erd\H{o}s-Hajnal-type questions when the small forbidden poset has a non-monochromatic color pattern, and so-called weak poset Ramsey numbers, which are concerned with non-induced subposets.
\cleardoublepage

\section*{Acknowledgements / Danksagung}
First and foremost, I would like to express my sincere gratitude to Maria Axenovich, my thesis advisor, for her unwavering support throughout my PhD program, 
for her outstanding ability to advance my research forward by asking the right questions in numerous constructive meetings, and for her productive co-authorship in several collaborations.
I am thankful to her for providing me with the opportunity to write this thesis, for her constant availability to address my questions and concerns, for her patience, and for the invaluable feedback from which I have gained immense knowledge.

My thanks go to Ryan R. Martin and Torsten Ueckerdt for reviewing this thesis and for their contribution to my PhD program. 
I am grateful to Ryan for our constructive collaboration and his linguistic advice, and to Torsten for his brilliant feedback after multiple seminar talks, which resulted in several shortcuts in my proofs.
My appreciation goes to all members of the research group \textit{Discrete Mathematics} at the Karlsruhe Institute of Technology -- Lea Weber, Felix Clemen, Dingyuan Liu, and Andrea Colin -- for creating a friendly, cooperative, and productive work environment. I am thankful for their guidance whenever I reached out to them, and for their valuable feedback on the manuscript of this thesis. 
Additionally, I thank Laura Bonn and David Lenze for their helpful comments on my thesis.
I am also very thankful for the funding provided by the \textit{Deutsche Forschungsgesellschaft} (grant no. FKZ AX 93/2-1), which partially supported my research and enabled my PhD program.
\\

\hspace{2cm}

Ein besonderer Dank gilt meiner Partnerin Wiebke Putz für das Ertragen von dreieinhalb Jahren Fernbeziehung, für ihre unerschütterliche emotionale Unterstützung bei jedem frustrierenden oder euphorisierenden Schritt meiner Promotion, und für die Freude und Kraft, die sie mir immer wieder schenkt.
Ich möchte meinen Eltern dafür danken, dass sie mir Flügel mitgegeben haben, mit denen ich jedes persönliche Ziel erreichen kann, und dass sie mich auf meinem Weg immer selbstlos ermutigt und klug beraten haben.
Ich bedanke mich bei meinen Studienfreunden und allen Menschen, die auf meinem Lebensweg mein Interesse an Mathematik gefördert und befeuert haben.
Insbesondere, möchte ich mich bei Pascal Bothe bedanken, der mein Studium wie kein anderer geprägt hat.
 
\cleardoublepage

\spacing{1}
\renewcommand{\contentsname}{Table of Contents}
\pdfbookmark[0]{\contentsname}{toc} 
\tableofcontents

\cleardoublepage
\phantomsection
\addcontentsline{toc}{chapter}{List of Figures}
\listoffigures

\cleardoublepage
\phantomsection
\addcontentsline{toc}{chapter}{List of Tables}
\listoftables
 
\cleardoublepage
\spacing{1.25}
  \pagestyle{fancy}
  \pagenumbering{arabic}

 \newpage  

\chapter{Introduction}\label{ch:prelim} 

\section{Brief introduction to Ramsey theory for posets} \label{sec:integration}

The field of \textit{extremal combinatorics} studies extremal properties of finite, discrete structures, for example graphs, integers or set systems, under local or global conditions.
The most common local constraint is to forbid a specific local substructure.
One of the key questions in extremal combinatorics is to find out which size of a \textit{homogeneous} substructure can be guaranteed in any given host structure.
This general question is the foundation of \textit{Ramsey theory}, a major area of discrete mathematics, which provides the general framework for this thesis. 
An active branch of this discipline is the so-called \textit{Erd\H{o}s-Hajnal setting}, in which the emergence of \textit{homogeneous} substructures is studied for host structures in which some specific induced substructure is forbidden.

These Ramsey-type problems can be illustrated by a folklore example corresponding to a Ramsey-type question for \textit{graphs}:
At a conference with $10$ participants there are always three people who are either all pairwise acquaintances or all pairwise strangers, i.e., there is a homogeneous set of size $3$. 
A natural extremal question is to determine the smallest number of participants $N$ such that any conference with $N$ participants has this property. 
It can be shown that the answer to this is $N=6$.
If we add the local constraint that at a conference with $10$ attendees, there are no three participants such that among the three pairs in this trio exactly two pairs are acquainted, then we can guarantee the existence of a homogeneous set of size $4$. 

In its mathematical abstraction, Ramsey-type problems are considered in terms of \textit{partitions} of a large host structure, or equivalently in terms of \textit{colorings}, in which a substructure is homogeneous if and only if it is \textit{monochromatic}. 
The most well-known setting of Ramsey theory studies monochromatic subgraphs in edge-colorings of complete graphs, 
i.e., in terms of the example above, each participant corresponds to a \textit{vertex}, and we color each pair of distinct vertices with any of two colors, depending on whether they are acquaintances or strangers.
In this thesis, we are concerned with Ramsey theory in the setting of \textit{posets}.

The term \textit{poset} is an abbreviation for \textit{partially ordered set}. 
As an example of a poset, we denote by $Q_n$ the set of subsets of the first $n$ integers $\{1,\dots,n\}$.
The subsets in $Q_n$ are partially ordered by inclusion, e.g., the subset $\{1,2\}$ is included in $\{1,2,3\}$ but incomparable to $\{1,3\}$.
A simple Ramsey-type question is to determine the smallest $N$ such that no matter how we color the members of $Q_N$ with two colors, there is always a monochromatic, $4$-element substructure isomorphic to $Q_2$. Figure \ref{fig:RQ2Q2} (a) illustrates such a substructure in an exemplary coloring. 
Observe that $N>3$, because the coloring of $Q_3$ presented in Figure \ref{fig:RQ2Q2} (b) does not contain a monochromatic \textit{copy} of $Q_2$.
However, it can be shown that $N=4$.
The focus of this thesis is to explore Ramsey-type questions for posets. 
Among the main theorems, we show quantitative results in the classic Ramsey setting as well as in the Erd\H{o}s-Hajnal setting for large unavoidable monochromatic $Q_n$. In particular, connecting both aforementioned settings, we study the \textit{off-diagonal} Ramsey setting, in which we forbid a small poset $P$ colored monochromatically with one color and determine the size of a largest $Q_n$ colored with the opposite color.
\\

\begin{figure}[h]
\centering
\includegraphics[scale=0.62]{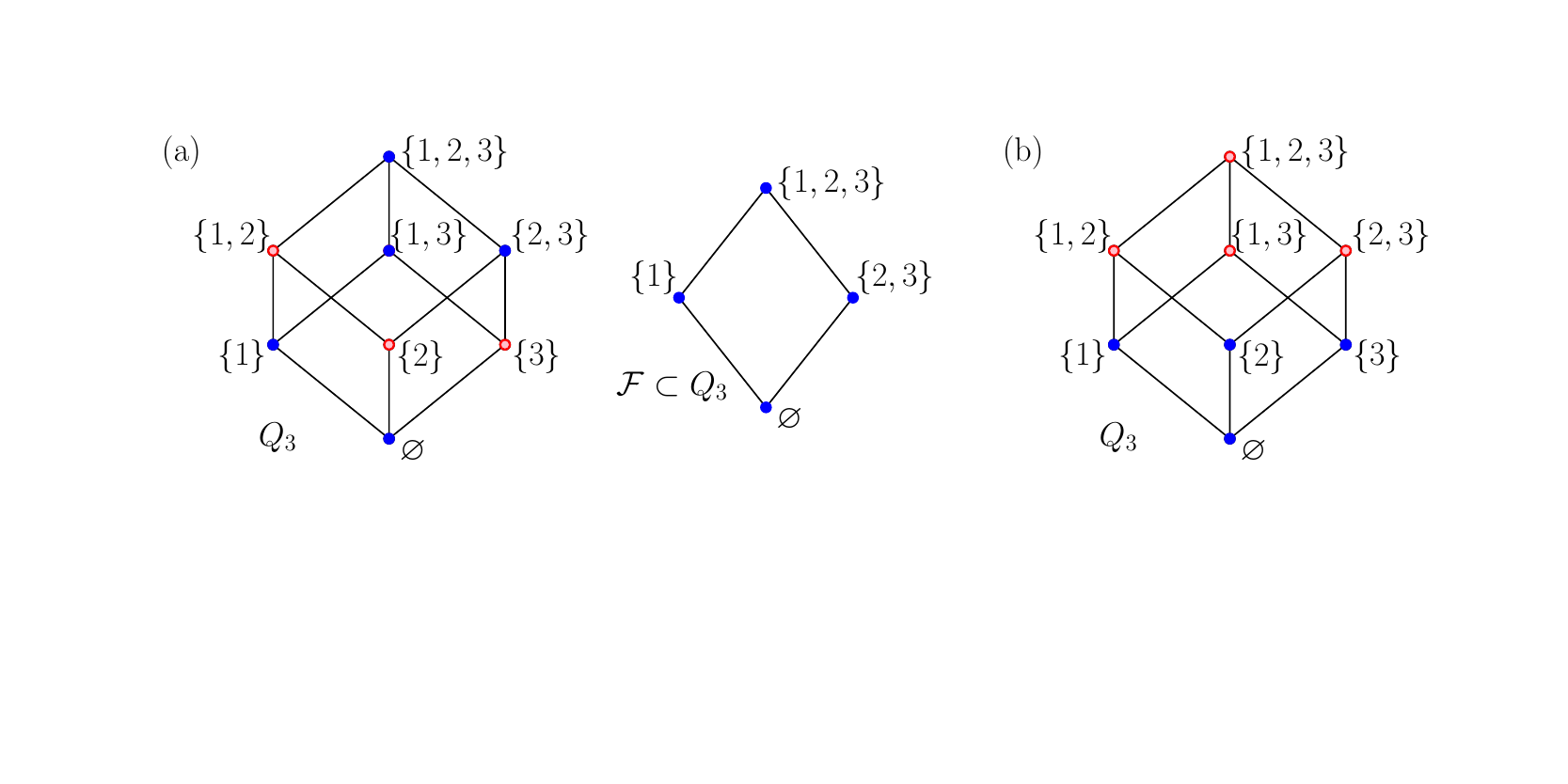}
\caption{(a) A blue/red coloring of $Q_3$ containing a blue subposet $\cF$ isomorphic to $Q_2$.\ (b) A blue/red coloring of $Q_3$ with no monochromatic copy of $Q_2$.}
\label{fig:RQ2Q2}
\end{figure}

The introductory chapter is structured as follows. 
In the next section, we formally state basic definitions, and in Section \ref{sec:catalog}, we give definitions of various posets.
Afterwards, we present an overview of known results, before summarizing the results of the dissertation in Section \ref{sec:results}. 
For a compact overview, the reader is referred to Table~\ref{table:ramsey}.
After that, we discuss related research and variants of Ramsey-type questions for posets.
In Section \ref{sec:preliminaries}, we introduce additional terminology commonly used in our proofs and state two central lemmas, the \textit{Embedding Lemma} and the \textit{Chain Lemma}. We use that terminology in the final section to formally prove the existence of the \textit{poset Ramsey number}.

%
%
\section{Basic definitions}\label{sec:basicdef}

\subsection*{Partially ordered set and Boolean lattice}

A \textit{partially ordered set}, or \textit{poset}\index{poset}\index{partially ordered set} for short, is a pair $(P,\le_P)$ of a finite set $P$ and a \textit{partial order}\index{partial order} $\le_P$ on this set, so a binary relation that is 
\vspace*{-1em}
\begin{itemize}
\item reflexive, i.e., $X\le_P X$ for every $X\in P$,
\item antisymmetric, i.e., if $X\le_P Y$ and $X\ge_P Y$, then $X=Y$ for every $X,Y\in P$, and
\item transitive, i.e., if $X\le_P Y$ and $Y\le_P Z$, then also $X\le_P Z$ for every $X,Y,Z\in P$. 
\end{itemize}
Usually, we refer to a poset $(P,\le_P)$ just as $P$. The elements of $P$ are often called \textit{vertices}\index{vertex}. 
We say that $X\in P$ is \textit{smaller} than $Y\in P$ if $X\le_P Y$.
Moreover, $X$ is \textit{strictly smaller}\index{strict comparability} than $Y$, denoted by $X<_P Y$, if  $X\le_P Y$ and $X\neq Y$.
Two vertices $X,Y\in P$ are \textit{incomparable}\index{incomparability}\index{$\inc$} if  $X\not\le_P Y$ and $X\not\ge_P Y$, for which we write $X\inc Y$. 
A vertex $X\in P$ is \textit{minimal} if there is no vertex $Y\in P$ such that $Y<_P X$. 
Similarly, $X\in P$ is \textit{maximal} if there is no vertex $Y\in P$ with $Y>_P X$.
A poset $P$ is usually represented by a \textit{Hasse diagram}\index{Hasse diagram}, in which every $X\in P$ is drawn as a point and there is an upward line from $X$ to $Y$ if $X<_P Y$, but lines that are implicit by transitivity are omitted. 

A prominent example of a poset is the \textit{Boolean lattice}\index{Boolean lattice} $Q_n$ of \textit{dimension} $n$, that is the set of subsets of an $n$-element \textit{ground set}, ordered by the inclusion relation $\subseteq$.
The Hasse diagram of the $3$-dimensional Boolean lattice $Q_3$ is depicted in Figure \ref{fig:RQ2Q2}.
For a general introduction to partially ordered sets, the reader is referred to the textbooks by Trotter \cite{Trotter} and Schröder \cite{Schroeder}.


\subsection*{Induced copy and weak copy}\label{sec:copies}

To establish a Ramsey setting for posets, we need a notation for a substructure of a poset. In fact, there are two main variants.
\vspace*{-1em}
\begin{itemize}
\item A poset $(P_2,\le_{P_2})$ is an \textit{induced subposet}\index{subposet}\index{induced subposet} of a poset $(P_1,\le_{P_1})$ if $P_2\subseteq P_1$ and for every two $X,Y\in P_2$,\:\:$X\le_{P_2} Y$ if and only if $X\le_{P_1} Y$.
If such a $P_2$ is isomorphic to some poset $P'$, we say that $P_2$ is an \textit{induced copy}\index{copy} of $P'$ in $P_1$.
For the majority of this thesis, we restrict our attention to induced subposets and induced copies. Therefore, we often omit the prefix ``induced'', e.g., we use the terms \textit{induced copy} and \textit{copy} interchangeably. See Figure \ref{fig:RQ2Q2}~(a) for an example of an induced subposet.

\item We say that $(P_2,\le_{P_2})$ is a \textit{weak subposet}\index{weak subposet} of $(P_1,\le_{P_1})$ if $P_2\subseteq P_1$ and for every two $X,Y\in P_2$ with $X\le_{P_2} Y$, we have that $X\le_{P_1} Y$.
If a weak subposet $P_2$ of $P_1$ is isomorphic to a poset $P'$, we refer to the induced subposet of $P_1$ on vertices $P_2$, i.e., the set $P_2$ ordered as in $\le_{P_1}$, as a \textit{weak copy}\index{weak copy} of $P'$ in $P_1$.
Note that the linearly ordered set on $k$ elements is a weak copy of any $k$-element poset $P$.
\end{itemize}

\subsection*{Basic poset parameters and parallel composition}

Let $P$ be a poset. The \textit{size}\index{size} $|P|$ of $P$ is the total number of vertices in $P$.
The \textit{height}\index{height} $h(P)$ denotes the largest number of vertices in $P$ which are pairwise comparable,
while the \textit{width}\index{width} $w(P)$ is the largest number of vertices in $P$ which are pairwise incomparable.
We denote the \textit{$2$-dimension}\index{$2$-dimension} of $P$ by $\dim_2(P)$, that is the smallest dimension $n$ of a Boolean lattice $Q_n$ which contains $P$ as an induced copy.
It is a basic observation that the $2$-dimension exists for any poset, see Proposition \ref{prop:dim}.
Observe that
$$|Q_n|=2^n, \quad h(Q_n)=n+1, \quad \text{and} \quad \dim_2(Q_n)=n.$$
The well-known Sperner's theorem \cite{Sperner} states that $w(Q_n)=\binom{n}{\lfloor n/2\rfloor}$.

Given a poset $P$, two subposets $P_1,P_2\subseteq P$ are \textit{parallel}\index{parallel posets} if they are element-wise incomparable. In particular, any two parallel posets are disjoint.
We denote by $P_1\opl P_2$ the \textit{parallel composition}\index{parallel composition}\index{$\opl$} of two posets $P_1$ and $P_2$, i.e., the poset consisting of a copy of $P_1$ and a copy of $P_2$ which are parallel. 
In the literature, the parallel composition is also referred to as \textit{independent union}. 
Note that this operation is commutative and associative, so the parallel composition $P_1\opl \dots \opl P_\ell$ for posets $P_i$, $i\in \{1,\dots,\ell\}$, is well-defined.

\subsection*{Blue/red coloring of a poset}

In this thesis, we study colorings of the vertices of posets with two colors, usually \textit{blue} and \textit{red}. 
A \textit{blue/red coloring}\index{coloring}\index{blue/red coloring} of a poset $P$ is a mapping $c\colon P\to \{\text{blue},\text{red}\}$. 
This definition extends canonically to colorings with more than two colors, which are not studied in-depth in this work.
We say that a poset is \textit{monochromatic} if all its vertices have the same color. 
If every vertex of a poset is blue, we say that the poset is \textit{blue}. Similarly, if every vertex is red, the poset is \textit{red}.

\subsection*{Poset Ramsey number}

Analogously to Ramsey-extremal functions in graphs or hypergraphs, we define an extremal function for posets that is based on the Boolean lattice.
For posets $P$ and $Q$, let the \textit{poset Ramsey number}\index{poset Ramsey number} of $P$ and $Q$ be
\begin{multline*}
R(P,Q)=\min\{N\in\N \colon \text{ every blue/red coloring of $Q_N$ contains either}\\ 
\text{ a blue induced copy of $P$ or a red induced copy of $Q$}\}.
\end{multline*}

The poset Ramsey number is the central extremal function analyzed in this thesis.
We remark that for any posets $P$ and $Q$, the poset Ramsey number $R(P,Q)$ is well-defined.
A detailed proof of this fundamental observation is given in Proposition \ref{prop:existence}. The proof idea is the following.
For $P=Q=Q_n$, the existence of $R(Q_n,Q_n)$ follows from a result by Graham and Rothschild \cite{GR}.
For arbitrary posets $P$ and $Q$, we find an $n=n(P,Q)$ such that $Q_n$ contains a copy of $P$ and a copy of $Q$. In particular, $R(P,Q)\le R(Q_n,Q_n)$,
which implies that $R(P,Q)$ is well-defined.


\subsection*{Weak poset Ramsey number and poset Erd\H{o}s-Hajnal number}
In Section \ref{sec:copies}, we introduced two different notions of a substructure in posets, \textit{induced} and \textit{weak} subposets.
While the poset Ramsey number is defined based on induced subposets, there is an analogous Ramsey-extremal number using weak subposets.
The \textit{weak poset Ramsey number}\index{weak poset Ramsey number} for posets $P$ and $Q$ is
\begin{multline*}
\Rw (P,Q)=\min\{N\in\N \colon \text{ every blue/red coloring of $Q_N$ contains either}\\ 
\text{ a blue weak copy of $P$ or a red weak copy of $Q$}\}.
\end{multline*}
Every induced copy of a poset is also a weak copy, so it is clear that $\Rw (P,Q)\le R(P,Q)$. In particular, $\Rw (P,Q)$ exists for any $P$ and $Q$.

A \textit{colored poset}\index{colored poset} $\dot P$ is a poset $P$ with a fixed blue/red coloring.
Given a colored poset $\dot P$ and an integer $n\in\N$, the \textit{poset Erd\H{o}s-Hajnal number}\index{poset Erd\H{o}s-Hajnal number} 
$\tilde{R}(\dot P,Q_n)$ is the smallest $N\in\N$ such that every blue/red coloring of the vertices of $Q_N$ contains a copy of $P$ colored as in $\dot P$, or a monochromatic copy of $Q_n$.
It is easy to see that $\tilde{R}(\dot P,Q_n)\le R(Q_n,Q_n)$ for any colored poset $\dot P$, so $\tilde{R}(\dot P,Q_n)$ is well-defined. 
Observe that if the fixed forbidden pattern $\dot P$ is monochromatic and $n$ is large, then $\tilde{R}(\dot P,Q_n)=R(P,Q_n)$.
\\

%
%
\section{Catalog of considered posets}\label{sec:catalog}

In this section, we provide a catalog of all specific classes of posets defined in this thesis.
They are listed alphabetically in terms of their notation. Illustrations of all posets are given in Figures \ref{fig:prelim:1} to \ref{fig:prelim:5}.
Following the common practice in extremal combinatorics, we usually do not distinguish between a poset $P$ and its isomorphism class, unless the vertices of $P$ are specifically defined.
\vspace*{-1em}
\begin{itemize}
\item An \textit{antichain}\index{antichain} $A_t$ of \textit{size} $t$ is the poset consisting of $t$ pairwise incomparable vertices, i.e., the parallel composition of $t$ single vertices. 
We remark that the width $w(P)$ of a poset $P$ is the size of a largest antichain contained as a subposet in $P$.

\item A \textit{chain}\index{chain} $C_t$ of \textit{length} $t$ is the poset consisting of $t$ pairwise comparable vertices, i.e., its vertices form a linear order $Z_1<\dots<Z_t$. 
Note that the height $h(P)$ of a poset $P$ is the length of a largest chain contained as a subposet in $P$.

\item We say that a \textit{chain composition}\index{chain composition} $C_{t_1,t_2,\dots,t_\ell}$ with parameters $t_1,\dots,t_\ell$, $\ell\ge 1$, is the parallel composition of chains $C_{t_1}, \dots, C_{t_\ell}$, i.e., $C_{t_1,t_2,\dots,t_\ell}=C_{t_1} \opl  C_{t_2} \opl \dots \opl C_{t_\ell}$.
Note that $C_{1,\dots,1}$ is an antichain.

\item An \textit{$n$-diamond}\index{diamond} $D_n$ is the poset consisting of an antichain on $n$ vertices, a vertex smaller than all others, and a vertex larger than all others.
Note that $D_2$ is isomorphic to $Q_2$.

\begin{figure}[h]
\centering
\includegraphics[scale=0.62]{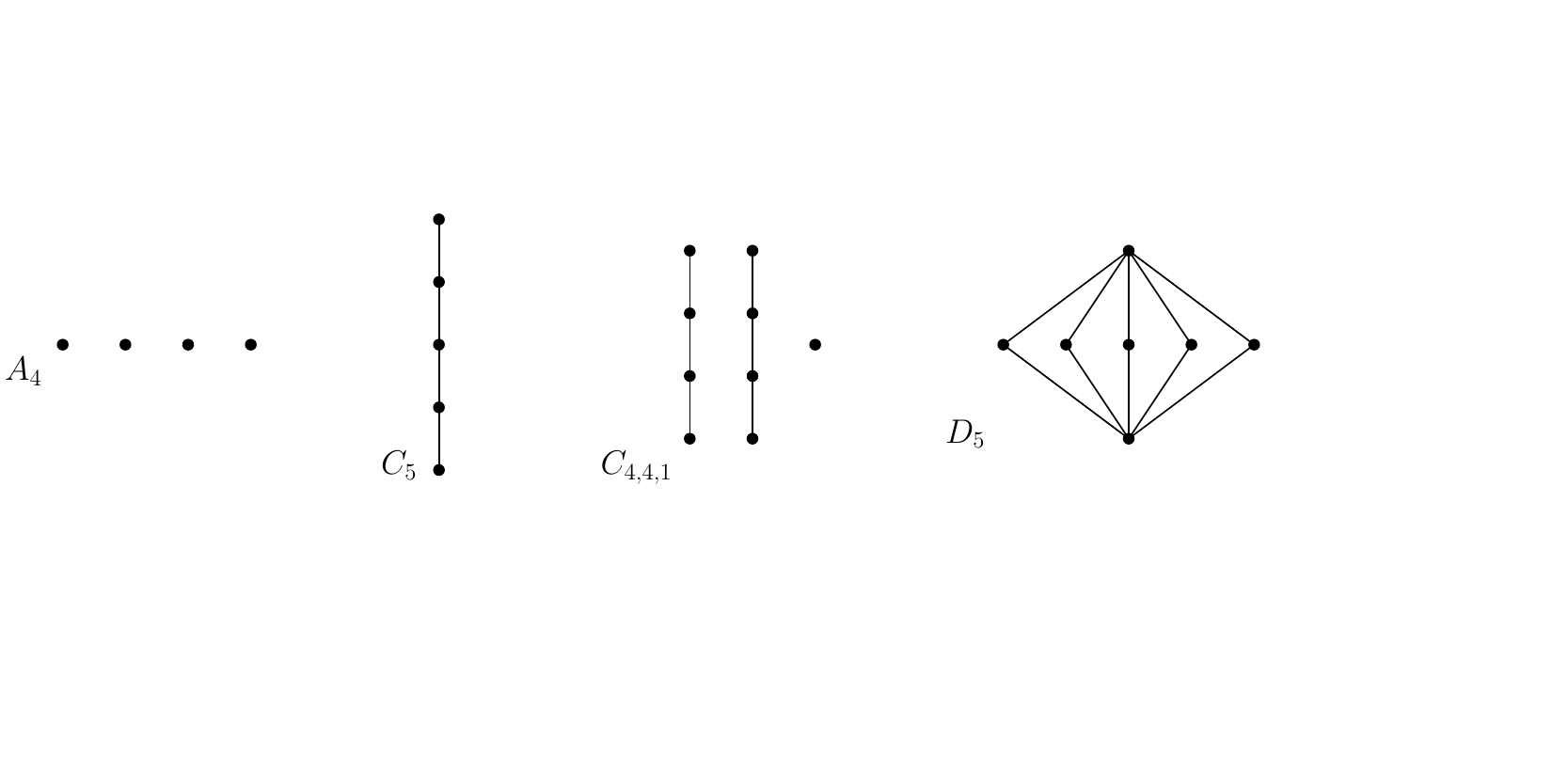}
\caption{Hasse diagrams of $A_4$, $C_5$, $C_{4,4,1}$, and $D_5$.}
\label{fig:prelim:1}
\end{figure}

\item A \textit{complete $\ell$-partite poset}\index{complete multipartite poset} $K_{t_1,\dots,t_\ell}$ is a poset on $t_1+\dots+t_\ell$ vertices obtained as follows. 
Consider $\ell$ pairwise disjoint sets $A^1,\dots,A^\ell$, where each $A^i$ consists of $t_i$ vertices.
Then for any two indices $i,j\in\{1,\dots,\ell\}$ and for any vertices $X\in A^i$, $Y\in A^j$, we define that $X<Y$ if and only if $i<j$.
Such a poset can be seen as a complete blow-up of a chain. In the literature, it is also referred to as a \textit{strict weak order}.
Note that $K_{1,n,1}\cong D_n$.

\item The $\Lambda$-shaped poset $\pLa$ consists of $3$ elements and has a unique maximal vertex and two distinct minimal vertices,
i.e., $\pLa$ is the poset on vertices $Z_1$, $Z_2$, and $Z_3$ such that $Z_1>Z_3$, $Z_1>Z_2$, and $Z_2\inc Z_3$.

\item The N-shaped poset $\pN$ consists of four distinct vertices $W$, $X$, $Y$, and $Z$ such that $W< Y$, $Y> X$, $X< Z$, $W\inc X$, $W\inc Z$, and $Y\inc Z$.

\begin{figure}[h]
\centering
\includegraphics[scale=0.62]{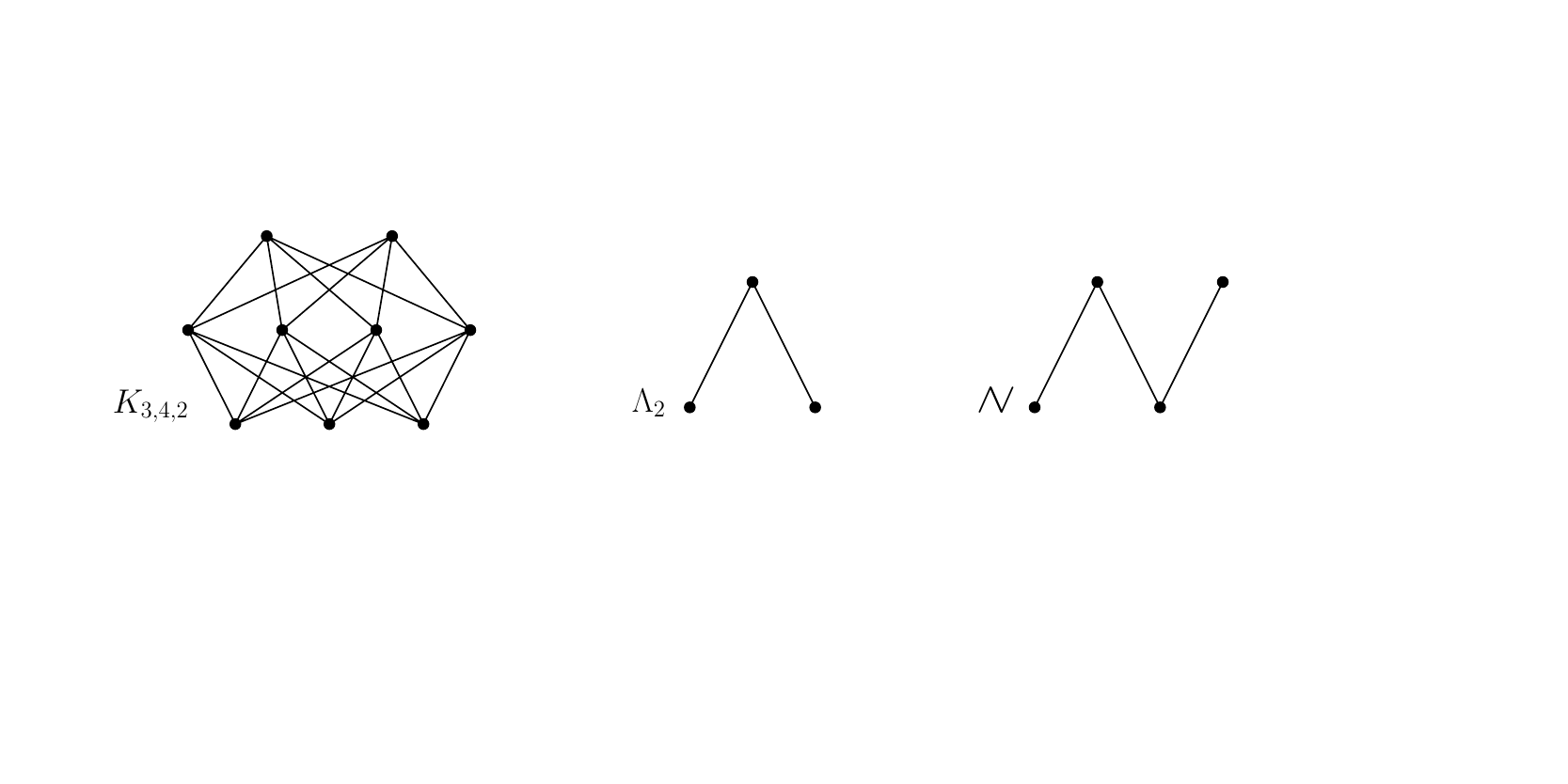}
\caption{Hasse diagrams of $K_{3,4,2}$, $\pLa$, and $\pN$.}
\end{figure}

\item We use $Q_n$ to denote the \textit{Boolean lattice} of \textit{dimension} $n$, i.e., the poset consisting of all subsets of an $n$-element \textit{ground set}, ordered by inclusion.

\item Let $n$ and $t$ be positive integers. The \textit{$n$-dimensional $t$-cube}\index{cube} $Q^{(t)}_n$ is the set $\{1,\dots,t\}^n$, partially ordered by domination, i.e., for any two $X,Y\in \{1,\dots,t\}^n$, we define that $X\le Y$ if and only if $X(i)\le Y(i)$ for every $i\in\{1,\dots,t\}$. 
Let the $0$-dimensional $t$-cube $Q^{(t)}_0$ be the poset consisting only of a single vertex. Note that $Q^{(1)}_n\cong C_n$ and $Q^{(2)}_n\cong Q_n$. 

\item The hook-shaped poset $\pJ$ has distinct vertices $W$, $X$, $Y$, and $Z$ such that $W>X$, $X< Y< Z$,  $W\inc Y$, and $W\inc Z$.

\item The \textit{standard example}\index{standard example} $S_n$ is the $2n$-element subposet of a Boolean lattice $Q_n$ consisting of all $1$-element and $(n-1)$-element subsets.

\begin{figure}[h]
\centering
\includegraphics[scale=0.62]{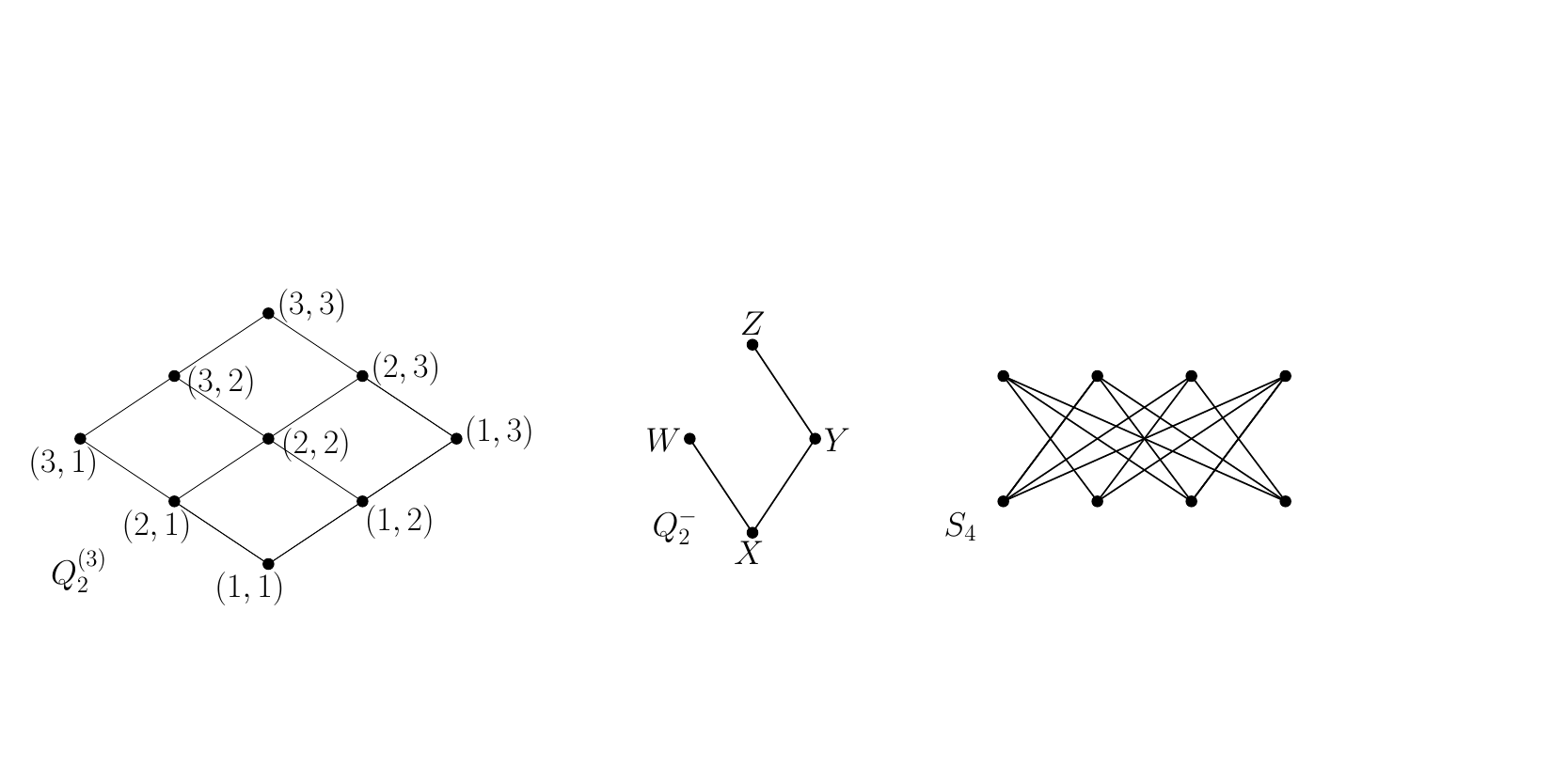}
\caption{Hasse diagrams of $Q^{(3)}_2$, $\pJ$, and $S_4$.}
\end{figure}

\item Given integer-valued parameters $r\ge 0$, $s\ge1$ and $t\ge 0$, an \textit{$(r,s,t)$-spindle}\index{spindle} $S_{r,s,t}$ is defined as the complete multipartite poset $K_{t'_1,\dots,t'_{r+1+t}}$ where $t'_1,\dots,t'_r=1$ and $t'_{r+1}=s$ and $t'_{r+2},\dots,t'_{r+1+t}=1$.
 In other words, this poset on $r+s+t$ vertices is constructed by combining an antichain $A_s$ and two chains $C_r$ and $C_t$ such that every vertex of $A_s$ is larger than every vertex from $C_r$, but smaller than every vertex from $C_t$. If $r=0$ or $t=0$, the respective chain is omitted. 
 Note that $S_{1,n,1}\cong D_n$ and $S_{r,1,t}\cong C_{r+t+1}$.
 
\item For $s,t\in\N$, let $SD_{s,t}$ denote the \textit{$(s,t)$-subdivided diamond}\index{subdivided diamond}, which is the poset obtained from two parallel chains of length $s$ and $t$, respectively, by adding a vertex which is smaller than all others and a vertex which is larger than all others. Note that $SD_{1,1}\cong Q_2$. 

\item An \textit{$n$-fork}\index{fork} $V_n$ is the poset consisting of an antichain on $n$ vertices with an added vertex smaller than all other vertices. Note that $V_n\cong K_{1,n}$.
Particularly relevant for this thesis is $\pV$, the V-shaped, $3$-element poset with a unique minimal vertex and two maximal vertices,
so on vertices $Z_1$, $Z_2$, and $Z_3$ such that $Z_1<Z_2$, $Z_1<Z_3$, and $Z_2\inc Z_3$.
\end{itemize}

\begin{figure}[h]
\centering
\includegraphics[scale=0.62]{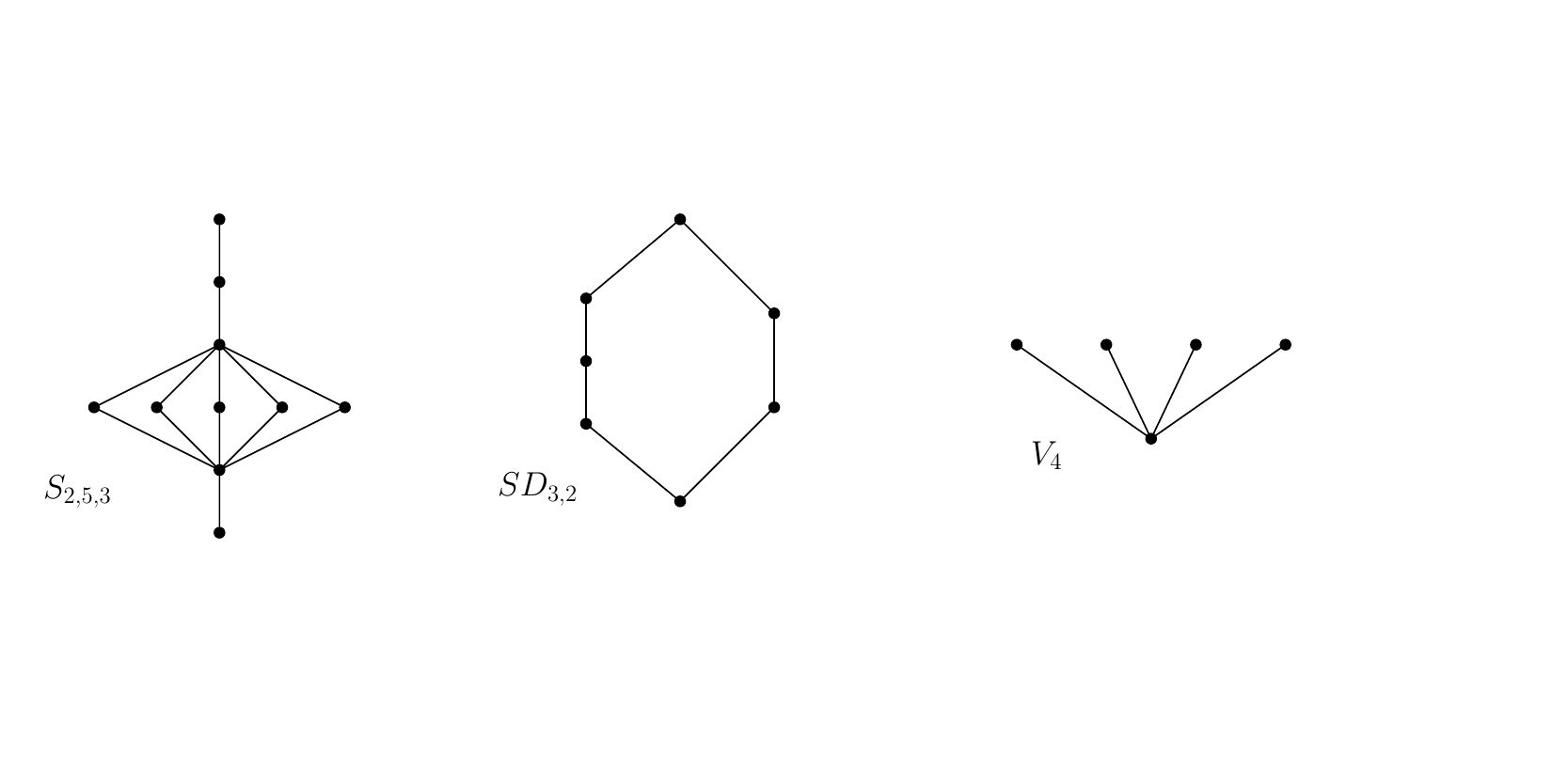}
\caption{Hasse diagrams of $S_{2,5,3}$, $SD_{3,2}$, and $V_4$.}
\label{fig:prelim:5}
\end{figure}

\bigskip

%
%
\section{Background and known results}\label{sec:backgrpund}

\subsection{Historical background} \label{sec:history}

Ramsey theory originates from a groundbreaking result by Ramsey \cite{Ramsey} from 1930, who showed that a sufficiently large \textit{uniform hypergraph} colored with a fixed number of colors contains a monochromatic sub-hypergraph of any fixed size.
Even shortly before that, van der Waerden \cite{Waerden} considered a Ramsey-type question on \textit{arithmetic progressions}.
Van der Waerden's work was later generalized by the highly influential Hales-Jewett theorem \cite{HJ}, which laid the foundation for Ramsey theory on various discrete settings such as \textit{combinatorial lines} or \textit{words}.

This result was even further extended by Graham and Rothschild \cite{GR}, who considered so-called \textit{cubes}. Cubes can be interpreted as finite \textit{affine spaces}, see also a book by Graham, Rothschild, and Spencer \cite{GRS}.
Here, we present an interpretation of the Graham-Rothschild theorem in terms of posets.

Recall that for $t\in \N$, the \textit{$n$-dimensional $t$-cube}\index{cube} $Q^{(t)}_n$ denotes the poset on vertices $\{1,\dots,t\}^n$ 
such that a vertex $X$ is smaller than a vertex $Y$ if and only if $X(i)\le Y(i)$ for every $i\in \{1,\dots,t\}$.
The Graham-Rothschild theorem \cite{GR} shows that there exists a sufficiently large $N$ such that
any coloring of copies of $Q^{(t)}_q$ in $Q^{(t)}_N$ with $r$ colors contains a monochromatic copy of $Q^{(t)}_n$.
In the special case $q=0$, $r=2$, and $t=2$, this shows the existence of the poset Ramsey number of $Q_n$ and $Q_n$.

\begin{theorem}[Graham-Rothschild \cite{GR}]\label{thm:gr}
For any $n\in\N$, there is a sufficiently large $N$ such that any blue/red coloring of the vertices of an $N$-dimensional Boolean lattice contains a monochromatic copy of $Q_n$.
In particular, $R(Q_n,Q_n)$ exists.
\end{theorem}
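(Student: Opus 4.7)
The plan is to reduce the statement to the Hales-Jewett theorem, or more precisely the Graham-Rothschild parameter-set theorem, for the binary alphabet. First I would identify the ground set of $Q_N$ with $\{0,1\}^N$ by sending each subset to its characteristic vector, so that set inclusion becomes coordinatewise domination. Under this identification, every induced copy of $Q_n$ in $Q_N$ arises from a partition $\{1,\dots,N\} = F_0 \sqcup F_1 \sqcup I_1 \sqcup \cdots \sqcup I_n$ with each $I_j$ nonempty, by taking the $2^n$ vertices of the form $F_1 \cup \bigcup_{j \in S} I_j$ for $S \subseteq \{1,\dots,n\}$. Such copies are exactly the $n$-parameter combinatorial subspaces of $\{0,1\}^N$ in the sense of Graham-Rothschild, and the conclusion of the theorem is precisely that any $2$-coloring of $\{0,1\}^N$ contains a monochromatic such subspace for $N$ large, which is the content of \cite{GR}.

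For a more self-contained argument, I would prove the statement by induction on $n$. The base case $n=1$ asks for two nested monochromatic subsets and follows either from Hales-Jewett applied to the binary alphabet, or from direct pigeonhole on a chain of length $3$ in $Q_N$. For the inductive step, split the coordinates as $\{1,\dots,N\} = A \sqcup B$ with $|A|$ equal to the value $N_{n-1}$ supplied by the induction hypothesis and $|B|$ very large. For every fixed $\alpha \in \{0,1\}^B$, the slice $\{(v,\alpha): v\in\{0,1\}^A\}$ is an induced copy of $Q_{N_{n-1}}$, so it contains a monochromatic copy of $Q_{n-1}$; I would then record for each $\alpha$ a canonical such copy together with its color, using a palette of bounded size depending only on $n$. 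Applying Hales-Jewett in the $B$-coordinates with this enlarged palette yields a combinatorial line $L \subseteq \{0,1\}^B$ on which the recorded data is constant, and combining $L$ with the chosen monochromatic $Q_{n-1}$ produces a monochromatic copy of $Q_n$.

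The main obstacle is the bookkeeping in the inductive step: for Hales-Jewett to apply in the $B$-coordinates, the palette used there must have size depending only on $n$ and not on $|B|$. This forces the canonical monochromatic $Q_{n-1}$ selected in each slice to be determined intrinsically from the coloring of that slice, for instance by lexicographic minimality among all monochromatic copies of $Q_{n-1}$ together with their color. Once this canonical choice is pinned down, the rest of the argument is a routine combination of Hales-Jewett and the induction hypothesis, and both approaches above yield the existence of $R(Q_n,Q_n)$.
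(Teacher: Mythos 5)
Your first paragraph matches the paper's approach: specialize the Graham--Rothschild parameter set theorem to $t=2$, $q=0$, $r=2$, then observe that a monochromatic $n$-parameter combinatorial subspace is in particular a monochromatic induced copy of $Q_n$. One small correction: your claim that induced copies of $Q_n$ in $Q_N$ are ``exactly'' the $n$-parameter combinatorial subspaces is false --- for example $\{\varnothing,\{1\},\{2\},\{1,2,3\}\}$ is an induced $Q_2$ in $Q_3$ whose top vertex is not the union of the two middle vertices, so it is not of parameter-set form. The reduction is still valid because only the forward implication (parameter subspace implies induced copy) is used; but the theorem is strictly weaker than the parameter set theorem at these parameters, not equivalent to it. Your second, self-contained argument does not appear in the paper, which simply invokes \cite{GR}, and it is correct: by choosing the monochromatic $Q_{n-1}$ in each slice canonically (e.g.\ lexicographically first), the auxiliary palette on $\{0,1\}^B$ has size bounded in terms of $n$ only; Hales--Jewett for the binary alphabet then gives two distinct, comparable points $\alpha_0\subset\alpha_1$ of $\{0,1\}^B$ whose slices carry the same monochromatic $Q_{n-1}$ at the same location, and the union of these two displaced copies is a monochromatic induced $Q_n$. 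This derives the needed special case of the parameter set theorem from Hales--Jewett alone, which is a more elementary route than citing \cite{GR} in full generality.
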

In the last decades, Ramsey theory established itself as an active and major field in discrete mathematics, applied to various discrete settings.
To name just a few, there are noteworthy recent advances in arithmetic Ramsey theory \cite{Le, LPS}, Ramsey theory on vector spaces \cite{NN}, and Euclidean Ramsey theory \cite{CW}.
\\

Most notable, Ramsey-type problems are considered for \textit{graphs}, initiated by a paper of Erd\H{o}s and Szekeres \cite{ES35}.
A \textit{graph} is a pair $(V,E)$ of a \textit{vertex set} $V$ and an \textit{edge set} $E$, such that each edge $e\in E$ is a set of two vertices $x,y\in V$. 
A graph $(V,E)$ is \textit{complete} if every $2$-element subset of $V$ is in $E$. 
For a general introduction to graph theory, see the textbooks of Diestel \cite{Diestel} and West \cite{West}.
Erd\H{o}s and Szekeres \cite{ES35} studied the smallest integer $R(n)$ such that any blue/red coloring of the edges of a complete graph on $N$ vertices contains a monochromatic, $n$-vertex, complete graph, and showed that $R(n)\le 4^n$.
An overview of the most significant graph Ramsey questions is given by a recent survey due to Conlon, Fox, and Sudakov \cite{CFS}.
It is also worth highlighting a very recent breakthrough result by Campos, Griffiths, Morris, and Sahasrabudhe \cite{CGMS}, who improved the upper bound by Erd\H{o}s and Szekeres to $R(n)\le \left(4-\tfrac{1}{128}\right)^n$. 
Further notable and recent results in Ramsey theory on graphs and hypergraphs are for example \cite{CFLS, LPY, RR, MV}.

An important variant of this classic graph Ramsey setting was introduced by Erd\H{o}s and Hajnal \cite{EH}.
Let $H$ be a complete graph on $t$ vertices in which each edge has an arbitrary fixed color, say blue or red.
They studied the minimal number of vertices of a graph such that any blue/red coloring of its edges contains a $t$-vertex subgraph whose induced color pattern is isomorphic to $H$, or a monochromatic complete graph on $n$ vertices.
The well-known and mostly unsolved Erd\H{o}s-Hajnal conjecture claims that for any $H$, this minimal number of vertices is at most $n^{c(H)}$, where $c(H)$ is some constant depending on $H$.
Results towards this conjecture are surveyed by Chudnovsky \cite{Chudnovsky}, see also \cite{MSZ, Weber, NSS}.
\\

The first to apply Ramsey-type problems explicitly to posets were Ne\v{s}et\v{r}il and R\"odl \cite{NR}, who 
determined all posets $U$ such that for any poset $P$ there is a host poset $F$ with the property that any coloring of copies of $U$ in $F$ with a fixed number of colors results in a monochromatic copy of $P$.
See also papers by Paoli, Trotter, and Walker \cite{PTW} and Soki\'c \cite{Sokic} on this topic. 
If $U$ is the poset consisting of a single vertex and there are only two colors, we arrive at a natural special case, 
where we analyze whether any blue/red coloring of the vertices of $F$ contains a monochromatic copy of~$P$.  
Kierstead and Trotter \cite{KT} considered this setting for general posets in a quantitative approach, with the goal of minimizing $p(F)$ for all $P$ with a fixed $p(P)$, where $p$ is a poset parameter such as size, height, or width. 

If we set the poset parameter $p$ to be the $2$-dimension, this leads to the definition of the poset Ramsey number $R(P,Q)$, which was introduced by Axenovich and Walzer~\cite{AW}. 

\subsection{Overview of known results}\label{sec:knownRPQ}

Recall that for any posets $P$ and $Q$,
$R(P,Q)$ is the smallest integer $N$ such that in any blue/red coloring of the $N$-dimensional Boolean lattice  $Q_N$, there is a blue induced copy of $P$ or a red induced copy of $Q$. In the following, we give a rough overview about previous research on $R(P,Q)$, stating only a selection of results.
Further known bounds on poset Ramsey numbers are discussed in the respective chapters of this thesis.

A trivial lower bound on the poset Ramsey number of any posets $P$ and $Q$ is
$$R(P,Q)\ge h(P) + h(Q) -2,$$
obtained by considering a so-called \textit{layered} blue/red coloring of $Q_N$ with dimension $N= h(P)+h(Q)-3$, i.e.,
each \textit{layer}\index{layer} $\{X\in Q_N: ~ |X|=\ell\}$, $0\le\ell\le N$, is colored monochromatically such that there are $h(P)-1$ blue layers and $h(Q)-1$ red layers.
This coloring obviously contains no blue copy of $P$, because no blue subposet has height $h(P)$. Similarly, there is no red copy of $Q$.

When $P$ is an arbitrary poset and $Q$ is a Boolean lattice, this trivial lower bound and a general upper bound due to Axenovich and Walzer \cite{AW} provide the following framework.

\begin{theorem}[Axenovich-Walzer \cite{AW}]\label{thm:general}
Let $n\in\N$ and $P$ be a poset. Then
$$n+h(P)-1 \le R(P,Q_n) \le  h(P)n + \dim_2(P).$$
\end{theorem}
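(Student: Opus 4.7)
The two bounds can be handled separately. For the lower bound $R(P,Q_n)\ge n+h(P)-1$, I would simply specialise the general inequality $R(P,Q)\ge h(P)+h(Q)-2$ established in the paragraph preceding the theorem (via the layered coloring, using $h(P)-1$ blue layers and $h(Q_n)-1=n$ red layers) to $Q=Q_n$; since $h(Q_n)=n+1$, the claimed bound follows immediately.

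For the upper bound, let $h=h(P)$, $d=\dim_2(P)$, set $N=hn+d$, and fix an arbitrary blue/red coloring of $Q_N$. The plan is to exploit the two parameters of $P$ separately: the $d$ coordinates encode the order structure of $P$, while $h$ blocks of $n$ coordinates each supply one flexible $Q_n$-subcube per vertex of $P$. Concretely, partition the ground set $[N]$ into sets $B_1,\dots,B_h,T$ with $|B_i|=n$ and $|T|=d$, and fix an induced embedding $\tau\colon P\hookrightarrow 2^T$ (which exists since $\dim_2(P)=d$). For each $X\in P$, let $h_X\in\{1,\dots,h\}$ denote the height of $X$ in $P$, define the fixed part $F_X:=\tau(X)\cup B_1\cup\dots\cup B_{h_X-1}$, and consider the induced subcube $\mathcal{Q}_X:=\{F_X\cup C\colon C\subseteq B_{h_X}\}\subseteq Q_N$, which is isomorphic to $Q_n$.

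The dichotomy is now immediate: if some $\mathcal{Q}_X$ is monochromatic red, we have the desired red induced copy of $Q_n$; otherwise, for each $X\in P$ we pick a blue vertex $\phi(X)=F_X\cup C_X\in\mathcal{Q}_X$, and claim that $\{\phi(X)\colon X\in P\}$ is a blue induced copy of $P$. The main point to verify carefully is precisely this claim, which must hold \emph{uniformly} in the arbitrary choices $C_X\subseteq B_{h_X}$. For $X<_P Y$ one has $h_X<h_Y$, so $B_{h_X}\subseteq B_1\cup\dots\cup B_{h_Y-1}$ absorbs the slack $C_X$, and $\tau(X)\subseteq\tau(Y)$ yields $\phi(X)\subseteq\phi(Y)$. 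For $X\inc Y$ with $h_X\le h_Y$, incomparability of $\tau(X)$ and $\tau(Y)$ in $2^T$ combined with $T$ being disjoint from every $B_i$ forces $\phi(X)\cap T=\tau(X)$ and $\phi(Y)\cap T=\tau(Y)$ to witness $\phi(X)\not\subseteq\phi(Y)$ and $\phi(Y)\not\subseteq\phi(X)$. This uniformity is the only subtlety in the argument; once it is checked, the upper bound follows.
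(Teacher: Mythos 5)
Your proof is correct, and it fills in what the paper only references: the paper attributes the theorem to Axenovich and Walzer and presents only the special case $P=Q_m$ as the ``Blob Lemma'' (Lemma~\ref{lem:blob_restated}). Your lower-bound argument is exactly the layered-coloring argument stated just before the theorem. Your upper-bound argument is the natural generalization of the blob technique; it is worth noting that it reverses the roles relative to Lemma~\ref{lem:blob_restated}. There, one partitions $[N]$ into $\bX$ of size $n$ and $n+1$ blocks of size $m$, the blobs (of dimension $m$) are candidate \emph{blue} copies of $Q_m$, and the red copy of $Q_n$ is built by iterating over $\QQ(\bX)$. You instead use $h(P)$ blocks of size $n$ together with a $\dim_2(P)$-element ``type'' set $T$ encoding the order structure of $P$, so the blobs (of dimension $n$) are candidate \emph{red} copies of $Q_n$, and the blue copy of $P$ is built by iterating over $P$ itself. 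For $P=Q_m$ both orientations give $mn+n+m$, but yours is the orientation that yields the stated $h(P)n+\dim_2(P)$ for arbitrary $P$ (the other orientation would only give $(n+1)\dim_2(P)+n$, which is weaker whenever $h(P)<\dim_2(P)+1$). Your verification that $\phi$ is an induced embedding (comparability via nesting of the $B_i$-unions plus $\tau(X)\subseteq\tau(Y)$; incomparability witnessed inside $T$ since $T$ is disjoint from every $B_i$) is complete, and the resulting map is automatically injective because $\phi(X)\cap T=\tau(X)$ and $\tau$ is injective.
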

\noindent We present their upper bound approach in Lemma \ref{lem:blob_restated}.

Naturally, one of the central questions in the study of poset Ramsey numbers is to determine the asymptotic behavior of $R(Q_n,Q_n)$ for large $n$.
In this setting, Theorem~\ref{thm:general} yields that
$$2n\le R(Q_n,Q_n)\le n^2 +2n.$$
After several gradual improvements, the best known lower bound is due to Bohman and Peng \cite{BP}, and the best upper bound is given by Lu and Thompson \cite{LT}. They showed that for $n\ge 3$,
$$2n+1 \le R(Q_n,Q_n) \le n^2-n+2.$$
For small $n$, the Ramsey number is determined exactly, in particular $R(Q_2,Q_2)=4$, see also Figure \ref{fig:RQ2Q2}. 
Crucially, it is not known whether $R(Q_n,Q_n)$ is asymptotically linear, quadratic or neither of the two.
The so-called \textit{diagonal} setting $R(P,P)$ was also considered for other basic classes of posets $P$, see e.g., Walzer \cite{Walzer} and Chen, Chen, Cheng, Li, and Liu \cite{CCCLL}.
\\

The related off-diagonal setting $R(Q_m,Q_n)$, where $m<n$, also received considerable attention over the last years.
When both $m$ and $n$ are large, the best known upper bound is due to Lu and Thompson \cite{LT} who showed that for a constant $c>0$,
$$R(Q_m,Q_n)\le n\Big(m-2+\tfrac{c}{m}\Big)+m+3.$$
The setting can be simplified by considering a fixed $m$. It is trivial to see that $R(Q_1,Q_n)=n+1$.
For $m=2$, Gr\'osz, Methuku, and Tompkins \cite{GMT} showed that
$$n+3 \leq R(Q_2,Q_n) \le n + \big(2+o(1)\big)\frac{n}{\log n}.$$
Here and throughout this thesis, `$\log$' refers to the logarithm with base $2$.
For fixed $m\ge 3$, only rough estimates are known, see Lu and Thompson \cite{LT}.
\\

Furthermore, there has been some progress on the weak poset Ramsey number in the diagonal setting $Q_n$ and $Q_n$. It was shown by Cox and Stolee \cite{CS} for the lower bound and by Lu and Thompson \cite{LT} for the upper bound that
$$2n+1\le \Rw(Q_n,Q_n)\le R(Q_n,Q_n)\le n^2-n+2.$$
We remark that in the off-diagonal setting $\Rw (P,Q_n)$, where $P$ is small and $n$ is large, the asymptotic behavior is trivial:
For any poset $P$, the chain $C_{|P|}$ is a weak copy of $P$, thus for fixed $P$, 
$$R(C_{h(P)},Q_n) \le \Rw (P,Q_n) \le R(C_{|P|},Q_n).$$
Axenovich and Walzer \cite{AW} determined $R(C_t,Q_n)=n+t-1$, see Corollary \ref{cor:chain}, so $\Rw (P,Q_n)=n + \Theta(1)$.

Besides the Ramsey numbers $R(P,Q)$ and $\Rw(P,Q)$, we are concerned in this thesis with the poset Erd\H{o}s-Hajnal number $\tilde{R}(\dot P,Q_n)$.
To the best knowledge of the author, Erd\H{o}s-Hajnal problems on posets have not been studied before.
\\

%
%
\section{\textbf{Summary of results shown in this thesis}}\label{sec:results}


We remark that formal definitions of all posets mentioned in the following are presented in Section \ref{sec:catalog}.

In this thesis, motivated by the Erd\H{o}s-Hajnal setting for posets as well as the objective to improve known bounds on $R(Q_m,Q_n)$, 
we analyze $R(P,Q_n)$ for a fixed poset $P$ and large $n$. 
In \textbf{Chapter~\ref{ch:QnK}}, we generalize a known approach by Gr\'osz, Methuku, and Tompkins \cite{GMT}, and obtain an improved upper bound on $R(P,Q_n)$ if $P$ is a complete multipartite poset or subdivided diamond. This chapter serves as a gentle introduction to the notation and methods used throughout this thesis.

We show in \textbf{Chapter \ref{ch:QnV}} that two different asymptotic behaviors of $R(P,Q_n)$ emerge, depending on two special $3$-element posets.
We say that a poset $P$ is \textit{non-trivial}\index{non-trivial poset} if $P$ contains a copy of either $\pV$ or $\pLa$; otherwise, we say that $P$ is \textit{trivial}\index{trivial poset}.
For trivial posets~$P$, we determine that $$R(P,Q_n)=n+ \Theta(1).$$
However, for non-trivial posets $P$, we show a different asymptotic behavior in Theorem~\ref{thm-MAIN}, by proving the lower bound
$$R(P,Q_n)\ge n+\tfrac{1}{15}\tfrac{n}{\log n}.$$
Although this seems to be minor progress at first glance,
this sublinear improvement is asymptotically tight in the two leading additive terms for several classes of non-trivial~$P$.
Together with the results of the first chapter, we find that
 $$R(P,Q_n)=n+\Theta\left(\frac{n}{\log n}\right)$$
if $P$ is a complete multipartite poset or subdivided diamond. In \textbf{Chapter \ref{ch:QnN}}, we show the same bound for $P$ being the N-shaped poset $\pN$.
At the heart of that chapter we develop a new method to upper bound $R(P,Q_n)$, that might be applied to further posets $P$ in the future.
It remains unknown whether there is a poset $P$ for which $R(P,Q_n)= n + \Omega(n)$, which we discuss in Conjecture \ref{conj:QnP}.

As mentioned above, in the setting of trivial posets $P$, the poset Ramsey number is $R(P,Q_n)=n+ \Theta(1)$.
In \textbf{Chapter \ref{ch:QnPA}}, the additive term $\Theta(1)$ is precisely determined for antichains and trivial posets of width at most $3$. 

In \textbf{Chapter \ref{ch:QnEH}}, we study the poset Erd\H{o}s-Hajnal number $\tilde{R}(\dot P,Q_n)$ for posets~$P$ with a fixed non-monochromatic color pattern. 
We present a general bound on $\tilde{R}(\dot P,Q_n)$ and investigate this number if $P$ is an antichain, chain, or small Boolean lattice.

In the final \textbf{Chapter \ref{ch:QnQn}}, we turn our attention to diagonal Ramsey problems on posets. In particular, we strengthen the bound on $R(Q_n,Q_n)$: 
With respect to the initial, basic estimate stated in Theorem \ref{thm:general}, we give the first linear improvement of the lower bound and the first superlinear improvement of the upper bound. More precisely, we show in Corollaries \ref{cor:QnQnLB} and \ref{cor:QnQnUB} that
$$2.02n+o(1)\le R(Q_n,Q_n) \le n^2- \big(1-o(1)\big)n\log n.$$
Furthermore, we determine the poset Ramsey number $R(P,P)$ up to an additive constant of $2$ when $P$ is a diamond or a fork, 
and give an improved upper bound on $R(Q_m,Q_n)$.
In Chapter \ref{ch:QnQn}, we also briefly discuss the weak poset Ramsey number. 
We present an improvement of the upper bound on $\Rw (Q_n,Q_n)$ by a quadratic term: 
For sufficiently large $n$, we show that
$$\Rw (Q_n,Q_n)\le 0.96n^2.$$

A compact overview of our results is given in Table \ref{table:ramsey}. 
As a consequence of our research, we obtain bounds on $R(P,Q_n)$ which are asymptotically tight in the two leading additive terms for all posets $P$ on at most $4$ vertices (of which there are 19 up to symmetry).
Moreover, we precisely determine $R(P,Q_n)$ for all trivial $P$ on at most $4$ vertices. The bounds are listed in Table \ref{table:small}.
Some posets have alternative names used in the literature, these are additionally mentioned in the table.

\def\rowhgt{19.9pt}
\begin{table}
\vspace*{-12pt}
\begin{center}
\begin{tabular}{| r  c  l  c  l | l |}
\hline 
    \multicolumn{5}{|c|}{\textbf{poset Ramsey bound}} & \rule{0pt}{13pt}\textbf{proof} \\ \hline \rule{0pt}{\rowhgt}

\hspace*{-0.2em} $n+\frac{1}{15}\frac{n}{\log(n)}$ & $\le$ & $R(Q_2,Q_n)$ & $\le$ & $n+\big(2+o(1)\big)\frac{ n}{\log n}$ &
	\begin{tabular}[c]{@{}l@{}}LB: Thm.~\ref{thm-MAIN},\\UB: \cite{GMT}\end{tabular}\\ \hline \rule{0pt}{\rowhgt}
    
\hspace*{-0.2em} $n+\frac{1}{15}\frac{n}{\log(n)}$ & $\le$ & $R(K_{t_1,\dots,t_\ell},Q_n)$ & $\le$ & $n+\big(2\ell+o(1)\big)\frac{ n}{\log n}$ &
	\begin{tabular}[c]{@{}l@{}}LB: Thm.~\ref{thm-MAIN},\\UB: Thm.~\ref{thm:QnK}\end{tabular}\\ \hline \rule{0pt}{\rowhgt}

\hspace*{-0.2em} $n+\frac{1}{15}\frac{n}{\log(n)}$ & $\le$ & $R(SD_{s,t},Q_n)$ & $\le$ & $n+\big(2+o(1)\big)\frac{ n}{\log n}$ &
	\begin{tabular}[c]{@{}l@{}}LB: Thm.~\ref{thm-MAIN},\\UB: Thm.~\ref{thm:QnSD}\end{tabular}\\ \hline \rule{0pt}{\rowhgt}

\hspace*{-0.2em} $n+\frac{1}{15}\frac{n}{\log(n)}$ & $\le$ & $R(D_s,Q_n)$ & $\le$ & $n+\big(2+o(1)\big)\frac{ n}{\log n}$ &
	\begin{tabular}[c]{@{}l@{}}LB: Thm.~\ref{thm-MAIN},\\UB: Cor.~\ref{cor:QnVs}\end{tabular}\\ \hline \rule{0pt}{\rowhgt}

\hspace*{-0.2em} $n+\frac{1}{15}\frac{n}{\log(n)}$ & $\le$ & $R(V_s,Q_n)$ & $\le$ & $n+\big(1+o(1)\big)\frac{ n}{\log n}$ &
	\begin{tabular}[c]{@{}l@{}}LB: Thm.~\ref{thm-MAIN},\\UB: Cor.~\ref{cor:QnDs}\end{tabular}\\ \hline \rule{0pt}{\rowhgt}

\hspace*{-0.2em} $n+\frac{1}{15}\frac{n}{\log(n)}$ & $\le$ & $R(\pN,Q_n)$ & $\le$ & $n+\big(1+o(1)\big)\frac{ n}{\log n}$ &
	\begin{tabular}[c]{@{}l@{}}LB: Thm.~\ref{thm-MAIN},\\UB: Thm.~\ref{thm:QnN}\end{tabular}\\ \hline \hline \rule{0pt}{\rowhgt}

 &  & $R(C_{t_1},Q_n)$ & $=$ & $n+t_1 -1$ &
	\cite{AW} \begin{tabular}[c]{@{}l@{}}\ \\ \ \end{tabular}\\ \hline \rule{0pt}{\rowhgt}

 &  & $R(C_{t_1,t_2},Q_n)$ & $=$ & $n+t_1+1$ &
	Thm.~\ref{thm:QnCC} \begin{tabular}[c]{@{}l@{}}\ \\ \ \end{tabular}\\ \hline \rule{0pt}{\rowhgt}

 &  & $R(C_{t_1,t_2,t_3},Q_n)$ & $=$ & $\begin{cases} n+t_1+1, ~\text{ if }t_1>t_2+1\\ n+t_1+2, ~\text{ if }t_1\le t_2+1\end{cases}$\hspace*{-1em} &
	Thm.~\ref{thm:QnCCC} \begin{tabular}[c]{@{}l@{}}\ \\ \ \end{tabular}\\ \hline \rule{0pt}{\rowhgt}

 &  & $R(A_t,Q_n)$ & $=$ & $n+3$ &
	Thm.~\ref{thm:antichain} \begin{tabular}[c]{@{}l@{}}\ \\ \ \end{tabular}\\ \hline \hline \rule{0pt}{\rowhgt}

\hspace*{-0.2em} $2.02n$ & $<$ & $R(Q_n,Q_n)$ & $\le$ & $n^2- \big(1-o(1)\big)n\log n$ &
	\begin{tabular}[c]{@{}l@{}}LB: Cor.~\ref{cor:QnQnLB},\\UB: Cor.~\ref{cor:QnQnUB}\end{tabular}\\ \hline \rule{0pt}{\rowhgt}

 && $R(Q_m,Q_n)$ & $\le$ & $n\big(m-\big(1-o(1)\big)\log m\big)$ &
	Thm.~\ref{thm:QmQn} \begin{tabular}[c]{@{}l@{}}\ \\ \ \end{tabular}\\ \hline \rule{0pt}{\rowhgt}

 &  & $R(D_n,D_n)$ & $=$ & $\big(1+o(1)\big)\log(n)$ &
	Thm.~\ref{thm:DnDn} \begin{tabular}[c]{@{}l@{}}\ \\ \ \end{tabular}\\ \hline \rule{0pt}{\rowhgt}

& & $R(V_n,V_n)$ & $=$ & $\big(d+o(1)\big)\log(n)$, &Thm.~\ref{thm:VnVn} \begin{tabular}[c]{@{}l@{}}\ \\ \ \end{tabular} \\
	 \multicolumn{5}{|c|}{ where $d\approx 1.29$}&	\\ \hline \hline \rule{0pt}{\rowhgt}
	
& & $\Rw (Q_n,Q_n)$ & $\le$ & $0.96n^2$ &
	Thm.~\ref{thm:weakQnQn} \begin{tabular}[c]{@{}l@{}}\ \\ \ \end{tabular} \\ \hline  \hline \rule{0pt}{\rowhgt}
		
& & $\tilde{R}(\dot A,Q_n)$ & $=$ & $\begin{cases} n+2, ~\text{ if }s\le 2\\ n+3, ~\text{ if }s\ge 3\text{,}\end{cases}$ &
	Thm.~\ref{thm:EHantichain} \begin{tabular}[c]{@{}l@{}}\ \\ \ \end{tabular} \\ 
	\multicolumn{5}{|c|}{where $\dot A$ is a colored antichain and $s$ is the size of} &\\
	 \multicolumn{5}{|c|}{ its largest color class} & \\\hline \rule{0pt}{\rowhgt}
	
\hspace*{-0.2em} $2.02n$ & $<$ & $\tilde{R}(\dot C,Q_n)$ & $\le$ & $(t-1)n+\Theta(1)$, &
	\begin{tabular}[c]{@{}l@{}}Thm.~\ref{thm:EHreduction},\\Thm.~\ref{thm:EHchain}\end{tabular}\\ 
	\multicolumn{5}{|c|}{where $\dot C$ is a colored chain and $t\ge 4$ is the size of} &\\
	 \multicolumn{5}{|c|}{a largest alternatingly colored subposet} & \\\hline 
		
  \end{tabular}
  \caption{Summary of poset Ramsey bounds presented in this thesis and reference to the proofs of lower bound (LB) and upper bound (UB).}
\label{table:ramsey}
\end{center} 
\end{table}

\renewcommand{\arraystretch}{1.1}
\def\figscale{0.41}
\def\rowhgt{20pt}
\def\rowhgtb{19pt}

\begin{table}
\begin{center}
\begin{tabular}{| l | r  l | l | l |}
    \hline \rule{0pt}{13pt}
    &\multicolumn{2}{c|}{\textbf{poset $P$}} & $R(P,Q_n)$ & \textbf{proof} \\ \hline\hline \rule{0pt}{\rowhgt}
    \includegraphics[scale=\figscale]{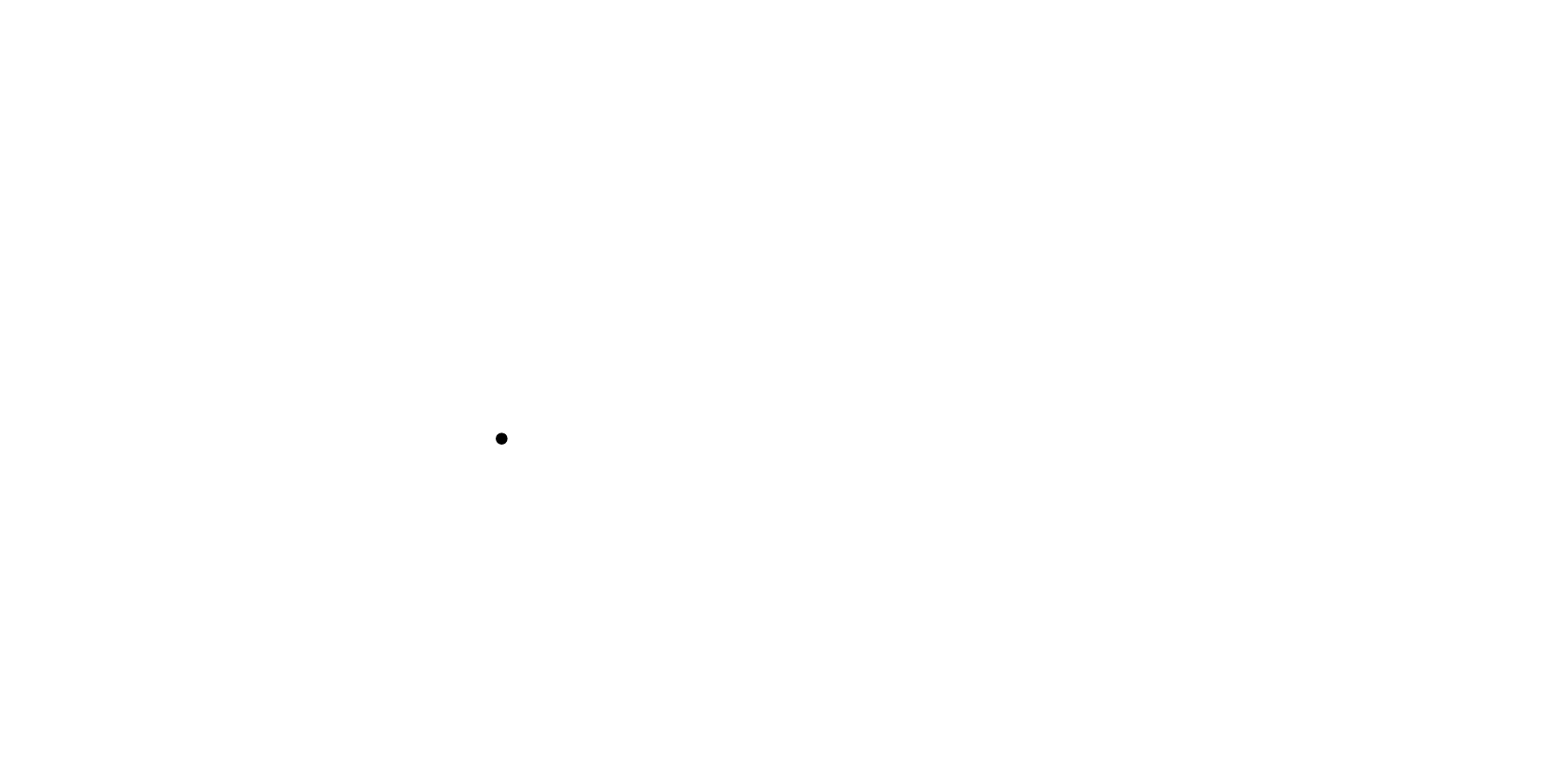} & 
    $C_1$& $=Q_0$ & $n+0$ & trivial\\ \hline \hline \rule{0pt}{\rowhgt}
    
    \includegraphics[scale=\figscale]{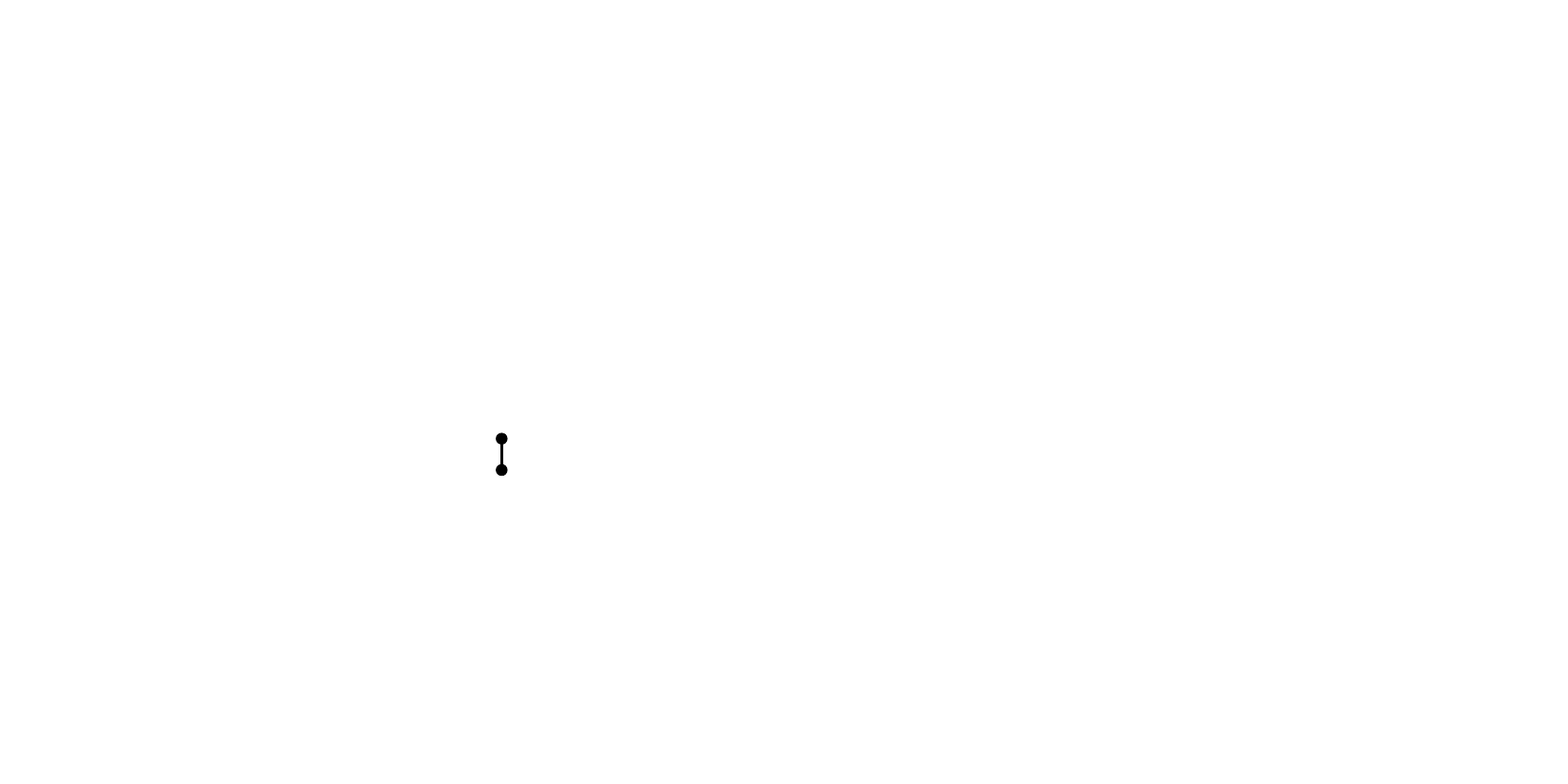} & 
    $C_2$& $=Q_1$ & $n+1$ & \cite{AW}, see Cor.~\ref{cor:chain} \\ \hline \rule{0pt}{\rowhgt}
    
    \includegraphics[scale=\figscale]{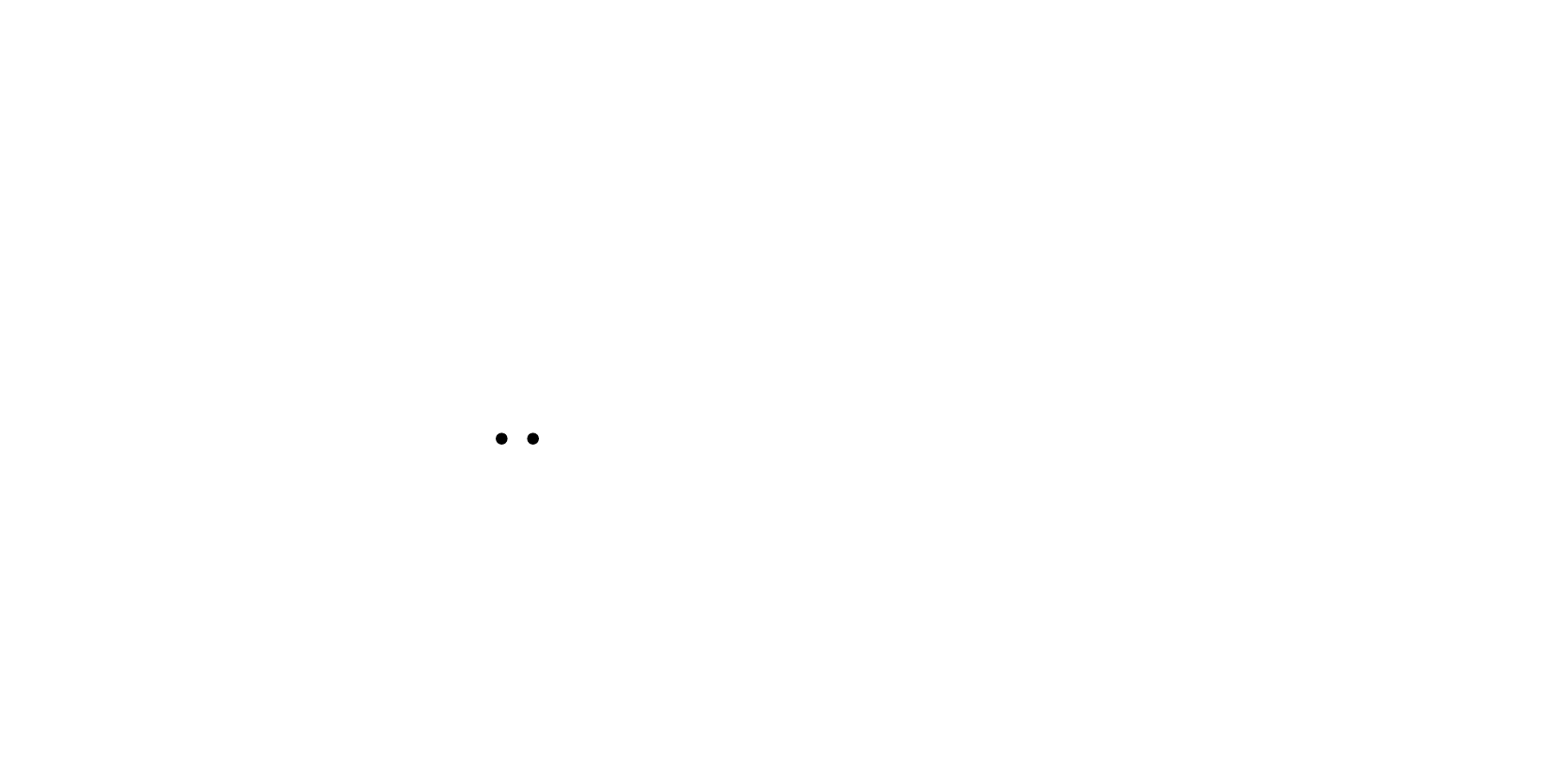} & 
    $A_2$&$=2C_1$ & $n+2$ &Thm.~\ref{thm:QnCC} \\ \hline\hline \rule{0pt}{\rowhgt}
    
    \includegraphics[scale=\figscale]{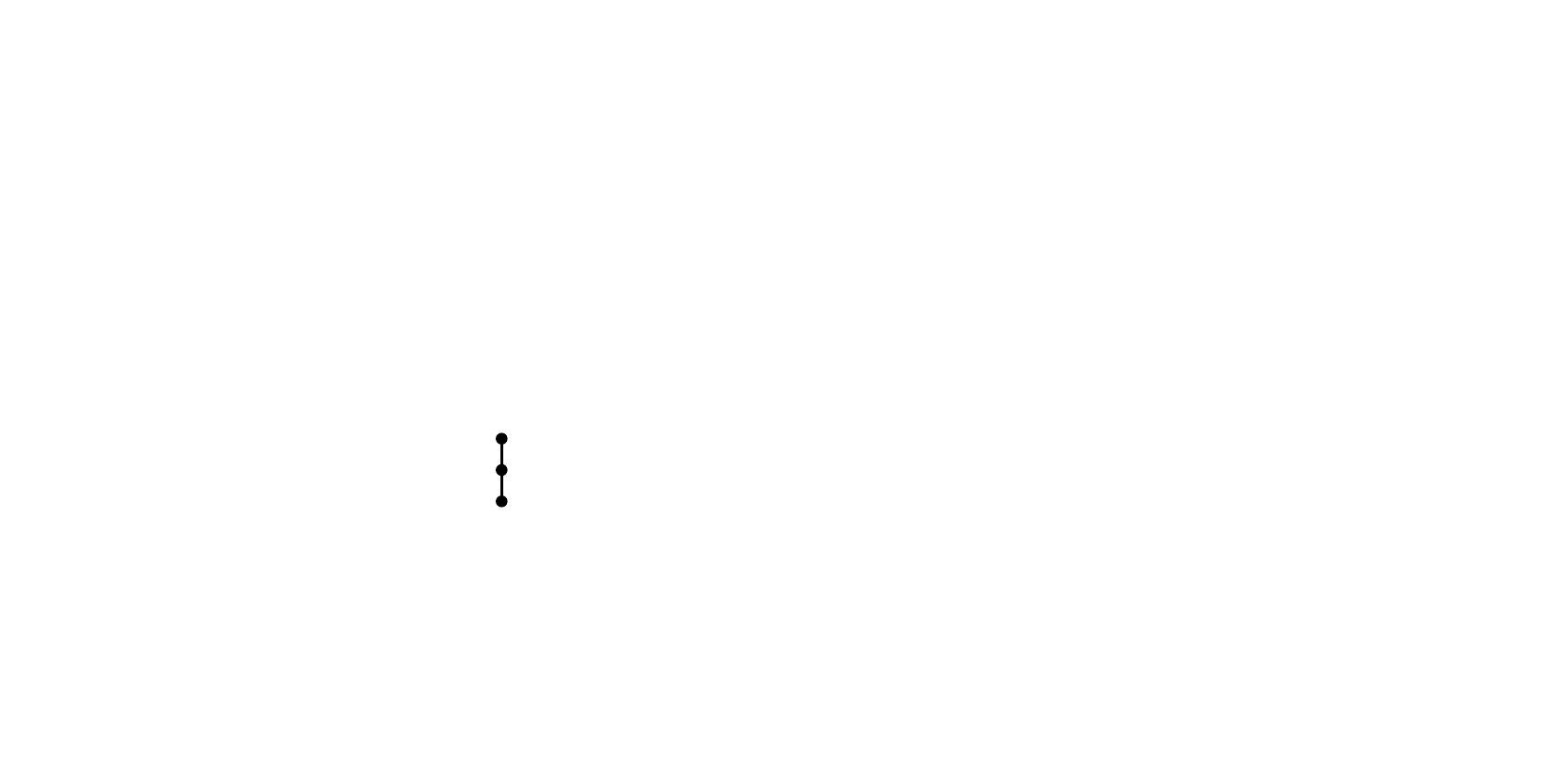} & 
    $C_3$& & $n+2$ & \cite{AW}, see Cor.~\ref{cor:chain}  \\ \hline \rule{0pt}{\rowhgt}
    
    \includegraphics[scale=\figscale]{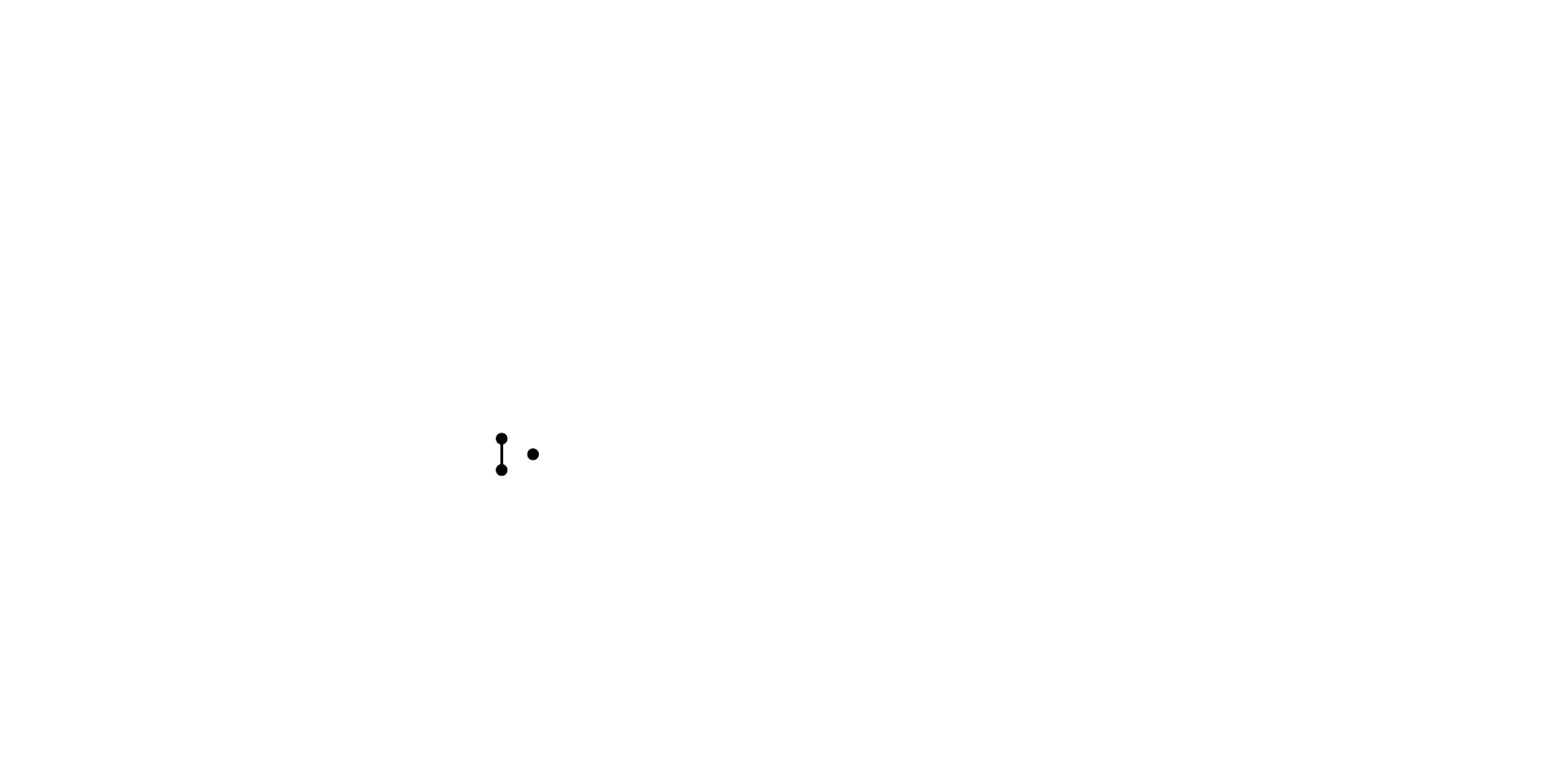} & 
    $C_{2,1}$ & & $n+3$ & Thm.~\ref{thm:QnCC} \\ \hline \rule{0pt}{\rowhgt}
    
    \includegraphics[scale=\figscale]{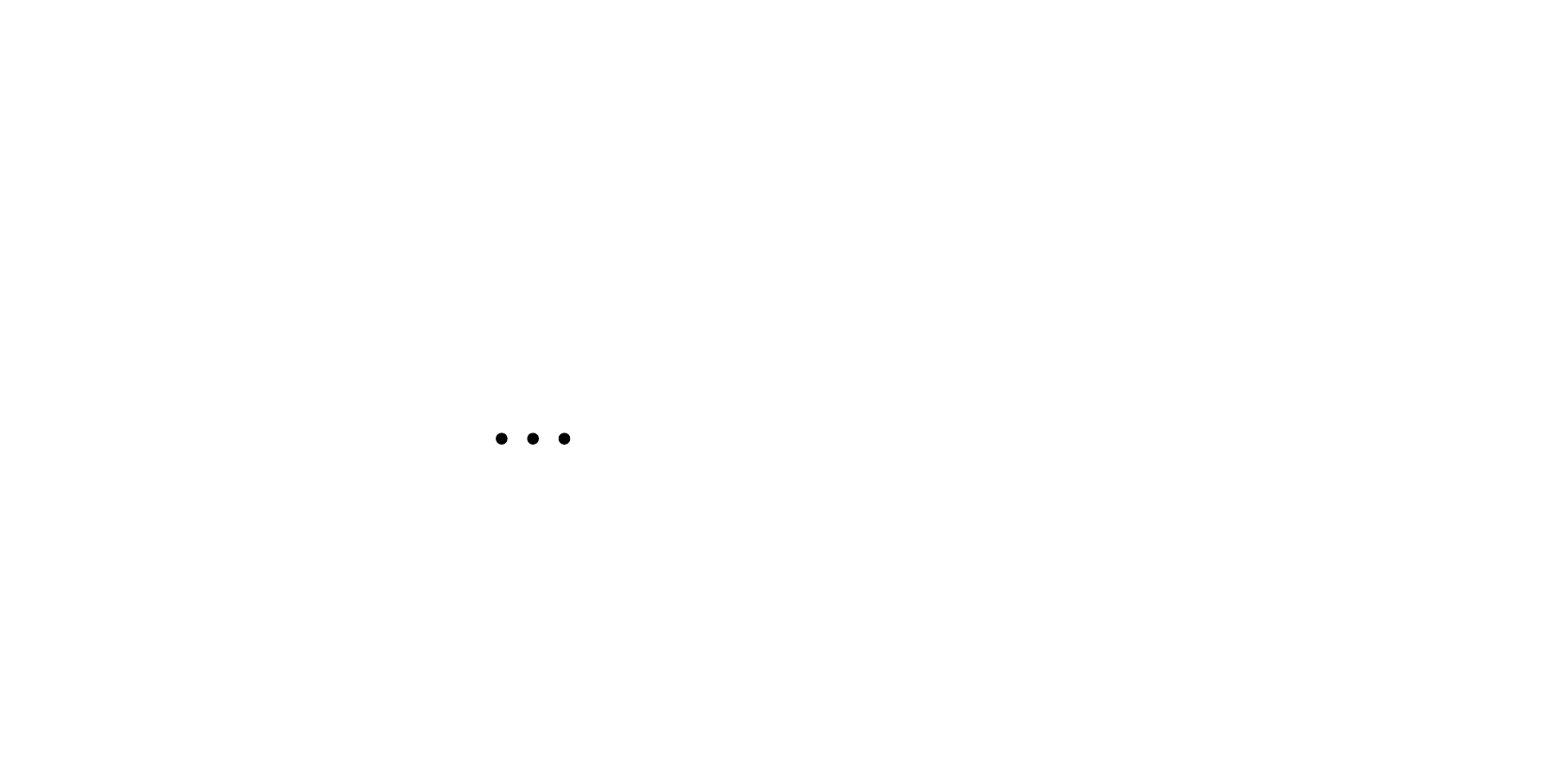} & 
    $A_3$&$=3C_1$ & $n+3$ & Thm.~\ref{thm:antichain} \\ \hline \rule{0pt}{\rowhgtb}
    
    \includegraphics[scale=\figscale]{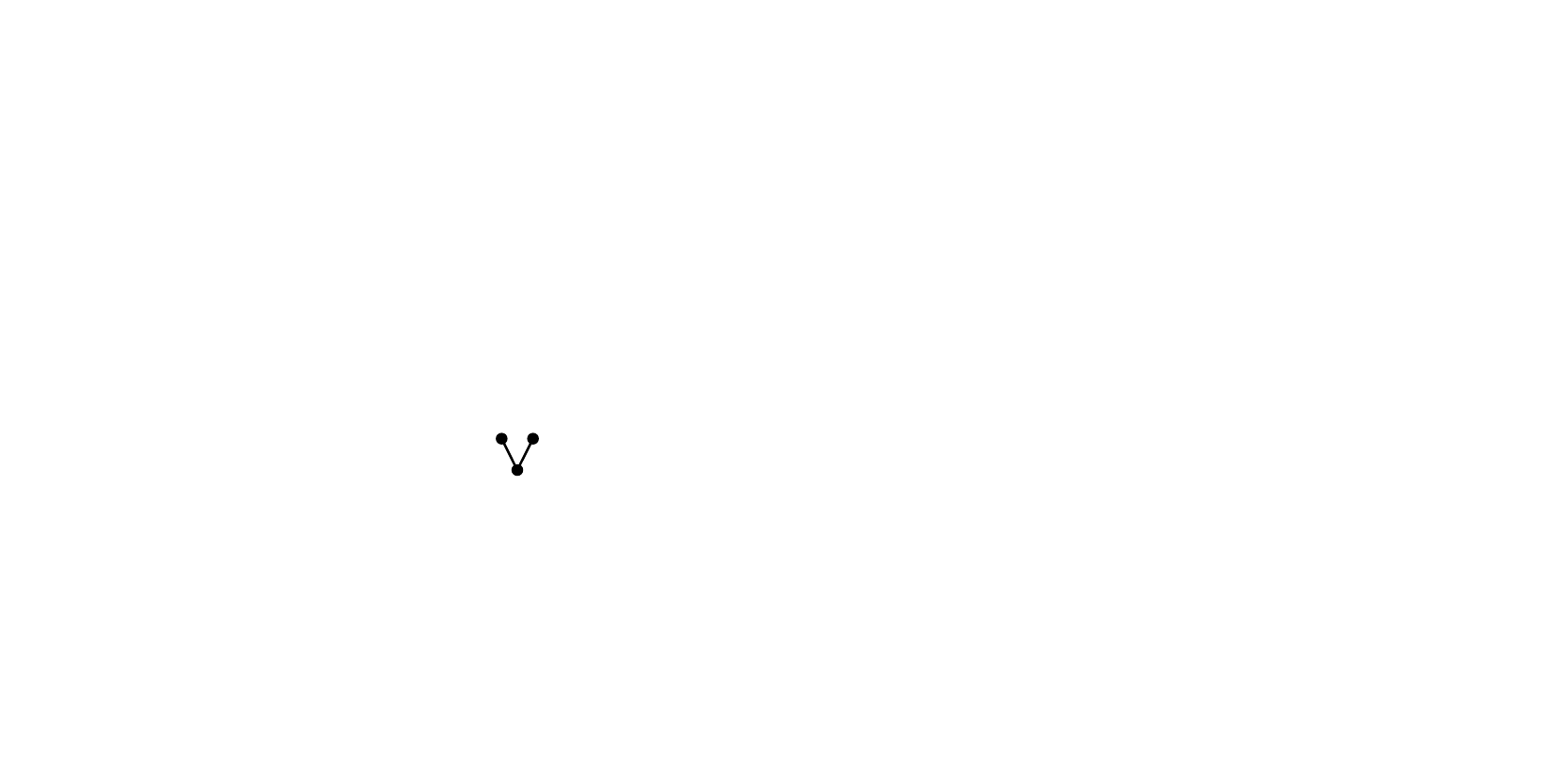} & 

    $\pV$&$=K_{1,2}$	 &$n+\frac{c(n)n}{\log(n)}$, \ $\frac{1}{15}\le c(n)\le 1+o(1)$ & 
    \begin{tabular}[c]{@{}l@{}}LB: Thm.~\ref{thm:QnV_LB},\\UB: Cor.~\ref{cor:QnVs}\end{tabular}\\ \hline\hline \rule{0pt}{\rowhgt}

    \includegraphics[scale=\figscale]{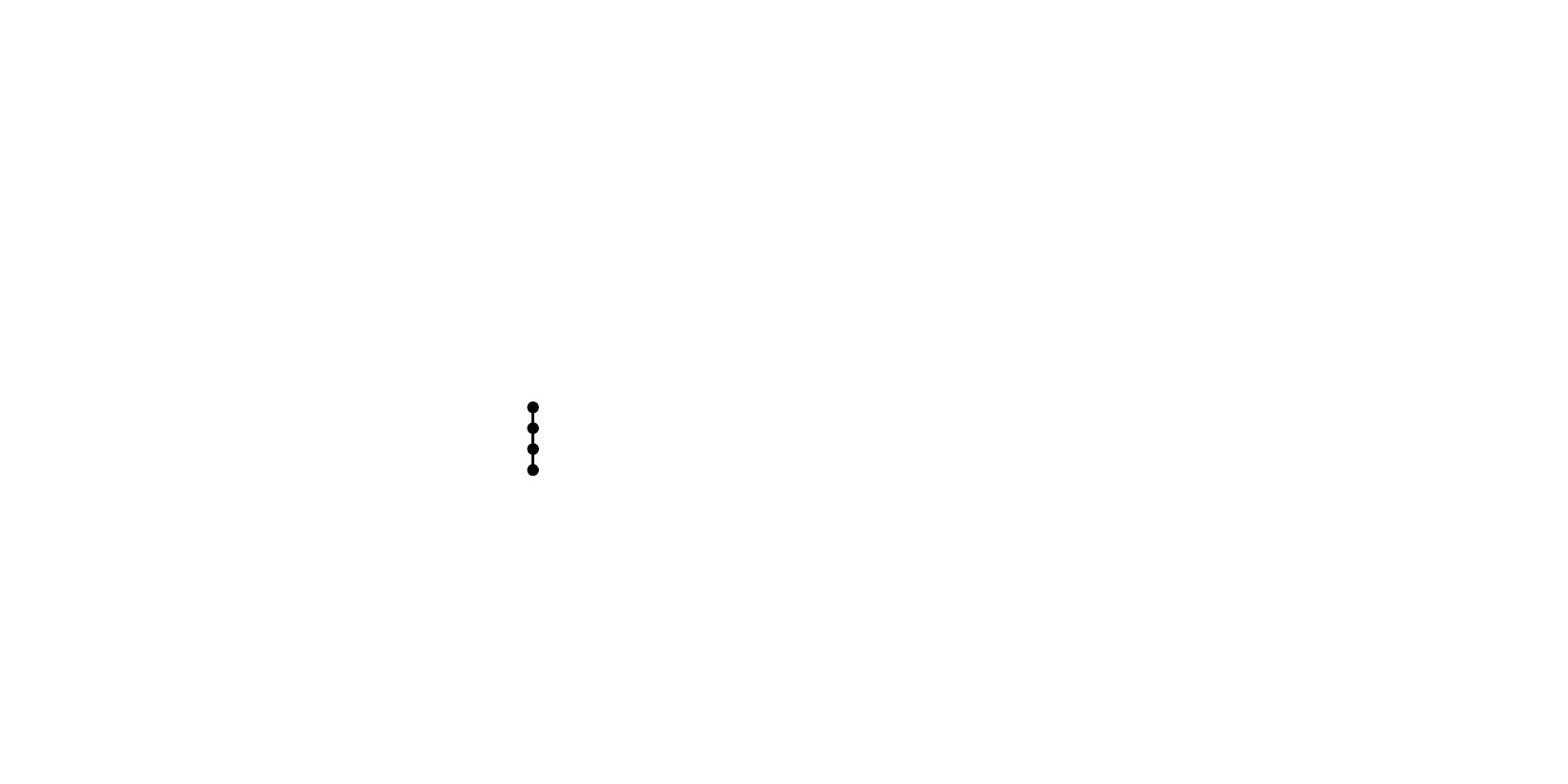} & 
    $C_4$& & $n+3$ & \cite{AW}, see Cor.~\ref{cor:chain} \\ \hline \rule{0pt}{\rowhgt}
    
    \includegraphics[scale=\figscale]{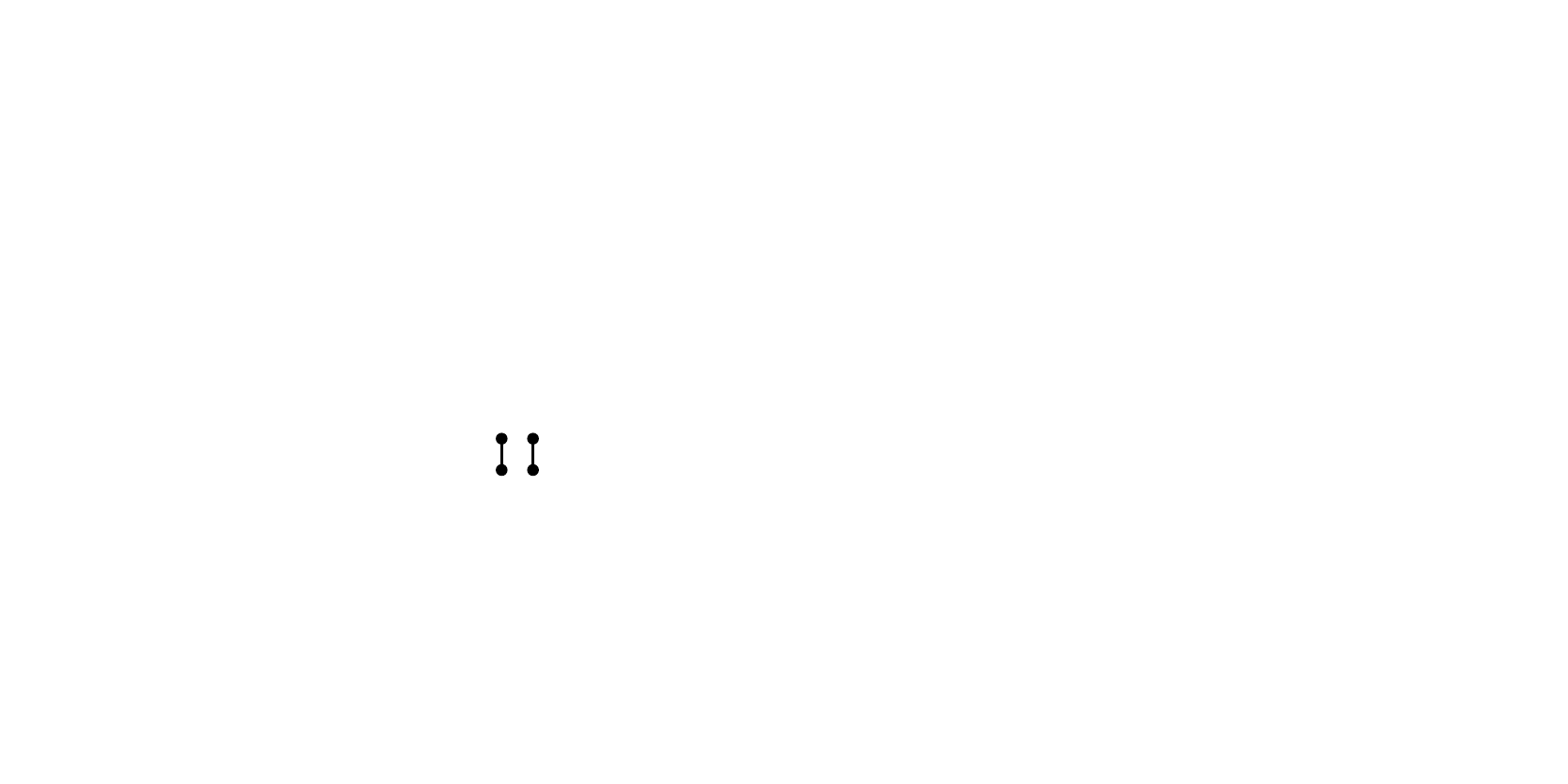} & 
    $C_{2,2}$&$=2C_2$ & $n+3$ & Thm.~\ref{thm:QnCC} \\ \hline    \rule{0pt}{\rowhgt}
    
    \includegraphics[scale=\figscale]{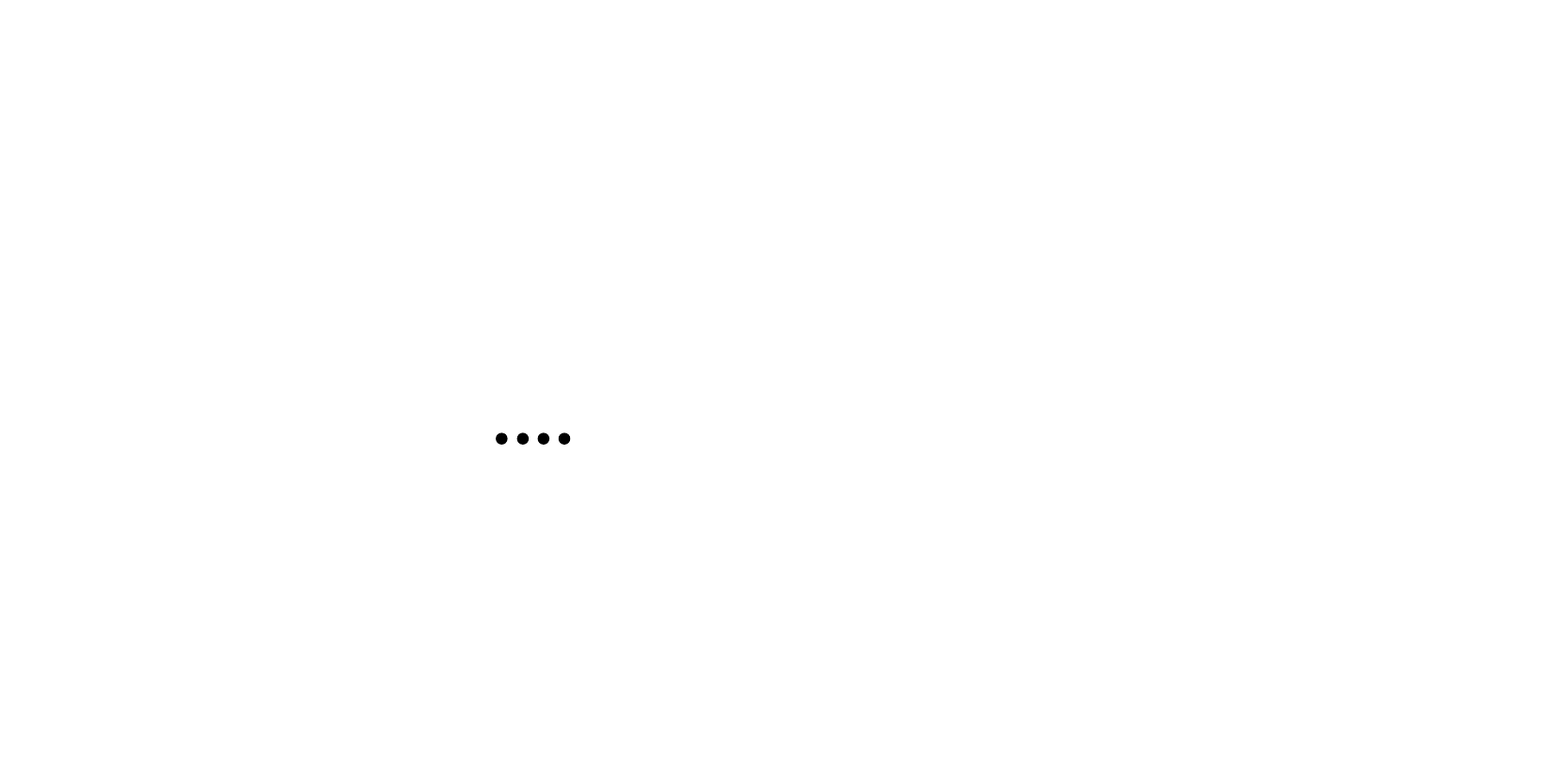} & 
    $A_4$&$=4C_1$ & $n+3$ & Thm.~\ref{thm:antichain} \\ \hline    \rule{0pt}{\rowhgt}
    
    \includegraphics[scale=\figscale]{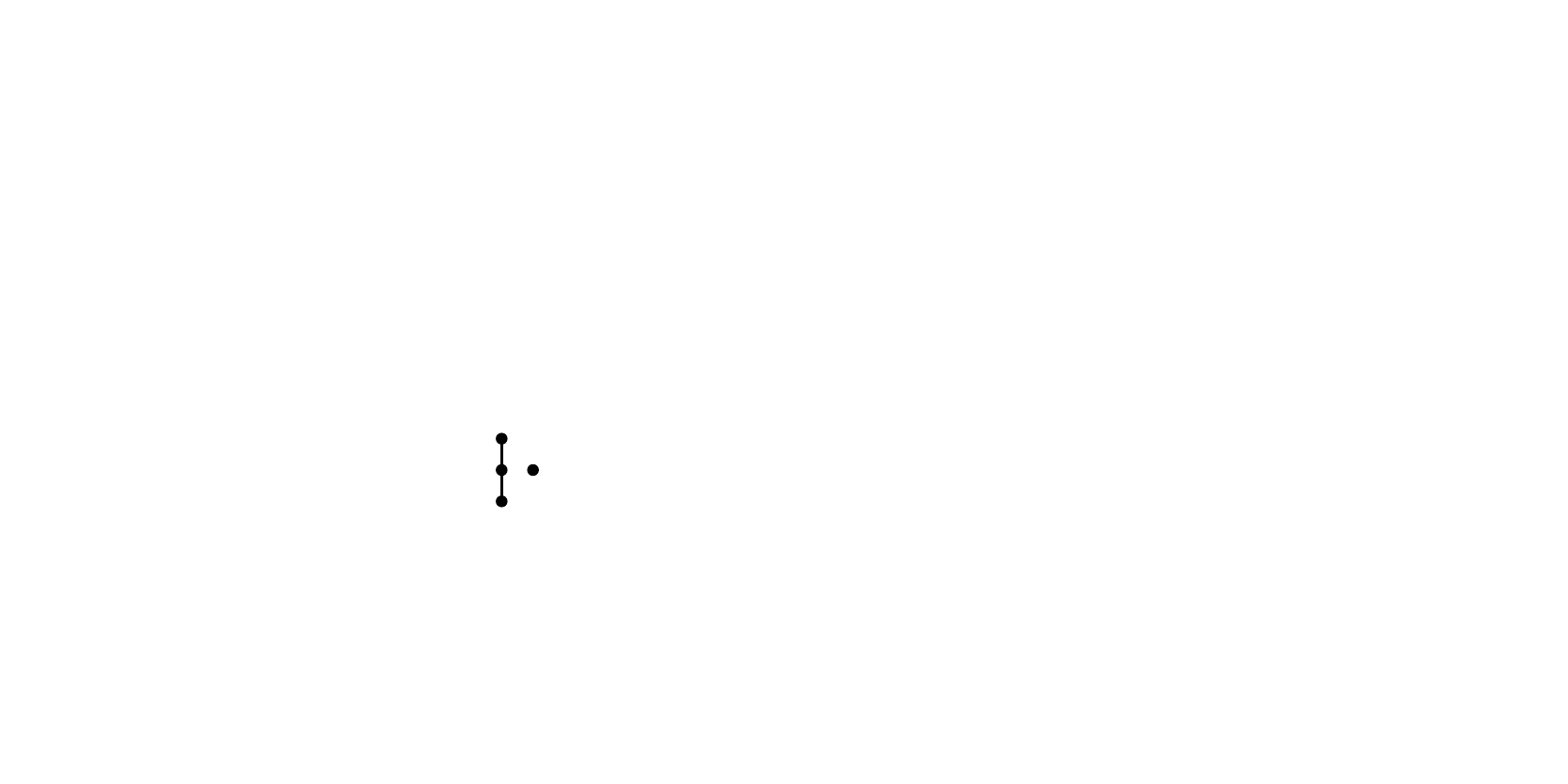} & 
    $C_{3,1}$ & & $n+4$ & Thm.~\ref{thm:QnCC}  \\ \hline \rule{0pt}{\rowhgt}
    
    \includegraphics[scale=\figscale]{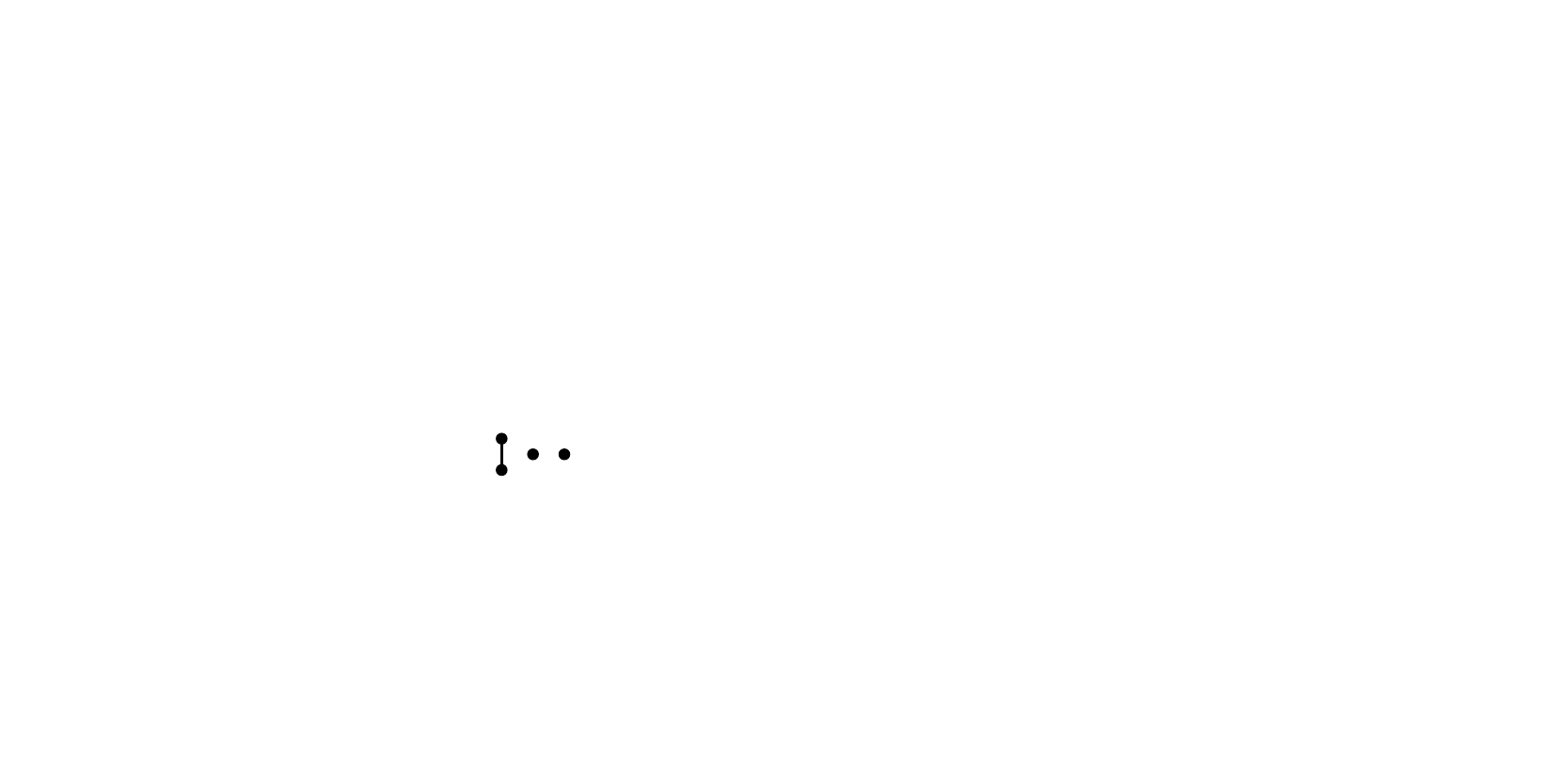} & 
    $C_{2,1,1}$ & & $n+4$ & Thm.~\ref{thm:QnCCC} \\ \hline \rule{0pt}{\rowhgtb}
    
    \includegraphics[scale=\figscale]{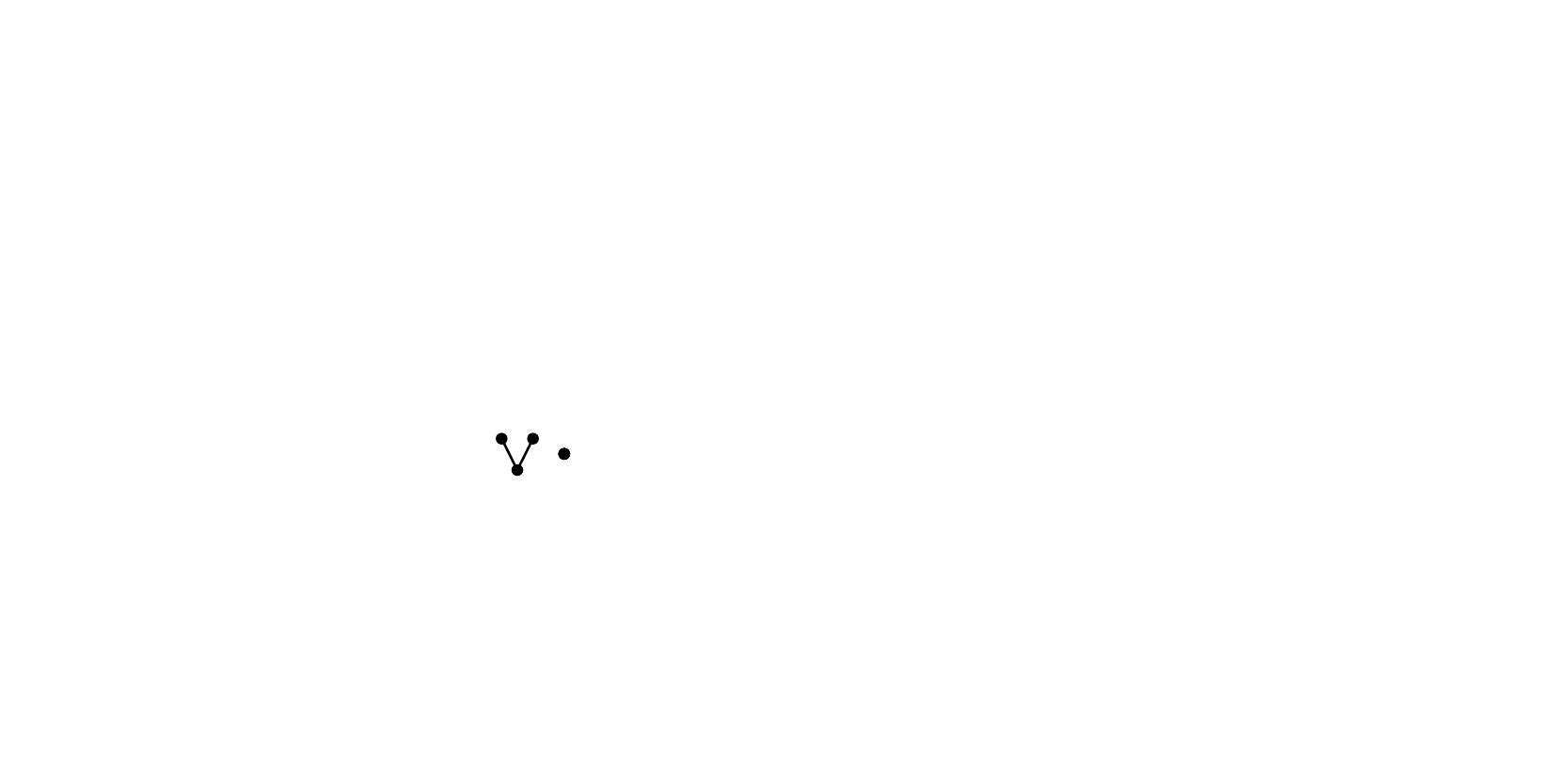} & 
    $\pV\opl C_1$& & $n+\frac{c(n)n}{\log(n)}$, \ $\frac{1}{15}\le c(n)\le 1+o(1)$ &
    \begin{tabular}[c]{@{}l@{}}LB: Thm.~\ref{thm-MAIN}, UB:\\ Cor.~\ref{cor:QnVs}, Thm.~\ref{lem:parallel}\end{tabular}\\ \hline \rule{0pt}{\rowhgtb}
      
    \includegraphics[scale=\figscale]{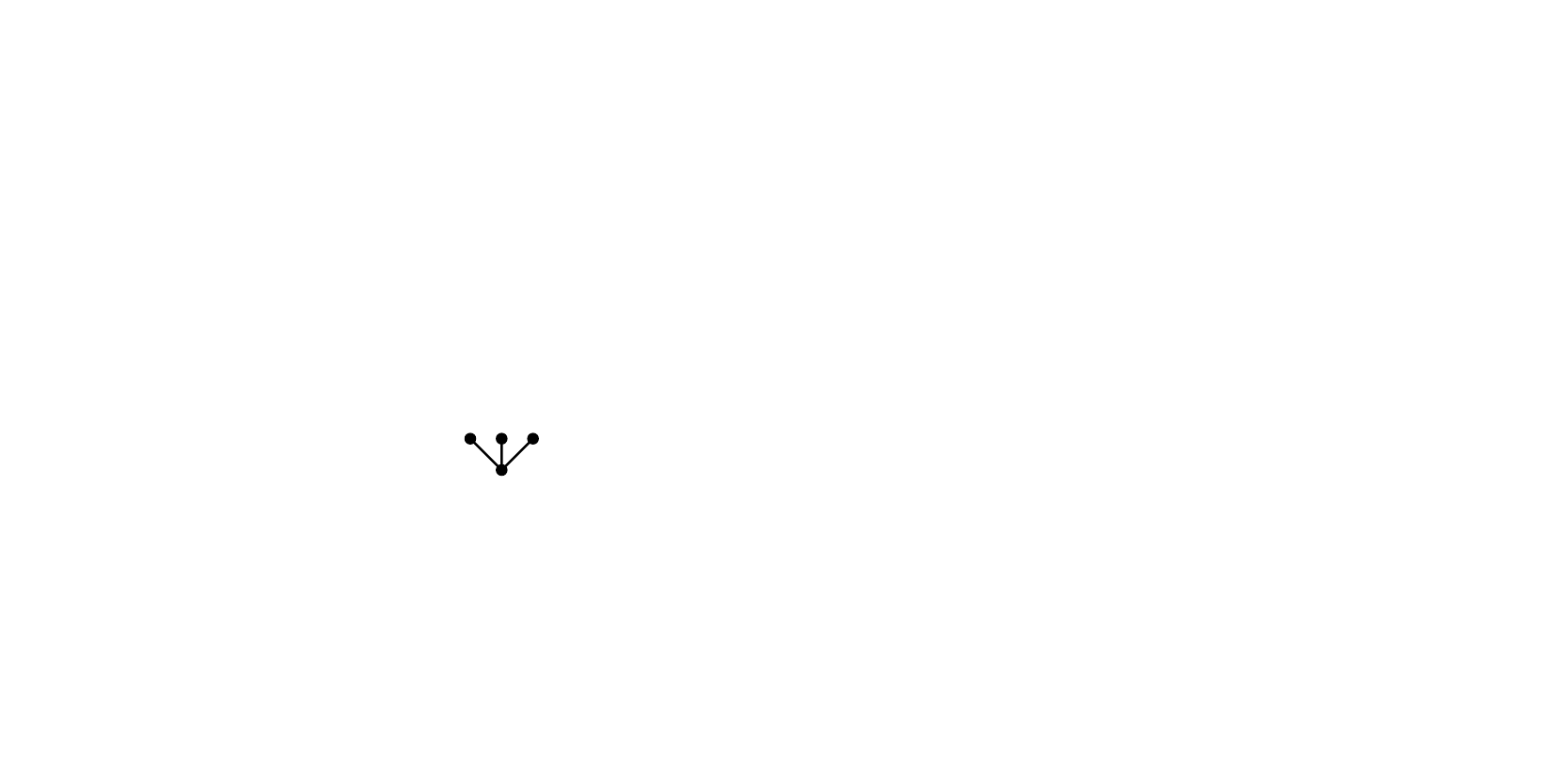} & 
    $K_{1,3}$&$=V_3$ & $n+\frac{c(n)n}{\log(n)}$, \ $\frac{1}{15}\le c(n)\le 1+o(1)$ &
     \begin{tabular}[c]{@{}l@{}}LB: Thm.~\ref{thm-MAIN},\\UB: Cor.~\ref{cor:QnVs}\end{tabular}\\ \hline \rule{0pt}{\rowhgtb}
    
    \includegraphics[scale=\figscale]{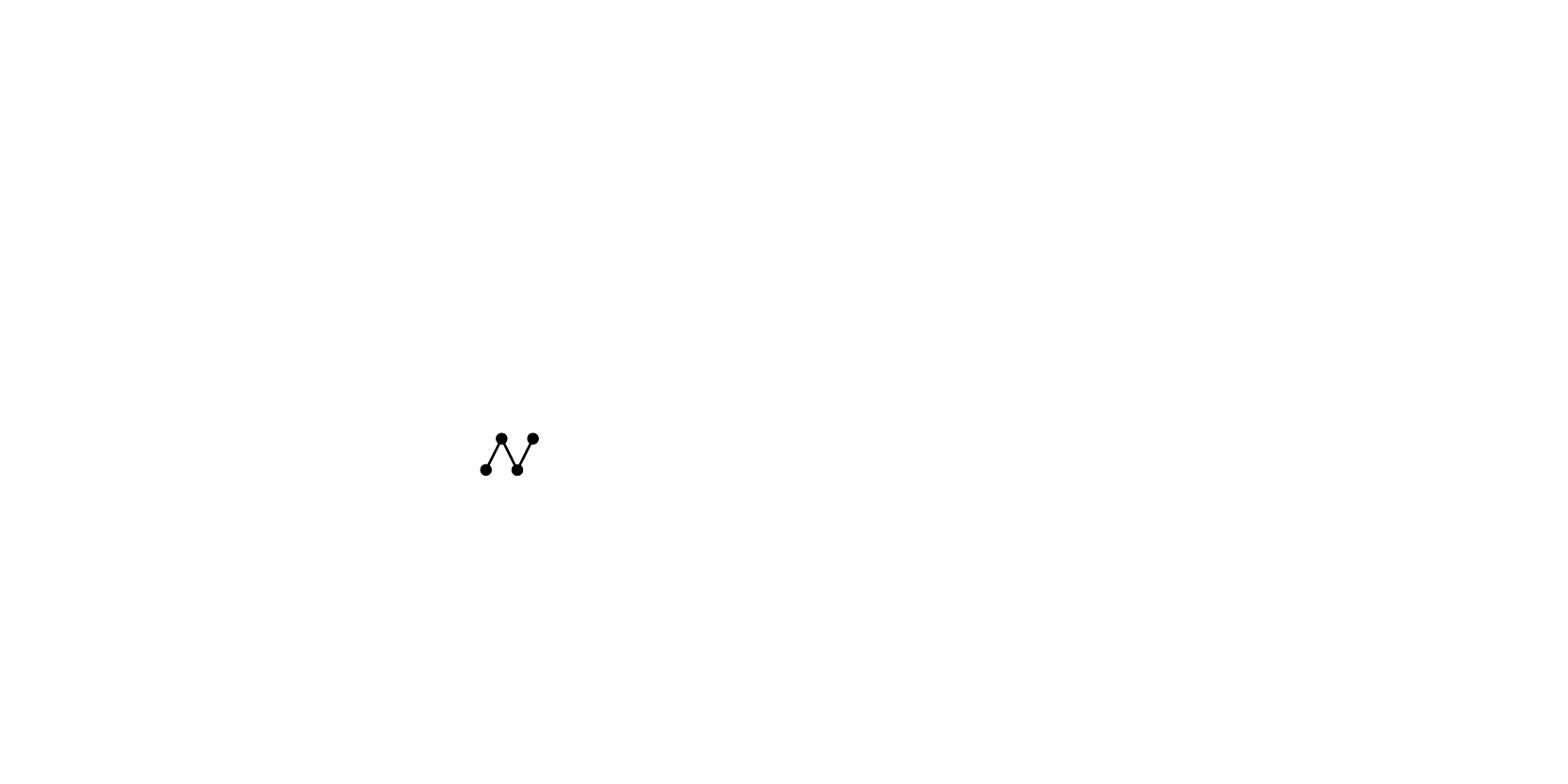} & 
    $\pN$& & $n+\frac{c(n)n}{\log(n)}$, \ $\frac{1}{15}\le c(n)\le 1+o(1)$ &
     \begin{tabular}[c]{@{}l@{}}LB: Thm.~\ref{thm-MAIN}\\UB: Thm.~\ref{thm:QnN}\end{tabular}\\ \hline \rule{0pt}{\rowhgtb}
    
    \begin{tabular}[c]{@{}l@{}}\includegraphics[scale=\figscale]{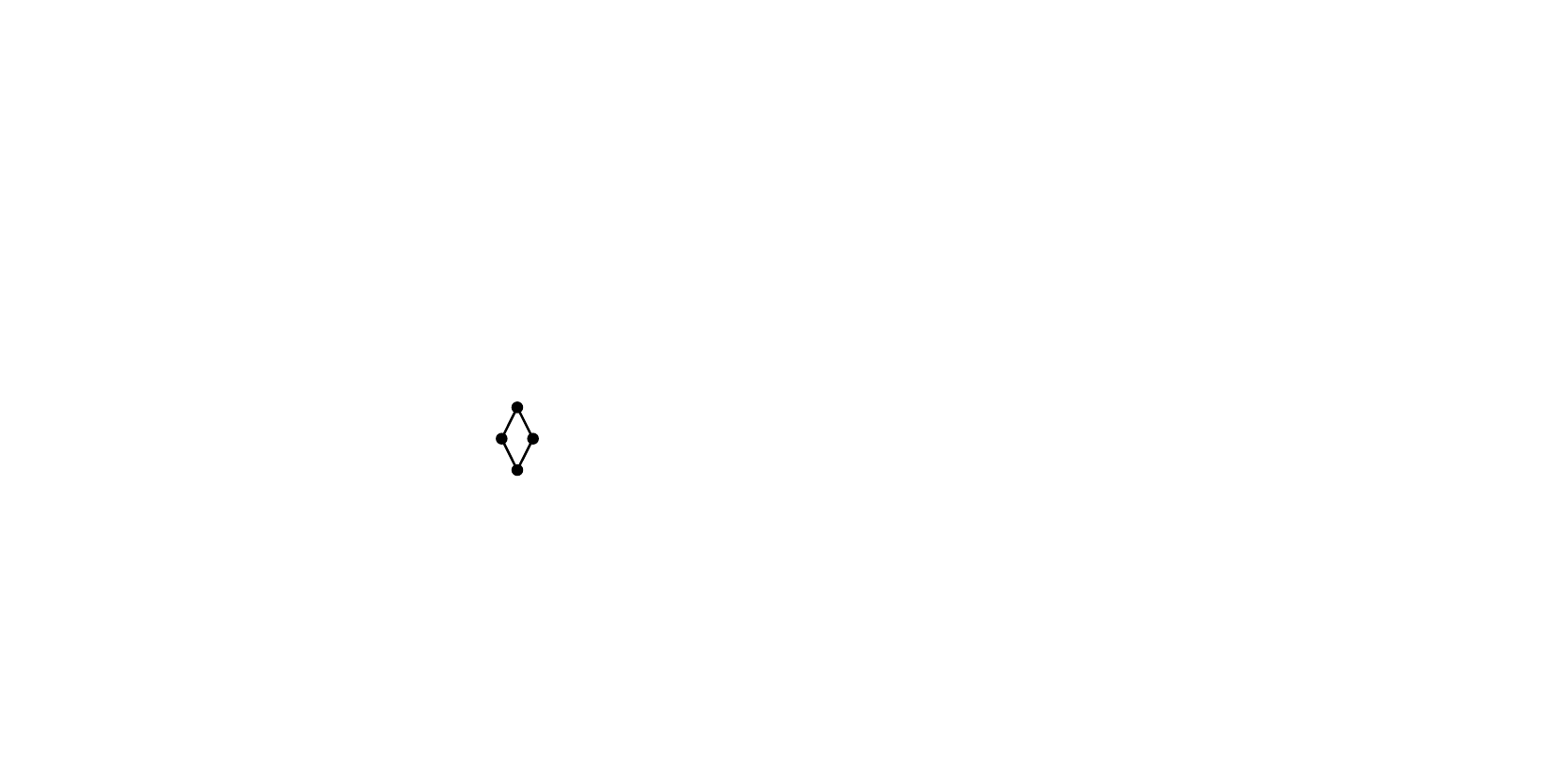}\end{tabular} & 
    $Q_2$&$=K_{1,2,1}$ & $n+\frac{c(n)n}{\log(n)}$, \ $\frac{1}{15}\le c(n)\le 2+o(1)$ & 
    \begin{tabular}[c]{@{}l@{}}LB: Thm.~\ref{thm-MAIN},\\UB: \cite{GMT}\end{tabular}\\ \hline \rule{0pt}{\rowhgtb}
    
    
    \begin{tabular}[c]{@{}l@{}}\includegraphics[scale=\figscale]{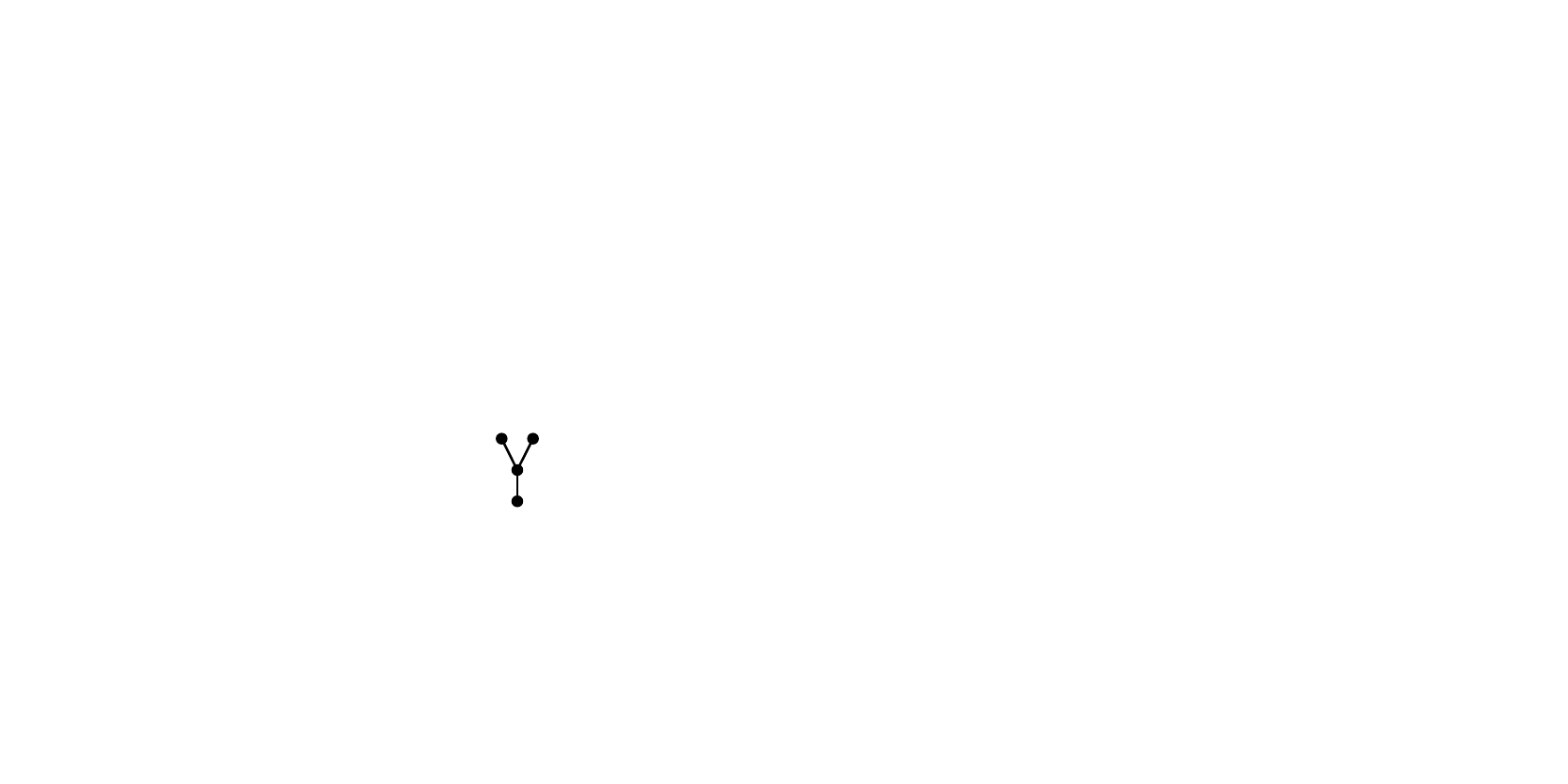}\end{tabular}& 
    $K_{1,1,2}$&$=Y$ & $n+\frac{c(n)n}{\log(n)}$, \ $\frac{1}{15}\le c(n)\le 2+o(1)$ &
     \begin{tabular}[c]{@{}l@{}}LB: Thm.~\ref{thm-MAIN},\\UB: Thm.~\ref{thm:QnS}\end{tabular}\\ \hline \rule{0pt}{\rowhgtb}
    
    \begin{tabular}[c]{@{}l@{}}\includegraphics[scale=\figscale]{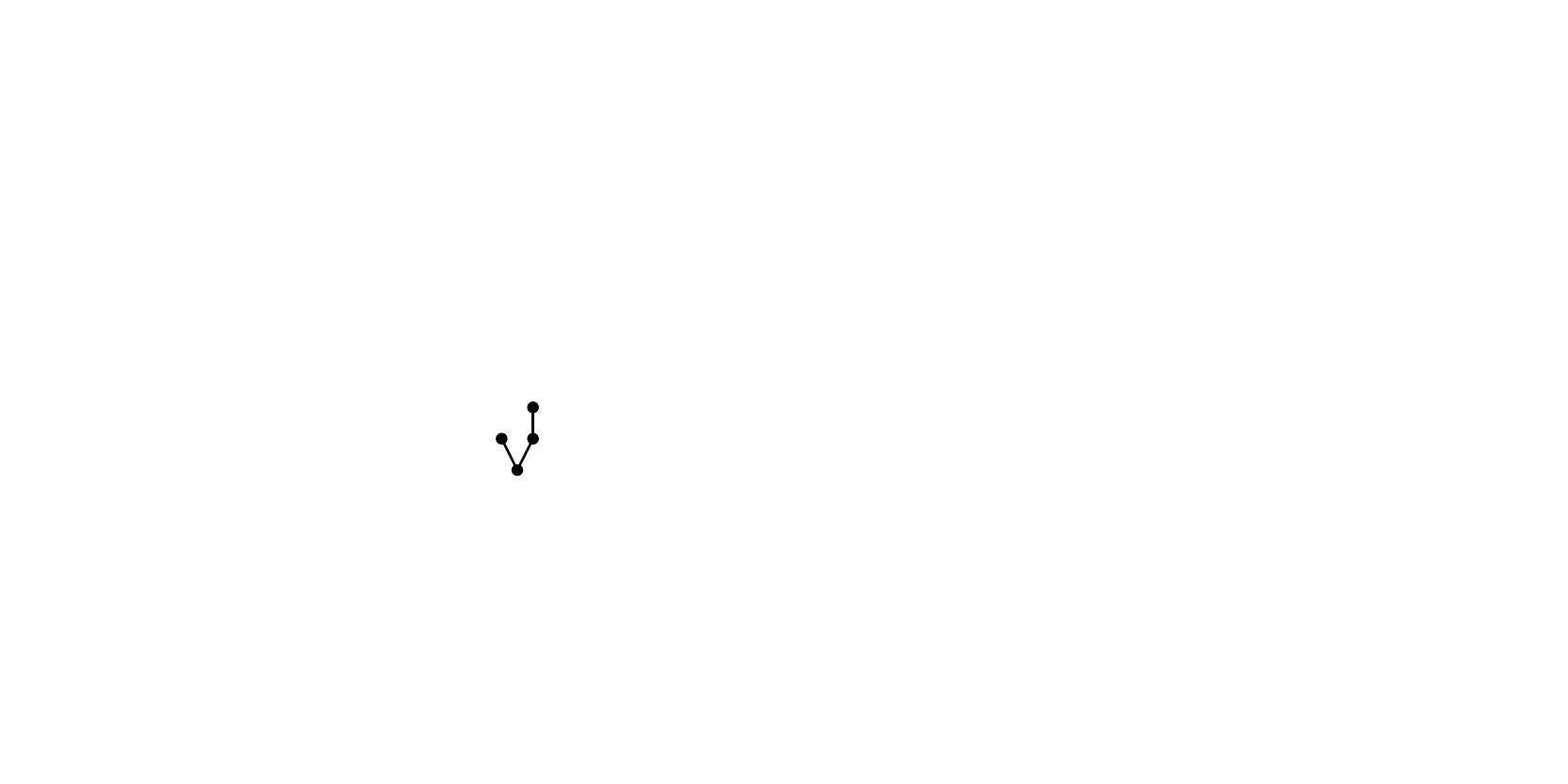}\end{tabular}& 
    $\pJ$& & $n+\frac{c(n)n}{\log(n)}$, \ $\frac{1}{15}\le c(n)\le 2+o(1)$ &
     \begin{tabular}[c]{@{}l@{}}LB: Thm.~\ref{thm-MAIN},\\UB: Cor.~\ref{cor:J}\end{tabular}\\ \hline \rule{0pt}{\rowhgtb}

    \includegraphics[scale=\figscale]{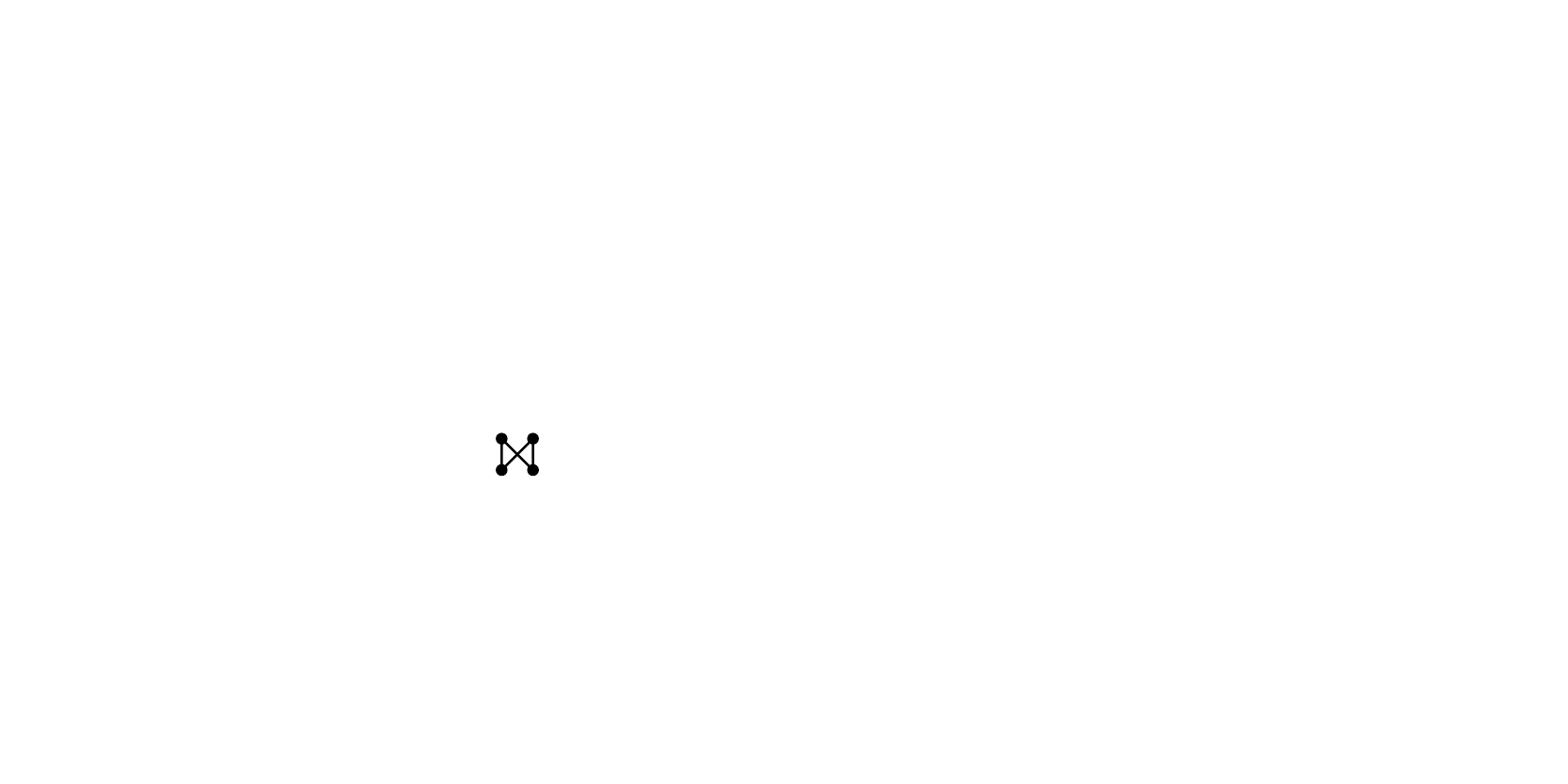} & 
    $K_{2,2}$&$=\pB$ & $n+\frac{c(n)n}{\log(n)}$, \ $\frac{1}{15}\le c(n)\le 4+o(1)$ &
     \begin{tabular}[c]{@{}l@{}}LB: Thm.~\ref{thm-MAIN},\\UB: Thm.~\ref{thm:QnK}\end{tabular}\\ \hline 
  \end{tabular}
  \caption{Off-diagonal poset Ramsey bounds for small $P$ and reference to the proofs of lower bound (LB) and upper bound (UB).}
\label{table:small}
\end{center} 
\end{table}

The results of this thesis are based on papers by the author, partly in joint work with Maria Axenovich, see listed below in chronological order. 
The presented material does not correspond exactly to the published manuscripts. Major changes are outlined in the respective chapters.
\vspace*{-1em}
\begin{itemize}
\item Christian Winter. \textbf{Poset Ramsey Number $R(P,Q_n)$. I. Complete Multipartite Posets}. \underline{Order}, 2023. 
Available from: \url{https://doi.org/10.1007/s11083-023-09636-8}.
\item Maria Axenovich and Christian Winter. \textbf{Poset Ramsey numbers: large Boolean lattice versus a fixed poset}. \underline{Combinatorics, Probability and Computing}, \textbf{32}(4): 638--653, 2023. 
Available from: \url{https://doi.org/10.1017/S0963548323000032}.
\item Maria Axenovich and Christian Winter. \textbf{Poset Ramsey Number $R(P,Q_n)$. II. N-Shaped Poset}. \underline{Order}, 2024. 
Available from: \url{https://doi.org/10.1007/s11083-024-09663-z}.
\item Christian Winter. \textbf{Poset Ramsey Number $R(P,Q_n)$. III. Chain compositions and antichains.} \underline{Discrete Mathematics}, \textbf{347}(7):114031, 2024. 
Available from: \url{https://doi.org/10.1016/j.disc.2024.114031}.
\item Christian Winter. \textbf{Erd\H{o}s-Hajnal problems for posets.} \underline{ArXiv preprint}, 2023. 
Available from: \url{https://arxiv.org/abs/2310.02621}.
\item Maria Axenovich and Christian Winter. \textbf{Diagonal poset Ramsey numbers}. \underline{ArXiv preprint}, 2024. 
Available from: \url{https://arxiv.org/abs/2402.13423}.
\end{itemize}
\bigskip

\section{Related research}
\subsection*{Other Ramsey-type numbers for posets}

The focus of this dissertation is placed on the poset Ramsey number $R(P,Q)$. In order to motivate this definition and emphasize connections to existing research,
we give an overview of various Ramsey-type questions in the setting of posets. 
\vspace*{-1em}
\begin{itemize}
\item In this thesis, we restrict our attention to colorings of the host Boolean lattice with two colors, blue and red. As a generalization of this setting, one can define a \textit{multicolor poset Ramsey number}, considering colorings using $r$ distinct colors.

For any poset $P$ and any $r\in\N$, let
\begin{multline*}
R_r (P)=\min\{N\in\N \colon \text{ every coloring of $Q_N$ with $r$ colors contains}\\ 
\text{a monochromatic copy of $P$}\}.
\end{multline*}
Note that $R_1(P)=\dim_2(P)\quad \text{ and }\quad R_2(P)=R(P,P)$.
If we choose a large poset~$P$ and a fixed number of colors $r\ge 3$, determining $R_r(P)$ is even harder than determining $R(P,P)$, which is already hardly understood.

As another direction of research, one can consider $R_r(P)$ for a fixed poset $P$ and a large number of colors $r$.
In this setting, Axenovich and Walzer \cite{AW} determined that $R_r(P)$ grows linearly in terms of $r$ for any poset $P$ which is not an antichain. 
The best general lower bound is due to Walzer, see Lemma 36 of \cite{Walzer}.
If $P$ is an antichain, it is an easy consequence of Sperner's theorem \cite{Sperner} that the multicolor poset Ramsey number $R_r(A_t)$ for fixed $t$ grows logarithmically in terms of $r$. Further results in the multicolor poset Ramsey setting are given by Chen, Cheng, Li, and Liu \cite{CCLL}.

\item Mirroring a similar notion in graphs, Ramsey problems can be also studied for \textit{rainbow copies} of posets. A poset is colored \textit{rainbow} if every vertex has a distinct color.

Chen, Cheng, Li, and Liu \cite{CCLL} introduced the \textit{rainbow poset Ramsey number} for posets $P$ and $Q$, that is
\begin{multline*}
RR(P,Q)=\min\{N\in\N \colon \text{ every coloring of $Q_N$ with arbitrarily many colors}\\ 
\text{ contains either a monochromatic copy of $P$ or a rainbow copy of $Q$}\}.
\end{multline*}
They gave bounds when $P$ and $Q$ are chains, antichains, or Boolean lattices.
In particular, Chen, Cheng, Li, and Liu \cite{CCLL} showed that $$m 2^n\le RR(Q_m,Q_n)\le m^7 2^{4(n+m)},$$ so the asymptotic behavior of $RR(Q_n,Q_n)$ is exponential.
Further results on rainbow poset Ramsey problems are given by Chang, Gerbner, Li, Methuku, Nagy, Patkós, and Vizer \cite{CGLMNPV}.

\item We mentioned in Section \ref{sec:history} that Kierstead and Trotter \cite{KT} considered Ramsey-type problems on posets with respect to several poset parameters.

We define the \textit{size poset Ramsey number} of a poset $P$ as
\begin{multline*}
R^{\text{size}}(P)=\min\{N\in\N \colon \text{there exists a poset $F$ of size $|F|=N$ such that }\\ 
\text{every blue/red coloring of $F$ contains a monochromatic copy of $P$}\}.
\end{multline*}
Similarly, we define $R^{\text{height}}(P)$ and $R^{\text{width}}(P)$.
Kierstead and Trotter \cite{KT} showed that
$$2|P|-1 \le R^{\text{size}}(P) \le |P|^2 - |P| +1.$$
In the same paper, it was shown that there exists a class of posets with a quadratic size poset Ramsey number, more precisely,
$$R^{\text{size}}(C_n \opl A_{n-1})=\Theta(n^2).$$
For $R^{\text{height}}(P)$ and $R^{\text{width}}(P)$, only rough bounds are known, again due to Kierstead and Trotter \cite{KT}.

The three variants have in common that in their definition the structure of the extremal poset $F$ depends on $P$.
However, if we consider 
\begin{multline*}
R^{\text{$2$-dim}}(P) = \min\{N\in\N \colon \text{there exists a poset $F$ with $\dim_2(F)=N$ such that }\\ 
\text{every blue/red coloring of $F$ contains a monochromatic copy of $P$}\},
\end{multline*}
then every poset $F$ with $\dim_2(F)=N$ is a subposet of an $N$-dimensional Boolean lattice, so we can suppose without loss of generality that $F=Q_N$.
In particular, $R^{\text{$2$-dim}}(P)$ is equal to the diagonal poset Ramsey number $R(P,P)$.

\item Another variant of Ramsey theory for posets is to choose the host poset randomly. Let $n\in \N$ and $p$ be a real-valued constant between $0$ and $1$.
Similarly to the well-known Erd\H{o}s-R\'enyi random graph, R\'enyi \cite{Renyi} introduced $\cP(n,p)$ as the induced subposet of $Q_n$ obtained by including each vertex of $Q_n$ independently with probability $p$.

We say that an event $E(n)$ holds with \textit{high probability}\index{high probability}, abbreviated by \textit{w.h.p.}, if the probability of $E(n)$ tends to $1$ as $n\to \infty$.
Falgas-Ravry, Markström, Treglown, and Zhao \cite{FMTZ} asked for which values of $p$, there is a high probability that any blue/red coloring of the random poset $\cP(n,p)$ contains a blue copy of a poset $P$ or a red copy of a poset $Q$. 

\end{itemize}

\subsection*{Ramsey theory on related structures}

In Section \ref{sec:history}, we described a variety of discrete structures for which Ramsey theory was studied in the literature. 
Here, we briefly highlight some structures which are closely related to our poset setting.
\vspace*{-1em}
\begin{itemize}
\item Dragani\'{c} and Ma\v{s}ulovi\'{c} \cite{DM} discussed a non-quantitative Ramsey result for so-called multiposets. A \textit{multiposet} is a set equipped with a family of partial orders $\le_1,\dots,\le_t$ such that each $\le_i$, $i\in[t]$, ``refines'' the partial orders $\le_{j}$, $j<i$.

\item If we consider a poset as the vertex set of a graph and draw an oriented edge for each pair of comparable vertices, we obtain an \textit{acyclic oriented graph}. 
Ramsey numbers for acyclic oriented graph were considered, for example, by Fox, He, and Wigderson \cite{FHW}.

\item The Hasse diagram of a poset can be interpreted as a graph. Any such graph is called a \textit{comparability graph}.
Kor\'andi and Tomon \cite{KT20} studied Ramsey properties of unions of comparability graphs.

\item A \textit{Boolean algebra} is a set structure which has additional restrictions in comparison to the Boolean lattice. 
Ramsey-type questions for Boolean algebras were discussed by Gunderson, Rödl, and Sidorenko \cite{GRS99}. 

\item Similarly, an \textit{affine vector space} over the field on $2$ elements is also a restricted version of a Boolean lattice. 
In recent years, Ramsey theory for affine spaces has undergone a renaissance, see e.g., Nelson and Nomoto \cite{NN}, Frederickson and Yepremyan \cite{FY}, and Hunter and Pohoata \cite{HP}.
\end{itemize}

\subsection*{Related extremal questions on posets}

Outside the scope of Ramsey theory, several other extremal properties of posets and their induced subposets have been investigated in recent years and mirror similar concepts in graphs. 
These include first and foremost \textit{Tur\'an-type} questions, which ask for the \textit{densest} discrete structure in which a specific substructure is forbidden.
Carroll and Katona \cite{CK} introduced $La^{\#}(n, P)$ as the largest size of a subposet of $Q_n$ that does not contain a copy of the poset $P$.
In this language, the classic Sperner's theorem \cite{Sperner} shows that $La^{\#}(n, C_2)= \binom{n}{\lfloor n/2\rfloor}$.
Most notable is a result by Methuku and P\'alv\"olgyi \cite{MP}, who showed that $$La^{\#}(n,P) \leq f(P) \binom{n}{\lfloor n/2 \rfloor},$$ i.e., an asymptotically tight bound on the maximum size of a subposet of a Boolean lattice that does not have a copy of a fixed poset $P$, for general $P$. 
Their statement has been refined for several special cases, see e.g., Erd\H{o}s \cite{Erdos45}, Boehnlein and Jiang \cite{BJ}, Lu and Milans \cite{LM}, and M\'eroueh \cite{Meroueh}.
Further Tur\'an-type results are, for example, given by Methuku and Tompkins \cite{MT}, and Tomon \cite{Tomon}. 

We remark that the closely related function $La(n,P)$, which is the weak subposet analogue of $La^{\#}(n, P)$, was also studied extensively.
Notably, Bukh \cite{Bukh} determined the asymptotic behavior of $La(n,T)$ for posets $T$ whose Hasse diagram is a tree.
The best known general upper bound is due to Gr\'osz, Methuku, and Tompkins \cite{GMT17}.
To name a few other results, see listed chronologically Katona and Tarj\'an \cite{KT83}, De Bonis and Katona \cite{DK}, Griggs and Lu \cite{GL}, Patk\'os \cite{Patkos}, Gr\'osz, Methuku, and Tompkins \cite{GMT18} and Guo, Chang, Chen, and Li \cite{GCCL}.

In addition, so-called \textit{saturation-type} extremal problems have been addressed for induced and weak subposets.
In the saturation setting, one asks for the \textit{sparsest} discrete structure which does not contain a specific substructure, but any increase of its density creates the forbidden substructure.
A survey of poset saturation questions is given by Keszegh, Lemons, Martin, P\'alv\"olgyi, and Patk\'os \cite{KLMPP}.
Very recent advances in this area are for example the papers of Freschi, Piga, Sharifzadeh, and Treglown \cite{FPST} and \DJ ankovi\'c and Ivan \cite{DI}.
\\

\section{Preliminary tools and two key lemmas}\label{sec:preliminaries}

In this section, we collect important tools and the essential terminology necessary for the proofs presented in this thesis. 
In particular, we show two key lemmas: the \textit{Embedding Lemma}, see Lemma \ref{lem:embed}, and the \textit{Chain Lemma}, see Lemma \ref{lem:chain}.

\subsection{Notational conventions}

For any positive integer $n\in\N$, we use $[n]$ to denote the set $\{1,\dots,n\}$. 
We omit floors and ceilings where appropriate.
In this work, `$\log$' always refers to the logarithm with base $2$. The $O$-notation is usually used depending on the parameter $n$, unless explicitly stated otherwise.
\\

\subsection{Boolean lattice $\QQ(\bX)$ and factorial tree $\cO(\bY)$}

Let $X$ be a non-empty set.
We denote by $\QQ(X)$ the \textit{Boolean lattice}\index{Boolean lattice}\index{$\QQ(\bX)$} of \textit{dimension} $|X|$ on \textit{ground set} $X$,
i.e., the poset consisting of all subsets of $X$, ordered by the inclusion relation $\subseteq$, as illustrated in Figure \ref{fig:prelim:3} (a). 
Note that the Boolean lattice $Q_n$ with an unspecified $n$-element ground set is isomorphic to $\QQ(X)$ for every $n$-element set $X$.
Throughout this work, we use capital letters to denote vertices in posets, in particular vertices in a Boolean lattice, i.e., subsets of the ground set. 
To avoid ambiguity when we consider a set of ground elements which is \textit{not} interpreted as a vertex, we often use bold capital letters for significant ground sets. 
For example, we consider the Boolean lattice $\QQ(\bX)$ on ground set $\bX$, and then study the vertices $X\in\QQ(\bX)$.
In a slight abuse of notation, any subset $\cF\subseteq\QQ(\bX)$ implicitly inherits the partial order of $\QQ(\bX)$, i.e., we interpret such an $\cF$ as a subposet of $\QQ(\bX)$.

The Boolean lattice is the natural poset consisting of all (unordered) subsets of some ground set. 
We define a related poset based on \textit{ordered subsets}, referred to as the \textit{factorial tree}.
An \textit{ordered subset}\index{ordered set} $S$ of a set $\bY$ is a sequence $S=(y_1,\dots,y_m)$ of distinct elements $y_i\in \bY$, $i\in[m]$. 
It also could be thought of as a string with non-repeated letters over the alphabet $\bY$. 
We denote the \textit{empty ordered set} by $\varnothing_o=()$. The number of elements in $S$ is denoted by $|S|$.
An ordered set $T$ is a \textit{prefix}\index{prefix} of an ordered set $S$ 
if $|T|\le |S|$ and each of the first $|T|$ members of $S$ coincides with the respective member of $T$.

If $T$ is a prefix of $S$, we write $T\le_\cO S$. Note that $\varnothing_o$ is a prefix of every ordered set.
Observe that the prefix relation $\le_\cO$ is transitive, reflexive, and antisymmetric, i.e., a partial order. 
Let $\cO(\bY)$\index{$\cO(\bY)$} be the poset of all ordered subsets of $\bY$, equipped with $\le_\cO$, see Figure \ref{fig:prelim:3} (b).
We refer to this poset as the \textit{factorial tree}\index{factorial tree} on \textit{ground set} $\bY$.
If we identify any ordered subsets in $\cO(\bY)$ with the same underlying unordered set, then we obtain the Boolean lattice $\QQ(\bY)$.
Thus, one can think of a factorial tree as an ``unfolded'' Boolean lattice.

\begin{figure}[h]
\centering
\includegraphics[scale=0.62]{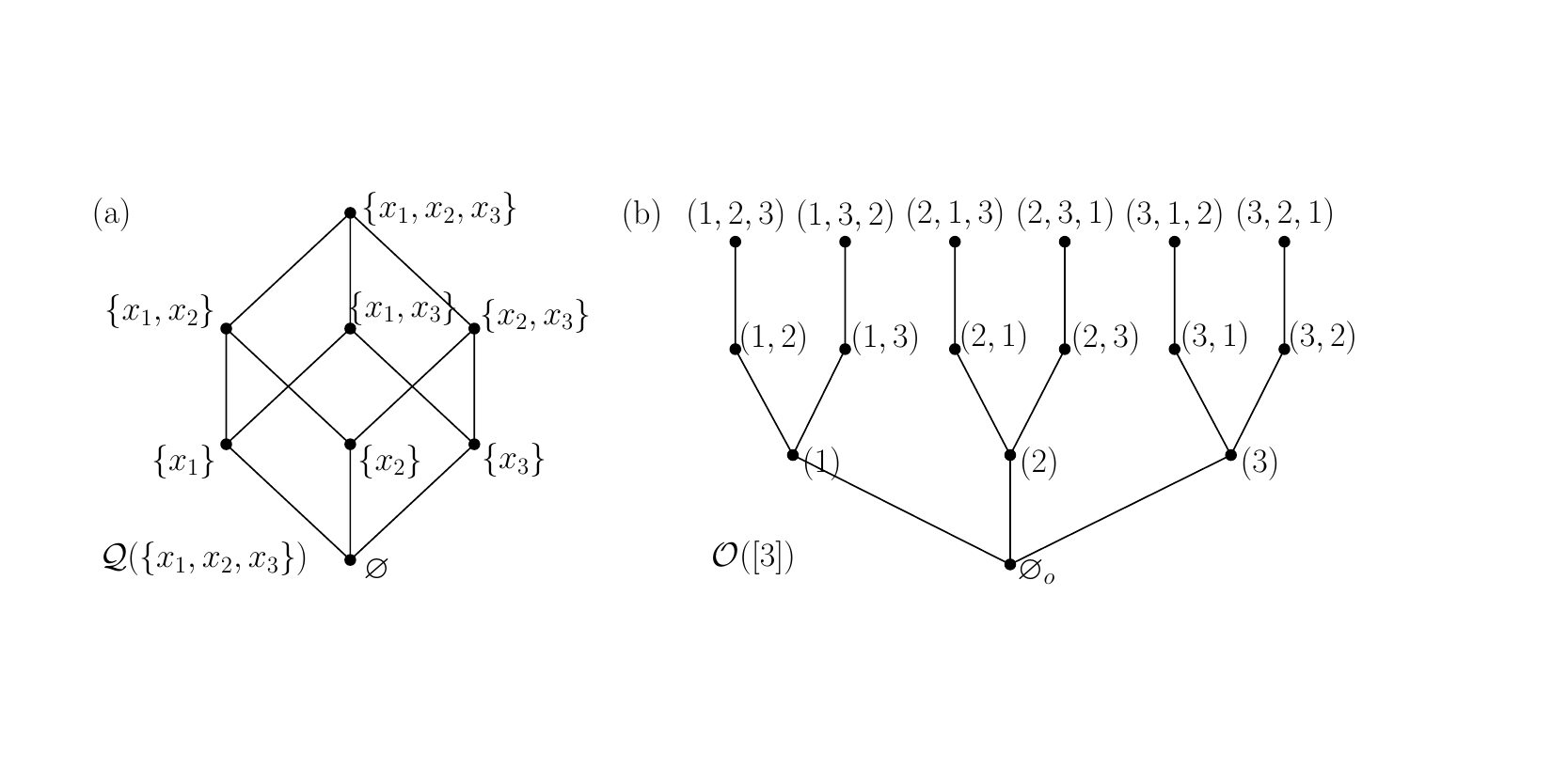}
\caption{Hasse diagrams of $\QQ(\{x_1,x_2,x_3\})$ and $\cO([3])$.}
\label{fig:prelim:3}
\end{figure}

\subsection{Stirling's formula, Sperner number, and parallel compositions}\label{sec:alpha}

The asymptotic behavior of factorials is approximated by the well-known Stirling's formula,
\begin{equation}\label{eq:stirlings}
N!=\big(1+o(1)\big)\sqrt{2\pi N}\left(\frac{N}{e}\right)^N.
\end{equation}
Using Stirling's formula, it is straightforward to find an approximation of the binomial coefficient $\binom{N}{qN}$.

\begin{proposition}\label{prop:binom}
If $N$ is a positive integer and $q$ is a real constant with $0<q<1$, then
$$\log\binom{N}{qN}=-\big(1+o(1)\big)N\big(q\log q + (1-q)\log(1-q)\big)=\big(1+o(1)\big)H(q)N,$$
where $H(q)=-\big(q\log q + (1-q)\log(1-q)\big)$.\index{$H(q)$}
\end{proposition}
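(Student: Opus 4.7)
The argument is a direct application of Stirling's formula \eqref{eq:stirlings} to the identity
$$\binom{N}{qN} = \frac{N!}{(qN)!\,\big((1-q)N\big)!}.$$
Since $q$ is a real constant with $0<q<1$, the quantity $qN$ need not be an integer, but (following the convention noted at the start of Section \ref{sec:preliminaries} that floors and ceilings are omitted where appropriate) one may interpret $qN$ as $\lfloor qN\rfloor$; replacing $qN$ by $\lfloor qN\rfloor$ or $\lceil qN\rceil$ affects the exponential part of Stirling's estimate only by a multiplicative $1+o(1)$ factor, so the asymptotic statement is unchanged.

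First I would apply \eqref{eq:stirlings} to each of the three factorials. The factors of $e$ combine as $e^N / (e^{qN} e^{(1-q)N}) = 1$, and the powers of $N$ similarly combine, leaving
$$\binom{N}{qN} \;=\; \big(1+o(1)\big)\cdot \frac{1}{\sqrt{2\pi q(1-q)\,N}} \cdot \frac{1}{q^{qN}\,(1-q)^{(1-q)N}}.$$
Taking the base-$2$ logarithm then yields
$$\log\binom{N}{qN} \;=\; -qN\log q - (1-q)N\log(1-q) \;-\; \tfrac{1}{2}\log\!\big(2\pi q(1-q)N\big) \;+\; o(1).$$
The first two terms are exactly $H(q)N$, and since $H(q)>0$ for $q\in(0,1)$, this main term is of order $\Theta(N)$. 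The remaining terms are $O(\log N) = o(N)$, hence can be absorbed into a multiplicative $1+o(1)$ factor in front of $H(q)N$, giving the claimed equality.

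There is no real obstacle: the only point requiring a little care is checking that the polynomial-in-$N$ prefactor from Stirling (the $\sqrt{N}$ term in the denominator, and the cumulative $1+o(1)$ error from the three applications of \eqref{eq:stirlings}) produces only an $O(\log N)$ additive correction after taking logs, which is negligible compared to the linear term $H(q)N$. The argument is otherwise a mechanical calculation.
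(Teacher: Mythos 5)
Your proposal is correct and follows exactly the approach the paper indicates: the paper states Proposition~\ref{prop:binom} immediately after introducing Stirling's formula \eqref{eq:stirlings} and remarks only that the estimate is ``straightforward'' from it, leaving the routine calculation to the reader, which is precisely the calculation you carried out. Your handling of the non-integer $qN$ via floors and of the $O(\log N)$ polynomial correction being absorbed into the $1+o(1)$ factor fills in the intended details correctly.
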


\noindent In the literature, the function $H(q)=-\big(q\log q + (1-q)\log(1-q)\big)$ is referred to as the \textit{binary entropy function}\index{binary entropy function}.
\\

Let $n\in\N$. We denote by $\alpha(n)$ the smallest dimension $N$ such that $Q_N$ contains an antichain of size $n$. 
We call $\alpha(n)$\index{$\alpha(n)$} the \textit{Sperner number}\index{Sperner number} of $n$.
A classic result due to Sperner \cite{Sperner} shows that $\alpha(n)$ is the minimal integer $N$ such that $\binom{N}{\lfloor N/2\rfloor}\ge n$.
By Proposition \ref{prop:binom}, we see that $\alpha(n)=\big(1+o(1)\big)\log n$.
Habib, Nourine, Raynaud and Thierry \cite{HNRT} gave an almost exact bound on $\alpha(n)$.

\begin{theorem}[Habib et al.\ \cite{HNRT}] \label{thm:alpha}
For any $n\in\N$,
$$\big\lfloor\log n+\tfrac{\log\log n}{2}\big\rfloor+1   \le \alpha(n)\le    \big\lfloor\log n+\tfrac{\log\log n}{2}\big\rfloor+2.$$
\end{theorem}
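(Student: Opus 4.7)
The plan is to reduce the theorem to a sharp estimate of the central binomial coefficient. Since Sperner's theorem (stated just before the theorem) says the largest antichain in $Q_N$ has size $\binom{N}{\lfloor N/2\rfloor}$, one has
\begin{equation*}
\alpha(n)=\min\bigl\{N\in\N:\tbinom{N}{\lfloor N/2\rfloor}\ge n\bigr\}.
\end{equation*}
Setting $M=\lfloor\log n+\tfrac{1}{2}\log\log n\rfloor$, the two inequalities in the theorem are therefore equivalent to the single pair of numerical facts $\binom{M}{\lfloor M/2\rfloor}<n$ and $\binom{M+2}{\lfloor(M+2)/2\rfloor}\ge n$.

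To prove these I would use two explicit central-binomial estimates, both essentially corollaries of Stirling's formula via Proposition~\ref{prop:binom}:
\begin{equation*}
\binom{N}{\lfloor N/2\rfloor}\ge\frac{2^N}{\sqrt{2N}}\quad\text{and}\quad\binom{N}{\lfloor N/2\rfloor}\le\bigl(1+o(1)\bigr)\sqrt{\tfrac{2}{\pi}}\,\frac{2^N}{\sqrt{N}}.
\end{equation*}

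For the lower bound $\alpha(n)\ge M+1$, substitute $N=M$. The defining inequality $M\le\log n+\tfrac{1}{2}\log\log n$ gives $2^M\le n\sqrt{\log n}$, so the upper binomial estimate together with $M=(1+o(1))\log n$ yields
\begin{equation*}
\binom{M}{\lfloor M/2\rfloor}\le\bigl(1+o(1)\bigr)\sqrt{\tfrac{2}{\pi}}\,n\,\sqrt{\tfrac{\log n}{M}}=\bigl(1+o(1)\bigr)\sqrt{\tfrac{2}{\pi}}\,n,
\end{equation*}
which is strictly less than $n$ for large $n$ because $\sqrt{2/\pi}<1$. For the upper bound $\alpha(n)\le M+2$, substitute $N=M+2$; now $2^N\ge 2n\sqrt{\log n}$, and the lower binomial estimate combined with $N=(1+o(1))\log n$ gives
\begin{equation*}
\binom{N}{\lfloor N/2\rfloor}\ge\frac{2n\sqrt{\log n}}{\sqrt{2N}}=\bigl(\sqrt{2}+o(1)\bigr)\,n\ge n
\end{equation*}
for $n$ large enough. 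Note that the constants $\sqrt{2/\pi}$ and $\sqrt{2}$ that emerge on the two sides are exactly what makes the additive gap of the theorem equal to $1$: a worse constant on either side would force one more unit of slack.

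The main obstacle will be the small-$n$ regime, where the $o(1)$ terms are not yet negligible and the rounding in the definition of $M$ can absorb or release a full unit. I would replace the asymptotic estimates above by fully explicit versions valid for all $N\ge N_0$, for instance by invoking Robbins' inequalities $\sqrt{2\pi}\,N^{N+1/2}e^{-N}e^{1/(12N+1)}\le N!\le\sqrt{2\pi}\,N^{N+1/2}e^{-N}e^{1/(12N)}$ in place of Proposition~\ref{prop:binom}, and then check the finitely many remaining cases $n$ with $\alpha(n)\le N_0$ by listing the values of $\binom{N}{\lfloor N/2\rfloor}$ for $N\le N_0$ against the relevant values of $M$.
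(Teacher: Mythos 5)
Note first that the paper does not prove this theorem: it is stated with the citation to Habib, Nourine, Raynaud, and Thierry, and is used as a black box. So there is no internal proof to compare your argument against, and I will only assess your proposal on its own terms.

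Your reduction is exactly right: since $\alpha(n)=\min\{N:\binom{N}{\lfloor N/2\rfloor}\ge n\}$ and the central binomial coefficient is nondecreasing in $N$, the two bounds in the theorem are indeed equivalent to $\binom{M}{\lfloor M/2\rfloor}<n$ and $\binom{M+2}{\lfloor (M+2)/2\rfloor}\ge n$ with $M=\lfloor \log n+\tfrac{1}{2}\log\log n\rfloor$, and the central-binomial route is the standard way to prove this sharp bound. The observation that the specific constant $\sqrt{2/\pi}\in(1/\sqrt{2},\,1)$ is what produces an additive gap of exactly $1$ is the right structural explanation of why the theorem holds with these constants and no weaker ones. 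Two cautions, both at the level you already flag. First, your stated lower estimate $\binom{N}{\lfloor N/2\rfloor}\ge 2^{N}/\sqrt{2N}$ is false for small odd $N$ (e.g.\ $N=3$: $3<8/\sqrt{6}\approx 3.27$); you need the slightly weaker $\binom{N}{\lfloor N/2\rfloor}\ge 2^{N}/\sqrt{2(N+1)}$, which holds for all $N\ge 1$ and does not change your asymptotic constant $\sqrt{2}$. Second, and more importantly, the plan as written proves the two inequalities only for $n$ large, and the theorem as quoted claims all $n\in\N$ (modulo the edge case $n=1$, where $\log\log n$ is undefined, so implicitly $n\ge 2$); the Robbins-type explicit Stirling bounds you mention will make the asymptotic estimates effective, but the finite verification for small $n$ is not automatic because the floor in the definition of $M$ interacts delicately with the constants $\sqrt{2/\pi}$ and $1/\sqrt{2}$, and this is precisely the range where the inequality is tight. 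In other words, the outline is sound and corresponds to what one would actually do, but the proof is not complete until that finite range is either tabulated or a non-asymptotic version of the two binomial estimates is pinned down with all constants explicit and shown to suffice from the first $n$ onward.
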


Recall that the \textit{parallel composition} of posets $P_1,\dots,P_\ell$, $\ell\ge 2$, is denoted by $P_1\opl \dots \opl P_\ell$ and refers to the poset
obtained by taking a copy of each $P_i$, $i\in[\ell]$, such that the copies are pairwise disjoint and element-wise incomparable.
In his master thesis, Walzer observed a general upper bound on the poset Ramsey number of parallel compositions, see Proposition 12 of \cite{Walzer}.

\begin{theorem}[Walzer \cite{Walzer}] \label{lem:parallel}
Let $\ell\ge 2$, and let $P_1,P_2,\dots,P_\ell$, and $Q$ be arbitrary posets.
Let $P=P_1\opl \dots \opl P_\ell$ be the parallel composition of $P_1,\dots,P_\ell$. Then
$$R(P,Q)\le  \max_{i\in[\ell]} \big\{R(P_i,Q) \big\} +\alpha(\ell) \le \max_{i\in[\ell]} \big\{R(P_i,Q) \big\} +\log (\ell)+\tfrac12 \log\log(\ell)+2.$$
\end{theorem}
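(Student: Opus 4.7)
The plan is to split the ground set of the host Boolean lattice into two disjoint parts, find an antichain of the right size in one part, and apply the Ramsey bound for each $P_i$ in an affine sub-lattice anchored at each antichain element. Concretely, set $N=\max_{i\in[\ell]}R(P_i,Q)+\alpha(\ell)$, and realize $Q_N$ as $\QQ(\bX\cup\bY)$ where $\bX,\bY$ are disjoint with $|\bX|=\max_{i}R(P_i,Q)$ and $|\bY|=\alpha(\ell)$. By the definition of the Sperner number, $\QQ(\bY)$ contains an antichain $\{Y_1,\dots,Y_\ell\}$.

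For each $i\in[\ell]$, consider the induced sub-lattice
\[
\cL_i=\{X\cup Y_i \colon X\subseteq \bX\}\subseteq \QQ(\bX\cup \bY).
\]
Since $\bX\cap\bY=\varnothing$, the map $X\mapsto X\cup Y_i$ is an order-isomorphism from $\QQ(\bX)$ to $\cL_i$, so $\cL_i$ is a copy of the Boolean lattice of dimension $|\bX|\ge R(P_i,Q)$. Restrict any fixed blue/red coloring of $\QQ(\bX\cup\bY)$ to $\cL_i$: by the definition of $R(P_i,Q)$, $\cL_i$ contains either a red induced copy of $Q$ (in which case we are done for the whole lattice as well) or a blue induced copy $\cF_i$ of $P_i$. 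Assuming no red $Q$ appears in any $\cL_i$, we obtain blue induced copies $\cF_1,\dots,\cF_\ell$ of $P_1,\dots,P_\ell$, with $\cF_i\subseteq \cL_i$.

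The key step is to verify that $\cF_1\cup\dots\cup\cF_\ell$ is an induced copy of the parallel composition $P=P_1\opl\dots\opl P_\ell$. Because each $\cF_i\subseteq\cL_i$ is induced, it suffices to show that any two vertices from different $\cL_i,\cL_j$ are incomparable in $\QQ(\bX\cup\bY)$. Take $X_1\cup Y_i\in\cL_i$ and $X_2\cup Y_j\in\cL_j$ with $i\neq j$, and suppose for contradiction that $X_1\cup Y_i\subseteq X_2\cup Y_j$. Intersecting both sides with $\bY$ yields $Y_i\subseteq Y_j$, contradicting the antichain property of $\{Y_1,\dots,Y_\ell\}$; the symmetric assumption is analogous. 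Hence $\cF_1,\dots,\cF_\ell$ are pairwise element-wise incomparable, and their union is a blue induced copy of $P$, proving the first inequality.

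The second inequality is immediate from Theorem \ref{thm:alpha}, which gives
\[
\alpha(\ell)\le \lfloor\log \ell+\tfrac{\log\log \ell}{2}\rfloor+2\le \log\ell+\tfrac{1}{2}\log\log\ell+2.
\]
The only place where anything subtle happens is the incomparability argument between the $\cL_i$'s; this is where the disjointness $\bX\cap \bY=\varnothing$ and the antichain property of the $Y_i$ are both essential, and it is the one step I would double-check most carefully. Everything else is bookkeeping and a direct appeal to the poset Ramsey numbers of the individual components.
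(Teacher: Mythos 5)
Your proof is correct. The paper does not give a proof of the first inequality but cites Walzer's thesis (Proposition 12 of \cite{Walzer}), noting only that the second inequality follows from Theorem~\ref{thm:alpha}, so there is no in-text argument to compare against. The route you take --- choosing an antichain $\{Y_1,\dots,Y_\ell\}$ of size $\ell$ in $\QQ(\bY)$, anchoring the sublattices $\cL_i=\{X\cup Y_i : X\subseteq \bX\}$ of dimension $|\bX|\ge R(P_i,Q)$ at the $Y_i$, and observing that two vertices from distinct $\cL_i,\cL_j$ must be incomparable because intersecting with $\bY$ would force $Y_i\subseteq Y_j$ or $Y_j\subseteq Y_i$ --- is the natural proof of this bound, and every step checks out: since each $\cL_i$ is an induced sublattice of $\QQ(\bX\cup\bY)$, the blue copies $\cF_i\subseteq\cL_i$ of $P_i$ remain induced, and together with pairwise element-wise incomparability this makes $\cF_1\cup\dots\cup\cF_\ell$ an induced blue copy of $P_1\opl\dots\opl P_\ell$. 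If you wanted to be fully pedantic, the one implicit appeal is that $R(P_i,Q_{n'})\le n'$ for any $n'\ge R(P_i,Q)$ (so that the dichotomy applies inside $\cL_i$ even when $|\bX|$ strictly exceeds $R(P_i,Q)$), but this is immediate by passing to a sublattice of the right dimension and is fine to leave as stated.
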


\noindent Here, the last inequality is a consequence of Theorem \ref{thm:alpha}.

\subsection{Characterization of $\pN$-free, $\pV$-free, and $\pLa$-free posets} \label{sec:free}

We say that a poset $Q$ is \textit{$P$-free}\index{$P$-free}\index{free poset} if it does not contain an induced copy of $P$.

Let $P_1$ and $P_2$ be two disjoint posets. 
The \textit{series composition}\index{series composition}\index{$\olt$} $P_1 \olt P_2$ of $P_1$ \textit{below} $P_2$ is the poset consisting of a copy of $P_1$ and a copy of $P_2$ that are disjoint and such that any vertex in the copy of $P_1$ is smaller than any vertex in the copy of $P_2$.
A poset is \textit{series-parallel}\index{series-parallel poset} if it is either a $1$-element poset, or obtained by series composition or parallel composition of two series-parallel posets. 

Recall that $\pN$ is the N-shaped poset consisting of $4$ vertices. Valdes \cite{Valdes} showed the following characterization.

\begin{theorem}[Valdes \cite{Valdes}]\label{thm:Nfree}
A non-empty poset is $\pN$-free if and only if it is series-parallel.
\end{theorem}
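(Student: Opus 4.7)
The plan is to prove both directions by induction on $|P|$, with the forward direction (series-parallel $\Rightarrow$ $\pN$-free) being routine and the reverse being the substantive part.

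For the forward direction, I would induct on the recursive series-parallel construction. A single vertex is vacuously $\pN$-free. Suppose $P = P_1 \opl P_2$ with $P_1, P_2$ both $\pN$-free; an induced copy of $\pN$ on vertices $W, X, Y, Z$ has the connected comparability pattern $W<Y>X<Z$, and the three comparabilities force all four vertices into the same component $P_i$, contradicting induction. For $P = P_1 \olt P_2$, the three incomparable pairs $\{W,X\}, \{W,Z\}, \{Y,Z\}$ of $\pN$ must each lie entirely in $P_1$ or entirely in $P_2$, since every pair with one element in $P_1$ and one in $P_2$ is comparable; chasing through these constraints forces all four vertices to the same side, and induction closes the case.

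For the reverse direction, I would induct on $|P|$, with the base case $|P|=1$ trivial. For $|P| \geq 2$, I split on whether the comparability graph $G$ of $P$ is connected. If $G$ is disconnected, the bipartition into components yields $P = P|_A \opl P|_B$ with each side $\pN$-free and smaller, so induction applies. If $G$ is connected, I want a nontrivial series decomposition. The crucial observation is that the comparability graph of $\pN$ is exactly the path on four vertices, so $\pN$-freeness of $P$ translates to $G$ being $P_4$-free, i.e., a cograph. By the classical property that a connected cograph on at least two vertices has a disconnected complement, I obtain a nontrivial bipartition $P = A \cup B$ (coming from components of $\overline{G}$) in which every $a \in A$ and every $b \in B$ are comparable in $P$.

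The main obstacle is showing that this bipartition points uniformly, i.e., either $a <_P b$ for all cross-pairs or $a >_P b$ for all cross-pairs, so that $P = P|_A \olt P|_B$ (or its dual) and induction finishes. I would argue by contradiction: assume there exist $a_1,a_2 \in A$ and $b_1,b_2 \in B$ with $a_1 <_P b_1$ and $a_2 >_P b_2$. A short case analysis, splitting on equalities among the four chosen vertices and repeatedly applying transitivity together with the fact that every cross-pair is comparable, produces four distinct elements whose comparability graph is precisely a path on four vertices. Such a configuration is necessarily an induced copy of $\pN$ (the only poset whose comparability graph is that path, up to isomorphism), contradicting $\pN$-freeness and completing the proof.
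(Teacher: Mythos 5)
The paper cites this characterization to Valdes without proof, so there is no in-paper argument to compare against; evaluating your proof on its own merits, the forward direction is correct, and in the reverse direction the reduction to cographs (the comparability graph of $\pN$ is the path $P_4$, so $\pN$-freeness of $P$ is equivalent to $P_4$-freeness of the comparability graph $G$) and the invocation of the fact that a connected cograph on at least two vertices has a disconnected complement are both sound. The gap is in the last step: the claim that a bipartition $A, B$ coming from components of $\overline{G}$ must point uniformly, else an induced $P_4$ appears, is false. Take $P = Q_2$ on vertices $0,1,2,3$ with $0<1$, $0<2$, $1<3$, $2<3$, $0<3$, and $1 \inc 2$. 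This is series-parallel and $\pN$-free, $G$ is $K_4$ minus the edge $\{1,2\}$, and $\overline{G}$ has three components $\{1,2\}$, $\{0\}$, $\{3\}$. The bipartition $A = \{1,2\}$, $B = \{0,3\}$ is a union of complement components with every cross-pair comparable, yet $1 > 0$ and $1 < 3$, so it does not point uniformly; and no case analysis can manufacture an induced $P_4$ in $G$, because $G$ has none. The contradiction you want simply is not there for an arbitrary such bipartition.

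What does hold, and what repairs the argument, is uniformity between any two \emph{single} components $C_i, C_j$ of $\overline{G}$, and it is a pure transitivity fact, not a $P_4$ fact: suppose $a_1, a_2 \in C_i$ and $b_1, b_2 \in C_j$ with $a_1 < b_1$ and $b_2 < a_2$. Since $a_1$ and $b_2$ lie in different complement components they are comparable; if $b_2 < a_1$ then $b_2 < a_1 < b_1$, and walking along a path $b_2 = y_0, y_1, \dots, y_m = b_1$ inside $\overline{G}[C_j]$ (each $y_\ell$ comparable to $a_1$) produces consecutive vertices with $y_\ell < a_1 < y_{\ell+1}$ and $y_\ell \inc y_{\ell+1}$, contradicting transitivity; if $a_1 < b_2$ then $a_1 < b_2 < a_2$ and the symmetric argument inside $\overline{G}[C_i]$ applies. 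Thus the direction is constant on each $C_i \times C_j$, so the components carry a tournament, which is transitive (a directed $3$-cycle of components forces a $3$-cycle of strict inequalities in $P$) and hence a linear order. Taking $A$ to be the minimum component and $B$ the union of the others gives $P = P|_A \olt P|_B$ with both parts proper, nonempty, and $\pN$-free, and your induction closes. So the overall strategy is right; the uniformity argument just has to be localized to a pair of complement components rather than an arbitrary bipartition assembled from them.
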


\medskip

A poset $P$ is an \textit{up-tree}\index{up-tree} if $P$ has a unique minimal vertex and for every vertex $X\in P$, the vertices smaller or equal than $X$ are pairwise comparable, i.e., the subposet $\{Z\in P : ~ Z\le_P X\}$ is a chain.
Similarly, $P$ is a \textit{down-tree}\index{down-tree} if there exists a unique maximal vertex and for every $X\in P$, the subposet $\{Z\in P : ~ Z\ge_P X\}$ is a chain.

Recall that $\pLa$ is the $3$-element $\Lambda$-shaped poset, and $\pV$ is the $3$-element V-shaped poset.
We use up-trees and down-trees to characterize the structure of $\pLa$-free and $\pV$-free posets.

\begin{proposition}\label{lem:uptree}
Let $P$ be a poset.
\vspace*{-1em}
\begin{enumerate}[label=(\roman*)]
\item $P$ contains no copy of $\pLa$ if and only if $P$ is a parallel composition of up-trees.
\item $P$ contains no copy of $\pV$ if and only if $P$ is a parallel composition of down-trees.
\item $P$ is trivial, i.e., $\pLa$-free and $\pV$-free, if and only if $P$ is a parallel composition of chains.
\end{enumerate}
\end{proposition}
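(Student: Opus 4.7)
The plan is to prove (i) directly, then derive (ii) by duality, and finally combine them to obtain (iii). The structural key is that in every $\pLa$-free poset $P$, the downset $D(X)=\{Z\in P:~Z\le_P X\}$ of every vertex $X$ must be a chain: two incomparable elements $Y_1,Y_2\in D(X)$ together with $X$ would form a copy of $\pLa$. Consequently, each $D(X)$ has a unique minimum $m(X)$, and this $m(X)$ is also minimal in all of $P$, since any strictly smaller vertex would likewise lie in $D(X)$, contradicting the minimality of $m(X)$ within $D(X)$.

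For the easy direction of (i), if $P$ is a parallel composition of up-trees $T_1,\dots,T_k$, then any three vertices forming a copy of $\pLa$ must lie in the same $T_j$, since the top vertex is comparable to each of the other two. But inside $T_j$ the downset of the top vertex is a chain, contradicting the incomparability of the two lower vertices. For the nontrivial direction, I would partition $P$ via the root function $m$: for each minimal $M\in P$, set $T_M=\{X\in P:~m(X)=M\}$. A short computation shows each $T_M$ is downward-closed in $P$, which forces any $X\in T_M$ and $Y\in T_{M'}$ with $M\ne M'$ to be incomparable --- if $X\le_P Y$, then $M\le_P X\le_P Y$ places $M$ into $D(Y)$, forcing $m(Y)\le_P M$ and hence $M'=M$. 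Therefore $P=\opl_M T_M$, and each $T_M$ has unique minimum $M$ with every downset a chain, i.e., it is an up-tree.

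Part (ii) then follows by applying (i) to the order-dual of $P$: copies of $\pV$ in $P$ correspond bijectively to copies of $\pLa$ in the dual, and up-trees in the dual are exactly down-trees in $P$. For (iii), a trivial $P$ decomposes by (i) as a parallel composition of up-trees and by (ii) as a parallel composition of down-trees. Since each up-tree (respectively down-tree) is comparability-connected via its unique minimal (respectively maximal) vertex, both decompositions must coincide with the partition of $P$ into comparability-connected pieces. Each such piece is therefore simultaneously an up-tree and a down-tree; inside it the unique maximal vertex $M$ lies above every vertex $X$, placing $X$ in the chain $\{Z\le_P M\}$, so the whole piece is a chain.

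The only step that genuinely needs a short argument is the pairwise element-wise incomparability of the fibers $T_M$ in (i); once this is established, the up-tree property of each $T_M$ and the collapse to chains in (iii) are essentially immediate, and I do not foresee a further obstacle.
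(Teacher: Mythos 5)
Your proof is correct and follows essentially the same route as the paper: characterize $\pLa$-freeness by the condition that every downset $\{Z\le_P X\}$ is a chain, obtain (ii) by order duality, and deduce (iii) by combining the up-tree and down-tree decompositions. The one place you go further than the paper is that you actually construct the partition into up-trees via the root map $m$ and verify that the fibers $T_M$ are pairwise parallel, whereas the paper simply states as an observation that ``$P$ is a parallel composition of up-trees if and only if every downset is a chain''; your argument fills in that observation, which is a welcome extra.
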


\begin{proof}
To show part (i), observe that $P$ is a parallel composition of up-trees if and only if for every vertex $X\in P$, the subposet $\{Z\in P\colon Z\le_P X\}$ forms a chain.
\vspace*{-1em}
\begin{itemize}
\item If there is a copy of $\pLa$ in $P$, say with maximal vertex $X$ and further vertices $Y_1$ and $Y_2$, then the subposet $\{Z\in P\colon Z\le_P X\}$ contains the two incomparable vertices $Y_1$ and $Y_2$, thus $\{Z\in P\colon Z\le_P X\}$ is not a chain.
This implies that $P$ is not a parallel composition of up-trees.

\item If $P$ is not the parallel composition of up-trees, we find a vertex $X\in P$, for which $\{Z\in P\colon Z\le_P X\}$ is not a chain.
In other words, there are incomparable $Y_1$ and $Y_2$ in $\{Z\in P\colon Z\le_P X\}$. 
Note that $X\neq Y_i$, $i\in[2]$, because $X$ is comparable to every vertex in $\{Z\in P\colon Z\le_P X\}$. Thus, $Y_1$, $Y_2$, and $X$ form a copy of $\pLa$.
\end{itemize}

A similar argument proves part (ii). For part (iii), if $P$ is a parallel composition of chains, it is clear that $P$ contains neither a copy of $\pLa$ nor a copy of $\pV$. 
Conversely, let $P$ be a poset with neither a copy of $\pLa$ nor a copy of $\pV$. Part (i) implies that $P$ is a parallel composition of up-trees $T_1,\dots,T_k$.
Since every $T_i$ has a unique minimal vertex, it is not the parallel composition of smaller subposets.
The poset $P$ is $\pV$-free, so in particular, $T_i$ is $\pV$-free.
It follows from (ii) that $T_i$ is a down-tree, i.e., it has a unique maximal vertex. Thus, $T_i$ is a chain.
\end{proof}

%

\subsection{Homomorphisms, embeddings, and Embedding Lemma}\label{sec:embedlem}

Let $P$ and $Q$ be two posets.
An \textit{embedding}\index{embedding} $\phi\colon P\to Q$ is a function such that for any $X,Y\in P$, $$X\le_{P} Y \quad \text{ if and only if }\quad \phi(X)\le_{Q}\phi(Y).$$
Observe that every embedding is injective.
Recall that we defined a \textit{copy} $P'$ of $P$ in $Q$ as a subposet of $Q$ isomorphic to $P$.
Equivalently, such a copy $P'$ \index{copy} can be defined as the image of an embedding $\phi\colon P\to Q$.
In particular, for every copy $P'$, there exists an embedding $\phi\colon P\to Q$ with image $P'$.

A \textit{homomorphism}\index{homomorphism} of a poset $P$ into another poset $Q$ is a function $\psi\colon P\to Q$ 
such that for any two $X,Y\in P$, $$\text{ if }\quad X\le_{P} Y, \quad \text{ then }\quad \psi(X)\le_{Q}\psi(Y).$$
We say that $\psi\colon P\to Q$  is a \textit{weak embedding}\index{weak embedding} of $P$ into $Q$ if $\psi$ is an injective homomorphism.
Note that every embedding is in particular a weak embedding, see Figure \ref{fig:diagram_embedding}.
Furthermore, note that every weak copy of $P$ in $Q$ is the image of a weak embedding $\psi\colon P\to Q$.

Let $\bZ$ be a non-empty ground set and let $\bX\subseteq \bZ$.
A function $\phi\colon \QQ(\bX)\to \QQ(\bZ)$ is \textit{$\bX$-good} if $$\phi(X)\cap \bX=X\quad \text{ for every }X\in\QQ(\bX).$$
This definition extends canonically to subposets of $\QQ(\bX)$:
For any $\cF\subseteq\QQ(\bX)$, we say that $\phi\colon \cF\to \QQ(\bZ)$ is \textit{$\bX$-good}\index{$\bX$-good function}\index{good function} if $\phi(X)\cap \bX=X$ for every $X\in\cF.$

\begin{proposition}\label{prop:good_embedding}
Let $\bX\subseteq \bZ$ for some non-empty set $\bZ$. Let $\cF$ be a subposet of $\QQ(\bX)$.
Every $\bX$-good homomorphism $\phi\colon \cF\to \QQ(\bZ)$ is an embedding.
\end{proposition}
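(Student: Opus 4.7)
The plan is to unpack the two defining properties and show that each direction of the required biconditional follows from one of them. By hypothesis $\phi$ is a homomorphism, so the implication $X \le_{\cF} Y \Rightarrow \phi(X)\subseteq \phi(Y)$ is immediate. The content of the proposition is the reverse implication, and this is exactly where $\bX$-goodness enters.

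First I would fix $X,Y \in \cF$ with $\phi(X)\le_{\QQ(\bZ)} \phi(Y)$, i.e., $\phi(X)\subseteq \phi(Y)$. Intersecting both sides with $\bX$ gives $\phi(X)\cap \bX \subseteq \phi(Y)\cap \bX$. Applying $\bX$-goodness to each side yields $X\subseteq Y$, which is precisely $X\le_{\cF} Y$ since $\cF$ inherits the subset order of $\QQ(\bX)$. Combined with the homomorphism direction this establishes the biconditional required in the definition of an embedding.

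Second I would observe that injectivity is automatic from what has just been shown: if $\phi(X)=\phi(Y)$, then both $\phi(X)\subseteq \phi(Y)$ and $\phi(Y)\subseteq \phi(X)$ hold, so by the argument above $X\subseteq Y$ and $Y\subseteq X$, giving $X=Y$. Thus $\phi$ is an injective order-preserving and order-reflecting map, i.e., an embedding.

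There is no real obstacle here: the proof is purely a matter of chasing definitions, and the only subtle point is to notice that $\bX$-goodness plays the role of a ``retraction'' onto $\bX$ that inverts $\phi$ after intersecting with $\bX$. I would keep the write-up short (a few lines) since the statement is a structural lemma whose value lies in its later applications rather than in any difficulty in its proof.
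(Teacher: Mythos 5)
Your proof is correct and follows essentially the same line as the paper's: both derive $X\subseteq Y$ from $\phi(X)\subseteq\phi(Y)$ by intersecting with $\bX$ and applying $\bX$-goodness, with the homomorphism hypothesis supplying the converse. The extra paragraph on injectivity is harmless but unnecessary here, since the paper already notes that every embedding is injective and the biconditional is all the definition requires.
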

\begin{proof}
If $X,Y\in\cF$ such that $\phi(X)\subseteq\phi(Y)$, then $X=\phi(X)\cap\bX \subseteq \phi(Y)\cap \bX = Y$. Therefore, $\phi$ is an embedding.
\end{proof}

\begin{figure}[h]
\centering
\includegraphics[scale=0.62]{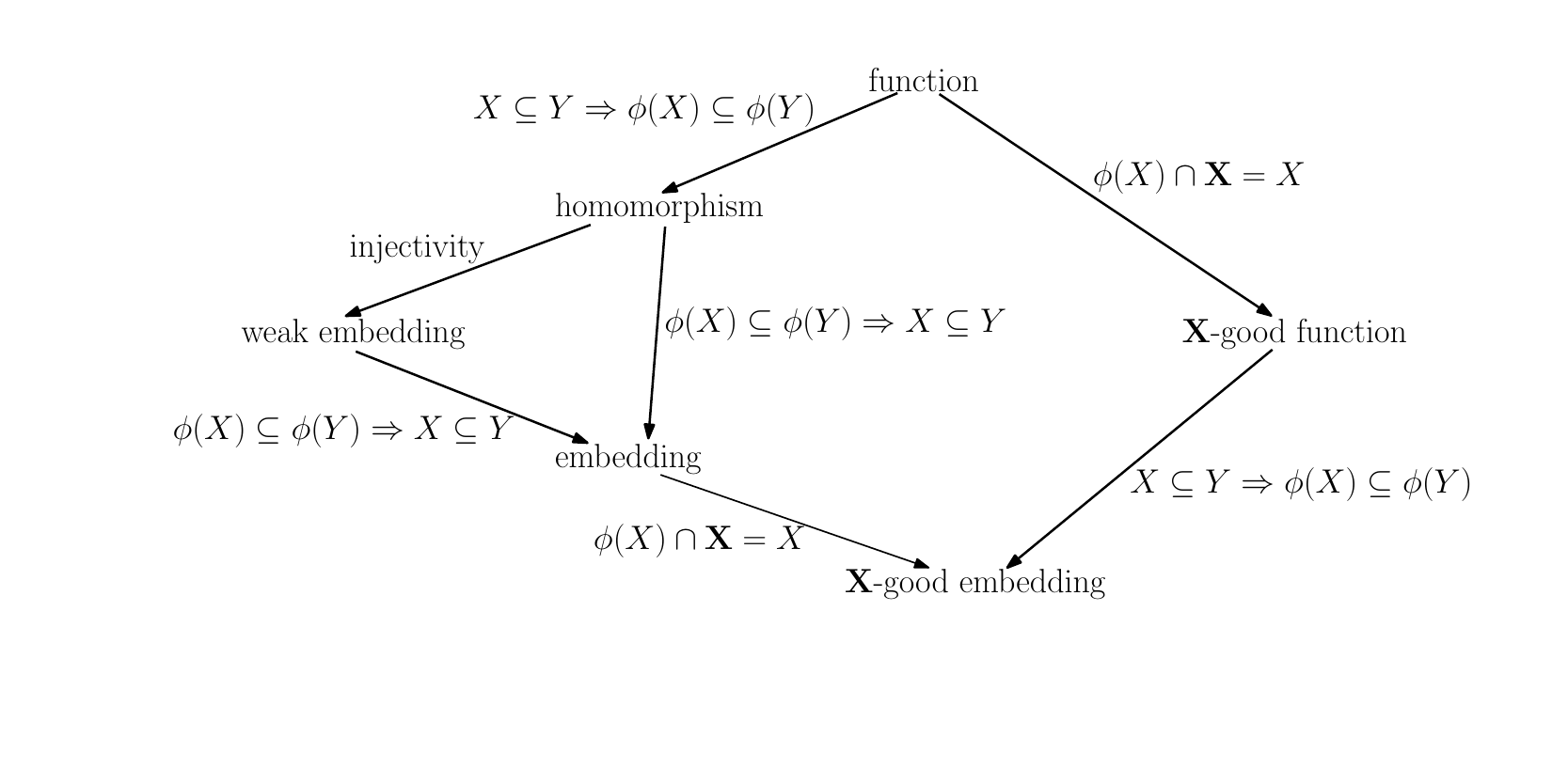}
\caption{Diagram of properties of a function $\phi\colon \cF\to\QQ(\bZ)$, where $\cF\subseteq \QQ(\bX)$ and $\bX\subseteq \bZ$. Respective properties shall hold for any $X,Y\in\cF$.}
\label{fig:diagram_embedding}
\end{figure}

\bigskip

One of the two main structural tools used in this thesis is the following result, which we refer to as the \textit{Embedding Lemma}:
When considering an embedding $\phi$ of a Boolean lattice $Q_n$ into a larger Boolean lattice $\QQ(\bZ)$, we can find a subset $\bX\subseteq\bZ$ such that 
$\phi$ is isomorphic to an $\bX$-good embedding. This result is due to Axenovich and Walzer \cite{AW}. Here, we state an alternative proof.

\begin{lemma}[Embedding Lemma; Axenovich-Walzer \cite{AW}]\label{lem:embed} \ \\
Let $n\in\N$. Let $\bZ$ be a set with $|\bZ|\ge n$. If there is an embedding $\phi\colon Q_n\to \QQ(\bZ)$,
then there exist a subset $\bX\subseteq\bZ$ with $|\bX|=n$, and an embedding $\phi'\colon \QQ(\bX)\to \QQ(\bZ)$ with the same image as $\phi$ such that
 $\phi'(X)\cap \bX=X$ for every $X\in \QQ(\bX)$, i.e., $\phi'$ is $\bX$-good.
\end{lemma}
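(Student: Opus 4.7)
The idea is to locate inside $\bZ$ a distinguished $n$-element set $\bX$ of ``coordinates'' so that the image of $\phi$, relabelled by $\bX$, records each vertex by its intersection with $\bX$. Concretely, the plan is to find elements $x_1,\dots,x_n$ of $\bZ$ with the property that $x_i \in \phi(\{i\})$ but $x_i \notin \phi([n]\setminus\{i\})$, set $\bX = \{x_1,\dots,x_n\}$, and take $\phi'$ to be $\phi$ transported along the obvious bijection $[n]\leftrightarrow \bX$.

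The only step that uses the embedding hypothesis is the production of the $x_i$. In $Q_n$, the singleton $\{i\}$ and its complement $[n]\setminus\{i\}$ are incomparable, so since $\phi$ is an embedding their images are incomparable subsets of $\bZ$; in particular $\phi(\{i\}) \not\subseteq \phi([n]\setminus\{i\})$, and I can pick some $x_i \in \phi(\{i\}) \setminus \phi([n]\setminus\{i\})$. The elements $x_1,\dots,x_n$ are automatically distinct: for $j \neq i$, the inclusion $\{j\} \subseteq [n]\setminus\{i\}$ gives $x_j \in \phi(\{j\}) \subseteq \phi([n]\setminus\{i\})$, whereas $x_i \notin \phi([n]\setminus\{i\})$. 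Set $\bX = \{x_1,\dots,x_n\}$.

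Now define $\phi'\colon \QQ(\bX) \to \QQ(\bZ)$ by $\phi'(T) = \phi\bigl(\{i \in [n] : x_i \in T\}\bigr)$ for $T \subseteq \bX$. Since the map $T \mapsto \{i : x_i \in T\}$ is an order-isomorphism $\QQ(\bX) \to Q_n$, the function $\phi'$ is the composition of this isomorphism with the embedding $\phi$, hence is itself an embedding with exactly the same image as $\phi$.

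It remains to check that $\phi'$ is $\bX$-good. Given $T \in \QQ(\bX)$ with corresponding index set $S = \{i : x_i \in T\}$, I would verify both containments of $\phi'(T)\cap\bX = T$ separately: if $x_i \in T$, then $i \in S$, so $\phi(\{i\}) \subseteq \phi(S) = \phi'(T)$ and thus $x_i \in \phi'(T)\cap \bX$; conversely, if $x_i \notin T$, then $S \subseteq [n]\setminus\{i\}$, so $\phi'(T) = \phi(S) \subseteq \phi([n]\setminus\{i\})$, and by the choice of $x_i$ we get $x_i \notin \phi'(T)$. (Alternatively, after checking that $\phi'$ is a homomorphism satisfying the $\bX$-good condition, one may invoke Proposition \ref{prop:good_embedding} to conclude that $\phi'$ is automatically an embedding.) There is no real obstacle in this argument; the sole crucial observation is that an embedding reflects incomparabilities, which is exactly what makes each set $\phi(\{i\}) \setminus \phi([n]\setminus\{i\})$ non-empty, and everything else is bookkeeping.
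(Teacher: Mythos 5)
Your proof is correct and follows essentially the same route as the paper: you choose representatives $x_i \in \phi(\{i\})\setminus\phi([n]\setminus\{i\})$ (the paper calls them $f(u)$), observe that the embedding property forces these to exist and be distinct, set $\bX$ to be their collection, and transport $\phi$ along the resulting bijection $[n]\leftrightarrow\bX$, checking $\bX$-goodness exactly as the paper does. No meaningful difference in structure or key ideas.
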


\begin{proof}
Suppose that the ground set of $Q_n$ is $\bU$, i.e., $Q_n=\QQ(\bU)$.
For each $u\in\bU$, we consider the embedded singleton $\phi(\{u\})$.
Since $\phi$ is an embedding and $\{u\}\not\subseteq \bU\backslash\{u\}$, we find that $\phi(\{u\})\not \subseteq \phi(\bU\backslash\{u\})$.
For every $u\in\bU$, pick an arbitrary $f(u)\in\bZ$ such that
$$f(u)\in \phi(\{u\}) \setminus \phi(\bU\backslash\{u\}).$$
For any $u'\in \bU\backslash\{u\}$, it holds that $\phi(\{u'\})\subseteq \phi(\bU\backslash\{u\})$, using that $\phi$ is an embedding.
This implies that $f(u)\notin \phi(\{u'\})$, so all representatives $f(u)$, $u\in\bU$, are distinct.
Let $$\bX=\{f(u) \colon ~ u\in\bU\}.$$
Note that the map $f\colon \bU \to \bX$ is a bijection.
We shall define an embedding $\phi'\colon \QQ(\bX)\to \QQ(\bZ)$.
For that, we consider the inverse image under $f$, i.e., for $X\subseteq\bX$ we let $U_X\subseteq \bU$ such that $X=\{f(u)\colon ~ u \in U_X\}$. 
Let $\phi'(X) = \phi(U_X)$ for any $X\in\QQ(\bX)$.
The function $\phi'\colon \QQ(\bX)\to \QQ(\bZ)$ is an embedding because $\phi$ is an embedding.
Moreover, since $f$ is a bijection, $\phi$ and $\phi'$ have the same image.
We shall show that $\phi'(X)\cap \bX=X$ for any $X\in \QQ(\bX)$.
Fix an arbitrary $f(u)\in\bX$.
\vspace*{-1em}
\begin{itemize}
\item If $f(u)\in X$, then $u\in U_X$, and in particular $\phi(\{u\})\subseteq \phi(U_X)$. 
We selected $f(u)\in \phi(\{u\})$, so $f(u)\in \phi(\{u\}) \subseteq \phi(U_X) =\phi'(X)$.
\item If $f(u)\notin X$, then $u\notin U_X$, and thus $U_X\subseteq \bU\setminus\{u\}$.
Using that $\phi$ is an embedding, we see that $\phi(U_X)\subseteq \phi(\bU\setminus \{u\})$.
By definition, $f(u)\notin \phi(\bU\setminus \{u\})$, so $f(u)\notin \phi(U_X)= \phi'(X)$. 
\end{itemize}
This implies that $f(u)\in \phi'(X)$ if and only if $f(u)\in X$, so $\phi'$ is $\bX$-good, which concludes the proof.
\end{proof}

Let $\bZ\neq\varnothing$ and $\bX\subseteq \bZ$.
We say that a copy $\QQ'$ of $\QQ(\bX)$ in $\QQ(\bZ)$ is \textit{$\bX$-good}\index{$\bX$-good copy} \index{good copy} if there is an $\bX$-good embedding of $\QQ(\bX)$ into $\QQ(\bZ)$ with image $\QQ'$.
The Embedding Lemma claims in particular that any copy of $Q_n$ in a larger Boolean lattice $\QQ(\bZ)$ is $\bX$-good for some subset $\bX\subseteq \bZ$.
\\

\begin{remark}
We say that a poset $P$ has the \textit{embedding property} if for any ground set $\bZ$ and for every embedding $\phi$ of $P$ into a Boolean lattice $\QQ(\bZ)$,
there exist a subset $\bX\subseteq\bZ$ of size $\dim_2(P)$, and a copy $\cF$ of $P$ in $\QQ(\bX)$ such that we can find an $\bX$-good embedding  $\phi'\colon \cF\to \QQ(\bZ)$ with the same image as $\phi$.
In the Embedding Lemma we showed that $Q_n$ has the embedding property. 
Further examples of posets which have the embedding property are chains $C_n$, the N-shaped poset $\pN$ and the standard example $S_n$.
For $C_n$ and $\pN$, the proof is straightforward. For $S_n$, we can apply the proof of Lemma~\ref{lem:embed}.
A negative example is the antichain $A_5$. Note that $\dim_2(A_5)=4$, but for the antichain in $\QQ([5])$ on vertices $\{1\}$, $\{2\}$, $\{3\}$, $\{4\}$, and $\{5\}$ there is no $4$-element $\bX\subseteq [5]$ such that this antichain is $\bX$-good.
Studying the class of posets with the embedding property might be of independent interest.
\end{remark}

\bigskip
\subsection{$\bY$-chains and Chain Lemma}\label{sec:chainlem}

Let $\bX$ and $\bY$ be two disjoint sets. 
Each vertex $Z$ in the Boolean lattice $\QQ(\bX\cup\bY)$ has an \textit{$\bX$-part}\index{$\bX$-part} $Z\cap\bX$ and a \textit{$\bY$-part}\index{$\bY$-part} $Z\cap \bY$.
In the following, we establish a connection between the existence of a red Boolean lattice and a blue chain in a blue/red colored $\QQ(\bX\cup\bY)$, depending on $\bX$ and $\bY$. 
More precisely, we are interested in red copies of $\QQ(\bX)$ in which the $\bX$-part of each vertex is predetermined, and blue copies of $C_{|\bY|+1}$ in which the $\bY$-part of each vertex is predetermined.

We denote a \textit{linear ordering}\index{linear ordering} $\tau$ of $\bY$ for which $y_1<_\tau y_2<_\tau \dots <_\tau y_k$ by a sequence $\tau=(y_1,\dots,y_k)$, implying that $\bY=\{y_1,\dots,y_k\}$.
In other words, $\tau$ corresponds to an ordered subset of $\bY$ of maximal size.
Given a linear ordering $\tau=(y_1,\dots,y_k)$ of $\bY$, a \textit{$\bY$-chain}\index{$\bY$-chain} corresponding to $\tau$ is a $(k+1)$-element chain in $\QQ(\bX\cup\bY)$ on vertices
$$X_0\cup \varnothing, \ X_1\cup\{y_1\}, \ X_2\cup\{y_1,y_2\}, \ \dots,  \  X_{k}\cup \bY,$$
 where $X_0 \subseteq X_1 \subseteq\dots \subseteq X_k\subseteq \bX$. 
Note that any $\bY$-chains corresponding to distinct linear orderings of $\bY$ are distinct. In the special case $\bY=\varnothing$, a $\bY$-chain consists of a single vertex.
\\

The following lemma is the second main structural tool of this dissertation and is referred to as the \textit{Chain Lemma}.
A weaker formulation of this result was stated implicitly by Chen, Cheng, Li and Liu, see Theorem 15 of \cite{CCLL}, as well as by Gr\'osz, Methuku and Tompkins, see Claim 3 of  \cite{GMT}.

\begin{lemma}[Chain Lemma]\label{lem:chain}
Let $n$ and $k$ be non-negative integers.
Let $\bX$ and $\bY$ be disjoint sets with $|\bX|=n$ and $|\bY|=k$. Fix an arbitrary blue/red coloring of the Boolean lattice $\QQ(\bX\cup\bY)$, and a linear ordering $\tau=(y_1,\dots,y_k)$ of $\bY$. Then there exists either a red, $\bX$-good copy of $\QQ(\bX)$ or a blue $\bY$-chain corresponding to $\tau$ in $\QQ(\bX\cup\bY)$.
\end{lemma}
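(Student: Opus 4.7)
The plan is to proceed by induction on the parameter $k=|\bY|$. The base case $k=0$ is immediate: the required $\bY$-chain consists of a single vertex, so if some vertex of $\QQ(\bX)$ is colored blue then it is itself a blue $\bY$-chain, while otherwise $\QQ(\bX)$ is entirely red and the identity map $X\mapsto X$ is a red $\bX$-good copy of $\QQ(\bX)$.

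For the inductive step I would split $\QQ(\bX\cup\bY)$ by the element $y_k$ into the lower half $\cL=\{Z\in\QQ(\bX\cup\bY):y_k\notin Z\}\cong\QQ(\bX\cup\{y_1,\dots,y_{k-1}\})$ and its complement. Applying the induction hypothesis to $\cL$ with the truncated ordering $(y_1,\dots,y_{k-1})$ would yield either (i) a red $\bX$-good copy of $\QQ(\bX)$ inside $\cL$---which is automatically also such a copy in the full lattice $\QQ(\bX\cup\bY)$, finishing the argument---or (ii) a blue $(y_1,\dots,y_{k-1})$-chain with vertices $X_0,\,X_1\cup\{y_1\},\dots,X_{k-1}\cup\{y_1,\dots,y_{k-1}\}$, where $X_0\subseteq\dots\subseteq X_{k-1}\subseteq\bX$.

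In case (ii) I would first attempt to extend the partial chain by locating some $X_k$ with $X_{k-1}\subseteq X_k\subseteq\bX$ such that $X_k\cup\bY$ is blue; any such $X_k$ completes a blue $\bY$-chain corresponding to $\tau$. The remaining subcase---in which every $X_k\supseteq X_{k-1}$ satisfies that $X_k\cup\bY$ is red---furnishes a red copy of $\QQ(\bX\setminus X_{k-1})$ at the top of the lattice. Note that when $X_{k-1}=\varnothing$ this red top is already itself a red $\bX$-good copy of $\QQ(\bX)$, via $\phi(X)=X\cup\bY$; so the genuine difficulty appears only when $X_{k-1}\neq\varnothing$, where the resulting partial red copy has dimension $n-|X_{k-1}|<n$.

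The hard part will be precisely this final subcase. My plan is to strengthen the induction to run on $|\bX|+|\bY|$ rather than on $k$ alone, and then recursively invoke the Chain Lemma on the proper sub-lattice $\QQ(X_{k-1}\cup\bY)$, whose parameter sum is strictly smaller. The recursive call produces either a blue $\bY$-chain inside $\QQ(X_{k-1}\cup\bY)$, which lifts trivially to a blue $\bY$-chain in $\QQ(\bX\cup\bY)$ and finishes the proof, or a red $X_{k-1}$-good copy of $\QQ(X_{k-1})$ supplying red images for all $A\subseteq X_{k-1}$. In the latter situation one assembles a single embedding $\phi\colon\QQ(\bX)\to\QQ(\bX\cup\bY)$ by combining the red top (covering $A\supseteq X_{k-1}$) with the red bottom (covering $A\subseteq X_{k-1}$); the most delicate point is to supply a red $\bX$-good image for those remaining $A\in\QQ(\bX)$ that are incomparable with $X_{k-1}$, and to verify that the resulting $\phi$ is monotone on all of $\QQ(\bX)$. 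This verification is where the main technical effort of the argument will have to be concentrated.
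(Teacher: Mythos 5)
There are genuine gaps in your proposal, and the difficulties are not confined to the ``final subcase'' you flag.

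First, the induction is not well-founded as set up. The inductive hypothesis applied to $\cL$ gives you \emph{some} blue $(y_1,\dots,y_{k-1})$-chain, with no control over $X_{k-1}$; in particular $X_{k-1}=\bX$ is entirely possible. In that case the proper sub-lattice $\QQ(X_{k-1}\cup\bY)$ is all of $\QQ(\bX\cup\bY)$, the quantity $|X_{k-1}|+|\bY|$ does not decrease, and the only new information you have extracted from the failed attempt to extend the chain is that the single vertex $\bX\cup\bY$ is red. Your strengthened induction therefore needs $X_{k-1}\subsetneq\bX$, which is not guaranteed.

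Second, and more fundamentally, the gluing step cannot be carried out from the data you have. The recursive call returns a red, $X_{k-1}$-good embedding $\psi\colon\QQ(X_{k-1})\to\QQ(X_{k-1}\cup\bY)$, while the ``red top'' sends $A\mapsto A\cup\bY$ for $A\supseteq X_{k-1}$. These two rules need not agree at $X_{k-1}$, since nothing forces $\psi(X_{k-1})=X_{k-1}\cup\bY$. Worse, for $A\in\QQ(\bX)$ incomparable to $X_{k-1}$ you have collected \emph{no} color information about any vertex $Z$ with $Z\cap\bX=A$: every red vertex you know of has $\bX$-part either contained in $X_{k-1}$ (from $\psi$) or containing $X_{k-1}$ (from the top). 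The natural candidate $\phi(A)=A\cup\big(\psi(A\cap X_{k-1})\cap\bY\big)$ is $\bX$-good and monotone against the bottom piece, but its color is unconstrained, so there is no way to argue it is red. Filling this hole would require a substantially stronger inductive hypothesis that tracks red vertices above every $A\subseteq X_{k-1}$ simultaneously, not just one embedded copy, at which point you are essentially reproving the lemma from scratch.

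The paper avoids all of this by a single bottom-up pass rather than a decomposition by $y_k$. It assigns to each $X\in\QQ(\bX)$, in increasing order of $|X|$, the least index $\ell_X\ge\max_{X'\subsetneq X}\ell_{X'}$ with $X\cup\{y_1,\dots,y_{\ell_X}\}$ red, and records as it goes a blue partial $\bY$-chain ``below'' $X\cup\{y_1,\dots,y_{\ell_X-1}\}$. Monotonicity and $\bX$-goodness are built in, there is nothing to glue, and when the greedy choice of $\ell_X$ fails the accumulated partial chain is precisely the blue $\bY$-chain. That invariant is what your sketch is missing: it gives a single global object that the recursion never has to reconcile across overlapping sub-lattices.
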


\begin{proof}
By relabelling $\bY$, we can suppose without loss of generality that $y_i=i$ for $i\in[k]$, i.e., $\bY=[k]$. Throughout this proof, we use the convention $[0]=\varnothing$.
Assume that there does not exist a blue $[k]$-chain corresponding to $\tau$, i.e., a subposet on vertices $X_i\cup[i]$, $i\in\{0,\dots,k\}$
with $X_{i-1}\subseteq X_i$, $i\in[k]$. We shall show that there is a red copy of a Boolean lattice.

For every $X\in\QQ(\bX)$, we shall define an integer $\ell_X\in\{0,\dots,k\}$ such that the function $$\phi\colon \QQ(\bX)\to \QQ(\bX\cup\bY), \ \phi(X)=X\cup [\ell_X]$$
is a red, $\bX$-good embedding. 
Recursively, we choose $\ell_X$, $X\in\QQ(\bX)$, such that
\vspace*{-1em}
\begin{enumerate}[label=(\roman*)]
\item for any $U\subseteq X$,\:\:$\ell_{U}\le \ell_{X}$,
\item the vertex $X\cup[\ell_X]$ is colored red, and
\item if $\ell_X\ge 1$, then there are blue vertices $X'_0\cup [0],\dots, X'_{\ell_X-1}\cup[\ell_X-1]$ with $X'_0\subseteq \dots \subseteq X'_{\ell_X-1} \subseteq X$,
i.e., there is a blue $[\ell_X-1]$-chain corresponding to the linear ordering $(1,\dots,\ell_X-1)$ whose maximal vertex is smaller than $X\cup [\ell_X-1]$.
\end{enumerate}

First, we consider the vertex $\varnothing\in\QQ(\bX)$. Let\ $\ell_\varnothing$ be the smallest integer $\ell$, $0\le \ell\le k$, such that the vertex $\varnothing\cup[\ell]$ is red.
If there is no such $\ell$, then the vertices $\varnothing\cup[0], \dots, \varnothing\cup[k]$ form a blue $\bY$-chain corresponding to $\tau$, a contradiction.
It is immediate that properties (i) and (ii) hold for $\ell_\varnothing$.
If $\ell_\varnothing\ge 1$, then $\varnothing\cup[0], \dots, \varnothing\cup[\ell_{\varnothing}-1]$ are blue vertices, so (iii) is fulfilled.

Next, consider an arbitrary $X\in\QQ(\bX)$, $X\neq\varnothing$, and assume that for every $X'\subsetneql X$, we already defined $\ell_{X'}$ with properties (i), (ii), and (iii). 
Fix any vertex $U\subsetneql X$ such that $\ell_{U}$ is maximal among the $\ell_{X'}$'s, $X'\subsetneql X$.
Recursively, we find a blue chain:
If $\ell_{U}\ge 1$, property (iii) for $U$ provides a blue $[\ell_{U}-1]$-chain $\cC_{U}$; if $\ell_{U}=0$, let $\cC_{U}$ be the empty poset.
If the vertices $X\cup [\ell_U],\dots, X\cup [k]$ are all blue, then they form, together with $\cC_{U}$, a blue $[k]$-chain corresponding to $\tau$, so we arrive at a contradiction.
Thus, there exists a smallest integer $\ell_X$ such that $\ell_U\le \ell_X \le k$ and $X\cup [\ell_X]$ is red.
We shall verify that $\ell_X$ has properties (i), (ii), and (iii).
\vspace*{-1em}
\begin{itemize}
\item For every $X''\subsetneql X$, we know that $\ell_{X''}\le \max\{\ell_{X'}: ~ X'\subsetneql X\}= \ell_U\le \ell_X$, thus property (i) holds. 
\item The vertex $X\cup[\ell_X]$ is defined to be red, so $\ell_X$ has property (ii).
\item For property (iii), note that $X\cup[\ell_U], \dots, X\cup [\ell_X -1]$ are blue by the minimality of $\ell_X$.
These vertices, together with $\cC_U$, form a blue $[\ell_X-1]$-chain corresponding to the linear ordering $(1,\dots,\ell_X-1)$ with a maximal vertex smaller than $X\cup [\ell_X-1]$.
\end{itemize}

We define $\phi$ as the function mapping from the Boolean lattice $\QQ(\bX)$ to $\QQ(\bX\cup\bY)$ such that
$$\phi(X)=X\cup [\ell_X].$$
Property (ii) implies that every vertex $\phi(X)$ is red.
Note that $\phi(X)\cap\bX=X$ for every $X\in\QQ(\bX)$, so $\phi$ is $\bX$-good. It remains to show that $\phi$ is an embedding.
By Proposition~\ref{prop:good_embedding}, it suffices to verify that for any two $X_1,X_2\in\QQ(\bX)$ with $X_1\subseteq X_2$,\:\:$\phi(X_1)\subseteq\phi(X_2)$.
Indeed, let $X_1,X_2\in\QQ(\bX)$ with $X_1\subseteq X_2$, then Property (i) provides that $\ell_{X_1}\le \ell_{X_2}$. Thus, 
$$\phi(X_1)=X_1\cup[\ell_{X_1}]\subseteq X_2\cup[\ell_{X_2}]=\phi(X_2).$$
Therefore, $\phi$ is a red, $\bX$-good embedding. Its image is a red, $\bX$-good copy of $\QQ(\bX)$  in $\QQ(\bX\cup\bY)$.
\end{proof}

The following corollary is a simplified version of the Chain Lemma, which appeared as Lemma 4 in Axenovich and Walzer \cite{AW}.

\begin{corollary}[Axenovich-Walzer \cite{AW}]\label{cor:chain}
Let $n\ge 1$ and $k\ge 0$ be integers. Any blue/red colored Boolean lattice of dimension $n+k$ contains a red copy of $Q_n$ or a blue chain of length $k+1$.
In particular, $R(C_{k+1},Q_n)= n+k$.
\end{corollary}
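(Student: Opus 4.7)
The plan is to derive both parts of the corollary directly from the Chain Lemma together with the trivial layered lower bound discussed in Section~\ref{sec:knownRPQ}.

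For the first statement, I would partition the $(n+k)$-element ground set of the Boolean lattice into two disjoint sets $\bX$ and $\bY$ with $|\bX|=n$ and $|\bY|=k$, so that the colored host lattice equals $\QQ(\bX\cup\bY)$. Fix any linear ordering $\tau$ of $\bY$ (the choice is irrelevant). Applying Lemma~\ref{lem:chain} with these parameters yields either a red $\bX$-good copy of $\QQ(\bX)$, which is by definition a red copy of $Q_n$, or a blue $\bY$-chain corresponding to $\tau$. Since a $\bY$-chain corresponding to a linear ordering of a $k$-element set has $k+1$ vertices and is totally ordered in $\QQ(\bX\cup\bY)$, this is exactly a blue chain of length $k+1$.

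For the equality $R(C_{k+1},Q_n)=n+k$, the upper bound $R(C_{k+1},Q_n)\le n+k$ is an immediate consequence of the first part: any blue/red coloring of $Q_{n+k}$ contains a blue $C_{k+1}$ or a red $Q_n$. For the matching lower bound, I would invoke the trivial layered construction recalled at the start of Section~\ref{sec:knownRPQ}. Explicitly, color the Boolean lattice $Q_{n+k-1}$ so that every vertex $X$ with $|X|<k$ is blue and every vertex $X$ with $|X|\ge k$ is red. Then the blue vertices form an induced subposet of height exactly $k$, which contains no chain of length $k+1$, and the red vertices form an induced subposet of height $n$, which cannot contain a copy of $Q_n$ since $h(Q_n)=n+1$. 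Hence $R(C_{k+1},Q_n)>n+k-1$, giving the required lower bound.

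There is no real obstacle here: the Chain Lemma has already done all of the work. The only thing to be careful about is the off-by-one bookkeeping between the length $k+1$ of the chain, the number $|\bY|=k$ of elements of $\bY$, and the fact that a $\bY$-chain has $|\bY|+1$ vertices; and similarly between the dimension $n+k-1$ of the extremal coloring and the heights $k$ and $n+1$ of the forbidden monochromatic substructures.
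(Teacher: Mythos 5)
Your proof is correct and follows essentially the same path as the paper: the first claim is a direct application of the Chain Lemma (\ref{lem:chain}) with $|\bX|=n$, $|\bY|=k$, and the lower bound is the standard layered construction, which is exactly how the general lower bound $R(P,Q_n)\ge n+h(P)-1$ in Theorem~\ref{thm:general} is obtained — the paper just cites that theorem rather than unpacking the coloring. Your explicit check that the $k$ blue layers yield height exactly $k$ and the $n$ red layers yield height $n<h(Q_n)$ is exactly what makes the cited bound apply with $h(C_{k+1})=k+1$.
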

\noindent Here, the lower bound on $R(C_{k+1},Q_n)$ follows immediately from Theorem \ref{thm:general}.
\\

\section{Existence of $2$-dimension and poset Ramsey number}

Recall that the $2$-dimension $\dim_2(P)$ of a poset $P$ is the smallest integer $n$ such that $Q_n$ contains an induced copy of $P$.
Using poset embeddings, it is easy to show that the $2$-dimension is well-defined for any poset $P$.

\begin{proposition}[Trotter \cite{Trotter75}] \label{prop:dim}
For every poset $P$, there is an integer $n$ such that $Q_n$ contains an induced copy of $P$.
In particular, $\dim_2(P)$ is well-defined.
\end{proposition}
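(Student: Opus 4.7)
The plan is to exhibit an explicit embedding of any poset $P$ into a Boolean lattice, showing that $\dim_2(P) \le |P|$. The natural candidate is the \textit{downset embedding}: identify each vertex $X\in P$ with its principal downset $D_X = \{Y\in P\colon Y\le_P X\}$, which is a subset of the ground set $P$ and hence a vertex of $\QQ(P)$.

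First I would fix the map $\phi\colon P\to \QQ(P)$ defined by $\phi(X)=D_X$, and verify the two implications that together make $\phi$ an embedding in the sense of Section \ref{sec:embedlem}. For the forward direction, if $X\le_P Y$ and $Z\in D_X$, then $Z\le_P X\le_P Y$, so transitivity gives $Z\in D_Y$; hence $D_X\subseteq D_Y$. For the backward direction, if $D_X\subseteq D_Y$, then reflexivity gives $X\in D_X\subseteq D_Y$, which means $X\le_P Y$ by the definition of $D_Y$. Combining both directions yields $X\le_P Y \Leftrightarrow \phi(X)\subseteq \phi(Y)$, so $\phi$ is an embedding of $P$ into $\QQ(P)$.

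Next I would note that $\QQ(P)$ is isomorphic to $Q_{|P|}$, since $P$ is a $|P|$-element set. The image of the embedding $\phi$ is, by definition (see Section \ref{sec:copies}), an induced copy of $P$ in $Q_{|P|}$. Therefore the set $\{n\in\N\colon Q_n \text{ contains an induced copy of }P\}$ is non-empty (it contains $|P|$), so its minimum exists. This is exactly $\dim_2(P)$, which consequently satisfies $\dim_2(P)\le |P|$ and in particular is well-defined.

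There is no substantial obstacle here: the only subtlety is being careful to check both implications in the embedding property (not merely the homomorphism property), which is handled by invoking reflexivity to guarantee $X\in D_X$. The construction also gives the quantitative bound $\dim_2(P)\le |P|$ for free, which is useful in later arguments (e.g.\ in Proposition \ref{prop:existence} establishing the existence of $R(P,Q)$).
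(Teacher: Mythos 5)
Your proposal is correct and takes essentially the same approach as the paper: the map $\phi(X_i)=\{j: X_j\le_P X_i\}$ in the paper is exactly your principal-downset embedding, just with the ground set relabelled as $[n]$ rather than $P$ itself, and the two implications are verified with the same appeals to transitivity and reflexivity.
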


\begin{proof}
Let $P$ be a poset of size $|P|=n$, say on vertices $X_1,\dots,X_n$. 
We shall show that the $n$-dimensional Boolean lattice $\QQ([n])$ contains a copy of $P$.
For that, we define the function $\phi\colon P \to \QQ([n])$ such that any vertex $X_i\in P$ is mapped to
$$\phi(X_i)=\{j : ~ X_j\le_P X_i\}.$$
We claim that $\phi$ is an embedding of $P$ into $\QQ([n])$.
Let $X_i$ and $X_j$ be any vertices in $P$.
\vspace*{-1em}
\begin{itemize}
\item If $X_i\le_P X_j$, then $\phi(X_i)=\{\ell: X_\ell \le_P X_i \} \subseteq \{\ell: X_\ell \le_P X_j\}= \phi(X_j)$.
\item If $\phi(X_i)\subseteq \phi(X_j)$, then $i\in \{\ell: X_\ell \le_P X_i \} = \phi(X_i)\subseteq \phi(X_j) =  \{\ell: X_\ell \le_P X_j\}$.
This implies that $X_i \le_P X_j$.
\end{itemize}
\vspace*{-2em}
\end{proof}


\begin{proposition}[Walzer \cite{Walzer}]\label{prop:existence}
For every two posets $P$ and $Q$, there is a sufficiently large $n$ such that any blue/red coloring of $Q_n$ contains a blue copy of $P$ or a red copy of $Q$.
In particular, $R(P,Q)$ is well-defined.
\end{proposition}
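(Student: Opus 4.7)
The plan is to reduce the general existence statement to the known existence of $R(Q_n,Q_n)$, which is guaranteed by the Graham–Rothschild theorem (Theorem~\ref{thm:gr}). The key observation is that an induced copy of a large Boolean lattice contains induced copies of all smaller posets, so a monochromatic copy of a suitable $Q_n$ immediately yields a monochromatic copy of the prescribed $P$ or $Q$.

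First, I would invoke Proposition~\ref{prop:dim} to fix $n_P = \dim_2(P)$ and $n_Q = \dim_2(Q)$, and set $n := \max\{n_P, n_Q\}$. By definition of $2$-dimension (and because $Q_m$ contains an induced copy of $Q_{m'}$ for every $m' \le m$, obtained for instance by restricting to subsets of a fixed $m'$-element subset of the ground set), $Q_n$ contains both an induced copy of $P$ and an induced copy of $Q$.

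Next, apply Theorem~\ref{thm:gr} to this fixed $n$: there exists an integer $N$ such that every blue/red coloring of $Q_N$ contains a monochromatic induced copy of $Q_n$. Given any blue/red coloring $c$ of $Q_N$, take such a monochromatic copy $\cF \cong Q_n$. If $\cF$ is blue, then inside $\cF$ we find an induced copy of $P$ (all of whose vertices are blue), and if $\cF$ is red, we find an induced copy of $Q$ (all red). In either case the coloring contains the desired monochromatic pattern, so $R(P,Q) \le N$, and in particular $R(P,Q)$ is well-defined.

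There is no real obstacle here: the only subtlety is making sure that ``induced copy in $Q_n \subseteq Q_N$'' is transitive, which is immediate from the definition of induced subposet, and that a $Q_n$ with $n \ge \dim_2(P)$ really contains an induced copy of $P$, which is exactly the content of Proposition~\ref{prop:dim} together with the trivial embedding $Q_{n_P} \hookrightarrow Q_n$ by padding ground sets. Thus the whole argument amounts to chaining Proposition~\ref{prop:dim} with Theorem~\ref{thm:gr}.
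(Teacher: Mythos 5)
Your proposal is correct and follows the same approach as the paper: take $n=\max\{\dim_2(P),\dim_2(Q)\}$, invoke Graham--Rothschild (Theorem~\ref{thm:gr}) for the existence of $R(Q_n,Q_n)$, and observe that a monochromatic copy of $Q_n$ contains a monochromatic copy of $P$ or of $Q$, giving $R(P,Q)\le R(Q_n,Q_n)$. You simply spell out a few of the transitivity/padding details that the paper leaves implicit.
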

\begin{proof}
Let $n=\max\{\dim_2(P),\dim_2(Q)\}$. By Theorem \ref{thm:gr}, $R(Q_n,Q_n)$ is well-defined.
The Boolean lattice $Q_n$ contains a copy of any smaller Boolean lattice $Q_m$, $m\le n$. In particular, $Q_n$ contains a copy of $P$ and a copy of $Q$.
Therefore, $R(P,Q)\le R(Q_n,Q_n)$, which completes the proof.
\end{proof}

 \newpage  

\chapter{Complete multipartite poset versus large Boolean lattice}\label{ch:QnK}
%

\section{Introduction of Chapter \ref{ch:QnK}}

Recall that the \textit{poset Ramsey number} $R(P,Q)$ \index{poset Ramsey number} of posets $P$ and $Q$ is the smallest integer $N$ such that in any blue/red coloring of the $N$-dimensional Boolean lattice $Q_N$, there is a blue induced copy of $P$ or a red induced copy of $Q$.
This chapter focuses on upper bounds for $R(P,Q)$ in the setting that $P$ is a fixed poset and $Q=Q_n$ is a Boolean lattice.
This setting is a generalization of the off-diagonal setting $R(Q_m,Q_n)$, where $m$ is fixed and $n$ is large, which is one of the focal points of research on the poset Ramsey number.
Theorem \ref{thm:general} provides a basic bound:
For every $m,n\in\N$,
$$n+m\le R(Q_m,Q_n)\le mn+n+m.$$
It is easy to see that the lower bound is sharp for $m=1$, i.e., $R(Q_1,Q_n)=n+1$. 
In the case $m=2$, an early estimate by Axenovich and Walzer \cite{AW} showed that $R(Q_2,Q_n)\le 2n+2$.
This was further improved by Lu and Thompson \cite{LT} who proved the bound $R(Q_2,Q_n)\le \tfrac53 n +2$,
and finally by Gr\'osz, Methuku, and Tompkins \cite{GMT} showing that
$$n+3\le R(Q_2,Q_n)\le n+ \big(2+o(1)\big) \frac{n}{\log n},$$
where the lower bound holds for $n\ge 18$.
We remark that Gr\'osz, Methuku, and Tompkins also gave an upper bound for small $n$, that is $R(Q_2,Q_n)\le n+ 6.14 \frac{n}{\log n}$ for $n\ge 2$.
For $m\ge 3$, an improvement of the basic upper bound is given by Lu and Thompson \cite{LT}.
They showed that $R(Q_3,Q_n)\le \tfrac{37}{16}n+\tfrac{39}{16}\approx 2.31 n$ and for $4\le m \le n$,
$$R(Q_m,Q_n)\le \left(m-2 +\frac{9(m-1)}{(2m-3)(m+1)}\right)n+m+3,$$
in particular $R(Q_4,Q_n)\le \tfrac{77}{25}n+7 \approx 3.08 n$ and $R(Q_5,Q_n)\le \tfrac{27}{7}n+8 \approx 3.86 n$.
However, no significant improvement of the lower bound in this setting is known.

The poset Ramsey number $R(Q_m,Q_n)$ is known exactly for some specific values of $m$ and $n$.
It was shown that $R(Q_2,Q_2)=4$, $R(Q_2,Q_3)=5$, and $R(Q_3,Q_3)=7$, see Axenovich and Walzer \cite{AW}, Lu and Thompson \cite{LT}, and Falgas-Ravry, Markstr\"om, Treglown and Zhao \cite{FMTZ}, respectively. 

In this chapter, we generalize the upper bound on $R(Q_2,Q_n)$ by Gr\'osz, Methuku, and Tompkins \cite{GMT} to two broader classes of posets.
Recall that a \textit{complete $\ell$-partite poset}\index{complete multipartite poset} $K_{t_1,\dots,t_\ell}$ is a poset on $t_1+\dots+t_\ell$ vertices which is the series composition of $\ell$ antichains $A^1,\dots,A^\ell$, where each $A^i$ consists of $t_i$ distinct vertices.
In other words, given pairwise disjoint sets $A^1,\dots,A^\ell$ with $|A^{i}|=t_i$, we define for any two indices $i,j\in\{1,\dots,\ell\}$ and for any vertices $X\in A^i$, $Y\in A^j$, 
$$X<Y \quad \text{ if and only if }\quad i<j,$$ see Figure \ref{fig:K_SD}.
Note that $Q_2\cong K_{1,2,1}$.

\begin{theorem}\label{thm:QnK}
Let $n\in\N$ and $\ell\ge 2$. Let $t_1,\dots,t_\ell$ be fixed integers. Then
$$R(K_{t_1,\dots,t_\ell},Q_n)\le n+\frac{\big(2+o(1)\big)\ell n}{\log n}.$$
\end{theorem}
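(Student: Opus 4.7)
The plan is to generalize the approach of Gr\'osz, Methuku, and Tompkins~\cite{GMT} for $R(Q_2,Q_n)\le n+(2+o(1))n/\log n$, with the Chain Lemma (Lemma~\ref{lem:chain}) as the central tool.

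\textbf{Setup and target configuration.} Set $k_i=\lceil(2+o(1))n/\log n\rceil$ for each $i\in[\ell]$, let $k=k_1+\cdots+k_\ell$, and consider a blue/red coloring of $\QQ(\bX\cup\bY)$ where $|\bX|=n$ and $\bY=\bY_1\sqcup\cdots\sqcup\bY_\ell$ with $|\bY_i|=k_i$. Write $\bY^{<i}=\bY_1\cup\cdots\cup\bY_{i-1}$, and assume there is no red copy of $Q_n$. Since $k_i$ greatly exceeds $\alpha(t_i)$ by Theorem~\ref{thm:alpha}, Sperner's theorem yields an antichain $\mathcal{B}_i\subseteq\binom{\bY_i}{\lfloor k_i/2\rfloor}$ of size $t_i$. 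The goal is to locate a common $X^*\subseteq\bX$ such that $X^*\cup\bY^{<i}\cup B$ is blue for every $i\in[\ell]$ and every $B\in\mathcal{B}_i$. These $\sum_i t_i$ vertices then form a blue copy of $K_{t_1,\ldots,t_\ell}$: within a level the $\bY$-parts are pairwise incomparable by antichain-ness of $\mathcal{B}_i$, and across levels $i<j$ the $\bY$-part strictly grows while the $\bX$-part is constant.

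\textbf{Extracting blue vertices via the Chain Lemma.} For each $\vec{B}=(B_1,\ldots,B_\ell)\in\prod_i\mathcal{B}_i$, consider linear orderings $\tau=(\sigma_1,\ldots,\sigma_\ell)$ of $\bY$ in which $\sigma_i$ is an ordering of $\bY_i$ that lists the elements of $B_i$ first (in any order), followed by $\bY_i\setminus B_i$. Applying the Chain Lemma to each such $\tau$ (using that the absence of a red $Q_n$ precludes a red $\bX$-good $\QQ(\bX)$) yields a blue $\bY$-chain, and at position $|\bY^{<i}|+|B_i|$ this chain contains a blue vertex $X_i^\tau\cup\bY^{<i}\cup B_i$ with $X_1^\tau\subseteq\cdots\subseteq X_\ell^\tau\subseteq\bX$.

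\textbf{Pigeonhole coordination.} For each fixed $\vec{B}$, the number of valid orderings is $\prod_{i=1}^\ell \lfloor k_i/2\rfloor!\,\lceil k_i/2\rceil!$, which by Stirling's formula~\eqref{eq:stirlings} dominates $2^n$ raised to the power of the number of pigeonhole steps, precisely when $k_i\ge(2+o(1))n/\log n$. Iterating pigeonhole on the $\bX$-parts $X_i^\tau$ level by level, then over the $\prod_i t_i$ choices of $\vec{B}$, one distills a single $X^*\subseteq\bX$ that serves as the common $\bX$-part for every target. The step converting "$X_i^\tau\cup\bY^{<i}\cup B_i$ is blue" to "$X^*\cup\bY^{<i}\cup B_i$ is blue" exploits the nesting $X_i^\tau\subseteq X^*$ together with a further invocation of the Chain Lemma on the upper sub-lattice above $X_i^\tau\cup\bY^{<i}\cup B_i$, where the absence of a red $Q_n$ again forces a blue chain whose bottom is the desired vertex.

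\textbf{Main obstacle.} The delicate part is the simultaneous coordination of pigeonhole across the $\ell$ levels and the $\prod_i t_i$ tuples $\vec{B}$: a single ordering only pins down one blue vertex per level, and the $\bX$-parts vary with both $\tau$ and $\vec{B}$. The factor $\ell$ in the bound reflects the $\ell$ successive pigeonhole applications (one per level), each demanding that the ordering count exceed the number of possible $\bX$-parts raised to a constant power. The specific choice $k_i=(2+o(1))n/\log n$ is dictated by balancing $(k_i/2)!^2$ against $2^n$, exactly mirroring the $R(Q_2,Q_n)$ bound of~\cite{GMT}.
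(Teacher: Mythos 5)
Your plan shares the paper's general flavor—Chain Lemma plus pigeonhole—but there is a genuine gap at the heart of it, which you yourself flag as ``the delicate part.'' Your target is a single $X^*\subseteq\bX$ such that $X^*\cup\bY^{<i}\cup B$ is blue simultaneously for all $i\in[\ell]$ and all $B\in\mathcal{B}_i$, i.e.\ for $\sum_i t_i$ different $(i,B)$ pairs. But the Chain Lemma, applied to one ordering $\tau$, produces only one blue vertex per level (the vertex at position $|\bY^{<i}|+|B_i|$, whose $\bY$-part is $\bY^{<i}\cup B_i$ for the specific $B_i$ dictated by $\tau$). Varying $B_i\in\mathcal{B}_i$ while keeping $i$ fixed requires disjoint families of orderings, and the $\bX$-part $X_i^\tau$ returned by the Chain Lemma depends on the full ordering $\tau$, not just on its first $|\bY^{<i}|+|B_i|$ elements. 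So a pigeonhole ``within each $\vec{B}$'' collapses the orderings consistent with one $\vec{B}$ to a common tuple, but says nothing about agreement of $\bX$-parts across different $\vec{B}$; and there is no obvious counting bound that forces agreement across the $\prod_i t_i$ tuples. Your proposed fix---invoking the Chain Lemma again on the upper sub-lattice above $X_i^\tau\cup\bY^{<i}\cup B_i$---does not achieve the conversion you want: a blue chain obtained there starts at the given blue vertex, not at a modified one with $\bX$-part $X^*\supseteq X_i^\tau$, so it cannot certify that $X^*\cup\bY^{<i}\cup B_i$ is blue.

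The paper avoids this coordination problem entirely by a different decomposition. First it proves the bound for the special case of \emph{spindles} $S_{r,s,t}=K_{1,\dots,1,s,1,\dots,1}$ (Theorem~\ref{thm:QnS}): here the Chain Lemma is applied over \emph{all} $k!$ orderings of a single set $\bY$, then pigeonhole pins down the $r+t$ endpoint vertices of the chains, and the middle antichain of size $s$ is extracted via Dilworth's theorem (or a contradiction is reached by a counting argument on $\bY$-parts). Crucially, one does not pre-select a middle antichain $\mathcal{B}_i$ of prescribed $\bY$-parts; the antichain emerges from the union of the chains. Second, the paper reduces the general $K_{t_1,\dots,t_\ell}$ to spindles via the \emph{gluing lemma}: $R(P_1\bw P_2,Q_n)\le R(P_1,Q_{R(P_2,Q_n)})$ (Lemma~\ref{lem:gluing}, Corollary~\ref{cor:gluing}), applied inductively to $K_{1,t,1,t,\dots,1,t,1}=K_{1,t,1}\bw K_{1,t,1,\dots,1,t,1}$, which multiplies the $\big(1+\tfrac{2+o(1)}{\log n}\big)$ factors and yields the $\ell$ in the final bound. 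Your proposal has no analogue of either the Dilworth step (so you cannot get the $s$-antichain ``for free'' from the chain structure) or the gluing reduction (so you are forced into the simultaneous multi-level pigeonhole that does not close). To repair your argument you would essentially need to re-derive one of these two ingredients.
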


\noindent In fact, our proof also holds if the parameters of a complete multipartite poset $K$ depend on $n$.

\begin{theorem}\label{thm:QnK-gen}
For large $n\in\N$, let $\ell=\ell(n)$ be an integer such that $\ell\ge 2$ and $\ell=o(\log n)$. 
For $i\in\{1,\dots,\ell\}$, let $t_i=t_i(n)$ be an integer with $\sup_{i\in[\ell]} t_i =n^{o(1)}$. Then
$$R(K_{t_1,\dots,t_\ell},Q_n)\le n\left(1+\frac{2+o(1)}{\log n}\right)^\ell\le n+\frac{\big(2+o(1)\big)\ell n}{\log n}.$$
\end{theorem}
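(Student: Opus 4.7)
The plan is to prove the bound by induction on $\ell\ge 1$. For the base case $\ell=1$, the poset $K_{t_1}$ coincides with the antichain $A_{t_1}$, which is the parallel composition of $t_1$ single-vertex posets. Theorem~\ref{lem:parallel} together with Theorem~\ref{thm:alpha} gives $R(A_{t_1},Q_n)\le n+\alpha(t_1)=n+o(\log n)$, using $t_1=n^{o(1)}$; this is well within the target bound $n(1+(2+o(1))/\log n)$.

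For the inductive step, write $M:=R(K_{t_2,\dots,t_\ell},Q_n)$ and $N:=M(1+(2+o(1))/\log n)$, so that $k:=N-M=(2+o(1))M/\log n$. Consider any blue/red coloring of $\QQ(\bX\cup\bY)$ with $|\bX|=M$ and $|\bY|=k$. The central reduction is the following: if one can find $t_1$ pairwise incomparable blue vertices $B_1,\dots,B_{t_1}$ each contained in $\bY$, then $|\bigcup_i B_i|\le|\bY|=k$, so the upper interval $[\bigcup_i B_i,\bX\cup\bY]$ is a Boolean lattice of dimension at least $N-k=M=R(K_{t_2,\dots,t_\ell},Q_n)$, and the inductive hypothesis applied to this interval yields either a red $Q_n$ (done) or a blue $K_{t_2,\dots,t_\ell}$. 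In the latter case, stacking the blue $K_{t_2,\dots,t_\ell}$ on top of the $B_i$'s gives a blue $K_{t_1,\dots,t_\ell}$, since every vertex of the upper blue poset contains $\bigcup_i B_i\supseteq B_i$.

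The main task---and the hardest step---is to show that any coloring of $\QQ(\bX\cup\bY)$ contains either a red $Q_n$ or the desired blue antichain $B_1,\dots,B_{t_1}$ in $\QQ(\bY)$. The plan is to examine the middle layer of $\QQ(\bY)$, viewed inside $\QQ(\bX\cup\bY)$ as the vertices with empty $\bX$-part and $\bY$-part of size $\lfloor k/2\rfloor$, which has size $\binom{k}{\lfloor k/2\rfloor}=2^{k-O(\log k)}$ by Proposition~\ref{prop:binom}. If at least $t_1$ of these vertices are blue, they form the required antichain. Otherwise, most middle-layer vertices of $\QQ(\bY)$ are red; for each such red $V\subseteq\bY$, I would apply the Chain Lemma to the upper cone $\{W:W\supseteq V\}\cong\QQ(\bX\cup(\bY\setminus V))$, partitioned into $\bX$ and $\bY\setminus V$, which gives either a red $\bX$-good copy of $\QQ(\bX)$ (containing a red $Q_M\supseteq Q_n$, done) or a blue $(\bY\setminus V)$-chain whose top vertex lies in the upper slab $\{W:W\supseteq\bY\}\cong\QQ(\bX)$. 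The main obstacle is to aggregate these top vertices over the many red middle $V$'s so as to either build a red $Q_n$ inside the upper slab or extract $t_1$ pairwise incomparable blue vertices compatible with the reduction above; the delicate quantitative interplay between the parameters $\binom{k}{\lfloor k/2\rfloor}$, $2^M$, $t_1$, and the chain length $M+1$ is what pins down the constant $2$ in $(2+o(1))/\log n$, and iterating the multiplicative step $\ell$ times produces the asserted bound $n(1+(2+o(1))/\log n)^\ell$.
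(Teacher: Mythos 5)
Your inductive strategy—peel off the bottom antichain $K_{t_1}$, find a blue copy of it low in $\QQ(\bX\cup\bY)$, and stack a blue $K_{t_2,\dots,t_\ell}$ on top—is a genuinely different route from the paper's, which first handles the diamond $K_{1,t,1}$ (a spindle) via the Chain Lemma, Dilworth's theorem, and a counting argument, and then obtains the general $K_{t_1,\dots,t_\ell}$ by noting that it embeds in $K_{1,t,1,t,\dots,1,t,1}$ and repeatedly applying the gluing bound $R(P_1\bw P_2,Q_n)\le R(P_1,Q_{R(P_2,Q_n)})$ (Lemma \ref{lem:gluing}, Corollary \ref{cor:gluing}).

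However, there is a genuine gap. The ``main task'' you state—that any blue/red coloring of $\QQ(\bX\cup\bY)$ with $|\bX|=M$ and $|\bY|=k$ contains either a red copy of $Q_n$ or a blue antichain $B_1,\dots,B_{t_1}$ consisting of subsets of $\bY$—is false. Take the layered coloring in which every $Z$ with $|Z|\le k$ is red and every $Z$ with $|Z|>k$ is blue. Every subset of $\bY$ has size at most $k$, so all of $\QQ(\bY)$ is red and there is no blue vertex of any kind inside $\QQ(\bY)$; on the other hand, the red vertices occupy only $k+1$ layers, and since $k=(2+o(1))M/\log n\ll n$, there is no red copy of $Q_n$ either. (This coloring is not a counterexample to the theorem, since its blue part contains a blue $Q_{M-1}\supseteq K_{t_1,\dots,t_\ell}$, but it kills your reduction.) The requirement that the $B_i$ be subsets of $\bY$ is essential to your stacking step—you need $|\bigcup_iB_i|\le k$ so that the interval above $\bigcup_iB_i$ has dimension at least $M$—so simply relaxing to ``some blue antichain somewhere'' does not help: the top vertices of the $(\bY\setminus V)$-chains you propose all contain $\bY$ plus arbitrary subsets of $\bX$, and their union can have size close to $N$, leaving no room for the inductive copy of $K_{t_2,\dots,t_\ell}$ above them. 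The paper avoids this trap precisely by \emph{not} insisting that the antichain live in $\QQ(\bY)$: in the spindle proof the blue $A_s$ is extracted from the union of many blue $\bY$-chains (whose vertices have nontrivial $\bX$-part), and the ``room above/below'' problem is solved by first pigeonholing to chains sharing the same top and bottom, so that the antichain sits strictly between two fixed vertices. The $\ell$-fold multiplicative structure then comes from gluing, not from a single Boolean lattice split into $\bX$ and $\bY$. You would need to replace your intermediate claim with something along those lines, or with a Chain-Lemma-plus-Dilworth argument that explicitly tracks both the $\bX$-part and $\bY$-part of the blue vertices, before the induction can close.
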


\noindent Recall that a complete multipartite poset is an antichain if $\ell=1$, and a chain if $t_i=1$ for every $i\in[\ell]$.
In both of these special cases, Theorem \ref{thm:QnK} gives a weaker bound than Theorem \ref{thm:general} and Corollary \ref{cor:chain}, respectively.

\begin{figure}[h]
\centering
\includegraphics[scale=0.62]{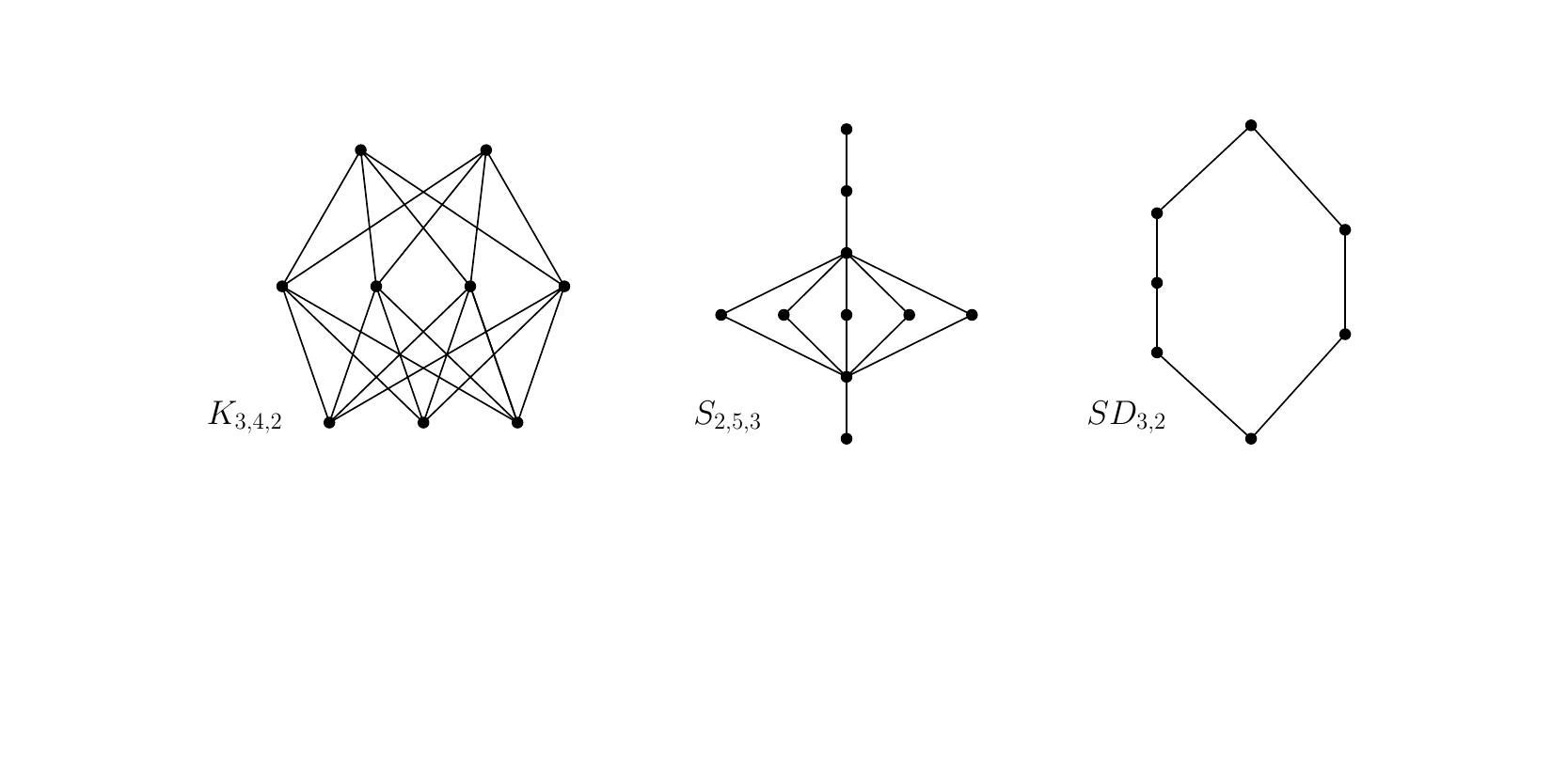}
\caption{Hasse diagrams of $K_{3,4,2}$, $S_{2,5,3}$, and $SD_{3,2}$.}
\label{fig:K_SD} \label{fig:spindle}
\end{figure}

As an intermediate step in proving Theorem \ref{thm:QnK-gen},
we shall first consider a special complete multipartite poset that we call a \textit{spindle}.
Let $r\ge 0$, $s\ge1$, and $t\ge 0$ be integers.
Recall that an \textit{$(r,s,t)$-spindle}\index{spindle} $S_{r,s,t}$ is the complete multipartite poset $K_{t'_1,\dots,t'_{r+1+t}}$, where
 $t'_1,\dots,t'_r=1$ and $t'_{r+1}=s$ and $t'_{r+2},\dots,t'_{r+1+t}=1$, see Figure \ref{fig:spindle}.
If $r=0$ or $t=0$, the respective layers are omitted.
 

\begin{theorem}\label{thm:QnS}
For $n\in\N$, let $r$ and $t$ be non-negative integers with $1\le r+t=o(\sqrt{\log n})$. Let $s$ be a positive integer with $s=n^{o(1)}$. Then 
$$R(S_{r,s,t},Q_n)\le n+\frac{\big(1+o(1)\big)(r+t)n}{\log n}.$$
\end{theorem}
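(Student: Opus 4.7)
The approach is to generalize the proof of $R(Q_2,Q_n) \le n + (2+o(1))n/\log n$ by Gr\'osz, Methuku, and Tompkins \cite{GMT}. Since $Q_2 \cong S_{1,2,1}$, the case $(r,s,t)=(1,2,1)$ of Theorem~\ref{thm:QnS} recovers their result exactly. The coefficient $(r+t)$ in the target bound reflects the fact that a spindle decomposes into a bottom chain of length $r$ and a top chain of length $t$, each contributing roughly $(1+o(1))n/\log n$ in overhead beyond $n$, while the middle antichain of size $s$ adds only the lower-order term $\alpha(s)=(1+o(1))\log s = o(\log n)$ by Theorem~\ref{thm:alpha} together with the hypothesis $s = n^{o(1)}$.

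\textbf{Setup.} Set $m_r = (1+o(1))rn/\log n$, $m_t = (1+o(1))tn/\log n$, and $m_s = \alpha(s)$, so that $N = n + m_r + m_s + m_t \le n + (1+o(1))(r+t)n/\log n$. Let $\bX$, $\bY_r$, $\bY_s$, $\bY_t$ be pairwise disjoint sets of cardinalities $n$, $m_r$, $m_s$, $m_t$, and write $\bY = \bY_r \cup \bY_s \cup \bY_t$. Fix an arbitrary blue/red coloring of $\QQ(\bX \cup \bY)$; assuming no red induced copy of $Q_n$ exists, we must produce a blue induced copy of $S_{r,s,t}$.

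\textbf{Three-piece construction via the Chain Lemma.} The blue spindle will be assembled from a bottom chain $W_1 \subsetneq \cdots \subsetneq W_r$ whose $\bY$-parts lie in $\bY_r$, a middle antichain $A_1,\dots,A_s$ whose $\bY_s$-parts form an antichain in $\QQ(\bY_s)$ (which exists because $|\bY_s| = \alpha(s)$), and a top chain $U_1 \subsetneq \cdots \subsetneq U_t$ whose $\bY$-parts contain $\bY_r \cup \bY_s$. The incomparability of the $A_i$'s is automatic: vertices of $\QQ(\bX \cup \bY)$ with incomparable $\bY_s$-parts are themselves incomparable. Fix orderings $\tau_r$ of $\bY_r$ and $\tau_t$ of $\bY_t$ and a size-$s$ antichain $\{S_1,\dots,S_s\} \subseteq \QQ(\bY_s)$. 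For each $S \subseteq \bY_s$, the slice $\cF_S = \{V \in \QQ(\bX \cup \bY) : V \cap \bY_s = S\}$ is isomorphic to $\QQ(\bX \cup \bY_r \cup \bY_t)$. Apply the Chain Lemma (Lemma~\ref{lem:chain}) to $\cF_S$ with base $\bX$, chain-part $\bY_r \cup \bY_t$, and ordering $\tau_r$ followed by $\tau_t$; under the assumption of no red $Q_n$, each application yields a blue $(\bY_r \cup \bY_t)$-chain $\cC_S$. From these, take $(W_1,\dots,W_r)$ as the first $r$ vertices of $\cC_\varnothing$, each $A_i$ as the level-$m_r$ vertex of $\cC_{S_i}$ (so that $A_i \cap \bY = \bY_r \cup S_i$), and $(U_1,\dots,U_t)$ as the last $t$ vertices of $\cC_{\bY_s}$.

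\textbf{Main obstacle.} The principal difficulty is $\bX$-part compatibility across slices: to secure $W_r \subsetneq A_i \subsetneq U_1$ for every $i$, one needs $W_r \cap \bX \subseteq A_i \cap \bX \subseteq U_1 \cap \bX$, yet the Chain Lemma forces the $\bX$-parts to be weakly increasing only within each individual chain $\cC_S$, so chains in different slices can be $\bX$-wise unrelated. Overcoming this coupling is the technical heart of the proof. One plausible resolution is to apply the Chain Lemma not slice-by-slice but to a single carefully chosen sub-lattice that simultaneously encodes all $s$ antichain elements, so that the resulting blue chain (or a small family of chains arising from a pigeonhole argument over many orderings of $\bY$) carries the entire skeleton with consistent $\bX$-parts, while the red alternative still delivers a red $Q_n$. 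Combined with the dimensional accounting $m_r + m_s + m_t = (1+o(1))(r+t)n/\log n$, which relies crucially on $m_s = o(\log n)$, this yields the stated upper bound.
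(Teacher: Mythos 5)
You correctly identify the asymptotic budget (the middle antichain only costs $\alpha(s)=o(\log n)$ extra dimension) and you correctly diagnose that the slice-by-slice use of the Chain Lemma produces chains with uncontrolled $\bX$-parts, so that there is no guarantee that $W_r\subsetneq A_i\subsetneq U_1$. But after naming this as ``the technical heart of the proof,'' you do not actually supply an argument for it; the paragraph that follows is a speculation about what \emph{might} work, not a proof of anything. That is the gap, and it is exactly the hard part of the theorem.

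The paper does not split $\bY$ into $\bY_r\cup\bY_s\cup\bY_t$, and it does not reserve a $\bY_s$-block to host a pre-chosen antichain. Instead it works with a single $\bY$ of size $k=k(n,r,s,t)$, chosen via Lemma~\ref{lem:QnK_comp} so that $k!>2^{(r+t)(n+k)}\cdot(s-1)^{k+1}$. Assuming there is no red copy of $Q_n$, the Chain Lemma yields one blue $\bY$-chain $\cC^\tau$ for each of the $k!$ linear orderings $\tau$ of $\bY$. Because the $r$ smallest and $t$ largest vertices of a chain together form a tuple in one of at most $2^{(r+t)(n+k)}$ configurations, pigeonhole gives $m=(s-1)^{k+1}+1$ distinct orderings whose chains all share the same bottom $r$ vertices $Z_0,\dots,Z_{r-1}$ and top $t$ vertices $Z_{k-t+1},\dots,Z_k$. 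Let $\cP$ be the union of those $m$ chains; it is blue, and every $Z_i$ in the shared part is comparable to every vertex of $\cP$. If $\cP$ contains an antichain of size $s$, that antichain avoids the shared $Z_i$'s and together with them forms a blue $S_{r,s,t}$, with comparability built in because everything lives on a common chain skeleton --- no cross-slice $\bX$-compatibility issue ever arises. If $\cP$ contains no antichain of size $s$, Dilworth's theorem covers $\cP$ by $s-1$ chains; within each covering chain, the $\bY$-part of a vertex on level $\ell$ is uniquely determined, so each $\bY$-chain $\cC^{\tau_j}$ is determined (through its $\bY$-parts) by one of at most $(s-1)^{k+1}$ sequences of choices, forcing two of the $m$ orderings to coincide --- a contradiction. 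That Dilworth-based counting is what actually closes the argument, and it is entirely absent from your proposal.

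So: your decomposition sets up an obstacle that the paper's uniform-$\bY$ argument never creates, and the ``plausible resolution'' you sketch (pigeonhole over orderings of $\bY$) is in fact the germ of the real proof, but without the shared-endpoint pigeonhole and the Dilworth step it does not constitute a proof.
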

\noindent Note that $S_{1,s,1}$ describes the same poset as an $s$-diamond $D_s$, while the spindle $S_{1,s,0}$ is an $s$-fork $V_s$. 
For these posets, Theorem \ref{thm:QnS} implies stronger bounds than Theorem \ref{thm:QnK}.
\begin{corollary}\label{cor:QnVs}\label{cor:QnDs}
Let $s\in\N$ with $s=n^{o(1)}$ for $n\in\N$. Then
$$R(D_s,Q_n)\le n+\frac{\big(2+o(1)\big)n}{\log n}\qquad\text{ and }\qquad R(V_s,Q_n)\le n+\frac{\big(1+o(1)\big)n}{\log n}.$$
\end{corollary}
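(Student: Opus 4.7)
The plan is to derive both bounds directly from Theorem \ref{thm:QnS}, after identifying $D_s$ and $V_s$ as particular spindles.

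First I would recall from the catalog in Section \ref{sec:catalog} that a diamond satisfies $D_s \cong S_{1,s,1}$, and a fork satisfies $V_s \cong K_{1,s} \cong S_{1,s,0}$. So both posets fit into the spindle framework with small parameters: for $D_s$ we have $r=t=1$, giving $r+t=2$; for $V_s$ we have $r=1$, $t=0$, giving $r+t=1$.

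Next I would verify the hypotheses of Theorem \ref{thm:QnS} in each case. The assumption $s=n^{o(1)}$ on the antichain parameter is exactly what the corollary assumes. The assumption $1 \le r+t = o(\sqrt{\log n})$ is trivial because $r+t \in \{1,2\}$ is a fixed constant, and in particular $r+t = o(\sqrt{\log n})$ as $n \to \infty$.

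Plugging these values into the conclusion of Theorem \ref{thm:QnS} yields
\[
R(D_s, Q_n) = R(S_{1,s,1}, Q_n) \le n + \frac{\bigl(1+o(1)\bigr)(1+1)\,n}{\log n} = n + \frac{\bigl(2+o(1)\bigr)n}{\log n},
\]
and
\[
R(V_s, Q_n) = R(S_{1,s,0}, Q_n) \le n + \frac{\bigl(1+o(1)\bigr)(1+0)\,n}{\log n} = n + \frac{\bigl(1+o(1)\bigr)n}{\log n},
\]
which gives both claimed bounds. Since the statement is a specialization of the preceding theorem, there is no real obstacle — the only thing to check carefully is the identification $V_s \cong S_{1,s,0}$ (or equivalently $S_{0,s,1}$, in which case the chain below the antichain has length $0$ and is omitted), to confirm that $r+t = 1$ rather than $0$, so that the factor of $1$ (not $0$) appears in front of $n/\log n$.
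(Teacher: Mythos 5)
Your proof is correct and follows exactly the paper's approach: identify $D_s \cong S_{1,s,1}$ and $V_s \cong S_{1,s,0}$, check that $s = n^{o(1)}$ and $r+t \in \{1,2\}$ satisfies the hypotheses of Theorem \ref{thm:QnS}, and read off the bounds. One small caveat: your parenthetical claim that $V_s \cong S_{0,s,1}$ is not quite right as stated---$S_{0,s,1}$ is the dual of $V_s$ (an antichain with a unique \emph{maximal} vertex above it, a $\Lambda$-shape), not isomorphic to $V_s$ itself; it is true that $R(S_{0,s,1},Q_n) = R(S_{1,s,0},Q_n)$ by the order-reversing symmetry of $Q_n$, but that is a Ramsey-number identity, not a poset isomorphism, and in any case the aside is unnecessary since $V_s \cong S_{1,s,0}$ already does the job.
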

For $s,t\in\N$, a \textit{$(s,t)$-subdivided diamond}\index{subdivided diamond} $SD_{s,t}$ is the poset obtained from two parallel, i.e., element-wise incomparable, chains of length $s$ and $t$, respectively, by adding a vertex smaller than all others as well as a vertex larger than all others, see Figure \ref{fig:K_SD}. Note that $Q_2\cong SD_{1,1}$. 
\begin{theorem}\label{thm:QnSD}
Let $n\in\N$. Let $s=s(n)$ and $t=t(n)$ be positive integers such that $s+t=o(\log \log n)$. Then
$$R(SD_{s,t},Q_n)\le n + \frac{\big(2+o(1)\big)n}{\log n}.$$
\end{theorem}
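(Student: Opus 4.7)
The plan is to adapt the argument of Gr\'osz, Methuku, and Tompkins~\cite{GMT} who proved $R(Q_2, Q_n) \le n + (2+o(1))n/\log n$; note that $Q_2 = SD_{1,1}$, so the theorem generalizes their bound to longer parallel chains. Set $N = n + k$ with $k = \lceil (2+\varepsilon(n)) n / \log n \rceil$ for some $\varepsilon(n) \to 0$ decaying slowly. Partition $[N] = \bX \sqcup \bY$ with $|\bX| = n$ and $|\bY| = k$, and fix an arbitrary blue/red coloring of $\QQ([N])$. Applying the Chain Lemma (Lemma~\ref{lem:chain}) to every linear ordering of $\bY$, we either obtain a red $\bX$-good copy of $\QQ(\bX)$ (giving the required red $Q_n$), or for every linear ordering $\tau$ of $\bY$ there is a blue $\bY$-chain $\cC_\tau$ in the coloring. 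Assume the latter.

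The central structural idea is the following: if we find two blue $\bY$-chains $\cC_{\tau_1}$ and $\cC_{\tau_2}$ sharing a common minimum vertex $X_0\cup\varnothing$ and a common maximum vertex $X_k\cup\bY$, and such that the set of the first $s$ elements of $\tau_1$ is disjoint from the set of the first $t$ elements of $\tau_2$, then the first $s$ interior vertices of $\cC_{\tau_1}$ and the first $t$ interior vertices of $\cC_{\tau_2}$ form two parallel blue chains of lengths $s$ and $t$ whose $\bY$-parts are pairwise incomparable subsets of $\bY$. The incomparability of the $\bY$-parts forces the full vertices to be incomparable in $\QQ([N])$ regardless of the $\bX$-parts, and together with the common minimum and maximum (blue, being endpoints of blue chains), this yields a blue induced copy of $SD_{s,t}$.

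It remains to produce such a pair of orderings among the $k!$ blue $\bY$-chains provided by Step~1. By Stirling's formula (Equation~\eqref{eq:stirlings}), $k! = 2^{(2+o(1))n}$, whereas the number of endpoint pairs $(X_0, X_k)$ with $X_0 \subseteq X_k$ in $\QQ(\bX)$ is exactly $3^n = 2^{(\log_2 3)n}$. Since $\log_2 3 < 2$, pigeonhole yields an exponentially large collection $\cT$ of orderings sharing a common endpoint pair. Within $\cT$, a more refined argument -- made possible by the hypothesis $s + t = o(\log\log n)$, which keeps the relevant prefix space small enough -- locates two orderings whose first $s$ and first $t$ elements form disjoint subsets of $\bY$, completing the construction.

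\textbf{Main obstacle.} The most delicate step is extracting the two desired orderings within $\cT$: the mere size of $\cT$ is not sufficient for a naive pigeonhole on $(s+t)$-prefixes to force the required disjointness, since the density of distinct initial prefixes among orderings in $\cT$ need not be automatically high. The argument must exploit additional structure from $\cT$, for instance through probabilistic averaging across random pairs of orderings, or through an iterated application of the Chain Lemma inside a suitable sub-Boolean lattice. The hypothesis $s + t = o(\log\log n)$ provides exactly the slack needed for this refined step, simultaneously balancing the earlier endpoint pigeonhole (which consumes a factor of $3^n$) against the final prefix-disjointness requirement.
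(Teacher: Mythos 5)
Your setup — apply the Chain Lemma to every linear ordering of $\bY$, pigeonhole the $k!$ resulting blue $\bY$-chains on their common minimum and maximum vertices, then build a blue $SD_{s,t}$ from two chains whose interior $\bY$-parts are pairwise incomparable — is the right framework, and it does match the paper's opening moves. Your structural observation is also sound: if the first $s$ elements of $\tau_1$ and the first $t$ elements of $\tau_2$ are disjoint sets, then the corresponding interior vertices have incomparable $\bY$-parts, hence are incomparable in $\QQ([N])$, giving the two parallel chains of the subdivided diamond. Your refined count of $3^n$ endpoint pairs (using $X_0\subseteq X_k\subseteq\bX$) is even slightly sharper than the paper's $2^{2(n+k)}$, though neither helps where the proof stalls.

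The gap you flag is real, and unfortunately it is fatal to the disjoint-prefix route as stated. If every pair of orderings in $\cT$ had intersecting first-$t$-element sets, then those sets form an intersecting family in $\binom{\bY}{t}$, so by Erd\H{o}s–Ko–Rado there are at most $\binom{k-1}{t-1}$ distinct such sets; multiplying by the $t!\,(k-t)!$ orderings per set gives $|\cT|\le t\,(k-1)! = (t/k)\,k!$. But the pigeonhole only guarantees $|\cT|\ge k!/3^n$, and $k!/3^n \ll (t/k)\,k!$ (the left side loses an exponential factor in $n$, the right side only a factor of roughly $\log n / n$). So there is no contradiction available, and $\cT$ could in principle consist entirely of orderings with pairwise intersecting $t$-prefixes. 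The appeal to ``probabilistic averaging'' or ``iterated Chain Lemma'' does not repair this: you need more than cardinality to rule out a heavily clustered $\cT$.

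The paper's fix is to look not at the initial prefix of length $t$ but at the first position $i$ where two chains \emph{diverge}. Fix a reference ordering $\sigma\in\cT$; say $\tau$ is $t$-close to $\sigma$ if for every $i$, the set of first $i$ elements under one ordering is contained in the first $i+t$ elements under the other. If some $\tau\in\cT$ is \emph{not} $t$-close to $\sigma$, then at the witnessing index $i$ the two chains $\cC^\sigma$ and $\cC^\tau$ contain length-$t$ segments $Z^\sigma_i,\dots,Z^\sigma_{i+t-1}$ and $Z^\tau_i,\dots,Z^\tau_{i+t-1}$ that are pairwise incomparable (the $\bY$-parts on one side contain an element the other side cannot have acquired yet), and these, together with the shared endpoints, form a blue $SD_{t,t}$. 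If instead every $\tau\in\cT$ is $t$-close to $\sigma$, one shows (after relabelling so $\sigma=(1,\dots,k)$) that each corresponding permutation is $(2t+2)$-proper, and the paper's Lemma~\ref{lem:count_perm} bounds the number of $r$-proper permutations by $2^{(r+\log r)k}$; a computational lemma (Lemma~\ref{lem:QnSD_comp}) verifies that $|\cT|$ exceeds this bound, a contradiction. This permutation-counting lemma — proved via an interval-graph coloring encoding — is the nontrivial ingredient missing from your proposal, and it is precisely what the hypothesis $s+t=o(\log\log n)$ is calibrated to support.
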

\medskip

Note that, if $P'$ is a subposet of a poset $P$, then $R(P',Q_n)\le R(P,Q_n)$.
In particular, Theorem~\ref{thm:QnSD} implies a Ramsey bound for the hook-shaped poset $\pJ$, which is a subposet of~$SD_{1,2}$.

\begin{corollary}\label{cor:J}
For $n$ sufficiently large, $R(\pJ,Q_n)\le R(SD_{1,2},Q_n)\le n+ \big(2+o(1)\big)\frac{n}{\log n}$.
\end{corollary}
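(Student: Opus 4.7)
The plan is to combine two facts that are already in place: the monotonicity statement quoted just before the corollary, namely that $R(P',Q_n)\le R(P,Q_n)$ whenever $P'$ is an induced subposet of $P$, together with Theorem~\ref{thm:QnSD} applied to $SD_{1,2}$. So the only real content is verifying that $\pJ$ sits inside $SD_{1,2}$ as an induced subposet; once this is done, the quantitative bound is purely a matter of plugging in constants.

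First I would unpack $SD_{1,2}$: it has a unique minimum $\bot$, a unique maximum $\top$, a single ``short-chain'' vertex $m$ with $\bot <m<\top$, and a ``long-chain'' consisting of $c_1<c_2$ with $\bot<c_1<c_2<\top$, where $m$ is incomparable to both $c_1$ and $c_2$. Recalling that $\pJ$ has vertices $W,X,Y,Z$ with $X<W$, $X<Y<Z$, and $W$ incomparable to $Y$ and $Z$, I would send
\[
X\mapsto \bot,\qquad W\mapsto m,\qquad Y\mapsto c_1,\qquad Z\mapsto c_2.
\]
Checking the six pairs in $SD_{1,2}$: $\bot<m$, $\bot<c_1<c_2$, while $m\inc c_1$ and $m\inc c_2$ by the ``parallel chains'' construction. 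This matches the relations of $\pJ$ exactly, so $\{\bot,m,c_1,c_2\}$ is an induced copy of $\pJ$ inside $SD_{1,2}$, giving $R(\pJ,Q_n)\le R(SD_{1,2},Q_n)$.

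For the second inequality I would invoke Theorem~\ref{thm:QnSD} with the constants $s=1$ and $t=2$. The hypothesis $s+t=o(\log\log n)$ is trivially met since $s+t=3$ is constant, so the theorem yields $R(SD_{1,2},Q_n)\le n+\bigl(2+o(1)\bigr)\tfrac{n}{\log n}$ for $n$ sufficiently large. Concatenating the two inequalities completes the proof.

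There is no substantial obstacle: all the heavy lifting was carried out inside Theorem~\ref{thm:QnSD}. The only step that requires a moment's attention is making sure the embedding $\pJ\hookrightarrow SD_{1,2}$ is \emph{induced} rather than merely weak, i.e.\ that no additional comparabilities are inherited from $SD_{1,2}$; but this is immediate once one observes that the chosen image vertices lie in the two parallel chains plus $\bot$, and $m$ is incomparable to the entire long chain by construction.
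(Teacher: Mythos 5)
Your proof is correct and follows exactly the same approach as the paper: the paper simply notes that $\pJ$ is a subposet of $SD_{1,2}$ (without spelling out the embedding $X\mapsto\bot$, $W\mapsto m$, $Y\mapsto c_1$, $Z\mapsto c_2$) and then invokes the monotonicity of $R(\cdot,Q_n)$ under induced subposets together with Theorem~\ref{thm:QnSD}. You merely carried out the induced-embedding check explicitly, which is fine but not a different argument.
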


We remark that an improved lower bound on $R(P,Q_n)$ for general posets $P$ is shown in Theorem \ref{thm-MAIN}. 
By this result, we see that Theorems \ref{thm:QnK} and \ref{thm:QnSD} are tight in a strong sense, i.e., that $R(P,Q_n)=n+\Theta\left(\frac{n}{\log n}\right)$ if $P$ is a complete multipartite poset or subdivided diamond.
\\

The structure of this chapter is as follows. 
In Section \ref{sec:QnK:prelim}, we recall key definitions and give a summary of our approach. 
In Section \ref{sec:QnK:gluing}, we bound the Ramsey number of posets obtained by \textit{gluing}.
In Sections \ref{sec:QnK:QnS} and \ref{sec:QnK:QnK}, we consider complete multipartite posets, and give proofs of Theorems \ref{thm:QnS} and \ref{thm:QnK-gen}, respectively.
These results are published in \textit{Order}, 2023 \cite{QnK}.
Section \ref{sec:QnK:QnSD} is concerned with subdivided diamonds, where we present a proof of Theorem \ref{thm:QnSD}.
This part of the chapter is published in \textit{Discrete Mathematics}, 2024 \cite{QnPA}. 
We remark that Theorem \ref{thm:QnSD} is a stronger statement than the corresponding result in \cite{QnPA}, but this strengthening is implied by the same proof.


\section{Outline of the approach} \label{sec:QnK:prelim}

Let $\bX$ and $\bY$ be disjoint sets with $|\bX|=n$ and $|\bY|=k$. Let $\tau=(y_1,\dots,y_k)$ be a linear ordering of $\bY$.
Recall that a \textit{$\bY$-chain}\index{$\bY$-chain} corresponding to $\tau$ is a chain on $k+1$ vertices $Z_0\subset \dots \subset Z_k$ such that
$$Z_i\cap \bY =\{y_1,\dots,y_i\}\quad \text{ for }i\in[k], \quad \text{ and } \quad Z_0\cap \bY=\varnothing.$$
In other words, the $\bY$-part of each vertex is determined by the underlying linear ordering.

The proofs of Theorems \ref{thm:QnS} and \ref{thm:QnSD} follow the same approach as the upper bound on $R(Q_2,Q_n)$ by Gr\'osz, Methuku, and Tompkins \cite{GMT}.
To show that $R(P,Q_n)\le n+k$, we consider a blue/red colored $\QQ(\bX\cup\bY)$, i.e., a Boolean lattice on ground set $\bX\cup\bY$, which contains no red copy of $Q_n$.
The Chain Lemma, Lemma \ref{lem:chain}, implies that in this coloring there is a blue $\bY$-chain for every linear ordering of $\bY$, of which there are $k!$.
These blue chains might intersect heavily, but are pairwise distinct.
We shall use a pigeonhole principle argument to find some vertices in some of these chains which form a blue copy of $P$.
In the proof of Gr\'osz, Methuku, and Tompkins \cite{GMT}, i.e., if $P=Q_2$, this was straightforward. If $P$ is a spindle or a subdivided diamond, additional arguments are required.
Theorem \ref{thm:QnK-gen} follows easily from Theorem \ref{thm:QnS} and a lemma on a \textit{gluing operation} for posets, see Lemma \ref{lem:gluing}.


\section{Upper bound on $R(K_{t_1,\dots,t_\ell},Q_n)$}\label{sec:QnK}


\subsection{Gluing operation for posets}\label{sec:QnK:gluing}
By identifying vertices of two otherwise disjoint posets, they can be ``glued together'' creating a new poset. 
Later, we construct complete multipartite posets by gluing spindles on top of each other using the following definition. 
Given a poset $P_1$ with a unique maximal vertex $Z_1$ and a poset $P_2$ disjoint from $P_1$ with a unique minimal vertex $Z_2$, 
let $P_1\bw P_2$ \index{gluing} \index{$\between$} be the poset obtained by identifying $Z_1$ and $Z_2$, see Figure \ref{fig:gluing}.
Formally speaking, $P_1\bw P_2$ is the poset $(P_1\setminus\{Z_1\}) \cup (P_2\setminus \{Z_2\}) \cup \{Z\}$ for a $Z\notin P_1\cup P_2$
where for any two $X,Y\in P_1\bw P_2$, we let $X<_{P_1\hspace{-0.05em}\between\hspace{-0.05em} P_2}Y$ if and only if one of the following five cases hold:
$X,Y\in P_1 $ and $X<_{P_1}Y$; $X,Y\in P_2$ and $X<_{P_2} Y$; $X\in P_1$ and $Y \in P_2$; $X\in P_1$ and $Y=Z$; or $X=Z$ and $Y\in P_2$.

\begin{figure}[h]
\centering
\includegraphics[scale=0.62]{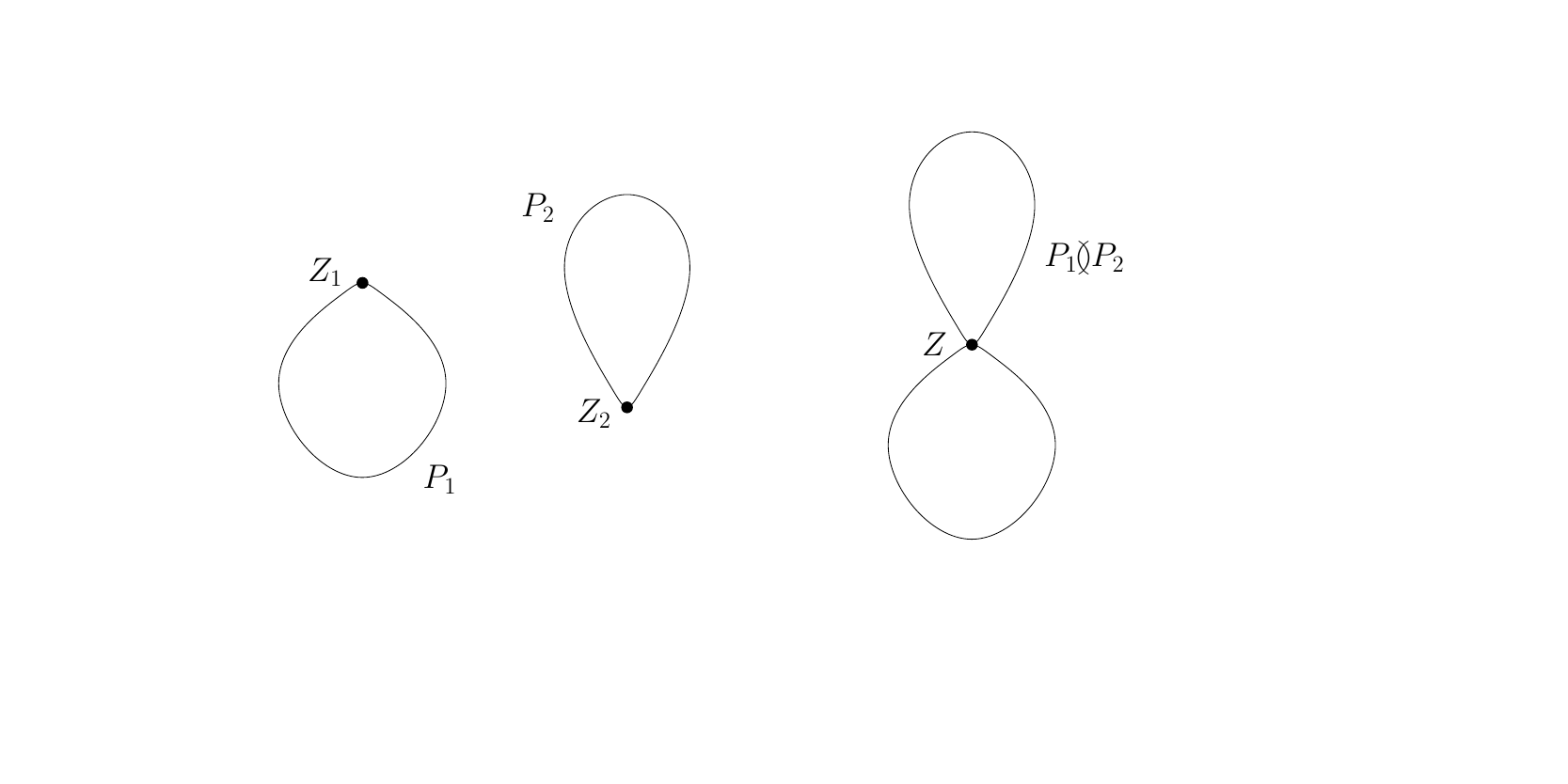}
\caption{Creating $P_1\bw P_2$ from $P_1$ and $P_2$.}
\label{fig:gluing}
\end{figure}

\begin{lemma}\label{lem:gluing}
Let $P_1$ be a poset with a unique maximal vertex and let $P_2$ be a poset with a unique minimal vertex. 
Then $R(P_1\bw P_2,Q_n)\le R(P_1,Q_{R(P_2,Q_n)})$.
\end{lemma}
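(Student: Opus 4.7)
Set $m=R(P_2,Q_n)$ and $N=R(P_1,Q_m)$, and let $c$ be an arbitrary blue/red coloring of $Q_N$; we aim to find either a $c$-blue copy of $P_1\bw P_2$ or a $c$-red copy of $Q_n$. The plan is to introduce an auxiliary blue/red coloring $c'$ of $Q_N$ tailored so that the two outcomes of applying $R(P_1,Q_m)=N$ to $c'$ deliver exactly what we want. Concretely, I would call a vertex $V\in Q_N$ \emph{$c'$-blue} if under $c$ the Boolean lattice $Q_N$ contains a blue copy of $P_2$ whose minimum vertex is $V$, and \emph{$c'$-red} otherwise. Note that being $c'$-blue automatically forces $V$ to be $c$-blue, since the minimum of a blue copy of $P_2$ is itself blue.

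By the choice of $N$, the coloring $c'$ contains either a $c'$-blue copy of $P_1$ or a $c'$-red copy of $Q_m$. Suppose first we obtain a $c'$-blue copy of $P_1$, and let $V^*$ be the image of its unique maximum $Z_1$. By the definition of $c'$-blue, there is then a $c$-blue copy of $P_2$ whose image of the unique minimum $Z_2$ is exactly $V^*$. Gluing the two copies along $V^*$ should produce a $c$-blue copy of $P_1\bw P_2$: every non-top vertex of the $P_1$-image is a strict subset of $V^*$, every non-bottom vertex of the $P_2$-image is a strict superset of $V^*$, so the two images meet only in $\{V^*\}$, and the induced comparabilities between the two slabs in $Q_N$ are forced through $V^*$ in precisely the pattern prescribed by the definition of $P_1\bw P_2$.

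In the remaining case, let $\cR$ be the $c'$-red copy of $Q_m$, regarded as an induced subposet of $Q_N$ with its inherited $c$-coloring. Since $m=R(P_2,Q_n)$, the restriction of $c$ to $\cR\cong Q_m$ contains a $c$-blue copy of $P_2$ or a $c$-red copy of $Q_n$. A $c$-blue copy of $P_2$ in $\cR$ would have its minimum vertex inside $\cR$, and that vertex would be $c'$-red by construction, directly contradicting the existence of such a blue $P_2$ anywhere in $Q_N$. Hence $\cR$ must contain a $c$-red copy of $Q_n$, finishing the argument. I expect the only nontrivial bookkeeping step to be the verification in the first case that the union of the two embedded copies is really an induced copy of $P_1\bw P_2$, with no spurious comparabilities; this should follow routinely from the upper/lower slab structure around $V^*$ together with the injectivity of embeddings into a Boolean lattice.
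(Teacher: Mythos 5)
Your proof is correct, but it takes a genuinely different route from the paper's. The paper argues by contraposition: assuming there is no blue copy of $P_1\bw P_2$, it calls a blue vertex \emph{$P_1$-clear} (resp.\ \emph{$P_2$-clear}) if no blue copy of $P_1$ (resp.\ $P_2$) has it as its maximal (resp.\ minimal) vertex, observes that every blue vertex must be $P_1$-clear or $P_2$-clear, and defines an auxiliary green/yellow coloring with green meaning ``blue and $P_1$-clear.'' No green copy of $P_1$ can exist (its maximum would violate its own $P_1$-clearness), so $R(P_1,Q_m)$ forces a fully yellow $Q_m$; inside it, blue vertices are not $P_1$-clear, hence $P_2$-clear by the dichotomy, hence no blue $P_2$ appears, so $R(P_2,Q_n)$ yields the red $Q_n$. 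Your argument needs neither the contraposition nor the dichotomy: you color $c'$-blue exactly the vertices from which a $c$-blue copy of $P_2$ grows upward, then use \emph{both} outcomes of $R(P_1,Q_m)$ --- gluing explicitly at the top vertex $V^*$ in the $c'$-blue $P_1$ case, and in the $c'$-red $Q_m$ case noting that any blue $P_2$ inside would make its own minimum $c'$-blue, a contradiction. Both proofs are two nested applications of the Ramsey hypothesis against an auxiliary coloring, but the auxiliary colorings differ ($P_1$-clearness versus $P_2$-extensibility), and the paper engineers one branch to be vacuous whereas you exploit both constructively; your version is arguably more transparent, though it transfers the work into the gluing verification, which you carry out correctly since the unique maximal vertex of $P_1$ and unique minimal vertex of $P_2$ force every other vertex of each image to sit strictly below or above $V^*$, pinning down the intersection to $\{V^*\}$ and the comparabilities to exactly those of $P_1\bw P_2$.
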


\begin{proof}
Let $N=R(P_1,Q_{R(P_2,Q_n)})$. Consider a blue/red colored Boolean lattice $\QQ$ of dimension $N$ which contains no blue copy of $P_1\bw P_2$. 
We shall show that there exists a red copy of $Q_n$ in this coloring.
We say that a blue vertex $Z$ in $\QQ$ is \textit{$P_1$-clear} if there is no blue copy of $P_1$ in $\QQ$ containing $Z$ as its maximal vertex.
Similarly, a blue vertex $Z$ is \textit{$P_2$-clear} if there is no blue copy of $P_2$ in $\QQ$ with minimal vertex $Z$.
Observe that every blue vertex is $P_1$-clear or $P_2$-clear (or both), since there is no blue copy of $P_1\bw P_2$.

We introduce an auxiliary coloring of $\QQ$ using colors green and yellow. 
Color all blue vertices which are $P_1$-clear in green and all other vertices in yellow. 
There exists no copy of $P_1$ in which every vertex has color green, since otherwise the maximal vertex of such a copy is green but not $P_1$-clear, a contradiction.
Recall that $N=R(P_1,Q_{R(P_2,Q_n)})$, thus $\QQ$ contains a copy $\QQ'$ of $Q_{R(P_2,Q_n)}$ in which every vertex is yellow.

Consider the original blue/red coloring of $\QQ'$. Every blue vertex of $\QQ'$ is yellow in the auxiliary coloring, thus not $P_1$-clear. 
Therefore, every blue vertex of $\QQ'$ is $P_2$-clear. 
The blue/red coloring of $\QQ'$ does not contain a blue copy of $P_2$, since otherwise the minimal vertex of such a copy is not $P_2$-clear.
Recall that $\QQ'$ is a copy of a Boolean lattice of dimension $R(P_2,Q_n)$, thus there exists a red copy of $Q_n$ in $\QQ'$, hence also in $\QQ$.
\end{proof}

\bigskip

\begin{corollary}\label{cor:gluing}
Let $P_1$ be a poset with a unique maximal vertex, and let $P_2$ be a poset with a unique minimal vertex. 
Suppose that there are functions $f_1,f_2\colon \N\to \R$ with $R(P_1,Q_n)\le f_1(n)n$ and $R(P_2,Q_n)\le f_2(n)n$ for any $n\in\N$ and
such that $f_1$ is monotonically non-increasing.
Then for every $n\in\N$,
$$R(P_1\bw P_2,Q_n)\le f_1(n)f_2(n)n.$$
\end{corollary}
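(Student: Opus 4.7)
The plan is to feed the two hypothesised bounds into Lemma \ref{lem:gluing} and manage the monotonicity carefully. First, set $m = R(P_2, Q_n)$. Lemma \ref{lem:gluing} immediately yields
$$R(P_1 \bw P_2, Q_n) \;\le\; R(P_1, Q_m) \;\le\; f_1(m)\, m,$$
where the second inequality is the assumed bound on $R(P_1, Q_\cdot)$. The second hypothesis gives $m \le f_2(n) n$, so if we can replace $f_1(m)$ by $f_1(n)$ we are done.

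To make that replacement, I would observe that $m \ge n$. This is the only non-bookkeeping step: consider the monochromatic red coloring of $Q_{n-1}$. It trivially has no blue copy of $P_2$, and it cannot contain a red copy of $Q_n$ since $\dim_2(Q_n)=n>n-1$. Hence $R(P_2, Q_n) \ge n$, i.e.\ $m \ge n$. Because $f_1$ is assumed to be monotonically non-increasing on $\N$, this gives $f_1(m) \le f_1(n)$.

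Combining the inequalities yields
$$R(P_1 \bw P_2, Q_n) \;\le\; f_1(m)\,m \;\le\; f_1(n)\,f_2(n)\,n,$$
as required. There is no genuine obstacle here; the corollary is essentially a bookkeeping consequence of Lemma \ref{lem:gluing}, with the mild subtlety being the justification of $m \ge n$ so that monotonicity of $f_1$ can be applied in the correct direction. (Note that no analogous monotonicity assumption is needed for $f_2$, since $f_2(n)$ is plugged in at the original argument $n$.)
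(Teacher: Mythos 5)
Your proof is correct and follows essentially the same route as the paper's: apply Lemma \ref{lem:gluing}, observe that $R(P_2,Q_n)\ge n$ so that the non-increasing assumption on $f_1$ can be applied in the right direction, and multiply. The only cosmetic difference is that you evaluate $f_1$ at $m=R(P_2,Q_n)$ directly, while the paper sets $n'=f_2(n)n\ge m$ and evaluates at $n'$ (implicitly using monotonicity of $R(P_1,Q_{\cdot})$ in the second argument); both are fine and lead to the same chain of inequalities.
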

\begin{proof}
For an arbitrary $n\in\N$, let $n'= f_2(n)n$. 
Because $R(P,Q_n)\ge n$ for any poset $P$, we find that
$$n'=f_2(n)n\ge R(P_2,Q_n) \ge n.$$
The function $f_1$ is non-increasing, thus $f_1(n')\le f_1(n)$. Lemma \ref{lem:gluing} provides that
$R(P_1\bw P_2,Q_n)\le R(P_1,Q_{n'})\le f_1(n')n'\le f_1(n)f_2(n)n$.
\end{proof}

\subsection{Proof of Theorem \ref{thm:QnS}}\label{sec:QnK:QnS}

In our proof, we need a classic result known as Dilworth's theorem \cite{Dilworth}, and a computational lemma which follows the lines of a similar claim in \cite{GMT}.
Recall that we omit floors and ceilings where appropriate.
\begin{theorem}[Dilworth \cite{Dilworth}]\label{thm:dilworths}
A poset $P$ contains no antichain of size $s$ if and only if all vertices of $P$ can be covered by $s-1$ chains.
\end{theorem}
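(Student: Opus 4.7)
The easy direction is pigeonhole: if $P = C_1 \cup \dots \cup C_{s-1}$ is a union of chains, then any antichain contains at most one vertex from each $C_i$ and hence has size at most $s-1$. So it suffices to prove the contrapositive of the other direction: if $P$ has no antichain of size $s$, then $P$ can be covered by $s-1$ chains. Equivalently, letting $w=w(P)$ denote the maximum antichain size, I will show $P$ can be covered by exactly $w$ chains.

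I would proceed by induction on $|P|$. The base case $|P|=0$ is trivial. For the inductive step, fix a maximal chain $C \subseteq P$ (maximal with respect to inclusion). Distinguish two cases based on whether removing $C$ drops the width.

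\textbf{Case A:} $w(P \setminus C) \le w-1$. By the induction hypothesis applied to $P \setminus C$, this smaller poset can be covered by $w-1$ chains, and appending $C$ yields a cover of $P$ by $w$ chains.

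\textbf{Case B:} $P \setminus C$ still contains an antichain $A$ of size $w$. Define the down-closure and up-closure of $A$ inside $P$:
\[ D^- = \{x \in P : x \le_P a \text{ for some } a \in A\}, \qquad D^+ = \{x \in P : x \ge_P a \text{ for some } a \in A\}. \]
Since $A$ is an antichain, $D^- \cap D^+ = A$. Since $A$ has maximum size $w$, every $x \in P \setminus A$ must be comparable to some element of $A$ (otherwise $A \cup \{x\}$ would be a larger antichain), so $D^- \cup D^+ = P$. The key point is that the maximal element of $C$ is maximal in $P$ (by maximality of the chain $C$) and does not lie in $A$, hence cannot lie in $D^-$; thus $D^- \subsetneq P$. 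Symmetrically $D^+ \subsetneq P$. Moreover, $w(D^-) = w$ and $w(D^+) = w$, both witnessed by $A$. By the induction hypothesis, $D^-$ is covered by $w$ chains, and since each chain meets the antichain $A$ in at most one element (and the $w$ chains must cover the $w$ elements of $A$), each chain contains exactly one $a_i \in A$, and $a_i$ must be the top element of its chain (any element of $D^-$ strictly above $a_i$ would have to sit below some $a_j$, violating antichain-ness of $A$). The same applies in $D^+$, with each $a_i$ the bottom of its chain. Concatenating the chain through $a_i$ in $D^-$ with the one in $D^+$ yields $w$ chains that together cover $D^- \cup D^+ = P$.

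The main subtlety lies in Case B: ensuring both $D^- \subsetneq P$ and $D^+ \subsetneq P$ so that induction applies, and verifying that the chain covers of $D^-$ and $D^+$ can be matched up through $A$. These two facts hinge on the maximality of the chain $C$ (giving both a maximal and minimal element of $P$ outside $A$) and on $A$ being a maximum-sized antichain (forcing each chain in the cover to use exactly one element of $A$). An alternative, perhaps cleaner route would be via König's theorem on the bipartite comparability graph with parts $P^- = P^+ = P$ and edges $\{x^-, y^+\}$ whenever $x <_P y$; there a maximum matching corresponds to a minimum chain cover and a minimum vertex cover yields a maximum antichain, but the inductive proof above is self-contained and avoids appealing to matching theory.
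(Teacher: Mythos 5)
The paper does not prove Dilworth's theorem; it cites it as a classical result \cite{Dilworth} and uses it as a black box (e.g.\ in the proof of Theorem~\ref{thm:QnS}). So there is no ``paper proof'' to compare against, and the only question is whether your argument is correct.

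It is. What you have written out is Galvin's inductive proof of Dilworth's theorem (Amer.\ Math.\ Monthly, 1994): pick an inclusion-maximal chain $C$, and split on whether deleting $C$ drops the width. The chain of reasoning in Case~B is sound: (i) $D^-\cap D^+=A$ follows from $A$ being an antichain, (ii) $D^-\cup D^+=P$ from $A$ being a \emph{maximum} antichain, (iii) $D^-\subsetneq P$ and $D^+\subsetneq P$ because the top, respectively bottom, of the maximal chain $C$ is a maximal, respectively minimal, element of $P$ lying outside $A$ and so cannot be dominated by (resp.\ dominate) any member of $A$, and (iv) in the inductively obtained covers of $D^-$ and $D^+$ by $w$ chains, each chain hits $A$ exactly once by pigeonhole, with the $A$-element sitting at the top of its chain in $D^-$ and at the bottom in $D^+$, which is exactly what lets the two covers be spliced through $A$. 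The one detail worth making explicit, though you use it implicitly in Case~A, is that removing a chain decreases width by at most one, so in Case~A one in fact has $w(P\setminus C)=w-1$ and the spliced-in chain $C$ brings the count back to exactly $w$; this makes the strong form ``$P$ is covered by exactly $w(P)$ chains'' go through cleanly, after which padding by singleton or empty chains gives the $s-1$ of the statement whenever $w\le s-1$. Your closing remark about K\H{o}nig's theorem on the split bipartite comparability graph is also a standard and valid alternative route.
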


\begin{lemma}\label{lem:QnK_comp}
For $n\in\N$, let $r=r(n)$ and $t=t(n)$ be non-negative integers such that $1\le r+t=o(\sqrt{\log n})$, and let $s=s(n)$ be a positive integer such that $s=n^{o(1)}$.
We define 
$$k=k(n,r,s,t)=\frac{cn}{\log n},$$
where
$$c=\frac{r+t+\delta}{1-\varepsilon}, \qquad \varepsilon=\frac{\log s}{\log n} \qquad  \text{ and } \qquad \delta=\frac{2(r+t)}{\log n}(\log\log n +r+t+\log e).$$
Then $k!>2^{(r+t)(n+k)}\cdot (s-1)^{k+1}$ for sufficiently large $n$.
\end{lemma}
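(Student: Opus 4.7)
The plan is to take logarithms of both sides of the desired inequality $k! > 2^{(r+t)(n+k)}\cdot (s-1)^{k+1}$ and verify the resulting asymptotic inequality. I may assume $s\ge 2$, since otherwise $(s-1)^{k+1}=0$ and the claim is trivial. Applying Stirling's formula~(\ref{eq:stirlings}) to $k!$ yields $\log(k!) = k\log k - k\log e + O(\log k)$, where $k = cn/\log n = \Omega(n/\log n) \to \infty$ so Stirling is applicable. Since $\log(s-1)\le \log s$, it therefore suffices to establish the asymptotic inequality
$$k\log k - k\log e \ \geq\  (r+t)(n+k) + (k+1)\log s + O(\log k).$$

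The key substitution is $k = cn/\log n$, which gives $\log k = \log n - \log\log n + \log c$ and hence $k\log k = cn - k\log\log n + k\log c$. Rearranging the inequality above, dividing through by $n$, and using the identities $k/n = c/\log n$ and $\log s = \varepsilon\log n$, everything collapses to
$$c(1-\varepsilon) - (r+t) \ \geq\  \frac{c\bigl(\log\log n - \log c + (r+t) + \log e\bigr)}{\log n} + o(1).$$
The definition $c(1-\varepsilon) = r+t+\delta$ turns the left-hand side into precisely $\delta$, so the task reduces to verifying
$$\delta \ \geq\  \frac{c\bigl(\log\log n - \log c + (r+t) + \log e\bigr)}{\log n} + o(1)$$
for sufficiently large $n$.

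The final step exploits the factor of $2$ in $\delta = \frac{2(r+t)(\log\log n + (r+t) + \log e)}{\log n}$. Because $r+t = o(\sqrt{\log n})$, both $\delta$ and $\varepsilon$ tend to $0$, so $c = (r+t+\delta)/(1-\varepsilon) = (r+t)(1+o(1))$. In particular $c\ge 1$, whence $\log c\ge 0$, so the bracket on the displayed right-hand side is bounded above by $\log\log n + (r+t) + \log e$. The ratio of $\delta$ to this upper bound is $2(r+t)/c \to 2$, providing a safety margin that comfortably absorbs the $o(1)$ error terms arising from the $O(\log k)$ Stirling remainder and the $\log s/n$ contribution. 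The main obstacle here is nothing conceptual but rather careful bookkeeping of the asymptotics; the factor of $2$ in the definition of $\delta$ is precisely what is engineered to furnish the slack needed for the proof to close—with a bare coefficient of $1$ in the definition of $\delta$ the target inequality would be too tight to establish.
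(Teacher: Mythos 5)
Your proposal is correct and takes essentially the same route as the paper: both take logarithms, apply a Stirling-type bound, substitute $k = cn/\log n$, and observe that the factor of $2$ in $\delta$ supplies the slack against the $\frac{c(\log\log n + r+t+\log e)}{\log n}$ term after noting $c=(1+o(1))(r+t)$. The only cosmetic difference is that the paper bounds the quantity $\Gamma = k(\log k - \log e) - k(r+t+\log(s-1)) - \log(s-1) - (r+t)n$ from below using $k!\ge(k/e)^k$ and works multiplied through by $n$, whereas you divide by $n$ and carry an (unnecessary but harmless) $O(\log k)$ Stirling remainder that the lower bound $k!\ge(k/e)^k$ would have let you drop.
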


\begin{proof}
We prove the lemma by verifying that $\Gamma>0$, where 
$$\Gamma=k\big(\log k-\log e\big)-k \big(r+t+\log (s-1)\big)-\log (s-1)-\big(r+t\big)n.$$
Indeed, $\Gamma>0$ implies that $k(\log k -\log e)>k(r+t+\log(s-1))+\log (s-1)+(r+t)n$,
therefore
$$k!\ge \left(\frac{k}{e}\right)^k = 2^{k(\log k -\log e)}>2^{(r+t)(n+k)+(k+1)\log (s-1)}=2^{(r+t)(n+k)}\cdot (s-1)^{k+1}.$$

Note that $s=n^{\varepsilon}$, so in particular, $s-1\le n^{\varepsilon}$.
Moreover, note that $c\ge 1$. Applying these two observations and recalling $k=\frac{cn}{\log n}$, we see that
\begin{eqnarray*}
\Gamma&=& k\big(\log k-r-t-\log (s-1)-\log e\big)-\log (s-1)-\big(r+t\big)n\\
&\ge &\frac{cn}{\log n} \big((\log n -\log\log n)-r-t-\varepsilon \log n-\log e\big)-\varepsilon \log n -\big(r+t\big)n\\
&=& \frac{cn}{\log n} \big((\log n-\varepsilon \log n)-(\log\log n +r+t+\log e)\big)-\varepsilon \log n -\big(r+t\big)n\\
&=& (1-\varepsilon )cn-\frac{cn }{\log n}\big(\log\log n +r+t+\log e\big)-\varepsilon \log n-\big(r+t\big)n \\
&=& \big((1-\varepsilon)c-(r+t)\big)n-\frac{cn }{\log n}\big(\log\log n +r+t+\log e\big)-\varepsilon \log n.
\end{eqnarray*}
By definition of $c$, $(1-\varepsilon)c-(r+t)=\delta=\frac{2(r+t)}{\log n}(\log\log n +r+t+\log e)$. Thus, 
\begin{eqnarray*}
\Gamma&\ge &\delta n -\frac{cn }{\log n}\big(\log\log n +r+t+\log e\big) - o\left( \frac{n}{\log n}\right)\\
&\ge &\big(2(r+t)-c-o(1)\big)\frac{n}{\log n}\big(\log\log n +r+t+\log e\big).
\end{eqnarray*}
Since $r+t=o(\sqrt{\log n})$, we see that $\delta=\frac{2(r+t)}{\log n}(\log\log n +r+t+\log e)=o(1)$.
Additionally, $s=n^{o(1)}$ implies that $\varepsilon=o(1)$. Thus,
$$c\le \frac{r+t+\delta}{1-\varepsilon} \le \big(1+o(1)\big)(r+t).$$
In particular, $2(r+t)-c-o(1)> 0$ for large $n$, so $\Gamma>0$.
\end{proof}

Now we give a proof of Theorem \ref{thm:QnS}.

\begin{proof}[Proof of Theorem \ref{thm:QnS}]
Recall that $r,s,t$ are chosen such that $1\le r+t=o(\sqrt{\log n})$ and $s=n^{o(1)}$.
We shall show that $R(S_{r,s,t},Q_n)\le n+k$ for sufficiently large $n$ and $k=k(n,r,s,t)$ as defined in Lemma \ref {lem:QnK_comp}.
By Lemma \ref{lem:QnK_comp}, we can suppose that
$$
k!>2^{(r+t)(n+k)}\cdot (s-1)^{k+1}.
$$
If $s=1$, then $S_{r,s,t}$ is a chain, and $R(S_{r,s,t},Q_n)\le n+r+t\le n+k$ by Corollary \ref{cor:chain}, so suppose that $s\ge2$.

Let $\bX$ and $\bY$ be disjoint sets with $|\bX|=n$ and $|\bY|=k$. Fix an arbitrary blue/red coloring of $\QQ(\bX\cup\bY)$ with no red copy of $Q_n$. 
We shall show that there is a blue copy of $S_{r,s,t}$ in $\QQ(\bX\cup\bY)$.
For any linear ordering $\tau=(y_1^\tau,\dots,y_k^\tau)$ of $\bY$, the Chain~Lemma, Lemma \ref{lem:chain}, provides a blue \textit{$\bY$-chain}, 
i.e., a chain on vertices $Z^\tau_0 \subset Z^\tau_1 \subset \dots \subset Z^\tau_k$ such that 
$Z^\tau_0\cap \bY=\varnothing$ and $Z^\tau_i\cap \bY =\{y_1^{\tau}, \dots, y_i^{\tau}\}$ for every $i\in[k]$.
We refer to this chain as~$\cC^\tau$.

For every linear ordering $\tau$ of $\bY$, we consider the $r$ smallest vertices $Z^{\tau}_0,\dots, Z^{\tau}_{r-1}$ 
and the $t$ largest vertices $Z^{\tau}_{k-t+1},\dots, Z^{\tau}_{k}$ of its corresponding chain $\cC^{\tau}$, so let $$I=\{0,\dots,r-1\}\cup\{k-t+1,\dots,k\}.$$
Here, if $r=0$, then $I=\{k-t+1,\dots,k\}$, and if $t=0$, then $I=\{0,\dots,r-1\}$.
Our approach is to find many chains $\cC^{\tau}$ that have their smallest and largest vertices in common, by using a counting argument.
Each $Z^{\tau}_{i}$ is a vertex of $\QQ(\bX\cup\bY)$, so one of the $2^{n+k}$ distinct subsets of $\bX\cup\bY$.
Thus, for a fixed $\tau$, there are at most $\left(2^{n+k}\right)^{r+t}$ distinct combinations of the $Z^{\tau}_{i}$, $i\in I$.
Recall that $k!>2^{(r+t)(n+k)}\cdot (s-1)^{k+1}$. By the pigeonhole principle, we find a collection $\tau_1,\dots,\tau_m$ of $m= (s-1)^{k+1}+1$ distinct linear orderings of $\bY$
such that for any $j\in[m]$ and $i\in I$,\:\:$Z^{\tau_{j}}_{i}=Z_i$, where $Z_i\subseteq\bX\cup\bY$ is a fixed vertex independent of $j$. 
In other words, we find many chains with the same $r$ smallest vertices $Z_i$, $i\in \{0,\dots,r-1\}$, and the same $t$ largest vertices $Z_i$, $i\in \{k-t+1,\dots,k\}$.
Let $\cP$ be the subposet of $\QQ$ induced by all chains $\cC^{\tau_j}$, $j\in[m]$. Note that every vertex in $\cP$ is blue.

If there is an antichain $\cA$ of size $s$ in $\cP$, then none of the vertices $Z_i$, $i\in I$, is in $\cA$, 
because each of them is contained in every chain $\cC^{\tau_j}$, and thus comparable to all other vertices in $\cP$. Note that here we used the assumption $s\ge2$.
This implies that $\cA$ and the vertices $Z_i$, $i\in I$, form a copy of $S_{r,s,t}$ in $\cP$, so we obtain a blue copy of the spindle $S_{r,s,t}$ in $\QQ$.
From now on, suppose that there is no antichain of size $s$ in~$\cP$.
By Dilworth's theorem, Theorem \ref{thm:dilworths}, we obtain $s-1$ chains $\cD_1,\dots,\cD_{s-1}$ which cover every vertex of $\cP$, i.e., every vertex in every $\cC^{\tau_j}$.
Note that the chains $\cD_i$ might consist of significantly more vertices than the $(k+1)$-element chains $\cC^{\tau_j}$.

In the remainder of the proof, we shall find a contradiction to the number of linear orderings $\tau_1,\dots,\tau_m$.
For this purpose, we restrict each vertex in $\cP$ to $\bY$, apply again the pigeonhole principle, and then show that there are two linear orderings in the collection $\tau_1,\dots,\tau_m$ which are equal, a contradiction.
\medskip

\noindent \textbf{Claim:} Let $i\in[s-1]$, $j\in[m]$, and $\ell\in\{0,\dots,k\}$ such that $Z^{\tau_j}_\ell\in\cD_i$.
Then $Z^{\tau_j}_\ell\cap \bY$ does not depend on $j$, i.e., there is a unique set $Y_i^\ell$ such that $Z^{\tau_j}_\ell\cap \bY=Y_i^\ell$ for every $j$.\medskip\\
\textit{Proof of the claim:} 
By definition of a $\bY$-chain, the $\bY$-part $Z^{\tau_j}_\ell\cap \bY$ consists of the first $\ell$ elements of $\bY$, ordered with respect to $\tau_j$.
In particular, $|Z^{\tau_j}_\ell\cap \bY|=\ell$.
We claim that there is a unique set $Y_i^\ell$ such that every $Z\in\cD_i$ with $|Z\cap\bY|=\ell$ has $\bY$-part $Z\cap\bY=Y_i^\ell$.
Indeed, pick arbitrary vertices $Z,Z'\in \cD_i$ such that $|Z\cap \bY|=|Z'\cap\bY|=\ell$.
Since $\cD_i$ is a chain, $Z$ and $Z'$ are comparable, say $Z\subseteq Z'$.
In particular, $Z\cap\bY \subseteq Z'\cap\bY$, which implies that $Z\cap\bY = Z'\cap\bY$.
This proves the claim.
\\

Let $j\in[m]$ be fixed.
Because the chains $\cD_1,\dots,\cD_{s-1}$ cover every vertex in $\cC^{\tau_j}$, there is an index $i_\ell\in[s-1]$ for each $\ell\in\{0,\dots,k\}$ such that $Z^{\tau_j}_\ell\in\cD_{i_\ell}$.
Thus, the claim implies that each of the $k+1$ sets $Z^{\tau_j}_\ell\cap \bY$ is equal to one of at most $s-1$ $Y_i^\ell$'s.
Recall that we have chosen $m=(s-1)^{k+1}+1$ distinct linear orderings $\tau_j$ of $\bY$.
Using the pigeonhole principle, we find two indices $j_1,j_2$ such that $Z^{\tau_{j_1}}_\ell\cap \bY=Z^{\tau_{j_2}}_\ell\cap \bY$ for each $\ell\in\{0,\dots,k\}$.
Recall that $Z^{\tau_j}_\ell\cap \bY=\{y^{\tau_j}_1,\dots y^{\tau_j}_\ell\}$, so we conclude that $y^{\tau_{j_1}}_\ell=y^{\tau_{j_2}}_\ell$ for each $\ell\in\{0,\dots,k\}$.
Therefore, $\tau_{j_1}$ and $\tau_{j_2}$ are equal, but this is a contradiction to the fact that all orderings $\tau_j$ are distinct.
\end{proof}

\subsection{Proof of Theorem \ref{thm:QnK}} \label{sec:QnK:QnK}

In this subsection, we generalize Theorem \ref{thm:QnS} from spindles to general complete multipartite posets by \textit{gluing} spindles on top of each other.

\begin{proof}[Proof of Theorem \ref{thm:QnK}]
Recall that $\ell=o(\log n)$ and $\sup_{i\in[\ell]} t_i=n^{o(1)}$.
Let $t=\sup_{i\in[\ell]} t_i$. Theorem \ref{thm:QnS} shows the existence of a function $\varepsilon(n)=o(1)$ with 
$$R(K_{1,t,1},Q_n)\le n\left(1 + \frac{2+\varepsilon(n)}{\log n}\right).$$
We can suppose that the function $\varepsilon$ is monotonically non-increasing by replacing $\varepsilon(n)$ with $\max_{N>n} \{\varepsilon(N),0\}$ where necessary.
Note that this maximum exists since $\varepsilon(N)\to 0$ for $N\to\infty$.
In order to prove the theorem, we show a stronger statement using the auxiliary $(2\ell+1)$-partite poset $P=K_{1,t,1,t,\dots,1,t,1}$.
Observe that $K_{t_1,\dots,t_\ell}$ is an induced subposet of $P$, thus $R(K_{t_1,\dots,t_\ell},Q_n)\le R(P,Q_n)$.
In the following, we verify that $$R(P,Q_n)\le n\left(1+\frac{2+\varepsilon(n)}{\log n}\right)^\ell.$$

We use induction on $\ell$. If $\ell=1$, then $P=K_{1,t,1}$, so $R(P,Q_n)\le n\left(1 + \frac{2+\varepsilon(n)}{\log n}\right)$. 
If $\ell\ge 2$, we ``deconstruct'' the poset into two parts. Consider $P_1=K_{1,t,1}$ and the complete $(2\ell-1)$-partite poset $P_2=K_{1,t,1,t,\dots,1,t,1}$.
Note that $P_1$ has a unique maximal vertex and $P_2$ has a unique minimal vertex, and moreover, $P_1\bw P_2=P$.
Using the induction hypothesis,
$$R(P_1,Q_n)\le n\left(1 + \frac{2+\varepsilon(n)}{\log n}\right)\quad \text{ and }\quad R(P_2,Q_n)\le n\left(1+\frac{2+\varepsilon(n)}{\log n}\right)^{\ell-1},$$
so Corollary \ref{cor:gluing} provides the required bound.
\end{proof}



\section{Upper bound on $R(SD_{s,t},Q_n)$}\label{sec:QnK:QnSD}

\subsection{Counting permutations}

In this subsection, we bound the number of permutations with a special property, in preparation for our proof of Theorem \ref{thm:QnSD}. 
A permutation $\pi\colon [k]\to[k]$ is called \textit{$r$-proper} if for every $j\in[k]$,\:\:$|\{\ell\le j : \pi(\ell)\ge j-1\}|\le r$. 
For example, the permutation $\hat{\pi}$ given by $(\hat{\pi}(1),\dots,\hat{\pi}(k))=(k,1,3,4,5,\dots,k-1,2)$, see Figure \ref{fig:pi_hat}, is not $1$-proper because at $j=3$,\:\:$\{\ell\le 3 : \hat{\pi}(\ell)\ge2\}=\{1,3\}$. However, $\hat{\pi}$ is $2$-proper.

\begin{figure}[h]
\centering
\includegraphics[scale=0.62]{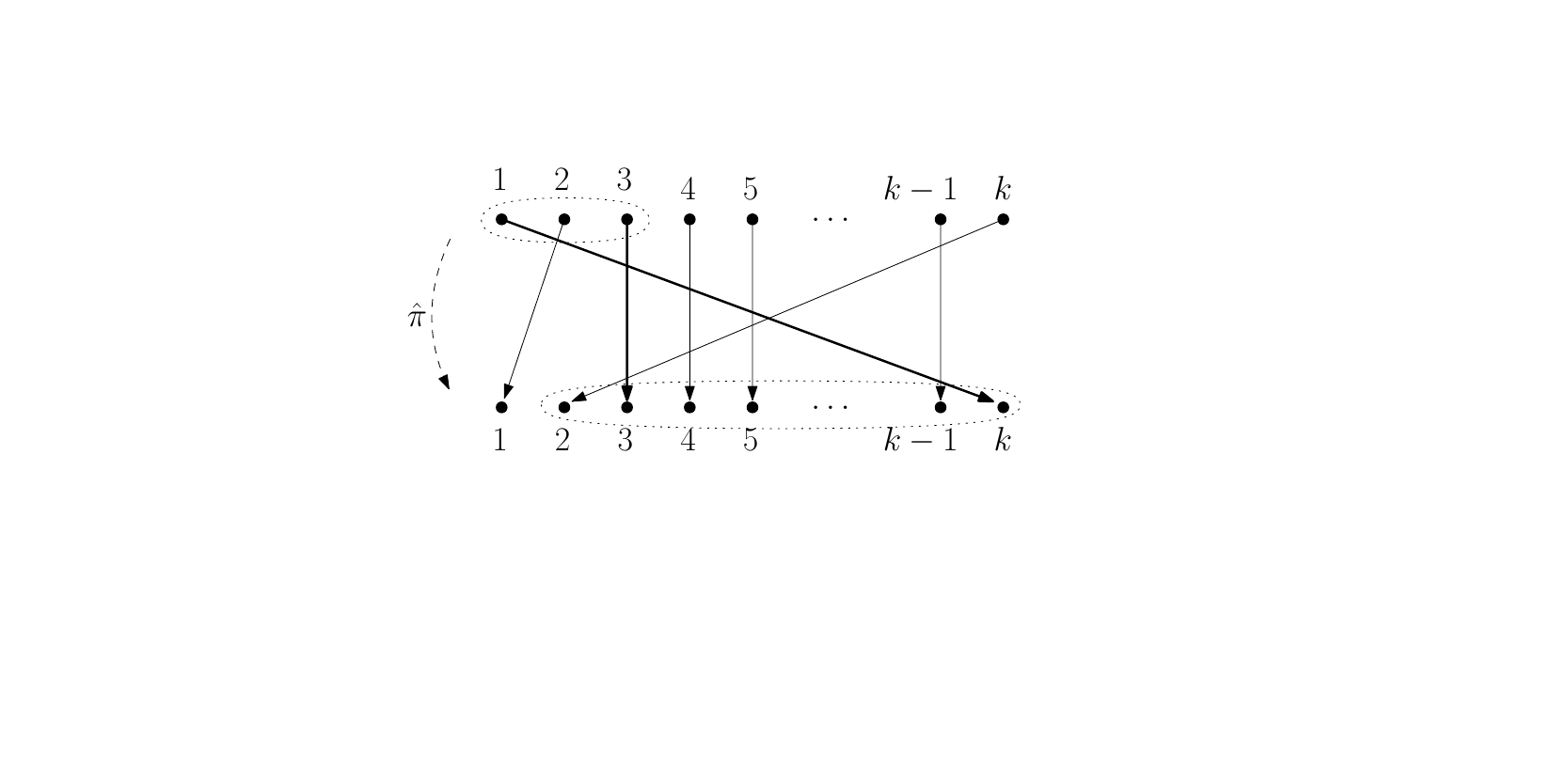}
\caption{The permutation $\hat{\pi}\colon [k]\to[k]$.}
\label{fig:pi_hat}
\end{figure}

\begin{lemma}\label{lem:count_perm}
Let $r,k\in\N$. There are at most $2^{(r+\log r )k}$ distinct $r$-proper permutations $\pi\colon [k]\to[k]$.
\end{lemma}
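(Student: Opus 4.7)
The plan is an encoding argument: I associate to each $r$-proper permutation $\pi$ a sequence of $k$ tokens, one per step of building $\tau=\pi^{-1}$ by placing values $v=1,2,\dots,k$ in turn, where each token belongs to an alphabet of size at most $r\cdot 2^r=2^{r+\log r}$. Multiplying over the $k$ steps yields at most $(r\cdot 2^r)^k=2^{(r+\log r)k}$ distinct encodings, bounding the count of $r$-proper permutations. The cases $r\le 1$ are trivial, since for $k\ge 2$ the value $j=2$ forces $|\{\ell\le 2:\pi(\ell)\ge 1\}|=2$, so no $r$-proper permutation exists for $r\le 1$; I therefore assume $r\ge 2$.

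First I would restate the condition in terms of $\tau$: taking the complement inside $[j]$ of $\{\ell\le j:\pi(\ell)\ge j-1\}$ and translating to the $\tau$-picture gives that, for every $j$,
$$|\{v\le j-2:\tau(v)>j\}|\le r-2,$$
and specializing $j=v+2$ gives $|\{i\le v:\tau(i)>v+2\}|\le r-2$. Two counting bounds on the state at step $v$ follow: the early-unfilled set $W_v=[v+2]\setminus\{\tau(1),\dots,\tau(v-1)\}$ satisfies $|W_v|\le r+1$, and the late queue $L_{v-1}=\{i\le v-1:\tau(i)>v+1\}$ satisfies $|L_{v-1}|\le r-2$.

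At step $v$ I emit a pair $(d_v,p_v)$. The drop coordinate $d_v$ names either ``no drop'' or the unique $i\in L_{v-1}$ with $\tau(i)=v+2$, giving at most $|L_{v-1}|+1\le r-1$ options. The placement coordinate $p_v$ is either ``early at $p\in W_v$'' (at most $r+1$ options) or ``late'' (one option, deferring the value of $\tau(v)$ to be recovered later when $v$ itself drops). This gives at most $(r-1)(r+2)$ tokens per step; an elementary check verifies $(r-1)(r+2)\le r\cdot 2^r$ for all $r\ge 2$, since $r\cdot 2^r\ge r(r+1)>(r-1)(r+2)$ already holds once $2^r\ge r+1$.

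The main obstacle is the injectivity and validity of the encoding. Given a token sequence, the decoder simulates the process forward, maintaining $L$ and $W$: each drop pins down $\tau(i)=v+2$ for the named $i$, each early placement pins down $\tau(v)$ directly, and each late placement enqueues $v$ with $\tau(v)$ to be set at its drop step. The key point is that for a genuine $r$-proper $\pi$, any value $v$ placed late at step $v$ has $\tau(v)\le k$, hence is dropped at step $\tau(v)-2\in[v+1,k-2]$, so after processing all $k$ tokens the function $\tau$ is fully recovered; distinct $r$-proper permutations therefore yield distinct token sequences, completing the bound.
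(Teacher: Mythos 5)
Your proof is correct and takes a genuinely different route from the paper's. The paper splits $[k]$ into \emph{bad} indices (those $i$ with $\pi(i)\ge i$) and \emph{good} ones, first bounds the number of restrictions of $\pi$ to its bad indices by $2^{rk}$ via an interval-graph $r$-colouring, and then bounds the number of completions of a given restriction by $r^k$, multiplying to $2^{rk}\cdot r^k$. You instead recast $r$-properness for $\tau=\pi^{-1}$ as $|\{i\le v:\tau(i)>v+2\}|\le r-2$ and give a streaming encoding: at step $v$ the late queue $L_{v-1}$ has size at most $r-2$ and the early window $W_v$ has size at most $r+1$, so a token $(d_v,p_v)$ from an alphabet of size at most $(r-1)(r+2)$ suffices, and the decoder recovers $\tau$ because every deferred value is eventually named by a drop at some step $\le k$. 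This gives the stronger bound $\big((r-1)(r+2)\big)^k$, which is strictly better than the paper's $2^{rk}r^k$ and is of the same order as the $(2r)^{2k}$ improvement the paper only mentions in a closing remark; you then pass to $(r-1)(r+2)\le r\cdot 2^r$ to match the stated form. Two small cosmetic points, neither a gap: your dismissal of $r\le 1$ treats $k\ge 2$ but should also note the trivial case $k=1$ (one permutation, and the bound is at least $2$); and for $v\ge k-1$ the set $[v+2]$ overshoots $[k]$, so one should read $W_v$ as intersected with $[k]$, which only shrinks it and preserves the bound $|W_v|\le r+1$.
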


\begin{proof}
For a permutation $\pi$, we say that an index $i\in[k]$ is \textit{bad} if $\pi(i)\ge i$, and \textit{good} if $\pi(i)\le i-1$.
Let $B_\pi$ and $G_\pi$ denote the set of indices that are bad and good, respectively. Note that $B_\pi$ and $G_\pi$ partition $[k]$.
Again considering the example $(\hat{\pi}(1),\dots,\hat{\pi}(k))=(k,1,3,4,5,\dots,k-1,2)$, we have that $B_{\hat{\pi}}=\{1\}\cup\{3,4,\dots,k-1\}$ and $G_{\hat{\pi}}=\{2,k\}$.

Given an $r$-proper permutation $\pi$, the \textit{proper restriction} $\rho$ of $\pi$ is the restriction of $\pi$ to its bad indices, i.e., $\rho\colon B_\pi\to [k]$ with
$\rho(i)=\pi(i)$ for every $i$. 
For example, the proper restriction of $\hat{\pi}$ is $\hat{\rho}\colon [k-1]\setminus\{2\}\to [k]$ with $\hat{\rho}(1)=k$ and $\hat{\rho}(i)=i$ for $3\le i\le k-1$.
Note that $\rho$ does not depend on $r$.
Observe that a function $\rho$ can be the proper restriction of distinct $r$-proper permutations. Let $\Pi$ be the set of all $r$-proper permutations $\pi\colon [k]\to[k]$. 
If a function $\rho$ is the proper restriction of some $\pi\in \Pi$, we say that $\rho$ is a \textit{$\Pi$-restriction}.
To avoid ambiguity, we denote the domain of $\rho$  by $B_\rho$.
Inheriting the properties of an $r$-proper permutation, $\rho$ is injective and 
$$\big|\{\ell\in B_\rho : \ell\le j, \ \rho(\ell)\ge j-1\}\big|\le r.$$

In the following, we bound $|\Pi|$ by first estimating $|\{\rho: ~ \rho \text{ is a }\Pi\text{-restriction}\}|$,
and then bounding $|\{\pi\in \Pi: ~ \pi\text{ has the proper restriction }\rho\}|$ for every fixed $\rho$.
\\

\noindent \textbf{Claim 1:} There are at most $2^{rk}$ distinct $\Pi$-restrictions.\medskip\\
\textit{Proof of Claim 1:} 
We show that every $\Pi$-restriction has a distinct representation as a collection of $r$ vectors $V_1, \dots, V_{r}\in\{\zero,\one\}^k$, which implies that there are at most $2^{rk}$ $\Pi$-restrictions.
Let $\rho$ be a $\Pi$-restriction with domain $B_\rho$. For every $i\in B_\rho$, we define an integer interval $I_i=\{i,\dots,\rho(i)+1\}$.
Consider the \textit{interval graph} $H$ given by intervals $I_i$, 
i.e., the graph on vertex set $B_\rho$ where $\{i,j\}$ is an edge if and only if $i\neq j$ and $I_i\cap I_j\neq\varnothing$.
In the following we use terminology common in graph theory, for a formal introduction we refer the reader to Diestel~\cite{Diestel}.

Next, we bound the maximal size of a clique in $H$.
Suppose that vertices $i_1,\dots,i_m$ form a clique in $H$, then the intervals $I_{i_1},\dots,I_{i_m}$ pairwise intersect,
thus there exists an integer $j\in[k]$ such that $j\in I_{i_1}\cap \dots \cap I_{i_m}$. In terms of $m$, this implies that
$$m\le \big|\{\ell\in B_\rho : ~ j\in I_\ell\}\big|= \big|\{\ell\in B_\rho : ~\ell\le j, \ \rho(\ell)+1\ge j\}\big|\le r,$$
where the last inequality holds because $\rho$ is a proper restriction.
Thus, there is no clique of size $r+1$ in $H$. It is common knowledge that interval graphs are perfect, see \cite{Diestel},
so there exists a proper vertex coloring of $H$ using at most $r$ colors. Fix such a coloring $c$ of $H$ with set of colors $[r]$. 
For each color $s\in[r]$, we shall construct a vector $V_s$ representing the intervals with color $s$.
Note that for each color class, the corresponding intervals are pairwise disjoint.

For any fixed $s\in[r]$, we denote the set of indices with color $s$ by $$B_s=\{i\in B_\rho: ~ c(I_i)=s\}.$$
We define a vector $V_{s}\in\{\zero,\one\}^k$ as follows. Let 
$$V_s(i)=\dots=V_s(\rho(i))=\one\text{ for }i\in B_s\quad\text{ and }\quad V_s(i)=\zero\text{ for }i\in[k]\setminus B_s.$$
Since the intervals $I_i$, $i\in B_s$, are pairwise disjoint, $V_s$ is well-defined. Moreover, we obtain that $V_s(\rho(i)+1)=\zero$ for every $i\in B_s$. 
This implies that $V_s(i-1)=\zero$, if defined, for $i\in B_s$.
In other words, each sequence $V_s(i),\dots,V_s(\rho(i))$ of $\one$'s in $V_s$ can be identified by a $\zero$ ``in front'' and ``behind'' of it.
By this, the vector $V_s$ encodes all indices in $B_s$ and their respective functional values $\rho(i)$, $i\in B_s$:
If for some $j\in[k]$,\:\:$V_s(j)=\one$ and $V_s(j-1)=\zero$, then $j\in B_s$ and $\rho(j)$ is given by the maximal index $j'$ such that $V_s(j)=\dots=V_s(j')=\one$.

We obtain a vector representation $V_1, \dots, V_{r}$ of $\rho$.
It is easy to see that distinct $\Pi$-restrictions have distinct representations.
There are at most $(2^k)^{r}$ distinct such vector representations, which proves the claim.
\\

\noindent \textbf{Claim 2:} Given a fixed $\Pi$-restriction $\rho$, the number of $r$-proper permutations $\pi$ with proper restriction $\rho$ is at most $r^{k}$.\medskip\\
\textit{Proof of Claim 2:} 
Let $\rho$ be a fixed $\Pi$-restriction with domain $B_\rho$. Let $G_\rho=[k]\setminus B_\rho$. 
Let $\Pi_\rho$ be the collection of all $r$-proper permutations with proper restriction $\rho$. We shall show that $|\Pi_\rho|\le r^k$.
Note that for every $\pi\in\Pi_\rho$ and $\ell\in B_\rho$,\:\:$\pi(\ell)=\rho(\ell)$. 
The remaining indices $\ell\in [k]\setminus B_\rho=G_\rho$ are good for $\pi$, i.e., $\pi(\ell)\le \ell-1$.
We count the permutations $\pi\in\Pi_\rho$, by iterating through all good indices $i\in G_\rho$ in increasing order, while counting the choices for each $\pi(i)$.

Observe that $1\notin G_\rho$, since $\pi(1)\ge 1$ for any permutation $\pi$.
Fix an $i\in G_\rho$, i.e., $i\ge 2$. Suppose that all indices $\ell\in G_\rho\cap[i-1]$ are already assigned to an integer $\pi(\ell)\le \ell-1$ and all $\ell\in B_\rho$ are assigned to $\pi(\ell)=\rho(\ell)$.
There are two conditions on the choice of $\pi(i)$:
On the one hand, $i$ is a good index, so we require $\pi(i)\in [i-1]$.
On the other hand, $\pi$ is injective, thus $\pi(i)\neq \pi(\ell)$ for all $\ell<i$.
Therefore, $\pi(i)\in [i-1]\setminus \{\pi(\ell)\in[i-1]: \ell<i\}$. We evaluate the size of this set.
Recall that $|\{\ell<i:\pi(\ell)\ge i-1\}|\le r$, thus the number of indices $\ell\in[i-1]$ with $\pi(\ell)<i-1$ is at least $(i-1)-r$. In particular,
$$\big|\{\pi(\ell)\in[i-1]: \ell<i\}\big|\ge i-1-r,$$
which implies that 
$$\big| [i-1]\setminus \{\pi(\ell)\in[i-1]: \ell<i\} \big|\le(i-1)-(i-1-r)=r.$$
Therefore, there are at most $r$ choices for selecting $\pi(i)$ for each $i\in G_\rho$. 
Note that $|G_\rho|\le k$, thus the number of $r$-proper permutations with proper restriction $\rho$ is at most~$r^k$. 
\\

Combining both claims, the number of $r$-proper permutations is at most 
$$\sum_{\rho\text{ is a }\Pi\text{-restriction}} \big|\{\pi\in\Pi \colon ~ \rho \text{ is a proper restriction of }\pi\}\big|\le 2^{rk}r^k=2^{(r+\log r)k}.$$
\vspace*{-1em}
\end{proof}

We remark that the bound provided here is not optimal. 
With a more careful approach to counting distinct $\Pi$-restrictions, the number $N(k,r)$ of $r$-proper permutations $\pi\colon [k]\to[k]$ can be bounded by
$$r^k\le N(k,r) \le (2r)^{2k}.$$
Studying this extremal function might be of independent interest.

\subsection{Proof of Theorem \ref{thm:QnSD}}

Before presenting the proof of Theorem \ref{thm:QnSD}, we give a computational lemma similar to Lemma \ref{lem:QnK_comp}.

\begin{lemma}\label{lem:QnSD_comp}
For $n\in\N$, let $c=c(n)$ be an integer with $c=o(\log \log n)$.
Let $$k=\frac{(2+\delta)n}{\log n},\qquad \text{ where }\qquad \delta=\frac{3}{\log n}(\log\log n +\log e + c +2).$$ 
Then for sufficiently large $n$,\:\:$k!>2^{ck}\cdot 2^{2(n+k)}.$
\end{lemma}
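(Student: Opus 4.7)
The plan is to take logarithms and reduce the claim to a manageable arithmetic inequality, following the template used already in Lemma \ref{lem:QnK_comp}. Taking $\log$ of both sides, it suffices to show
\[
\log(k!) > ck + 2(n+k).
\]
Stirling's formula (\ref{eq:stirlings}) gives $k! \ge (k/e)^k$, so $\log(k!) \ge k(\log k - \log e)$. Hence it is enough to verify
\[
k\bigl(\log k - \log e - c - 2\bigr) > 2n.
\]

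Now I would substitute $k = (2+\delta)n/\log n$ and $\log k = \log n - \log\log n + \log(2+\delta)$, dividing through by $n/\log n$. Letting $A = \log\log n + \log e + c + 2$, the inequality becomes
\[
(2+\delta)\bigl(\log n - A + \log(2+\delta)\bigr) > 2\log n,
\]
which after expansion and using the definition $\delta \log n = 3A$ simplifies to
\[
(1-\delta)A + (2+\delta)\log(2+\delta) > 0.
\]

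The final step is to observe that this holds for large $n$. Since $c = o(\log \log n)$, we have $A = \bigl(1+o(1)\bigr)\log\log n$, and therefore $\delta = 3A/\log n = o(1)$. In particular $1 - \delta > 0$ and $A > 0$ for large $n$, while $\log(2+\delta) \ge 1$, so the left-hand side is strictly positive. Reversing the chain of inequalities yields $k! > 2^{ck}\cdot 2^{2(n+k)}$ as required.

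The work here is entirely bookkeeping; there is no real obstacle beyond tracking the constants carefully. The only subtlety is making sure the definition of $\delta$ (with the factor $3$ in front of $A$) gives enough slack over the ``$2$'' coming from $2(n+k)$; the extra unit in $3A$ versus $2A$ is exactly what absorbs the lower-order $\log\log n + \log e + c + 2$ terms in Stirling.
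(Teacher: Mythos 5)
Your proof is correct and takes essentially the same route as the paper: take logs, lower-bound $\log(k!)$ by $k(\log k - \log e)$ via Stirling, substitute $k=(2+\delta)n/\log n$, and use $\delta\log n = 3A$ to reduce to a strict positivity. The paper simply drops the $\log(2+\delta)$ term one step earlier (it is nonnegative and unneeded), arriving at $(1-\delta)\tfrac{n}{\log n}A>0$ rather than your $(1-\delta)A + (2+\delta)\log(2+\delta)>0$, but the underlying calculation and the final appeal to $\delta = o(1)$ are identical.
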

\begin{proof}
Since $k!>\left(\frac{k}{e}\right)^k=2^{k(\log k -\log e)}$, we shall show that $k(\log k -\log e)>ck+2(n+k)$ or equivalently 
$$k(\log k -\log e-c-2)-2n>0.$$
Let $\Gamma=k(\log k -\log e-c-2)-2n$.
Using that $k=\frac{(2+\delta)n}{\log n}$ and $\delta\ge 0$, we see that
\begin{eqnarray*}
\Gamma &=&k\big(\log k-\log e-c-2\big)-2n\\
&=& \frac{(2+\delta)n }{\log n}\big(\log (2+\delta) + \log n -\log\log n-\log e-c-2\big) -2n\\
&\ge &\frac{(2+\delta)n\log n }{\log n} -\frac{(2+\delta)n }{\log n}\big(\log\log n+\log e+c+2\big)-2n\\
&\ge & \delta n- \frac{(2+\delta)n}{\log n}\big(\log\log n +\log e+c+2\big)\\
&= & \frac{3 n}{\log n}(\log\log n +\log e + c +2)- \frac{(2+\delta)n}{\log n}\big(\log\log n +\log e+c+2\big)\\
& > & 0,
\end{eqnarray*}
where the last inequality holds for sufficiently large $n$.
\end{proof}

\begin{proof}[Proof of Theorem \ref{thm:QnSD}]
For any $s\le t$, note that $SD_{s,t}$ is an induced subposet of $SD_{t,t}$, so it suffices to prove the Ramsey bound for $s=t$.
We shall show that
$$R(SD_{t,t},Q_n)\le n + \big(2+o(1)\big)\frac{n}{\log n}\quad \text{ for }2t\le o(\log\log n).$$
Let $k=\frac{(2+\delta)n}{\log n}$, where $\delta=\frac{3}{\log n}(\log\log n +\log e + c +2)$ and $c=2t+2+\log (2t+2)$.
By the choice of $t$, we know that $c=o(\log \log n)$. Thus, Lemma \ref{lem:QnSD_comp} implies that for sufficiently large $n$,
$$k!>2^{ck}\cdot 2^{2(n+k)}.$$
Let $\bX$ and $\bY$ be disjoint sets with $|\bX|=n$, $|\bY|=k$. Consider an arbitrary blue/red coloring of $\QQ(\bX\cup\bY)$ with no red copy of $Q_n$.
We shall show that there is a blue copy of $SD_{t,t}$ in this coloring. 

There are $k!$ linear orderings of $\bY$. For every linear ordering $\tau$ of $\bY$, the Chain Lemma provides a blue $\bY$-chain $\cC^\tau$ in $\QQ(\bX\cup\bY)$ corresponding to $\tau$, say on vertices $Z^\tau_0\subsetneql Z^\tau_1\subsetneql \dots \subsetneql Z^\tau_k$.
In each chain, consider the smallest vertex $Z^\tau_0$ and the largest vertex $Z^\tau_k$. 
Both vertices are subsets of $\bX\cup\bY$, so there are at most $2^{2(n+k)}$ distinct pairs $(Z^\tau_0,Z^\tau_k)$.
By the pigeonhole principle, there is a collection $\tau_1,\dots,\tau_m$ of $m=\frac{k!}{2^{2(n+k)}}$ distinct linear orderings of $\bY$ such that all the corresponding $\bY$-chains $\cC^{\tau_i}$ have both $Z^{\tau_i}_0$ and $Z^{\tau_i}_k$ in common.
Lemma \ref{lem:QnSD_comp} shows that $m> 2^{ck}$. 

Fix an arbitrary linear ordering $\sigma\in\{\tau_1,\dots,\tau_m\}$. By relabelling $\bY$, we can suppose that $\sigma=(1,\dots,k)$, i.e., $1<_\sigma \dots <_\sigma k$.
Consider an arbitrary linear ordering $\tau_j$, $j\in[m]$, allowing that $\tau_j=\sigma$. Let $\tau_j=(y_1,\dots,y_k)$.
We say that $\tau_j$ is \textit{$t$-close} to $\sigma$ for some $t\in\N$ if for every $i\in[k-t]$, either $[i]\subseteq \{y_1,\dots,y_{i+t}\}$ or $\{y_1,\dots,y_i\}\subseteq [i+t]$. 
For example, the linear ordering $(4,5,\dots,k,1,2,3)$ is $3$-close to $\sigma$ since the first $i$ elements of this linear ordering are contained in $[i+3]$, for any $i\in[k-3]$.
However, our example is not $2$-close to $\sigma$, because neither $\{1\}\subseteq\{4,5,6\}$ nor $\{4\}\subseteq\{1,2,3\}$.

In the remainder of the proof, we distinguish two cases:
If there is a linear ordering $\tau_j$ which is not $t$-close to $\sigma$, we build a copy of $SD_{t,t}$ from the $\bY$-chains corresponding to $\sigma$ and $\tau_j$.
If every linear ordering $\tau_1,\dots,\tau_m$ is $t$-close to $\sigma$, we find $m$ permutations fulfilling the property of Lemma \ref{lem:count_perm}, which provides that $m\le 2^{ck}$. Recalling that $m>2^{ck}$, we arrive at a contradiction.
\\

\noindent \textbf{Case 1:} There is a linear ordering $\tau\in\{\tau_1,\dots,\tau_m\}$ which is not $t$-close to $\sigma$.\medskip\\
Suppose that the $\bY$-chains corresponding to $\sigma$ and $\tau$ are given by vertices $Z^{\sigma}_0,\dots,Z^{\sigma}_k$ and  $Z^{\tau}_0,\dots,Z^{\tau}_k$, respectively. 
Recall that $Z^\sigma_0=Z^\tau_0$ and $Z^\sigma_k=Z^\tau_k$.
Since $\tau$ is not $t$-close to $\sigma$, there is an index $i\in[k-t]$ such that neither $[i]\subseteq \{y_1,\dots,y_{i+t}\}$ nor $\{y_1,\dots,y_{i}\}\subseteq [i+t]$.
In a $\bY$-chain, the $\bY$-part $Z\cap \bY$ of each vertex $Z$ is determined by the underlying linear ordering, 
in particular $Z^\sigma_i \cap \bY = [i]$ and $Z^\tau_{i+t} \cap \bY=\{y_1,\dots,y_{i+t}\}$, thus $Z^\sigma_i\not\subseteq Z^\tau_{i+t}$. 
By transitivity, $Z^\sigma_{j}\not\subseteq Z^\tau_{j'}$ for any two $j,j'\in\{i,\dots,i+t-1\}$.
Similarly, $Z^\tau_i\not\subseteq Z^\sigma_{i+t}$ and so $Z^\tau_j\not\subseteq Z^\sigma_{j'}$.
This implies that the poset $$\cP=\left\{Z^\sigma_j,Z^\tau_j : ~ j\in \{0,k\}\cup\{i,\dots,i+t-1\}\right\}$$
contains a copy of $SD_{t,t}$.
Furthermore, every vertex of $\cP$ is included in a blue $\bY$-chain and thus colored blue.
This completes the proof for Case 1.
\\

\noindent \textbf{Case 2:} Every linear ordering $\tau\in\{\tau_1,\dots,\tau_m\}$ is $t$-close.\medskip\\
Here, we use the fact that every linear ordering $\tau_j$, $j\in[m]$, is obtained by \textit{permuting} the linear ordering $\sigma$:
Fix an arbitrary $\tau\in\{\tau_1,\dots,\tau_m\}$, and let $\tau=(y_1,\dots,y_k)$. We say that the permutation \textit{corresponding} to $\tau$ is $\pi\colon [k]\to[k]$ with $\pi(\ell)=y_\ell$.
We show that $\pi$ has the following property.
\medskip

\noindent \textbf{Claim:} For every $j\in[k]$,~~$|\{\ell\le j : ~ \pi(\ell)> j+t\}|\le t$.\medskip\\
\textit{Proof of the claim:} 
The inequality is trivially true if $j+t> k$. Fix an arbitrary $j\in[k-t]$.
Using that $\tau$ is $t$-close, either $\{\pi(1),\dots,\pi(j)\}=\{y_1,\dots,y_j\}\subseteq [j+t]$ or $[j]\subseteq \{y_1,\dots,y_{j+t}\}=\{\pi(1),\dots,\pi(j+t)\}$.
\vspace*{-1em}
\begin{itemize}
\item If $\{\pi(1),\dots,\pi(j)\}\subseteq [j+t]$, then for every $\ell\le j$,\:\:$\pi(\ell) \le j+t$. 
Therefore, $\{\ell\le j : \pi(\ell)> j+t\}=\varnothing$, so the claim holds.
\item If $[j]\subseteq \{\pi(1),\dots,\pi(j+t)\}$, then let $I=\{\pi(1),\dots,\pi(j+t)\}\setminus [j]$, and note that $|I|=t$.
For every $\ell\le j$ with $\pi(\ell)> j+t$, we know in particular that $\pi(\ell)\notin[j]$, thus $\pi(\ell)\in I$.
Using that $\pi$ is bijective, $$\big|\{\ell\le j : ~ \pi(\ell)> j+t\}\big|=\big|\{\pi(\ell): ~ \ell\le j, \ \pi(\ell)> j+t\}\big|\le |I|=t.$$
\end{itemize}
This proves the claim.
\\

In particular, $\pi$ has the property that $|\{\ell\le j : \pi(\ell)\ge j-1\}|\le 2t+2$ for every $j\in[k]$, i.e., $\pi$ is $(2t+2)$-proper.
Note that distinct linear orderings $\tau_i$, $i\in[m]$, correspond to distinct permutations $\pi_i\colon [k]\to[k]$.
Lemma \ref{lem:count_perm} provides that the number of $(2t+2)$-proper permutations $\pi_i$ is at most 
$$m\le 2^{(2t+2+\log (2t+2) )k}=2^{ck}.$$
Recall that $m>2^{ck}$ by Lemma \ref{lem:QnSD_comp}, so we arrive at a contradiction.
\end{proof}
\bigskip


\section{Concluding remarks}

In the first part of this chapter, we studied the poset Ramsey number $R(K,Q_n)$, where $K$ is a complete multipartite poset.
Our proof was based on the Chain Lemma. Despite the effectiveness of this approach for complete multipartite posets of any fixed size, it remains open how to improve the upper bound on $R(Q_3,Q_n)$, in particular whether $R(Q_3,Q_n)=n+o(n)$.
We have seen in Corollary \ref{cor:gluing} how small posets can be used as ``building blocks'' for bounding $R(P,Q_n)$ for more complex posets $P$.
This raises hope that an extension of Corollary \ref{cor:gluing} might allow for ``building'' the poset $Q_3$.
For example, $Q_3$ can be partitioned into a copy of $K_{1,3}$ and a copy of $K_{3,1}$ which interact properly. 
Both of these two building blocks are complete $2$-partite posets, so by Theorem \ref{thm:QnK}, $$R(K_{1,3},Q_n)=R(K_{3,1},Q_n)\le n+O\left(\frac{n}{\log n}\right).$$

We remark that in this chapter, we have not used the full strength of the Chain Lemma. It actually provides that the red copy of $Q_n$ is \textit{$\bX$-good}.
Taking this additional structural property into account brings us closer to the notion of \textit{blockers}, which we study in Chapters \ref{ch:QnV} and \ref{ch:QnN}.
\\

In the second part of this chapter, we focused on subdivided diamonds $SD_{s,t}$, and presented an upper bound on $R(SD_{s,t},Q_n)$ in Theorem \ref{thm:QnSD}. 
This result can be extended to a larger class of posets. Let $P$ be a poset of width $w(P)=2$ which does not contain a copy of the N-shaped poset $\pN$. Note that every subdivided diamond is such a poset.
Recall that a poset is \textit{series-parallel} if it consists of a single vertex, or is constructed from a series composition or parallel composition of smaller series-parallel posets.
Theorem~\ref{thm:Nfree} yields that $P$ is series-parallel.
In a rather technical argument, which is omitted here, one can show that $P$ is a subposet of some poset $P'$, where $P'$ is obtained by gluing subdivided diamonds on top of each other. Therefore, Corollary~\ref{cor:gluing} and Theorem \ref{thm:QnSD} imply that
$$R(P,Q_n)= n + O\left(\frac{n}{\log n}\right).$$

\newpage

\chapter{V-shaped poset versus large Boolean lattice}\label{ch:QnV}
\section{Introduction of Chapter \ref{ch:QnV}}
Recall that the \textit{poset Ramsey number} \index{poset Ramsey number} $R(P,Q)$ of two posets $P$ and $Q$ is the smallest $N$ such that any blue/red coloring of an $N$-dimensional Boolean lattice $Q_N$ contains a blue copy of $P$ or a red copy of $Q$.
Most known lower bounds on the poset Ramsey number $R(P,Q)$ correspond to so-called layered colorings of Boolean lattices, in which any two vertices in the same layer have the same color, see for example Theorem~\ref{thm:general}.
The~only two previously known non-layered constructions are given by Gr\'osz, Methuku, and Tompkins \cite{GMT}, improving the trivial lower bound $R(Q_2, Q_n) \geq n+2$ to $n+3$, and by Bohman and Peng \cite{BP}, improving the trivial lower bound for the diagonal case $R(Q_n,Q_n)\ge 2n$ to $2n+1$.
In this chapter, we present a first of a kind non-marginal improvement of the trivial lower bound for the poset Ramsey number $R(P,Q_n)$, where $P$ is a fixed poset. 

Recall that the poset $\pLa$ is the $3$-element $\Lambda$-shaped poset, i.e., the poset on vertices $Z_1$, $Z_2$, and $Z_3$ such that $Z_1>Z_3$, $Z_1>Z_2$, and $Z_2\inc Z_3$.
Its symmetric counterpart is~$\pV$, the $3$-element $V$-shaped poset on vertices $Z_1$, $Z_2$, and $Z_3$ such that $Z_1<Z_2$, $Z_1<Z_3$, and $Z_2\inc Z_3$.
We say that a poset is \textit{trivial}\index{trivial poset} if it contains neither a copy of~$\pLa$ nor a copy of $\pV$, and \textit{non-trivial}\index{non-trivial poset} otherwise.
The main result of this chapter shows a sharp jump in the behavior of $R(P, Q_n)$ as a function of $n$, depending on whether $P$ contains a copy of $\pLa$ or $\pV$. 

\begin{theorem}\label{thm-MAIN}
Let $P$ be an arbitrary poset.
\vspace*{-1em}
\begin{enumerate}
\item[(i)] If $P$ is trivial, then $R(P,Q_n)=n+\Theta(1)$. More precisely, for any positive integer $n$,\:\:$n+h(P)-1\le R(P, Q_n) \leq n + h(P)+\alpha(w(P))-1$.
\item[(ii)] If $P$ is non-trivial, then $R(P, Q_n) \geq n + \frac{1}{15} \frac{n}{\log n}$ for any $n\ge 2^{16}$.
\end{enumerate}
\end{theorem}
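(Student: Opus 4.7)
The lower bound $R(P,Q_n)\ge n+h(P)-1$ is the layered construction mentioned at the start of Section \ref{sec:knownRPQ}. For the upper bound, since $P$ is trivial, Proposition \ref{lem:uptree}(iii) expresses $P$ as a parallel composition of chains $P=C_{t_1}\opl\cdots\opl C_{t_w}$ with $w=w(P)$ and $\max_i t_i=h(P)$. Combining Theorem \ref{lem:parallel} with the identity $R(C_t,Q_n)=n+t-1$ of Corollary \ref{cor:chain} then yields $R(P,Q_n)\le n+h(P)-1+\alpha(w(P))$.

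\textbf{Part (ii).} The plan is to construct, for $N=n+\bigl\lfloor\tfrac{1}{15}\tfrac{n}{\log n}\bigr\rfloor-1$, a blue/red coloring of $Q_N$ that contains neither a blue copy of $P$ nor a red copy of $Q_n$. I begin with two reductions: since $P$ is non-trivial it contains $\pV$ or $\pLa$ as an induced subposet, so it suffices to avoid a blue copy of that three-element poset; and the $\pLa$-case reduces to the $\pV$-case by simultaneously swapping the two colors and reversing inclusion in $Q_N$. Thus the task reduces to producing a single coloring of $Q_N$ with no blue $\pV$ and no red $Q_n$.

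By Proposition \ref{lem:uptree}(ii), a blue subposet free of $\pV$ must decompose as a parallel composition of down-trees, and the cleanest way to enforce this is to arrange for the blue vertices to form a disjoint union of pairwise incomparable chains in $Q_N$. My plan is to partition the ground set $[N]=\bX\sqcup\bY$ with $|\bY|=k:=\bigl\lfloor\tfrac{1}{15}\tfrac{n}{\log n}\bigr\rfloor$, choose a carefully designed injection $\phi$ from a suitable family of subsets of $\bX$ into $\QQ(\bY)$, and color $S\subseteq[N]$ blue exactly when the pair $(S\cap\bX,\,S\cap\bY)$ is compatible with $\phi$. The definition of $\phi$ should make any two blue vertices with a common lower bound comparable, giving the desired chain decomposition of the blue set. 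To verify that no red $Q_n$ exists, suppose for contradiction such a copy $\cR$ lies in $Q_N$; the Embedding Lemma (Lemma \ref{lem:embed}) provides an $n$-element set $\bZ\subseteq[N]$ with respect to which $\cR$ is $\bZ$-good, and a Chain-Lemma-style inspection of how the splitting $\bZ=(\bZ\cap\bX)\cup(\bZ\cap\bY)$ interacts with $\phi$ should force a blue vertex into $\cR$, a contradiction.

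The principal obstacle is the joint calibration of $\phi$ and $k$. The $\log n$ in the denominator must surface from a Sperner-type comparison between the $2^k$ available labels in $\QQ(\bY)$ and a binomial coefficient such as $\binom{n}{\lfloor n/2\rfloor}$ of subsets of $\bX$: the labels need to be numerous enough to intercept every candidate red $Q_n$ after the Embedding Lemma reduction, yet structured enough that the blue set remains a union of parallel chains. The constant $\tfrac{1}{15}$ is expected to arise from a worst-case analysis of the split $|\bZ\cap\bX|$ versus $|\bZ\cap\bY|$ in the contradiction argument; making that combinatorial counting tight is where the bulk of the technical work will lie.
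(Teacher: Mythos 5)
Your argument matches the paper's: layered coloring for the lower bound, Proposition~\ref{lem:uptree}(iii) plus Theorem~\ref{lem:parallel} plus Corollary~\ref{cor:chain} for the upper bound. One small point: Theorem~\ref{lem:parallel} requires $\ell\ge 2$, so you should dispatch the case $w(P)=1$ (i.e.\ $P$ a single chain, where $\alpha(1)=0$) directly via Corollary~\ref{cor:chain}, as the paper does. Otherwise this part is fine.

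\textbf{Part (ii).} This is a sketch, not a proof, and it diverges from the paper's argument in two ways that each conceal a real difficulty.

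First, you propose a \emph{single} deterministic injection $\phi$ tied to a \emph{single} partition $[N]=\bX\sqcup\bY$, and then hope that ``how the splitting $\bZ=(\bZ\cap\bX)\cup(\bZ\cap\bY)$ interacts with $\phi$'' forces a blue vertex into every $\bZ$-good copy of $Q_n$, for every $n$-element $\bZ\subseteq[N]$. But a structure built around one fixed split gives you no leverage when $\bZ$ straddles $\bX$ and $\bY$ arbitrarily. The paper's coloring is instead the union of a \emph{family} of blocking subposets $\{\cF_\bY : \bY\in\binom{[N]}{k}\}$, one for each possible complement $\bY$ of a would-be red $Q_n$, and the key technical contribution is a \emph{probabilistic} selection of random frameworks (Theorem~\ref{thm:QnV_random}, Claims 2--4) that makes all $\binom{N}{k}$ of these subposets pairwise parallel. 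That randomized selection is precisely what your plan does not account for.

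Second, you aim to make the blue set a union of pairwise incomparable \emph{chains}, so that it is both $\pV$-free and $\pLa$-free. The paper does something strictly weaker and that weakening is load-bearing: each $\cF_\bY$ is a $\bY$-shrub, i.e.\ a $\bY$-good copy of the factorial tree $\cO(\bY)$, which is an up-tree with heavy branching. The proof that a shrub meets every $\bX$-good copy of $\QQ(\bX)$ (Lemma~\ref{lem:QnV:shrub}(ii)) is an iteration that at each step \emph{extends an ordered prefix in a freely chosen direction}; this only works because the factorial tree branches. Shrubs are $\pLa$-free but deliberately \emph{not} $\pV$-free. Replacing them by parallel chains destroys the branching and with it the blocking argument: a union of pairwise incomparable chains cannot contain a copy of $\cO(\bY)$ once $|\bY|\ge 2$, so Lemma~\ref{lem:QnV:shrub}(ii) simply does not apply, and no substitute argument is offered. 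You would have to prove, from scratch, that some union of parallel chains in $Q_{n+k}$ intercepts every $\bZ$-good $Q_n$ for every $n$-set $\bZ$; nothing in the paper suggests this is possible at the claimed density $k=\Theta(n/\log n)$, and your proposal does not attempt it. As you yourself note, the ``bulk of the technical work'' is missing; that missing work is exactly where the theorem lives.
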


\noindent Here, recall that `$\log$' refers to the logarithm with base $2$, and $\alpha(n)$ denotes the smallest dimension of a Boolean lattice containing an antichain of size $n$, as elaborated in Section~\ref{sec:alpha}.
The second part of Theorem \ref{thm-MAIN} relies on a lower bound on $R(\pLa,Q_n)$ that we provide in the next theorem.

\begin{theorem}\label{thm:QnV_LB}
For any $n\ge 2^{16}$,
$$R(\pLa,Q_n)\ge n+ \frac{1}{15}\cdot\frac{n}{\log n}.$$
\end{theorem}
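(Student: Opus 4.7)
The plan is to prove $R(\pLa,Q_n) \ge n + \bigl\lceil \tfrac{n}{15\log n}\bigr\rceil$ by exhibiting, for $N = n + \bigl\lceil \tfrac{n}{15\log n}\bigr\rceil - 1$, a blue/red coloring of $Q_N$ whose blue set $B$ contains no induced copy of $\pLa$ and whose red set contains no induced copy of $Q_n$. By Proposition~\ref{lem:uptree}(i), the no-blue-$\pLa$ condition is equivalent to $B$ being a parallel composition of up-trees; the no-red-$Q_n$ condition says that $B$ is a \emph{blocker}, meaning it meets every induced copy of $Q_n$ in $Q_N$. Thus the task reduces to constructing a $\pLa$-free blocker inside $Q_N$.

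The construction I would try is the following. Partition $[N] = \bX_0 \cup T$ with $|\bX_0|=n$ and $|T| = k := N-n$, and assign to each tag $y \in T$ a distinct code $A_y \subseteq \bX_0$ lying in the middle layer of $\QQ(\bX_0)$, so that $|A_y|$ is close to $n/2$; enough mutually distinct such codes exist by Sperner's theorem combined with the estimate $\alpha(n) = (1+o(1))\log n$ from Theorem~\ref{thm:alpha}. Declare a vertex $X\in\QQ([N])$ blue precisely when $X\cap \bX_0$ equals $A_y$ for some (uniquely selected, via a monotone tie-breaking rule) tag $y\in X\cap T$. Each blue vertex then carries a well-defined tag $y$: two blues with the same tag have identical $\bX_0$-parts $A_y$ and are hence comparable along the $T$-coordinate, while blues with distinct tags sit in parallel branches. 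A direct check gives that the down-set within $B$ of any blue vertex is a chain, so $B$ is a parallel composition of up-trees and hence $\pLa$-free.

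For the blocker step I would invoke the Embedding Lemma (Lemma~\ref{lem:embed}): any induced copy of $Q_n$ inside $Q_N$ is $\bX$-good for some $n$-subset $\bX\subseteq[N]$ and is parameterized by upward-closed sets $\{U_y\}_{y\in[N]\setminus\bX}$ in $\QQ(\bX)$ via $\phi'(X) = X\cup \{y : X\in U_y\}$. Assuming for contradiction that no $\phi'(X)$ is blue, I would restrict each $U_y$ to the middle layer of $\QQ(\bX)$ and run a pigeonhole/Sperner-type count: with $\binom{n}{\lfloor n/2\rfloor}\approx 2^n/\sqrt{n}$ mid-level candidates competing against only $k$ tags, some $X$ must be forced for which $\{y : X\in U_y\}$ realizes one of the prescribed codes, yielding a blue image $\phi'(X)$ and a contradiction. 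The explicit constant $\tfrac{1}{15}$ falls out of the resulting inequality using $\alpha(n)\approx \log n$ from Theorem~\ref{thm:alpha}.

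The main obstacle will be decoupling the adversary's $\bX$ from the fixed $\bX_0$. Since the codes $A_y$ are anchored in $\bX_0$ but the adversary may pick any $n$-subset $\bX\subseteq[N]$, one must either argue the blocker property for each $\bX$ separately, by an averaging or symmetry argument over the choice of $\bX$, or design the codes so as to remain ``visible'' under arbitrary restrictions $A_y\mapsto A_y\cap \bX$. Simultaneously preserving the rigidity that makes $B$ a parallel composition of up-trees, while keeping it dense enough to block every $\bX$-good copy, is what drives the delicate calculation and pins down the precise constant $\tfrac{1}{15}$.
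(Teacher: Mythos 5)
The construction in your second paragraph is not $\pLa$-free, so the proposal has a genuine gap. With your rule ``$X$ is blue iff $X\cap\bX_0 = A_y$ for some $y\in X\cap T$'', take any tag $y$ and any two distinct $z_1,z_2\in T\setminus\{y\}$. Then the three vertices
\[
X_1 = A_y\cup\{y,z_1\},\qquad X_2 = A_y\cup\{y,z_2\},\qquad X_3 = A_y\cup\{y,z_1,z_2\}
\]
are all blue (each has $\bX_0$-part $A_y$ and contains $y$), with $X_1\subset X_3$, $X_2\subset X_3$, and $X_1\inc X_2$ — a blue $\pLa$. Your claim that two blues with the same tag ``are hence comparable along the $T$-coordinate'' is false: agreeing on the $\bX_0$-part forces nothing about how the $T$-parts nest. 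The blue set $B$ you build is not a parallel composition of up-trees; the tag-$y$ branch is the full upset of $A_y\cup\{y\}$ inside $\cB(A_y\cup\{y\};\,T\setminus\{y\})$, which is far from a tree.

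You also correctly flag, in the last paragraph, what the real difficulty is — the adversary picks an arbitrary $n$-set $\bX$, whereas your codes are anchored to a fixed $\bX_0$ — but you do not resolve it. The paper handles both problems at once with a different mechanism: instead of a single ``tagged'' blue set, it builds a separate blocker for each complementary $k$-set $\bY$. The blocker for $\bY$ is a $\bY$-shrub, i.e.\ a $\bY$-good copy of the factorial tree $\cO(\bY)$; the factorial tree is an up-tree by construction (comparability is the prefix relation on ordered sequences), so each shrub is automatically $\pLa$-free, and shrub $\cF_\bY$ hits every $\bX$-good copy of $Q_n$ for $\bX=[N]\setminus\bY$ by Lemma~\ref{lem:QnV:shrub}(ii). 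The union of shrubs over all $\bY$ then blocks every copy of $Q_n$ via the Embedding Lemma. The nontrivial step — making the $\binom{N}{k}$ shrubs pairwise parallel so that their union remains $\pLa$-free — is done probabilistically (Theorem~\ref{thm:QnV_random}): each shrub is shifted by a random ``anchor'' set $\bX_\bY$, and Chernoff plus a union bound show that with positive probability no two shifted shrubs share a comparable pair. This random-shift idea is what is missing from your plan, and it is also what produces the explicit constant in the bound.
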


\noindent We show this lower bound by giving a probabilistic construction of parallel copies of \textit{factorial trees} to find the desired blue/red coloring. 

Previously, in Theorems~\ref{thm:QnK} and~\ref{thm:QnSD}, we have shown that $R(P,Q_n)=n+O\big(\frac{n}{\log n}\big)$, if $P$ is a complete multipartite poset or a subdivided diamond. 
For such $P$, Theorem~\ref{thm-MAIN} provides a lower bound for $R(P, Q_n)$ which is asymptotically tight not only in the linear but also in the sublinear term.
Most notably, Theorem \ref{thm-MAIN} and the upper bound on $R(Q_2,Q_n)$ by Gr\'osz, Methuku, and Tompkins \cite{GMT} imply:

\begin{corollary}\label{cor:QnQ2}
$R(Q_2,Q_n) = n + \Theta\left(\frac{n}{\log n}\right).$
\end{corollary}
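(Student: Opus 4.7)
The plan is entirely a bookkeeping exercise once Theorem~\ref{thm-MAIN}(ii) and the Grósz--Methuku--Tompkins bound are in hand. The only conceptual step is to check that $Q_2$ is a non-trivial poset in the sense defined in this chapter, i.e.\ that it contains a copy of either $\pV$ or $\pLa$. In fact it contains both: writing the ground set as $\{1,2\}$, the three vertices $\varnothing < \{1\}$, $\{2\}$ form an induced copy of $\pV$, and symmetrically $\{1,2\} > \{1\}$, $\{2\}$ form an induced copy of $\pLa$. Hence Theorem~\ref{thm-MAIN}(ii) applies with $P=Q_2$ and yields
$$R(Q_2,Q_n)\ \ge\ n+\frac{1}{15}\cdot\frac{n}{\log n}$$
for every $n\ge 2^{16}$, so in particular $R(Q_2,Q_n) \ge n + \Omega(n/\log n)$.

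For the matching upper bound I would simply invoke the theorem of Grósz, Methuku and Tompkins~\cite{GMT} quoted in Section~\ref{sec:knownRPQ} and in the introduction of Chapter~\ref{ch:QnK}, namely $R(Q_2,Q_n) \le n + (2+o(1))\frac{n}{\log n}$. Combining the two estimates gives $R(Q_2,Q_n) = n+\Theta(n/\log n)$. There is no genuine obstacle here: all the work sits in proving Theorem~\ref{thm-MAIN}(ii) (equivalently, Theorem~\ref{thm:QnV_LB}), and the corollary merely records that the $Q_2$ case is the smallest non-trivial instance in which the lower bound of Theorem~\ref{thm-MAIN} meets the known upper bound up to a multiplicative constant in the sublinear term.
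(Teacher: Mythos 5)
Your proposal is correct and matches the paper's intended derivation exactly: the corollary is stated as an immediate consequence of Theorem~\ref{thm-MAIN}(ii) (after noting $Q_2$ is non-trivial, e.g.\ via the copy of $\pV$ on $\varnothing,\{1\},\{2\}$) together with the Gr\'osz--Methuku--Tompkins upper bound, which is precisely what you do.
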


\noindent For every trivial poset $P$, Theorem \ref{thm-MAIN} determines $R(P,Q_n)$ up to an additive constant, depending on $P$. This setting is considered in more detail in Chapter \ref{ch:QnPA}.

Recall that an \textit{ordered subset}\index{ordered set} of a fixed set $\bY$ is a sequence of non-repeated elements of $\bY$.
For two ordered subsets $T$ and $S$ of $\bY$, we write $T\le_\cO S$ if $T$ is a \textit{prefix}\index{prefix} of $S$, i.e., if $|T|\le |S|$ and each of the first $|T|$ members of $S$ coincides with the respective member of $T$. The relation $\le_\cO$ defines a partial order.
Recall that the \textit{factorial tree}\index{factorial tree}\index{$\cO(\bY)$} $\cO(\bY)$ with ground set $\bY$ is the poset of all ordered subsets of a fixed set $\bY$, ordered by the prefix relation. The factorial tree $\cO([3])$ is depicted in Figure \ref{fig:QnV:factorial_tree}.
\medskip

\begin{figure}[h]
\centering
\includegraphics[scale=0.62]{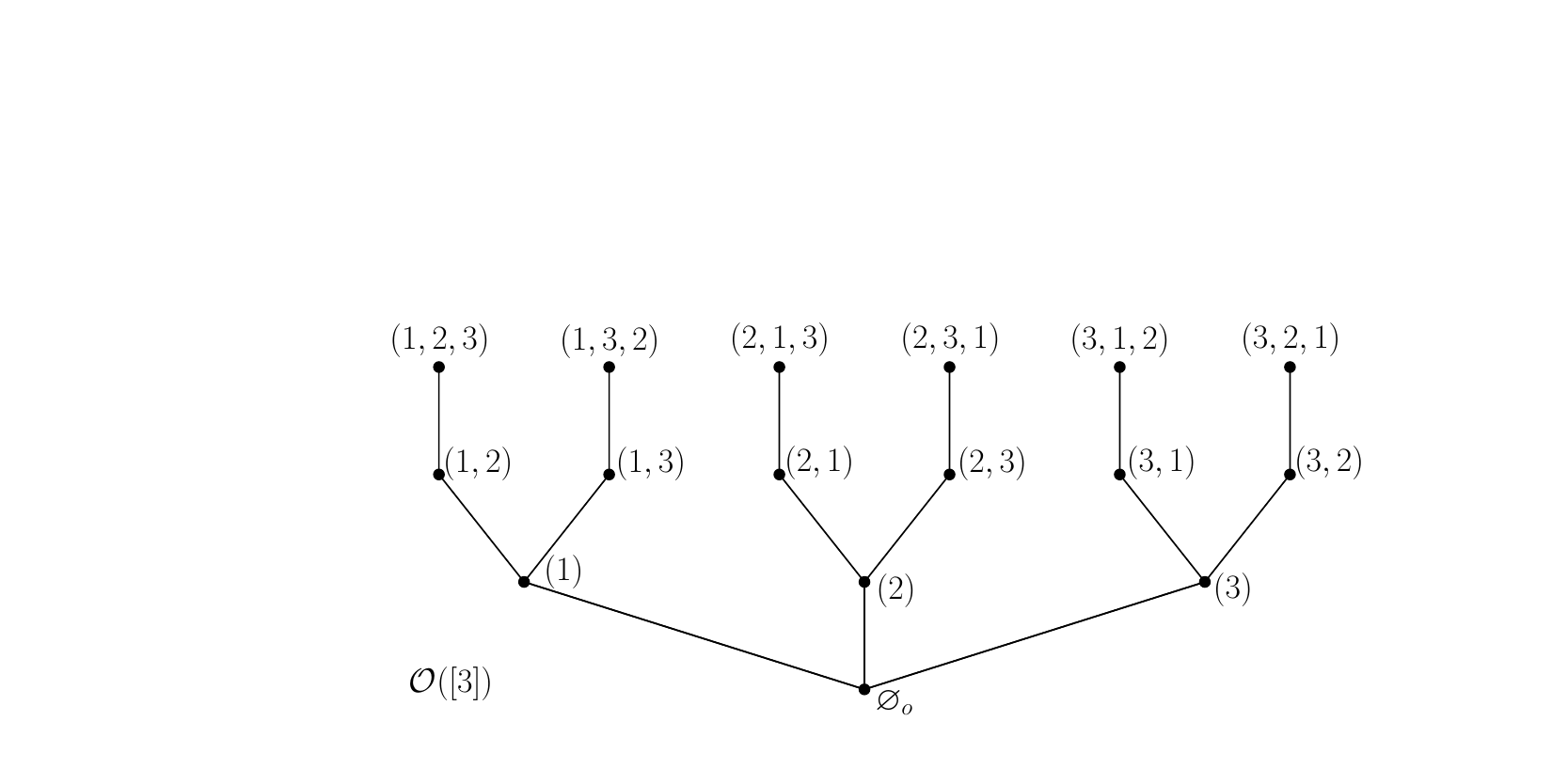}
\caption{Hasse diagram of the factorial tree $\cO([3])$.}
\label{fig:QnV:factorial_tree}
\end{figure}
\medskip

Let $\bX$ and $\bY$ be disjoint sets. We denote by $\QQ(\bX)$ the Boolean lattice on ground set $\bX$.
Recall that an embedding $\phi$ of $\QQ(\bX)$ into $\QQ(\bX\cup \bY)$ is \textit{$\bX$-good}\index{$\bX$-good function}\index{good function} if $\phi(X)\cap \bX = X$  for every $X\in \QQ(\bX)$. 
A copy $\QQ$ of $Q_n$ in $\QQ(\bX\cup \bY)$ is \textit{$\bX$-good}\index{$\bX$-good copy}\index{good copy} if there is an $\bX$-good embedding of $\QQ(\bX)$ into $\QQ(\bX\cup \bY)$ with image $\QQ$. The Embedding Lemma, Lemma \ref{lem:embed}, states that any copy of $Q_n$ in a host Boolean lattice is $\bX$-good for some $\bX$.
We introduce a similar notion of \textit{goodness} for factorial trees. 
For an ordered subset $S$ of $\bY$, we refer to its underlying unordered set as $\underline{S}$.
An embedding $\xi\colon \cO(\bY)\to \QQ(\bX\cup\bY)$ is \textit{$\bY$-good} if $\xi(S)\cap \bY=\underline{S}$ for every $S\in \cO(\bY)$.
We say that a subposet $\cF$ of $\QQ(\bX\cup\bY)$ is a \textit{$\bY$-good copy} of $\cO(\bY)$ if
there exists a $\bY$-good embedding $\xi\colon \cO(\bY) \to \QQ(\bX\cup\bY)$ with image $\cF$. We also refer to a $\bY$-good copy of $\cO(\bY)$ as a \textit{$\bY$-shrub}\index{shrub}\index{$\bY$-shrub}.

In our second main theorem, we present a structural duality result.

\begin{theorem}\label{thm:duality}
For two disjoint sets $\bX$ and $\bY$, fix a blue/red coloring of the Boolean lattice $\QQ(\bX\cup \bY)$ that contains no blue copy of $\pLa$.
Then exactly one of the following statements holds in $\QQ(\bX\cup \bY)$: 
\vspace*{-1em}
\begin{enumerate}
\item[(i)] there is a red, $\bX$-good copy of $\QQ(\bX)$, or
\item[(ii)] there is a blue, $\bY$-good copy of $\cO(\bY)$, i.e., a blue $\bY$-shrub.
\end{enumerate}
\end{theorem}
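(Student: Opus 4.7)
The plan is to prove existence (at least one of (i), (ii) holds) by induction on $|\bY|$, then handle uniqueness by a color-chasing argument. The base case $|\bY| = 0$ is immediate: $\cO(\bY) = \{\varnothing_o\}$, so a blue $\bY$-shrub is just a blue vertex of $\QQ(\bX)$; if no such vertex exists then $\QQ(\bX) \subseteq \QQ(\bX \cup \bY)$ is entirely red and the identity embedding witnesses (i).

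For the inductive step I exploit that, on removing the root $\varnothing_o$, the factorial tree $\cO(\bY)$ splits into $|\bY|$ subtrees rooted at the $(y)$'s, with the $y$-subtree isomorphic to $\cO(\bY \setminus \{y\})$ via dropping the first coordinate. Assume (i) fails; then $\QQ(\bX)$ is not entirely red, so pick a blue vertex $Z_0 \in \QQ(\bX)$, intending to root the shrub at $\xi(\varnothing_o) = Z_0$. For each $y \in \bY$, the filter $\QQ_y = \{Z \in \QQ(\bX \cup \bY) : Z \supseteq Z_0,\ y \in Z\}$ is canonically isomorphic to $\QQ((\bX \setminus Z_0) \cup (\bY \setminus \{y\}))$ and inherits the $\pLa$-free condition. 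The induction hypothesis applied to $\QQ_y$ yields either a red $(\bX \setminus Z_0)$-good copy of $\QQ(\bX \setminus Z_0)$ (discussed below as the main obstacle), or a blue $(\bY \setminus \{y\})$-good copy of $\cO(\bY \setminus \{y\})$ via some embedding $\xi_y$. In the clean case where the sub-shrub alternative holds for every $y$, I assemble these into $\xi : \cO(\bY) \to \QQ(\bX \cup \bY)$ by setting $\xi((y, T)) = \xi_y(T)$ (viewed back in $\QQ_y$), yielding a blue, $\bY$-good map. The essential verification that $\xi$ is an embedding hinges on the non-trivial subcase $S_1 = (y_1, T_1)$, $S_2 = (y_2, T_2)$ with $y_1 \ne y_2$ and $y_1 \in \underline{T_2}$: a putative inclusion $\xi(S_1) \subseteq \xi(S_2)$ combined with $\xi((y_1)) \subseteq \xi(S_1)$ and $\xi((y_2)) \subseteq \xi(S_2)$ would place two incomparable blue vertices $\xi((y_1)), \xi((y_2))$ (with disjoint singleton $\bY$-parts) both below the blue $\xi(S_2)$, creating a blue $\pLa$---contradicting the hypothesis.

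The main obstacle is handling the scenario where some recursive call on $\QQ_y$ returns only a red $(\bX \setminus Z_0)$-good copy of $\QQ(\bX \setminus Z_0)$ rather than a sub-shrub. Such a partial red copy covers only $\bX$-parts in the principal filter of $Z_0$ inside $\QQ(\bX)$, so extending it to a full red $\bX$-good copy of $\QQ(\bX)$ requires splicing with red vertices whose $\bX$-parts lie outside this filter. I would address this by choosing $Z_0$ carefully (for instance, among minimal blue vertices of $\QQ(\bX)$, forcing every vertex strictly below $Z_0$ in $\QQ(\bX)$ to be red) and by invoking the duality recursively in the dual sub-lattice $\{Z : Z \cap (\bX \setminus Z_0) = \varnothing\} \cong \QQ(Z_0 \cup \bY)$: any blue shrub found there would lift to one in $\QQ(\bX \cup \bY)$, contradicting our running assumption of no shrub, so the recursive call must return a red $Z_0$-good copy of $\QQ(Z_0)$, which splices with the partial red copy above to yield the desired red $\bX$-good copy of $\QQ(\bX)$. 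Finally, for uniqueness, writing $\phi(X) = X \cup Y(X)$ (with $Y$ monotone in $X$) and $\xi(S) = X(S) \cup \underline{S}$ (with $X(\cdot)$ monotone along $\leq_\cO$), start from $Z_0 = \xi(\varnothing_o)$: if $Y(Z_0) = \varnothing$ then $\phi(Z_0) = Z_0$ is both blue and red; else pick an ordering $S_0$ of $Y(Z_0)$ and observe that either $Y(X(S_0)) = Y(Z_0)$, forcing $\phi(X(S_0)) = \xi(S_0)$ (a color clash), or $Y(X(S_0)) \supsetneq Y(Z_0)$ and one iterates with an extension of $S_0$. Since $Y$-values strictly grow inside the finite set $\bY$, this iteration terminates within $|\bY|$ steps, yielding the required clash.
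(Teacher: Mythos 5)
Your uniqueness argument and base case are sound, and the ``clean case'' glue is a genuinely different idea from the paper's: you decompose the factorial tree into $|\bY|$ subtrees and recurse on $|\bY|$, whereas the paper instead introduces a predicate (``embeddable vertices''), proves a recursive characterization of it (Lemma~\ref{lem:QnV:lem3}), and grows a blue weak shrub vertex by vertex from the non-embeddability of $(\varnothing,\varnothing)$ (Lemmas~\ref{lem:QnV:lem4} and~\ref{lem:QnV:lem5}). Your construction of the glued $\xi$ and the $\pLa$ argument for why it is injective across subtrees is correct.

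However, the handling of the ``main obstacle'' has a genuine gap that is not a matter of filling in details. Suppose some $\QQ_y$ returns a red $(\bX\setminus Z_0)$-good copy $\phi_1$ of $\QQ(\bX\setminus Z_0)$, and the dual call on $\QQ(Z_0\cup\bY)$ returns a red $Z_0$-good copy $\phi_2$ of $\QQ(Z_0)$. Being $\bX$-good, $\phi_1$ supplies a red image only for those $W\subseteq\bX$ with $W\supseteq Z_0$ (its vertices have $\bX$-part $Z_0\cup X'$), while $\phi_2$ supplies a red image only for $W\subseteq Z_0$. Their union covers exactly the $W$ comparable with $Z_0$. Whenever $\varnothing\neq Z_0\neq\bX$, the vast majority of $\QQ(\bX)$ consists of $W$ incomparable with $Z_0$, and the proposal gives no candidate $\phi(W)$ for these; the natural attempt $\phi(W)=\phi_2(W\cap Z_0)\cup(W\setminus Z_0)$ need not land on a red vertex (adding $W\setminus Z_0\subseteq\bX\setminus Z_0$ to a red vertex of $\QQ(Z_0\cup\bY)$ produces a vertex whose color you have no control over), and there is no obvious way to make it compatible with both $\phi_1$ and $\phi_2$ simultaneously. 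Choosing $Z_0$ minimal blue tells you that $W$ itself is red for $W\subsetneq Z_0$, but that is information about vertices of $\QQ(\bX)$, not about what a good $\phi(W)$ with a nontrivial $\bY$-part should be, and in particular it says nothing at all about the incomparable $W$'s.

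A secondary but real problem: your stated induction is on $|\bY|$, yet the recursive call on $\QQ(Z_0\cup\bY)$ keeps $\bY$ unchanged. You would need to switch to induction on $|\bX|+|\bY|$ (say), which then forces $Z_0\neq\bX$ for the call to be strictly smaller, and the corner case $Z_0=\bX$ (i.e., $\bX$ is the unique blue vertex of $\QQ(\bX)$) must be handled separately; this is not discussed. For contrast, the paper sidesteps both issues because the recursive characterization in Lemma~\ref{lem:QnV:lem3}~(ii) quantifies over \emph{all} $X'\supset X$ at once---never decomposing the filter above $X$ into disjoint pieces that later need to be spliced---and builds the red embedding $\phi$ for a single fixed filter by case analysis on the inclusion-minimal blue vertices above $X$, invoking $\pLa$-freeness to ensure the ``ambiguous'' images $(X',\bY)$ are red.
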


\noindent Informally speaking, this duality statement claims that for any bipartition $\bX\cup\bY$ of the ground set of a Boolean lattice, there exists either a red copy of $\QQ(\bX)$ that is restricted to $\bX$, or a blue copy of the factorial tree $\cO(\bY)$ restricted to $\bY$.
This result can be seen as a strengthening of the Chain Lemma, Lemma \ref{lem:chain}, in the special case when we forbid a blue copy of $\pLa$.
In particular, the family of chains of maximal length in the shrub corresponds to the family of \textit{$\bY$-chains} obtained in the Chain Lemma.

When studying the poset Ramsey number $R(P,Q_n)$, we shall understand blue/red colorings in which blue copies of $P$ and red copies $Q_n$ are forbidden.
Theorem \ref{thm:duality} implies a characterization for blue/red colored Boolean lattices that contain neither a blue copy of $\pLa$ nor a red copy of $Q_n$.

\begin{corollary}\label{cor:duality}
Let $n,k\in\N$ and $N=n+k$. Let $\QQ([N])$ be a blue/red colored Boolean lattice with no blue copy of $\pLa$.
There is no red copy of $Q_n$ in $\QQ([N])$ if and only if for every $k$-element subset $\bY\subseteq [N]$, there exists a blue $\bY$-shrub.
\end{corollary}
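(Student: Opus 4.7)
The plan is to deduce both directions directly from Theorem~\ref{thm:duality}, using the Embedding Lemma (Lemma~\ref{lem:embed}) to bridge from an arbitrary red copy of $Q_n$ to the specific structural form (an $\bX$-good copy of $\QQ(\bX)$) that Theorem~\ref{thm:duality} talks about. There is nothing combinatorially deep to prove: the content is a careful bookkeeping of the bipartition $[N]=\bX\cup\bY$ with $|\bX|=n$, $|\bY|=k$.

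For the forward direction, I would fix an arbitrary $k$-element subset $\bY\subseteq [N]$ and set $\bX=[N]\setminus\bY$, so that $|\bX|=n$ and the blue/red coloring of $\QQ([N])$ restricts to a coloring of $\QQ(\bX\cup\bY)$ containing no blue copy of $\pLa$. Applying Theorem~\ref{thm:duality} to this bipartition yields either a red, $\bX$-good copy of $\QQ(\bX)$ — which is in particular a red copy of $Q_n$, contradicting the hypothesis — or a blue $\bY$-shrub. The former is excluded, so the latter holds, which is exactly what we wanted.

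For the backward direction, I would argue by contradiction. Assume that for every $k$-element $\bY\subseteq[N]$ there is a blue $\bY$-shrub, yet some embedding $\phi\colon Q_n\to\QQ([N])$ produces a red copy of $Q_n$. The Embedding Lemma supplies an $n$-element subset $\bX\subseteq[N]$ and an $\bX$-good embedding $\phi'\colon\QQ(\bX)\to\QQ([N])$ with the same image as $\phi$; this image is then a red, $\bX$-good copy of $\QQ(\bX)$. Setting $\bY=[N]\setminus\bX$, the standing assumption gives a blue $\bY$-shrub. But Theorem~\ref{thm:duality}, applied to the bipartition $\bX\cup\bY=[N]$, asserts that \emph{exactly} one of the two alternatives (i) and (ii) occurs, so the simultaneous existence of a red $\bX$-good copy of $\QQ(\bX)$ and a blue $\bY$-shrub is impossible. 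This contradiction completes the proof.

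The only subtle point — and the one place a reader could stumble — is remembering that Theorem~\ref{thm:duality} is stated only for copies that are \emph{$\bX$-good} with respect to a prescribed bipartition, whereas the hypothesis of the corollary speaks about an arbitrary red copy of $Q_n$. The Embedding Lemma is precisely the tool that lets us choose the bipartition so as to turn any such copy into an $\bX$-good one, and this normalization step is what glues the two halves of the argument together.
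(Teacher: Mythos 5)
Your proposal is correct and follows essentially the same route as the paper: both directions reduce to Theorem~\ref{thm:duality}, with the Embedding Lemma used to turn an arbitrary red copy of $Q_n$ into an $\bX$-good one so that the theorem applies. The only cosmetic difference is that you phrase the backward direction as a proof by contradiction while the paper proves its contrapositive directly; the logical content is identical.
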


\noindent In Corollary \ref{cor:QnVs}, we presented an upper bound on $R(\pLa,Q_n)$. 
By applying a counting argument to Corollary \ref{cor:duality}, we obtain an alternative proof of that bound.

\begin{corollary}\label{cor:QnV_UB}
$$R(\pLa,Q_n)\le n+\big(1+o(1)\big)\frac{n}{\log n}.$$
\end{corollary}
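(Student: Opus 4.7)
The plan is to argue by contradiction. Set $N=n+k$ and suppose $R(\pLa,Q_n)>N$, so there is a blue/red coloring of $\QQ([N])$ containing neither a blue copy of $\pLa$ nor a red copy of $Q_n$. By Corollary~\ref{cor:duality}, for every $k$-element subset $\bY\subseteq[N]$ there is a blue $\bY$-shrub, i.e., a $\bY$-good embedding $\xi_\bY\colon \cO(\bY)\to \QQ([N])$ whose image is entirely blue. The goal is to count these blue structures globally and derive the inequality $k!\le 2^n$.

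Next, I would focus on the top level of the factorial tree $\cO(\bY)$. Its maximal elements are exactly the $k!$ full orderings of $\bY$, and each such ordering $\tau$ has underlying set $\underline{\tau}=\bY$. By $\bY$-goodness, $\xi_\bY(\tau)\cap\bY=\bY$, which means $\xi_\bY(\tau)\supseteq\bY$. Since $\xi_\bY$ is injective, each $\bY$ contributes $k!$ distinct blue vertices of $\QQ([N])$ containing $\bY$. I would then double-count the pairs $(\bY,V)$ with $|\bY|=k$ and $V\supseteq\bY$ blue: the shrub argument gives the lower bound $\binom{N}{k}\cdot k!$, while dropping the blue condition and using the standard identity gives
$$\binom{N}{k}\cdot k! \;\le\; \sum_{V \text{ blue}} \binom{|V|}{k} \;\le\; \sum_{v=0}^{N}\binom{N}{v}\binom{v}{k} \;=\; \binom{N}{k}\,2^{N-k},$$
which simplifies to $k!\le 2^{N-k}=2^n$.

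The final step is to translate $k!\le 2^n$ into an asymptotic bound via Stirling's formula, which yields $\log(k!) = k\log k - k\log e + O(\log k)$. Hence $k\log k \le n + O(k)$, and substituting $k=\frac{n}{\log n}(1+\varepsilon)$ together with $\log k = \log n - \log\log n + O(1)$ shows that the inequality fails as soon as $\varepsilon$ exceeds $\bigl(1+o(1)\bigr)\frac{\log\log n}{\log n}$. Choosing $k$ to be the smallest integer with $k!>2^n$ gives $k=\bigl(1+o(1)\bigr)\frac{n}{\log n}$ and the desired bound $R(\pLa,Q_n)\le n+\bigl(1+o(1)\bigr)\frac{n}{\log n}$.

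The argument is a straightforward application of Corollary~\ref{cor:duality} combined with a global double-counting identity, so I do not anticipate a real obstacle. The only subtle point is extracting $k!$ distinct blue supersets of $\bY$ from a single shrub, which rests on combining injectivity of $\xi_\bY$ with $\bY$-goodness at the maximal level of $\cO(\bY)$; once this is noted, the rest is elementary.
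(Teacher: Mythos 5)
Your proof is correct and it takes a genuinely different counting argument from the paper's. Both proofs build on the same structural input, namely Corollary~\ref{cor:duality} (equivalently Theorem~\ref{thm:duality}): in a coloring with no blue $\pLa$ and no red $Q_n$, every $k$-element $\bY$ supports a blue $\bY$-shrub. The paper then works with a \emph{single} shrub $\cF$, bounding its size crudely by the entire lattice: $k!\le|\cF|\le 2^{n+k}$, which is exactly Lemma~\ref{prop:shrub_dim}~(iii), giving $n\ge k(\log k-\log e-1)$. You instead double-count the incidences between $k$-sets $\bY$ and blue vertices $V\supseteq\bY$ across \emph{all} $\binom{N}{k}$ choices of $\bY$: the $k!$ maximal vertices of each shrub (which are pairwise distinct by injectivity of the $\bY$-good embedding and all contain $\bY$ by $\bY$-goodness) give the lower bound $\binom{N}{k}k!$, while the trivial upper bound $\sum_{V}\binom{|V|}{k}=\binom{N}{k}2^{N-k}$ — via the identity $\binom{N}{v}\binom{v}{k}=\binom{N}{k}\binom{N-k}{v-k}$ — forces $k!\le 2^{N-k}=2^n$. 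This is a strictly sharper inequality than the paper's $k!\le 2^{n+k}$ (saving a factor $2^k$), though both translate to the identical asymptotic $k=(1+o(1))\tfrac{n}{\log n}$, since the improvement affects only lower-order terms. The double-counting version is arguably more natural: it uses the shrub structure uniformly across all $\bY$ rather than discarding all but one shrub, and the normalization by $\binom{N}{k}$ is cleaner than invoking $|\QQ(\bX\cup\bY)|=2^{N}$.
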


\bigskip

The structure of this chapter is as follows.
In Section \ref{sec:QnV:prelim}, we introduce additional notation for factorial trees, and discuss basic properties of shrubs.
In Section \ref{sec:QnV:LB}, we give a probabilistic construction that verifies Theorem \ref{thm:QnV_LB}, and derive Theorem \ref{thm-MAIN} from that.
In Section \ref{sec:QnV:duality}, we present proofs of Theorem \ref{thm:duality}, Corollary \ref{cor:duality}, and Corollary \ref{cor:QnV_UB}.
The results of this chapter are published in Combinatorics, Probability and Computing, 2023 \cite{QnV}, in joint work with Maria Axenovich.
In contrast to the published manuscript, the content of this chapter has been reorganized into two parts, separating the proofs of Theorems \ref{thm-MAIN} and \ref{thm:duality}. Furthermore, we have expanded Theorem \ref{thm-MAIN}~(ii) to include a threshold for $n$ such that the statement holds, and edited the proof of Theorem \ref{thm:QnV_random} accordingly.
\vspace*{0.5em}
%
%
\section{Basic properties of a shrub}\label{sec:QnV:prelim}


In this subsection, we discuss the central properties of shrubs, which are the key ingredient in the upcoming proofs.
Recall that the unordered set underlying an ordered set $S$ is denoted by $\underline{S}$. We say that $S$ is an \textit{ordering} of $\underline{S}$. 
Let $|S|=|\underline{S}|$ be the \textit{size} of $S$.
We denote the \textit{empty ordered set} by $\varnothing_o=()$. Note that $\varnothing_o$ is a prefix of every ordered set. 
We say that an ordered set $T$ is a \textit{strict prefix}\index{strict prefix} of an ordered set $S$, denoted by $T<_\cO S$, if $T$ is a prefix of $S$ and $T\neq S$. 

Recall that an \textit{up-tree}\index{up-tree} $P$ is a poset which has a unique minimal vertex and for every vertex $Z\in P$, the vertices $Z'\in P$ with $Z'\le_P Z$ form a chain. In Proposition \ref{lem:uptree}, we showed that a poset $P$ does not contain a copy of $\pLa$ if and only if it is a parallel composition of up-trees.

\begin{lemma}\label{lem:QnV:shrub}\label{prop:shrub_dim}
Let $\bX$ and $\bY$ be disjoint sets. Let $n=|\bX|$ and $k=|\bY|$. Let $\cF$ be a $\bY$-shrub in the Boolean lattice $\QQ(\bX\cup\bY)$. Then:
\vspace*{-1em}
\begin{enumerate}
\item[(i)] $\cF$ is an up-tree. In particular, $\cF$ does not contain a copy of $\pLa$.
\item[(ii)] $\cF$ contains a vertex of every $\bX$-good copy of $Q_n$.
\item[(iii)] The $2$-dimension of $\QQ(\bX\cup\bY)$ is at least $k(\log k -\log e)$, i.e., $n\ge k(\log k - \log e -1)$.
\end{enumerate}
\end{lemma}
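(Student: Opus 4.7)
The plan is to transfer structure from the factorial tree $\cO(\bY)$ to $\cF$ through the $\bY$-good embedding $\xi$, and then handle each part separately.

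For (i), the factorial tree $\cO(\bY)$ is an up-tree: $\varnothing_o$ is its unique minimum, and for each $S\in\cO(\bY)$ the set $\{T\in\cO(\bY): T\le_\cO S\}$ consists precisely of the prefixes of $S$, which are linearly ordered by length. Being an up-tree is preserved under poset isomorphism, so $\cF\cong\cO(\bY)$ is also an up-tree, and Proposition~\ref{lem:uptree}(i) then yields the $\pLa$-freeness.

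For (ii), let $\phi\colon\QQ(\bX)\to\QQ(\bX\cup\bY)$ be the $\bX$-good embedding whose image is the given $\bX$-good copy $\QQ'$ of $Q_n$, and write $\phi(X)=X\cup G(X)$ with $G(X)\subseteq\bY$. Similarly write $\xi(S)=H(S)\cup\underline{S}$ with $H(S)\subseteq\bX$. The embedding property forces $G$ and $H$ to be monotone with respect to $\subseteq$ and $\le_\cO$ respectively. The goal is to find $S\in\cO(\bY)$ with $\xi(S)\in\QQ'$, which is equivalent to $\underline{S}=G(H(S))$, since then $\xi(S)=H(S)\cup G(H(S))=\phi(H(S))$. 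I would construct $S$ greedily: set $S_0=\varnothing_o$; having $S_i$ with invariant $\underline{S_i}\subseteq G(H(S_i))$, stop if equality holds, and otherwise append any element of $G(H(S_i))\setminus\underline{S_i}$ to obtain $S_{i+1}$. Monotonicity of $G$ and $H$ preserves the invariant, and since $|\underline{S_i}|$ strictly increases while $\underline{S_i}\subseteq\bY$, the process terminates within $k$ steps at some $S_i$ with $\xi(S_i)\in\cF\cap\QQ'$.

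For (iii), $\cF\subseteq\QQ(\bX\cup\bY)$ gives $|\cF|\le 2^{n+k}$, while $|\cF|=|\cO(\bY)|\ge k!$ just by counting the $k!$ full orderings of $\bY$. Stirling's bound $k!\ge(k/e)^k$ then yields $(k/e)^k\le 2^{n+k}$, which rearranges to $n+k\ge k(\log k-\log e)$; this is both the claimed lower bound on the $2$-dimension $n+k$ and the equivalent bound $n\ge k(\log k-\log e-1)$. The main obstacle is the invariant-based construction in (ii); (i) and (iii) reduce to a short structural observation and a Stirling calculation, respectively.
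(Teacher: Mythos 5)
Your proof is correct and follows essentially the same approach as the paper's. For part (ii), you phrase the zigzag between $\bX$-parts and $\bY$-parts as a direct greedy construction maintaining the invariant $\underline{S_i}\subseteq G(H(S_i))$, while the paper runs the identical iteration as a proof by contradiction (assuming no common vertex, the $\bY$-parts $Y_i$ grow strictly, exceeding $|\bY|$); the underlying combinatorial content is the same, and parts (i) and (iii) match the paper's argument exactly.
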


\begin{proof}
First, we shall verify part (i).
In a factorial tree $\cO(\bY)$ with ground set $\bY$, the set of prefixes $\{T\in\cO(\bY)\colon T\le_\cO S\}$ of any vertex $S\in\cO(\bY)$ forms a chain.  Furthermore, the vertex $\varnothing_o$ is the unique minimal vertex of $\bY$, thus $\cO(\bY)$ is an up-tree. Since $\cF$ is a copy of $\cO(\bY)$, it is also an up-tree.
In particular, Proposition \ref{lem:uptree} provides that $\cF$ does not contain a copy of $\pLa$.

For part (ii), let $\phi\colon \QQ(\bX) \to \QQ(\bX\cup \bY)$ be an arbitrary $\bX$-good embedding of $\QQ(\bX)$.
Let $\xi\colon \cO(\bY) \to \QQ(\bX\cup \bY)$ be an arbitrary $\bY$-good embedding of the factorial tree with image $\cF$.
Assume that $\phi(X)\neq \xi(S)$ for any two $X\in\QQ(\bX)$ and $S\in\cO(\bY)$.

We shall find a contradiction by applying an iterative argument. 
Let $Y_0=\varnothing$ be the empty (unordered) set and let $S_0=\varnothing_o$ be the empty ordered set.
Let $X_1\subseteq\bX$ such that $\xi(S_0)=X_1\cup \underline{S_0}$.
Note that $X_1=\xi(S_0)\cap \bX$, since $\xi$ is $\bY$-good.
Let $Y_1\subseteq\bY$ such that $\phi(X_1)=X_1\cup Y_1$, i.e., using that $\phi$ is $\bX$-good, $Y_1=\phi(X_1)\cap \bY$.
Because $X_1\cup Y_1=\phi(X_1)\neq \xi(S_0)=X_1\cup \underline{S_0}$, we conclude that $Y_1\neq \underline{S_0}=\varnothing$, so $|Y_1|\ge 1$. 

Suppose that for some $i\in[k]$, we already defined $X_1,\dots,X_i\subseteq \bX$, \ $Y_0,\ldots,Y_i\subseteq\bY$, and $S_0,\dots,S_{i-1}\in\cO(\bY)$ such that
\vspace*{-1em}
\begin{itemize}
\item $\underline{S_{i-1}}=Y_{i-1}$,
\item $\xi(S_{i-1})=X_i\cup \underline{S_{i-1}}$,
\item $\phi(X_i)=X_i\cup Y_i$, 
\item $Y_{i-1}\subsetneql Y_i$, and $|Y_i|\ge i$.
\end{itemize}

\indent Fix any ordering $S_i$ of $Y_i$ which has $S_{i-1}$ as a strict prefix. 
Such an $S_i$ exists because $\underline{S_{i-1}}=Y_{i-1}\subsetneql Y_i$. 
In other words, $S_i$ is obtained from $S_{i-1}$ by adding the elements in $Y_i\setminus Y_{i-1}$ in arbitrary order to the ``end'' of $S_{i-1}$.

Afterwards, let $X_{i+1}$ be the subset of $\bX$ with $\xi(S_i)=X_{i+1}\cup\underline{S_i}$, i.e., $X_{i+1}=\xi(S_i)\cap \bX$.
Since $\xi$ is an embedding, we know that $X_{i}\cup\underline{S_{i-1}}=\xi(S_{i-1})\subseteq\xi(S_i)=X_{i+1}\cup \underline{S_i}$, so in particular, $X_i\subseteq X_{i+1}$.


Next, let $Y_{i+1}\subseteq\bY$ such that $\phi(X_{i+1})=X_{i+1}\cup Y_{i+1}$, so $Y_{i+1}=\phi(X_{i+1})\cap \bY$.
Because $X_i\subseteq X_{i+1}$ and $\phi$ is an embedding, we see that $X_i\cup Y_i=\phi(X_i)\subseteq\phi(X_{i+1})=X_{i+1}\cup Y_{i+1}$,
so in particular, $Y_i\subseteq Y_{i+1}$.
Moreover, using the assumption that the images of $\phi$ and $\xi$ have no common vertex, 
$$X_{i+1}\cup Y_{i+1}=\phi(X_{i+1})\neq \xi(S_i) = X_{i+1}\cup Y_i.$$ 
This implies that $Y_{i+1}\neq Y_i$, thus $Y_{i+1}\supset  Y_i$, and therefore $|Y_{i+1}|\ge|Y_i|+1\ge i+1$.
Iteratively, we obtain a subset $Y_{k+1}\subseteq \bY$ with $|Y_i|\ge k+1$, a contradiction to $|\bY|=k$.

For part (iii), observe that $\cO(\bY)$ has $k!$ distinct maximal vertices, each corresponding to a distinct permutation of $\bY$. 
In particular, $\cF$ has at least $k!$ vertices, thus
$$\left(\frac{k}{e}\right)^k\le k!\le |\cF| \le |\QQ(\bX\cup\bY)| = 2^{|\bX\cup\bY|}=2^{n+k}.$$
This implies that $k(\log k - \log e) \le n +k$.
\end{proof}

\bigskip

\section{Lower bound on $R(\pLa,Q_n)$}\label{sec:QnV:LB} 

\subsection{Construction of an almost optimal shrub}

\noindent\textbf{Outline of the proof idea for Theorem \ref{thm:QnV_LB}:}
To bound $R(\pLa,Q_n)$ from below, we shall construct a blue/red coloring of the host Boolean lattice that contains neither a blue copy of $\pLa$ nor a red copy of $Q_n$.
Our proposed coloring consists of a collection of blue shrubs, while all remaining vertices are colored red.
We construct a ``dense'' shrub, and then use a probabilistic argument to find a collection of parallel shrubs in the host Boolean lattice.
Afterwards, we use Lemma \ref{lem:QnV:shrub} to confirm that the proposed coloring indeed contains neither a blue copy of $\pLa$ nor a red copy of $Q_n$.

First, we construct a $\bY$-shrub which is almost optimal in the sense that its host Boolean lattice has a dimension almost matching the lower bound in Lemma \ref{prop:shrub_dim}~(iii).

\begin{lemma} \label{lem:QnV:optimal}
Let $\bY$ be a $k$-element set for some $k\in\N$.
Let $\bA$ be a set disjoint from $\bY$ such that $|\bA| \geq k \cdot \max\{\log k + \log\log k,11\}$.
Then there is a $\bY$-shrub in $\QQ(\bA\cup\bY)$.
\end{lemma}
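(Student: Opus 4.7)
The plan is to prove Lemma~\ref{lem:QnV:optimal} by induction on $k=|\bY|$, exhibiting a $\bY$-shrub via a recursive ``antichain-tagging'' scheme. The base case $k=0$ is immediate: $\cO(\varnothing)=\{\varnothing_o\}$ maps to $\varnothing\in\QQ(\bA)$. For the inductive step, I would exploit the tree structure of $\cO(\bY)$: the unique minimum $\varnothing_o$ has $k$ children $(y)$, $y\in\bY$, and the subtree rooted at $(y)$ is canonically isomorphic to $\cO(\bY\setminus\{y\})$ via $S'\mapsto(y,S')$. The core idea is to partition $\bA=\bA_0\sqcup\bA_1$ with $|\bA_0|=\alpha(k)$, so that by definition of the Sperner number $\QQ(\bA_0)$ contains a $k$-element antichain $\{B_y : y\in\bY\}$, and to recursively embed the subtrees inside $\bA_1\cup(\bY\setminus\{y\})$ while disambiguating them using the tags~$B_y$.

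Explicitly, for each $y\in\bY$ I apply the inductive hypothesis to obtain a $(\bY\setminus\{y\})$-good embedding $\xi_y\colon\cO(\bY\setminus\{y\})\to \QQ(\bA_1\cup(\bY\setminus\{y\}))$ and then define $\xi\colon\cO(\bY)\to\QQ(\bA\cup\bY)$ by $\xi(\varnothing_o)=\varnothing$ and
\[
\xi((y,s_2,\dots,s_m)) \;=\; \xi_y((s_2,\dots,s_m))\,\cup\,\{y\}\,\cup\,B_y.
\]
$\bY$-goodness is immediate from $B_y\subseteq\bA_0$ being disjoint from $\bY$ together with the inductive goodness of each $\xi_y$. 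Order-preservation within a single subtree $y$ reduces to the inductive hypothesis because the factor $\{y\}\cup B_y$ is common to both sides. For two vertices $S,T$ in subtrees indexed by distinct $y\ne y'$, observe that $\xi(S)\cap\bA_0=B_y$ and $\xi(T)\cap\bA_0=B_{y'}$ (using that $\xi_y(S')\subseteq\bA_1\cup\bY$ and $\bA_0\cap\bA_1=\varnothing$); since $\{B_y\}$ is an antichain, $B_y\not\subseteq B_{y'}$, so $\xi(S)\not\subseteq\xi(T)$ as required. The comparison with $\varnothing_o$ is similarly straightforward.

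This establishes the recurrence $f(k)\le \alpha(k)+f(k-1)$ with $f(0)=0$, hence $f(k)\le \sum_{j=1}^{k}\alpha(j)$ for the minimum size of $\bA$ that the construction requires. The main remaining obstacle is the arithmetic verification that this sum is bounded by $k\cdot\max\{\log k+\log\log k,\, 11\}$. Using Theorem~\ref{thm:alpha}, $\alpha(j)\le\log j+\tfrac{1}{2}\log\log j+2$, and Stirling's approximation gives $\log(k!)\le k\log k - k\log e + O(\log k)$, so
\[
\sum_{j=1}^{k}\alpha(j)\;\le\; k\log k+\tfrac{k}{2}\log\log k+(2-\log e)k+O(\log k).
\]
In the regime $\log k+\log\log k\ge 11$ (which holds for $k\ge 256$), the term $\tfrac{k}{2}\log\log k\ge\tfrac{3k}{2}$ comfortably absorbs the linear remainder $(2-\log e)k\approx 0.56k$, yielding the bound $k(\log k+\log\log k)$. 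In the complementary regime $k<256$, one checks directly that $\log k+\tfrac{1}{2}\log\log k+0.56+o(1)\le 11$, giving $f(k)\le 11k$. The recursive construction itself is cleanly forced once the antichain-tagging trick is identified; the only delicate point is this arithmetic bookkeeping, in particular keeping the constants in Stirling and Theorem~\ref{thm:alpha} sharp enough that the recurrence closes at $k=256$ rather than much later.
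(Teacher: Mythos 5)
Your recursive construction is correct and differs genuinely from the paper's argument. The paper builds the shrub \emph{directly}: it reserves $k$ pairwise disjoint blocks $\bA_0,\dots,\bA_{k-1}$ of $\bA$, each of size $\alpha(k)$ and each housing a $k$-element antichain $\{A_i^j\}$, and then writes an explicit closed formula
$\xi((y_{i_1},\dots,y_{i_j}))=\bA_{i_1}\cup A_{i_1+1}^{i_2}\cup\cdots\cup A_{i_1+j-1}^{i_j}\cup\{y_{i_1},\dots,y_{i_j}\}$,
so the first letter consumes an entire block and each subsequent letter consumes one antichain member in the next block (indices mod $k$). Your version peels off only one antichain of size $\alpha(k)$ at the root, tags the $k$ subtrees with its elements, and recurses into $\bA_1$, so the consumption per level is $\alpha(k),\alpha(k-1),\dots,\alpha(1)$ rather than $\alpha(k)$ throughout. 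Your check of order-preservation (same subtree reduces to the inductive hypothesis because the shared factor $\{y\}\cup B_y$ is disjoint from $\bA_1\cup(\bY\setminus\{y\})$; distinct subtrees are separated by the antichain property, read off via $\xi(\cdot)\cap\bA_0$) and of $\bY$-goodness are both sound, so the construction genuinely works, and in fact is more economical with ground elements: it needs only $\sum_{j=1}^{k}\alpha(j)\le k\alpha(k)$.

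That last inequality is also where your arithmetic can be shortened considerably. You route through Stirling to estimate $\sum_j\alpha(j)$, but since $\alpha$ is nondecreasing you have $\sum_{j=1}^{k}\alpha(j)\le k\,\alpha(k)$ for free, and then the single observation that the paper itself uses --- namely $\alpha(k)\le\log k+\log\log k$ for $k\ge 256$ and $\alpha(k)\le 11$ for $k\le 256$ --- immediately gives $\sum_{j=1}^{k}\alpha(j)\le k\cdot\max\{\log k+\log\log k,\,11\}$ with no further estimation. The Stirling computation you outline does close, but it is strictly unnecessary; the monotonicity shortcut is both simpler and matches the paper's final numerics exactly. One small point of presentation: to make the induction airtight, state the inductive claim in the form ``if $|\bA|\ge\sum_{j=1}^{k}\alpha(j)$ then a $\bY$-shrub exists,'' and only afterward compare $\sum_{j=1}^{k}\alpha(j)$ against $k\cdot\max\{\log k+\log\log k,11\}$; otherwise one has to check directly that $k\cdot\max\{\log k+\log\log k,11\}-\alpha(k)$ dominates $(k-1)\cdot\max\{\log(k-1)+\log\log(k-1),11\}$, which is an avoidable nuisance.
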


\begin{proof}
Let $\bA_0, \ldots, \bA_{k-1}$ be pairwise disjoint subsets of $\bA$  such that  $|\bA_i|= \alpha(k)$, where $\alpha(k)$ is the smallest $N$ such that the $N$-dimensional Boolean lattice contains an antichain of size $k$. 
It is easy to check that $$\alpha(k) \le \log k +\log\log k \text{ for }k\ge 256, \quad \text{ and } \quad \alpha(k)\le 11\text{ for } k\le256,$$
so such subsets $\bA_i$'s can be chosen.
Each $\bA_i$, $i\in\{0,\ldots, k-1\}$, is the ground set of a Boolean lattice $\QQ(\bA_i)$ which contains an antichain of size $k$.
Let  $\big\{A_i^j : ~ j\in \{0, \ldots, k-1\}\big\}$ be this antichain enumerated arbitrarily.
Throughout this proof, we use addition of indices modulo $k$.

Let $\bY=\{y_0, \ldots, y_{k-1}\}$.
We shall construct a $\bY$-good embedding $\xi$ of the factorial tree $\cO(\bY)$ into the Boolean lattice $\QQ(\bA\cup\bY)$ as follows. 
Let $\xi(\varnothing_o)= \varnothing$. For every $i\in\{0,\dots,k-1\}$, let $\xi((y_{i}))=\bA_{i}\cup \{y_{i}\}$.
Consider any non-empty ordered subset of $\bY$, say $(y_{i_1}, y_{i_2}, \ldots, y_{i_j})$ where $2\leq j\leq k$. 
Let
 $$\xi( (y_{i_1}, \ldots, y_{i_j})) = \bA_{i_1}\cup A_{i_1+1}^{i_2} \cdots \cup A_{i_1+j-1}^{i_j} \cup \{y_{i_1}, \ldots, y_{i_j}\}.$$
For example, if $k=4$, then $\xi((y_0, y_1, y_2)) = \bA_0\cup A_1^1\cup A_2^2 \cup \{y_0, y_1, y_2\}$,  
$\xi((y_2, y_3, y_1)) = \bA_2 \cup A_3^3 \cup A_0^1 \cup \{y_1, y_2, y_3\}$, and $\xi((y_3,  y_1)) = \bA_3 \cup A_0^1  \cup \{y_1, y_3\}$, see Figures \ref{fig:QnV:shrub} and \ref{fig:QnV:shrub-1}.
\\

\begin{figure}[h]
\centering
\includegraphics[scale=0.52]{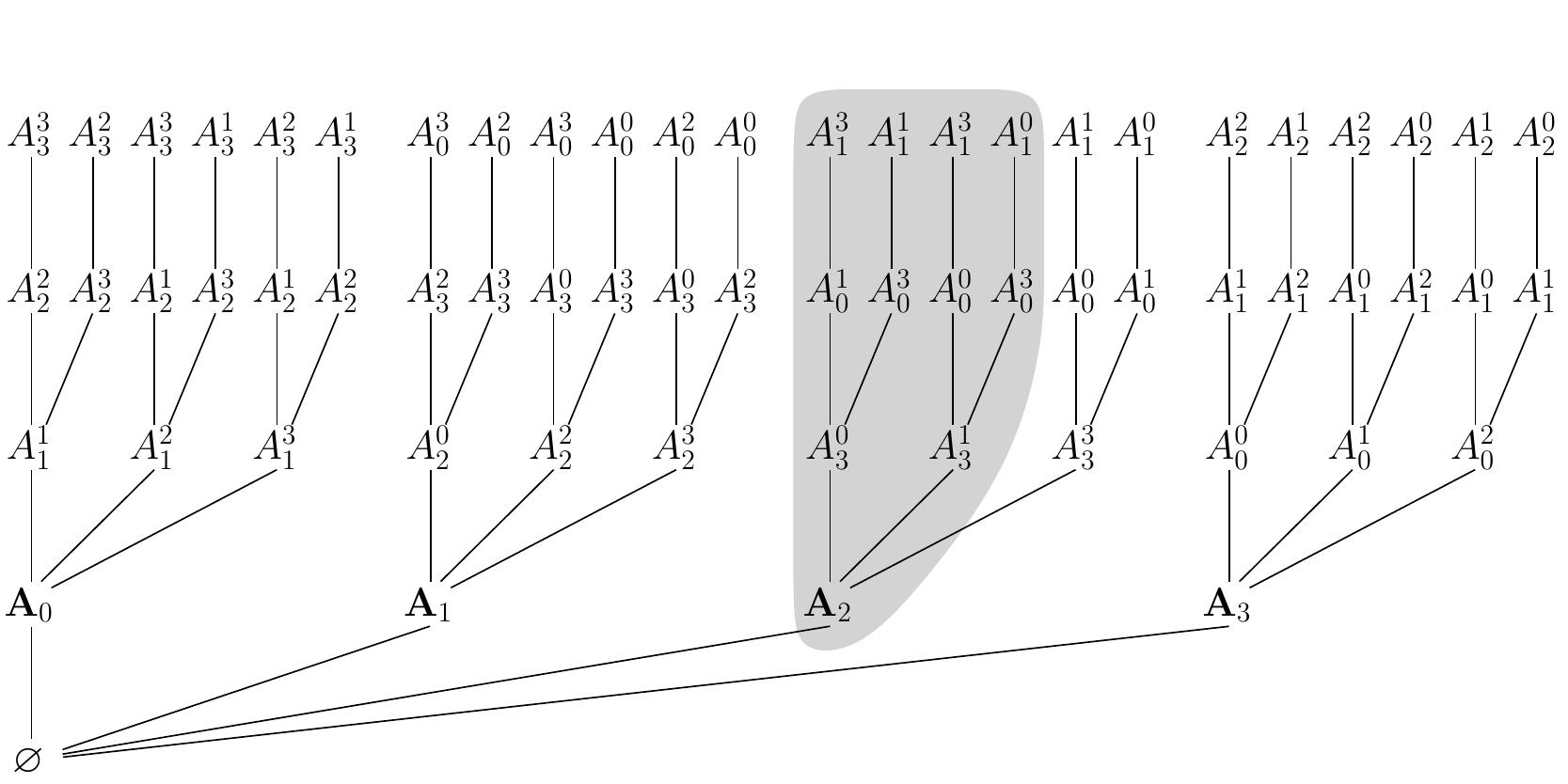}
\caption{Assignment of the $\bA_i$'s and $A_i^j$'s to vertices of a $\{y_0, y_1, y_2, y_3\}$-shrub.}
\label{fig:QnV:shrub}
\end{figure}
\bigskip

\begin{figure}[h]
\centering
\includegraphics[scale=0.52]{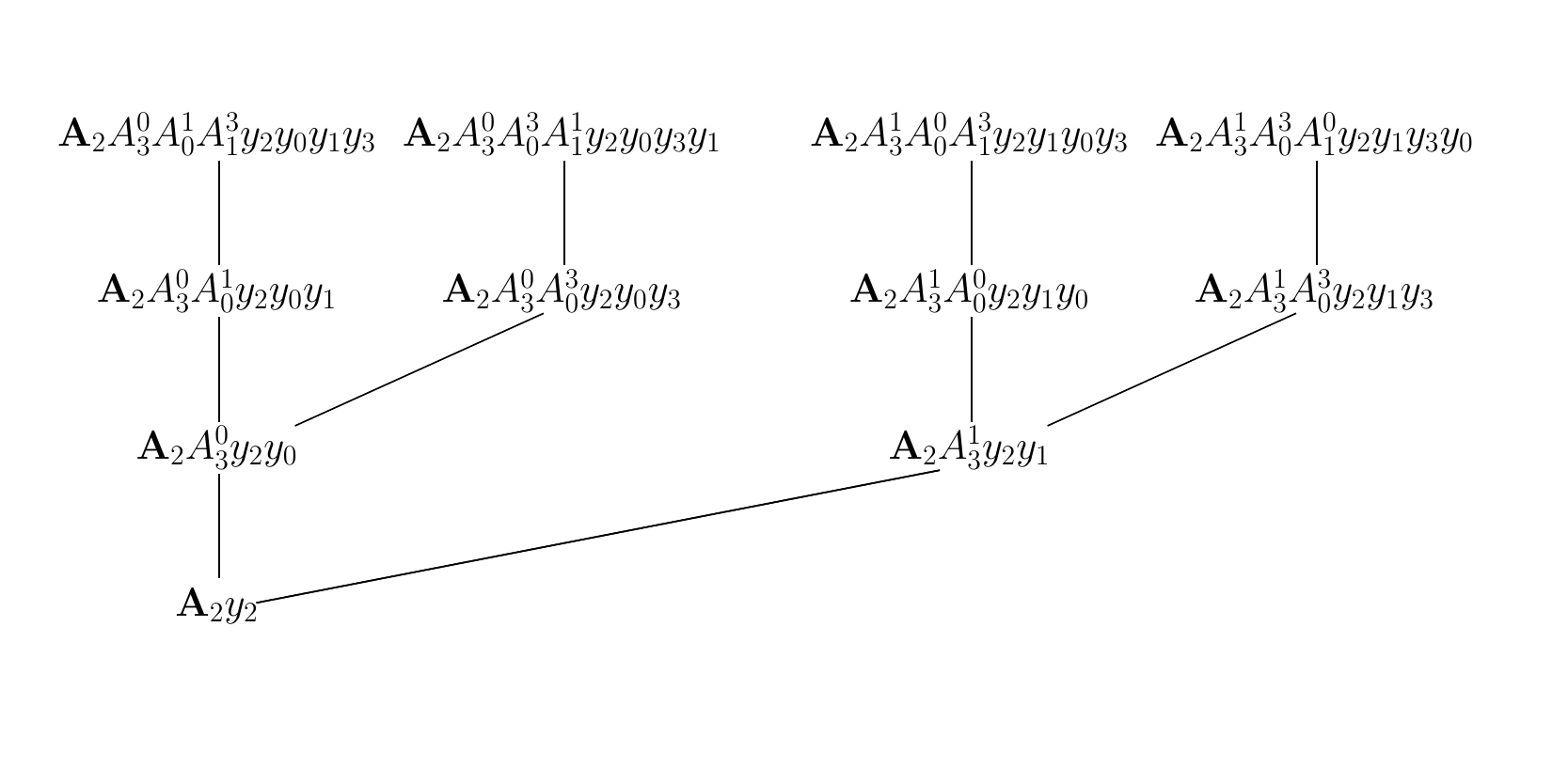}
\caption{Segment of the shrub highlighted in Figure \ref{fig:QnV:shrub}.  Here, union signs are omitted because of spacing. For example, $\bA_2  A_3^1  A_0^0 y_2 y_1 y_0$ corresponds to 
the shrub vertex $\bA_2 \cup  A_3^1 \cup A_0^0\cup \{ y_2, y_1, y_0\}$.}
\label{fig:QnV:shrub-1}
\end{figure}

Observe that $\xi$ is a $\bY$-good embedding. 
Indeed, for any ordered sequence of distinct ground elements $(y_{i_1}, \ldots, y_{i_j})$, we have that
$\xi( (y_{i_1}, \ldots, y_{i_j}))\cap \bY = \{y_{i_1}, \ldots, y_{i_j}\}$. 
Using this property, it is straightforward to check that for any two vertices $(y_{i_1}, \ldots, y_{i_p})$ and $(y_{i_1}, \ldots, y_{i_q})$ in $\cO(\bY)$,
$$(y_{i_1}, \ldots, y_{i_p})<_{\cO} (y_{i_1}, \ldots, y_{i_q}) \quad \text{ if and only if }
\quad \xi((y_{i_1}, \ldots, y_{i_p}))\subseteq \xi ((y_{i_1}, \ldots, y_{i_q})).$$ 
The image of $\xi$ is a $\bY$-shrub in $\QQ(\bA\cup\bY)$
\end{proof}

\subsection{Random coloring with many blue shrubs} \label{sec:QnV:random}

In this subsection, we find a blue/red coloring which later implies our lower bound on $R(\pLa, Q_n)$. 
Note that we do not provide an explicit construction, but only prove the existence of such a coloring. 

\begin{theorem}\label{thm:QnV_random}
Let $N\in\N$ with $N\ge 2^{16}$, and let $k=\frac{N}{14.9\log N}$. 
Then there exists a blue/red coloring of the Boolean lattice $\QQ([N])$ which contains no blue copy of $\pLa$ and 
such that for each $k$-element subset $\bY\subseteq[N]$, there is a blue $\bY$-shrub in $\QQ([N])$.
\end{theorem}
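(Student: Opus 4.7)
The plan is to construct the desired coloring via a probabilistic argument: independently for every $k$-subset $\bY\subseteq[N]$ we place a random $\bY$-shrub $\cF_\bY$ in $\QQ([N])$, color blue every vertex lying in some $\cF_\bY$, and color the remaining vertices red. Property (b) then holds by construction. For property (a), note that by Lemma \ref{lem:QnV:shrub}(i) each $\cF_\bY$ is an up-tree, and by Proposition \ref{lem:uptree} any parallel composition of up-trees is $\pLa$-free. Hence it is enough to show that, with positive probability, the $\binom{N}{k}$ shrubs are pairwise parallel, i.e.\ element-wise incomparable in $\QQ([N])$.

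Concretely, for each $k$-subset $\bY$ I would sample $\bA_\bY\subseteq[N]\setminus\bY$ uniformly of size $|\bA_\bY|=\lceil k(\log k+\log\log k)\rceil$, then construct $\cF_\bY\subseteq\QQ(\bA_\bY\cup\bY)$ by applying the explicit template of Lemma~\ref{lem:QnV:optimal} after randomizing every auxiliary choice in that template (a random equi-partition of $\bA_\bY$ into the sets $\bA_0,\dots,\bA_{k-1}$, a uniformly random labelling of the $k$-element antichain chosen inside each $\QQ(\bA_i)$, and a random ordering of the elements of $\bY$). Because $k=N/(14.9\log N)$ and $N\ge 2^{16}$, the budget $N-k$ comfortably exceeds the minimum $k(\log k+\log\log k)$ demanded by Lemma~\ref{lem:QnV:optimal}; the extra slack factor, which is essentially the constant $14.9$, is precisely what the probabilistic argument will consume to de-correlate different shrubs.

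The main estimate is then as follows. For distinct $\bY_1\neq\bY_2$ and arbitrary $v_1\in\cF_{\bY_1}$, $v_2\in\cF_{\bY_2}$, I would bound $\Pr[v_1\subseteq v_2]$ by first using that $v_1\cap\bY_1=\underline{S_1}$ and $v_2\cap\bY_2=\underline{S_2}$ are forced by the shrub structure, and then exploiting the randomness in $\bA_{\bY_1},\bA_{\bY_2}$ and the internal antichains to argue that $v_1\setminus\bY_1$ and $v_2\setminus\bY_2$ behave like sufficiently spread random subsets of $[N]$, yielding a comparability probability that is doubly exponentially small in $N$. Summing over the at most $\binom{N}{k}^2\,(k\cdot k!)^2$ bad quadruples $(\bY_1,\bY_2,v_1,v_2)$, the union bound
\[
\sum_{\bY_1\neq\bY_2}\;\sum_{v_1\in\cF_{\bY_1},\,v_2\in\cF_{\bY_2}}\Pr[v_1\subseteq v_2\text{ or }v_2\subseteq v_1]
\]
will be strictly less than $1$, precisely because the constant $14.9$ is calibrated to defeat the factor $\binom{N}{k}^2\le (eN/k)^{2k}=(14.9\,e\log N)^{2k}$ together with the $(k!)^2$ vertex-count factor. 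At such a realization all shrubs are pairwise parallel, and the argument concludes.

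The main obstacle I foresee is the quantitative coupling between the rigid Lemma~\ref{lem:QnV:optimal} template and the comparability bound: after randomization one must still retain enough independence to push $\Pr[v_1\subseteq v_2]$ below the union-bound threshold, and the hardest regime is when $\bY_1\cap\bY_2$ is large, so that the two shrubs are forced to live inside overlapping ground sets and coincidental comparabilities become more likely. Handling this regime carefully is what pins down the specific value of the constant $14.9$ as well as the explicit threshold $N\ge 2^{16}$.
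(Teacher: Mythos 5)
Your construction has a fatal gap: you build each shrub $\cF_\bY$ directly inside $\QQ(\bA_\bY\cup\bY)$ via the template of Lemma~\ref{lem:QnV:optimal}, and that template always maps $\varnothing_o\mapsto\varnothing$. Hence \emph{every} shrub you construct contains the vertex $\varnothing$, which is a subset of every other vertex, so any two of your shrubs share a vertex and are therefore never parallel. Randomizing the equi-partition of $\bA_\bY$, the antichain labellings, and the ordering of $\bY$ does not help, because all that randomness lives strictly above $\varnothing$. Consequently the event ``all shrubs pairwise parallel'' that you want to show has positive probability is in fact impossible, and no calibration of the union bound can fix this.

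The missing idea is a \emph{translation step}: the paper samples, in addition to $\bA_\bY$, a random ``shift set'' $\bX_\bY\subseteq\bZ_\bY:=[N]\setminus(\bA_\bY\cup\bY)$ (each element included independently with probability $1/2$), builds the shrub $\cF'_\bY$ inside $\QQ(\bA_\bY\cup\bY)$ deterministically, and then replaces every vertex $Z$ by $Z\cup\bX_\bY$. After this shift, the minimal vertex of $\cF_\bY$ is $\bX_\bY$, and every vertex of $\cF_\bY$ intersects $\bZ_\bY$ exactly in $\bX_\bY$. Then a \emph{single} deterministic event suffices to separate $\cF_{\bY_1}$ from $\cF_{\bY_2}$ entirely: if $\bX_{\bY_1}\cap\bZ_{\bY_2}\not\subseteq\bX_{\bY_2}$ (and symmetrically), then there is a ground element $a$ in every vertex of $\cF_{\bY_1}$ but in no vertex of $\cF_{\bY_2}$, and vice versa, forcing element-wise incomparability uniformly over all vertex pairs. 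The union bound is then taken only over the $O\bigl(\binom{N}{k}^2\bigr)$ pairs $(\bY_1,\bY_2)$, with Chernoff controlling the two conditions on the shift sets --- there is no need to sum over vertices, the internal structure of the shrubs is not randomized, and there is no ``doubly exponentially small'' comparability probability to prove. In short, the engine of the proof is the random shift plus the separating-element argument, neither of which appears in your proposal, and the union bound you envisage over $\binom{N}{k}^2(k!)^2$ vertex pairs is both unnecessary and, as stated, applied to an event of probability zero.
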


\begin{proof}[Proof of Theorem \ref{thm:QnV_random}]
We shall show the existence of a desired coloring for $k= \frac{N}{\gamma \log N}$ where $\gamma=14.9$.
For this, we select parameters $\delta_1=0.135$, $\delta_2=0.3$, and  
\begin{equation}
\varepsilon=\frac{\big(\frac{1}{2}-\frac{1}{\gamma}-\delta_1 \big)^2}{1-\frac{1-\delta_2}{\gamma}}-\frac{2}{\gamma \log (e)}. \label{eq:QnV:epsilonhat}
\end{equation}
Here, $\varepsilon\approx 0.00007$. Observe that
\begin{equation}
\gamma\ge 6, \quad \frac{2}{\gamma}< \delta_1 \le \frac{1}{2}-\frac{1}{\gamma}, \quad 0<\delta_2<1, \quad \text{ and } \quad \varepsilon>0. \label{eq:QnV:epsilon}
\end{equation}
We remark that our proof holds for all parameters $\gamma$, $\delta_1$, $\delta_2$, and $\varepsilon$ satisfying (\ref{eq:QnV:epsilonhat}) and (\ref{eq:QnV:epsilon}), at the expense of a larger lower bound on $N$.
The minimal value for $\gamma$ such that there exist $\delta_1$, $\delta_2$, and $\varepsilon$ fulfilling these conditions is approximately $14.7235$.

We consider the Boolean lattice $\QQ([N])$. Let $\binom{[N]}{k}$ denote the family of $k$-element subsets of $[N]$.
The idea of the proof is to construct a $\bY$-shrub, denoted by $\cF_{\bY}$,  for every $\bY\in\binom{[N]}{k}$, with the additional property that selected shrubs are parallel, i.e., pairwise disjoint and element-wise incomparable. 
Then, since each shrub $\cF_{\bY}$ does not contain a copy of $\pLa$, the parallel composition of all $\cF_{\bY}$'s also does not contain a copy of $\pLa$.
We obtain these shrubs by randomly choosing a \textit{$\bY$-framework}\index{framework}\index{$\bY$-framework} for every $\bY\in\binom{[N]}{k}$ and then constructing a $\bY$-shrub based on each of them. 
Afterwards, we shall define a coloring in which every vertex in each constructed shrub is colored blue and the remaining vertices red.

\noindent A \textit{$\bY$-framework} of a $k$-element subset $\bY\subseteq [N]$ is a $4$-tuple $(\bY,\bA_\bY,\bZ_\bY, \bX_\bY)$ such that
\vspace*{-1em}
\begin{itemize}
\item $\bY$, $\bA_\bY$, and $\bZ_\bY$ are pairwise disjoint and $\bY\cup \bA_\bY\cup \bZ_\bY=[N]$,
\item $|\bA_\bY|=k(\log k + \log \log k)$,
\item $\bX_\bY\subseteq \bZ_\bY$.
\end{itemize}

\noindent A $\bY$-framework is \textit{random}\index{random framework} if
\vspace*{-1em}
\begin{itemize}
\item $\bA_\bY$ is chosen uniformly at random among all subsets of $[N]\backslash\bY$ of size $k(\log k + \log \log k)$, 
\item $\bZ_\bY=[N]\backslash(\bY\cup \bA_\bY)$, and
\item each element of $\bZ_\bY$ is included in $\bX_\bY$ independently at random with probability $\frac{1}{2}$.
\end{itemize}

\noindent Draw a random $\bY$-framework for each $\bY\in\binom{[N]}{k}$.

\noindent\textbf{Claim 1:} ~~ $\left(1-\frac{1}{\gamma}\right)N \le |\bZ_\bY|\le  \left(1-\frac{1-\delta_2}{\gamma}\right)N$.\medskip\\
\textit{Proof of Claim 1:} 
Using that $k=\frac{N}{\gamma\log N}$,
\begin{eqnarray}
|\bZ_\bY|&=&  N-|\bY|-|\bA_\bY| \nonumber\\
           & =&  N-k(\log k + \log\log k +1) \nonumber\\
            &=&  N- \frac{N}{\gamma\log N}\big(\log N - \log \gamma -\log \log N + \log\log k+1\big). \label{eq:QnV:bracket}
\end{eqnarray}
Note that $\log \log k \le \log \log N$ and $1\le \log \gamma$, so in (\ref{eq:QnV:bracket}) the term in parentheses is bounded from above as follows:
$$
\log N - \log \gamma -\log \log N + \log\log k+1\le \log N,
$$
thus
$$
|\bZ_\bY| \ge N-\frac{N}{\gamma\log N} \log N = \left(1-\frac{1}{\gamma}\right)N.
$$
\medskip

For the upper bound on $|\bZ_\bY|$, note that $k\ge \sqrt{N}$ for $N\ge 2^{16}$, so
$$\log \log N -\log \log k\le \log \log N - \log\log \sqrt{N}=1.$$ 
This implies that in (\ref{eq:QnV:bracket}) the term in parentheses is bounded from below as follows:
$$
\log N - \log \gamma -\log \log N + \log\log k+1 \ge \log N -\log \gamma\ge (1-\delta_2) \log N ,
$$
where we used that $\delta_2 \log N \ge \log\gamma$ for $N\ge 2^{16}$, $\delta_2=0.3$, and $\gamma=14.9$. Consequently,
$$
|\bZ_\bY|\le  N- \frac{N}{\gamma\log N} (1-\delta_2)\log N = \left(1-\frac{1-\delta_2}{\gamma}\right)N,
$$
which proves Claim 1.\\

Let $E_1$ be the event that for some distinct $\bY_1,\bY_2\in\binom{[N]}{k}$, ~ $|\bX_{\bY_1}\cap \bZ_{\bY_2}|\le \delta_1 N$.

\noindent\textbf{Claim 2:}~~ 
$\PPP(E_1)<\tfrac12$.\medskip\\
\textit{Proof of Claim 2:} 
Consider some arbitrary $\bY_1,\bY_2\in\binom{[N]}{k}$ with $\bY_1\neq\bY_2$. 
It follows from Claim 1 that 
\begin{equation}
\left(1-\frac{2}{\gamma}\right)N\le|\bZ_{\bY_1}\cap \bZ_{\bY_2}|\le|\bZ_{\bY_1}|\le\left(1-\frac{1-\delta_2}{\gamma}\right)N.  \label{eq:QnV:ZYZY}
\end{equation}
In the random $\bY_1$-framework, each element of $\bZ_{\bY_1}\cap \bZ_{\bY_2}$ is contained in $\bX_{\bY_1}\cap \bZ_{\bY_2}$ independently with probability $\frac12$.
Consequently, $|\bX_{\bY_1}\cap \bZ_{\bY_1}|\sim \text{Bin}\big(|\bZ_{\bY_1}\cap \bZ_{\bY_2}|,\frac12\big)$ and $\EEE(|\bX_{\bY_1}\cap \bZ_{\bY_1}|)=\frac12 |\bZ_{\bY_1}\cap \bZ_{\bY_2}|$.   

A well-known Chernoff's inequality, see Corollary 23.7 in the textbook by Frieze and Karo\'{n}ski \cite{FK}, states that for a binomially distributed random variable $X$ and a real number $a$,
\begin{equation}\label{eq:chernoffs}
\PPP(X\le \EEE(X) - a)\le \exp\left(-\frac{a^2}{2\EEE(X)}\right).
\end{equation}
By Chernoff's inequality,
\begin{eqnarray*}
\PPP(|\bX_{\bY_1}\cap \bZ_{\bY_2}|\le \delta_1 N)&\!\!=\!\!& \PPP\left(|\bX_{\bY_1}\cap \bZ_{\bY_2}|\le\frac{|\bZ_{\bY_1}\cap \bZ_{\bY_2}|}{2}-\left(\frac{|\bZ_{\bY_1}\cap \bZ_{\bY_2}|}{2}-\delta_1 N\right)\!\!\right)\\
&\!\!\le \!\!& \exp\left(-\frac{\big(\frac{|\bZ_{\bY_1}\cap \bZ_{\bY_2}|}{2}-\delta_1 N\big)^2}{|\bZ_{\bY_1}\cap \bZ_{\bY_2}|}\right)\\
&\!\!\le \!\!& \exp\left(-\frac{\big((\frac{1}{2}-\frac{1}{\gamma})-\delta_1\big)^2}{(1-\frac{1-\delta_2}{\gamma})}\cdot N\right)\\
&\!\!\le \!\!& \exp\left(-\left(\frac{2}{\gamma \log e}+\varepsilon\right)\cdot N\right),
\end{eqnarray*}
where we applied (\ref{eq:QnV:ZYZY}) in the penultimate line and (\ref{eq:QnV:epsilonhat}) in the last line.
Thus,
\begin{eqnarray*}
\PPP(E_1)&\le  & \sum_{\bY_1,\bY_2\in\binom{[N]}{k}} \PPP(|\bX_{\bY_1}\cap \bZ_{\bY_2}|\le \delta_1 N)\\
&\le &  N^{2k} \exp\left(-\left(\frac{2}{\gamma\log e}+\varepsilon\right)\cdot N\right)\\
&=& \exp\left(\frac{2k \log N}{\log (e)} - \left(\frac{2}{\gamma\log e}+\varepsilon\right)\cdot N\right)\\
&= & \exp\left(\frac{2N}{\gamma\log e} - \left(\frac{2}{\gamma\log e}+\varepsilon\right)\cdot N\right)\\
&=& \exp(-\varepsilon N)\\ 
&<& \frac{1}{2}\quad \text{ for }N\ge 2^{16}.
\end{eqnarray*}
This proves Claim 2.\\

Let $E_2$ be the event that  there exist two subsets $\bY_1,\bY_2\in\binom{[N]}{k}$ with $\bY_1\neq\bY_2$ such that $\bX_{\bY_1}\cap \bZ_{\bY_2}\subseteq \bX_{\bY_2}$. 

\noindent\textbf{Claim 3:} ~~ 
$\PPP(E_2)<1$.\medskip\\
\textit{Proof of Claim 3:} 
Let $\PPP(E_2 | \neg E_1)$ be the conditional probability of $E_2$, given that the event $E_1$ does \textit{not} occur.
Note that, using Claim 2,
$$\PPP(E_2)\le \PPP(E_1) + \PPP(E_2 | \neg E_1)< \frac{1}{2} +\PPP(E_2 | \neg E_1).$$
We shall show that $\PPP(E_2 | \neg E_1)\le \tfrac{1}{2}$.
Let $\bY_1,\bY_2\in\binom{[N]}{k}$ with $\bY_1\neq\bY_2$, and suppose that $E_1$ does not occur, i.e., $|\bX_{\bY_1}\cap \bZ_{\bY_2}|> \delta_1 N$.
Note that each element of $\bX_{\bY_1}\cap \bZ_{\bY_2}$ is contained in $\bX_{\bY_2}$ with probability $\frac{1}{2}$. Thus,
$$\PPP(\bX_{\bY_1}\cap \bZ_{\bY_2}\subseteq \bX_{\bY_2})=\left(\frac{1}{2}\right)^{|\bX_{\bY_1}\cap \bZ_{\bY_2}|}\le 2^{-\delta_1 N}.$$
This implies that
\begin{eqnarray*}
\PPP(E_2 | \neg E_1)&\le &  \sum_{\bY_1,\bY_2\in\binom{[N]}{k}} \PPP(\bX_{\bY_1}\cap \bZ_{\bY_2}\subseteq \bX_{\bY_2})\\
& \le & N^{2k}\cdot 2^{-\delta_1 N} \\
& = & 2^{\left(\frac{2N}{\gamma}-\delta_1 N\right)}\\ 
& \le & \frac{1}{2},
\end{eqnarray*}
where the last line holds for $N\ge 2^{16}$, $\gamma=14.9$, and $\delta_1=0.135$.
This proves Claim 3.\\

In particular, there exists a collection of $\bY$-frameworks $(\bY,\bA_\bY,\bZ_\bY, \bX_\bY)$, $\bY\in\binom{[N]}{k}$, such that 
for any two distinct $\bY_1,\bY_2\in\binom{[N]}{k}$,\:\:$\bX_{\bY_1}\cap \bZ_{\bY_2}\not\subseteq \bX_{\bY_2}$.
In the remainder of the proof, we use this collection of frameworks to define the desired coloring.

Recall that $|\bA_{\bY}|=k(\log k + \log \log k)$ for every $\bY\in\binom{[N]}{k}$.
Let $\cF'_{\bY}$ be a $\bY$-shrub in $\QQ(\bA_{\bY}\cup \bY)$ as guaranteed by Lemma \ref{lem:QnV:optimal}.
Note that the shrubs $\cF'_{\bY}$, $\bY\in\binom{[N]}{k}$, are not necessarily parallel posets.
Let $\cF_{\bY}$ be obtained from $\cF'_{\bY}$ by replacing each vertex $Z\in \cF'_{\bY}$ with $Z\cup \bX_{\bY}$.
Then $\cF_{\bY}$ is a $\bY$-shrub in $\QQ([N])$.  
\\
\bigskip

\noindent\textbf{Claim 4:} ~~ Let $\bY_1, \bY_2$ be two distinct $k$-element subsets of $[N]$. Then $\cF_{\bY_1}$  and $\cF_{\bY_2}$ are parallel subposets of $\QQ([N])$.\medskip\\
\textit{Proof of Claim 4:} 
Fix arbitrary vertices $U_i\in \cF_{\bY_i}$, $i\in[2]$. Recall that $\bX_{\bY_1}\cap \bZ_{\bY_2}\not\subseteq \bX_{\bY_2}$, which implies that 
there exists an $a\in (\bX_{\bY_1}\cap \bZ_{\bY_2})\backslash \bX_{\bY_2}$. 
In particular, $a\in U_1$ since $\bX_{\bY_1}\subseteq U_1$ and $a\not\in U_2$ since $a\in \bZ_{\bY_2}\setminus \bX_{\bY_2}$ and $U_2\cap \bZ_{\bY_2}=\bX_{\bY_2}$. 
Therefore, $a\in U_1\backslash U_2$ and similarly, there is an element $b\in U_2\backslash U_1$. 
This implies that $U_1\inc U_2$, which proves the claim.\\

We consider the blue/red coloring $c\colon \QQ([N])\to \{\text{blue}, \text{red}\}$ such that for every $X\in \QQ([N])$, 
\begin{equation} \nonumber
c(X) = 
 \begin{cases}
\text{blue},  	\hspace*{0.73cm}&\mbox{ if } ~~  X\in \cF_\bY \text{ for some }\bY\in\binom{[N]}{k}\\
\text{red}, 		\hspace*{0.8cm}&\mbox{ otherwise.}
\end{cases}
\end{equation}
\noindent For every $\bY\in\binom{[N]}{k}$,\:\:$\cF_\bY$ is a blue $\bY$-shrub in $\QQ([N])$.
By Lemma \ref{lem:QnV:shrub}~(i), any $\bY$-shrub is an up-tree. 
Claim 4 implies that the blue subposet of $\QQ([N])$ is a parallel composition of up-trees, 
so Proposition \ref{lem:uptree} provides that the coloring $c$ does not contain a blue copy of $\pLa$.
\end{proof}

\subsection{Proofs of Theorems \ref{thm:QnV_LB} and \ref{thm-MAIN}}

We have collected all necessary tools to show a lower bound on $R(\pLa,Q_n)$.

\begin{proof}[Proof of Theorem \ref{thm:QnV_LB}]
Let $k=\frac{N}{14.9\log N}$ and $n=N-k$. Note that $k\le \frac{N}{2}$, thus $n\le N\le 2n$. In particular, $N\ge 2^{16}$.
By Theorem \ref{thm:QnV_random}, there exists a blue/red coloring of the Boolean lattice $\QQ([N])$ with no blue copy of $\pLa$ such that for every $k$-element $\bY\subseteq[N]$, there is a blue $\bY$-shrub.
If there is a red copy $\QQ$ of $Q_n$, then the Embedding Lemma, Lemma~\ref{lem:embed}, provides that $\QQ$ is $\bX$-good for some $n$-element subset $\bX\subseteq [N]$.
However, $\QQ([N])$ contains a blue $([N]\setminus \bX)$-shrub $\cF$. The subposets $\QQ$ and $\cF$ are monochromatic in distinct colors, thus they are disjoint. 
This is a contradiction to Lemma \ref{lem:QnV:shrub}~(ii), so there is no red copy of $Q_n$.
Therefore, $R(\pLa,Q_n)>N=n+k$.

It remains to bound $k$ in terms of $n$.
Indeed, $$k=\frac{N}{14.9\log N}\ge \frac{n}{14.9\log(2n)}
= \frac{n}{14.9(\log(n)+1)}\ge \frac{1}{15}\cdot\frac{n}{\log n},$$
which shows the desired bound.
\end{proof}

\bigskip

\begin{proof}[Proof of Theorem \ref{thm-MAIN}]
The lower bound $R(P,Q_n)\ge n+h(P)-1$ is immediate from Theorem \ref{thm:general}. 
If $P$ is a chain, the upper bound is given by Corollary \ref{cor:chain}.
If $P$ is trivial but not a chain, then by Proposition \ref{lem:uptree}, the poset $P$ is a parallel composition of at least $2$ chains. 
Applying Theorem \ref{lem:parallel} and afterwards Corollary~\ref{cor:chain}, we obtain that
$$ R(P,Q_n)\le R(C_{h(P)},Q) + \alpha(w(P)) \le n + h(P) + \alpha(w(P)) -1.$$ 

Every non-trivial poset $P$ contains either $\pLa$ or $\pV$ as a subposet. 
Note that $R(\pLa,Q_n)=R(\pV,Q_n)$, so it follows from Theorem \ref{thm:QnV_LB} that for $n\ge 2^{16}$,
$$R(P,Q_n)\ge R(\pLa,Q_n)\ge \frac{1}{15}\cdot\frac{n}{\log n}.$$
\vspace*{-1em}
\end{proof} 

\bigskip

\section{Duality of colorings with no blue copy of $\pLa$}\label{sec:QnV:duality}

\subsection{Definition and examples of embeddable vertices}

In Lemma \ref{lem:QnV:shrub} (ii), we showed that there can not be both a blue $\bY$-shrub and a red, $\bX$-good copy of $Q_n$ in any blue/red coloring of $\QQ(\bX\cup\bY)$. This already implies one direction of Theorem \ref{thm:duality}.
In this section, the main objective is to show that any blue/red coloring  of a host Boolean lattice with no blue copy of $\pLa$ and no red, $\bX$-good copy of $Q_n$ contains a blue $\bY$-shrub.

Throughout this section, we fix disjoint sets $\bX$ and $\bY$ and consider the Boolean lattice $\QQ(\bX\cup\bY)$ on ground set $\bX\cup\bY$. 
The integers $n$, $k$, and $N$ always denote $n=|\bX|$, $k=|\bY|$, and $N=n+k=|\bX\cup\bY|$.
The vertices of $\QQ(\bX\cup\bY)$ can be partitioned with respect to $\bX$ and $\bY$ in the following manner.
Every $Z\subseteq\bX\cup\bY$ has an \textit{$\bX$-part} $X_Z=Z\cap \bX$ and a \textit{$\bY$-part} $Y_Z=Z\cap\bY$.
In this section, we refer to $Z$ alternatively as the pair $(X_Z,Y_Z)$. 
Conversely, for any two subsets $X\subseteq\bX$ and $Y\subseteq\bY$, the pair $(X,Y)$ has a $1$-to-$1$ correspondence to the vertex $X\cup Y\in \QQ(\bX\cup\bY)$.
One can think of such pairs as elements of the Cartesian product $2^\bX \times 2^\bY$, which has a canonical bijection to $2^{\bX\cup\bY}=\QQ(\bX\cup\bY)$.
Observe that for $X_i\subseteq\bX, Y_i\subseteq\bY$, $i\in[2]$,\:\:$(X_1,Y_1)\subseteq (X_2,Y_2)$ if and only if $X_1\subseteq X_2$ and $Y_1\subseteq Y_2$.

Fix an arbitrary blue/red coloring of $\QQ(\bX\cup\bY)$ which contains no blue copy of $\pLa$.
For $X\subseteq \bX$ and $Y\subseteq \bY$, we say that the vertex $(X,Y)\in \QQ(\bX\cup\bY)$ is \textit{embeddable}\index{embeddable vertex} if there is an embedding $\phi\colon \{X'\in\QQ(\bX): ~  X'\supseteq X\}\to \QQ(\bX\cup\bY)$ such that 
\vspace*{-1em}
\begin{itemize}
\item $\phi$ is \textit{red}, i.e., every vertex in its image is red,
\item $\phi$ is \textit{$\bX$-good}, i.e., $\phi(X')\cap \bX=X'$ for any $X'$, and
\item $(X,Y)\subseteq \phi(X)$, or equivalently $(X,Y)$ is included in every $\phi(X')$, $X'\supseteq X$. 
\end{itemize}
We say that $\phi$ \textit{witnesses} that $(X,Y)$ is embeddable. Moreover, for every embeddable vertex $(X,Y)$, we fix an arbitrary embedding $\phi$ with the above properties and refer to it as the \textit{witness} for the embeddability of $(X,Y)$.

\begin{example}
Let $\bX=\{1,2\}$ and $\bY=\{y\}$. Fix the blue/red coloring of $\QQ(\bX\cup\bY)$ depicted in Figure \ref{fig:QnV:embeddable}. 
The vertex $\{1\}$ is embeddable, witnessed by the embedding $\phi(\{1\})=\{1\}$ and $\phi(\{1,2\})=\{1,2,y\}$.
The vertex $\{y\}$ is not embeddable, since there exists no red vertex $\phi(\{y\})$ with $\phi(\{y\})\cap \bX= \varnothing$ and $\phi(\{y\})\supseteq\{y\}$.
In fact, the only non-embeddable vertices of this blue/red coloring are $\{y\}$ and $\{2,y\}$.
\end{example}

\begin{figure}[h]
\centering
\includegraphics[scale=0.62]{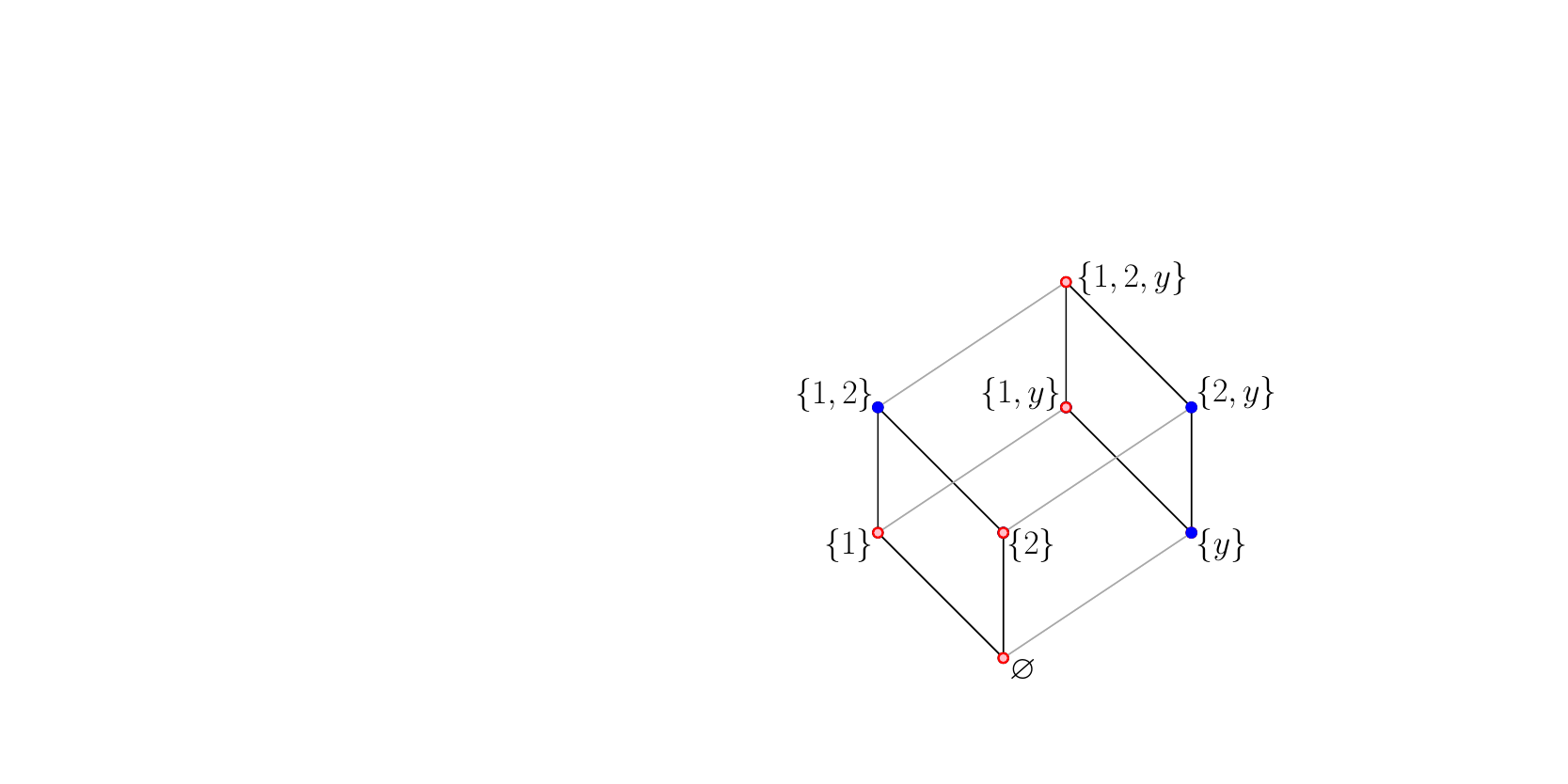}
\caption{A blue/red coloring of $\QQ(\bX\cup\bY)$ for $\bX=\{1,2\}$ and $\bY=\{y\}$.}
\label{fig:QnV:embeddable}
\end{figure}

It is immediate from the definition of embeddable vertices that:
\begin{proposition}\label{prop:QnV:lem2}
There is a red, $\bX$-good embedding $\phi\colon \QQ(\bX)\to \QQ(\bX\cup\bY)$ if and only if $(\varnothing,\varnothing)$ is embeddable.
\end{proposition}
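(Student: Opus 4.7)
The plan is to observe that this proposition is a direct unwinding of the definition of embeddable vertices in the special case $(X,Y)=(\varnothing,\varnothing)$. No coloring-specific arguments are needed, and the absence of a blue $\pLa$ is not invoked at this stage.

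First I would address the ``only if'' direction. Suppose $(\varnothing,\varnothing)$ is embeddable with witness $\phi$. By definition, the domain of $\phi$ is $\{X'\in\QQ(\bX)\colon X'\supseteq \varnothing\}$, which is all of $\QQ(\bX)$ since every subset of $\bX$ contains $\varnothing$. The witness $\phi$ is red and $\bX$-good by the definition of embeddability, so $\phi$ is the required red, $\bX$-good embedding $\QQ(\bX)\to \QQ(\bX\cup\bY)$. The third condition $(\varnothing,\varnothing)\subseteq \phi(\varnothing)$ is automatic and contributes nothing.

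For the ``if'' direction, suppose $\phi\colon \QQ(\bX)\to \QQ(\bX\cup\bY)$ is a red, $\bX$-good embedding. Since $\{X'\in\QQ(\bX)\colon X'\supseteq \varnothing\}=\QQ(\bX)$, this very $\phi$ serves as a candidate embedding in the definition of embeddability for $(\varnothing,\varnothing)$. The red and $\bX$-good conditions hold by hypothesis, and $(\varnothing,\varnothing)\subseteq \phi(\varnothing)$ holds trivially because $\varnothing$ is a subset of every set. Hence $(\varnothing,\varnothing)$ is embeddable with $\phi$ as witness.

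I do not anticipate any obstacle: the proposition simply records that the defining data for ``$(\varnothing,\varnothing)$ is embeddable'' coincides with the defining data for ``there is a red, $\bX$-good embedding $\QQ(\bX)\to\QQ(\bX\cup\bY)$''. The proposition's role is purely organizational, setting up the reformulation that will be leveraged in later lemmas where non-trivial $(X,Y)$ come into play.
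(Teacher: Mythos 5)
Your proof is correct and matches the paper's treatment: the paper states Proposition~\ref{prop:QnV:lem2} as ``immediate from the definition of embeddable vertices'' without further argument, and your write-up simply makes that unwinding explicit for $(X,Y)=(\varnothing,\varnothing)$, noting that the domain becomes all of $\QQ(\bX)$ and the third condition is vacuous.
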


\noindent \textbf{Outline of the proof of Theorem \ref{thm:duality}:}
We shall show that, if there is no red, $\bX$-good embedding, then there is a blue $\bY$-shrub.
Proposition \ref{prop:QnV:lem2} implies that if there is no red, $\bX$-good embedding, then $(\varnothing,\varnothing)$ is not embeddable.
In the next subsection, we characterize embeddable and non-embeddable vertices, see Lemma \ref{lem:QnV:lem3}. 
We shall use this characterization in Lemma \ref{lem:QnV:lem4} to show that if $(\varnothing,\varnothing)$ is not embeddable, then it contains a blue \textit{weak $\bY$-shrub}.
Finally, we show that in our setting every weak $\bY$-shrub is a $\bY$-shrub, see Lemma \ref{lem:QnV:lem5}.
\\

\subsection{Characterization of embeddable vertices}

\begin{lemma}\label{lem:QnV:lem3}
Let $X\subseteq\bX$ and $Y\subseteq \bY$. Let $\QQ (\bX\cup \bY)$ be a blue/red colored Boolean lattice with no blue copy of $\pLa$. Then $(X,Y)$ is embeddable if and only if either
\vspace*{-1em}
\begin{enumerate}[label=(\roman*)]
\item $(X,Y)$ is blue and there is a $Y'\subseteq\bY$ with $Y'\supset  Y$ such that $(X,Y')$ is embeddable, or
\item $(X,Y)$ is red and for all $X'\subseteq\bX$ with $X'\supset  X$, \  $(X',Y)$ is embeddable.
\end{enumerate}
\end{lemma}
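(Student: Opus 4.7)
The lemma is an ``if and only if'', and I would prove both directions. For the forward direction, suppose $(X,Y)$ is embeddable via a witness $\phi$. Since $\phi(X)$ is red and $\bX$-good, write $\phi(X) = X \cup Y^\sharp$ for some $Y^\sharp \subseteq \bY$ with $Y^\sharp \supseteq Y$. If $(X,Y)$ is blue, then $(X,Y) \neq (X,Y^\sharp)$ because the latter is red, so $Y^\sharp \supset Y$; the same $\phi$ witnesses $(X,Y^\sharp)$ embeddable, yielding~(i). If $(X,Y)$ is red, then for any $X' \supset X$ the restriction of $\phi$ to $\{X'' \supseteq X'\}$ is again a red, $\bX$-good embedding, and $\bX$-goodness together with $Y \subseteq \phi(X) \subseteq \phi(X')$ gives $(X',Y) \subseteq \phi(X')$, so this restriction witnesses $(X',Y)$ embeddable, yielding~(ii).

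For the reverse direction, case~(i) is immediate: if $\phi$ witnesses $(X,Y')$ embeddable and $Y' \supset Y$, then $(X,Y) \subseteq (X,Y') \subseteq \phi(X)$, so the same $\phi$ witnesses $(X,Y)$ embeddable. The main content is case~(ii), which I would prove by induction on $|\bX \setminus X|$. The base case $X = \bX$ is trivial: the witness domain reduces to $\{X\}$ and $\phi(X) := X \cup Y$ is red by hypothesis.

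For the inductive step, I would reformulate embeddability: a witness for $(X,Y)$ is equivalent to a monotone assignment $X' \mapsto Y^*_{X'}$ from $\{X' \supseteq X\}$ to $2^{\bY}$ with $Y^*_X \supseteq Y$ and $(X',Y^*_{X'})$ red for every $X' \supseteq X$. Setting $Y^*_X := Y$, I would define $Y^*_{X'}$ recursively in order of increasing $|X' \setminus X|$: at each stage, put $Y^\sharp := \bigcup_{X'' \subsetneq X',\, X'' \supseteq X} Y^*_{X''}$ and seek a red vertex $(X', Y^*_{X'})$ with $Y^*_{X'} \supseteq Y^\sharp$. If $(X', Y^\sharp)$ is already red, take $Y^*_{X'} := Y^\sharp$. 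Otherwise $(X', Y^\sharp)$ is blue, and this is the principal obstacle. To resolve it, I plan to invoke the no-blue-$\pLa$ hypothesis together with Proposition~\ref{lem:uptree} on the up-tree structure of the blue subposet: from the embeddability of $(X',Y)$ a red vertex $(X', \tilde{Y})$ with $\tilde{Y} \supseteq Y$ exists, and the up-tree structure of the blue vertices containing the red vertices $(X'', Y^*_{X''})$ for $X'' \subsetneq X'$ should force a red extension $(X', Y')$ with $Y' \supseteq Y^\sharp$, allowing the recursion to continue. Implementing this compatibility step cleanly is where I expect the argument to be most delicate, and it is precisely the point at which the $\pLa$-freeness hypothesis enters the proof.
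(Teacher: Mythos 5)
Your treatment of the forward direction and of the reverse implication in case~(i) is correct and matches the paper's argument. The gap is in case~(ii), precisely at the step you flag as delicate, and it is a real gap rather than a routine detail. Your greedy recursion requires, at each $X'$, a red vertex $(X', Y^*_{X'})$ with $Y^*_{X'} \supseteq Y^\sharp := \bigcup_{X \subseteq X'' \subsetneq X'} Y^*_{X''}$. But $Y^\sharp$ depends on the arbitrary choices made at the predecessors, and neither the embeddability hypothesis nor $\pLa$-freeness forces a red vertex above an arbitrary such $Y^\sharp$. In particular, the $\bY$-parts $Y^*_{X''}$ are attached to \emph{red} vertices, so two incomparable red $(X''_1,Y^*_{X''_1})$, $(X''_2,Y^*_{X''_2})$ lying under a blue $(X', Y^\sharp)$ produce no blue copy of $\pLa$. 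The fallback $(X', \bY)$ need not be red either: it is red only when there are two or more incomparable minimal $X^*$ (with $X \subseteq X^* \subseteq X'$) such that $(X^*, Y)$ is blue, and when there is a \emph{unique} such minimal $X^*$ nothing prevents $(X', \bY)$ from being blue while your chosen $Y^\sharp$ lies outside the $\bY$-part of the witness $\phi_{X^*}(X')$. So the recursion as stated can strand itself.

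The paper sidesteps the compatibility problem by never forming a union over predecessors. It defines $\phi(X')$ directly, via a trichotomy on the set of inclusion-minimal $X^* \in [X, X']$ with $(X^*, Y)$ blue — crucially, with the \emph{fixed} $Y$ from the lemma statement, not a recursively inflated $Y^\sharp$: if there is no such $X^*$, set $\phi(X')=(X',Y)$; if there is a unique one, hand off to the witness $\phi_{X^*}$ (which exists by hypothesis~(ii)) and set $\phi(X')=\phi_{X^*}(X')$; if there are two or more, set $\phi(X')=(X',\bY)$. Here $\pLa$-freeness enters only once, in the third case, where two incomparable blue vertices $(X^*_1,Y),(X^*_2,Y)$ strictly below $(X',\bY)$ would form a blue $\pLa$ if $(X',\bY)$ were blue. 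Monotonicity is then verified case-by-case by exploiting the uniqueness/minimality of $X^*$, which is what replaces the compatibility condition you were trying to enforce by taking unions. To repair your argument you would essentially need to replace your greedy choice of $Y^*_{X'}$ by the paper's rule keyed to the minimal blue $(X^*,Y)$; the union-and-extend strategy alone does not close the gap.
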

\noindent Note that if $(X,\bY)$ is blue for some $X\subseteq\bX$, then $(X,\bY)$ is not embeddable.

\begin{proof}

First, suppose that $(X,Y)$ is embeddable, so let $\phi\colon \{X'\in\QQ(\bX): ~  X'\supseteq X\}\to \QQ(\bX\cup\bY)$ be the witness that $(X,Y)$ is embeddable. 
\vspace*{-1em}
\begin{itemize}
\item If $(X,Y)$ is blue, then $(X,Y)\subset\phi(X)$, using that $\phi$ has a red image. 
Since $\phi$ is $\bX$-good and so $\phi(X)\cap\bX=X$, we know that $Y\subset \phi(X)\cap\bY$.
Let $Y'=\phi(X)\cap\bY$. Note that $\phi$ witnesses that $(X,Y')$ is embeddable, so condition (i) is fulfilled.

\item If $(X,Y)$ is red, then pick some arbitrary subset $X'\subseteq\bX$ such that $X'\supset  X$. 
Let $\phi'\colon \{U\in \QQ(\bX): ~ U\supseteq X'\}\to \QQ(\bX\cup\bY)$ be the restriction of $\phi$, i.e., $\phi'(U)=\phi(U)$ for every $U$ containing $X'$.
Inheriting the properties of $\phi$, the function $\phi'$ is a red, $\bX$-good embedding.
Moreover, since $\phi(X')\cap\bX = X'$ and $(X,Y)\subseteq \phi(X) \subseteq \phi(X')$, we know that $(X',Y)\subseteq \phi(X')=\phi'(X')$.
Thus, $\phi'$ witnesses that $(X',Y)$ is embeddable. Since $X'$ was chosen arbitrarily, condition (ii) is fulfilled.
\end{itemize}

We shall show that conditions (i) and (ii) each imply that $(X,Y)$ is embeddable.
If (i) holds, then there is some $Y'\supset  Y$ such that $(X,Y')$ is embeddable. The embedding witnessing this also verifies that $(X,Y)$ is embeddable.

For the remainder of the proof, we assume that (ii) holds, i.e., that $(X,Y)$ is red and for any $X'\subseteq\bX$ with $X'\supset  X$, the vertex $(X',Y)$ is embeddable.
We shall verify that $(X,Y)$ is embeddable by constructing a red, $\bX$-good embedding $$\phi\colon \{X'\in\QQ(\bX) : ~ X'\supseteq X\}\to \QQ(\bX\cup\bY)$$ with $(X,Y)\subseteq \phi(X)$.
Our idea for this is to choose each $\phi(X')$ depending on the blue vertices of the form $(X^*,Y)$, where $X\subseteq X^*\subseteq X'$, more precisely, depending on the number of inclusion-minimal subsets $X^*$'s, $X\subseteq X^*\subseteq X'$, such that $(X^*,Y)$ is blue.

Let $X'$ be arbitrary with $X\subseteq X'\subseteq\bX$. We distinguish three cases, see Figure \ref{fig:QnV:iteration}.
\vspace*{-1em}
\begin{enumerate}[label=(\arabic*)]
\item If for every $X^*$ with $X\subseteq X^*\subseteq X'$, the vertex $(X^*,Y)$ is red, let $\phi(X')=(X',Y)$. Note that this case includes $X'=X$.
\item If there is a unique minimal set $X^*$ such that $X\subseteq X^*\subseteq X'$ and $(X^*,Y)$ is blue, then $(X^*,Y)$ is embeddable by condition (ii).
Let $\phi_{X^*}$ be the witness that $(X^*,Y)$ is embeddable. Define $\phi(X')=\phi_{X^*}(X')$.
\item Otherwise, let $\phi(X')=(X',\bY)$. 
\end{enumerate}
Cases (1), (2), and (3) determine a partition of the poset $\{X'\in \QQ(\bX) : ~ X'\supseteq X\}$ into three pairwise disjoint parts $\cP_1, \cP_2$, and $\cP_3$, 
where $\cP_j$ is the set of vertices $X'$ to which $\phi$ was assigned in Case (j). 

\begin{figure}[h]
\centering
\includegraphics[scale=0.62]{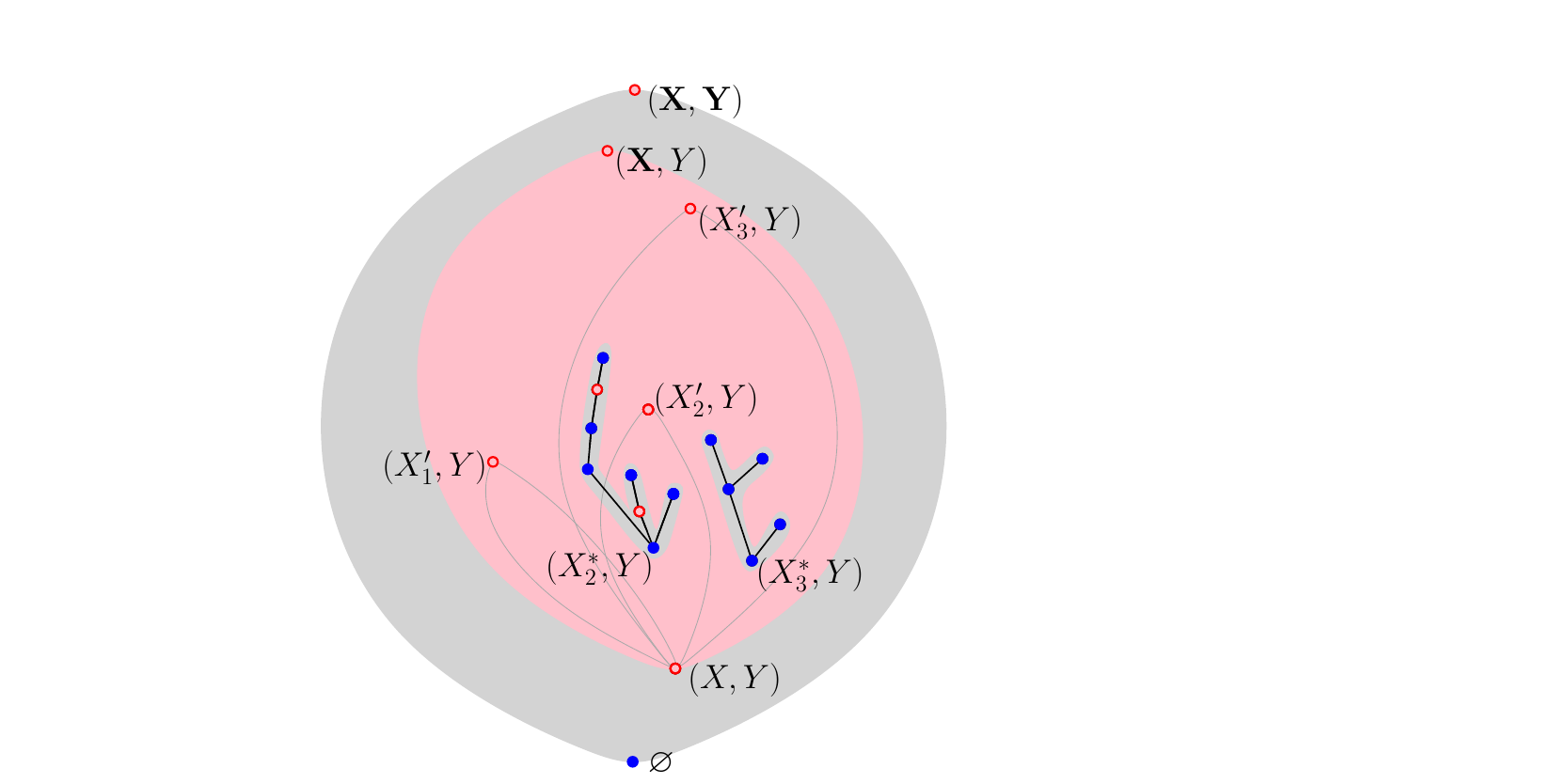}
\caption{Construction for subsets $X'_1, X'_2, X'_3\in\bX$ belonging to cases (1), (2), and (3), respectively. Here, all vertices are supposed to be red unless drawn in blue. The subsets $X^*_2$ and $X^*_3$ are minimal with the desired property.}
\label{fig:QnV:iteration}
\end{figure}

We shall show that the function $\phi$ witnesses that $(X,Y)$ is embeddable.
\vspace*{-1em}
\begin{itemize}
\item Clearly, $\phi(X')\cap \bX=X'$ for every $X'\subseteq\bX$ with $X'\supseteq X$, so $\phi$ is $\bX$-good.
\item By assumption, $(X,Y)$ is a red vertex, so $X\in\cP_1$. This implies that $\phi(X)=(X,Y)$, and in particular $\phi(X)\supseteq (X,Y)$.
\item For every $X'\in\QQ(\bX)$, we shall verify that $\phi(X')$ is red. The proof depends on the index $i$ for which $X'\in\cP_i$.
If $X'\in\cP_1$, then it is immediate that $\phi(X')$ is red. 
If $X'\in\cP_2$, then $\phi(X')=\phi_{X^*}(X')$, which is red because $\phi_{X^*}$ has a red image.

Next, we consider the case that $X'\in\cP_3$, i.e., there are two subsets $X_1,X_2\subseteq \bX$ with $X_1\neq X_2$ and $X\subseteq X_i\subseteq X'$, $i\in[2]$, 
such that $(X_i,Y)$ are blue and $X_i$ are both minimal with this property. The latter condition implies that $X_1$ and $X_2$ are incomparable, i.e., neither $X_1\subseteq X_2$ nor $X_1\supseteq X_2$.
In particular, $(X_1,Y)$ and $(X_2,Y)$ are incomparable.
Moreover, because $X'$ is by definition comparable to both $X_1$ and $X_2$, we know that $X_i\neq X'$, $i\in[2]$. 
Thus, $X_i\subsetneql X'$ and therefore $(X_i,Y)\subsetneql (X',\bY)$,
which implies that $(X_1,Y)$, $(X_2,Y)$, and $(X',\bY)$ form a copy of $\pLa$.
Recall that $(X_1,Y)$ and $(X_2,Y)$ are blue. 
If $\phi(X')=(X',\bY)$ is blue, there is a blue copy of $\pLa$, which is a contradiction. Therefore, $\phi(X')$ is red.

\item We claim that $\phi$ is an embedding. Indeed, let $X_1$ and $X_2$ be arbitrary subsets of $\bX$ with
\begin{equation}
X\subseteq X_1 \subseteq X_2 \subseteq X'. \label{eq:QnV:X1X2a}
\end{equation}
By Proposition \ref{prop:good_embedding}, it suffices to verify that $\phi(X_1)\subseteq \phi(X_2)$.
Let $Y_1,Y_2\subseteq\bY$ such that $\phi(X_1)=(X_1,Y_1)$ and $\phi(X_2)=(X_2,Y_2)$. Note that $Y\subseteq Y_i$, $i\in[2]$.
We shall show that $Y_1\subseteq Y_2$.

Assume that at least one of $X_1$ or $X_2$ is in $\cP_1\cup\cP_3$.
If $X_1\in\cP_1$, then $Y_1=Y$, and we are done as $Y\subseteq Y_2$. 
Furthermore, if $X_2\in\cP_3$, then $Y_2=\bY$, and we are done since $Y_1\subseteq\bY$.
If $X_1\in\cP_3$, then $X_1\subseteq X_2$ implies that $X_2$ is also in $\cP_3$.
Conversely, if $X_2\in\cP_1$, the fact that $X_2\supseteq X_1$ yields that $X_1\in\cP_1$, and the proof is complete.

As a final step, suppose that $X_1$ and $X_2$ are both in $\cP_2$. 
This implies that for each $i\in[2]$, there is a unique minimal set $X^*_i$ such that $(X^*_i,Y)$ is blue and
\begin{equation}
X\subseteq X^*_i\subseteq X_i. \label{eq:QnV:X1X2b}
\end{equation}
It follows from (\ref{eq:QnV:X1X2a}) and (\ref{eq:QnV:X1X2b}) that $X^*_1\subseteq X_1\subseteq X_2$. 
By minimality of $X^*_2$, we obtain that $X^*_2\subseteq X^*_1$, thus in particular, $X^*_2\subseteq X^*_1\subseteq X_1$. 
The minimality of $X^*_1$ implies that $X^*_1\subseteq X^*_2$, so $X^*_1=X^*_2$. From now on, we denote the set $X^*_1=X^*_2$ by $X^*$.

By condition (ii), the vertex $(X^*,Y)$ is embeddable, so let the embedding $\phi_{X^*}$ be the witness of that.
Since $X^*\subseteq X_1\subseteq X_2$ and since $\phi_{X^*}$  is an embedding, we observe that $\phi_{X^*}(X_1)\subseteq\phi_{X^*}(X_2)$.
This implies that 
$$\phi(X_1)=\phi_{X^*}(X_1)\subseteq\phi_{X^*}(X_2)=\phi(X_2).$$
\end{itemize}
This concludes the proof of the lemma.
\end{proof}

\begin{corollary}\label{cor:QnV:cor3}
Let $X\subseteq \bX$ and $Y\subseteq \bY$ such that $(X,Y)$ is not embeddable. 
Then there exists some $X'$ with $X\subseteq X' \subseteq \bX$ such that $(X',Y)$ is blue and not embeddable.
\end{corollary}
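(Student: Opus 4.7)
The plan is to iteratively apply Lemma \ref{lem:QnV:lem3} to increase the $\bX$-part until we land on a blue vertex. Start by setting $X_0 = X$. If $(X_0, Y)$ is already blue, then we are done and may take $X' = X_0$. Otherwise $(X_0, Y)$ is red and not embeddable, so by Lemma \ref{lem:QnV:lem3}, condition (ii) must fail (condition (i) being automatically inapplicable for red vertices). This gives some $X_1 \subseteq \bX$ with $X_1 \supsetneq X_0$ such that $(X_1, Y)$ is not embeddable.

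Repeat this construction: having produced $X_0 \subsetneq X_1 \subsetneq \dots \subsetneq X_i$ with every $(X_j,Y)$ not embeddable, if $(X_i,Y)$ is blue we stop and set $X' = X_i$; if $(X_i,Y)$ is red, Lemma \ref{lem:QnV:lem3} provides $X_{i+1} \subsetneq \bX$ with $X_{i+1} \supsetneq X_i$ and $(X_{i+1},Y)$ not embeddable. Since $\bX$ is finite and the chain $X_0 \subsetneq X_1 \subsetneq \cdots$ strictly ascends inside $2^{\bX}$, this process terminates after at most $|\bX \setminus X|$ steps.

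It remains to verify that termination happens at a blue vertex. The only way the iteration stops is if $(X_i, Y)$ is blue, or if no proper extension $X_{i+1}$ can be found. The latter occurs only when $X_i = \bX$; but if $(\bX, Y)$ were red, then condition (ii) of Lemma \ref{lem:QnV:lem3} would hold vacuously (there is no $X' \supsetneq \bX$), forcing $(\bX, Y)$ to be embeddable, contradicting our hypothesis. Hence whenever the iteration halts, the terminal vertex $(X_i, Y)$ is blue and not embeddable, giving the desired $X'$.

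There is no real obstacle here: the statement is essentially the contrapositive of Lemma \ref{lem:QnV:lem3}(ii) iterated to exhaustion along chains in $2^{\bX}$, and the only subtle point is handling the degenerate terminal case $X_i = \bX$, which is resolved by the vacuous-truth observation above.
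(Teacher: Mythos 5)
Your argument is correct and follows the same route as the paper: iterating Lemma~\ref{lem:QnV:lem3}, extending the $\bX$-part one step at a time until a blue vertex is hit, with finiteness of $\bX$ guaranteeing termination. Your extra remark that $(\bX,Y)$ red would satisfy condition~(ii) vacuously and hence be embeddable is a nice explicit treatment of the terminal case, which the paper leaves to the reader behind ``by repeating this argument.'' (Minor typo: where you write $X_{i+1}\subsetneq\bX$ you mean $X_{i+1}\subseteq\bX$; the lemma does not preclude $X_{i+1}=\bX$, and your own boundary-case analysis correctly accounts for that possibility.)
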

\begin{proof}
If $(X,Y)$ is blue, we are done. Otherwise, Lemma \ref{lem:QnV:lem3} yields a set $X_1$ with $X\subset X_1\subseteq \bX$ such that $(X_1,Y)$ is not embeddable.
By repeating this argument, we find an $X'\subseteq \bX$ with $X'\supseteq X$ such that $(X',Y)$ is blue and not embeddable.
\end{proof}

\subsection{Finding a blue weak shrub}

We define a \textit{weak $\bY$-shrub}\index{weak shrub} as the image of a $\bY$-good homomorphism of the factorial tree, i.e., a function $\xi\colon \cO(\bY) \to \QQ(\bX\cup\bY)$ 
such that for every $S,T\in \cO(\bY)$ with $T\le_\cO S$,\:\:$\xi(S)\cap \bY=\underline{S}$ and $\xi(T)\subseteq \xi(S)$. 
Note that $\xi$ is not necessarily injective.

\begin{lemma}\label{lem:QnV:lem4}
If $(\varnothing,\varnothing)$ is not embeddable, then there is a blue weak $\bY$-shrub.
\end{lemma}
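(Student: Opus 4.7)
The plan is to construct a $\bY$-good homomorphism $\xi\colon \cO(\bY)\to \QQ(\bX\cup\bY)$ by induction on $|S|$, while maintaining the invariant that, for every ordered subset $S\in\cO(\bY)$, the value $\xi(S)=(X_S,\underline{S})$ is a \textit{blue, non-embeddable} vertex and that $X_T\subseteq X_S$ whenever $T\le_\cO S$. The non-embeddability is not part of the final conclusion, but it is the engine that keeps the induction running, so it must be carried through each step.

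For the base case, I would apply Corollary \ref{cor:QnV:cor3} to the hypothesis that $(\varnothing,\varnothing)$ is not embeddable in order to find $X_0\subseteq\bX$ with $(X_0,\varnothing)$ blue and not embeddable, and set $\xi(\varnothing_o):=(X_0,\varnothing)$. For the inductive step, fix an ordered subset $S'$ of length $i+1$ with unique prefix $S$ of length $i$, say $\underline{S'}=\underline{S}\cup\{y\}$. By the induction hypothesis, $(X_S,\underline{S})$ is blue and not embeddable; since condition (i) of Lemma \ref{lem:QnV:lem3} fails for every blue non-embeddable vertex, \emph{no} superset $Y'\supsetneq\underline{S}$ can make $(X_S,Y')$ embeddable. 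In particular $(X_S,\underline{S'})$ is not embeddable, so Corollary \ref{cor:QnV:cor3} yields some $X_{S'}\supseteq X_S$ with $(X_{S'},\underline{S'})$ blue and not embeddable. Define $\xi(S'):=(X_{S'},\underline{S'})$.

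To verify the required properties of $\xi$: each $\xi(S)$ is blue by construction, and $\xi(S)\cap\bY=\underline{S}$ by the form $(X_S,\underline{S})$. For the homomorphism property, given $T\le_\cO S$, chain through single-element prefix extensions $T=S_0\le_\cO S_1\le_\cO\cdots\le_\cO S_m=S$; at each step of the construction we chose $X_{S_{j+1}}\supseteq X_{S_j}$, so $X_T\subseteq X_S$ and $\underline{T}\subseteq\underline{S}$, hence $\xi(T)\subseteq\xi(S)$. Thus $\xi$ is a $\bY$-good homomorphism whose image is a blue weak $\bY$-shrub.

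The only real obstacle is convincing oneself that non-embeddability genuinely propagates upward along prefixes in $\cO(\bY)$. This is precisely the content of the implication ``blue and not embeddable $\Rightarrow$ condition (i) of Lemma \ref{lem:QnV:lem3} fails,'' and it is what allows Corollary \ref{cor:QnV:cor3} to be reapplied after each single-element $\bY$-extension. Everything else (well-definedness via the unique prefix of length $|S'|-1$, monotonicity of the $X_S$'s, the form of $\xi(S)$) is bookkeeping that falls out of the construction.
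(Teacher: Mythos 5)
Your proof is correct and takes essentially the same route as the paper's: iteratively build $\xi$ one layer of $\cO(\bY)$ at a time, using Corollary~\ref{cor:QnV:cor3} at each step after first propagating non-embeddability across a single $\bY$-extension via the contrapositive of condition~(i) in Lemma~\ref{lem:QnV:lem3}. Your observation that non-embeddability is the invariant driving the induction, not part of the conclusion, is exactly the right way to frame what makes the construction work.
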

\begin{proof}
We construct a function $\xi\colon \cO(\bY)\to \QQ(\bX\cup\bY)$ iteratively and increasingly with respect to the order of $\cO(\bY)$. 
Suppose that $(\varnothing,\varnothing)$ is not embeddable. 
By Corollary \ref{cor:QnV:cor3}, there is some $X_\varnothing\subseteq \bX$ such that $(X_\varnothing,\varnothing)$ is blue and not embeddable.
Let $\xi(\varnothing)=(X_\varnothing,\varnothing)$. 
From here, we continue iteratively. Suppose that for an ordered set $T \in\cO(\bY)$ with $\underline{T}\neq\bY$, we already defined $X_T\subseteq \bX$ such that
\vspace*{-1em}
\begin{enumerate}[label=(\roman*)]
\item $\xi(T)=(X_T,\underline{T})$ is blue and not embeddable, and
\item $X_{T'}\subseteq X_T$ for every $T'\le_\cO T$.
\end{enumerate}

\indent Extend the ordered set $T$ by adding a new final element, i.e., let $S\in\cO(\bY)$ such that $T$ is a prefix of $S$ and $|S|=|T|+1$. 
Because $(X_T,\underline{T})$ is blue and not embeddable, we see by Lemma \ref{lem:QnV:lem3} that $(X_T,\underline{S})$ is not embeddable. 
Corollary \ref{cor:QnV:cor3} provides an $X_{S}$ with $X_T\subseteq X_S\subseteq\bX$ such that $(X_{S},\underline{S})$ is blue and not embeddable.
Let $\xi(S)=(X_{S},\underline{S})$. By definition, property (i) holds for $X_S$.
We claim that $X_S$ also has property (ii). This is clear if $T'=S$. For any $T'<_\cO S$, we see that $T'\le_\cO T$, in which case property (ii) applied to $X_T$ implies that $X_{T'} \subseteq X_T \subseteq X_S$.

By this procedure, we define $\xi$ for every $S\in\cO(\bY)$. 
We shall show that $\xi$ is a $\bY$-good homomorphism with a blue image.
\vspace*{-1em}
\begin{itemize}
\item It follows immediately from (i) that $\xi(S)$ is blue and $\xi(S)\cap \bY=\underline{S}$, for every $S\in \cO(\bY)$.

\item Let $S,T\in\cO(\bY)$ with $T\le_\cO S$. We shall verify that $\xi(T)\subseteq \xi(S)$, so let $X_S$ and $X_T$ be subsets of $\bX$ such that $\xi(T)=(X_T,\underline{T})$ and $\xi(S)=(X_S,\underline{S})$.
Since $T$ is a prefix of $S$, we see that $\underline{T}\subseteq \underline{S}$.
Moreover, condition (ii) implies that $X_T\subseteq X_S$.
Consequently, $\xi(T)\subseteq\xi(S)$. 
\end{itemize}
\vspace*{-2em}
\end{proof}

Trivially, every $\bY$-shrub is a weak $\bY$-shrub. The converse statement holds for subposets of a Boolean lattice that do not contain a copy of $\pLa$.

\begin{lemma}\label{lem:QnV:lem5}
Let $\bX$ and $\bY$ be disjoint sets.
Let $\cF$ be a weak $\bY$-shrub in $\QQ(\bX\cup\bY)$ such that $\cF$ contains no copy of $\pLa$. 
Then $\cF$ is a $\bY$-shrub.
\end{lemma}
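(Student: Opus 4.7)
The goal of Lemma \ref{lem:QnV:lem5} is to show that the given $\bY$-good homomorphism $\xi\colon \cO(\bY) \to \QQ(\bX\cup\bY)$ with image $\cF$ is automatically a $\bY$-good embedding, i.e., that it reflects the partial order in addition to preserving it. Since the homomorphism property gives $T \leq_\cO S \Rightarrow \xi(T) \subseteq \xi(S)$ for free, the plan is to prove the converse $\xi(T) \subseteq \xi(S) \Rightarrow T \leq_\cO S$ by contradiction, splitting on the possible relationships between $S$ and $T$ in $\cO(\bY)$.

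The easy sub-case is when $S$ is a strict prefix of $T$: then $\underline{S}\subsetneq\underline{T}$, and $\bY$-goodness forces $\xi(S)\cap\bY \subsetneq \xi(T)\cap\bY$, which is incompatible with $\xi(T)\subseteq\xi(S)$. The substantive case is when $S$ and $T$ are incomparable in $\cO(\bY)$. Here I will take $U$ to be their longest common prefix and let $S',T'$ be the prefixes of $S,T$ of length $|U|+1$; by choice of $U$, their final elements $b,a$ are distinct. Using $\bY$-goodness, $\xi(S')\cap\bY = \underline{U}\cup\{b\}$ and $\xi(T')\cap\bY = \underline{U}\cup\{a\}$, so $a$ witnesses $\xi(T')\not\subseteq\xi(S')$ and $b$ witnesses the reverse, making $\xi(S')$ and $\xi(T')$ incomparable in $\QQ(\bX\cup\bY)$.

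The main step, and the only place where care is needed, is to show that $\xi(S')$ and $\xi(T')$ both lie \emph{strictly} below $\xi(S)$: together with their incomparability this will exhibit the three vertices $\xi(S),\xi(S'),\xi(T')$ as an induced $\pLa$ in $\cF$ (with $\xi(S)$ at the top), contradicting the hypothesis. The containment $\xi(S')\subseteq \xi(S)$ is immediate from $S'\leq_\cO S$, and similarly $\xi(T')\subseteq\xi(T)\subseteq\xi(S)$ using the assumption $\xi(T)\subseteq\xi(S)$. To rule out equality, I will argue that $|S|\geq |U|+2$: since $\xi(T')\subseteq \xi(S)$, the element $a\in \underline{T'}$ lies in $\underline{S}$, and because $a\neq b$ and $a\notin\underline{U}$, the element $a$ must appear in $S$ at some position $\geq|U|+2$. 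Hence $|S'|<|S|$ gives $\xi(S')\subsetneq\xi(S)$ via distinct $\bY$-parts, and $b\in\underline{S}\setminus\underline{T'}$ gives $\xi(T')\neq\xi(S)$, upgraded to $\xi(T')\subsetneq\xi(S)$. Once these strict containments are in place, the identified $\pLa$ in $\cF$ contradicts the hypothesis and the proof concludes.
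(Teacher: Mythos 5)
Your proposal is correct and follows essentially the same route as the paper: both take the longest common prefix, extend it by one step on each branch to get two incomparable images, place them strictly below the larger endpoint, and read off a copy of $\pLa$ in $\cF$. The only cosmetic difference is that the paper handles the case where both ordered sets have size exactly one more than the common prefix as a separate sub-case, whereas you instead derive directly from $\xi(T)\subseteq\xi(S)$ and the incomparability that $|S|\geq|U|+2$, which absorbs that sub-case automatically.
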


\begin{proof}
Let $\xi\colon \cO(\bY) \to \QQ(\bX\cup\bY)$ be a $\bY$-good homomorphism with image $\cF$. 
For each $S\in\cO(\bY)$, let $X_S=\xi(S)\cap \bX$, i.e., $\xi(S)=(X_S,\underline{S})$.
We shall show that $\xi$ is an embedding, which proves that $\cF$ is a $\bY$-shrub. 
Let $S_1$ and $S_2$ be ordered subsets of $\bY$.
We need to verify that,
$$\text{ if }\quad \xi(S_1)\subseteq\xi(S_2), \quad \text{ then }\quad S_1\le_\cO S_2.$$
Suppose that $\xi(S_1)\subseteq\xi(S_2)$, i.e., $(X_{S_1},\underline{S_1})\subseteq(X_{S_2},\underline{S_2})$.
In particular, $\underline{S_1}\subseteq \underline{S_2}$ and so $|S_1|\le |S_2|$.
Let $T$ be the largest common prefix of $S_1$ and $S_2$. Such a prefix exists since $\varnothing_o$ is a prefix of every ordered set.
\vspace*{-1em}
\begin{itemize}
\item If $|S_1|=|T|$, then $S_1=T\le_\cO S_2$ and the proof is complete, so we can assume that $|T|+1\le |S_1|$.

\item If $|T|+1=|S_1|=|S_2|$, then the first $|T|$ elements of $S_1$ and $S_2$ are equal, because they coincide with the respective elements of $T$.
We claim that the final element of $S_1$ and the final element of $S_2$ are also equal.
Using that $\underline{S_1}\subseteq \underline{S_2}$ and $|S_1|=|S_2|$, it is clear that $\underline{S_1}=\underline{S_2}$, so there is a ground element $a$ with $\{a\}= \underline{S_1}\backslash \underline{T}=\underline{S_2}\backslash \underline{T}$.
In other words, both $S_1$ and $S_2$ have $T$ as prefix of size $|S_1|-1=|S_2|-1$ and $a$ as final vertex. Therefore, $S_1=S_2$, and the proof is complete as well.

\item From now on, assume that $|T|+1\le |S_1|\le|S_2|$ and $|T|+1<|S_2|$. We shall find a contradiction.
Consider prefixes $S'_1\le_\cO S_1$ and $S'_2\le_\cO S_2$ of size $|T|+1$. 
Note that $T$ is a prefix of both $S'_1$ and $S'_2$. 
Let $a_1$ such that $\underline{S'_1}\backslash \underline{T}=\{a_1\}$, and let $a_2$ with $\underline{S'_2}\backslash \underline{T}=\{a_2\}$.
If $a_1=a_2$, then $S'_1=S'_2$, which implies that $T$ is not the largest common prefix of $S_1$ and $S_2$, a contradiction.

If $a_1\neq a_2$, the sets $\underline{S'_1}$ and $\underline{S'_2}$ are incomparable. 
This implies that the vertices $(X_{S'_1},\underline{S'_1})$ and $(X_{S'_2},\underline{S'_2})$ are incomparable.
We shall prove that $(X_{S'_1},\underline{S'_1}), (X_{S'_2},\underline{S'_2})$ and $(X_{S_2},\underline{S_2})$ form a copy of $\pLa$.
Since $S'_1\le_\cO S_1$ and since $\xi$ is a homomorphism, we know that
$$(X_{S'_1},\underline{S'_1})\subseteq (X_{S_1},\underline{S_1})=\xi(S_1) \subseteq \xi(S_2)=(X_{S_2},\underline{S_2}),$$ 
and similarly $(X_{S'_2},\underline{S'_2})\subseteq (X_{S_2},\underline{S_2})$. 
Moreover, $(X_{S'_1},\underline{S'_1})$ and $(X_{S'_2},\underline{S'_2})$ are proper subsets of $(X_{S_2},\underline{S_2})$, because $|S'_1|=|S'_2|=|T|+1<|S_2|$.
Therefore, the three vertices $(X_{S'_1},\underline{S'_1}), (X_{S'_2},\underline{S'_2})$ and $(X_{S_2},\underline{S_2})$ form a copy of $\pLa$, so we reach a contradiction.
\end{itemize}
\vspace*{-2em}
\end{proof}

\subsection{Proof of Theorem \ref{thm:duality} and Corollary \ref{cor:duality}}

Combining the previously presented lemmas, we can now prove Theorem \ref{thm:duality}. 

\begin{proof}[Proof of Theorem \ref{thm:duality}]
Let $\bX$ and $\bY$ be disjoint sets.
Let $\QQ(\bX\cup \bY)$ be a blue/red colored Boolean lattice that contains no blue copy of $\pLa$. 
\vspace*{-1em}
\begin{itemize}
\item First, suppose that there is no red, $\bX$-good copy of $\QQ(\bX)$.
By Proposition \ref{prop:QnV:lem2}, $(\varnothing,\varnothing)$ is not embeddable. 
Lemma \ref{lem:QnV:lem4} provides that there exists a blue weak $\bY$-shrub.
According to Lemma \ref{lem:QnV:lem5}, this subposet is a blue $\bY$-shrub.

\item If there is a red, $\bX$-good copy of $\QQ(\bX)$ and a blue $\bY$-shrub, then this contradicts Lemma \ref{lem:QnV:shrub} (ii).
\end{itemize}
\vspace*{-2em}
\end{proof}

\begin{proof}[Proof of Corollary \ref{cor:duality}]
If there is a red copy $\QQ$ of $Q_n$ in a blue/red coloring of $\QQ([N])$, then by the Embedding Lemma, there exist disjoint subsets $\bX$ and $\bY$ partitioning $[N]=\bX\cup\bY$ such that $|\bX|=n$, $|\bY|=k$, and such that $\QQ$ is an $\bX$-good copy of $\QQ(\bX)$.
Thus, by Theorem~\ref{thm:duality} there is no blue $\bY$-shrub. 

If there is no red copy of $Q_n$ in $\QQ([N])$, fix an arbitrary $n$-element subset $\bX$ of $[N]$. 
In particular, there is no red, $\bX$-good copy of $\QQ(\bX)$. Let $\bY= [N]\setminus \bX$.
Theorem \ref{thm:duality} shows the existence of a blue $\bY$-shrub. 
As $\bX$ was chosen arbitrarily, there is a blue $\bY$-shrub for any $k$-element subset $\bY$ of $[N]$. 
\end{proof}

\subsection{Alternative proof of Corollary \ref{cor:QnV_UB}}

We remark that Corollary \ref{cor:QnV_UB} is a weaker version of Corollary \ref{cor:QnVs}, which has been shown in Chapter \ref{ch:QnK}.
In this subsection, we present an alternative proof of Corollary~\ref{cor:QnV_UB}.

\begin{proof}[Proof of Corollary \ref{cor:QnV_UB}]
For any $\varepsilon>0$, let $$k=(1+\varepsilon)\frac{n}{\log n}.$$
We shall show that $R(\pLa,Q_n)\le n+k$.
Assume that there exists a blue/red coloring of the Boolean lattice on ground set $[n+k]$ which contains neither a blue copy of $\pLa$ nor a red copy of $Q_n$.
Select an arbitrary $k$-element subset $\bY\subseteq[n+k]$. 
Theorem~\ref{thm:duality} guarantees the existence of a blue $\bY$-shrub $\cF$, therefore Lemma \ref{prop:shrub_dim}~(iii) implies that $n\ge k(\log k - \log e -1)$.

We shall find a contradiction by bounding $k(\log k - \log e -1)$ from below.
Using that $k=(1+\varepsilon)\frac{n}{\log n}$,
\begin{eqnarray*}
k(\log k - \log e -1)&> & k(\log k-3)\\
& > & \tfrac{(1+\varepsilon)n}{\log n}\big(\log n-\log\log n-3\big)\\
& = & \tfrac{n}{\log n}\log n + \tfrac{n}{\log n}\big(\log n + \varepsilon \log n-(1+\varepsilon)(\log \log n +3)\big)\\
& \ge & n,
\end{eqnarray*}
where the last inequality holds for sufficiently large $n$ depending on $\varepsilon$.
This is a contradiction, so each blue/red colored Boolean lattice of dimension $n+k$ contains either a blue copy of $\pLa$ or a red copy of $Q_n$.
\end{proof}
\bigskip

\section{Concluding remarks}
In this chapter, we showed a sharp jump in the asymptotic behavior of the poset Ramsey number $R(P,Q_n)$ for large $n$. 
Recall that we characterized trivial posets as parallel composition of chains, see Proposition \ref{lem:uptree}. 
For trivial posets $P$, the poset Ramsey number of $P$ and $Q_n$ deviates from the trivial lower bound $R(P, Q_n)\ge n$ by at most an additive constant. 
However, we observe a different behavior for non-trivial posets. 
In this case, $R(P, Q_n)$ is always notably larger than the trivial lower bound $n$ by at least an additive term  $\Omega\big(\frac{n}{\log n}\big)$.

At the core of our proof, we analyzed the properties of shrubs. 
Theorem \ref{thm:duality} verifies that in any blue/red coloring with no blue copy of $\pLa$, shrubs are the crucial structure to determine the existence of a red copy of $Q_n$. 
In the next chapter, we discuss how a similar approach can be applied for other small forbidden blue posets, in particular for the N-shaped poset $\pN$.

As shown in the proof of Corollary \ref{cor:QnV_UB}, Theorem \ref{thm:duality} can be applied to derive an upper bound on $R(\pLa,Q_n)$.
A first proof of that upper bound was given in Theorem~\ref{thm:QnS}, using the Chain Lemma.
Here, our approach was to bound the dimension of a Boolean lattice containing a single shrub, and we achieved this bound by considering the maximal vertices of the shrub. There are several obvious ideas of improving this approach:
The maximal vertices of the shrub actually form an antichain; the shrub contains many further vertices which are not maximal; and every $\bY$-shrub is $\bY$-good. Each of these additional conditions only provides an asymptotically insignificant improvement on the Ramsey bound.

Another intuitive idea to strengthen the upper bound is to take into account more than one shrub. We know that any blue/red coloring with neither a blue copy of $\pLa$ nor a red copy of $Q_n$ contains a $\bY$-shrub for every $\bY$. However, $\bY$-shrubs for distinct $\bY$ might intersect heavily, which is hard to control. 
In summary, additional ideas are necessary to further improve the bound on $R(\pLa,Q_n)$.

As elaborated in Chapter \ref{ch:QnK}, the poset Ramsey number $R(Q_m,Q_n)$ for fixed $m\ge 3$ is only bounded up to a constant linear factor, see Lu and Thompson \cite{LT}. Their result implies that for a fixed poset $P$ and large $n$,
$$R(P, Q_n) \leq C_P\cdot n,$$
where $C_P$ is a constant close to the $2$-dimension of $P$.
However, Axenovich and the author believe that the true value of $R(P,Q_n)$ is significantly closer to our lower bound, 
more precisely, that the difference $R(P,Q_n)-n$ is sublinear in terms of $n$. 

\begin{conjecture}\label{conj:QnP}
Let $n\in\N$ and $P$ be a fixed poset independent of $n$. Then
$$R(P,Q_n)=n + o(n).$$
\end{conjecture}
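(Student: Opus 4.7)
\textbf{Proof plan for Conjecture \ref{conj:QnP}.}
The plan is to first reduce the general conjecture to the case of Boolean lattices. By Proposition \ref{prop:dim}, every fixed poset $P$ embeds into $Q_m$ where $m=\dim_2(P)$ is a constant depending only on $P$. Hence $R(P,Q_n)\le R(Q_m,Q_n)$, and it suffices to prove $R(Q_m,Q_n)=n+o(n)$ for every fixed integer $m\ge 1$. The base cases $m=1$ and $m=2$ are covered by Corollary \ref{cor:chain} and the Grósz--Methuku--Tompkins bound $R(Q_2,Q_n)\le n+(2+o(1))n/\log n$ respectively. I would then attempt an induction on $m$.

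For the inductive step, fix $m\ge 3$ and suppose the statement holds for all dimensions less than $m$. Choose a parameter $k=k(n)$ with $k\to\infty$ and $k=o(n)$, set $N=n+k$, and consider an arbitrary blue/red coloring of $\QQ(\bX\cup\bY)$ with $|\bX|=n$ and $|\bY|=k$ that avoids a red copy of $Q_n$. By the Chain Lemma (Lemma \ref{lem:chain}), there is no red $\bX$-good copy of $\QQ(\bX)$, so for each of the $k!$ linear orderings $\tau$ of $\bY$ there is a blue $\bY$-chain $\cC^\tau$ with $\bX$-parts $X^\tau_0\subseteq \dots \subseteq X^\tau_k$. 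The goal is to assemble a blue induced copy of $Q_m$ from this enormous family of blue chains. In analogy with the proofs of Theorems \ref{thm:QnS} and \ref{thm:QnSD}, I would perform a pigeonhole refinement: partition the orderings $\tau$ according to the $\bX$-parts at a small set of carefully chosen indices, extract a sub-family $\mathcal{T}$ of orderings whose chains share a common ``skeleton'' of vertices, and then attempt to identify $m$ ``coordinate axes'' within $\mathcal{T}$ whose chains jointly trace out an induced $Q_m$.

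The main obstacle is the final extraction step. For $m=2$ this reduces to finding two orderings whose $\bY$-sequences are sufficiently incompatible, producing the four vertices of $Q_2$, and the argument of \cite{GMT} achieves exactly this. For $m\ge 3$, however, an induced $Q_m$ requires not only the prescribed comparabilities but also many prescribed \emph{incomparabilities} between pairs of blue vertices on different levels, which chain intersections alone do not easily guarantee. A promising route is to combine the pigeonhole extraction with the structural duality from Theorem \ref{thm:duality}, or with the $\pN$-free classification of Theorem \ref{thm:Nfree}, applied to a suitable blue sub-structure generated by $\mathcal{T}$; alternatively, the inductive hypothesis might be applied recursively inside a lower-dimensional sub-cube produced by the refinement, reducing the problem to finding only a blue $Q_{m-1}$ within a coloring of $Q_{n'}$ for some $n'=n+o(n)$. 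Either route appears to require a strengthening of the Chain Lemma that records more refined information about blue $\bY$-chains than mere existence, and devising such a strengthening, or an alternative method capable of yielding the needed incomparabilities at the $n+o(n)$ scale, is what I expect to be the principal difficulty.
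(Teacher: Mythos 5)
This statement is a \emph{conjecture}, not a theorem; the paper offers no proof of it, and it remains open. So there is no ``paper's own proof'' to compare your attempt against. What I can say is that your reduction step is exactly the one the paper makes: since any fixed $P$ embeds into $Q_m$ with $m=\dim_2(P)$, the paper states Conjecture~\ref{conj:QnP} as equivalent to ``$R(Q_m,Q_n)=n+o(n)$ for every fixed $m$'' (Conjecture~\ref{conj:QnP_equiv}). Your base cases $m=1,2$ are also handled correctly by Corollary~\ref{cor:chain} and the Gr\'osz--Methuku--Tompkins bound.

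Beyond that, your proposal is not a proof, and you say so yourself: the inductive step is a plan that bottoms out at an acknowledged difficulty. That difficulty is real and is in fact the entire open content of the conjecture. Even the first nontrivial case, $R(Q_3,Q_n)=n+o(n)$, is explicitly flagged as open in the concluding remarks of Chapter~\ref{ch:QnK}; the best known upper bound there is $R(Q_3,Q_n)\le \tfrac{37}{16}n + O(1)$ from Lu and Thompson, which is already $(1+c)n$ for $c>0$ rather than $n+o(n)$. Your diagnosis of the obstacle is accurate: the Chain Lemma (and its shrub/blocker refinements in Theorems~\ref{thm:duality} and \ref{thm:mPk}) give comparabilities cheaply, but an induced $Q_m$ for $m\ge 3$ requires controlling many incomparabilities among blue vertices on intermediate levels, and no current method yields this at the $n+o(n)$ scale. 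The alternative routes you mention (duality with blockers, $\pN$-free classification, recursive application inside a sub-cube) are reasonable directions, but none of them is known to close the gap, and the paper does not claim otherwise. Treat this statement as a target, not as something you can cite or derive with the tools in this thesis.
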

\noindent Since any poset $P$ is contained in a Boolean lattice of dimension $\dim_2(P)$, Conjecture~\ref{conj:QnP} is equivalent to the following.
\begin{conjecture}\label{conj:QnP_equiv}
For every fixed $m\in\N$,\:\:$R(Q_m,Q_n)=n + o(n)$.
\end{conjecture}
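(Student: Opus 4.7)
The plan is to attempt an induction on $m$, combining the Chain Lemma (Lemma \ref{lem:chain}) with an iterative pigeonhole argument in the spirit of Theorems \ref{thm:QnS} and \ref{thm:QnSD}. The base cases $m \in \{1,2\}$ are already settled by Corollary \ref{cor:chain} and Corollary \ref{cor:QnQ2}, both of the required form $n+o(n)$. Fix $m \ge 3$ and suppose inductively that $R(Q_{m-1}, Q_n) \le n + o(n)$.

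Set $N = n+k$ with $k = \omega(n/\log n)$ to be optimized, and consider an arbitrary blue/red coloring of $\QQ(\bX \cup \bY)$ with $|\bX|=n$, $|\bY|=k$, and no red $\bX$-good copy of $\QQ(\bX)$. The Chain Lemma furnishes a blue $\bY$-chain $\cC^\tau$ for each of the $k!$ linear orderings $\tau$ of $\bY$. First, I would pigeonhole on the bottom and top vertices of these chains, as in the proof of Theorem \ref{thm:QnS}, extracting a large sub-family of chains sharing a common minimum $Z_0$ and maximum $Z_k$. The interval $[Z_0,Z_k]$ is itself a Boolean lattice with an inherited coloring; the aim is to verify that, for suitable parameters, this sub-coloring avoids a red, good copy of a Boolean lattice of some slightly smaller dimension, so that the inductive hypothesis produces a blue induced $Q_{m-1}$ inside $[Z_0,Z_k]$. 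To bootstrap from $Q_{m-1}$ to $Q_m$, I would exploit the recursive decomposition of the Boolean lattice: produce two blue $Q_{m-1}$'s whose vertex sets differ by the addition of a fixed element $y \in \bY$ to each vertex, and argue that together they yield a blue induced $Q_m$. A second pigeonhole over the $M$ ``good'' orderings of $\bY$ and over the choice of shifting element $y$ should, in principle, supply such a paired structure.

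The main obstacle, and the reason the conjecture is open, is the accumulation of pigeonhole loss across the induction. Each round of common-top/bottom pigeonholing costs a factor $2^{2(n+k)}$ in the chain count, so after $m-1$ recursive reductions the required $k$ grows from $\Theta(n/\log n)$ toward a significantly larger sublinear function of $n$; worse, it is not clear that the sub-interval $[Z_0,Z_k]$ endowed with the restricted coloring really avoids a red, $\bX'$-good copy of $\QQ(\bX')$ relative to the shifted ground set $\bX' \subseteq \bX \setminus Z_0$, because forbidding a red embedding on the global ground set does not automatically forbid one on a shifted interval. A cleaner route would be to prove a duality in the spirit of Theorem \ref{thm:duality} tailored to colorings avoiding an induced blue $Q_m$: replace the factorial tree / shrub by a canonical ``blocker'' poset whose presence forces the dimension gap to remain sublinear. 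Identifying this blocker is the genuinely hard core of the conjecture; Chapter \ref{ch:QnN} indicates that the blocker paradigm succeeds for $P = \pN$, and my hope would be that a careful combination of the shrub duality (applied layer-by-layer in the Boolean lattice) with an inductive blocker argument can yield the full statement. Absent a new blocker construction, the Chain-Lemma-plus-pigeonhole approach outlined above seems destined to fall short of $n+o(n)$ for $m\ge 3$.
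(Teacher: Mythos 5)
The statement you were asked about is a conjecture, not a theorem: the paper explicitly leaves Conjecture~\ref{conj:QnP_equiv} open, noting only that it is equivalent to Conjecture~\ref{conj:QnP} (since any fixed poset $P$ embeds in $Q_{\dim_2(P)}$). There is no proof in the paper to compare against, and the honest thing to do here is exactly what you did — lay out a plausible attack, then pinpoint where it breaks down. Your assessment is correct on both counts.

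Your identification of the two main obstructions is the right one. First, the geometric growth of the pigeonhole loss: each round of forcing common endpoints costs roughly $2^{2(n+k)}$ out of $k!$, and stacking $m-1$ such reductions pushes the needed $k$ far beyond $\Theta(n/\log n)$, with no obvious way to keep the overhead sublinear in $n$ uniformly in $m\ge 3$. Second, and more fundamentally, the restriction to the interval $[Z_0,Z_k]$ does not inherit the hypothesis ``no red copy of $Q_{n'}$'' for the shrunken dimension $n'=|Z_k|-|Z_0|$: avoiding a red $Q_n$ globally does not preclude a red $Q_{n'}$ sitting inside a sub-interval, so the inductive hypothesis is simply unavailable at the next level. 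This is precisely why Theorem~\ref{thm:QnS} works for spindles (where one only needs an antichain inside the interval, cheaply obtainable via Dilworth) but does not propagate to $Q_3$. Your proposed fix — replace the shrub/factorial-tree duality of Theorem~\ref{thm:duality} by a canonical $Q_m$-free critical blocker and control $m_{Q_m}(k)$ as in Section~\ref{sec:QnN:mPk} — is indeed the direction the paper itself gestures toward: Theorem~\ref{thm:blockerUB} shows that a tight, asymptotically-smooth bound on $m_{Q_m}(k)$ would settle the conjecture, and the open question is exactly whether such a bound (or even the asymptotics of $m_{Q_3}(k)$) can be established. So your proposal is a faithful map of the known terrain rather than a proof, which is the appropriate stance for an open problem.
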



\newpage

\chapter{N-shaped poset versus large Boolean lattice}\label{ch:QnN}
\section{Introduction of Chapter \ref{ch:QnN}}

The \textit{poset Ramsey number} \index{poset Ramsey number} of posets $P$ and $Q$ is
\begin{multline*}
R(P,Q)=\min\{N\in\N \colon \text{ every blue/red coloring of $Q_N$ contains either }\\ 
\text{ a blue copy of $P$ or a red copy of $Q$}\}.
\end{multline*}
In this chapter, we discuss a novel proof method for determining an upper bound on $R(P,Q_n)$, where $P$ is fixed and $n$ is large.
It is unknown whether there exists a poset $P$ such that $R(P,Q_n)\ge (1+c)n$ for some $c>0$. 
Therefore, it is natural to consider the value of $R(P,Q_n)-n$ and determine its asymptotic behavior. 
We say that a \textit{tight bound}\index{tight bound} on $R(P,Q_n)$ is a function $f(n)$ such that $R(P,Q_n)=n+\Theta(f(n))$.

A tight bound on $R(P,Q_n)$ is only known for a handful of posets. 
For trivial posets~$P$, i.e.,  posets which contain neither a copy of $\pV$ nor a copy of $\pLa$, Theorem~\ref{thm-MAIN} gives the bound $R(P,Q_n)=n+\Theta(1)$.
If $P$ is a complete multipartite poset or a subdivided diamond, we know that
$$R(P,Q_n)=n+\Theta\left(\frac{n}{\log n}\right),$$
where the upper bound is shown in Theorems \ref{thm:QnK} and \ref{thm:QnSD}, respectively, and the lower bound is given by Theorem \ref{thm-MAIN}, which states that
$$R(P,Q_n)\ge R(\pLa,Q_n)\ge n+\frac{n}{15\log n},$$
for any fixed non-trivial $P$ and for sufficiently large $n$.
In particular, if $P$ is a Boolean lattice, a tight bound is known for $Q_1$ and $Q_2$, but not for Boolean lattices $Q_m$, $m\ge 3$.
\\

For non-trivial posets $P$, we have presented in Chapters \ref{ch:QnK} and \ref{ch:QnV} two different approaches to bound $R(P,Q_n)$ from above.
The first one is showcased by Gr\'osz, Methuku, and Tompkins~\cite{GMT} for an upper bound on $R(Q_2,Q_n)$ and uses the Chain Lemma, Lemma~\ref{lem:chain}:
In any blue/red coloring of the host Boolean lattice, there is either a red copy of $Q_n$ or there are many blue chains.
In the latter case, we can apply a counting argument to the chains, and find a blue copy of the poset $P$.
Examples for this method are the proofs of Theorems \ref{thm:QnS} and \ref{thm:QnSD}.

An alternative approach is given in Corollary \ref{cor:QnV_UB}, proving an upper bound on $R(\pLa,Q_n)$:
With a careful analysis of the blue vertices in a blue/red colored host Boolean lattice with forbidden red $Q_n$, we obtain much more information than the existence of many blue chains.
More specifically, we can show that there is either a blue $\pLa$ or many blue \textit{shrubs}, see Theorem \ref{thm:duality}.

In this chapter, we elaborate on the second approach and formulate the central, intermediate step as a theorem for general $P$. 
This approach involves so-called \textit{blockers}, posets that contain a vertex from each copy of $Q_n$ in a special, easier-to-analyze subclass. 
We show in Theorem~\ref{thm:mPk} that the extremal properties of blockers which do not contain a copy of $P$ immediately give an upper bound on $R(P, Q_n)$. 
In particular, we present a bound on $R(\pN,Q_n)$, where $\pN$ is the $4$-element N-shaped poset, i.e., the poset on vertices $W$, $X$, $Y$, and $Z$ such that $W\le Y$, $Y\ge X$, $X\le Z$, $W\inc X$, $W\inc Z$, and $Y\inc Z$.

\begin{theorem}\label{thm:QnN}
For $n\ge 2^{16}$,
$$n+\frac{n}{15\log n}\le R(\pN,Q_n)\le n+\frac{(1+o(1))n}{\log n}.$$
\end{theorem}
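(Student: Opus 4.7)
The lower bound is immediate from Theorem~\ref{thm-MAIN}(ii): since $\pN$ contains both $\pLa$ and $\pV$ as induced subposets, $R(\pN,Q_n)\ge R(\pLa,Q_n)\ge n+\tfrac{n}{15\log n}$ for $n\ge 2^{16}$. So the real content is the upper bound, and my plan is to develop the "blocker" machinery alluded to in Theorem~\ref{thm:mPk} in the specific case $P=\pN$, exploiting the structural characterization of $\pN$-free posets.

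Fix $\varepsilon>0$ and set $k=(1+\varepsilon)n/\log n$, $N=n+k$. Consider any blue/red coloring of $\QQ([N])$ with no red copy of $Q_n$ and no blue copy of $\pN$; I aim to reach a contradiction for $n$ large. For each bipartition $[N]=\bX\cup\bY$ with $|\bX|=n$, $|\bY|=k$, the Embedding Lemma reduces the absence of a red $Q_n$ to the absence of a red $\bX$-good copy of $\QQ(\bX)$, and the Chain Lemma then produces a blue $\bY$-chain $\cC^{\tau}$ for every linear ordering $\tau$ of $\bY$, giving $k!$ distinct blue chains inside the blue subposet $\cB$ of $\QQ(\bX\cup\bY)$. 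The assumption that $\cB$ is $\pN$-free combined with Theorem~\ref{thm:Nfree} yields that $\cB$ is series-parallel.

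The structural heart of the proof is to show that a series-parallel blue blocker containing a $\bY$-chain for every linear ordering of $\bY$ must live in a Boolean lattice of dimension at least $k\log k-O(k)$, i.e., to establish an analogue of Lemma~\ref{lem:QnV:shrub}(iii) for $\pN$-free blockers rather than just shrubs. I would proceed by induction on the series-parallel decomposition of $\cB$, defining for each subposet $\cB'$ the set $\Lambda(\cB')$ of linear orderings of $\bY$ whose corresponding $\bY$-chain is contained in $\cB'$. At a parallel node $\cB'=\cB_1\opl\cB_2$, the sets $\Lambda(\cB_1)$ and $\Lambda(\cB_2)$ partition $\Lambda(\cB')$ since a $\bY$-chain is a chain and must lie entirely in one parallel component. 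At a series node $\cB'=\cB_1\olt\cB_2$, every chain in $\Lambda(\cB')$ splits into an initial segment in $\cB_1$ and a terminal segment in $\cB_2$; by a pigeonhole argument on the split-point and its $\bY$-part, $|\Lambda(\cB')|$ is controlled by $\max_{j}\binom{k}{j}\cdot |\Lambda(\cB_1)|\cdot|\Lambda(\cB_2)|$, which combined with the inductive size bound gives the desired inequality. Plugging $|\Lambda(\cB)|=k!$ into the resulting inequality contradicts $k=(1+\varepsilon)n/\log n$ via Stirling's formula, completing the proof.

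The main obstacle will be the series node analysis: unlike the $\pLa$-free case studied in Chapter~\ref{ch:QnV}, where the blue subposet decomposes into parallel up-trees and each $\bY$-chain is confined to a single branch, series composition allows many distinct orderings to share the same "bottleneck" vertex, and so the factorial-count bookkeeping must be performed more delicately. I expect that the right invariant to track is not just the number of $\bY$-chains per subposet but also the multiset of $\bY$-parts appearing at a fixed height, so that the series recursion produces a clean convolution bound and the overall count of at most $2^{O(n+k)}$ orderings gives $k!\le 2^{O(n+k)}$, and hence the claimed bound $k\le(1+o(1))n/\log n$.
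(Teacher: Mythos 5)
The lower bound is indeed a direct consequence of Theorem~\ref{thm-MAIN}(ii), and your overall strategy for the upper bound---invoking the Chain Lemma for $k!$ blue $\bY$-chains and then using the series-parallel decomposition of the $\pN$-free blue subposet---is a genuinely different route from the paper's, which works with critical $\bY$-blockers, proves they have a unique minimal or maximal vertex (Theorem~\ref{thm:min_vertex}), and builds a recursive decomposition $\{(\cF_S,Z_S,A_S,B_S)\}$ indexed by ordered subsets of $\bY$. Unfortunately, your series-node analysis is not merely incomplete; the inequality you propose there is wrong. In a series composition $\cB'=\cB_1\olt\cB_2$, every vertex of $\cB_1$ is strictly below every vertex of $\cB_2$, so no full $\bY$-chain (which has $k+1$ vertices spanning $\bY$-parts of all sizes $0,\dots,k$) can lie entirely in either $\cB_1$ or $\cB_2$. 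Hence your sets $\Lambda(\cB_1)$ and $\Lambda(\cB_2)$ are typically empty, and the bound $|\Lambda(\cB')|\le\max_j\binom{k}{j}\cdot|\Lambda(\cB_1)|\cdot|\Lambda(\cB_2)|$ would force $|\Lambda(\cB')|=0$, which contradicts $|\Lambda(\cB')|=k!$ at the root. The multiplicative invariant you chose simply does not interact with series composition the way you need.

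You recognize this obstacle yourself in the final paragraph, but the speculated fix---tracking the ``multiset of $\bY$-parts at each height''---is not a proof, and the difficulty it must resolve is exactly the difficulty that makes the paper's route nontrivial: a series node can funnel many distinct orderings through a small bottleneck, so one cannot hope for a clean multiplicative recursion on chain counts alone. A repaired invariant would have to carry, for each subposet and each possible ``entry/exit'' $\bY$-part, the number of partial $\bY$-chains that start or end there, and the series recursion is then a convolution over all $\binom{k}{j}$ possible cut sets, not a single max. Making this count close at $2^{O(n+k)}$ is a real estimate, not a bookkeeping formality, and until it is done the upper bound does not follow. For comparison, the paper sidesteps this by working not with the full blue subposet but with critical $\bY$-blockers, which carry the extra rigidity (unique min or max vertex via Theorem~\ref{thm:min_vertex}) that gives each decomposition step a controlled branching structure, ultimately producing $k!\,2^{-k-1}$ pairwise distinct vertices $Z_S$ and hence the dimension bound directly.
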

\noindent Here, the lower bound is a consequence of Theorem \ref{thm-MAIN}, so our focus is placed on the upper bound. 
\\

This chapter is structured as follows. 
In Section \ref{sec:QnN:definitions}, we introduce and recall important definitions. 
Section \ref{sec:QnN:blockers} deals with the main tool used in this chapter - \textit{blockers}. 
In Section \ref{sec:QnN:QnN}, we give a proof of Theorem~\ref{thm:QnN}.
The content of these sections is published in Order, 2024 \cite{QnN}, in joint work with Maria Axenovich, although the proof of Theorem \ref{thm:blocker} has been completely rewritten.
In Section \ref{sec:QnN:mPk}, we study the relation between the poset Ramsey number $R(P,Q_n)$ and an extremal function for $P$-free blockers.
The material of this section has not been published before.
\\

%
%
\section{Preliminaries}\label{sec:QnN:definitions}


Let $P_1$ and $P_2$ be two disjoint posets. Recall that the \textit{parallel composition}\index{parallel composition} $P_1\opl P_2$ is the poset on vertices $P_1\cup P_2$ such that pairs of vertices in $P_1$ as well as pairs of vertices in $P_2$ are comparable if and only if they are likewise comparable in $P_1$ or $P_2$, respectively, and any two $Z_1\in P_1$ and $Z_2\in P_2$ are incomparable. 
If for a poset $P$ there exists no partition $P=P_1\cup P_2$ into non-empty subposets $P_1$ and $P_2$ such that $P$ is the parallel composition of $P_1$ and $P_2$, we say that $P$ is \textit{connected}\index{connected poset}. Note that a poset is connected if its Hasse diagram, considered as a graph, is connected.

Moreover, recall the following definitions introduced in Chapter \ref{ch:prelim}:
The \textit{series composition}\index{series composition} $P_1 \olt P_2$ of a poset $P_1$ \textit{below} a poset $P_2$ is the poset on vertices $P_1\cup P_2$, where 
pairs of vertices in $P_1$ as well as pairs of vertices in $P_2$ are comparable if and only if they are likewise comparable in $P_1$ or $P_2$, respectively, 
and $Z_1< Z_2$ for any $Z_1\in P_1$ and $Z_2\in P_2$.
A poset is \textit{series-parallel}\index{series-parallel poset} if it is either a $1$-element poset, or obtained by series composition or parallel composition of two series-parallel posets. 
Given a fixed poset~$P$, a poset $Q$ is \textit{$P$-free}\index{$P$-free}\index{free poset} if it contains no copy of~$P$.
Valdes~\cite{Valdes} showed that a non-empty poset is $\pN$-free if and only if it is series-parallel, see Theorem~\ref{thm:Nfree}.

Recall that a \textit{homomorphism}\index{homomorphism} of a poset $P$ to another poset $Q$ is a function $\psi\colon P\to Q$ 
such that for any two $X,Y\in P$ with $X\le_{P} Y$,\:\:$\psi(X)\le_{Q}\psi(Y)$. 
An \textit{embedding}\index{embedding} $\phi\colon P\to Q$ is a function such that for any $X,Y\in P$,\:\:$X\le_{P} Y$ if and only if $\phi(X)\le_{Q}\phi(Y)$. 
The image of $\phi$ is referred to as a \textit{copy}\index{copy} of $P$ in $Q$.

Throughout this chapter, we consider a set $\bZ$ as the ground set of our host Boolean lattice $\QQ(\bZ)$, where $|\bZ|=N$ for some integer $N$.
We then partition $\bZ$ into two disjoint sets $\bX$ and $\bY$, $|\bY|\neq\varnothing$, such that $|\bX|=n$ and $|\bY|=k$ for some integers $n$ and $k$, i.e.,  $N=n+k$.

Recall that a function $\phi\colon \QQ(\bX)\to \QQ(\bZ)$ is \textit{$\bX$-good}\index{$\bX$-good function}\index{good function} if $\phi(X)\cap \bX=X$ for every $X\in\QQ(\bX)$.
A copy of $Q_n$ in $\QQ(\bZ)$ is \textit{$\bX$-good}\index{$\bX$-good copy}\index{good copy} if it is the image of an $\bX$-good embedding of $\QQ(\bX)$. 
The Embedding Lemma, Lemma \ref{lem:embed}, states that any copy of $Q_n$ in $\QQ(\bZ)$ is $\bX$-good for some $n$-element subset $\bX\subseteq\bZ$.
\\

\section{$\bY$-blockers}\label{sec:QnN:blockers}

\subsection{Definition and examples of $\bY$-blockers}

{\bf Outline of the proof idea for Theorem \ref{thm:QnN}:} 
The definition of $R(P,Q_n)$ implies that for every set $\bZ$ with $|\bZ| \leq R(P, Q_n)-1$, there is a coloring of $Q(\bZ)$ in blue and red such that the subposet of blue vertices is $P$-free and ``covers'' all copies of $Q_n$, i.e., there is a blue vertex in each copy of $Q_n$.
We shall classify all copies of $Q_n$ according to the set $\bX$ for which they are $\bX$-good, and consider the set of only those blue vertices that ``cover'' copies of $Q_n$ with a specific $\bX$. We refer to the poset induced by those blue vertices as a \textit{$\bY$-blocker}, where $\bY=\bZ\setminus \bX$. We shall derive several properties of general blockers and those that are $\pN$-free. Afterwards, we bound $R(\pN, Q_n)$ in terms of blockers. 

Let $\bY$ and $\bZ$ be two non-empty sets such that $\bY\subseteq \bZ$. A \textit{$\bY$-blocker}\index{blocker} in $\QQ(\bZ)$ is a subposet $\cF$ in $\QQ(\bZ)$ 
which contains a vertex from every $\bX$-good copy of $\QQ(\bX)$, where $\bX= \bZ\setminus \bY$.
We say that a $\bY$-blocker $\cF$ in $\QQ(\bZ)$ is \textit{critical}\index{critical blocker} if for any vertex $F\in\cF$, the subposet $\cF\setminus\{F\}$ is not a $\bY$-blocker in $\QQ(\bZ)$.
Note that for any $\bY\subseteq\bZ$, a $\bY$-blocker in $\QQ(\bZ)$ exists, take for example $\cF=\QQ(\bZ)$. 
Later on, we consider ``thinner'' $\bY$-blockers satisfying special properties, in particular $P$-free blockers. 
We remark that the \textit{$\bY$-shrubs} considered in Chapter \ref{ch:QnV} are $\bY$-blockers. This observation follows immediately from Lemma \ref{lem:QnV:shrub}~(ii).

\begin{example}
 Let  $\bZ=\{1,2,x_1,x_2\}$, $\bY=\{1,2\}$, and $\bX=\{x_1,x_2\}$. 
 Let $$\cF=\big\{\{x_1\}\cup Y : ~ Y\in\QQ(\bY)\big\},$$ see Figure \ref{fig:QnN:blocker_lattice} (a).
To show that $\cF$ is a $\bY$-blocker, consider an arbitrary $\bX$-good copy of $\QQ(\bX)$ with a corresponding $\bX$-good embedding $\phi\colon \QQ(\bX)\to \QQ(\bZ)$. 
Then $\phi(\{x_1\})=\{x_1\}\cup Y$ for some $Y\subseteq\bY$, and hence $\phi(\{x_1\})\in\cF$. Thus, $\cF$ is a $\bY$-blocker in $\QQ(\bZ)$.
Figure \ref{fig:QnN:blocker_lattice} (b) also depicts a $\bY$-blocker, which we shall verify by Theorem \ref{thm:blocker}.
\end{example}

\begin{figure}[h]
\centering
\includegraphics[scale=0.62]{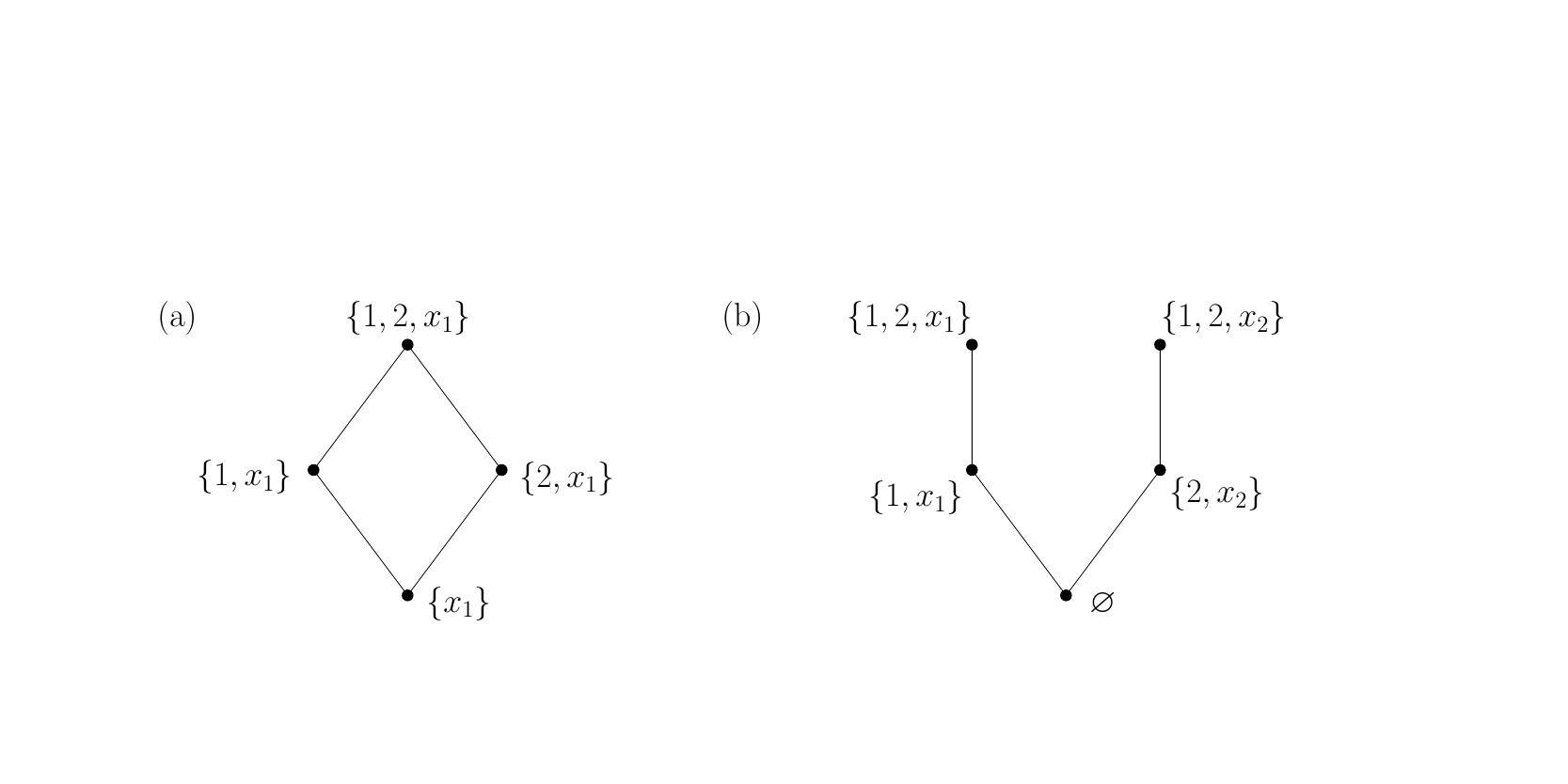}
\caption{ Two $\{1,2\}$-blockers in $\QQ(\{1,2,x_1,x_2\})$.}
\label{fig:QnN:blocker_lattice}
\end{figure}
%


\subsection{General properties of $\bY$-blockers}

\begin{lemma}\label{lem:QnN:blocker-basic} \ 
\vspace*{-1em}
\begin{itemize}
\item[(i)] Let $\bZ$ be a set with $|\bZ|>n$. A blue/red colored Boolean lattice $\QQ(\bZ)$ contains no red copy of $Q_n$ if and only if for every $\bX\subseteq \bZ$ of size $|\bX|=n$, there is a blue $(\bZ\setminus \bX)$-blocker.
\item[(ii)] Let $\bY$ be a non-empty subset of a set $\bZ$. Let $\cF$ be a $\bY$-blocker and let $Y\subseteq\bY$. Then there is a vertex $Z\in\cF$ with $Z\cap\bY=Y$. 
In particular, if $\cF$ has a unique minimal vertex $Z$, then $Z\cap\bY=\varnothing$; and if $\cF$ has a unique maximal vertex $Z$, then $Z\cap\bY=\bY$.
\item[(iii)] If $\cF$ is a $\bY$-blocker, then $|\cF|\geq 2^{|\bY|}$.
\end{itemize}
\end{lemma}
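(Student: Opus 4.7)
For part (i), the plan is to use the Embedding Lemma as the bridge between red copies of $Q_n$ and $\bX$-good copies of $\QQ(\bX)$. In the forward direction, if $\QQ(\bZ)$ contains no red copy of $Q_n$, I would take $\cF$ to be the set of all blue vertices of $\QQ(\bZ)$ and verify that for any $\bX$ of size $n$, every $\bX$-good copy of $\QQ(\bX)$ (being a copy of $Q_n$) must contain at least one blue vertex, so $\cF$ is a $(\bZ\setminus\bX)$-blocker. In the reverse direction, assume a red copy $\QQ'$ of $Q_n$ exists; by the Embedding Lemma (Lemma~\ref{lem:embed}), $\QQ'$ is $\bX$-good for some $n$-element $\bX\subseteq\bZ$. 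Setting $\bY=\bZ\setminus\bX$, the hypothesized blue $\bY$-blocker then contains a vertex of $\QQ'$, contradicting the fact that $\QQ'$ is entirely red.

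For part (ii), the main idea is to exhibit an explicit $\bX$-good embedding whose image is entirely confined to vertices with prescribed $\bY$-part. Concretely, fixing $Y\subseteq\bY$ and setting $\bX=\bZ\setminus\bY$, I would consider the map $\phi\colon\QQ(\bX)\to\QQ(\bZ)$ defined by $\phi(X)=X\cup Y$. This is plainly order-preserving and order-reflecting (since both reduce to $X_1\subseteq X_2$), and $\phi(X)\cap\bX=X$ because $Y\cap\bX=\varnothing$, so $\phi$ is an $\bX$-good embedding. Since $\cF$ is a $\bY$-blocker, it must contain a vertex of the image, and any such vertex $Z=X\cup Y$ satisfies $Z\cap\bY=Y$. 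The ``in particular'' clauses then follow quickly: if $Z_0$ is the unique minimal vertex of $\cF$, apply the first part with $Y=\varnothing$ to obtain $Z'\in\cF$ with $Z'\cap\bY=\varnothing$; since $Z_0\subseteq Z'$, we get $Z_0\cap\bY=\varnothing$. The maximal case is symmetric, using $Y=\bY$.

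For part (iii), the plan is simply to count: applying part (ii) to each of the $2^{|\bY|}$ distinct subsets $Y\subseteq\bY$ yields a vertex $Z_Y\in\cF$ with $Z_Y\cap\bY=Y$, and these vertices are pairwise distinct since their $\bY$-parts differ. Hence $|\cF|\ge 2^{|\bY|}$.

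There is no substantive obstacle in this lemma; the only small subtlety is recognising that part (ii) is the right intermediate statement from which both (iii) and the uniqueness corollaries for the extremal vertices fall out cleanly, and that part (i) is a direct corollary of the Embedding Lemma rather than needing any finer structural analysis of blockers.
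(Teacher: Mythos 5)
Your proof is correct and matches the paper's argument in every substantive respect: part (i) is the Embedding Lemma applied in both directions, part (ii) exhibits the $\bX$-good embedding $X\mapsto X\cup Y$ (the same subposet the paper constructs) and then deduces the extremal-vertex facts by comparability with the $Y=\varnothing$ and $Y=\bY$ witnesses, and part (iii) is the immediate count from (ii). No meaningful difference from the published proof.
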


\begin{proof}
Part (i) follows immediately from the Embedding Lemma and the definition of a $\bY$-blocker.
For (ii), let $\cF$ be a $\bY$-blocker in $\QQ(\bZ)$ and let $\bX=\bZ\setminus \bY$. Observe that $\cF$ contains a vertex $U$ with $U\cap\bY=Y$ for every $Y\subseteq \bY$,
because otherwise the subposet $\{X\cup Y :~ X\in\QQ(\bX)\}$ is an $\bX$-good copy of $\QQ(\bX)$ that does not contain a vertex from $\cF$.
Considering $Y=\varnothing$, we see that there is a vertex $U\in \cF$ such that $U\cap \bY= \varnothing$. 
If $Z$ is the unique minimal vertex of $\cF$, then it has $\bY$-part $Z\cap \bY \subseteq U\cap \bY= \varnothing$. Similarly, if there is a unique maximal vertex $Z$ of $\cF$, it has $\bY$-part $Z\cap\bY=\bY$.
For (iii), since there are $2^{|\bY|}$ subsets of $\bY$, part (ii) immediately implies that $|\cF|\ge 2^{|\bY|}$.
\end{proof} 

\bigskip

\begin{theorem}\label{thm:mPk}
Let $P$ be a poset and let $n\in\N$ be an integer.
Then $$R(P,Q_n)\le \min \{N\!: \text{there is no }P\text{-free} ~  \text{\bY\!-blocker in }\QQ([N])\text{ for some } \bY\hspace*{-2pt}\subseteq\hspace*{-2pt} [N]\text{, }|\bY|\hspace*{-2pt}=\hspace*{-2pt}N-n\}.$$
\end{theorem}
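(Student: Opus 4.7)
The plan is to derive the bound directly from Lemma \ref{lem:QnN:blocker-basic}(i), which characterizes colorings avoiding a red $Q_n$ in terms of the existence of blue blockers for every choice of ground-set partition. The inequality on $R(P,Q_n)$ then reduces to a simple contrapositive argument against $P$-free blockers for a single well-chosen $\bY$.

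More precisely, let $N$ be any integer for which there exists a subset $\bY \subseteq [N]$ with $|\bY| = N-n$ such that no $P$-free $\bY$-blocker exists in $\QQ([N])$; it suffices to prove $R(P,Q_n) \le N$. Fix such an $N$ and such a $\bY$, and set $\bX = [N]\setminus\bY$, so $|\bX| = n$. Consider an arbitrary blue/red coloring of $\QQ([N])$. I would split into two cases according to whether a red copy of $Q_n$ exists. If it does, we are done. Otherwise, Lemma \ref{lem:QnN:blocker-basic}(i) guarantees that for every $n$-element subset $\bX'\subseteq [N]$ there is a blue $([N]\setminus\bX')$-blocker in $\QQ([N])$; applying this to our specific $\bX'=\bX$ yields a blue $\bY$-blocker $\cF\subseteq \QQ([N])$.

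Now comes the (only) substantive step: since every vertex of $\cF$ is blue by construction, $\cF$ is a $\bY$-blocker consisting entirely of blue vertices. By the choice of $\bY$, no $\bY$-blocker in $\QQ([N])$ is $P$-free, so $\cF$ must contain an induced copy of $P$, and this copy is monochromatically blue. Hence the coloring contains a blue copy of $P$. Combining both cases, every blue/red coloring of $\QQ([N])$ contains a blue copy of $P$ or a red copy of $Q_n$, which gives $R(P,Q_n)\le N$. Taking the minimum over all admissible $N$ yields the theorem.

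There is no real obstacle here: the argument is purely a translation between the definition of a $\bY$-blocker and the Ramsey-type statement. The only subtlety to double-check is that in Lemma \ref{lem:QnN:blocker-basic}(i) one is free to pick \emph{any} single $n$-subset $\bX$ (equivalently, any single $\bY$ of size $N-n$), so that we do not need to control $P$-freeness simultaneously for all choices of $\bY$ -- a single bad $\bY$ suffices. This matches the ``for some $\bY$'' quantifier in the statement.
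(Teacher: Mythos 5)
Your argument is correct and follows essentially the same route as the paper: in a coloring with no red $Q_n$, the blue vertices form a $\bY$-blocker (which you obtain via Lemma \ref{lem:QnN:blocker-basic}(i); the paper inlines the same observation), and $P$-freeness fails by the choice of $\bY$, producing a blue copy of $P$. One thing you skip, which the paper includes, is verifying that the set over which the minimum is taken is nonempty — the paper bounds a $P$-free $\bY$-blocker's size from above via Methuku–P\'alv\"olgyi and from below via Lemma \ref{lem:QnN:blocker-basic}(iii) to show that for large enough $N$ no such blocker exists. Strictly speaking the inequality as stated holds vacuously if the set were empty (the minimum would be $+\infty$), so your proof of the inequality is complete, but the finiteness check is what makes the bound useful and is worth including.
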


\begin{proof}
Let $N_0$ be the smallest integer such that for some subset $\bY\subseteq[N_0]$ of size $|\bY|=N_0-n$, there is no $P$-free $\bY$-blocker in $\QQ([N_0])$.
Consider an arbitrarily blue/red colored Boolean lattice $\QQ([N_0])$, and let $\cF$ be the subposet of $\QQ([N_0])$ induced by all blue vertices.
We shall show that there is either a blue copy of $P$ or a red copy of $Q_n$.
Let $\bX=[N_0]\setminus\bY$. 
If there is a red, $\bX$-good copy of $Q_n$ in $\QQ([N_0])$, the proof is complete.
Otherwise, each $\bX$-good copy of $Q_n$ contains a blue vertex, i.e., the blue subposet $\cF$ is a $\bY$-blocker.
By definition of $N_0$, $\cF$ is not $P$-free, thus there is a blue copy of $P$ in $\QQ([N_0])$.
\\

It remains to show that this minimum is well-defined, i.e.,  we shall find an integer $N$ such that there is no $P$-free $\bY$-blocker in $\QQ([N])$, 
where $\bY\subseteq [N]$ with $|\bY|=N-n$. 
For this, we bound the size $|\cF|$ of a $P$-free $\bY$-blocker $\cF$ in $\QQ([N])$  from above and from below.
On the one hand, by a result of Methuku and P\'alv\"olgyi \cite{MP} and by Stirling's formula, see (\ref{eq:stirlings}), the size of the $P$-free subposet $\cF\subseteq\QQ([N])$ is bounded by
 $$|\cF|\le c(P)\binom{N}{N/2}\le \frac{c'(P)\cdot 2^{N}}{ \sqrt{N}},$$
 where $c$ and $c'$ are constants depending only on $P$.
On the other hand, Lemma \ref{lem:QnN:blocker-basic} provides that $$|\cF|\ge 2^{|\bY|}=2^{N-n}.$$
For sufficiently large $N$, we have that $\frac{\sqrt{N}}{c'(P)}> 2^{n}$, which implies that there is no $P$-free $\bY$-blocker $|\cF|$ in $\QQ([N])$.
\end{proof}

\subsection{$\bY$-hitting and $\bY$-avoiding homomorphisms}

For a subposet $\cF$ of $\QQ(\bZ)$ and $\bY\subseteq \bZ$, we say that a homomorphism $\psi\colon \cF\to \QQ(\bY)$ is \textit{$\bY$-hitting}\index{$\bY$-hitting homomorphism} if there exists some $F\in\cF$ with $\psi(F)= F\cap\bY$.
Conversely, $\psi$ is \textit{$\bY$-avoiding}\index{$\bY$-avoiding homomorphism} if $\psi(F)\neq F\cap\bY$ for every $F\in\cF$. Note that each $\psi$ is either $\bY$-hitting or $\bY$-avoiding.

We prove that the existence of a $\bY$-blocker is equivalent to the non-existence of a $\bY$-avoiding homomorphism, by showing an interconnection of $(\bZ\setminus\bY)$-good copies of $Q_n$ with homomorphisms $\psi\colon \cF\to \QQ(\bY)$.

\begin{theorem}\label{thm:blocker}
Let $ \bY$ be a non-empty subset of a set $\bZ$.  A subposet $\cF$ of a Boolean lattice $\QQ(\bZ)$ is a $\bY$-blocker if and only if every homomorphism $\psi\colon \cF \to\QQ(\bY)$ is $\bY$-hitting.
\end{theorem}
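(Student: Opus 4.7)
For the implication ``every homomorphism $\psi\colon\cF\to\QQ(\bY)$ is $\bY$-hitting $\Rightarrow\cF$ is a $\bY$-blocker'', my plan is to argue by contrapositive. Starting from an $\bX$-good embedding $\phi\colon\QQ(\bX)\to\QQ(\bZ)$ whose image avoids $\cF$, I would write $\phi(X)=X\cup f(X)$ for some monotone $f\colon\QQ(\bX)\to\QQ(\bY)$, and then define $\psi\colon\cF\to\QQ(\bY)$ by $\psi(F)=f(F\cap\bX)$. Monotonicity of $f$ directly forces $\psi$ to be a homomorphism, and $\bY$-avoidance will follow since an equality $\psi(F)=F\cap\bY$ would give $\phi(F\cap\bX)=(F\cap\bX)\cup f(F\cap\bX)=F\in\cF$, contradicting the choice of $\phi$. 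This direction is essentially a bookkeeping argument.

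For the converse direction, I will assume a $\bY$-avoiding homomorphism $\psi\colon\cF\to\QQ(\bY)$ exists and construct a monotone $f\colon\QQ(\bX)\to\QQ(\bY)$ with $f(X)\neq F\cap\bY$ for every $F\in\cF$ satisfying $F\cap\bX=X$; the corresponding $\bX$-good copy $\{X\cup f(X)\colon X\in\QQ(\bX)\}$ will then witness that $\cF$ is not a $\bY$-blocker. The most natural candidate, $f(X)=\bigcup\{\psi(F)\colon F\in\cF,\,F\cap\bX\subseteq X\}$, is monotone but, as small examples reveal, may coincide with some $F\cap\bY$ and thereby fail the avoidance requirement; the dual candidate $f(X)=\bigcap\{\psi(F)\colon F\in\cF,\,F\cap\bX\supseteq X\}$ has analogous defects. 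I therefore plan to build $f$ by an inductive procedure along a linear extension of $\QQ(\bX)$, at each new $X$ selecting a value in $\QQ(\bY)\setminus\{F\cap\bY\colon F\in\cF,\,F\cap\bX=X\}$ that preserves monotonicity with the previously defined values and keeps the subsequent choices feasible, using $\psi$ as a guide when a purely greedy choice is not available.

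The main obstacle will be the inductive step. The difficulty is that several $F\in\cF$ sharing the same $\bX$-part may be pairwise incomparable in $\cF$, in which case the homomorphism condition puts no relation between their $\psi$-values, and it is unclear how to aggregate them into a single $f(X)$ that simultaneously avoids every $F\cap\bY$ with $F\cap\bX=X$. A promising route around this is to first pass to a critical $\bY$-blocker $\cF'\subseteq\cF$ (the restriction $\psi|_{\cF'}$ remains $\bY$-avoiding) and exploit the defining property of criticality---namely, that for each $F\in\cF'$ there is an $\bX$-good copy that meets $\cF'$ only at $F$---to either read off an explicit definition of $f$ or to extract a direct contradiction from the homomorphism and $\bY$-avoiding properties of $\psi|_{\cF'}$.
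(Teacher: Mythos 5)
Your argument for the implication ``every homomorphism is $\bY$-hitting $\Rightarrow$ $\cF$ is a $\bY$-blocker'' is essentially the paper's: phrased directly rather than contrapositively, the paper takes an arbitrary $\bX$-good embedding $\phi$ and sets $\psi(F)=\phi(F\cap\bX)\cap\bY$, which is exactly your $f(F\cap\bX)$. That part is fine.

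For the converse, however, there is a genuine gap, and you have already identified it yourself without resolving it. Your plan is to build a monotone $f\colon\QQ(\bX)\to\QQ(\bY)$ along a linear extension, choosing $f(X)$ at each step to avoid every $F\cap\bY$ with $F\cap\bX=X$ while preserving monotonicity and ``keeping future choices feasible.'' The difficulty is real: the values $\{F\cap\bY : F\in\cF,\ F\cap\bX=X,\ F\cap\bY\supseteq\bigcup_{X'\subset X} f(X')\}$ can in principle fill up the whole interval of admissible choices, so there is no purely greedy move, and the homomorphism condition on $\psi$ says nothing about incomparable vertices of $\cF$ with the same $\bX$-part. Your fallback---passing to a critical $\bY$-blocker and invoking its defining property---is speculative: it commits you to a proof by contradiction in which you \emph{assume} $\cF$ is a blocker to obtain $\cF'$, and it is not clear how to combine the per-vertex witnesses of criticality with a $\bY$-avoiding $\psi$ into an actual contradiction; moreover, the structural facts about critical blockers established later in the chapter are themselves downstream of this theorem. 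The paper's construction is quite different and avoids the obstacle entirely: it starts from the identity embedding $\phi_0(X)=X$ and iterates, at step $i$ locating a minimal $X_i$ with $\phi_i(X_i)\in\cF$ and replacing $\phi_i(U)$ by $\phi_i(U)\cup\psi(\phi_i(X_i))$ for all $U\supseteq X_i$. Monotonicity is automatic because one adds the same set to all larger images, and the $\bY$-avoiding property of $\psi$ guarantees $\psi(\phi_i(X_i))\not\subseteq\phi_i(X_i)$ (via a trace-back argument along the recursion), so each step strictly enlarges $\phi_i(X_i)$ and the process terminates with a blocker-avoiding $\bX$-good copy. You should adopt this ``iterative push-up'' idea rather than trying to commit to a single value of $f(X)$ at each node.
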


\begin{example} Let  $\bZ=\{1,2,x_1,x_2\}$ and $\bY=\{1,2\}$. 
In the Boolean lattice $\QQ(\bZ)$, consider the subposet $\cF$ on vertices $\varnothing$, $\{1,x_1\}$, $\{1,2,x_1\}$, $\{2,x_2\}$, and $\{1,2,x_2\}$, see Figure~\ref{fig:QnN:blocker_lattice}~(b). 
We claim that $\cF$ is a $\{1,2\}$-blocker. 
To show this, we apply Theorem~\ref{thm:blocker}. Let $\psi\colon \cF \to\QQ(\bY)$ be an arbitrary homomorphism. We shall show that $\psi$ is $\bY$-hitting, 
so assume that $\psi$ is $\bY$-avoiding, i.e., for every $F\in\cF$,\:\:$\psi(F)\neq F\cap \bY$.
Thus in particular, $\psi(\varnothing)\cap\bY\neq \varnothing$. Say without loss of generality, $1\in \psi(\varnothing)\cap\bY$.
Since $\psi$ is a homomorphism, we see that $\psi(\varnothing)\subseteq \psi(\{1,x_1\})$, so $1\in \psi(\{1,x_1\})\cap\bY$.
Recall that $\psi$ is $\bY$-avoiding, so $\psi(\{1,x_1\})\cap\bY\neq \{1\}$, and thus $\psi(\{1,x_1\})\cap\bY= \{1,2\}$. 
Using that $\psi(\{1,x_1\})\subseteq \psi(\{1,2,x_1\})$, this implies that $\psi(\{1,2,x_1\})\cap\bY= \{1,2\}$, a contradiction.
\end{example}

\begin{proof}[Proof of Theorem \ref{thm:blocker}]
Let $\bY$ be a non-empty subset of a set $\bZ$ and let $\bX=\bZ\setminus\bY$.
For the first part of the proof, let $\cF$ be a subposet in $\QQ(\bZ)$ such that every homomorphism $\psi\colon \cF\to \QQ(\bY)$ is $\bY$-hitting.
We shall show that $\cF$ is a $\bY$-blocker.
Let $\QQ$ be an arbitrary $\bX$-good copy of $\QQ(\bX)$ in $\QQ(\bZ)$ with a corresponding $\bX$-good embedding $\phi\colon \QQ(\bX)\to \QQ(\bZ)$.
Consider the function $\psi\colon \cF\to \QQ(\bY)$ given by 
$$\psi(F)=\phi(F\cap\bX)\cap\bY, \quad \text{ for each }F\in\cF.$$
Let $F,F'\in\cF$ such that $F\subseteq F'$, so in particular, $F\cap\bX\subseteq F'\cap \bX$. 
Since $\phi$ is an embedding, $\psi(F)=\phi(F\cap\bX)\cap\bY\subseteq \phi(F'\cap\bX)\cap\bY=\psi(F')$, so $\psi$ is a homomorphism.
By our assumption, $\psi$ is $\bY$-hitting, thus we find some $Z\in\cF$ with $\psi(Z)=Z\cap\bY$.
The $\bY$-part of $\phi(Z\cap\bX)$ is
$$\phi(Z\cap\bX)\cap\bY=\psi(Z)=Z\cap\bY.$$ 
The fact that $\phi$ is $\bX$-good implies that $\phi(Z\cap\bX)\cap\bX=Z\cap\bX$. Therefore, $\phi(Z\cap\bX)=Z$.
Recall that the image of $\phi$ is $\QQ$, which implies that $Z=\phi(Z\cap\bX)\in \QQ$.
In particular, $\cF$ and $\QQ$ have the vertex $Z$ in common.
Since $\QQ$ was chosen arbitrarily, $\cF$ contains a vertex from every $\bX$-good copy of $\QQ(\bX)$, so $\cF$ is a $\bY$-blocker.
\\

From now on, let $\cF$ be a subposet in $\QQ(\bZ)$ for which there exists a $\bY$-avoiding homomorphism $\psi\colon \cF\to \QQ(\bY)$. 
We shall show that $\cF$ is not a $\bY$-blocker. 
For that, we shall construct an $\bX$-good embedding $\phi\colon \QQ(\bX)\to \QQ(\bZ)$ such that the image of $\phi$ does not contain a vertex from $\cF$.
We obtain this embedding iteratively from a family of $\bX$-good embeddings $\phi_i\colon \QQ(\bX)\to \QQ(\bZ)$, $i\ge 0$, constructed as follows.

Let $\phi_0\colon \QQ(\bX)\to \QQ(\bZ)$ be the identity function, i.e., $\phi_0(X)=X$ for every $X\in\QQ(\bX)$. Note that $\phi_0$ is an $\bX$-good embedding.
Assume that we already defined an $\bX$-good embedding $\phi_i$ for some non-negative integer $i$. 
If the image of $\phi_i$ does not contain a vertex from $\cF$, we are done. 
So, suppose that there is a vertex $X_i\in\QQ(\bX)$ which is minimal with the property that $\phi_i(X_i)\in\cF$.
Let $\phi_{i+1}\colon \QQ(\bX)\to \QQ(\bZ)$ be defined as
$$\phi_{i+1}(U)=\begin{cases}
\phi_i(U)\cup \psi\big(\phi_i(X_i)\big),\quad & \text{ if }X_i\subseteq U \\ 
\phi_i(U),\quad &\text{ otherwise.}
\end{cases}$$
It is easy to see that $\phi_{i+1}$ is an embedding, because $\phi_i$ is an embedding.
Since $\phi_i$ is $\bX$-good and $\psi\big(\phi_i(X_i)\big)\cap \bX=\varnothing$, the function $\phi_{i+1}$ is $\bX$-good as well.

It remains to verify that this process stops after finitely many steps. Note that $\phi_i(U)\subseteq \phi_{i+1}(U)$ for every $U\in\QQ(\bX)$.
We shall show that $\phi_i(X_i)$ is a proper subset of $\phi_{i+1}(X_i)$. 
Since $\QQ(\bX)$ and $\bZ$ are finite, this implies that there are only finitely many steps.
Assume towards a contradiction that $\phi_i(X_i)=\phi_{i+1}(X_i)$, thus $\psi\big(\phi_i(X_i)\big)\subseteq  \phi_i(X_i)$. 
Recall that $\psi$ maps to $\bY$, so in particular,
$$\psi\big(\phi_i(X_i)\big)\subseteq  \phi_i(X_i)\cap\bY.$$
Since $\psi$ is $\bY$-avoiding, $\psi\big(\phi_i(X_i)\big)\neq  \phi_i(X_i)\cap \bY$, 
thus $\psi\big(\phi_i(X_i)\big)$ is a proper subset of $\phi_i(X_i)\cap \bY$. 
Therefore, there exists a ground element $a\in \phi_i(X_i)\cap\bY$ such that $a\notin \psi\big(\phi_i(X_i)\big)$.

A simple inductive argument shows that for any $X\in\QQ(\bX)$ and any non-negative integer $i'$,
\begin{equation}\label{eq:phipsi}
\phi_{i'}(X)=X\cup \bigcup_{0\le j<i' \text{ with }X_j\subseteq X} \psi\big(\phi_j(X_j)\big).
\end{equation}
In particular, since $a\in \phi_i(X_i)\setminus X_i$, there exists an index $j<i$ such that $X_j\subseteq X_i$ and $a\in\psi\big(\phi_{j}(X_j)\big)$.
Using (\ref{eq:phipsi}) and $X_j\subseteq X_i$, we find that
\begin{eqnarray*}
\phi_{j}(X_j)&=& X_j\cup \bigcup_{0\le \ell<j \text{ with }X_\ell\subseteq X_j} \psi\big(\phi_\ell(X_\ell)\big)\\
&\subseteq & X_i\cup \bigcup_{0\le \ell<i \text{ with }X_\ell\subseteq X_i} \psi\big(\phi_\ell(X_\ell)\big)\\
& = & \phi_{i}(X_i).
\end{eqnarray*}
Since $\psi$ is a homomorphism, $\psi\big(\phi_{j}(X_j)\big)\subseteq \psi\big(\phi_i(X_i)\big)$, so $a\in\psi\big(\phi_i(X_i)\big)$.
This contradicts the choice of $a$.
\end{proof}

\begin{remark}
In this proof, we showed that there is a $\bY$-hitting homomorphism $\psi\colon \cF \to\QQ(\bY)$ if and only if there is an $\bX$-good embedding $\phi$ of $Q_n$ which has no vertex in common with $\cF$.
However, there is no $1$-to-$1$ correspondence between homomorphisms and $\bX$-good embeddings of $Q_n$, 
and our constructions building $\phi$ from $\psi$ as well as $\psi$ from $\phi$ are not inverse of each other.
\end{remark}

\subsection{Properties of critical blockers}

In the following, we use the characterization from Theorem \ref{thm:blocker} to analyze properties of critical blockers. 
Recall that for $\bY\subseteq\bZ$, a $\bY$-blocker $\cF$ in $\QQ(\bZ)$ is \textit{critical}\index{critical blocker} if for any vertex $F\in\cF$ the subposet $\cF\setminus\{F\}$ is not a $\bY$-blocker in $\QQ(\bZ)$.

\begin{lemma}\label{lem:QnN:minimal-connected} 
Let $\cF$ be a critical $\bY$-blocker for a non-empty set $\bY$. Then $\cF$ is a connected poset, i.e.,  it can not be decomposed into two non-empty parallel posets. 
\end{lemma}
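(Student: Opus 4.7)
The plan is to argue by contradiction. Assume $\cF$ decomposes as a parallel composition $\cF = \cF_1 \opl \cF_2$ of two non-empty subposets. I will produce a $\bY$-avoiding homomorphism $\psi\colon \cF \to \QQ(\bY)$, which by Theorem \ref{thm:blocker} contradicts the fact that $\cF$ is a $\bY$-blocker.

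First, I would use criticality to show that neither $\cF_1$ nor $\cF_2$ is itself a $\bY$-blocker. Indeed, suppose for example $\cF_1$ were a $\bY$-blocker. Pick any vertex $F \in \cF_2$ (possible since $\cF_2 \neq \varnothing$). Then $\cF \setminus \{F\} \supseteq \cF_1$ would still contain a vertex from every $\bX$-good copy of $\QQ(\bX)$ and hence would itself be a $\bY$-blocker, contradicting criticality of $\cF$. Symmetrically for $\cF_2$.

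Next, since $\cF_i$ is not a $\bY$-blocker for $i\in[2]$, Theorem \ref{thm:blocker} provides homomorphisms $\psi_i \colon \cF_i \to \QQ(\bY)$ that are $\bY$-avoiding, i.e., $\psi_i(F) \neq F \cap \bY$ for every $F \in \cF_i$. Define $\psi \colon \cF \to \QQ(\bY)$ by setting $\psi(F) = \psi_i(F)$ whenever $F \in \cF_i$. Clearly $\psi$ remains $\bY$-avoiding on all of $\cF$. The key point is that $\psi$ is a homomorphism: if $F \le_{\cF} F'$, then $F$ and $F'$ cannot lie in different parts (since $\cF_1$ and $\cF_2$ are parallel, hence element-wise incomparable), so both lie in some common $\cF_i$ and $\psi(F) = \psi_i(F) \subseteq \psi_i(F') = \psi(F')$. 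Applying Theorem \ref{thm:blocker} to $\psi$ then yields that $\cF$ is not a $\bY$-blocker, a contradiction.

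I do not anticipate a serious obstacle; the crucial conceptual step is recognising that criticality prevents either parallel component from being a blocker on its own, after which the gluing of the two witnessing homomorphisms works because parallel composition introduces no new comparabilities to verify.
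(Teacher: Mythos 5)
Your proof is correct and follows essentially the same route as the paper: use criticality to show neither parallel component is itself a $\bY$-blocker, invoke Theorem~\ref{thm:blocker} to obtain $\bY$-avoiding homomorphisms on each part, glue them (which works because the two parts are element-wise incomparable), and derive a contradiction. Your spelled-out justification that $\cF_1$ (resp.\ $\cF_2$) cannot be a blocker — by noting that removing a vertex of the other part would leave a superset of $\cF_1$, still a blocker — is exactly what the paper leaves implicit.
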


\begin{proof}
Assume that $\cF$ is the parallel composition of two non-empty posets $\cF_1$ and $\cF_2$, i.e.,  $\cF_1$ and $\cF_2$ are element-wise incomparable in $\cF$.
Each of $\cF_1$ and $\cF_2$ is not a $\bY$-blocker, because $\cF$ is critical.
By Theorem \ref{thm:blocker}, there are $\bY$-avoiding homomorphisms $\psi_1\colon \cF_1\to\QQ(\bY)$ and $\psi_2\colon \cF_2\to\QQ(\bY)$. 
This implies that the function $\widehat{\psi}\colon\cF\to\QQ(\bY)$, 
$$\widehat{\psi}(F)=\begin{cases}
\psi_1(F),\quad &\text{ if }F\in\cF_1\\ 
\psi_2(F),\quad &\text{ if }F\in\cF_2\end{cases}$$
is a $\bY$-avoiding homomorphism of $\cF$. Recall that $\cF$ is a $\bY$-blocker, so this is a contradiction to Theorem \ref{thm:blocker}.
\end{proof}

\begin{lemma} \label{lem:QnN:chain} 
Let $\cF$ be a critical $\bY$-blocker for a non-empty set $\bY$. Let $U_1,U_2\in\cF$ with $U_1\neq U_2$.
If either $U_1\cap\bY=\varnothing=U_2\cap\bY$ or $U_1\cap\bY=\bY=U_2\cap\bY$, then $U_1$ and $U_2$ are not comparable.
\end{lemma}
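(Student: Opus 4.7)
The two cases are dual (the map $Z \mapsto \bZ \setminus Z$ is an order-reversing automorphism of $\QQ(\bZ)$ that swaps the roles), so I would handle only the first: assume $U_1 \cap \bY = U_2 \cap \bY = \varnothing$ and, aiming for contradiction, suppose $U_1 \subsetneq U_2$. The plan is to use the characterization of $\bY$-blockers via Theorem \ref{thm:blocker}: I will produce a $\bY$-avoiding homomorphism $\cF \to \QQ(\bY)$, contradicting the assumption that $\cF$ is a $\bY$-blocker.

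Since $\cF$ is a critical $\bY$-blocker, $\cF \setminus \{U_2\}$ is not a $\bY$-blocker. By Theorem \ref{thm:blocker} there exists a $\bY$-avoiding homomorphism $\psi\colon \cF \setminus \{U_2\} \to \QQ(\bY)$. The main task is to extend $\psi$ to all of $\cF$ while keeping it $\bY$-avoiding. I will define
\[
\psi(U_2) \;=\; \bigcup \bigl\{\psi(F): F \in \cF \setminus \{U_2\},\ F \subseteq U_2\bigr\}.
\]
Three things need to be checked. First, to confirm $\widehat{\psi}$ remains a homomorphism: for $F \subseteq U_2$ in $\cF \setminus \{U_2\}$ the inclusion $\psi(F) \subseteq \psi(U_2)$ is built into the definition, and for $F \supseteq U_2$ in $\cF \setminus \{U_2\}$ I use that every $F' \subseteq U_2$ in $\cF \setminus \{U_2\}$ satisfies $F' \subseteq F$, hence $\psi(F') \subseteq \psi(F)$ since $\psi$ was already a homomorphism; taking the union over such $F'$ gives $\psi(U_2) \subseteq \psi(F)$. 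Second, $\psi(U_2) \subseteq \bY$ is clear since it is a union of subsets of $\bY$. Third, and this is the point where the hypothesis $U_1 \cap \bY = \varnothing$ is used: because $U_1 \in \cF \setminus \{U_2\}$ with $U_1 \subsetneq U_2$ and $\psi$ is $\bY$-avoiding, we have $\psi(U_1) \neq U_1 \cap \bY = \varnothing$, so $\psi(U_1) \neq \varnothing$, and therefore $\psi(U_2) \supseteq \psi(U_1) \neq \varnothing = U_2 \cap \bY$, confirming $\widehat{\psi}$ is still $\bY$-avoiding at $U_2$.

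This produces a $\bY$-avoiding homomorphism $\widehat{\psi}\colon \cF \to \QQ(\bY)$, contradicting that $\cF$ is a $\bY$-blocker by Theorem \ref{thm:blocker}. In the dual case $U_1 \cap \bY = U_2 \cap \bY = \bY$ with $U_1 \subsetneq U_2$, I would instead start from a $\bY$-avoiding homomorphism on $\cF \setminus \{U_1\}$ and extend it to $U_1$ by setting $\psi(U_1)$ equal to the intersection of $\psi(F)$ over $F \in \cF \setminus \{U_1\}$ with $F \supseteq U_1$; then $\psi(U_1) \subseteq \psi(U_2) \subsetneq \bY = U_1 \cap \bY$, and the homomorphism check is analogous.

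The main obstacle is the verification in the middle paragraph that the extension is genuinely a homomorphism — specifically, the compatibility between the union-defined value at $U_2$ and images of strict supersets of $U_2$ in $\cF$. This reduces to a short argument that any $F' \subseteq U_2$ and $F \supseteq U_2$ in $\cF \setminus \{U_2\}$ are themselves comparable in $\cF$ with $F' \subseteq F$, which is just the transitivity of $\subseteq$. Once this is noted, the rest of the proof is a clean application of Theorem \ref{thm:blocker}.
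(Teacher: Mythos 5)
Your proposal is correct and follows essentially the same route as the paper: use criticality to obtain a $\bY$-avoiding homomorphism on $\cF\setminus\{U_2\}$, extend it to $U_2$ by taking the union of images of the sets below $U_2$, verify the homomorphism and $\bY$-avoiding properties (the latter via $\psi(U_1)\neq\varnothing$), and invoke Theorem~\ref{thm:blocker} for the contradiction, with the dual case handled symmetrically using intersections. The only cosmetic difference is that you gesture at a complementation symmetry before carrying out the direct dual argument, whereas the paper goes directly to the dual construction.
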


\begin{proof}
Assume that $U_1\cap\bY=\varnothing=U_2\cap\bY$ and $U_1\subset U_2$. 
As $\cF$ is a critical $\bY$-blocker, the subposet $\cF\setminus\{U_2\}$ is not a $\bY$-blocker, so by Theorem \ref{thm:blocker}, we find a $\bY$-avoiding homomorphism $\psi\colon\cF\setminus\{U_2\}\to \QQ(\bY)$. Let $\cU=\{U\in\cF\setminus\{U_2\} :~ U\subset U_2\}$, note that $\cU\neq\varnothing$, see Figure \ref{fig:QnN:root} (a).
We extend $\psi$ to a function $\widehat{\psi}\colon \cF\to \QQ(\bY)$ by defining 
$$\widehat{\psi}(F)=\begin{cases}\psi(F),& \text{ if }F\neq U_2\\ \bigcup_{U\in\cU}\psi(U),\quad &\text{ if }F= U_2.\end{cases}$$
To reach a contradiction, we need to show that $\widehat{\psi}$ is a $\bY$-avoiding homomorphism.
We shall prove that $\widehat{\psi}$ is a homomorphism by considering any two $F_1, F_2\in \cF$ such that $F_1\subseteq F_2$ and verifying that $\widehat{\psi}(F_1)\subseteq \widehat{\psi}(F_2)$. We distinguish three cases, depending on whether either of $F_1$ or $F_2$ is equal to $U_2$. We repeatedly use the fact that $\psi$ is a homomorphism.
\vspace*{-1em}
\begin{itemize}
\item If $F_1\neq U_2$ and $F_2\neq U_2$, then $\widehat{\psi}(F_1)=\psi(F_1)\subseteq \psi(F_2)=\widehat{\psi}(F_2)$.
\item If  $F_1=U_2$, then $\widehat{\psi}(F_1)=\widehat{\psi}(U_2) = \bigcup_{U\in \cU} \psi(U) \subseteq \bigcup_{U\in \cU} \psi(F_2) \subseteq\psi(F_2)= \widehat{\psi}(F_2)$. Here, we used the property that for any $U\in \cU$,\:\:$U\subseteq U_2\subseteq F_2$. 
\item If $F_2=U_2$, then $\widehat{\psi}(F_1) =\psi(F_1) \subseteq \bigcup_{U\in \cU} \psi(U)  = \widehat{\psi}(U_2)=\widehat{\psi}(F_2)$. 
Here, we used that $F_1$ is an element of $\cU$, and thus $\psi(F_1)\subseteq \bigcup_{U\in \cU} \psi(U)$.
\end{itemize}
Therefore, $\widehat{\psi}$ is a homomorphism.
To show that $\widehat{\psi}$ is $\bY$-avoiding, we shall verify for any $F\in \cF$ that $\widehat{\psi}(F)\neq F\cap \bY$. 
\vspace*{-1em}
\begin{itemize}
\item Consider an arbitrary vertex $F\in \cF$ with $F\neq U_2$. Recalling that $\psi$ is $\bY$-avoiding,  $\widehat{\psi}(F)=\psi(F)\neq F\cap\bY$. 

\item For $F=U_2$, since $\psi$ is $\bY$-avoiding, $\psi(U_1)\neq U_1\cap \bY=\varnothing$.
Note that $\psi(U_1)\subseteq \widehat{\psi}(U_2)$, so $\widehat{\psi}(U_2)\neq \varnothing = U_2\cap\bY$.
\end{itemize}
We conclude that $\widehat{\psi}$ is $\bY$-avoiding. This contradicts Theorem \ref{thm:blocker} and the fact that $\cF$ is a $\bY$-blocker. 
\bigskip

Under the assumption $U_1\cap\bY=\bY=U_2\cap\bY$ and $U_1\subset U_2$, 
a symmetric proof holds for the subposet $\cF\setminus\{U_1\}$,\:\:$\cU=\{U\in\cF\setminus\{U_1\} :~ U\supset U_1\}$, and $\widehat{\psi}\colon \cF\to \QQ(\bY)$ with 
$$\widehat{\psi}(F)=\begin{cases}\psi(F),& \text{ if }F\neq U_1\\ \bigcap_{U\in\cU}\psi(U),\quad &\text{ if }F= U_1.\end{cases}$$
\end{proof}

\begin{figure}[h]
\centering
\includegraphics[scale=0.62]{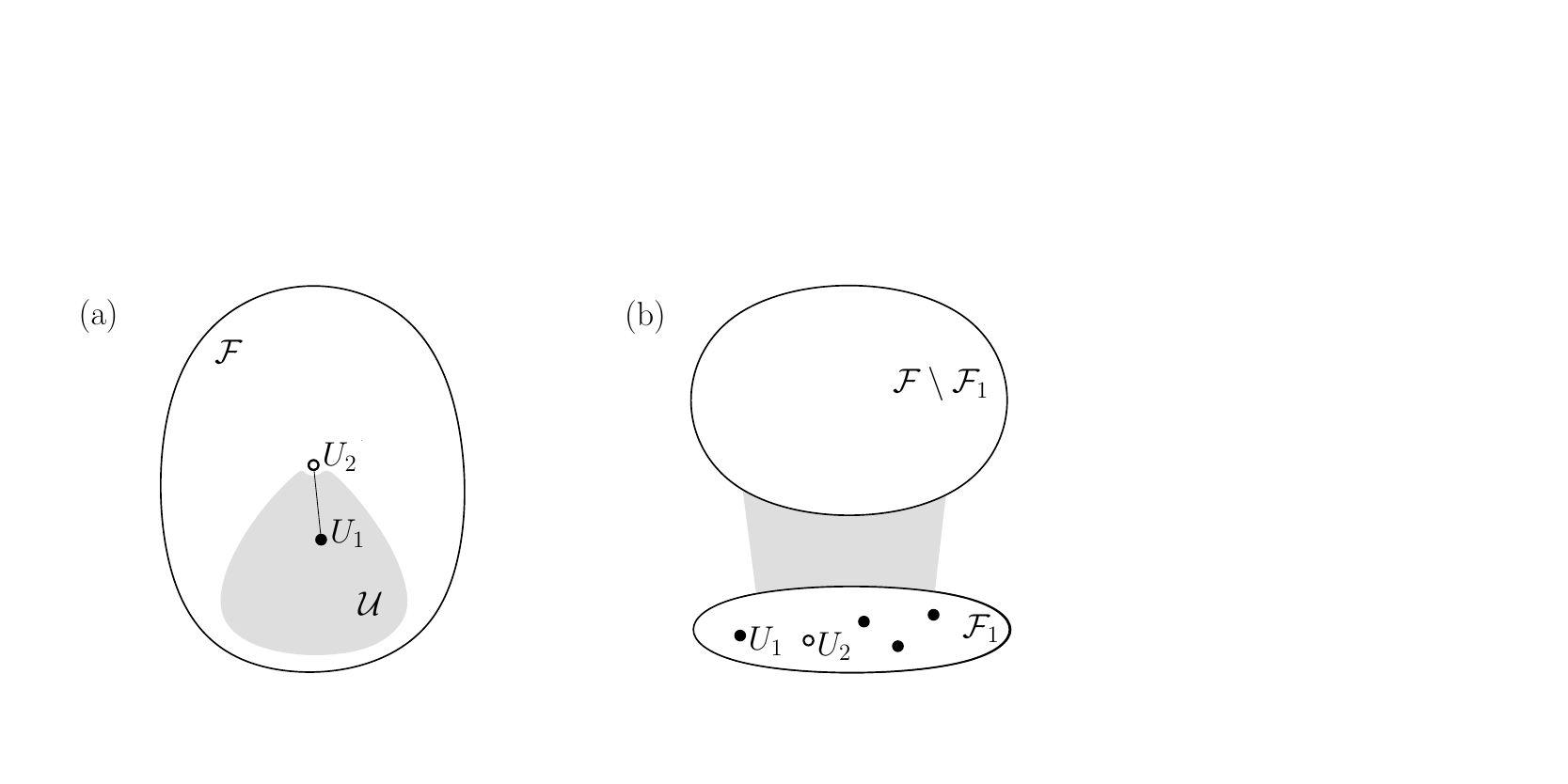}
\caption{(a) Setting in Lemma \ref{lem:QnN:chain}.\ (b) Setting in Lemma \ref{lem:QnN:antichain}.}
\label{fig:QnN:root}
\end{figure}

\begin{lemma}\label{lem:QnN:antichain}
Let $\cF$ be a critical $\bY$-blocker, where $\bY \neq \varnothing$. 
\vspace*{-1em}
\begin{enumerate}
\item[(i)] Let $\cF_1\subseteq \{U\in \cF: ~ U\cap \bY=\varnothing\}$ such that $\cF=\cF_1 \olt (\cF\setminus \cF_1)$, i.e., $\cF$ is a series composition of $\cF_1$ below $\cF\setminus \cF_1$, then $|\cF_1|\le1$.
\item[(ii)] Let $\cF_2\subseteq\{U\in \cF: ~ U\cap \bY=\bY\}$ such that $\cF =(\cF\setminus \cF_2)\olt \cF_2$, i.e., $\cF$ is a series composition of $\cF\setminus \cF_2$ below $\cF_2$, then $|\cF_2|\le1$.
\end{enumerate}
%
\end{lemma}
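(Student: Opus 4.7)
My plan is to prove part (i) by assuming $|\cF_1|\ge 2$, constructing a $\bY$-avoiding homomorphism $\widehat{\psi}\colon \cF\to \QQ(\bY)$, and then contradicting the characterization of $\bY$-blockers in Theorem \ref{thm:blocker}. Part (ii) will follow by a symmetric argument, dualizing bottom-to-top.

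First, observe that by Lemma \ref{lem:QnN:chain} applied to the vertices of $\cF_1$ (which all have $\bY$-part $\varnothing$), $\cF_1$ is an antichain in $\cF$. Combined with the series decomposition hypothesis $\cF=\cF_1\olt(\cF\setminus\cF_1)$, this implies that for any fixed $F_0\in\cF_1$, the vertices of $\cF$ that are strictly comparable to $F_0$ are exactly the vertices of $\cF\setminus\cF_1$, and all of them lie strictly above $F_0$. This observation will be what lets the extension be well-defined.

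Now assume $|\cF_1|\ge 2$ and pick two distinct vertices $F_0,F_1\in \cF_1$. Since $\cF$ is critical, $\cF\setminus\{F_0\}$ is not a $\bY$-blocker, and Theorem \ref{thm:blocker} supplies a $\bY$-avoiding homomorphism $\psi\colon \cF\setminus\{F_0\}\to \QQ(\bY)$. Define $\widehat{\psi}\colon \cF\to \QQ(\bY)$ by $\widehat{\psi}(F)=\psi(F)$ for $F\neq F_0$ and $\widehat{\psi}(F_0)=\psi(F_1)$. To see $\widehat{\psi}$ is a homomorphism, the only non-trivial comparabilities involve $F_0$: there is nothing strictly below $F_0$ in $\cF$ by the observation above, and for any $F\supsetneq F_0$ in $\cF$, necessarily $F\in\cF\setminus\cF_1$, and the series decomposition gives $F_1\subseteq F$, whence $\widehat{\psi}(F_0)=\psi(F_1)\subseteq\psi(F)=\widehat{\psi}(F)$ by the homomorphism property of $\psi$. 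Finally, $\widehat{\psi}$ is $\bY$-avoiding because $\psi$ is $\bY$-avoiding on $\cF\setminus\{F_0\}$, and at $F_0$ we have $\widehat{\psi}(F_0)=\psi(F_1)\neq F_1\cap\bY=\varnothing=F_0\cap\bY$. This contradicts Theorem \ref{thm:blocker}, proving (i).

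For part (ii), I will perform the dual construction: pick $F_0,F_1\in\cF_2$ distinct (assuming $|\cF_2|\ge 2$ for contradiction), obtain via criticality a $\bY$-avoiding homomorphism $\psi$ on $\cF\setminus\{F_0\}$, and define $\widehat{\psi}(F_0)=\psi(F_1)$. Here the dual form of Lemma \ref{lem:QnN:chain} guarantees $\cF_2$ is an antichain, and the series decomposition $\cF=(\cF\setminus\cF_2)\olt\cF_2$ ensures that every vertex strictly comparable to $F_0$ lies strictly below $F_0$ and is contained in $\cF\setminus\cF_2$, hence below $F_1$ as well; the $\bY$-avoiding property at $F_0$ uses $F_1\cap\bY=\bY$ and $\psi(F_1)\neq\bY$. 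I do not expect a serious obstacle — the argument is essentially the same extension-by-one trick used in Lemma \ref{lem:QnN:chain}, and the main conceptual point is simply recognizing that the series-composition hypothesis together with the antichain conclusion of Lemma \ref{lem:QnN:chain} makes the extension of $\psi$ at $F_0$ forced to respect all comparabilities automatically.
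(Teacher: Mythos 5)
Your proof is correct and takes essentially the same route as the paper: remove one of the two purportedly distinct vertices of $\cF_1$ (resp.\ $\cF_2$), get a $\bY$-avoiding homomorphism on the remainder via criticality and Theorem~\ref{thm:blocker}, extend by sending the removed vertex to the image of the other vertex, and check the homomorphism and $\bY$-avoiding conditions using the antichain structure from Lemma~\ref{lem:QnN:chain} together with the series-composition hypothesis. The case analysis you sketch (nothing strictly below $F_0$, and $F_1\subseteq F$ for any $F\supsetneq F_0$) is exactly the paper's.
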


\begin{proof}
For the first part, assume towards a contradiction that there are two distinct vertices $U_1,U_2\in\cF_1$. 
 Since $\cF$ is a critical $\bY$-blocker, there is a $\bY$-avoiding homomorphism $\psi\colon \cF\setminus\{U_2\}\to\QQ(\bY)$.
Let $\widehat{\psi}\colon \cF\to \QQ(\bY)$ such that
$$\widehat{\psi}(F)=\begin{cases}\psi(F),\quad &\text{ if }F\neq U_2\\ \psi(U_1),& \text{ if }F= U_2.\end{cases}$$
We shall prove that $\widehat{\psi}$ is a $\bY$-avoiding homomorphism of $\cF$. 
By Lemma \ref{lem:QnN:chain}, $\cF_1$ is an antichain. 
In order to show that $\widehat{\psi}$ is a homomorphism, we consider two arbitrary $F_1, F_2\in \cF$ with $F_1\subseteq F_2$ 
and verify that $\widehat{\psi}(F_1)$ is a subset of $\widehat{\psi}(F_2)$.
\vspace*{-1em}
\begin{itemize}
\item If $F_2=U_2$, then in particular $F_2\in\cF_1$. This implies that $F_1\in\cF_1$, because $F_1\subseteq F_2$ and $\cF$ is a series composition of $\cF_1$ below $\cF\setminus \cF_1$.
Since $\cF_1$ is an antichain, we have that $F_1=U_2=F_2$, thus trivially $\widehat{\psi}(F_1)=\widehat{\psi}(U_2)= \widehat{\psi}(F_2)$.
\item If $F_1=U_2$ and $F_2\neq U_2$, we know that $F_2$ is in $\cF\setminus\cF_1$, because $\cF_1$ is an antichain. 
Thus, $U_1\subseteq F_2$. Because $\psi$ is a homomorphism and by definition of $\widehat{\psi}$, we see that $\widehat{\psi}(F_1)=\widehat{\psi}(U_2)=\psi(U_1)\subseteq \psi(F_2)=\widehat{\psi}(F_2)$.
\item If $F_1\neq U_2$ and $F_2\neq U_2$, then $\widehat{\psi}(F_1)=\psi(F_1)\subseteq \psi(F_2)=\widehat{\psi}(F_2)$.
\end{itemize}
\vspace*{-1em}
Therefore, $\widehat{\psi}$ is a homomorphism of $\cF$.
For every $F\in\cF\setminus\{U_2\}$,\:\:$\widehat{\psi}(F)=\psi(F)\neq F\cap\bY$. 
Furthermore, $\widehat{\psi}(U_2)=\psi(U_1)\neq U_1\cap \bY=\varnothing=U_2\cap\bY$, thus $\widehat{\psi}$ is $\bY$-avoiding, a contradiction.
\bigskip

For part (ii), if we assume that there are distinct $U_1,U_2\in\cF_2$, then a symmetric argument, considering the same function $\widehat{\psi}\colon \cF\to \QQ(\bY)$, 
$$\widehat{\psi}(F)=\begin{cases}\psi(F),\quad &\text{ if }F\neq U_2\\ \psi(U_1),& \text{ if }F= U_2,\end{cases}$$
yields a contradiction.
\end{proof}

\begin{lemma}\label{lem:QnN:chain_blocker}
Let $\bX$ and $\bY$ be two disjoint sets with $|\bY|=1$. Let $\cF$ be a critical $\bY$-blocker in $\QQ(\bX\cup\bY)$.
Then $\cF$ is a chain consisting of two vertices $X_1$ and $X_2\cup\bY$, where $X_1\subseteq X_2\subseteq\bX$.
\end{lemma}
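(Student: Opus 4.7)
The plan is to exploit the fact that $|\bY|=1$ to reduce the range of any homomorphism $\psi\colon\cF\to\QQ(\bY)$ to just two values, $\varnothing$ and $\bY$, which makes the $\bY$-avoiding condition extremely rigid. Let $\bY=\{y\}$, and partition $\cF$ into $\cF_0=\{U\in\cF:U\cap\bY=\varnothing\}$ and $\cF_1=\{U\in\cF:U\cap\bY=\bY\}$. Every vertex of $\cF$ lies in exactly one of these two sets. From Lemma~\ref{lem:QnN:blocker-basic}~(ii) applied with $Y=\varnothing$ and with $Y=\bY$, both $\cF_0$ and $\cF_1$ are non-empty, so $|\cF|\ge 2$. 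Moreover, for $U\in\cF_0$ and $V\in\cF_1$, comparability can only occur as $U\subsetneq V$, since $V\supseteq\bY\not\subseteq\bX\supseteq U$.

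The main step is to show that $|\cF_0|=|\cF_1|=1$. I will argue $|\cF_0|\le 1$ (the argument for $\cF_1$ is symmetric). Suppose for contradiction that $\cF_0$ contains two distinct vertices $U_1,U_2$; by Lemma~\ref{lem:QnN:chain}, they are incomparable. By criticality, $\cF\setminus\{U_1\}$ is not a $\bY$-blocker, so Theorem~\ref{thm:blocker} yields a $\bY$-avoiding homomorphism $\psi\colon\cF\setminus\{U_1\}\to\QQ(\bY)$. Since $\QQ(\bY)=\{\varnothing,\bY\}$, the $\bY$-avoiding condition forces $\psi(W)=\bY$ for every $W\in\cF_0\setminus\{U_1\}$ and $\psi(W)=\varnothing$ for every $W\in\cF_1$. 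In particular, $\psi(U_2)=\bY$. But then, for any $V\in\cF_1$ with $U_2\subseteq V$, the homomorphism condition would give $\bY=\psi(U_2)\subseteq\psi(V)=\varnothing$, impossible. Hence $U_2$ is incomparable with every vertex of $\cF_1$, and by Lemma~\ref{lem:QnN:chain} it is also incomparable with every other vertex of $\cF_0$. Thus $\{U_2\}$ forms a parallel component of $\cF$, contradicting Lemma~\ref{lem:QnN:minimal-connected}.

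The symmetric argument, applied to $\cF\setminus\{V_1\}$ for distinct $V_1,V_2\in\cF_1$, gives $|\cF_1|\le 1$. Combined with non-emptiness, $|\cF_0|=|\cF_1|=1$, so $\cF=\{X_1,X_2\cup\bY\}$ for some $X_1,X_2\subseteq\bX$. Connectedness (Lemma~\ref{lem:QnN:minimal-connected}) forces $X_1$ and $X_2\cup\bY$ to be comparable; as noted above, this can only mean $X_1\subseteq X_2\cup\bY$, and since $X_1\subseteq\bX$ is disjoint from $\bY$, we get $X_1\subseteq X_2$, completing the proof.

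The only subtle point I anticipate is ruling out comparabilities inside $\cF_0$ (and inside $\cF_1$), but this is already handled by Lemma~\ref{lem:QnN:chain}; once that is in hand, the reduction of $\QQ(\bY)$ to two values does all the work and no separate case analysis is needed. The entire argument uses only the three general lemmas about critical blockers from this subsection together with the basic Lemma~\ref{lem:QnN:blocker-basic}, so no new machinery is required.
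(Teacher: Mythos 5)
Your proof is correct, but it takes a genuinely different route from the paper's. The paper first uses connectedness (Lemma~\ref{lem:QnN:minimal-connected}) and the fiber partition to locate a comparable pair $X_1\in\cF_0$ and $X_2\cup\bY\in\cF_1$ directly, then applies Theorem~\ref{thm:blocker} to the two-element subposet $\cF'=\{X_1,X_2\cup\bY\}$: since $\QQ(\bY)=\{\varnothing,\bY\}$, any $\bY$-avoiding homomorphism on $\cF'$ would have to send $X_1\mapsto\bY$ and $X_2\cup\bY\mapsto\varnothing$, violating monotonicity, so $\cF'$ is already a $\bY$-blocker and criticality forces $\cF=\cF'$. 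You instead prove by contradiction that each fiber has at most one element: deleting one of two vertices in a fiber, Theorem~\ref{thm:blocker} gives a $\bY$-avoiding homomorphism on $\cF$ minus that vertex, the two-valued codomain pins down $\psi$ completely, and this shows the surviving sibling would be incomparable to everything (using Lemma~\ref{lem:QnN:chain} within the fiber), making it a parallel singleton and contradicting connectedness. Both arguments are sound; the paper's is a bit more economical — a direct one-line verification with no contradiction and no appeal to Lemma~\ref{lem:QnN:chain} — while yours more explicitly exhibits why an extra vertex in a fiber would be severed from the rest of the blocker by the $\bY$-avoiding constraint, an idea that resonates with how Lemma~\ref{lem:QnN:antichain} is proved.
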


\begin{proof}Since $|\bY|=1$, every $Z\in\cF$ has as its $\bY$-part either $Z\cap\bY=\varnothing$ or $Z\cap\bY=\bY$.
Consider subposets  $\cF_1 = \{Z\in \cF: ~Z\cap\bY=\varnothing\}$ and $\cF_2 = \{Z\in \cF: ~Z\cap\bY=\bY\}$ partitioning $\cF$.
Lemma \ref{lem:QnN:blocker-basic} provides that neither $\cF_1$ nor $\cF_2$ are empty. 
By Lemma \ref{lem:QnN:minimal-connected}, $\cF$ is connected.
In particular, there are two vertices from $\cF_1$ and from $\cF_2$ which are comparable.
Let these vertices be $X_1\in\cF_1$ and $X_2\cup\bY \in\cF_2$, where $X_1,X_2\subseteq \bX$, and note that $X_1\subseteq X_2\cup\bY$.

To show that $\cF=\{X_1,X_2\cup\bY\}$, consider the subposet $$\cF'=\{X_1,X_2\cup\bY\}\subseteq\cF.$$
We shall verify that $\cF'$ is a $\bY$-blocker in $\QQ(\bX\cup\bY)$.
By Theorem \ref{thm:blocker}, it suffices to show that there exists no $\bY$-avoiding homomorphism from $\cF'$ to $\QQ(\bY)$.
Let $\psi\colon\cF'\to \QQ(\bY)$ be a homomorphism, so in particular, $\psi(X_1)\subseteq \psi(X_2\cup \bY)$.
If $\psi$ is $\bY$-avoiding, then $\psi(X_1)=\bY$ and $\psi(X_2\cup\bY)=\varnothing$, contradicting $\psi(X_1)\subseteq \psi(X_2\cup \bY)$.
This implies that there is no $\bY$-avoiding homomorphism, so $\cF'$ is a $\bY$-blocker. 
The $\bY$-blocker $\cF$ is critical, thus $\cF=\cF'=\{X_1,X_2\cup\bY\}$.
\end{proof}

\begin{lemma}\label{lem:QnN:reduction}
Let $\bY$ be a set of size at least $2$, and let $a\in\bY$. Let $\cF$ be a $\bY$-blocker. 
Then the subposets $\{F\in\cF: ~ a\in F\}$ and $\{F\in\cF: ~ a\notin F\}$ are $(\bY\setminus\{a\})$-blockers.
\end{lemma}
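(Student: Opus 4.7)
\textbf{Proof plan for Lemma \ref{lem:QnN:reduction}.} Write $\bZ$ for the ground set of the ambient Boolean lattice and $\bX = \bZ \setminus \bY$, so that $\cF$ being a $\bY$-blocker means $\cF$ contains a vertex of every $\bX$-good copy of $\QQ(\bX)$ in $\QQ(\bZ)$. Let $\bY' = \bY \setminus \{a\}$ and $\bX' = \bZ \setminus \bY' = \bX \cup \{a\}$. The plan is to show directly from the definition that both $\cF_0 = \{F\in\cF: a\notin F\}$ and $\cF_1 = \{F\in\cF: a\in F\}$ meet every $\bX'$-good copy of $\QQ(\bX')$, by splitting each such copy into two halves that turn out to be $\bX$-good copies of $\QQ(\bX)$.

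Fix an arbitrary $\bX'$-good embedding $\phi'\colon \QQ(\bX') \to \QQ(\bZ)$, and let $\QQ'$ be its image. Partition $\QQ(\bX')$ into the ``lower half'' $\QQ(\bX') \cap \{X' : a\notin X'\}$, which we identify with $\QQ(\bX)$ via $X \mapsto X$, and the ``upper half'' $\QQ(\bX') \cap \{X' : a\in X'\}$, identified with $\QQ(\bX)$ via $X \mapsto X \cup \{a\}$. Restricting $\phi'$ to the lower half yields a map $\phi_0\colon \QQ(\bX) \to \QQ(\bZ)$, $\phi_0(X) = \phi'(X)$. Since $\phi'$ is $\bX'$-good, $\phi_0(X) \cap \bX' = X$ for every $X\in\QQ(\bX)$; in particular $a \notin \phi_0(X)$ and $\phi_0(X) \cap \bX = X$, so $\phi_0$ is an $\bX$-good embedding of $\QQ(\bX)$ into $\QQ(\bZ)$. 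Likewise, restricting to the upper half gives $\phi_1\colon \QQ(\bX) \to \QQ(\bZ)$, $\phi_1(X) = \phi'(X \cup \{a\})$, and the $\bX'$-goodness of $\phi'$ forces $\phi_1(X) \cap \bX = X$ and $a \in \phi_1(X)$, so $\phi_1$ is also an $\bX$-good embedding of $\QQ(\bX)$.

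Now apply the hypothesis that $\cF$ is a $\bY$-blocker to each of $\phi_0$ and $\phi_1$. There exist $X_0, X_1 \in \QQ(\bX)$ with $F_0 := \phi_0(X_0) \in \cF$ and $F_1 := \phi_1(X_1) \in \cF$. By construction $a \notin F_0$ and $a \in F_1$, so $F_0 \in \cF_0$ and $F_1 \in \cF_1$. Moreover both vertices lie in the image $\QQ'$ of $\phi'$. Since the $\bX'$-good copy $\QQ'$ was arbitrary, each of $\cF_0$ and $\cF_1$ meets every $\bX'$-good copy of $\QQ(\bX')$, which is exactly the statement that they are $\bY'$-blockers.

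The only step requiring real care is the verification that the two halves of $\phi'$ are genuinely $\bX$-good: one must check both that the image intersects $\bX$ in exactly the prescribed set and that the element $a$ behaves correctly (absent in $\phi_0(X)$, present in $\phi_1(X)$). Both follow cleanly from the single identity $\phi'(X') \cap \bX' = X'$, so no further obstacle arises; the rest is bookkeeping.
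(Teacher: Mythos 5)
Your proof is correct, but it takes a genuinely different route from the paper's. The paper proves this lemma via its dual characterization of blockers (Theorem \ref{thm:blocker}): it assumes for contradiction that $\cF' = \{F\in\cF:\ a\in F\}$ admits a $(\bY\setminus\{a\})$-avoiding homomorphism $\psi$, then extends $\psi$ to a $\bY$-avoiding homomorphism $\widehat\psi$ of all of $\cF$ (sending vertices outside $\cF'$ to $\{a\}$ and vertices of $\cF'$ to $\psi(F)\cup\{a\}$), contradicting $\cF$ being a $\bY$-blocker. You instead argue straight from the primal definition: given any $\bX'$-good embedding $\phi'$ of $\QQ(\bX')$ with $\bX' = \bX\cup\{a\}$, restrict to the sublattice $\{X'\colon a\in X'\}$ (resp.\ $\{X'\colon a\notin X'\}$) and observe that after relabelling this is an $\bX$-good embedding of $\QQ(\bX)$, to which the $\bY$-blocker hypothesis applies. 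Both arguments are clean and of comparable length. What your version buys is independence from Theorem \ref{thm:blocker} --- it needs nothing beyond the definitions and the observation that the two $a$-slices of an $\bX'$-good copy are $\bX$-good copies. What the paper's version buys is uniformity: the homomorphism-extension technique is the common engine behind Lemmas \ref{lem:QnN:minimal-connected}, \ref{lem:QnN:chain}, \ref{lem:QnN:antichain}, \ref{lem:QnN:chain_blocker}, and \ref{lem:QnN:reduction}, so phrasing this lemma the same way keeps the section's toolkit coherent. Your slicing argument also proves both conclusions (for $\cF_0$ and $\cF_1$) in one pass, whereas the paper handles them by two symmetric homomorphism extensions.
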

\begin{proof}
Let $\cF'=\{F\in\cF: ~ a\in F\}$. 
Assume that $\cF'$ is not a $(\bY\setminus\{a\})$-blocker, i.e.,  by Theorem \ref{thm:blocker}, there is a $(\bY\setminus\{a\})$-avoiding homomorphism $\psi\colon \cF'\to \QQ(\bY\setminus\{a\})$.
We shall find a $\bY$-avoiding homomorphism of $\cF$ to reach a contradiction.
Let $\widehat{\psi}\colon \cF\to \QQ(\bY)$ such that
$$\widehat{\psi}(F)=\begin{cases}
\{a\},&\text{ if }F\in\cF\setminus\cF'\text{, i.e., }a\notin F\\ 
\psi(F)\cup\{a\},&\text{ if }F\in\cF'\text{, i.e., }a\in F.\end{cases}$$
Observe that $\widehat{\psi}$ is a homomorphism, because $\{a\}\subseteq \psi(F)\cup\{a\}$ for all $F\in\cF'$ and $\psi$ is a homomorphism.
We claim that $\widehat{\psi}$ is $\bY$-avoiding.
\vspace*{-1em}
\begin{itemize}
\item For every $F\in\cF\setminus\cF'$, note that $a\in\widehat{\psi}(F)$ but $a\notin F\cap\bY$, thus $\widehat{\psi}(F)\neq F\cap\bY$.
\item Recall that $\psi$ is $(\bY\setminus\{a\})$-avoiding, thus for every $F\in\cF'$, \ $\psi(F)\neq F\cap (\bY\setminus\{a\})$. Note that $a\notin \psi(F)$ and $a\notin F\cap (\bY\setminus\{a\})$, so $$\widehat{\psi}(F)=\psi(F)\cup\{a\}\neq \big( F\cap (\bY\setminus\{a\})\big)\cup\{a\}=F\cap\bY.$$
\end{itemize}
Therefore, $\widehat{\psi}$ is a $\bY$-avoiding homomorphism of $\cF$, which is a contradiction.
\\

The second part of the lemma follows from a symmetric argument for the subposet $\cF''=\{F\in\cF: ~ a\notin F\}$, using the function $\widehat{\psi}\colon \cF\to \QQ(\bY)$,
$$\widehat{\psi}(F)=\begin{cases}\bY\setminus\{a\},&\text{ if }F\in\cF\setminus\cF''\\ \psi(F),&\text{ if }F\in\cF''.\end{cases}$$
\vspace*{-1em}
\end{proof}


\subsection{Properties of $\pN$-free $\bY$-blockers}


\begin{theorem}\label{thm:min_vertex}
Let $\bX$ and $\bY$ be disjoint sets with $\bY\neq\varnothing$.
Let $\cF$ be an $\pN$-free, critical $\bY$-blocker in $\QQ(\bX\cup\bY)$. 
Then $\cF$ has at least one of a unique minimal vertex or a unique maximal vertex. 
\end{theorem}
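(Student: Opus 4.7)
The plan is to exploit the series-parallel structure of $\cF$ (guaranteed by Theorem \ref{thm:Nfree}) together with its connectedness (guaranteed by Lemma \ref{lem:QnN:minimal-connected}), and then to resolve the one remaining hard case using criticality. If $|\cF|=1$, the statement is immediate. Otherwise, $\cF$ is a connected series-parallel poset, so it admits a canonical series decomposition $\cF = \cH_1 \olt \cH_2 \olt \cdots \olt \cH_k$ with $k \ge 2$, in which each $\cH_i$ is either a single vertex or a parallel composition of at least two series-parallel subposets. A direct check shows that the minimal vertices of $\cF$ coincide with those of $\cH_1$ (and the maximal vertices of $\cF$ with those of $\cH_k$); hence $\cF$ has a unique minimum if and only if $\cH_1$ is a single vertex, and dually for the maximum. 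It therefore suffices to show that at least one of $\cH_1$ or $\cH_k$ is a single vertex.

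I will argue by contradiction, assuming both $\cH_1$ and $\cH_k$ are parallel compositions of at least two components. Applying Lemma \ref{lem:QnN:antichain}(i) to the decomposition $\cF = \cH_1 \olt (\cH_2 \olt \cdots \olt \cH_k)$, I see that $\cH_1$ cannot consist entirely of vertices $U$ with $U \cap \bY = \varnothing$, since otherwise $|\cH_1| \le 1$, contradicting the parallel structure of $\cH_1$. Hence there exists $W \in \cH_1$ with $W \cap \bY \ne \varnothing$. Dually, Lemma \ref{lem:QnN:antichain}(ii) yields a vertex $W' \in \cH_k$ with $W' \cap \bY \ne \bY$.

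The main obstacle will be converting this structural information into an outright contradiction. My plan is to build a $\bY$-avoiding homomorphism $\widehat\psi \colon \cF \to \QQ(\bY)$, which by Theorem \ref{thm:blocker} would contradict $\cF$ being a $\bY$-blocker. Starting from the $\bY$-avoiding homomorphism $\psi$ of $\cF \setminus \{W\}$ guaranteed by criticality, I would set $\widehat\psi(W) = \psi(W^*)$, where $W^*$ is a vertex carefully chosen in a parallel component of $\cH_1$ different from the component containing $W$. Such a $W^*$ is incomparable to $W$, yet lies below every vertex of $\cH_2,\dots,\cH_k$, so monotonicity of $\widehat\psi$ for vertices above $W$ is inherited from $\psi$, and $W^*$ can be chosen so that $\psi(W^*) \ne W\cap\bY$, securing $\bY$-avoidance at $W$. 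The delicate point will be monotonicity for vertices strictly below $W$ in its own parallel component of $\cH_1$, where $W^*$ is incomparable to these vertices and so no direct bound is available; I expect to resolve this by an iterative extension argument in the spirit of the proofs of Lemmas \ref{lem:QnN:chain} and \ref{lem:QnN:antichain}, building $\widehat\psi$ on $W$ using a union of $\psi$-values on suitable predecessors. Should that direct extension prove insufficient, I would fall back on induction on $|\bY|$ via Lemma \ref{lem:QnN:reduction}, with the base case $|\bY|=1$ provided by Lemma \ref{lem:QnN:chain_blocker}.
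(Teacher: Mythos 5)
Your high-level reduction is sound and in fact parallels the paper's: work with the series decomposition forced by $\pN$-freeness (Theorem \ref{thm:Nfree}) and connectedness (Lemma \ref{lem:QnN:minimal-connected}), and argue that the bottom or top block must be a singleton. Your use of Lemma \ref{lem:QnN:antichain} to extract $W\in\cH_1$ with $W\cap\bY\neq\varnothing$ and $W'\in\cH_k$ with $W'\cap\bY\neq\bY$ is also correct. But from that point the proof is not completed, and the route you sketch is the wrong one.

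The missing ingredient is Lemma \ref{lem:QnN:blocker-basic}~(ii): for \emph{every} $Y\subseteq\bY$ the blocker contains a vertex with $\bY$-part exactly $Y$. The paper uses only the top-level split $\cF=\cF_1\olt\cF_2$, sets $Y_1=\bigcup_{F\in\cF_1}F\cap\bY$ and $Y_2=\bigcap_{F\in\cF_2}F\cap\bY$, and then shows directly that $Y_1,Y_2\in\{\varnothing,\bY\}$: if $a\in Y_1$ and $b\in\bY\setminus Y_1$, take $U\in\cF$ with $U\cap\bY=\{b\}$; then $U\notin\cF_1$ (because $b\notin Y_1$) and $U\notin\cF_2$ (because $a\in Y_1\subseteq Y_2$ but $a\notin U$), a contradiction. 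A second application of the same lemma forces $Y_1=Y_2$, after which Lemma \ref{lem:QnN:antichain} immediately pins the singleton. In your notation, $W\cap\bY\neq\varnothing$ and $W'\cap\bY\neq\bY$ just translate to $Y_1\neq\varnothing$ and $Y_2\neq\bY$, and the clash you are looking for comes for free from this ``all $\bY$-parts are present'' property of blockers — no homomorphism construction is needed.

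The homomorphism construction you propose instead has genuine problems beyond the one you flag. You observe that $\widehat\psi(W)=\psi(W^*)$ may fail monotonicity against predecessors of $W$ inside its own parallel component of $\cH_1$, but the same obstruction arises for \emph{successors} of $W$ inside that component: such a $V$ satisfies $V\inc W^*$, so $\psi(W^*)\subseteq\psi(V)$ is not supplied by $\psi$ either. One could try choosing $W$ maximal in $\cH_1$ (which preserves $W\cap\bY\neq\varnothing$ by upward closure), but you do not make that choice, and you still have the predecessor problem, for which the ``iterative extension'' you gesture at is not written down and is not at all routine — unlike Lemmas \ref{lem:QnN:chain} and \ref{lem:QnN:antichain}, here there is no comparable vertex whose $\psi$-value dominates or is dominated by $W$ from the appropriate side. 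The fallback via Lemma \ref{lem:QnN:reduction} and induction on $|\bY|$ would also require showing that passing to $\{F\in\cF:a\in F\}$ or $\{F\in\cF:a\notin F\}$ yields a \emph{critical} blocker preserving the decomposition shape, which is not addressed. In short, the structural setup is right, but the decisive step — invoking Lemma \ref{lem:QnN:blocker-basic}~(ii) — is missing, and the replacement argument you sketch would require substantial additional work and patching.
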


\begin{proof}
Since $\bY\neq\varnothing$, Lemma \ref{lem:QnN:blocker-basic} implies that $|\cF|\geq 2^1$. 
By Theorem~\ref{thm:Nfree}, $\cF$ is series-parallel, so it can be partitioned into two disjoint, non-empty posets $\cF_1$ and $\cF_2$ such that $\cF$ is either the parallel composition of $\cF_1$ and $\cF_2$ or the series composition of $\cF_1$ below $\cF_2$.
The former could not happen, as shown in Lemma \ref{lem:QnN:minimal-connected}.
Therefore, $\cF$ can be partitioned into two disjoint, non-empty posets $\cF_1$ and $\cF_2$ such that for every $F_1\in\cF_1$ and $F_2\in\cF_2$,\:\:$F_1\subseteq F_2$.

Let $Y_1=(\bigcup_{F\in\cF_1} F)\cap\bY$ be the $\bY$-part of the union of all vertices in $\cF_1$, and let $Y_2=(\bigcap_{F\in\cF_2} F)\cap\bY$ be the $\bY$-part of the intersection of all vertices in $\cF_2$. Clearly, $\varnothing\subseteq Y_1\subseteq Y_2\subseteq \bY$. 

First, assume that $Y_1$ is neither $\varnothing$ nor $\bY$, thus there are $a\in Y_1$ and $b\in\bY\setminus Y_1$.
Lemma~\ref{lem:QnN:blocker-basic} provides that the $\bY$-blocker $\cF$ contains a vertex $U$ with $U\cap\bY=\{b\}$.
Note that $U\notin\cF_1$, since $b\in U$ while $b\notin Y_1$. Similarly, $U\notin\cF_2$, because $a\notin U$ while $a\in Y_1\subseteq Y_2$. 
This is a contradiction, hence $Y_1\in\{\varnothing,\bY\}$. Symmetrically, $Y_2\in\{\varnothing,\bY\}$.

Take an arbitrary subset $Y\subseteq\bY$ such that $Y\notin\{\varnothing,\bY\}$. By Lemma \ref{lem:QnN:blocker-basic}, there is a vertex $Z\in\cF$ with $Z\cap\bY=Y$.
For this vertex, either $Z\in\cF_1$ or $Z\in\cF_2$. If $Z\in\cF_1$, then $Y_1\neq\varnothing$, thus $Y_1=\bY$. Recalling that $Y_1\subseteq Y_2\subseteq\bY$, we obtain that $Y_1=Y_2=\bY$. 
If $Z\in\cF_2$, then $Y_2\neq\bY$, so $Y_1=Y_2=\varnothing$.
Therefore, either $Y_1=Y_2=\varnothing$ or $Y_1=Y_2=\bY$. 
\vspace*{-1em}
\begin{itemize}
\item First, suppose that $Y_1=Y_2=\varnothing$.
Because $Y_1=(\bigcup_{F\in\cF_1} F)\cap\bY=\varnothing$, we see that $F\cap\bY=\varnothing$ for every $F\in\cF_1$. 
By Lemma~\ref{lem:QnN:antichain}, there is at most one vertex in $\cF_1$. 
Recall that $\cF_1$ is non-empty, so we find a unique vertex $Z\in\cF_1$. In particular, $Z$ is the unique minimal vertex of $\cF$.

\item If $Y_1=Y_2=\bY$, we can argue symmetrically and obtain a unique maximal vertex of $\cF$.
\end{itemize}
\vspace*{-2em}
\end{proof}

\subsection{Construction of the family $\{(\cF_S, Z_S, A_S, B_S): ~S\in \cS\}$}

In this subsection, we define posets and vertices indexed by \textit{ordered sets}. 
For this, we reiterate definitions on ordered sets and prefixes, which were used in Chapter \ref{ch:QnV} in the context of \textit{factorial trees}:
An \textit{ordered set}\index{ordered set} $S$ is a sequence $S=(y_1,\dots,y_m)$ of distinct elements $y_i$, $i\in[m]$. Given a set $\bY$, we say that $S$ is an \textit{ordered subset} of $\bY$ if $y_i\in\bY$ for all $i\in[m]$.
We denote the \textit{empty ordered set} by $\varnothing_o=()$.
The underlying unordered set of $S$ is denoted by $\underline{S}$, and $|S|=|\underline{S}|$ is the \textit{size} of $S$.
Recall that an ordered set $S'$ is a \textit{prefix}\index{prefix} of $S$ if $|S'|\le |S|$ and each of the first $|S'|$ members of $S$ coincides with the respective member of $S'$.
Note that $\varnothing_o$ is a prefix of every ordered set. 
A prefix $S'$ of~$S$ is \textit{strict}\index{strict prefix} if $S'\neq S$. 

For a set $\bY$ and an ordered subset $S$ of $\bY$, we denote the set of all elements of $\bY$ that are not in $S$ by $\bY-S=\bY\setminus\underline{S}$.
For an ordered set $S=(y_1, \ldots, y_m)$ and an element $y_{m+1}\notin \underline{S}$, we write $(S, y_{m+1})$ for the ordered set $(y_1, \ldots, y_m, y_{m+1})$.
\bigskip

In the following, we analyze the structure of an $\pN$-free critical $\bY$-blocker by selecting smaller and smaller subposets which are critical $\bY'$-blockers for some $\bY'\subseteq\bY$.
Recall that by Theorem \ref{thm:min_vertex}, any critical $\bY'$-blocker has either a unique minimal vertex or a unique maximal vertex. We call such a vertex a \textit{root}\index{root vertex} of the blocker. 
Note that the blocker could have both a unique minimal vertex and a unique maximal vertex. In this case, we select one of them to be the assigned root of the blocker and ignore the second root.

\begin{construction}\label{constr:QnN:main} ~
Let $\bY$ be a $k$-element subset of $\bZ$. Let $\cF$ be an $\pN$-free, critical $\bY$-blocker in $\QQ(\bZ)$. Let $\cS$ be the set of all ordered subsets of $\bY$ of size at most $k-1$.
In the following, we recursively construct a family $\{(\cF_S, Z_S, A_S,B_S): ~S\in \cS\}$, where $\cF_S$ is a critical $(\bY-S)$-blocker, $\cF_S\subseteq \cF$, and $Z_S$ is the root of $\cF_S$.
In addition, $A_S\cup B_S=\underline{S}$, where each element of $A_S$ is included in each vertex of $\cF_S$ and each element of $B_S$ is excluded from each vertex of $\cF_S$. The sets $A_S$ and $B_S$ are used as tools to encode crucial information on the blocker $\cF_S$ and its root $Z_S$, as well as $\cF_{S'}$ and $Z_{S'}$ for prefixes $S'$ of $S$.

If the root $Z_S$ is the unique minimal vertex in $\cF_S$, we say that $S$ is \textit{min-type}\index{min-type ordered set}, otherwise we say that $S$ is \textit{max-type}\index{max-type ordered set}.

\noindent {\bf Initial step: } Let $S= {\varnothing_o}$. In this case, let $\cF_S=\cF$. Let $Z_{S}$ be an arbitrarily chosen root of $\cF$, i.e.,  a unique minimal or unique maximal vertex of $\cF$, which exists due to Theorem \ref{thm:min_vertex}. Let $A_{S}=B_S=\varnothing$.

\noindent {\bf General iterative step: } Consider an arbitrary non-empty ordered subset $S$ of $\bY$ with $|S|\le k-1$. Let $S'$ be the prefix of $S$ such that $(S',a)=S$ for some $a\in\bY$, i.e., $|S'|=|S|-1$.
Given $(\cF_{S'}, Z_{S'},A_{S'},B_{S'})$ such that $\cF_{S'}$ is a critical $(\bY-{S'})$-blocker, $Z_{S'}$ is a root of $\cF_{S'}$, and $A_{S'}, B_{S'}$ are disjoint sets partitioning $A_{S'}\cup B_{S'}=\underline{S'}$, we shall construct $\cF_{S}$, $Z_{S}$, $A_{S}$, and $B_{S}$.
By Lemma~\ref{lem:QnN:reduction} and since $(\bY-{S'})\setminus\{a\}=\bY-S$, the subposets $\{F\in \cF_{S'}: a \in F\}$ and $\{F\in\cF_{S'}: a\notin F\}$ are $(\bY-S)$-blockers.
\vspace*{-1em}
\begin{itemize}
\item If ${S'}$ is min-type, we define $\cF_{S}$ to be an arbitrary critical $(\bY-S)$-blocker that is a subposet of $\{F\in \cF_{S'}: a \in F\}$. Note that $a\in F$ for every $F\in\cF_{S}$. Let $A_{S} = A_{S'}\cup \{a\}$ and $B_S=B_{S'}$. 
\item If ${S'}$ is max-type, we define $\cF_{S}$ to be an arbitrary critical $(\bY-S)$-blocker that is a subposet of $\{F\in \cF_{S'}: a \notin F\}$. In this case, note that $a\notin F$ for every $F\in\cF_{S}$. Let $A_S=A_{S'}$ and $B_{S}=B_{S'}\cup \{a\}$.
\end{itemize}

It remains to select $Z_{S}$. Theorem \ref{thm:min_vertex} guarantees the existence of a root in $\cF_{S}$.
If $|{S}|\le k-2$, let $Z_{S}$ be an arbitrary root of $\cF_{S}$. If $|S|=k-1$, we need to be more careful in choosing $Z_{S}$.  
We selected $\cF_{S}$ as a critical $(\bY-S)$-blocker, for $|\bY-S|=1$. 
By Lemma~\ref{lem:QnN:chain_blocker}, $\cF_{S}$ has exactly two vertices, one of which is the unique minimal and one is the unique maximal vertex.
If the prefix ${S'}$ is min-type, let $Z_{S}$ be the unique minimal vertex of $\cF_{S}$, i.e.,  ${S}$ is min-type.
If ${S'}$ is max-type, let $Z_{S}$ be the unique maximal vertex of $\cF_{S}$, here ${S}$ is max-type.
\\

\begin{figure}[h]
\centering
\includegraphics[scale=0.62]{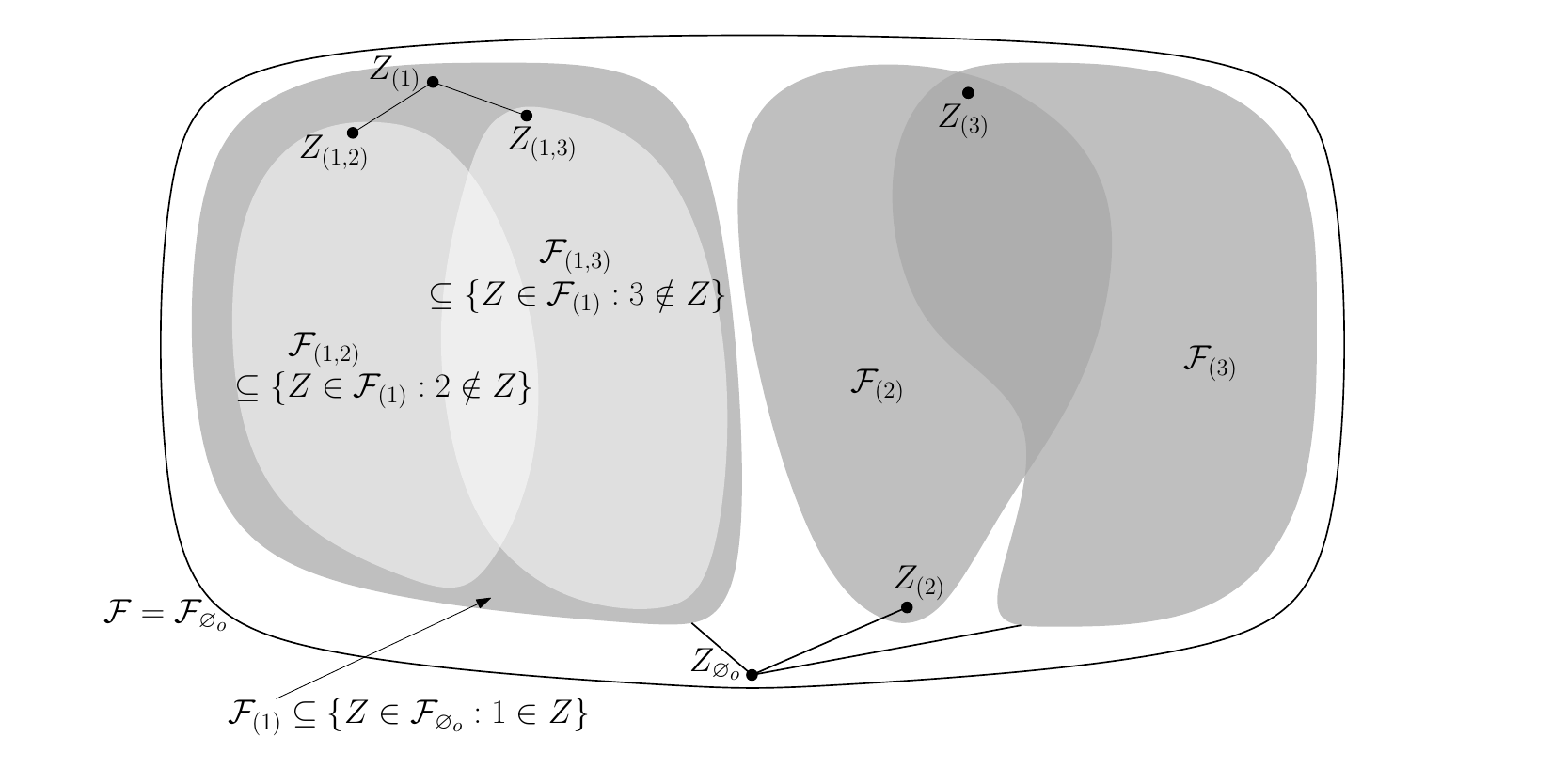}
\caption{An example of the construction of $\cF_{(1,3)}$ and $\cF_{(1,2)}$ for $\bY=\{1,2,3\}$.}

\label{fig:QnN:recursion}
\end{figure}

The construction terminates after all ordered subsets of $\bY$ of size at most $k-1$ have been considered. 
The family $\{(\cF_S, Z_S,A_S,B_S): ~S\in \cS\}$ gives a recursive structural decomposition of $\cF$ into ``up'' and ``down'' components, i.e.,   max-type and min-type blockers, as illustrated in Figure \ref{fig:QnN:recursion}. Note that blockers $\cF_S$ may heavily overlap. 
\end{construction}

\begin{remark}
An example for an $\pN$-free critical $\bY$-blocker is the \textit{$\bY$-shrub} introduced in Chapter \ref{ch:QnV}.
Indeed, it follows from Lemma \ref{lem:QnV:shrub} that a $\bY$-shrub is an $\pN$-free $\bY$-blocker.
Theorem \ref{thm:duality} implies that $\cF$ is critical.
Applying the above structural decomposition to $\cF$, we see that every $\cF_S$ is min-type, and that for same-sized ordered subsets $S_1$ and $S_2$ of $\bY$,
the blockers $\cF_{S_1}$ and $\cF_{S_2}$ does not overlap.
Moreover, it can be seen that every $\pLa$-free critical blocker is a $\bY$-shrub, which gives rise to an alternative proof of Theorem \ref{thm:duality}.
A direct generalization of Theorem \ref{thm:duality} to $\pN$-free subposets is not possible:
By allowing min-type \textit{and} max-type blockers $\cF_S$, we obtain non-isomorphic $\pN$-free critical $\bY$-blockers.
\end{remark}
\bigskip

Several properties follow immediately from the construction.

\begin{lemma}\label{lem:QnN:construction}
Let $S$ be an ordered subset of $\bY$ of size at most $k-1$, and let $S'$ be a prefix of $S$. Then:
\vspace*{-1em}
\begin{enumerate}
\item[(i)] $\cF_{S}\subseteq\cF_{S'}$, $A_{S'}=A_S\cap \underline{S'}$, and $B_{S'}=B_S\cap \underline{S'}$.
\item[(ii)] The size of the set $A_S$ is equal to the number of min-type strict prefixes $S'$ of $S$. The size of $B_S$ is equal to the number of max-type strict prefixes $S'$ of $S$.
\item[(iii)]  If $S$ is min-type, then $\bY\cap Z_{{S}}= A_{S}$.
If $S$ is max-type, then $\bY\setminus Z_{{S}} = B_{S}$.
\end{enumerate}
\end{lemma}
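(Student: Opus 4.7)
My plan is to prove all three parts simultaneously by induction on $|S|$, carrying along one auxiliary invariant: every vertex $F \in \cF_S$ satisfies $A_S \subseteq F$ and $B_S \cap F = \varnothing$. This invariant will propagate immediately through the construction, because at the step producing $S = (S', a)$ the new element $a$ is either explicitly required to lie in every vertex of $\cF_S$ (and placed into $A_S$) or explicitly required to lie outside every vertex of $\cF_S$ (and placed into $B_S$), depending on whether $S'$ is min-type or max-type. The base case $S = \varnothing_o$ is vacuous in all three claims.

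For parts (i) and (ii), the inductive step reduces to a one-step comparison of $S = (S', a)$ with $S'$. The inclusion $\cF_S \subseteq \cF_{S'}$ is immediate from the construction, and since $a \notin \underline{S'}$, the one-step identities $A_{S'} = A_S \cap \underline{S'}$ and $B_{S'} = B_S \cap \underline{S'}$ follow by direct inspection; iterating along the chain of prefixes then yields (i) in full. For (ii), the strict prefixes of $S$ are exactly the strict prefixes of $S'$ together with $S'$ itself, and the construction increases $|A_S|$ by one precisely when $S'$ is min-type and increases $|B_S|$ by one precisely when $S'$ is max-type, which matches the change in the respective counts.

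Part (iii) is where the auxiliary invariant and Lemma \ref{lem:QnN:blocker-basic}(ii) come together. Suppose $S$ is min-type, so $Z_S$ is the unique minimal vertex of $\cF_S$. The invariant gives $A_S \subseteq Z_S$ and $B_S \cap Z_S = \varnothing$, so $\underline{S} \cap Z_S = A_S$. Because $\cF_S$ is a $(\bY - S)$-blocker whose unique minimal vertex is $Z_S$, Lemma \ref{lem:QnN:blocker-basic}(ii) forces $Z_S \cap (\bY - S) = \varnothing$, and combining these two facts gives $\bY \cap Z_S = A_S$. The max-type case is symmetric: Lemma \ref{lem:QnN:blocker-basic}(ii) now forces $\bY - S \subseteq Z_S$, while the invariant still gives $\underline{S} \cap Z_S = A_S$, so $\bY \setminus Z_S = \underline{S} \setminus A_S = B_S$. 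I expect no real obstacle in this proof: the content is essentially careful bookkeeping of the recursive construction, with the only discipline needed being to maintain the auxiliary invariant at each step.
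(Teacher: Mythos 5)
Your proof is correct and follows essentially the same route as the paper's: the paper handles (i) and (ii) by the same one-step inspection of the construction, and for (iii) it combines Lemma~\ref{lem:QnN:blocker-basic}(ii) with the identity $Z_S\cap\underline{S}=A_S$, which is exactly what your auxiliary invariant delivers. Your version is just more explicit in formalizing that identity as a maintained invariant, which is a nice bit of hygiene but not a different argument.
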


\begin{proof}Statements (i) and (ii) are easy to see. For statement (iii), recall that $\cF_{S}$ is a $(\bY-S)$-blocker. 
If ${S}$ is min-type, then $Z_{S}$ is the unique minimal vertex of $\cF_{S}$, so Lemma \ref{lem:QnN:blocker-basic} implies that $Z_{S}\cap(\bY-S)=\varnothing$.
Therefore, $$Z_{{S}}\cap\bY= Z_{S}\cap \underline{S}=A_{S}.$$
Similarly, if $\cF_{S}$ is max-type, then Lemma \ref{lem:QnN:blocker-basic} provides that $Z_{S}\cap(\bY-S)=(\bY-S)$, thus
$$Z_{{S}}\cap\bY= \big(Z_{S}\cap \underline{S}\big) \cup (\bY-S)=A_{S}\cup (\bY-S)=\bY\setminus B_{S},$$
or equivalently $\bY\setminus Z_{{S}}=B_{S}$.
\end{proof}

Let $S'$ be a strict prefix of an ordered set $S$. 
In Construction \ref{constr:QnN:main}, we defined $\cF_S$ as a subposet of $\cF_{S'}$, so in particular, $Z_{S}\in \cF_{S'}$.
If $\cF_{S'}$ has both a unique minimal and a unique maximal vertex, then one of these two vertices is $Z_{S'}$, while the other might be equal to $Z_S$. 
However, it is crucial for the upper bound on $R(\pN,Q_n)$ that this does not happen if $S$ has size $k-1$, i.e., for the ``innermost'' root $Z_S$.
We ensure this property by the following lemma.

Recall that for an ordered set $S=(y_1, \ldots, y_m)$, we denote by $(S, y_{m+1})$ the ordered set $(y_1, \ldots, y_m, y_{m+1})$.

\begin{lemma}\label{lem:QnN:Y-S}
Let $S$ be an ordered subset of $\bY$ of size $k-1$, and let $S'$ be a strict prefix of $S$.
Then $Z_S\cap(\bY- S')\notin \{\varnothing,\bY-S'\}$. 
\end{lemma}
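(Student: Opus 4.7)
Write $S=(y_1,\dots,y_{k-1})$ and let $S_i=(y_1,\dots,y_i)$ for $0\le i\le k-1$, so that $S=S_{k-1}$; the given strict prefix is $S_j$ for some $0\le j\le k-2$. Let $y^{*}$ denote the unique element of $\bY-S$, noting $|\bY-S|=1$. My plan is to show the two asserted inequalities by exhibiting, in each case, one explicit element of $(\bY-S_j)\cap Z_S$ (to rule out $\varnothing$) and one explicit element of $(\bY-S_j)\setminus Z_S$ (to rule out $\bY-S_j$). The two witnesses will always be $y_{k-1}$ and $y^{*}$: both visibly lie in $\bY-S_j$ because $j\le k-2<k-1$ and because $y^{*}\in\bY\setminus\underline{S}$.

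The central observation is the special rule used in Construction~\ref{constr:QnN:main} when $|S|=k-1$. By this rule, $Z_S$ is chosen to be a minimum (resp.\ maximum) of $\cF_S$ precisely when the prefix $S_{k-2}$ is min-type (resp.\ max-type); consequently $S$ inherits the type of $S_{k-2}$. Combining this with the update rules for $A_\bullet,B_\bullet$ in the general step (which add the new element to $A$ iff the previous prefix is min-type), I obtain: if $S$ is min-type then $y_{k-1}\in A_S$, and if $S$ is max-type then $y_{k-1}\in B_S$. A straightforward induction using Lemma~\ref{lem:QnN:construction}(i) shows $A_S\cup B_S=\underline{S}$, so in particular $A_S,B_S\subseteq \underline{S}$, and hence $y^{*}\notin A_S\cup B_S$.

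Now consider the two cases. If $S$ is min-type, Lemma~\ref{lem:QnN:construction}(iii) gives $Z_S\cap\bY=A_S$, so
\[
Z_S\cap(\bY-S_j)=A_S\setminus\underline{S_j}.
\]
Since $y_{k-1}\in A_S$ and $y_{k-1}\notin\underline{S_j}$, the set on the right is non-empty. Since $y^{*}\in\bY-S_j$ but $y^{*}\notin A_S$, the set is strictly contained in $\bY-S_j$. The case $S$ max-type is completely symmetric: Lemma~\ref{lem:QnN:construction}(iii) gives $Z_S\cap\bY=\bY\setminus B_S$, hence
\[
Z_S\cap(\bY-S_j)=(\bY-S_j)\setminus B_S,
\]
which contains $y^{*}$ (as $y^{*}\notin B_S$) and excludes $y_{k-1}$ (as $y_{k-1}\in B_S$ and $y_{k-1}\in \bY-S_j$).

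The only step that is not a direct unpacking of definitions is verifying the two facts about $y_{k-1}$ and $A_S,B_S\subseteq\underline{S}$; I expect the main obstacle to be phrasing the rule at $|S|=k-1$ carefully so that the inheritance of type from $S_{k-2}$ to $S$, together with the simultaneous assignment of $y_{k-1}$ to the correct one of $A_S$ or $B_S$, is transparent. Everything else is bookkeeping on $\underline{S_j}$ versus $\bY$.
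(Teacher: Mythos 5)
Your proof is correct and uses the same core facts as the paper's proof: the type of $S$ is inherited from $S_{k-2}$, the final element $y_{k-1}$ is consequently placed in $A_S$ or $B_S$ according to that type, $A_S,B_S\subseteq\underline{S}$, and Lemma~\ref{lem:QnN:construction}~(iii) identifies $Z_S\cap\bY$ with $A_S$ or $\bY\setminus B_S$. The only presentational difference is that the paper first establishes the claim for the prefix $S''$ of size $k-2$ (where $\bY-S''=\{y_{k-1},y^*\}$) and then observes that $\bY-S''\subseteq\bY-S'$ makes the conclusion monotone in $S'$, whereas you short-circuit the reduction by naming the two explicit witnesses $y_{k-1}$ and $y^*$ directly for an arbitrary strict prefix $S_j$ — a slight streamlining, not a different approach.
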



\begin{proof} Note that $|\bY-S|=1$, so let $\bY-S=\{b\}$.
First, we consider the case $|S'|=k-2$, i.e.,  $S=(S',a)$ for some $a\in\bY$. 
Note that $\bY-S'=\{a,b\}$, so we shall show that $|Z_S\cap\{a,b\}|=1$, i.e., one of the two elements $a$ and $b$ is in $Z_S$ while the other is not.
We repeatedly use that $a$ and $b$ are elements of $\bY$.
\vspace*{-1em}
\begin{itemize}
\item If $S$ is min-type, then our construction implies that $a\in A_S$. 
By Lemma~\ref{lem:QnN:construction} (iii), $A_S=Z_S\cap \bY$, so in particular, $a\in Z_S$.
Moreover, since $b\notin \underline{S}$ and $A_S\subseteq \underline{S}$, we know that $b\notin A_S=Z_S\cap \bY$, thus $b\notin Z_S$.

\item If $S$ is max-type, then we can argue similarly. Note that $a\in B_S\cap \bY$ . 
By Lemma~\ref{lem:QnN:construction}~(iii), $B_S=\bY\setminus Z_S$, thus $a\notin  Z_S$.
Using that $b\notin \underline{S}$ and $B_S\subseteq \underline{S}$, we obtain that $b\notin B_S=\bY\setminus Z_S$, so $b\in Z_S$.
\end{itemize}

It remains to consider the case $|S'|<k-2$. Let $S''$ be the prefix of $S$ of size $k-2$, so $S'$ is a prefix of $S''$. Observe that $\bY-S''\subseteq\bY-S'$.
We already showed that $Z_S\cap(\bY-S'')\notin \{\varnothing,\bY-S''\}$, so in particular, $Z_S\cap(\bY-S')\notin \{\varnothing,\bY-S'\}$.
\end{proof}
\bigskip



\section{Upper bound on $R(\pN,Q_n)$}\label{sec:QnN:QnN}

\begin{proof}[Proof of Theorem \ref{thm:QnN}]

To bound $R(\pN,Q_n)$ from above, let $k$ and $N$ be arbitrary integers with $N\ge k$, let $n$ such that $N=n+k$.
Let $\bY$ be a set on $|\bY|=k$ elements, say without loss of generality, $\bY = \{1,\ldots, k\}$. Fix a set $\bZ$ with $\bY\subseteq \bZ$ and $|\bZ|=N$.
Suppose that there is an $\pN$-free, critical $\bY$-blocker $\cF$ in $\QQ(\bZ)$. 
In other words, suppose that the integer $N$ is sufficiently large with respect to $k$ such that there exists a subposet $\cF$ with these properties.

In the following, we shall show that $\cF$ contains at least $k!2^{-k-1}$ vertices. Since $|\cF|\le |\QQ(\bZ)|=2^{|\bZ|}$, this implies that
$$\big(1-o(1)\big)k\log k \le \log\big(k! 2^{-k-1}\big)\le \log |\cF| \le |\bZ| = N = n+k.$$
It follows that $k\le (1+o(1))\frac{n}{\log n}$, i.e.,  $N=n+k\le n+(1+o(1))\frac{n}{\log n}$, so Theorem \ref{thm:mPk} provides the required bound.

Next, we argue that there exists a subposet in $\cF$ with many vertices.
Let $\cS$ be the set of all ordered subsets of $\bY$ of size at most $k-1$. 
Consider the family $\{(\cF_S, Z_S, A_S, B_S): ~S\in \cS\}$ given by Construction \ref{constr:QnN:main}. 
Let $\cS_1$ be the family of all ordered subsets of $\bY$ of size exactly $k-1$, note that $|\cS_1|=k!$. 
We introduce two notions of equivalence between elements in $\cS_1$, \textit{type-equivalence} and \textit{intersection-equivalence}.
First, we shall show the existence of a large subfamily $\cS_3\subseteq\cS_1$ such that its elements are pairwise type-equivalent but not pairwise intersection-equivalent.
Afterwards, we prove that $\{Z_S: S\in\cS_3\}$ is a large subposet of $\cF$.
\\


Let $S_1,S_2\in\cS_1$ be two ordered subsets of $\bY$ of size $k-1$.
We say that $S_1$ and $S_2$ are \textit{type-equivalent} if for any prefixes $S'_1$ of $S_1$ and $S'_2$ of $S_2$ of the same size, ${S'_1}$ is min-type if and only if ${S'_2}$ is min-type. Equivalently, ${S'_1}$ is max-type if and only if ${S'_2}$ is max-type.
The ordered sets $S_1$ and $S_2$ are \textit{intersection-equivalent} if for any same-sized prefixes $S'_1$ of $S_1$ and $S'_2$ of $S_2$,\:\:$Z_{S'_1}\cap\bY=Z_{S'_2}\cap\bY$.
It is obvious that both notions define equivalence relations on $\cS_1$. Note that intersection-equivalence of two ordered sets in $\cS_1$ is a very strong property. 
It provides a good intuition to think of intersection-equivalent ordered sets as equal. 
Several technical parts of the proof, in particular in Claim 1, arise from the fact that there might be intersection-equivalent ordered sets which are distinct.
\\

\noindent \textbf{Claim 1:} There exists a subfamily $\cS_3\subseteq\cS_1$ of size at least $2^{-k-1}k!$ such that any two distinct ordered sets $S_1,S_2\in\cS_3$, are type-equivalent but not intersection-equivalent.\medskip\\
\textit{Proof of Claim 1.} Recall that $|\cS_1|=k!$. We denote the prefix of an ordered set $S$ with size~$i$ by $S[i]$.
For every $i\in\{0,\dots,k-1\}$ and for every $S\in\cS_1$, the prefix $S[i]$ is either min-type or max-type.
By the pigeonhole principle for fixed $i$, there are at least $|\cS_1|/2$ ordered subsets $S\in\cS_1$ such that all prefixes $S[i]$ are of the same type.
Inductively, we find a subfamily $\cS_2\subseteq\cS_1$ of size at least $2^{-k}|\cS_1|$ such that for any fixed $i\in\{0,\dots,k-1\}$, 
all prefixes $S[i]$, $S\in\cS_2$, have the same type. Equivalently, the elements of $\cS_2$ are pairwise type-equivalent.

In the following, we shall show that each intersection-equivalence class in $\cS_2$ has size at most~$2$. 
Thus, by selecting a representative of each equivalence class, we obtain a subfamily $\cS_3$ as required.

Consider two ordered sets $S_1,S_2\in\cS_2$ which are intersection-equivalent, i.e., for every two same-sized prefixes $S'_1$ of $S_1$ and $S'_2$ of $S_2$, we have that $Z_{S'_1}\cap\bY=Z_{S'_2}\cap\bY$.
Without loss of generality, suppose that $S_1=(1,2,\dots,k-1)$ and $\bY-S_1=\{k\}$. Let $S_2=(y_1,\dots,y_{k-1})$ and $\bY-S_2=\{y_k\}$. 
We shall show that $y_i=i$ for all but at most two indices $i\in[k]$, which implies that $S_2$ is either equal to $S_1$, or obtained from~$S_1$ by interchanging the two differing members. This implies that the intersection-equivalence class of $S_1$ consists of at most $2$ members.

Since $S_1$ and $S_2$ are both in $\cS_2$, i.e.,  type-equivalent, we know that for every index $i\in\{0,\dots,k-1\}$, either both ${S_1[i]}$ and ${S_2[i]}$ are min-type, or both ${S_1[i]}$ and ${S_2[i]}$ are max-type.
We enumerate the index set $\{0,\dots,k-1\}$ as follows.
Let $i_1,\dots,i_p$ be the indices $i\in\{0,\dots,k-1\}$ such that ${S_1[i]}$ and ${S_2[i]}$ are min-type in increasing order.
Similarly, let $j_1,\dots,j_q$ enumerate in increasing order the indices $j\in\{0,\dots,k-1\}$ for which ${S_1[j]}$ and ${S_2[j]}$ are max-type.
Note that $\{i_1,\dots,i_p\}\cup\{j_1,\dots,j_q\}=\{0,\dots,k-1\}$.

Next, consider any two consecutive indices $i=i_{\ell}$ and $i'=i_{\ell+1}$ for some fixed $\ell\in[p-1]$. 
Note that $i<i'$, so in particular, $i+1\le i'$ and $i<k-1$.
By Lemma~\ref{lem:QnN:construction}~(iii), we know that $Z_{S_1[i]}\cap \bY=A_{S_1[i]}\text{ and }Z_{S_1[i']}\cap \bY=A_{S_1[i']}.$
Our next step is to show that the index $i+1$ is the unique element in the set difference of those two sets.

Recall that the prefix $S_1[i]$ is min-type, so in Construction \ref{constr:QnN:main} in the iterative step for $S_1[i+1]=(S_1[i],i+1)$, we defined that
$$A_{S_1[i+1]}=A_{S_1[i]}\cup\{i+1\}.$$
By Lemma \ref{lem:QnN:construction} (i) and recalling that $i+1\le i'$,
$$A_{S_1[i+1]}\subseteq A_{S_1[i']}.$$
Lemma \ref{lem:QnN:construction}~(ii) provides that $|A_{S_1[i]}|=\ell$ and $|A_{S_1[i']}|=\ell+1$. 
In particular, 
$$A_{S_1[i]}\cup\{i+1\}=A_{S_1[i+1]}=A_{S_1[i']},$$
which implies that
$$(Z_{S_1[i']}\cap \bY)\setminus (Z_{S_1[i]}\cap \bY)=A_{S_1[i']}\setminus A_{S_1[i]}=\{i+1\}.$$
Similarly for $S_2$, we see that
$$(Z_{S_2[i']}\cap \bY)\setminus (Z_{S_2[i]}\cap \bY)=A_{S_2[i']}\setminus A_{S_2[i]}=\{y_{i+1}\}.$$
Since $S_1$ and $S_2$ are intersection-equivalent, the indices $y_{i+1}$ and $i+1$ are equal.

We obtain that $y_{i_{\ell}+1}=i_{\ell}+1$ for every $\ell\in[p-1]$. 
For $j_1,\dots,j_q$, a symmetric argument for  $j=j_{\ell}$ and $j'=j_{\ell+1}$ considering the set difference 
$$(Z_{S_1[j]}\cap \bY)\setminus (Z_{S_1[j']}\cap \bY)=(\bY\setminus B_{S_1[j]})\setminus (\bY\setminus B_{S_1[j']})=B_{S_1[j']}\setminus B_{S_1[j]}=\{j+1\}$$
 yields that  $y_{j_{\ell+1}}=j_{\ell+1}$ for every $\ell\in[q-1]$.
Thus, $y_{i+1}=i+1$ for all indices $i\in\{0,\dots,k-1\}\setminus\{i_p,j_q\}$, so $S_1$ and $S_2$ coincide in all but at most two members.
As a consequence, $S_2$ is either equal to $S_1$, or obtained from $S_1$ by interchanging the two differing members, i.e., the intersection-equivalence class of $S_1$ consists of at most $2$ ordered sets. Since $S_1$ was chosen arbitrarily, every intersection-equivalence class of $\cS_2$ has size at most $2$.
Select $\cS_3\subseteq\cS_2$ by choosing an arbitrary representative from each intersection-equivalence class, i.e.,  let $\cS_3$ be the largest subfamily of $\cS_2$ such that every two distinct $S_2,S_3\in\cS_3$ are not intersection-equivalent. The size of $\cS_3$ is
$$|\cS_3|\ge |\cS_2|/2\ge 2^{-k-1}|\cS_1|= 2^{-k-1}k!,$$
which concludes the proof of Claim 1.
\\

\noindent \textbf{Claim 2:} The set $\{Z_S: S\in\cS_3\}$ has size $|\cS_3|=k!2^{-k-1}$.\medskip\\ 
\indent We remark that (although not necessary for verifying Theorem \ref{thm:QnN}) Claim 2 holds in greater generality. 
Analogously to the following proof, one can show that for every family $\cS'$ of ordered sets such that any two distinct members of $\cS'$ are type-equivalent and not intersection-equivalent, the set $\{Z_S: S\in\cS'\}$ is an antichain in $\QQ(\bZ)$ of size~$|\cS'|$.

\noindent \textit{Proof of Claim 2.} Recall that any two distinct, ordered sets in $\cS_3$ are type-equivalent but not intersection-equivalent. 
We shall prove that for every two distinct $S_1,S_2\in\cS_3$, the vertices $Z_{S_1}$ and $Z_{S_2}$ are distinct.
We show an even stronger property: Any two vertices $Z_{S_1}$ and $Z_{S_2}$ are incomparable.
Assume towards a contradiction that $Z_{S_1}\subseteq Z_{S_2}$.
Since $S_1$ and $S_2$ are not intersection-equivalent, there are same-sized prefixes $S'_1$ of $S_1$ and $S'_2$ of $S_2$ such that $Z_{S'_1}\cap\bY\neq Z_{S'_2}\cap\bY$.
Since $S_1$ and $S_2$ are type-equivalent, both ${S'_1}$ and ${S'_2}$ have the same type. Suppose that ${S'_1}$ and ${S'_2}$ are min-type.

First, we argue that $Z_{S'_1}\cap\bY$ and $Z_{S'_2}\cap\bY$ are incomparable.
Lemma \ref{lem:QnN:construction}~(iii) shows that $Z_{S'_1}\cap\bY=A_{S'_1}$ and $Z_{S'_2}\cap\bY=A_{S'_2}$.
Type-equivalence implies that pairs of same-sized prefixes of $S'_1$ and $S'_2$ always have the same type, 
thus by Lemma \ref{lem:QnN:construction}~(ii), $|A_{S'_1}|=|A_{S'_2}|$.
We obtain that the two sets $Z_{S'_1}\cap\bY=A_{S'_1}$ and $Z_{S'_2}\cap\bY=A_{S'_2}$ are distinct but of the same size, 
consequently $Z_{S'_1}\cap\bY$ and $Z_{S'_2}\cap\bY$ are not comparable.

If $S'_1=S_1$ and $S'_2=S_2$, then $Z_{S_1}\cap\bY$ and $Z_{S_2}\cap\bY$ are incomparable, and so $Z_{S_1}\inc Z_{S_2}$, a contradiction to the assumption $Z_{S_1}\subseteq Z_{S_2}$.
For the remainder of the proof, suppose that the size $|S'_1|=|S'_2|$ is strictly less than $k-1$.
We shall show that there is a copy of the N-shaped poset $\pN$ in $\cF$, contradicting the definition of $\cF$ to be an $\pN$-free poset.

Let $\bY'= \bY-S'_2$, and note that $\cF_{S'_2}$ is a $\bY'$-blocker.
Since $Z_{S'_1}\cap\bY$ and $Z_{S'_2}\cap\bY$ are not comparable, there exists an element $a\in Z_{S'_1}\cap\bY$ with $a\notin Z_{S'_2}$.
By Lemma \ref{lem:QnN:blocker-basic} (ii), the $\bY$-blocker $\cF_{S'_2}$ contains a vertex $U\in\cF_{S'_2}$ with $U\cap\bY'=\bY'\setminus\{a\}$.
Next, we shall verify that $Z_{S'_1}$, $Z_{S_2}$, $Z_{S'_2}$, and $U$ form a copy of $\pN$ in $\cF$, see Figure \ref{fig:QnN:claim5}.
\begin{figure}[h]
\centering
\includegraphics[scale=0.62]{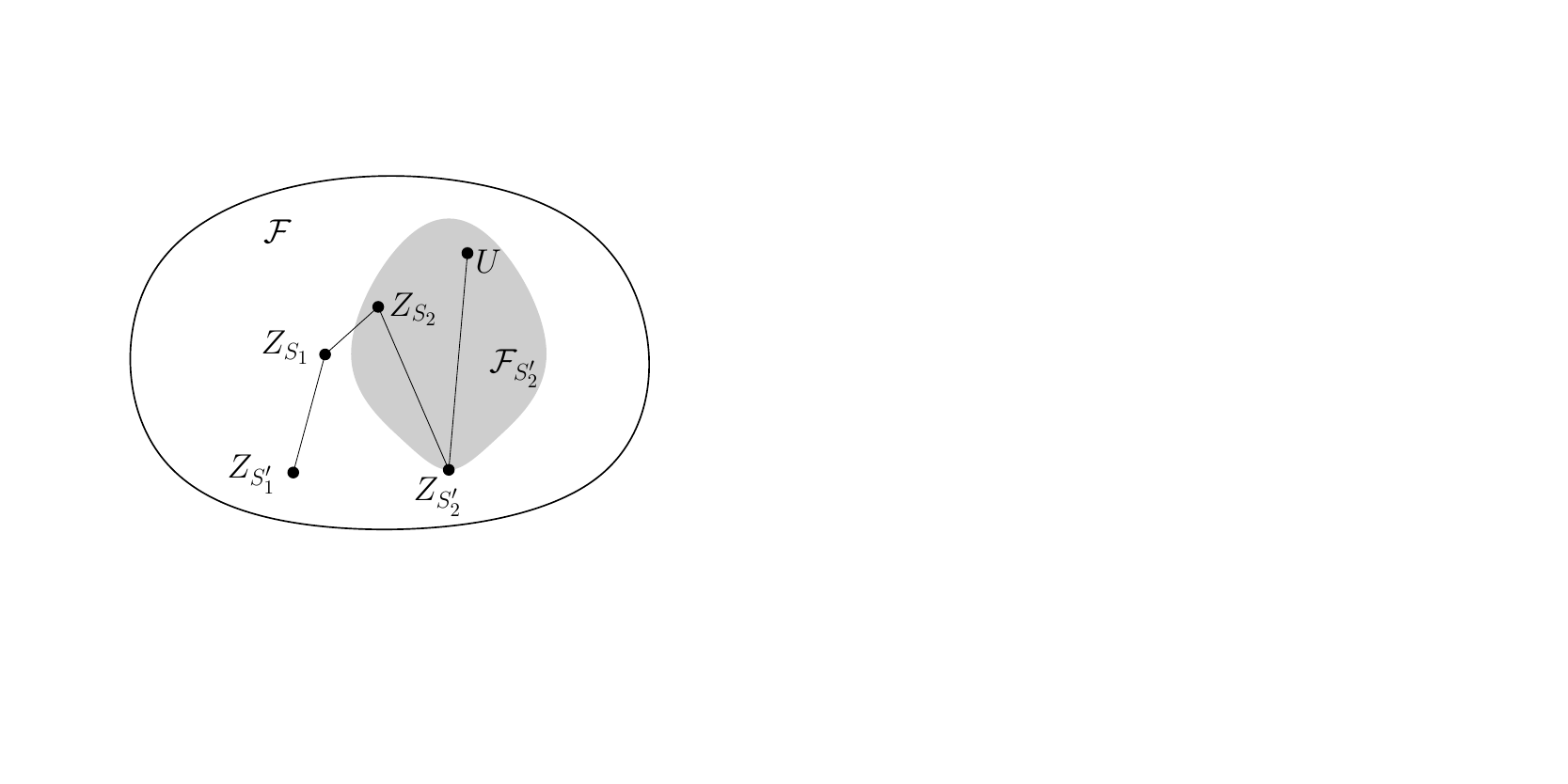}
\caption{Copy of $\pN$ constructed in the proof of Claim 2.}

\label{fig:QnN:claim5}
\end{figure}

\vspace*{-1em}
\begin{itemize}
\item $Z_{S'_2}\subseteq Z_{S_2}$, because $Z_{S'_2}$ is the unique minimal vertex of $\cF_{S'_2}$ and $Z_{S_2}\in\cF_{S_2}\subseteq\cF_{S'_2}$ by Lemma \ref{lem:QnN:construction} (i).
\item $Z_{S'_1}\subseteq Z_{S_1}\subseteq Z_{S_2}$, as $Z_{S'_1}$ is the unique minimal vertex of $\cF_{S'_1}$ and $Z_{S_1}\in\cF_{S_1}\subseteq\cF_{S'_1}$ by Lemma \ref{lem:QnN:construction} (i).
\item $Z_{S'_1} \inc Z_{S'_2}$, because $Z_{S'_1}\cap\bY$ and $Z_{S'_2}\cap\bY$ are not comparable.
\item $Z_{S'_2}\subseteq U$, because $U$ is in $\cF_{S'_2}$ by definition and $Z_{S'_2}$ is the unique minimal vertex of $\cF_{S'_2}$.
\item Note that $a\in Z_{S'_1}$ and $a\notin U$, so $Z_{S'_1}\not\subseteq U$. 
Since $Z_{S'_2}\not\subseteq Z_{S'_1}$ but $Z_{S'_2}\subseteq U$, transitivity yields that $U\not\subseteq Z_{S'_1}$.
Therefore, $U$ and $Z_{S'_1}$ are incomparable.
\item We know that $a\in Z_{S_2}$ but $a\notin U$, thus $Z_{S_2}\not\subseteq U$. 
To show that $U\not\subseteq Z_{S_2}$, we consider $Z_{S_2}\cap\bY'$.
Lemma \ref{lem:QnN:Y-S} provides that $Z_{S_2}\cap\bY'\neq\bY'$.
Furthermore, $Z_{S_2}\cap\bY'\neq \bY'\setminus\{a\}$, since $a\in Z_{S_2}$, thus
$Z_{S_2}\cap\bY'$ is not a superset of $\bY'\setminus\{a\}=U\cap\bY'$, so $U\not\subseteq Z_{S_2}$, and hence $U \inc Z_{S_2}$.
\item The four vertices are distinct, because otherwise we find an immediate contradiction to one of the above relations.
\end{itemize}
\vspace*{-1em}
Therefore, there is a copy of $\pN$ in $\cF$, which is a contradiction to the fact that $\cF$ is $\pN$-free.

If $S'_1$ and $S'_2$ are max-type, a similar argument can be applied, so we only give a rough sketch. 
As a first step, we observe that $Z_{S'_1}\cap\bY$ and $Z_{S'_2}\cap\bY$ are incomparable.
Afterwards, for $\bY'=\bY-S'_1$ and for a vertex $U\in\cF_{S'_1}$ with $U\cap\bY'=\{a\}$, we find a copy of $\pN$ on vertices $Z_{S'_1}$, $Z_{S_1}$, $Z_{S'_2}$, and $U$, which is a contradiction. This concludes the proof of Claim 2.
\\

Claim 2 guarantees the existence of a subposet of $\cF$ of size at least $k!2^{-k-1}$.
Since $\cF\subseteq\QQ(\bZ)$, we know that $k!2^{-k-1}\le |\cF|\le 2^{|\bZ|}$, so
$$|\bZ|\ge \log |\cF| \ge \log\left(\frac{k!}{2^{k+1}}\right)\ge \log\left(\frac{k^k}{2^{k+1}e^{k}}\right)\ge k\big(\log(k)-c\big),$$
for a fixed constant $c>0$. 
Recall that $|\bZ|=n+k$, thus $n\ge k\log k -\big(1+c\big)k$, which implies that $k\le \big(1+o(1)\big)\frac{n}{\log n}$.
Finally, Theorem \ref{thm:mPk} provides that $$R(\pN,Q_n)\le N=n+k\le n+ \big(1+o(1)\big) \frac{n}{\log n}.$$ The lower bound on $R(\pN,Q_n)$ follows from Theorem~\ref{thm-MAIN}.
\end{proof}

\bigskip

%
%
\section{Tight bound on $R(P,Q_n)$ using $P$-free blockers} \label{sec:QnN:mPk}

The content of this section is not contained in a previous publication.

Recall that $n\le R(P,Q_n)\le c(P)n$ for any fixed poset $P$, see Theorem \ref{thm:general}.
A bound on $R(P,Q_n)$ is referred to as \textit{tight}\index{tight bound} if it has the form $R(P,Q_n)=n+\Theta\big(g(n)\big)$ for some function $g(n)$.
In Corollary \ref{cor:QnV_UB} and Theorem \ref{thm:QnN}, we have presented a tight bound on $R(P,Q_n)$ for $P=\pLa$ and $P=\pN$, respectively.
For both results, the proof idea for the upper bound was to bound the dimension of a Boolean lattice hosting a single blocker.
Here, we generalize that proof technique to every non-trivial poset $P$ which has an additional property.

Recall that a poset is \textit{connected} if it can not be decomposed into two non-empty parallel posets, and \textit{non-trivial} if it contains a copy of $\pLa$ or $\pV$.
Furthermore, recall that in Theorem \ref{thm:mPk} we established the bound
\begin{multline*}
R(P,Q_n)\le \min \big\{N: ~\text{there is no }P\text{-free} ~  \text{\bY\!-blocker in }\QQ([N])\\ 
\text{ for some } \bY\subseteq [N], |\bY|=N-n\big\}.
\end{multline*}
\noindent In this section, we consider a related extremal function for blockers, which is easier to work with. For $k\in\N$ and a non-trivial poset $P$, let
$$m_P(k)=\min\big\{N :~ \text{there is a } P\text{-free }[k]\text{-blocker }\cF\text{ in }\QQ([N])\big\}.$$
Note that $m_P(k)$ is well-defined: Every non-trivial poset $P$ contains a copy of $\pLa$ or~$\pV$, and Lemma \ref{lem:QnV:optimal} implies that $m_{\pLa}(k)=m_{\pV}(k)\le k \max\{(\log k +\log \log k+1), 12\}$. Thus,
\begin{equation}
k\le m_P(k)\le m_{\pLa}(k)\le \big(1+o(1)\big)k \log k, \label{eq:QnN:mPk}
\end{equation}
where the lower bound is trivial. Note that $m_P(k)$ is a non-decreasing function. 

In our first result, we give a lower bound on $R(P,Q_n)$ in terms of $m_P(n)$, for every poset $P$ such that the asymptotic behavior of $m_P$ is ``nice'' in the following sense.
We say that a function $f\colon \N\to \R$ is \textit{asymptotically smooth}\index{asymptotically smooth} if 
$$ f\left(\frac{n^2}{f(n)}\right)=\Theta(n).$$
We remark that this definition is not related to the notion of a \textit{smooth function} commonly used in the field of mathematical analysis.
An example of an asymptotically smooth function is $f(n)=n\log n$. Indeed, 
$$f\left(\frac{n^2}{f(n)}\right)=f\left(\frac{n}{\log n}\right)= \frac{n\left(\log n - \log \log n\right)}{\log n}=\big(1-o(1)\big) n=\Theta(n).$$
In fact, every function $f(n)=\Theta(n)$, $f(n)=\Theta(n\log n)$, or $f(n)=\Theta(n\log\log n)$, etc.\ is asymptotically smooth.
Recall that $n\le m_P(n)\le \big(1+o(1)\big)n \log n$. It is not clear whether there exists a non-trivial poset $P$ such that $m_P$ is \textit{not} asymptotically smooth.

\begin{theorem}\label{thm:blockerLB}
Let $P$ be a fixed non-trivial, connected poset. If $m_P$ is asymptotically smooth,
then $$R(P,Q_n)\ge n+ \Omega\left( \frac{n^2}{m_P(n)}\right).$$
\end{theorem}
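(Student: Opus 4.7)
The plan is to generalize the probabilistic construction used to prove Theorem~\ref{thm:QnV_LB}, replacing the blue shrubs there by general $P$-free blockers whose existence is guaranteed by the definition of $m_P(k)$. Fix a small constant $c > 0$ (to be tuned), set $k = c \cdot n^2/m_P(n)$, and let $N = n + k$. Since $m_P(n) \ge n$ one has $k \le cn$ and $N = \Theta(n)$; monotonicity of $m_P$ together with asymptotic smoothness gives $m_P(k) \le m_P(n^2/m_P(n)) = O(n)$, and by taking $c$ small enough one ensures $m_P(k) \le \alpha N$ for some $\alpha < 1/2$. By definition of $m_P(k)$, fix one $P$-free $[k]$-blocker $\cF_0$ in $\QQ([m_P(k)])$. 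For each $\bY \in \binom{[N]}{k}$ draw a random framework $(\bY,\bA_\bY,\bZ_\bY,\bX_\bY)$ exactly as in Theorem~\ref{thm:QnV_random}: $\bA_\bY$ uniform of size $m_P(k)-k$ in $[N]\setminus\bY$, $\bZ_\bY = [N]\setminus(\bY \cup \bA_\bY)$, and each element of $\bZ_\bY$ placed in $\bX_\bY$ independently with probability $1/2$. Let $\cF'_\bY \subseteq \QQ(\bA_\bY \cup \bY)$ be the image of $\cF_0$ under any bijection $[m_P(k)] \to \bA_\bY \cup \bY$ that maps $[k]$ onto $\bY$, so $\cF'_\bY$ is a $P$-free $\bY$-blocker in $\QQ(\bA_\bY \cup \bY)$, and set $\cF_\bY = \{V \cup \bX_\bY : V \in \cF'_\bY\}$.

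Each $\cF_\bY$ is in fact a $\bY$-blocker in $\QQ([N])$: given any $([N]\setminus\bY)$-good embedding $\phi\colon \QQ([N]\setminus\bY) \to \QQ([N])$, the map $\widehat{\phi}(A) := \phi(A \cup \bX_\bY) \cap (\bA_\bY \cup \bY)$ on $A \in \QQ(\bA_\bY)$ is an $\bA_\bY$-good embedding into $\QQ(\bA_\bY \cup \bY)$, since $\phi(A\cup\bX_\bY)\cap\bZ_\bY = (A\cup\bX_\bY)\cap\bZ_\bY = \bX_\bY$ by $([N]\setminus\bY)$-goodness of $\phi$. The blocker property of $\cF'_\bY$ yields $A\in\QQ(\bA_\bY)$ with $\widehat{\phi}(A)\in\cF'_\bY$, and then $\phi(A\cup\bX_\bY) = \widehat{\phi}(A)\cup\bX_\bY \in \cF_\bY$. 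The probabilistic analysis of Claims~2 and~3 in the proof of Theorem~\ref{thm:QnV_random} then carries over: with the deterministic bound $|\bZ_{\bY_1}\cap\bZ_{\bY_2}| \ge (1-2\alpha)N = \Omega(N)$ in hand, Chernoff plus a union bound over $\binom{N}{k}^2$ pairs shows that with positive probability $\bX_{\bY_1}\cap\bZ_{\bY_2}\not\subseteq\bX_{\bY_2}$ for all distinct pairs. By the argument of Claim~4 of that same proof, this implies that any two of the $\cF_\bY$ are parallel subposets of $\QQ([N])$.

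Color the vertices in $\bigcup_\bY \cF_\bY$ blue and all other vertices red. Each $\cF_\bY$ is $P$-free; since $P$ is connected and the $\cF_\bY$ are pairwise parallel, any hypothetical copy of $P$ in the blue subposet would be contained in a single $\cF_\bY$, which is impossible. For every $\bX\in\binom{[N]}{n}$ the blue subposet contains the $([N]\setminus\bX)$-blocker $\cF_{[N]\setminus\bX}$, so Lemma~\ref{lem:QnN:blocker-basic}(i) precludes a red copy of $Q_n$. Hence $R(P,Q_n) > N = n + \Omega(n^2/m_P(n))$. The main obstacle will be the probabilistic calibration: Theorem~\ref{thm:QnV_random} takes advantage of the specific behavior $m_{\pLa}(k)\sim k\log k$ to set $\gamma = 14.9$, whereas here one must simultaneously guarantee $m_P(k) \le \alpha N$ with $\alpha < 1/2$ and the Chernoff union-bound constraint $k\log(N/k) = o(N)$ for \emph{every} asymptotically smooth $m_P$ between $n$ and $(1+o(1))n\log n$. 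This will likely require replacing the crude bound $\binom{N}{k} \le N^k$ used in Theorem~\ref{thm:QnV_random} by the sharper $\binom{N}{k} \le (eN/k)^k$, together with a case analysis on the growth rate of $m_P$ to fix $c$ appropriately.
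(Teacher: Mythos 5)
Your proposal follows essentially the same route as the paper's proof, which also draws a random family of frameworks, shifts a fixed small $P$-free blocker into each of them, and concludes via the same Chernoff/union-bound and parallelism argument. The calibration worry you raise at the end turns out to be unnecessary: taking $k=n^2/(100c^2 m_P(n))$ for a suitably large constant $c$ supplied by asymptotic smoothness, the elementary inequality $\tfrac{n}{m_P(n)}\log\tfrac{m_P(n)}{n}\le 1$ (valid whenever $m_P(n)\ge n$, since $\log(x)/x\le 1$ for $x\ge 1$) yields $\log\binom{N}{k}\le k\log(2en/k)\le\tfrac{n}{100c^2}\bigl(\log(200c^2e)+1\bigr)$ uniformly, so no case analysis on the growth rate of $m_P$ is needed.
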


\noindent The proof of Theorem \ref{thm:blockerLB} follows the same steps as the proof of Theorem~\ref{thm:QnV_LB}. 
In the second result of this section, we show that Theorem \ref{thm:blockerLB} is asymptotically tight if $m_P$ has one of the following properties.

\begin{theorem}\label{thm:blockerUB}
Let $P$ be a non-trivial, connected poset. 
\vspace*{-1em}
\begin{enumerate}
\item[(i)] If $m_P(\ell)=\big(c+o(1)\big)\ell$ \,for some $c>1$,
then $R(P,Q_n)= n + \Theta\!\left( \frac{n^2}{m_P(n)}\right)=\big(1+\Theta(1)\big)n.$
\item[(ii)]  If $m_P$ is superlinear and asymptotically smooth,
then $R(P,Q_n)= n + \Theta\left( \frac{n^2}{m_P(n)}\right).$
\item[(iii)] In particular, if $m_P(\ell)=\Theta(\ell (\underbrace{\log \dots \log}_\text{ t times} \ell)^s )$ for some parameters $t\in \N$ and $s>0$,
then $R(P,Q_n)= n + \Theta\left( \frac{n^2}{m_P(n)}\right).$
\end{enumerate}
\end{theorem}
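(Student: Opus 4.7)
The plan is to combine the two results already at hand: Theorem~\ref{thm:blockerLB} gives the lower bound $R(P,Q_n) \ge n + \Omega(n^2/m_P(n))$ as soon as $m_P$ is asymptotically smooth, which holds by hypothesis in (ii), and is a one-line verification for the linear $m_P$ in (i) and the iterated-log $m_P$ in (iii). So the task reduces to the matching upper bound $R(P,Q_n) \le n + O(n^2/m_P(n))$. For this, I would first repackage Theorem~\ref{thm:mPk}. By symmetry of $\QQ([N])$ under permutations of the ground set, the existence of a $P$-free $\bY$-blocker in $\QQ([N])$ depends only on $|\bY|$, so Theorem~\ref{thm:mPk} simplifies to
$$R(P,Q_n) \le \min\{\,n+k : k\in \N,\ m_P(k) > n+k\,\}.$$
Thus it suffices to exhibit one integer $k = O(n^2/m_P(n))$ with $m_P(k) > n+k$.

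For part (i), this is immediate: with $m_P(k) = (c+o(1))k$ and $c>1$, take $k = \lceil 2n/(c-1) \rceil$; then $m_P(k) - (n+k) = (c-1-o(1))k - n \to \infty$, and $k = \Theta(n) = \Theta(n^2/m_P(n))$. For part (iii), write $L(\ell) = (\log\cdots\log\ell)^s$, so $m_P(\ell) = \Theta(\ell L(\ell))$, and set $k = \lceil Cn/L(n) \rceil$ for a constant $C$ to be chosen. Since iterated logarithms are very slowly varying, $L(n/L(n)) = (1+o(1)) L(n)$, and hence $m_P(k) = \Theta(Cn)$; taking $C$ large enough ensures $m_P(k) > n + k = (1+o(1)) n$, yielding $R(P,Q_n) \le n + O(n/L(n)) = n + O(n^2/m_P(n))$.

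For part (ii) the strategy is the same, but the slowly-varying step is now abstract. Write $g(\ell) := m_P(\ell)/\ell$; superlinearity of $m_P$ gives $g \to \infty$, and asymptotic smoothness gives $m_P(n/g(n)) = \Theta(n)$, equivalently $g(n/g(n)) = \Theta(g(n))$. The aim is to lift this single-point $\Theta$-relation to a statement of the form $m_P(\lambda n/g(n)) \ge \lambda' n$ for some absolute constants $\lambda, \lambda' > 1$; once this is in hand, the choice $k = \lceil \lambda n/g(n) \rceil$ satisfies $m_P(k) > \lambda' n > n+k$ (because $k = o(n)$), and therefore $R(P,Q_n) \le n + \lambda \cdot n^2/m_P(n)$. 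To produce such $\lambda, \lambda'$, I would apply asymptotic smoothness with $n$ replaced by the scaled argument $\lambda n$, obtaining $m_P(\lambda^2 n^2/m_P(\lambda n)) = \Theta(\lambda n)$, and combine this with monotonicity of $m_P$ and the induced regularity $m_P(\lambda n) = \Theta(\lambda m_P(n))$ (which follows from smoothness applied at the two scales $n$ and $\lambda n$ together with the trivial inequality $m_P(\ell)\ge\ell$).

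The main obstacle is this ``scaling step'' in (ii): converting the single-point asymptotic smoothness hypothesis into controlled behaviour of $m_P$ on a neighbourhood of scales. In the concrete case (iii) this is trivial because iterated logarithms are slowly varying, but in the abstract setting of (ii) one must track the constants hidden in $\Theta$ carefully and rule out pathological oscillations of $g$; I expect this to be doable using monotonicity of $m_P$ and iteration of the smoothness relation, but it is the one delicate point where the proof departs from a calculation.
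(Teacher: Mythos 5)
Your overall strategy coincides with the paper's: get the lower bound from Theorem~\ref{thm:blockerLB}, reduce the upper bound to exhibiting a single $k = O(n^2/m_P(n))$ with $m_P(k) > n+k$, and handle (i) directly because $m_P$ is not superlinear there. The paper packages your reformulation of Theorem~\ref{thm:mPk} as Lemma~\ref{lem:QnN:nk} (extract a $P$-free $[k]$-blocker from the blue subposet of an extremal coloring) rather than via the symmetry argument you sketch; your route is also fine, though it implicitly uses that existence of a $P$-free $[k]$-blocker in $\QQ([M])$ is monotone in $M$, which holds because any $\bX'$-good embedding of $\QQ(\bX')$ restricts on $\QQ(\bX)$ to an $\bX$-good embedding into the smaller ground set. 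Your calculations for (i) and (iii) are correct; the paper instead derives (iii) as a corollary of (ii) after checking that $\ell(\log\cdots\log\ell)^s$ is asymptotically smooth.

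The genuine gap is the one you flagged in (ii). Your plan routes through the regularity statement $m_P(\lambda n)=\Theta(\lambda m_P(n))$, which you propose to deduce from smoothness at scales $n$ and $\lambda n$ together with $m_P(\ell)\ge\ell$. That deduction does not go through: asymptotic smoothness only constrains $m_P$ at the single scaled-down argument $\ell^2/m_P(\ell)$, and neither it nor the trivial bound gives upper control on $m_P(\lambda n)$ in terms of $m_P(n)$. More importantly, you do not need any such estimate. Once you write the smoothness lower bound as $m_P(\ell^2/m_P(\ell)) > \tfrac{2}{d}\ell$ for large $\ell$ and some constant $d\ge 1$, set $k = d^2 n^2/m_P(n)$ (so $k = o(n)$ by superlinearity) and compute
\[
m_P(k)=m_P\!\left(\frac{d^2n^2}{m_P(n)}\right)\ge m_P\!\left(\frac{d^2n^2}{m_P(dn)}\right)=m_P\!\left(\frac{(dn)^2}{m_P(dn)}\right)>\frac{2}{d}\cdot dn=2n\ge n+k,
\]
where the first inequality is just monotonicity of $m_P$ (since $m_P(n)\le m_P(dn)$). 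This is exactly the lift $m_P(\lambda n^2/m_P(n))\ge\lambda' n$ you were aiming for, obtained by one application of smoothness at the scaled argument $dn$ plus one application of monotonicity; the auxiliary regularity lemma is superfluous. (The paper's write-up takes $k = dn^2/m_P(n)$ rather than $d^2n^2/m_P(n)$, which appears to be a slip in the exponent; with $d^2$ the identity $(dn)^2/m_P(dn)$ lines up cleanly, and the $\Theta$-bound is unaffected.)
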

\noindent It remains open whether the bound $R(P,Q_n)= n + \Theta\left( \frac{n^2}{m_P(n)}\right)$ holds for every non-trivial $P$.
We remark that our proofs of Theorems \ref{thm:blockerLB} and \ref{thm:blockerUB} provide an improved bound on $R(P,Q_n)$, even if $m_P(n)$ is only bounded roughly.
That is, if $m_P$ is superlinear and $f(n)\le m_P(n) \le g(n)$ for asymptotically smooth functions $f,g\colon \N\to \R$, then 
$$n + \Omega\left( \frac{n^2}{g(n)}\right) \le R(P,Q_n) \le n + O\left( \frac{n^2}{f(n)}\right).$$
However, for any non-trivial poset $P$ for which the asymptotic behavior of $R(P,Q_n)-n$ is unknown, it remains open to improve the basic bound on $m_P$ stated in equation (\ref{eq:QnN:mPk}).

\subsection{Proof of Theorem \ref{thm:blockerLB}}

\textbf{Outline of the proof idea:} ~  We prove Theorem \ref{thm:blockerLB} similarly to Theorem \ref{thm:QnV_LB}.
Using a collection of random \textit{frameworks}, we shall show that there exists a collection of frameworks with specific properties. 
Depending on this collection, we construct a collection of parallel blockers, and use it to define a blue/red coloring which contains neither a blue copy of $P$ nor a red copy of $Q_n$. 

The following lemma is a variant of Theorem \ref{thm:QnV_random}.

\begin{lemma}\label{lem:QnN:framework}
Let $P$ be a non-trivial poset. For $n\in\N$, let $k=k(n)\in\N$ and $N=n+k$. If
\vspace*{-1em}
\begin{enumerate}
\item[(i)] $m_P(k)\le 0.1 N$ and
\item[(ii)] $\binom{N}{k}^2 2^{-0.1N}\to 0$ as $n\to\infty$,
\end{enumerate}
\vspace*{-1em}
then $R(P,Q_n)>N=n+k$ for sufficiently large $n$.
\end{lemma}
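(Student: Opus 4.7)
The plan is to mirror the proof of Theorem \ref{thm:QnV_random}, replacing the use of $\bY$-shrubs by general $P$-free $\bY$-blockers. I would construct a blue/red coloring of $\QQ([N])$ that contains no blue copy of $P$ and no red copy of $Q_n$, which immediately gives $R(P,Q_n)>N$. Condition (i) ensures that in any Boolean lattice $\QQ(\bA\cup\bY)$ with $|\bA|=m_P(k)-k$ there is a $P$-free $\bY$-blocker, while simultaneously $|\bZ_\bY|=N-m_P(k)\ge 0.9N$, leaving enough randomness for the parallelism argument.

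For each $\bY\in\binom{[N]}{k}$, I would draw a random $\bY$-framework $(\bY,\bA_\bY,\bZ_\bY,\bX_\bY)$ exactly as in the proof of Theorem \ref{thm:QnV_random}: $\bA_\bY$ is uniformly chosen among the $(m_P(k)-k)$-element subsets of $[N]\setminus\bY$, $\bZ_\bY=[N]\setminus(\bY\cup\bA_\bY)$, and each element of $\bZ_\bY$ is independently placed in $\bX_\bY$ with probability $\tfrac12$. Since $|\bZ_{\bY_1}\cap\bZ_{\bY_2}|\ge 2(N-m_P(k))-N\ge 0.8N$, Chernoff's inequality (\ref{eq:chernoffs}) gives $|\bX_{\bY_1}\cap\bZ_{\bY_2}|\ge 0.1N$ with probability $1-e^{-\Omega(N)}$; conditioning on this bound, $\PPP(\bX_{\bY_1}\cap\bZ_{\bY_2}\subseteq\bX_{\bY_2})\le 2^{-0.1N}$. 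A union bound over the at most $\binom{N}{k}^2$ ordered pairs, together with condition (ii), shows that with positive probability one can fix a collection of frameworks such that $\bX_{\bY_1}\cap\bZ_{\bY_2}\not\subseteq\bX_{\bY_2}$ for every pair of distinct $\bY_1,\bY_2$.

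Next, for each $\bY$, I would select a $P$-free $\bY$-blocker $\cF'_\bY$ in $\QQ(\bA_\bY\cup\bY)$ (which exists by definition of $m_P$), and set $\cF_\bY=\{Z\cup\bX_\bY : Z\in\cF'_\bY\}$. I claim $\cF_\bY$ is a $\bY$-blocker in the full lattice $\QQ([N])$: given any $([N]\setminus\bY)$-good embedding $\phi\colon\QQ([N]\setminus\bY)\to\QQ([N])$, the map $\phi'\colon\QQ(\bA_\bY)\to\QQ(\bA_\bY\cup\bY)$ defined by $\phi'(A)=\phi(A\cup\bX_\bY)\cap(\bA_\bY\cup\bY)$ is $\bA_\bY$-good, since $\phi(A\cup\bX_\bY)\cap\bA_\bY=(A\cup\bX_\bY)\cap\bA_\bY=A$ by the $([N]\setminus\bY)$-goodness of $\phi$, and hence an embedding by Proposition \ref{prop:good_embedding}. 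Therefore some $\phi'(A)$ lies in $\cF'_\bY$, and then $\phi(A\cup\bX_\bY)=\phi'(A)\cup\bX_\bY\in\cF_\bY$. Moreover, $Z\mapsto Z\cup\bX_\bY$ is an order-isomorphism from $\cF'_\bY$ to $\cF_\bY$, so $\cF_\bY$ is $P$-free.

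Finally, as in Claim 4 of Theorem \ref{thm:QnV_random}, any $a\in(\bX_{\bY_1}\cap\bZ_{\bY_2})\setminus\bX_{\bY_2}$ is contained in every vertex of $\cF_{\bY_1}$ (since each such vertex contains $\bX_{\bY_1}$) but in no vertex of $\cF_{\bY_2}$ (since each such vertex intersects $\bZ_{\bY_2}$ exactly in $\bX_{\bY_2}$), and a symmetric element yields the reverse incomparability; thus $\cF_{\bY_1}$ and $\cF_{\bY_2}$ are parallel. I would color a vertex of $\QQ([N])$ blue if it lies in some $\cF_\bY$, and red otherwise. Lemma \ref{lem:QnN:blocker-basic}(i) then rules out a red copy of $Q_n$, and connectedness of $P$ (implicit in the application of this lemma in Theorem \ref{thm:blockerLB}) forces any blue copy of $P$ to lie inside a single $\cF_\bY$, contradicting its $P$-freeness. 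The main obstacle, compared to the shrub case, is verifying that the shifted set $\cF_\bY$ is still a $\bY$-blocker in the enlarged lattice $\QQ([N])$; this is precisely the goodness calculation for $\phi'$ above, and once it is in hand the probabilistic bookkeeping is essentially identical to the proof of Theorem \ref{thm:QnV_random}.
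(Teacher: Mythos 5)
Your proof is correct and follows essentially the same route as the paper's: random $\bY$-frameworks, Chernoff plus a union bound controlled by condition (ii), the shift $Z\mapsto Z\cup\bX_\bY$ to lift a $P$-free $\bY$-blocker from $\QQ(\bA_\bY\cup\bY)$ into $\QQ([N])$, pairwise parallelism, and Lemma~\ref{lem:QnN:blocker-basic}(i). The only cosmetic differences are that you take $|\bA_\bY|=m_P(k)-k$ (so $|\bA_\bY\cup\bY|=m_P(k)$ exactly, which sidesteps any need to observe that $P$-free blockers exist in every dimension above $m_P(k)$, whereas the paper takes $|\bA_\bY|=0.1N-k$ and appeals to that monotonicity), and that you phrase the blocker-lifting step via the explicit composite embedding $\phi'$ rather than the paper's equivalent ``reverse shift'' of the slice $\{U\in\QQ:U\cap\bZ_\bY=\bX_\bY\}$; your parenthetical about connectedness is also well placed, since the lemma's statement omits it but both your argument and the paper's proof use it at the final step.
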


\begin{proof} 
We shall show that $R(P,Q_n)> N$ by finding a blue/red coloring of $\QQ([N])$ which contains neither a blue copy of $P$ nor a red copy of $Q_n$.
Let $\binom{[N]}{k}$ denote the family of $k$-element subsets of $[N]$.
For a subset $\bY\in \binom{[N]}{k}$, a \textit{$\bY$-framework}\index{$\bY$-framework} is a $4$-tuple $(\bY,\bA,\bZ,\bX)$ such that 
\vspace*{-1em}
\begin{itemize}
\item the sets $\bY$, $\bA$, and $\bZ$ are pairwise disjoint and partition $[N]$,
\item $|\bA|=0.1N-k$, or equivalently $|\bZ|=0.9N$, and
\item $\bX\subseteq  \bZ$.
\end{itemize}
\vspace*{-1em}
\noindent It follows from (\ref{eq:QnN:mPk}) and property (i) that $k\le m_{P}(k)\le 0.1N$, so $|\bA|=0.1N-k\ge 0$.
\\

\noindent \textbf{Claim:} There is a collection of $\bY$-frameworks $(\bY,\bA_\bY,\bZ_\bY,\bX_\bY)$, $\bY\in \binom{[N]}{k}$, such that for every two distinct 
$\bY_1,\bY_2\in \binom{[N]}{k}$, it holds that $\bX_{\bY_1}\cap \bZ_{\bY_2}\not \subseteq \bX_ {\bY_2}$.
\medskip\\ 
\textit{Proof of the claim.} Given $\bY$, we say that a $\bY$-framework $(\bY,\bA,\bZ,\bX)$ is \textit{random}\index{random framework} if 
\vspace*{-1em}
\begin{itemize}
\item $\bA$ is chosen uniformly at random among all subsets of $[N]$ on $0.1N-k$ elements, 
\item $\bZ=[N]\setminus(\bY\cup\bA)$, and 
\item each element of $\bZ$ is included in $\bX$ independently at random with probability $\tfrac12$.
\end{itemize}
Draw a random $\bY$-framework $(\bY,\bA_\bY,\bZ_\bY,\bX_\bY)$ for every $\bY\in \binom{[N]}{k}$. 
Since $|\bZ_{\bY}|=0.9N$, we know that for any two $\bY_1,\bY_2\in \binom{[N]}{k}$,
$$0.8N\le |\bZ_{\bY_1}\cap \bZ_{\bY_2}|\le 0.9N.$$

Recall that an event $E(n)$ holds with \textit{high probability}\index{high probability} if $\PPP(E(n))\to 1$ as $n\to \infty$.
Next, we shall show that with high probability, every two distinct $\bY_1,\bY_2\in \binom{[N]}{k}$ have the property that 
$$|\bX_{\bY_1}\cap \bZ_{\bY_2}|\ge0.1 N.$$
Note that each element of $\bZ_{\bY_1}\cap \bZ_{\bY_2}$ is contained in $\bX_{\bY_1}\cap \bZ_{\bY_2}$ independently with probability $\tfrac12$. 
Therefore, $$|\bX_{\bY_1}\cap \bZ_{\bY_2}|\sim \text{Bin}\big(|\bZ_{\bY_1}\cap \bZ_{\bY_2}|,\tfrac12\big)\quad 
\text{ and }\quad \mathbb{E}(|\bX_{\bY_1}\cap \bZ_{\bY_2}|)=\tfrac12|\bZ_{\bY_1}\cap \bZ_{\bY_2}|.$$
Chernoff's inequality, see (\ref{eq:chernoffs}), provides that
\begin{eqnarray*}
\PPP(|\bX_{\bY_1}\cap \bZ_{\bY_2}|\hspace*{-1pt}\le \hspace*{-1pt} 0.1 N)&\!\!=\!\!& \PPP\left(\!|\bX_{\bY_1}\cap \bZ_{\bY_2}|\hspace*{-1pt}
\le\hspace*{-1pt}\frac{|\bZ_{\bY_1}\cap \bZ_{\bY_2}|}{2}\hspace*{-1pt}-\hspace*{-1pt}\left(\frac{|\bZ_{\bY_1}\cap \bZ_{\bY_2}|}{2}-0.1 N\right)\!\right)\\
&\!\!\le \!\!& \exp\left(-\frac{\big(\frac{|\bZ_{\bY_1}\cap \bZ_{\bY_2}|}{2}-0.1 N\big)^2}{|\bZ_{\bY_1}\cap \bZ_{\bY_2}|}\right)\\
&\!\!\le \!\!& \exp\left(-\frac{(0.4-0.1)^2}{0.9}\cdot N\right)\\
&\!\!= \!\!& \exp\left(-0.1 N\right).
\end{eqnarray*}

Let $E_1$ be the event that for the collection of random $\bY$-frameworks, there exist two distinct $\bY_1,\bY_2\in\binom{[N]}{k}$ with $|\bX_{\bY_1}\cap \bZ_{\bY_2}|\le 0.1N$. The probability of $E_1$ is
\begin{eqnarray*}
\PPP(E_1) & \le & \sum_{\bY_1,\bY_2\in\binom{[N]}{k}} \PPP(|\bX_{\bY_1}\cap \bZ_{\bY_2}|\le 0.1 N)\\
&\le & \binom{N}{k}^2 \exp\left(-0.1 N\right)\\
&\le &  \binom{N}{k}^2 2^{-0.1N}\to 0, \text{ as }  n\to\infty,
\end{eqnarray*}
where we used property (ii) in the last line.
We conclude that with high probability, the event $E_1$ does not occur.
From now on, assume that for any distinct $\bY_1,\bY_2\in \binom{[N]}{k}$,\:\:$|\bX_{\bY_1}\cap \bZ_{\bY_2}|\ge 0.1 N$.
\\

Let $\bY_1,\bY_2\in \binom{[N]}{k}$ be distinct. Recall that each element of $\bX_{\bY_1}\cap \bZ_{\bY_2}$ is included in~$\bX_{\bY_2}$ independently with probability $\tfrac12$. This implies that
$$\PPP(\bX_{\bY_1}\cap\bZ_{\bY_2}\subseteq \bX_{\bY_2})=\left(\frac12\right)^{|\bX_{\bY_1}\cap\bZ_{\bY_2}|}\le 2^{-0.1N}.$$
Let $E_2$ be the event that there are distinct $\bY_1,\bY_2\in\binom{[N]}{k}$ for which $\bX_{\bY_1}\cap \bZ_{\bY_2} \subseteq \bX_ {\bY_2}$.
Using property (ii), we see that
\begin{eqnarray*}
\PPP(E_2)&\le& \sum_{\bY_1,\bY_2\in\binom{[N]}{k}} \PPP(\bX_{\bY_1}\cap\bZ_{\bY_2}\subseteq \bX_{\bY_2})\\
&\le &\binom{N}{k}^2 2^{-0.1N} \to 0, \text{ as }n\to\infty.
\end{eqnarray*}
Thus, with high probability, the event $E_2$ does not occur. Consequently, there exists a collection of $\bY$-frameworks, $\bY\in \binom{[N]}{k}$, such that $\bX_{\bY_1}\cap \bZ_{\bY_2}\not \subseteq \bX_ {\bY_2}$ for any two distinct $\bY_1,\bY_2\in \binom{[N]}{k}$, which proves the claim.
\\

Fix a collection of $\bY$-frameworks, $\bY\in \binom{[N]}{k}$, as obtained from the claim. 
For every $\bY\in \binom{[N]}{k}$, select an arbitrary $P$-free $\bY$-blocker $\cF'_\bY$ in $\QQ(\bY\cup\bA_\bY)$.
Note that $\cF'_\bY$ exists, because $|\bY\cup \bA_\bY|=0.1N\ge m_P(k)$ by property (i).
We ``shift'' the vertices of $\cF'_\bY$ by $\bX_\bY$, i.e., let $$\cF_\bY=\big\{Z\cup \bX_\bY : ~~ Z\in \cF'_\bY \big\}.$$
Note that $\cF_\bY$ is isomorphic to $\cF'_\bY$, thus $\cF_\bY$ is a $P$-free.

Recall that $[N]=\bY\cup \bA_{\bY} \cup \bZ_{\bY}$.
We claim that $\cF_\bY$ is a  $\bY$-blocker in $\QQ([N])$.
Consider an arbitrary $(\bA_\bY\cup\bZ_\bY)$-good copy $\QQ$ of $\QQ(\bA_\bY\cup\bZ_\bY)$ in $\QQ([N])$.
We shall show that $\QQ$ has a vertex in common with $\cF_\bY$, by using that $\cF'_\bY$ is a $\bY$-blocker.
It is straightforward to check that the induced subposet $\big\{ U \in \QQ : ~ U\cap \bZ_\bY=\bX_\bY\}$ is an $\bA_\bY$-good copy of $\QQ(\bA_\bY)$.
Next, we apply a ``reverse shift'' to this subposet.
That is, let $\QQ'$ be the poset obtained from $\big\{ U \in \QQ : ~ U\cap \bZ_\bY=\bX_\bY\}$ by element-wise deleting $\bX_\bY$.
Note that $\QQ'$ is isomorphic to $\big\{ U \in \QQ : ~ U\cap \bZ_\bY=\bX_\bY\}$, in particular $\QQ'$ is a copy of $\QQ(\bA_\bY)$.
Moreover, $\QQ'$ is $\bA_\bY$-good and a subposet of $\QQ(\bY\cup \bA_\bY)$.
Since $\cF'_\bY$ is a $\bY$-blocker in $\QQ(\bY\cup\bA_\bY)$, $\cF'_\bY$ has a vertex $F$ in common with $\QQ'$.
Consider the shifted vertex $F\cup \bX_\bY$ in $\QQ([N])$.
On the one hand, the definition of $\cF_\bY$ implies that $F\cup \bX_\bY\in \cF_\bY$.
On the other hand, using that $F\in\QQ'$, we see that $F\cup \bX_\bY$ is a vertex in $\QQ$.
Thus, $F\cup \bX_\bY \in \cF_\bY \cap \QQ$, as desired.
This implies that $\cF_\bY$ is a  $\bY$-blocker in $\QQ([N])$.

We shall show that distinct blockers $\cF_{\bY}$ are parallel, i.e., element-wise incomparable. 
For two arbitrary distinct $\bY_1,\bY_2\in\binom{[N]}{k}$, pick arbitrary vertices $U_1\in \cF_{\bY_1}$ and $U_2\in \cF_{\bY_2}$. 
Since $\bX_{\bY_1}\cap \bZ_{\bY_2}\not \subseteq \bX_ {\bY_2}$, we find a ground element $a\in (\bX_{\bY_1}\cap \bZ_{\bY_2}) \setminus \bX_ {\bY_2}$, so in particular, $a\in U_1 \setminus U_2$. Similarly, we find an element $b\in U_2\setminus U_1$, thus $U_1 \inc U_2$.
Therefore, the blockers $\cF_{\bY_1}$ and $\cF_{\bY_2}$ are parallel.

Let $c\colon \QQ([N])\to \{\text{blue}, \text{red}\}$ be the blue/red coloring mapping $Z\in\QQ([N])$ to 
\begin{equation} \nonumber
c(Z) = 
 \begin{cases}
\text{blue},  		\quad&\mbox{ if } ~~  Z\in \cF_\bY \text{ for some }\bY\in\binom{[N]}{k} \\
\text{red}, 		\quad&\mbox{ otherwise.}
\end{cases}
\end{equation}
\vspace*{-1em}
\begin{itemize}
\item Assume that there is a blue copy of $P$. Each blue vertex is contained in a blocker~$\cF_\bY$. 
Since $P$ is connected and the blockers $\cF_\bY$ are pairwise parallel, the copy of $P$ is contained in one of the blockers $\cF_\bY$. This is a contradiction, because $\cF_\bY$ is $P$-free.

\item Finally, assume that there is a red copy $\QQ$ of $Q_n$. The Embedding Lemma, Lemma~\ref{lem:embed}, implies that this copy is $\bX$-good for some $n$-element subset $\bX\subseteq[N]$.
There is a blue $([N]\setminus\bX)$-blocker in $\QQ([N])$, which by definition, contains a vertex of $\QQ$.
This contradicts the assumption that $\QQ$ is red.
\end{itemize}
\vspace*{-2em}
\end{proof}

\begin{proof}[Proof of Theorem \ref{thm:blockerLB}]
We shall show that $R(P,Q_n)>n+k$, where $k=\Omega\left(\tfrac{n^2}{m_P(n)}\right)$.
Since $m_P$ is asymptotically smooth, there exists a constant $c=c(P)\ge 1$ such that for large~$\ell$,
\begin{equation}\label{eq:smooth}
m_P\left(\frac{\ell^2}{m_P(\ell)}\right) \le c\ell.
\end{equation}
By choosing $c$ sufficiently large, we can suppose that $\log(200c^2e)+1\le 4c^2$.
Let $k=\tfrac{n^2}{100c^2 m_P(n)}$ and $N=n+k$. 
Next, we shall show that $n$, $k$, and $N$ meet conditions (i) and (ii) of Lemma \ref{lem:QnN:framework}.

It follows from the definition of $m_P$, that this function is non-decreasing. Thus, by~(\ref{eq:smooth}),
\begin{eqnarray*}
m_P(k) & = & m_P\left(\frac{n^2}{100c^2 m_P(n)}\right)\\
& \le  & m_P\left(\frac{\left(\frac{n}{10c}\right)^2}{m_P\left(\frac{n}{10c}\right)}\right)\\
& \le & c \cdot \frac{n}{10c}\\
& \le & \frac{n}{10} \le \frac{N}{10}.
\end{eqnarray*}
This proves condition (i) of Lemma \ref{lem:QnN:framework}.
Recall that trivially $m_P(n)\ge n$, thus $k\le \tfrac{n}{100c^2}$, and in particular $N =n+k \le 2n$.
Using that $N \le 2n$, $k=\tfrac{n^2}{100c^2 m_P(n)}$, and $m_P(n)\ge n$, we see that
\begin{eqnarray*}
\log \binom{N}{k}&\le &  k \log \left(\frac{eN}{k}\right) \\
&\le & k\log\left(\frac{2en}{k}\right)\\
&= & \frac{n^2}{100c^2m_P(n)} \left( \log(200c^2e)+ \log \left(\frac{m_P(n)}{n}\right)\right)\\
&\le & \frac{n}{100c^2} \left( \log(200c^2e)	+ \frac{n}{m_P(n)}\log\left(\frac{m_P(n)}{n}\right)\right).
\end{eqnarray*}
\noindent Note that $\tfrac{\log(x)}{x}\le 1$ for any $x\ge 1$, so in particular, $\frac{n}{m_P(n)}\log\frac{m_P(n)}{n}\le 1$.
Recall that $\log(200c^2e)+1\le 4c^2$, thus
$$ \log \binom{N}{k}\le  \frac{n}{100c^2}	\left( \log(200c^2e)+ 1\right) \le 0.04n.$$
Therefore, $\binom{N}{k}^2 2^{-0.1N }\le 2^{0.08n - 0.1N}\to 0$ for $n\to\infty$, where we used that $n\le N$.

Consequently, conditions (i) and (ii) in Lemma \ref{lem:QnN:framework} hold for $n$ and $k$, and this lemma provides the desired Ramsey bound $R(P,Q_n)>N$.
\end{proof}

\subsection{Proof of Theorem \ref{thm:blockerUB}}

Before presenting a proof of Theorem \ref{thm:blockerUB}, we show a preliminary observation.
\begin{lemma}\label{lem:QnN:nk}
Let $n$ and $k$ be fixed integers. Let $P$ be a poset.
If $R(P,Q_n)>n+k$, then $m_P(k)\le n+k$.
\end{lemma}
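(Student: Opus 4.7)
The plan is to unpack the definitions on both sides. From $R(P,Q_n) > n+k$ we extract a concrete blue/red coloring of $\QQ([n+k])$, and then show that the set of blue vertices furnishes a $P$-free $[k]$-blocker in exactly the sense needed for $m_P(k) \le n+k$.

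First I would set $N = n+k$. By the definition of the poset Ramsey number, the inequality $R(P,Q_n) > N$ gives a blue/red coloring $c$ of $\QQ([N])$ that contains neither a blue copy of $P$ nor a red copy of $Q_n$. Let $\cF$ denote the subposet of $\QQ([N])$ induced by all blue vertices under $c$. Since $c$ has no blue copy of $P$, the subposet $\cF$ is automatically $P$-free.

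Next, I would single out the ground set $[k] \subseteq [N]$ and apply Lemma~\ref{lem:QnN:blocker-basic}~(i). Taking $\bX = [N]\setminus [k] = \{k+1,\dots,N\}$, which has size $n$, the absence of a red copy of $Q_n$ means that every $\bX$-good copy of $\QQ(\bX)$ must contain a blue vertex. Equivalently, $\cF$ contains a vertex of every $\bX$-good copy of $\QQ(\bX)$, which is precisely the statement that $\cF$ is a $[k]$-blocker in $\QQ([N])$.

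Combining these two observations, $\cF$ is a $P$-free $[k]$-blocker in $\QQ([N])$, so by the definition of $m_P$,
\[
m_P(k) \le N = n+k,
\]
as claimed. There is no substantive obstacle here: the lemma is essentially a direct translation between the two extremal functions, and the main thing to be careful about is just matching the setup of Lemma~\ref{lem:QnN:blocker-basic}~(i) to the labeling convention ($\bY = [k]$) used in the definition of $m_P$.
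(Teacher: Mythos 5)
Your proof is correct and matches the paper's argument exactly: extract a coloring of $\QQ([n+k])$ with no blue $P$ and no red $Q_n$, note the blue vertices form a $P$-free subposet, observe they must hit every $([n+k]\setminus[k])$-good copy of $\QQ([n+k]\setminus[k])$, and conclude via the definition of $m_P$. The explicit appeal to Lemma~\ref{lem:QnN:blocker-basic}~(i) is a harmless clarification of a step the paper states more tersely.
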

\begin{proof}
Let $N=n+k$. If $R(P,Q_n)>N$, then there exists a blue/red coloring of $\QQ([N])$ which contains neither a blue copy of $P$ nor a red copy of $Q_n$.
In particular, there is no red $([N]\setminus [k])$-good copy of $Q_n$, so the subposet consisting of all blue vertices in $\QQ([N])$ is a $P$-free $[k]$-blocker. 
This implies that $m_P(k)\le N=n+k$. 
\end{proof}

\begin{proof}[Proof of Theorem \ref{thm:blockerUB}]
The lower bound in each part follows from Theorem \ref{thm:blockerLB}. 

\noindent \textbf{Part (i):} Suppose that $m_P(\ell)=\big(c + o(1) \big)\ell$ for some constant $c>1$.
Let $$d=\frac{c+1}{c-1} \quad \text{ and }\quad k=\frac{d n^2}{m_P(n)}.$$
We shall show the bound $R(P,Q_n)\le n+k$.
By Lemma \ref{lem:QnN:nk}, it suffices to show that $m_P(k)>n+k$.
Note that $\tfrac{c+1}{2}<c$ for $c>1$.
Recalling the definition of $d$ and $k$ as well as the bound $m_P(\ell)=\big(c + o(1) \big)\ell\ge \ell$, we see that
\begin{eqnarray*}
m_P(k)&=&m_P\left(\frac{dn^2}{m_P(n)}\right) \\
& = & \big(c+o(1)\big) \left(\frac{dn^2}{\big(c+o(1)\big)n}\right)\\
& \ge & \big(d- o(1) \big)n \\
& = & \left(1+ \frac{2}{c-1} - o(1) \right) n \\
& = &  \left( 1 + \frac{dn}{\frac{c+1}{2}n}-o(1) \right) n\\
& > &  \left( 1 + \frac{d n}{\big(c-o(1)\big)n}\right) n\\
& \ge &  \left( 1 + \frac{d n}{m_P(n)}\right) n\\
& = &  n + k.
\end{eqnarray*}

\noindent \textbf{Part (ii):}
Since $m_P$ is asymptotically smooth, there is a real-valued constant $d\ge 1$ such that for large $\ell$,
$$m_P\left(\frac{\ell^2}{m_P(\ell)}\right) >  \frac{2}{d}\cdot \ell.$$
Let $k=\frac{d n^2}{m_P(n)}$. The function $m_P$ is superlinear, so $k\le n$ for large $n$.
We shall show that $m_P(k)>n+k$. 
Using that $m_P$ is non-decreasing, the definition of $d$, and the inequality $k\le n$, we find that
\begin{eqnarray*}
m_P(k)&=&m_P\left(\frac{dn^2}{m_P(n)}\right)\\
& \ge & m_P\left(\frac{d n^2}{m_P(dn)}\right) \\
& > & \frac{2}{d} \cdot dn\\
& \ge & n+k,
\end{eqnarray*}
where the last two lines hold for sufficiently large $n$. Lemma \ref{lem:QnN:nk} implies the desired Ramsey bound.

\noindent \textbf{Part (iii):} It is straightforward to check that $m_P(\ell)=\Theta\big(\ell (\log \dots \log \ell)^s \big)$ is asymptotically smooth, so part (iii) follows from (ii).
\end{proof}

\bigskip

%
%
\section{Concluding remarks}
In this chapter, we showed the Ramsey bound $R(\pN,Q_n)\le n+ O\big(\frac{n}{\log n}\big)$.
A matching lower bound is given by Theorem \ref{thm-MAIN}, which states that for any non-trivial poset $P$,
$$R(P,Q_n)=n+\Omega\left(\frac{n}{\log n}\right).$$
If this lower bound is asymptotically tight in the two leading additive terms for some non-trivial poset $P$, i.e., $R(P,Q_n)=n+\Theta\big(\frac{n}{\log n}\big)$,
we say that $P$ is \textit{modest}\index{modest poset}.
Note that by Theorem \ref{thm:QnN}, $\pN$ is modest. Further modest posets are the complete multipartite posets, see Theorem \ref{thm:QnK}, and subdivided diamonds, see Theorem \ref{thm:QnSD}.
Notably, it remains open whether there exists a non-trivial poset which is \textit{not} modest.
\begin{conjecture}\label{conj:modest}
There is a fixed poset $P$ with $R(P,Q_n)=n + \omega\big(\frac{n}{\log n}\big).$
\end{conjecture}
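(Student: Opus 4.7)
The plan is to attack Conjecture \ref{conj:modest} through the blocker framework developed in Section \ref{sec:QnN:mPk}. By Theorem \ref{thm:blockerLB}, it suffices to exhibit a non-trivial, connected poset $P$ with asymptotically smooth $m_P$ satisfying $m_P(k) = o(k \log k)$; then $R(P, Q_n) - n = \Omega(n^2/m_P(n)) = \omega(n/\log n)$. Equivalently, one must find a poset $P$ admitting $P$-free $[k]$-blockers in a Boolean lattice of dimension $N = k \cdot o(\log k)$, as opposed to the $N = O(k \log k)$ achieved by the $\pLa$-free shrub construction in Lemma \ref{lem:QnV:optimal}. If $m_P$ turns out not to be asymptotically smooth, the same conclusion can instead be reached by running the random framework argument of Lemma \ref{lem:QnN:framework} directly on the construction.

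The natural first step is to diagnose why the shrub is essentially optimal among $\pLa$-free blockers, and then to locate a non-trivial poset where this optimality should fail. The shrub is forced to be tree-like because $\pLa$-avoidance forces every vertex to have a unique ``ancestry chain'' (Proposition \ref{lem:uptree}). For a strictly larger forbidden $P$, the class of $P$-free posets is far richer, and one expects blockers to be able to use directed acyclic graph structures rather than trees. My plan is therefore:
\begin{enumerate}
\item[(1)] Fix a candidate $P$. Primary candidates are $P = Q_m$ for $m \geq 3$ (where the gap between the trivial upper bound $R(Q_m, Q_n) \le (m-2+o(1))n$ and the best known lower bound is widest) and the standard example $S_m$ for large $m$.
\item[(2)] Design a $[k]$-blocker $\cF$ in $\QQ([N])$ whose Hasse diagram is a DAG rather than a tree, letting distinct branches merge whenever doing so does not force a copy of $P$. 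Concretely, represent vertices of $\cF$ by partial functions $f\colon [k] \to \{0,1\}$ together with a ``payload'' drawn from a carefully chosen code in $\QQ([N]\setminus[k])$, where the code is chosen so that any copy of $P$ detected in the blocker forces two merging branches whose payloads are algebraically prohibited.
\item[(3)] Verify via Theorem \ref{thm:blocker} that $\cF$ is indeed a $[k]$-blocker, i.e., that every homomorphism $\cF \to \QQ([k])$ is $[k]$-hitting; and verify $P$-freeness by bounding the height and width of local neighborhoods in $\cF$.
\item[(4)] Apply Theorem \ref{thm:blockerLB}, or run the probabilistic construction of Lemma \ref{lem:QnN:framework} directly, to amplify $\cF$ into a blue/red coloring of the ambient Boolean lattice containing neither a blue copy of $P$ nor a red copy of $Q_n$.
\end{enumerate}

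The principal obstacle is step (2): constructing a $P$-free blocker that is genuinely thinner than a shrub. Any $[k]$-blocker contains at least $2^k$ vertices (Lemma \ref{lem:QnN:blocker-basic}(iii)), so the saving must take place entirely in the ``width'' of the ambient lattice, squeezing into the tight range $N \geq k + \Theta(\log N)$. Even for $P = Q_3$, it is unclear whether $m_{Q_3}(k) = o(k \log k)$ holds; a serious conceptual input beyond Chain Lemma / shrub arguments seems to be required to forbid $Q_3$ while still allowing branches to merge. A secondary obstacle is that every non-trivial poset contains $\pLa$ or $\pV$ as a subposet, so one cannot simply ``escape'' the $\pLa$-structure; instead, one must exploit the fact that the $\pLa$ inside $P$ can be decorated in ways that $P$-freeness no longer forbids.

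If the blocker route resists, a fallback is to abandon blockers and design the blue/red coloring of $\QQ([N])$ directly from algebraic data, for instance randomly colored affine subspaces of $\mathbb{F}_2^N$, in the spirit of Bohman--Peng \cite{BP}. Such constructions exploit global linear structure inaccessible to blockers, which are purely local objects, and also seem to be the most plausible route toward the stronger Conjecture \ref{conj:QnP}.
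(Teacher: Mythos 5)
This statement is a conjecture; the paper does not prove it, so there is no proof to compare against. What you have written is a plan of attack, not a proof, and you are explicit about where the gaps lie, so the only question is whether your plan is coherent and whether it tracks the approach the paper itself hints at.

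Your reduction is correct and matches the paper's own framing: by Theorem~\ref{thm:blockerLB}, exhibiting a non-trivial connected $P$ with asymptotically smooth $m_P$ satisfying $m_P(k) = o(k\log k)$ would give $R(P,Q_n) - n = \Omega(n^2/m_P(n)) = \omega(n/\log n)$, and if smoothness fails, Lemma~\ref{lem:QnN:framework} can be invoked directly for a suitable choice of $k$. The quantity $m_P(k)$ is bracketed by the trivial $m_P(k) \ge k$ and the shrub bound $m_P(k) \le (1+o(1))k\log k$ from~(\ref{eq:QnN:mPk}), and Conjecture~\ref{conj:modest} lives exactly in the question of whether the shrub bound can be beaten by a factor tending to infinity for some $P$. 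Your candidate posets $Q_m$ ($m \ge 3$) and the standard examples coincide with those the paper singles out in its concluding remarks, though the paper's stated reason — every known modest poset has order dimension~$2$, while $Q_3$ and $S_3$ have order dimension~$3$ — is a slightly sharper heuristic than your observation about where the Ramsey bounds are loosest. Your further idea of abandoning blockers in favor of algebraic constructions over $\mathbb{F}_2^N$ is not discussed in the paper for this conjecture and is a plausible additional direction.

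The genuine gap is the one you name yourself: step~(2), the construction of a $P$-free $[k]$-blocker of dimension $k\cdot o(\log k)$, is not known for any non-trivial $P$, and your proposed DAG-with-payload scheme is so far only a heuristic. Your diagnosis — that $\pLa$-freeness forces tree-like blockers via Proposition~\ref{lem:uptree}, whereas a richer forbidden $P$ ought to permit branch merging — is sound as intuition, but nothing in the paper or in your proposal supplies a concrete merging rule that is simultaneously verifiable as $[k]$-hitting (via Theorem~\ref{thm:blocker}) and as $P$-free. Since the paper also leaves this open, your proposal is not wrong; it simply reaches the same wall the paper does.
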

\noindent This conjecture is related to Conjecture \ref{conj:QnP}, in which we propose the general bound $R(P,Q_n)=n+o(n)$ for any fixed poset $P$. 
Known modest posets differ in various poset parameters, for example $SD_{t,t}$ has large height, and $K_{1,t}$ has large width.
However, every known modest poset has order dimension~$2$.
The \textit{order dimension} of $P$, also known as \textit{Dushnik-Miller dimension}, is the minimal number of linear orderings of the vertices in $P$ 
such that $P$ is the poset in which $X\le Y$ for $X,Y\in P$ if and only if in every linear ordering, $X$ is smaller than $Y$.
Natural candidates for proving Conjecture \ref{conj:modest} are the Boolean lattice $Q_3$ and the standard example $S_3$, the $6$-element poset induced by the $1$- and $2$-element subsets in $Q_3$. Both posets have order dimension $3$.
\\

A key ingredient in our approach to bound $R(\pN,Q_n)$ is Theorem \ref{thm:mPk}, in which we showed a connection between the poset Ramsey number of $R(P,Q_n)$ for a poset $P$ and an extremal function for blockers.
In Section \ref{sec:QnN:mPk}, we introduced a closely related extremal function, that is
$$m_P(k)=\min\{N :~ \text{there is a } P\text{-free }[k]\text{-blocker }\cF\text{ in }\QQ([N])\}.$$
A blocker in a Boolean lattice can be seen as a \textit{transversal} of a set of specific smaller Boolean lattices, and is related to other notions of \textit{transversals}, e.g., \textit{clique-transversals} in graphs as introduced by Erd\H{o}s, Gallai, and Tuza \cite{EGT}. Seen in this context, research on extremal functions on blockers might be of independent interest.

We have shown that if $m_P(k)$ behaves ``nicely'', 
then the asymptotic behavior of $R(P,Q_n)-n$ can be determined from a tight bound on $m_P(k)$.
This results can be interpreted as a reduction of the Ramsey setting $R(P,Q_n)$, in which we forbid \textit{all} red copies of $Q_n$, 
to a setting in which we avoid only $\bX$-good copies of $Q_n$ for a \textit{single} $n$-element set $\bX$.
It remains open whether $m_P(k)$ behaves nicely for every non-trivial posets $P$, and subsequently, whether for every such $P$,
$$R(P,Q_n)= n + \Theta\left( \frac{n^2}{m_P(n)}\right).$$

In Conjecture \ref{conj:QnP}, we have suggested that for any fixed poset~$P$,\:\:$R(P,Q_n)=n+o(n)$.
If one does not believe this conjecture, Theorem \ref{thm:blockerUB}~(i) provides an approach to disprove it: If there is a non-trivial, connected poset $P$ such that $\lim_{k\to \infty} \frac{m_P(k)}{k}$ exists and is not equal to $1$, 
then $R(P,Q_n)\ge (1+c)n$ for some constant $c>0$.

\newpage

\chapter{Chain composition and antichain versus large Boolean lattice}\label{ch:QnPA}
\section{Introduction of Chapter \ref{ch:QnPA}}

The \textit{poset Ramsey number} \index{poset Ramsey number} of posets $P$ and $Q$ is defined as
\begin{multline*}
R(P,Q)=\min\{N\in\N \colon \text{ every blue/red coloring of $Q_N$ contains either }\\ 
\text{ a blue copy of $P$ or a red copy of $Q$}\}.
\end{multline*}
The focus of this chapter is to determine $R(P,Q_n)$ for \textit{trivial} posets $P$, i.e., posets that contain neither a copy of $\pLa$ nor a copy of $\pV$.
Previously, in Chapters \ref{ch:QnK}, \ref{ch:QnV}, and \ref{ch:QnN}, we have presented asymptotic bounds on $R(P,Q_n)$ 
for \textit{non-trivial} posets $P$, i.e., posets that have a subposet isomorphic to $\pLa$ or $\pV$.
In that setting, it appears to be out of reach to precisely determine $R(P,Q_n)$ for large $n$.
However, for trivial posets $P$, we already bounded $R(P,Q_n)$ up to an additive constant in Theorem \ref{thm-MAIN}, showing that
$$n+h(P)-1\le R(P,Q_n)\le n+h(P)+\alpha\big(w(P)\big)-1.$$
Here, $h(P)$ denotes the \textit{height}\index{height} of $P$, i.e., the length of a largest chain in $P$, and $w(P)$ denotes the \textit{width}\index{width} of $P$, i.e., the size of a largest antichain in $P$.
Recall that $\alpha(t)$ is the \textit{Sperner number}\index{Sperner number}\index{$\alpha(n)$}, i.e., the smallest integer $N$ such that the Boolean lattice of dimension~$N$ contains an antichain of size $t$.
This chapter focuses on improving this bound by precisely determining $R(P,Q_n)$ for some classes of trivial posets $P$.

Recall that a \textit{chain} \index{chain} $C_t$ of length $t$ is a poset on $t$ vertices forming a linear order. 
A \textit{parallel composition}\index{parallel composition} $P_1\opl P_2$ of posets $P_1$ and $P_2$ is the poset consisting of a copy of $P_1$ and a copy of $P_2$ which are disjoint and element-wise incomparable.
We know from Proposition \ref{lem:uptree} that a poset is trivial if and only if it is a parallel composition of chains $C_{t_1}, \dots, C_{t_\ell}$.
We refer to this as a \textit{chain composition}\index{chain composition} with parameters $t_1,\dots,t_\ell$, denoted by
$$C_{t_1,t_2,\dots,t_\ell}=C_{t_1} \opl C_{t_2} \opl \dots \opl C_{t_\ell}.$$
Throughout this chapter, we use the convention that $t_1\ge t_2 \ge \dots \ge t_\ell$, 
thus every trivial poset has a unique representation as a chain composition.

An \textit{antichain}\index{antichain} $A_\ell$ is a chain composition with parameters $t_1,\dots,t_\ell=1$, i.e., the poset consisting of $\ell$ pairwise incomparable vertices. 
Theorems \ref{thm-MAIN} and \ref{thm:alpha} imply that
$$n\le R(A_t,Q_n)\le n+\alpha(t)\le n+\log t+\tfrac{\log\log t}{2}+2.$$
The same bound can be obtained from Theorem \ref{thm:general} or Theorem \ref{lem:parallel}.
In the first result of this chapter, we exactly determine $R(A_t,Q_n)$, not only for fixed $t\ge 3$, but also if $t$ grows at most double-logarithmic in terms of $n$.
We remark that the cases $t\in\{1,2\}$ are covered by Corollary~\ref{cor:chain} and Theorem~\ref{thm:CC}, respectively.

\begin{theorem}\label{thm:QnA}\label{thm:antichain}
For every two integers $n$ and $t$ with $3\le t\le \log\log n$, $$R(A_t,Q_n)= n+3.$$
\end{theorem}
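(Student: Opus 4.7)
The proof has two parts. For the lower bound $R(A_t,Q_n)\ge n+3$ with $t\ge 3$, since $A_3$ is an induced subposet of $A_t$, it suffices to treat $A_3$. I would construct an explicit $2$-coloring of $\QQ([n+2])$ whose blue subposet has width at most~$2$ (so contains no $A_3$) and whose red subposet contains no copy of $Q_n$. By the Embedding Lemma the latter reduces to: for every $2$-element subset $\bY\subseteq[n+2]$, the blue subposet is a $\bY$-blocker in $\QQ([n+2])$. A natural candidate for the blue subposet is a union of two carefully chosen chains of length $n+3$ sharing the endpoints $\varnothing$ and $[n+2]$; the blocker property at each such $\bY$ can be verified via Theorem~\ref{thm:blocker} by ruling out any $\bY$-avoiding homomorphism into $\QQ(\bY)$.

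For the upper bound, let $\bX\cup\bY=[n+3]$ with $|\bX|=n$, $|\bY|=3$, and fix any blue/red coloring with no red copy of $Q_n$. Applying the Chain Lemma to each of the six linear orderings of $\bY$ produces six blue $\bY$-chains of length $4$; in particular, for each of the three singletons $\{y\}\subseteq\bY$ there is a blue vertex of $\bY$-part $\{y\}$, giving three pairwise incomparable blue vertices and proving $R(A_3,Q_n)\le n+3$. To reach $A_t$ for $t\le\log\log n$, the plan is to exploit the much stronger fact that, since no red $Q_n$ exists, the blue subposet is a $\bY'$-blocker for \emph{every} $3$-element $\bY'\subseteq[n+3]$, and hence by Lemma~\ref{lem:QnN:blocker-basic}(ii) contains blue vertices of every prescribed $\bY'$-part. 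I would then build the target antichain iteratively: having selected $i-1$ pairwise incomparable blue vertices, I would choose a new triple $\bY'\subseteq[n+3]$ so that the induced blue vertex with some singleton $\bY'$-part is incomparable to all previously chosen vertices, using the freedom afforded by the $\binom{n+3}{3}$ choices of $\bY'$.

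The main obstacle will be the iterative step in the upper bound. After $i-1$ blue vertices have been chosen, they impose structural constraints on where a new incomparable blue vertex can be found, and each iteration effectively consumes a few ground elements from the sub-Boolean-lattice into which we descend. The precise accounting of how these constraints degrade the available freedom is what limits the argument to $t\le\log\log n$: the usable ground set shrinks roughly doubly-logarithmically per iteration, and as long as enough ground elements remain to apply the Chain Lemma with a fresh $|\bY'|=3$, the procedure succeeds. Making this accounting rigorous, perhaps via a pigeonhole argument on the $\bX$-parts of the relevant blue vertices combined with the blocker characterization of Theorem~\ref{thm:blocker}, is the technical heart of the proof.
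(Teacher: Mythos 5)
Your lower bound plan is sound in spirit and close to the paper's: the paper colors two vertex-disjoint chains in $\QQ([n+2])$ blue (one running from $\{1\}$ to $[n+2]$, the other from $\varnothing$ to $\{2,\dots,n+2\}$) and verifies directly, via the Embedding Lemma, that no $\bX$-good copy of $Q_n$ is entirely red; your proposed verification through the $\bY$-blocker characterization of Theorem~\ref{thm:blocker} would serve as a legitimate alternative. Your base case for $A_3$ — apply the Chain Lemma to a fixed $3$-element $\bY$ and observe that blue vertices with $\bY$-parts $\{y_1\}$, $\{y_2\}$, $\{y_3\}$ are pairwise incomparable — is also correct.

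The gap is in the iterative extension to $A_t$. When you pick a fresh triple $\bY'$ and invoke the Chain Lemma (or Lemma~\ref{lem:QnN:blocker-basic}(ii)) to obtain a blue vertex $W$ with a prescribed $\bY'$-part, you control only $W\cap\bY'$; the $\bX'$-part of $W$ is arbitrary. To force $W$ incomparable to every previously chosen $Z_j$ you need $W\not\subseteq Z_j$ and $W\not\supseteq Z_j$, and the only elements under your control lie in $\bY'$: you would need, simultaneously for all $j<i$, some element of $\bY'\setminus Z_j$ in $W$ and some element of $Z_j\cap\bY'$ outside $W$. With $|\bY'|=3$ this breaks down after a handful of steps, and your "doubly-logarithmic shrinkage of the usable ground set per iteration" has no concrete mechanism behind it — the ground set is $[n+3]$ throughout and the Chain Lemma is always applied in the full host. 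The paper's actual proof is not iterative at all: assume no blue $A_t$, apply Dilworth's theorem to cover all blue vertices by $t-1$ chains, associate to each such chain a linear ordering $\tau_i$ of $[n+3]$, then use an Erd\H{o}s--Szekeres-type result (Lemma~\ref{lem:QnPA:ordering}, which needs $n+3\ge 2^{2^{t-2}}+1$ and is the source of the $\log\log n$ threshold) to produce three elements $x,y,z$ monotone in all $t-1$ orderings. Applying the Chain Lemma with $\bY=\{x,y,z\}$ and the ordering $(x,z,y)$ yields a blue vertex $Z$ with $Z\cap\bY=\{x,z\}$; but $Z$ lies in some covering chain $\cC_j$, and since $y$ is not between $x$ and $z$ in $\tau_j$, any vertex of $\cC_j$ containing both $x$ and $z$ must also contain $y$ — a contradiction. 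This single global argument entirely sidesteps the accounting problem your iteration would face.
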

\noindent In fact, our result holds for $n\ge 2^{2^{t-2}}-2$, which is a slightly weaker precondition than $t\le \log\log n$.
Furthermore, we prove that if $t$ is large in terms of $n$, the poset Ramsey number $R(A_t,Q_n)$ exceeds $n+3$.

\begin{theorem}\label{thm:antichain2}
Let $n,r,t\in\N$ such that $t> \binom{n+2r+1}{r}$. Then $$R(A_t,Q_n)\ge n+2r+2.$$
In particular, if $t\ge n+4$, then $R(A_t,Q_n)\ge n+4$.
\end{theorem}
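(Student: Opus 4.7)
The plan is to give a layered construction witnessing the lower bound. Let $N = n+2r+1$, and color a vertex $X \subseteq [N]$ blue if $|X| \le r$ or $|X| \ge n+r+1$, and red otherwise. The red vertices lie in exactly $n$ consecutive layers of $\QQ([N])$, so every red chain has length at most $n$, which is strictly less than the height $n+1$ of $Q_n$. Hence no red copy of $Q_n$ can exist in this coloring.

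The main step is to show that every blue antichain has size at most $\binom{n+2r+1}{r}$, which by hypothesis is strictly less than $t$. For this I would invoke the classical symmetric chain decomposition of $\QQ([N])$: this partitions $\QQ([N])$ into symmetric chains, and the number of chains whose minimum element lies in layer $k$ equals $\binom{N}{k} - \binom{N}{k-1}$ for $0 \le k \le N/2$. Such a chain spans layers $k$ through $N-k$, so it meets the blue subposet if and only if $k \le r$. Summing telescopically,
\[
\sum_{k=0}^{r} \left(\binom{N}{k} - \binom{N}{k-1}\right) = \binom{N}{r},
\]
so at most $\binom{N}{r}$ symmetric chains contain a blue vertex. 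Since every antichain picks at most one vertex from each chain of the decomposition, the largest blue antichain has size at most $\binom{n+2r+1}{r} < t$. Combining both observations, the coloring avoids a red $Q_n$ and a blue $A_t$, which yields $R(A_t, Q_n) > N = n+2r+1$, i.e., $R(A_t, Q_n) \ge n+2r+2$.

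The ``in particular'' statement follows by specializing to $r = 1$: if $t \ge n+4$, then $t > n+3 = \binom{n+3}{1}$, so the first part gives $R(A_t, Q_n) \ge n+4$. I expect the symmetric chain step to be the only genuinely nontrivial ingredient; ruling out a red $Q_n$ is a one-line height argument, and the reduction to the particular case is trivial. Minor care is needed only to check that the telescoping indeed counts exactly those chains meeting the blue subposet, which is immediate from the symmetry of each chain about level $N/2$.
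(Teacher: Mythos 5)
Your construction and argument match the paper's proof essentially step for step: the same layered coloring of $\QQ([n+2r+1])$, the same use of the De Bruijn–Tengbergen–Kruyswijk symmetric chain decomposition, and the same conclusion that at most $\binom{n+2r+1}{r}$ chains can meet the blue subposet. The only cosmetic difference is that you count these chains by telescoping over the starting layer, whereas the paper observes directly that they are in bijection with the $r$-element subsets; both yield $\binom{n+2r+1}{r}$.
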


\noindent Stated explicitly in terms of $n$ and $t$, Theorems \ref{thm-MAIN} and \ref{thm:antichain2} provide the following.

\begin{corollary}\label{cor:ac_asym}
For $n,t\in\N$ with $n\ge 3$ and $t\ge 2$,
$$n+\frac{2\log t}{3+\log n}\le R(A_t,Q_n)\le n+\alpha(t)\le n +\log t+\frac{\log\log t}{2}+2.$$
\end{corollary}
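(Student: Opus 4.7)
The plan is to derive both inequalities directly from the two theorems already in hand. The upper bound is immediate: applying Theorem \ref{thm-MAIN}(i) to $P = A_t$, for which $h(A_t) = 1$ and $w(A_t) = t$, yields $R(A_t, Q_n) \le n + h(A_t) + \alpha(w(A_t)) - 1 = n + \alpha(t)$, and then Theorem \ref{thm:alpha} supplies the numerical estimate $\alpha(t) \le \log t + \tfrac{1}{2}\log\log t + 2$.

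For the lower bound I would apply Theorem \ref{thm:antichain2} with a carefully chosen $r$. Set $r = \lceil \log t/(3+\log n)\rceil - 1$, so that $r \ge 0$ (using $t \ge 2$) and
\[
r(3+\log n) < \log t \le (r+1)(3+\log n).
\]
The right-hand inequality immediately gives $2r+2 \ge 2\log t/(3+\log n)$, which is precisely the sublinear term we want. Hence it remains only to verify the hypothesis $t > \binom{n+2r+1}{r}$ of Theorem \ref{thm:antichain2}, after which Theorem \ref{thm:antichain2} delivers $R(A_t, Q_n) \ge n + 2r+2 \ge n + 2\log t/(3+\log n)$.

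The main technical step --- and the only real obstacle --- is to establish the uniform estimate
\[
\binom{n+2r+1}{r} \le 2^{r(3+\log n)} \qquad \text{for all } r \ge 0 \text{ and } n \ge 3,
\]
since combined with the left-hand inequality $r(3+\log n) < \log t$ this yields $\binom{n+2r+1}{r} < t$. I would split the verification into two regimes. When $r \le n$, the crude bound $\binom{n+2r+1}{r} \le (n+2r+1)^r \le (3n+1)^r \le (8n)^r = 2^{r(3+\log n)}$ works directly. When $r > n$, I would use the standard estimate $\binom{N}{k} \le (eN/k)^k$ with $N = n+2r+1$ and $k = r$; since $(n+2r+1)/r = 2 + (n+1)/r \le 3$ for $r \ge n+1$, one has $e(n+2r+1)/r \le 3e \le 8n$ whenever $n \ge 3$, and again $\binom{n+2r+1}{r} \le (8n)^r = 2^{r(3+\log n)}$ follows.

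Combining these pieces finishes the proof. The hypothesis $n \ge 3$ in the statement is used precisely to make the inequality $3e \le 8n$ hold in the regime $r > n$; the remainder of the argument is bookkeeping for the choice of $r$ and the definition of $\alpha$.
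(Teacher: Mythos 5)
Your proof is correct and takes essentially the same approach as the paper: both deduce the upper bound from Theorems \ref{thm-MAIN} and \ref{thm:alpha}, and both deduce the lower bound from Theorem \ref{thm:antichain2} together with a binomial-coefficient estimate of the form $\binom{n+2r+c}{r+c'}\le (\text{const}\cdot n)^{r+c'}$. The only difference is presentational: the paper takes $r$ maximal with $t>\binom{n+2r+1}{r}$ and then bounds $r$ from below via $t\le\binom{n+2r+3}{r+1}\le(2en)^{r+1}$, whereas you pick $r=\lceil\log t/(3+\log n)\rceil-1$ explicitly and verify the hypothesis directly via the dual estimate $\binom{n+2r+1}{r}\le(8n)^r<t$; the two are the same argument read in opposite directions.
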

\medskip

In the second part of this chapter, we provide an exact bound on $R(P,Q_n)$ for chain compositions $P$ of width $w(P)\le 3$, i.e., those consisting of at most $3$ chains. 
Recall that by Corollary \ref{cor:chain}, for any natural numbers $n$ and $t_1$,
$$R(C_{t_1},Q_n)=n+t_1-1.$$

\begin{theorem}
Let $n,t_1,t_2\in\N$ such that $t_1\ge t_2$. \label{thm:QnCC}\label{thm:CC}
Then
$$R(C_{t_1,t_2},Q_n)=n+t_1+1.$$
\end{theorem}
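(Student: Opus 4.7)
The plan is to prove matching upper and lower bounds of $n+t_1+1$.

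For the upper bound, I would apply Walzer's parallel composition result (Theorem \ref{lem:parallel}) to the decomposition $C_{t_1,t_2} = C_{t_1}\opl C_{t_2}$. Since $t_1\ge t_2$, Corollary \ref{cor:chain} gives $\max\{R(C_{t_1},Q_n),R(C_{t_2},Q_n)\}=n+t_1-1$. Combined with $\alpha(2)=2$ (the lattice $Q_2$ contains the antichain $\{\{1\},\{2\}\}$ while $Q_1$ does not), this yields $R(C_{t_1,t_2},Q_n)\le (n+t_1-1)+2=n+t_1+1$.

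For the lower bound, it suffices to show $R(C_{t_1,1},Q_n)>n+t_1$, because $C_{t_1,1}$ is an induced subposet of $C_{t_1,t_2}$. Set $N=n+t_1$ and color $\QQ([N])$ so that the blue subposet is
$$\cF=\{\varnothing\}\cup\{Z\subseteq[N]:\,|Z|\ge n+1\},$$
namely the minimum element together with the top $t_1$ layers, and everything else is red. The verification splits into two parts. First, $\cF$ contains no copy of $C_{t_1,1}$: any chain of length $t_1$ in $\cF$ either contains $\varnothing$, or lies entirely in the top $t_1$ layers; in the latter case the strictly increasing sizes together with $|Z_1|\ge n+1$ force $Z_{t_1}=[N]$. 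In both sub-cases the chain contains a vertex ($\varnothing$ or $[N]$) that is comparable to \emph{every} element of $\cF$, so no blue vertex can be parallel to the chain. Second, $\cF$ is a $Q_n$-blocker: by the Embedding Lemma (Lemma \ref{lem:embed}), any copy of $Q_n$ in $\QQ([N])$ is the image of an $\bX$-good embedding $\phi(X)=X\cup f(X)$ with $|\bX|=n$; if $f(\varnothing)=\varnothing$ then $\phi(\varnothing)=\varnothing\in\cF$, otherwise $|\phi(\bX)|=n+|f(\bX)|\ge n+|f(\varnothing)|\ge n+1$, so $\phi(\bX)\in\cF$. In either case the copy meets $\cF$, so red contains no $Q_n$.

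The main obstacle is finding the right lower-bound construction. A naive layered coloring (top $t_1$ layers blue) blocks every red $Q_n$ by a standard $\bX$-good argument, but its blue part has width larger than one and already contains $C_{t_1,t_2}$ as soon as $t_1\ge 2$; conversely, a purely chain-like blue poset easily avoids $C_{t_1,t_2}$ but is too sparse to block every $Q_n$-copy. The resolution is to ``pinch'' the blue poset by adjoining the single vertex $\varnothing$ to the top $t_1$ layers, producing a subposet with a unique global minimum and a unique global maximum. Every long blue chain is forced through one of these two extremes, which rules out parallel blue chains, while the top layer together with $\varnothing$ is still rich enough to intersect every $\bX$-good $Q_n$-copy via a direct size argument.
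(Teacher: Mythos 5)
Your proof is correct, and both halves arrive at $n+t_1+1$, but the upper bound takes a genuinely different route from the paper.

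For the upper bound, you invoke Walzer's parallel-composition lemma (Theorem \ref{lem:parallel}) with $\ell=2$ and $\alpha(2)=2$; this is clean and modular. The paper instead argues directly: in $\QQ([n+t_1+1])$ with no red $Q_n$, Corollary \ref{cor:chain} yields a blue chain of length $t_1+2$; trimming off its endpoints leaves a blue $C_{t_1}$ and frees two ground elements $a,b$, and then the sublattice $\{Z:b\in Z,\ a\notin Z\}$ of dimension $n+t_1-1\ge R(C_{t_2},Q_n)$ supplies a parallel blue $C_{t_2}$. Your Walzer-based argument buys economy here, but note that the paper's direct method is what carries over to Theorem \ref{thm:QnCCC}: for three chains, Walzer would give only the uniform bound $n+t_1-1+\alpha(3)=n+t_1+2$ and would miss the dichotomy $R(C_{t_1,t_2,t_3},Q_n)=n+t_1+1$ when $t_1>t_2+1$. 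The lower bound you give is essentially the paper's — a layered coloring of $\QQ([n+t_1])$ with layer $0$, layer $n+t_1$, and $t_1-1$ more blue layers — with your coloring being the specific choice where the extra blue layers are $n+1,\dots,n+t_1-1$; the "pinch through $\varnothing$ or $[N]$" argument is exactly the paper's layer-count contradiction.

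One small correction to your motivational commentary (it does not affect your actual proof): with only the top $t_1$ layers of $\QQ([n+t_1])$ blue, the identity embedding $\phi(X)=X$ already gives a red $Q_n$, so that coloring does \emph{not} block red $Q_n$; and since any blue chain of length $t_1$ inside the top $t_1$ layers must terminate at $[N]$, which is comparable to every vertex, that coloring also does \emph{not} contain a blue $C_{t_1,t_2}$. So the role of adjoining $\varnothing$ is to kill the red $Q_n$, not to eliminate a blue $C_{t_1,t_2}$ that was already absent.
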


\begin{theorem}\label{thm:QnCCC}\label{thm:CCC}
Let $n,t_1,t_2,t_3\in\N$ with $t_1\ge t_2\ge t_3$. 
Then
$$R(C_{t_1,t_2,t_3},Q_n)=\begin{cases}n+t_1+1, \quad &\text{ if }t_1> t_2+1\\ n+t_1+2, \quad &\text{ if }t_1\le t_2+1.\end{cases}$$
\end{theorem}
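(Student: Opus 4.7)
The proof divides into two cases. For both upper bounds, I would invoke Walzer's parallel composition bound, Theorem \ref{lem:parallel}. In Case 1 ($t_1 \ge t_2+2$), write $C_{t_1,t_2,t_3} = C_{t_1} \opl C_{t_2,t_3}$: combining Theorem \ref{lem:parallel} with Corollary \ref{cor:chain}, Theorem \ref{thm:QnCC}, and $\alpha(2)=2$ gives
\[
R(C_{t_1,t_2,t_3},Q_n) \le \max\{n+t_1-1,\,n+t_2+1\}+2 = n+t_1+1,
\]
where the maximum equals $n+t_1-1$ because $t_1-1\ge t_2+1$. In Case 2 ($t_1\le t_2+1$), decompose instead as $C_{t_1,t_2,t_3} = C_{t_1}\opl C_{t_2}\opl C_{t_3}$ and use $\alpha(3)=3$ to obtain $R(C_{t_1,t_2,t_3},Q_n) \le (n+t_1-1)+3 = n+t_1+2$.

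The lower bound in Case 1 is immediate by monotonicity: $C_{t_1,t_2}$ is an induced subposet of $C_{t_1,t_2,t_3}$, so any blue/red coloring witnessing $R(C_{t_1,t_2},Q_n)>N$ also witnesses $R(C_{t_1,t_2,t_3},Q_n)>N$, and Theorem \ref{thm:QnCC} then yields $R(C_{t_1,t_2,t_3},Q_n) \ge n+t_1+1$. The only genuine content is therefore the lower bound in Case 2, which amounts to constructing a blue/red coloring of $Q_{n+t_1+1}$ containing neither a blue copy of $C_{t_1,t_2,t_3}$ (where $t_2\in\{t_1-1,t_1\}$) nor a red copy of $Q_n$. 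When $t_1=1$ this reduces to $R(A_3,Q_n)=n+3$ from Theorem \ref{thm:antichain}, so the new content is the range $t_1 \ge 2$.

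For the Case 2 construction, the plan is to partition the ground set $[n+t_1+1]=\bX\cup\bY$ with $|\bX|=n$ and $|\bY|=t_1+1$, and to color each vertex $Z$ by a rule depending jointly on $Z\cap\bX$ and $Z\cap\bY$. The Embedding Lemma (Lemma \ref{lem:embed}) translates the absence of a red $Q_n$ into the requirement that the set of blue vertices forms a $\bY'$-blocker (in the sense of Chapter \ref{ch:QnN}) for every $(t_1+1)$-element subset $\bY'\subseteq [n+t_1+1]$. The main obstacle is that, because $n+t_1+1\ge n+3$ whenever $t_1\ge 2$, Theorem \ref{thm:antichain} forces the blue subposet to contain an induced $A_3$; hence a width-$\le 2$ blocker, which would be the easiest way to avoid $C_{t_1,t_2,t_3}$, is not available. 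The design principle is to confine every blue three-antichain to the very top of the blue subposet so that no blue vertex lies strictly below one of its elements while being incomparable to the other two, ruling out any $C_{t_1,t_2,t_3}$. The hard part will be implementing this structural restriction while simultaneously blocking every $\bY'$-good copy of $Q_n$; I expect to achieve this by starting from the layered coloring (bottom $n$ layers red, top $t_1+2$ layers blue) and then selectively re-coloring a carefully chosen family of vertices to sever every triple of long parallel blue chains without introducing a red $Q_n$.
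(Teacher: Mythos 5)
Your upper bounds and the Case~1 lower bound are all correct, and the Case~1 upper bound takes a genuinely different route from the paper. You get $n+t_1+1$ in Case~1 from Theorem~\ref{lem:parallel} applied to the two-part split $C_{t_1}\opl C_{t_2,t_3}$ with $\alpha(2)=2$, which is clean. The paper instead argues directly: take a blue $C_{t_1+2}$ from Corollary~\ref{cor:chain}, strip its two endpoints to get a blue $\cC'\cong C_{t_1}$ with ground elements $a\in Z_1$, $b\notin Z_{t_1}$, and then find a blue $C_{t_2,t_3}$ (via Theorem~\ref{thm:QnCC}) inside the parallel sublattice $\{Z:b\in Z,\,a\notin Z\}$ of dimension $n+t_1-1\ge n+t_2+1$. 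Both give the same bound; your version is shorter but relies on $\alpha(2)=2$, whereas the paper's direct construction is self-contained. For the Case~2 upper bound you and the paper are doing the same thing (Theorem~\ref{thm-MAIN}(i) is exactly Walzer's bound with $\alpha(w(P))$), and the Case~1 lower bound by monotonicity from Theorem~\ref{thm:QnCC} also matches.

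The genuine gap is the Case~2 lower bound, which you correctly identify as the only new content but do not supply, and your sketched approach points in the wrong direction. You propose starting from the layering with all $n$ red layers at the bottom and all $t_1+2$ blue layers at the top and then selectively re-coloring; this starting layering will typically contain a blue $C_{t_1,t_2,t_3}$ already and you would need substantial surgery to destroy every copy while keeping red $Q_n$-freeness, with no clear handle on either side. The paper's construction is itself a pure layered coloring of $\QQ([N])$ with $N=n+t_1+1$, no re-coloring needed, but the blue layers are distributed differently: layers $0$, $1$, $N-1$, $N$ are blue together with $t_1-2$ further layers, and the remaining $n$ layers are red. The key structural fact (which your ``confine blue $A_3$s to the top'' principle misses, since here blue antichains appear at both ends) is this: any blue copy of $C_{t_1,t_2,t_3}$ avoids $\varnothing$ and $[N]$ (they are comparable to everything), so its longest blue chain must occupy all $t_1$ remaining blue layers in $\{1,\dots,N-1\}$, forcing its endpoints to be a singleton $\{a\}$ and a co-singleton $[N]\setminus\{b\}$. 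Anything in $\cP$ incomparable to both is trapped in the sublattice $\{Z:b\in Z,\,a\notin Z\}$, whose inherited layered coloring again has singleton blue layers $\{b\}$ and $[N]\setminus\{a\}$ at its extremes; since $t_2\ge t_1-1$, the second parallel chain is long enough that it must hit one of these universally comparable vertices, which then cannot be incomparable to the third component. It is this ``double singleton trap at both ends'' that makes the layered coloring succeed, and a one-sided starting layering cannot reproduce it.
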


\noindent These results imply an improved general lower bound for trivial posets. 
In particular, if $P$ is trivial and has width $w(P)\ge 2$, then $R(P,Q_n)\ge n+h(P)+1$.
For non-trivial posets $P$, it follows from Theorem \ref{thm-MAIN} that $R(P,Q_n)\ge n+h(P)+1$ for large $n$.
However, for small $n$, the general lower bound does not extend to non-trivial posets. For example, it can be easily checked that $R(\pV,Q_1)=3<4$.

Let $\bZ$ be an $N$-element set. Recall that $\QQ(\bZ)$ denotes the Boolean lattice with ground set~$\bZ$.
For $\ell\in\{0,\dots,N\}$, recall that \textit{layer $\ell$}\index{layer} of $\QQ(\bZ)$ is the subposet $\{Z\in\QQ(\bZ) : ~ |Z|=\ell\}$.
Note that $\QQ(\bZ)$ consists of $N+1$ pairwise disjoint layers, and that each layer forms an antichain in $\QQ(\bZ)$.
A blue/red coloring of a Boolean lattice is \textit{layered} if within each layer every vertex receives the same color.

This chapter is structured as follows.
Section \ref{sec:QnPA:antichain} focuses on antichains, and contains proofs of Theorems \ref{thm:antichain} and \ref{thm:antichain2} as well as Corollary \ref{cor:ac_asym}. In Section~\ref{sec:QnPA:triv}, we show Theorems \ref{thm:QnCC} and \ref{thm:QnCCC}.
The content of this chapter is published in \textit{Discrete Mathematics}, 2024 \cite{QnPA}. 


\section{Exact bound on $R(A_t,Q_n)$}\label{sec:QnPA:antichain}

\subsection{Erd\H{o}s-Szekeres variant}

A sequence $T$ is a \textit{subsequence}\index{subsequence} of another sequence $S=(a_1,\dots,a_m)$ if there exist indices $1\le i_1 < \dots < i_\ell \le m$ such that $(a_{i_1},\dots,a_{i_\ell})=T$.
In preparation for the proof of Theorem \ref{thm:antichain}, we reshape the following well-known result of Erd\H{o}s and Szekeres \cite{ES35}.

\begin{theorem}[Erd\H{o}s-Szekeres \cite{ES35}]\label{thm:ES}
Let $m\in\N$. Let $S=(a_1,a_2,\dots,a_{m^2+1})$ be a sequence consisting of $m^2+1$ distinct elements.
Let $\tau$ be a linear ordering of $\{a_1,\dots,a_{m^2+1}\}$.
Then there exists a subsequence $(a_{i_1},\dots,a_{i_{m+1}})$ of $S$ on $m+1$ elements such that
$$a_{i_1}<_\tau \dots <_\tau a_{i_{m+1}}\quad\text{or}\quad a_{i_{m+1}}<_\tau \dots <_\tau a_{i_1}.$$
\end{theorem}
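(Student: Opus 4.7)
The plan is to adapt the classical pigeonhole argument of Erd\H{o}s and Szekeres, noting that the statement differs from the usual formulation only by replacing the natural order on real numbers with an arbitrary linear order $\tau$ on the distinct elements $\{a_1,\dots,a_{m^2+1}\}$. The proof technique is invariant under this change, since it relies only on the trichotomy property of a linear order.

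First, for every index $i\in[m^2+1]$, I would define two parameters: let $u_i$ be the largest integer $p$ such that there exists a subsequence $(a_{j_1},\dots,a_{j_p})$ of $S$ with $j_p=i$ and $a_{j_1}<_\tau\dots<_\tau a_{j_p}$, and let $d_i$ be defined symmetrically with reversed $\tau$-order. I would assume for contradiction that no $\tau$-monotone subsequence of length $m+1$ exists, so that $u_i,d_i\in[m]$ for every $i\in[m^2+1]$.

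Next, the pigeonhole step: there are only $m^2$ distinct pairs in $[m]\times[m]$, so among the $m^2+1$ pairs $(u_i,d_i)$ two must coincide, say $(u_i,d_i)=(u_j,d_j)$ for some $i<j$. Since all elements of $S$ are distinct and $\tau$ is a linear order, either $a_i<_\tau a_j$ or $a_j<_\tau a_i$. In the first case, prepending a longest $\tau$-increasing subsequence ending at position $i$ to $a_j$ produces a $\tau$-increasing subsequence of length $u_i+1$ ending at position $j$, forcing $u_j\ge u_i+1$, which contradicts $u_i=u_j$. The second case is handled symmetrically using $d_i$ and $d_j$.

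The argument is entirely routine; the only thing to check carefully is that the definitions of $u_i, d_i$ use the position in the sequence $S$ together with the order $\tau$ consistently, which is immediate. No main obstacle is anticipated, so this should close out the lemma directly.
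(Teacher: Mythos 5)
Your proof is correct and is precisely the classical Erd\H{o}s--Szekeres double-counting/pigeonhole argument, transferred verbatim to an arbitrary linear order $\tau$ (which changes nothing, since only trichotomy of $\tau$ is used). The paper itself does not prove this theorem but merely cites \cite{ES35}, so your argument is a faithful reconstruction of the standard proof rather than an alternative to anything in the paper.
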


In this subsection, we refer to a finite sequence $(a_1,\dots,a_m)$ of distinct elements as a \textit{$\bZ$-sequence}\index{$\bZ$-sequence}, where $\bZ=\{a_1,\dots,a_m\}$. 
Note that there is a $1$-to-$1$ correspondence between $\bZ$-sequences and linear orderings of $\bZ$.
We say that a sequence $(b_1,b_2,\dots,b_\ell)$ is an \textit{undirected subsequence}\index{undirected subsequence} of a sequence~$S$ 
if either $(b_1,b_2,\dots,b_\ell)$ or $(b_\ell,b_{\ell-1},\dots,b_1)$ is a subsequence of~$S$. 
Let $\{S^1,\dots,S^d\}$, $d\in\N$, be a collection of $\bZ$-sequences for some set~$\bZ$. 
If $(b_1,b_2,\dots,b_\ell)$ is an undirected subsequence of every $S^i$, $i\in[d]$, it is referred to as a \textit{common undirected subsequence}\index{common undirected subsequence} of $S^1,\dots,S^d$.

\begin{corollary}\label{cor:ES}
Let $\bZ$ be an $(m^2+1)$-element set. Let $S$ and $T$ be two $\bZ$-sequences.
Then there exists a common undirected subsequence of $S$ and $T$ with length $m+1$.
\end{corollary}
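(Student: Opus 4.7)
The plan is to reduce the statement directly to Theorem \ref{thm:ES} by viewing the $\bZ$-sequence $T$ as providing the linear ordering $\tau$ of $\bZ$.

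Concretely, write $S=(a_1,\dots,a_{m^2+1})$, and let $\tau$ be the linear ordering of $\bZ=\{a_1,\dots,a_{m^2+1}\}$ defined by $T$; that is, $x<_\tau y$ if and only if $x$ appears before $y$ in the sequence $T$. I would then apply Theorem~\ref{thm:ES} to the sequence $S$ and the ordering $\tau$, obtaining indices $1\le i_1<\dots<i_{m+1}\le m^2+1$ such that either
\[
a_{i_1}<_\tau \dots <_\tau a_{i_{m+1}} \qquad \text{or} \qquad a_{i_{m+1}}<_\tau \dots <_\tau a_{i_1}.
\]

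Now consider the subsequence $U=(a_{i_1},\dots,a_{i_{m+1}})$ of $S$, which has length $m+1$. By the choice of the indices, $U$ is already a subsequence of $S$ in the direction written. If the first alternative holds, then by the definition of $\tau$ the elements $a_{i_1},\dots,a_{i_{m+1}}$ appear in $T$ in exactly this order, so $U$ is also a subsequence of $T$, hence a common undirected subsequence. If the second alternative holds, then the reversed sequence $(a_{i_{m+1}},\dots,a_{i_1})$ is a subsequence of $T$, and therefore $U$ itself is an undirected subsequence of $T$. In either case $U$ is a common undirected subsequence of $S$ and $T$ of length $m+1$.

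I do not anticipate any real obstacle here, since the corollary is essentially a rebranding of Erd\H{o}s--Szekeres: the only content is the observation that an ascending common subsequence and a descending common subsequence both qualify as ``undirected'' common subsequences. The whole argument should fit in a few lines of LaTeX with no calculations.
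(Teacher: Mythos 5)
Your proof is correct and follows essentially the same route as the paper's: encode $T$ as a linear ordering $\tau$ of $\bZ$, apply Theorem~\ref{thm:ES} to $S$ with respect to $\tau$, and observe that a $\tau$-monotone subsequence of $S$ is precisely a common undirected subsequence of $S$ and $T$. The only difference is that you spell out the two monotonicity cases explicitly, which the paper leaves implicit.
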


\begin{proof}
Let $S=(a_1,a_2,\dots,a_{m^2+1})$, i.e., $\bZ=\{a_1,\dots,a_{m^2+1}\}$. 
Let $j_\ell$, $\ell\in[m^2+1]$, be indices such that $T=(a_{j_1},\dots,a_{j_{m^2+1}})$.
Consider the linear ordering $\tau$ of $\bZ$ given by $a_{j_1}<_\tau \dots <_\tau a_{j_{m^2+1}}$.
Theorem \ref{thm:ES} provides a subsequence $(a_{i_1},\dots,a_{i_{m+1}})$ of $S$ which is also an undirected subsequence of $T$.
In particular, $(a_{i_1},\dots,a_{i_{m+1}})$ is a common undirected subsequence of $S$ and $T$.
\end{proof}

\noindent By iteratively applying Corollary \ref{cor:ES}, we obtain the following lemma.

\begin{lemma}\label{lem:QnPA:ordering}
Let $d\in\N$ and $N\ge 2^{2^{d-1}}+1$. Let $\bZ$ be an $N$-element set.
Let $\tau_1,\dots,\tau_d$ be arbitrary linear orderings of $\bZ$. Then there exist pairwise distinct $x,y,z\in\bZ$ such that for every $i\in[d]$,
$$x <_{\tau_i} y <_{\tau_i} z\qquad \text{ or }\qquad z<_{\tau_i}y<_{\tau_i} x.$$
\end{lemma}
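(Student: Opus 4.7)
The plan is to prove the lemma by iteratively applying Corollary \ref{cor:ES}, collapsing the collection of orderings under consideration by one at each step while approximately taking a square root of the available ground set size. To begin, view each linear ordering $\tau_i$ of $\bZ$ as a $\bZ$-sequence.

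First, I would construct a nested sequence of subsets $\bZ = \bZ_0 \supseteq \bZ_1 \supseteq \dots \supseteq \bZ_{d-1}$ together with $\bZ_k$-sequences $\sigma_k$ for $k = 0, 1, \ldots, d-1$, satisfying $|\bZ_k| \ge 2^{2^{d-1-k}}+1$ and the invariant that $\sigma_k$ is a common undirected subsequence of $\tau_1|_{\bZ_k}, \dots, \tau_{k+1}|_{\bZ_k}$. For the base case, take $\bZ_0 := \bZ$ and $\sigma_0 := \tau_1$; then $|\bZ_0| = N \geq 2^{2^{d-1}}+1$ by hypothesis, and the invariant is vacuous for $k=0$.

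Next, for the inductive step: given $\bZ_k$ and $\sigma_k$ with $k \leq d-2$, set $m = 2^{2^{d-2-k}}$, so that $m^2+1 = 2^{2^{d-1-k}}+1 \leq |\bZ_k|$. Fix any subset $\bZ_k' \subseteq \bZ_k$ of size exactly $m^2+1$, and apply Corollary \ref{cor:ES} to the two $\bZ_k'$-sequences obtained by restricting $\sigma_k$ and $\tau_{k+2}$ to $\bZ_k'$. This yields a common undirected subsequence of length $m+1 = 2^{2^{d-1-(k+1)}}+1$, which I take as $\sigma_{k+1}$, with underlying set $\bZ_{k+1}$. Because undirected subsequences compose transitively, $\sigma_{k+1}$ inherits from $\sigma_k$ the property of being a common undirected subsequence of $\tau_1, \ldots, \tau_{k+1}$, and by construction is also an undirected subsequence of $\tau_{k+2}$, so the invariant is preserved.

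After $d-1$ iterations, $\sigma_{d-1}$ is a common undirected subsequence of all of $\tau_1, \ldots, \tau_d$ whose underlying set has size at least $2^{2^0}+1 = 3$. Taking $x, y, z$ to be the three initial elements of $\sigma_{d-1}$ (in the order dictated by $\sigma_{d-1}$) yields pairwise distinct elements of $\bZ$ such that for each $i \in [d]$ either $x <_{\tau_i} y <_{\tau_i} z$ or $z <_{\tau_i} y <_{\tau_i} x$, as required. The main bookkeeping task will be to verify the size arithmetic $m^2+1 = 2^{2^{d-1-k}}+1$ and $m+1 = 2^{2^{d-2-k}}+1$ at each step and to formalize that an undirected subsequence of an undirected subsequence of $\tau_i$ is again an undirected subsequence of $\tau_i$; no deeper obstacle is expected.
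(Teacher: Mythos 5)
Your argument follows the same route as the paper's proof of Lemma \ref{lem:QnPA:ordering}: an iterative application of Corollary \ref{cor:ES}, folding in one additional linear ordering per step while the available ground set shrinks roughly by a square root each time, terminating after $d-1$ steps with a length-$3$ common undirected subsequence that yields $x,y,z$. The only cosmetic difference is that you truncate $\bZ_k$ to exactly $m^2+1$ elements before invoking Corollary \ref{cor:ES}, whereas the paper applies it to the full $T^i$ (implicitly using that a larger ground set only helps); your indexing also starts at $0$ rather than $1$, but the size arithmetic checks out and the invariant you track is equivalent.
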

\begin{proof}
For each $i\in[d]$, say that $\tau_i$ is given by $a^i_1<_{\tau_i} a^i_2<_{\tau_i}\dots <_{\tau_i} a^i_N$.
Let $S(\tau_i)$ be the sequence $(a^i_1, a^i_2,\dots , a^i_N)$, i.e., $S(\tau_i)$ is a $\bZ$-sequence.
We shall show that there is a common undirected subsequence of $S(\tau_1),\dots,S(\tau_d)$ of length~$3$.
Afterwards, we verify that for such a subsequence $(x,y,z)$, either 
$x <_{\tau_i} y <_{\tau_i} z$ or $z<_{\tau_i}y<_{\tau_i} x$ for each $i\in[d]$.
We proceed with an iterative argument.

Let $T^1=S(\tau_1)$ and note that $|T^1|\ge2^{2^{d-1}}+1$. 
For $i\in[d-1]$, assume that $T^i$ is a common undirected subsequence of all $S(\tau_j)$, $j\in[i]$, and has length at least $2^{2^{d-i}}+1$.
Let $\bZ^i$ be the underlying set of $T^i$, and let $S^{i}$ be the restriction of $S(\tau_{i+1})$ to $\bZ^i$.
We see that both $T^i$ and $S^{i}$ are $\bZ^{i}$-sequences.
By Corollary \ref{cor:ES}, there is a common undirected subsequence $T^{i+1}$ of $T^i$ and $S^{i}$ 
of length at least $(2^{2^{d-i}})^{\frac12}+1=2^{2^{d-(i+1)}}+1$.
Since $T^{i+1}$ is an undirected subsequence of $T^i$, $T^{i+1}$ is also an undirected subsequence of every $S(\tau_j)$, $j\in[i]$.
Furthermore, because $T^{i+1}$ is an undirected subsequence of $S^{i}$, it is also an undirected subsequence of $S(\tau_{i+1})$.

After $d-1$ steps, we obtain a sequence $T^d$ of length at least $2^{2^{0}}+1=3$ which is a common undirected sequence of $S(\tau_1),\dots,S(\tau_d)$.
Choose an arbitrary $3$-element subsequence $(x,y,z)$ of $T^d$. Then $x$, $y$, and $z$ have the desired property.
\end{proof}

We remark that the bound $N\ge 2^{2^{d-1}}+1$ in Lemma \ref{lem:QnPA:ordering} is tight. Indeed, it is widely known that there is a sequence of $m^2$ distinct elements which does not meet the property in Theorem \ref{thm:ES}. Given such a sequence, it is straightforward to construct a collection of $d$ linear orderings of a $2^{2^{d-1}}$-element set such that no triple $x$, $y$, and $z$ has the property of Lemma \ref{lem:QnPA:ordering}.

\subsection{Proof of Theorem \ref{thm:antichain}}\label{sec:QnPA:ac_small}

Recall that an \textit{embedding}\index{embedding} $\phi\colon \QQ^1\to \QQ^2$ of a Boolean lattice $\QQ^1$ into a Boolean lattice $\QQ^2$ is a function such that for any two $X,Y\in \QQ^1$,\:\:$X\subseteq Y$  if and only if  $\phi(X)\subseteq\phi(Y)$.
 
\begin{proof}[Proof of Theorem \ref{thm:antichain}]

Observe that $R(A_{3},Q_n)\le R(A_{t},Q_n)\le R(A_{\log \log n},Q_n)$ for any natural number $t$ with $3\le t\le \log \log n$, 
so it suffices to show that $R(A_3,Q_n)\ge n+3$ and $R(A_{\log\log n},Q_n)\le n+3$. Let $N=n+3$.

For the lower bound, we consider the following blue/red coloring of the Boolean lattice $\QQ([N-1])$.
Color all vertices in the two chain 
$$\big\{[i] : ~ i\in[N-1]\big\}\quad \text{ and }\quad \big\{[N-1]\backslash[i]: ~ i\in[N-1]\big\}$$
in blue, and color all remaining vertices in red.
Among any three distinct blue vertices, we find two vertices contained in the same chain, so there exists no blue copy of $A_3$ in our coloring.

We shall show that there is no red copy of $Q_n$ in $\QQ([N-1])$, so assume that there does exist such a copy.
By the Embedding Lemma, Lemma~\ref{lem:embed}, there is an $n$-element set $\bX\subseteq [N-1]$ and an embedding $\phi\colon \QQ(\bX)\to \QQ([N-1])$ such that the image of $\phi$ is red and $\phi(X)\cap \bX=X$ for every $X\in\QQ(\bX)$.
We shall find a contradiction by finding a blue vertex in the image of $\phi$.

The vertex $\varnothing$ is blue, but $\phi(\varnothing)$ is red, so there exists a ground element $a\in\phi(\varnothing)$. 
We know that $a\in [N-1]\setminus \bX$ because $\phi(\varnothing)\cap\bX=\varnothing$.
Recalling that $\phi$ is an embedding of $\QQ(\bX)$, we see that for every $X\in \QQ(\bX)$,\:\:$\phi(\varnothing)\subseteq \phi(X)$, and thus $a\in\phi(X)$.
Using that $[N-1]$ is blue and $\phi(\bX)$ is red, we similarly find an element $b\in[N-1]\setminus \bX$ such that $b\notin\phi(X)$ for every $X\in\QQ(\bX)$.
In particular, the ground elements $a$ and $b$ are distinct. 

Since $|\bX|=n=(N-1)-2$ and $a,b\notin\bX$, the ground set $[N-1]$ is partitioned into $\bX$ and $\{a,b\}$.
For every $X\in\QQ(\bX)$, we know that $\phi(X)\cap \bX=X$ and $\phi(X)\cap \{a,b\}= \{a\}$.
Therefore, every $X\in\QQ(\bX)$ is mapped to $\phi(X)=X\cup\{a\}$. 
\vspace*{-1em}
\begin{itemize}
\item If $a=1$, then $\phi(\varnothing)=\{a\}=\{1\}$. The vertex $\{1\}$ is colored blue, contradicting the fact that $\phi$ has a red image.
\item If $b>a\ge 2$, then neither $a$ nor $b$ are in $[a-1]$, so $[a-1]\subseteq\bX$. The image $$\phi([a-1])=[a-1]\cup\{a\}=[a]$$ is colored blue. This is a contradiction, because $\phi$ has a red image.
\item If $b<a$, then $[N-1]\backslash [a]\subseteq\bX$. Note that the image $$\phi([N-1]\backslash [a])=([N-1]\backslash [a])\cup\{a\}=[N-1]\backslash [a-1]$$ is a blue vertex, so we reach a contradiction again.
\end{itemize}
\medskip

For the upper bound on $R(A_{\log\log n},Q_n)$, recall that $N=n+3$, and let $\bZ$ be a fixed $N$-element set. 
Consider an arbitrary blue/red coloring of the Boolean lattice $\QQ(\bZ)$ which contains no blue copy of $A_t$, where $t= \log\log n$.
We shall show that there is a red copy of $Q_n$ in $\QQ(\bZ)$.

By Dilworth's theorem, Theorem \ref{thm:dilworths}, there exists a family of $t-1$ chains $\cC_1,\dots,\cC_{t-1}$ that cover all blue vertices.
Without loss of generality, we assume that every $\cC_i$ is a chain on $N+1$ vertices, so a poset on vertices $\varnothing, \{a^i_1\}, \{a^i_1,a^i_2\},\dots, \{a^i_1,\dots,a^i_N\}$, where $\bZ=\{a^i_1,\dots,a^i_N\}$.
We say that each $\cC_i$, $i\in[t-1]$, \textit{corresponds} to the unique linear ordering $\tau_i$ of $\bZ$ given by $a^i_1<_{\tau_i}a^i_2<_{\tau_i}\dots<_{\tau_i}a^i_N$. 
By applying Lemma \ref{lem:QnPA:ordering} to the collection of linear orderings $\tau_i$, $i\in [t-1]$, we
obtain three distinct elements $x,y,z\in\bZ$ such that for every $i\in[t-1]$, 
$$x <_{\tau_i} y <_{\tau_i} z \quad \text{ or } \quad z<_{\tau_i}y<_{\tau_i} x.$$

Assume towards a contradiction that there is no red copy of $Q_n$ in $\QQ(\bZ)$. Let $\bY=\{x,y,z\}$ and $\bX=\bZ\backslash\bY$. 
Let $\tau$ be the linear ordering of $\bY$ defined by $x<_{\tau}z<_\tau y$. 
The Chain Lemma, Lemma \ref{lem:chain}, provides a chain in $\QQ(\bZ)$ which contains a blue vertex~$Z$ such that $Z\cap\bY=\{x,z\}$.
Since $Z$ has color blue, it is covered by a chain $\cC_j$ for some $j\in[t-1]$.
In the linear ordering $\tau_j$ corresponding to $\cC_j$, we know that either $y<_{\tau_j} z$ or $y<_{\tau_j} x$.
This implies that every vertex in $\cC_j$ containing $x$ and $z$ also contains~$y$. This contradicts $Z\in\cC_j$.
\end{proof}

\subsection{Proofs of Theorem \ref{thm:antichain2} and Corollary \ref{cor:ac_asym}}\label{sec:QnPA:ac_large}

If $t$ is large in terms of $n$, we give an improved lower bound on $R(A_t,Q_n)$ using a layered construction, i.e., a blue/red coloring of the host Boolean lattice in which each layer is monochromatic.

For our proof, we need a classic result of extremal set theory.
A chain in an $N$-dimensional Boolean lattice is said to be \textit{symmetric}\index{symmetric chain} 
if it consists of vertices $$X_{\ell}\subset \dots \subset X_{N-\ell}$$ for some $\ell\ge 0$, such that $|X_i|=i$ for every $i\in\{\ell,\dots,N-\ell\}$.
De Bruijn, Tengbergen, and Kruyswijk \cite{BEK} showed the following decomposition result.

\begin{theorem}[De Bruijn-Tengbergen-Kruyswijk \cite{BEK}]\label{thm:BEK}
The vertices of an $N$-dimensional Boolean lattice can be decomposed into pairwise disjoint, symmetric chains.
\end{theorem}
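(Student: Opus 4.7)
The plan is to prove this by induction on $N$, building a symmetric chain decomposition (SCD) of the $N$-dimensional Boolean lattice from one of dimension $N-1$. For the base case $N=0$, the single vertex $\varnothing$ by itself forms a symmetric chain (sitting in layer $0$, which is also layer $N-\ell=0$). So assume we have an SCD of the Boolean lattice $\QQ([N-1])$ into symmetric chains $\cC_1,\dots,\cC_k$.

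Observe that $\QQ([N])$ splits naturally into two parts: the ``lower copy'' $\QQ_0=\{X\in\QQ([N]):N\notin X\}$, which is isomorphic to $\QQ([N-1])$, and the ``upper copy'' $\QQ_1=\{X\cup\{N\}:X\in\QQ([N-1])\}$. For each symmetric chain $\cC_i$ in $\QQ([N-1])$, say on vertices $X_\ell\subset X_{\ell+1}\subset\dots\subset X_{N-1-\ell}$ with $|X_j|=j$, I would construct two symmetric chains in $\QQ([N])$ as follows:
\begin{align*}
\cC_i^\uparrow &= X_\ell\subset X_{\ell+1}\subset\dots\subset X_{N-1-\ell}\subset X_{N-1-\ell}\cup\{N\},\\
\cC_i^\downarrow &= X_\ell\cup\{N\}\subset X_{\ell+1}\cup\{N\}\subset\dots\subset X_{N-2-\ell}\cup\{N\}.
\end{align*}
The chain $\cC_i^\uparrow$ has layer indices running from $\ell$ to $N-\ell$, hence is symmetric in $\QQ([N])$; the chain $\cC_i^\downarrow$ has layer indices running from $\ell+1$ to $N-1-\ell$, which is also symmetric in $\QQ([N])$. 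If $\cC_i$ is a single vertex (i.e.\ $\ell=N-1-\ell$), then $\cC_i^\downarrow$ is empty and we discard it.

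To verify that the family $\{\cC_i^\uparrow,\cC_i^\downarrow:i\in[k]\}$ is an SCD of $\QQ([N])$, I would check three things. First, each resulting poset is indeed a chain in $\QQ([N])$ (obvious from inclusions) and is symmetric (noted above). Second, every vertex is covered: any $Y\in\QQ_0$ equals some $X_j$ in exactly one original chain $\cC_i$, and is covered by $\cC_i^\uparrow$; any $Y\cup\{N\}\in\QQ_1$ with $Y=X_j\in\cC_i$ lies in $\cC_i^\uparrow$ when $j=N-1-\ell$ and in $\cC_i^\downarrow$ when $\ell\le j\le N-2-\ell$. Third, the chains are disjoint: the only potential overlaps are within pairs $(\cC_i^\uparrow,\cC_i^\downarrow)$ or between pairs coming from different $\cC_i$, and both cases are ruled out because the $\QQ_0$-part identifies the original chain uniquely.

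I do not expect a major obstacle here; the only mild care point is the boundary case of a single-vertex input chain, where $\cC_i^\downarrow$ degenerates to the empty sequence and should simply be omitted. Alternatively, one could give a non-inductive proof via parenthesis matching on binary strings, but the inductive construction above is the cleanest.
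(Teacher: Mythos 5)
Your proof is correct and is the standard inductive construction of a symmetric chain decomposition of the hypercube (sometimes attributed to Aigner, and equivalent to the bracketing/parenthesis-matching argument you allude to at the end). The inductive step splitting each chain $\cC_i$ into $\cC_i^\uparrow$ and $\cC_i^\downarrow$ correctly preserves both saturation and the symmetry condition $\ell + (N-\ell)=N$, the covering and disjointness checks are sound, and the degenerate case of a single-vertex chain is handled. The paper itself does not prove this theorem; it states it as a known classical result and cites the original source of De Bruijn, van Ebbenhorst Tengbergen, and Kruyswijk, so there is no in-paper proof to compare against. Your write-up would serve as a self-contained justification for the cited black box.
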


\begin{proof}[Proof of Theorem \ref{thm:antichain2}]
Let $n,r,t\in\N$ with $t> \binom{n+2r+1}{r}$. We shall show that $R(A_t,Q_n)>n+2r+1$.
Let $\QQ(\bZ)$ be the Boolean lattice on an arbitrary ground set $\bZ$ with $n+2r+1$ elements.
Consider the following layered blue/red coloring of $\QQ(\bZ)$.
Color every vertex $Z\in\QQ(\bZ)$ with $|Z|\le r$ or $|Z|\ge n+r+1$ in blue, and all other vertices in red.
We see that $\QQ(\bZ)$ consists of $n+2r+2$ monochromatic layers, of which $2r+2$ are colored blue, and the remaining $n$ layers are colored red.
Since $h(Q_n)=n+1$, there is no red copy of $Q_n$ in this coloring. 

It remains to show that there is no blue copy of $A_t$ in $\QQ(\bZ)$.
We fix a symmetric chain decomposition of $\QQ(\bZ)$ as provided by Theorem \ref{thm:BEK}. 
Let~$\Gamma$ be the collection of only those symmetric chains in $\QQ(\bZ)$ which contain an $r$-element subset of $\bZ$ as a vertex.
For any vertex $Z$ of size at most $r$ or at least $n+r+1$, there is some chain $\cC_Z$ in the decomposition which covers $Z$. 
Using the properties of a symmetric chain, we conclude that $\cC_Z\in\Gamma$, thus the chains in $\Gamma$ cover all vertices of size at most $r$ or at least $n+r+1$.
Thus, all blue vertices are covered by chains in $\Gamma$, whereas $|\Gamma|=\binom{n+2r+1}{r}<t$. 
Therefore, Dilworth's theorem implies that there is no blue copy of $A_t$ in $\QQ(\bZ)$.
\end{proof}

\begin{proof}[Proof of Corollary \ref{cor:ac_asym}]
The upper bound on $R(A_t,Q_n)$ follows from Theorem \ref{thm-MAIN}.
In Theorem~\ref{thm:antichain2}, we showed the lower bound $R(A_t,Q_n)\ge n+2r+2$, where $r$ is the largest non-negative integer with $t> \binom{n+2r+1}{r}$. Note that $r$ is well-defined for $t\ge 2$.
We shall bound $r$ in terms of $n$ and $t$.
Using the maximality of $r$,
$$t\le \binom{n+2r+3}{r+1}\le \left(\frac{e(n+2r+3)}{r+1}\right)^{r+1}\le \left(\frac{en}{r+1}+3e\right)^{r+1}\le (2en)^{r+1},$$
thus $r+1\ge \frac{\log t}{\log(2e)+\log n}\ge \frac{\log t}{3+\log n}$, which implies the desired bound.
\end{proof}
\bigskip

\section{Bounds on $R(C_{t_1,\dots,t_\ell},Q_n)$}\label{sec:QnPA:triv}

\subsection{Proof of Theorem \ref{thm:QnCC}}

\begin{proof}[Proof of Theorem \ref{thm:QnCC}]
%
Let $N=n+t_1+1$. 
To prove that $R(C_{t_1,t_2},Q_n)\le N$, we consider an arbitrarily blue/red colored Boolean lattice $\QQ^1=\QQ([N])$ which contains no red copy of $Q_n$. 
We shall show that there is a blue copy of $C_{t_1,t_2}$.
Corollary~\ref{cor:chain} guarantees the existence of a blue chain $\cC$ of length $t_1+2$, say on vertices $Z_0\subset Z_1 \subset \dots \subset Z_{t_1+1}$.
Let $\cC'$ be the subposet of $\cC$ on vertices $Z_1,\dots,Z_{t_1}$, i.e., the chain of length $t_1$ obtained by discarding the minimal and maximal vertex of $\cC$.
Note that there exists an element $a\in Z_1$, since $Z_1$ has a proper subset $Z_0$.
Similarly, we find an element $b\in[N]\setminus Z_{t_1}$. We obtain that $\{a\}\subseteq Z_1\subset \dots \subset Z_{t_1} \subseteq [N]\setminus\{b\}$.

\begin{figure}[h]
\centering
\includegraphics[scale=0.6]{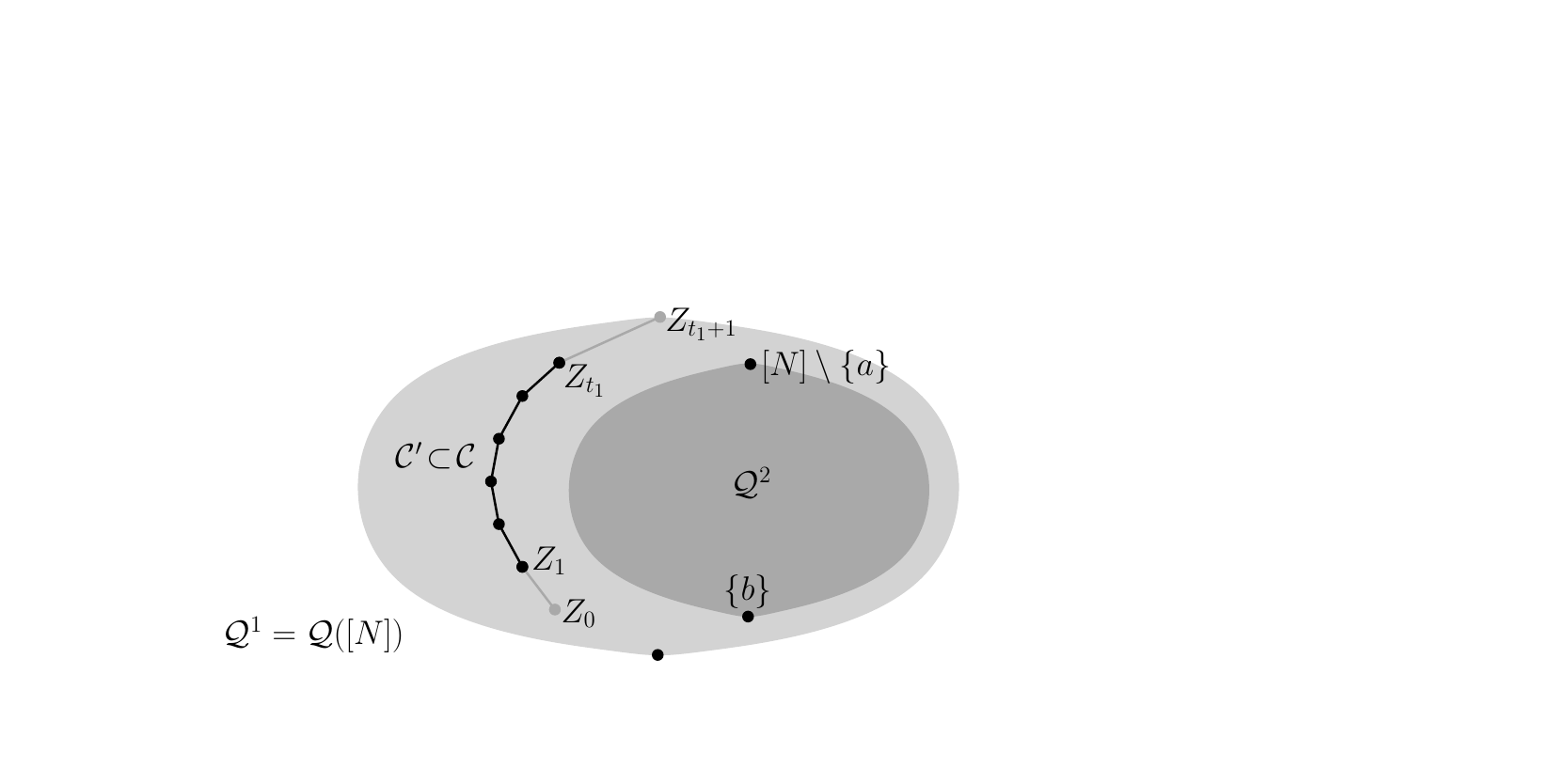}
\caption{The parallel subposets $\cC'$ and $\QQ^2$ in $\QQ^1$.}
\label{fig:QnPA:CQ3}
\end{figure}
We consider the subposet $\QQ^2=\{Z\in\QQ^1 : ~ b\in Z,\ a\notin Z\}$, see Figure \ref{fig:QnPA:CQ3}. Note that $\QQ^2$ is a copy of a Boolean lattice of dimension $N-2=n+t_1-1$. Since $\QQ^1$ contains no red copy of $Q_n$, in particular there is no red copy of $Q_n$ in $\QQ^2$.
Since $R(C_{t_2},Q_n)=n+t_2-1\le n+t_1-1$, there is a blue copy $\cD$ of $C_{t_2}$ in $\QQ^2$.
For any $U\in\cC'$ and $Z\in\cD$, we know that $a\in U\setminus Z$ and $b\in Z \setminus U$, thus $U\inc Z$.
Therefore, $\cC'\cup \cD$ is a blue copy of $C_{t_1,t_2}$, so $R(C_{t_1,t_2},Q_n)\le N$.
\\

It remains to show that $R(C_{t_1,t_2},Q_n)\ge N=n+t_1+1$. 
We shall verify this lower bound by introducing a layered coloring of the Boolean lattice $\QQ^3=\QQ([N-1])$ which contains neither a blue copy of $C_{t_1,t_2}$ nor a red copy of $Q_n$.
Consider a layered blue/red coloring of $\QQ^3$ in which layer $0$ and layer $N-1$, i.e., both one-element layers, are blue, $t_1-1$ arbitrarily chosen additional layers are blue,
and all remaining $N-(t_1+1)=n$ layers are red.

Since $Q_n$ has height $n+1$, but only $n$ layers of $\QQ^3$ are colored red, there is no red copy of $Q_n$.
Assume that there is a blue copy $\cP$ of $C_{t_1,t_2}$.
Note that $h(\cP)=h(C_{t_1,t_2})=t_1$. Because $\QQ^3$ has only $t_1-1$ layers containing blue vertices, including layer $0$ and layer $N-1$, we know that at least one of the vertices $\varnothing$ and $[N-1]$ is in $\cP$.
Each of these two vertices is comparable to all other vertices of $\QQ^3$.
This is a contradiction, because $\cP$ consists of two parallel chains.
Thus, there exists no blue copy $\cP$ of $C_{t_1,t_2}$.
\end{proof}


\subsection{Proof of Theorem \ref{thm:QnCCC}}

\begin{proof}[Proof of Theorem \ref{thm:QnCCC}]
Let $N=n+t_1+1$.
It is a consequence of Theorem \ref{thm:QnCC} that 
$$R(C_{t_1,t_2,t_3},Q_n) \ge R(C_{t_1,t_2},Q_n) = n+t_1+1=N.$$
By Theorem \ref{thm-MAIN}, we see that
$$R(C_{t_1,t_2,t_3},Q_n)  \le n+t_1+\alpha(3)-1=n+t_1+2.$$

First, suppose that $t_1\ge t_2+2$. We shall show that $ R(C_{t_1,t_2,t_3},Q_n)\le N$. 
Our proof is similar to the lower bound proof of Theorem \ref{thm:QnCC}.
Let $\QQ^1=\QQ([N])$, and fix an arbitrary blue/red coloring of $\QQ^1$ which contains no red copy of $Q_n$. 
We shall find a blue copy of $C_{t_1,t_2,t_3}$ in $\QQ^1$.
By Corollary \ref{cor:chain}, there is a blue chain~$\cC$ of length $t_1+2$.
Let $\cC$ consist of vertices $Z_0\subset \dots \subset Z_{t_1+1}$.
Consider the subposet $\cC'$ of $\cC$ on vertices $Z_1,\dots,Z_{t_1}$, which is a chain of length $t_1$.
Note that $Z_1\neq\varnothing$ and $Z_{t_1}\neq [N]$, thus there are ground elements $a,b\in[N]$ such that
$\{a\}\subseteq Z_1\subseteq \dots \subseteq Z_{t_1} \subseteq [N]\setminus\{b\}$.

Let $\QQ^2=\{Z\in\QQ^1 : ~ b\in Z,\ a\notin Z\}$. This subposet of $\QQ^1$ is isomorphic to a Boolean lattice of dimension $N-2=n+t_1-1\ge n+t_2+1$.
There is no red copy of $Q_n$ in $\QQ^2$, so Theorem~\ref{thm:QnCC} yields a blue copy $\cP$ of $C_{t_2,t_3}$ in $\QQ^2$.
For every two vertices $Z\in \cP$ and $U\in \cC'$,  we know that $a\in U\setminus Z$ and $b\in Z \setminus U$, so $Z\inc U$.
Consequently, $\cP\cup\cC'$ is a blue copy of $C_{t_1,t_2,t_3}$ in $\QQ^1$.
\\

From now on, suppose that $t_1\le t_2+1$. Recall that $N=n+t_1+1$. We shall show that $R(C_{t_1,t_2,t_3},Q_n)> N$.
If $t_1=1$, then $C_{t_1,t_2,t_3}$ is an antichain and the desired bound follows immediately from Theorem \ref{thm:antichain}, so say that $t_1\ge 2$.
We construct a layered blue/red coloring of the Boolean lattice $\QQ^3=\QQ([N])$ which neither contains a red copy of $Q_n$ nor a blue copy of $C_{t_1,t_2,t_3}$, as follows.
Color all vertices in the four layers $0$, $1$, $N-1$, and $N$ in blue. Color $t_1-2$ arbitrarily chosen additional layers in blue and all remaining $(N+1)-4-(t_1-2)=n$ layers in red. 
Clearly, this coloring contains no red copy of $Q_n$, since $h(Q_n)=n+1$. 

Assume for a contradiction that there is a blue copy $\cP$ of $C_{t_1,t_2,t_3}$ in $\QQ^3$. 
In~$\cP$, we denote a blue chain of length $t_1$ by $\cC$, say on vertices $Z_1\subset \dots \subset Z_{t_1}$.  Furthermore, there is a chain $\cD$ of length $t_2$ in $\cP$ which is parallel to $\cC$, see Figure \ref{fig:QnPA:CDF}.
Let $F$ be a vertex of~$\cP$ which is neither in $\cC$ nor in $\cD$, i.e., $F$ is incomparable to every vertex in $\cC$ and $\cD$.

\begin{figure}[h]
\centering
\includegraphics[scale=0.6]{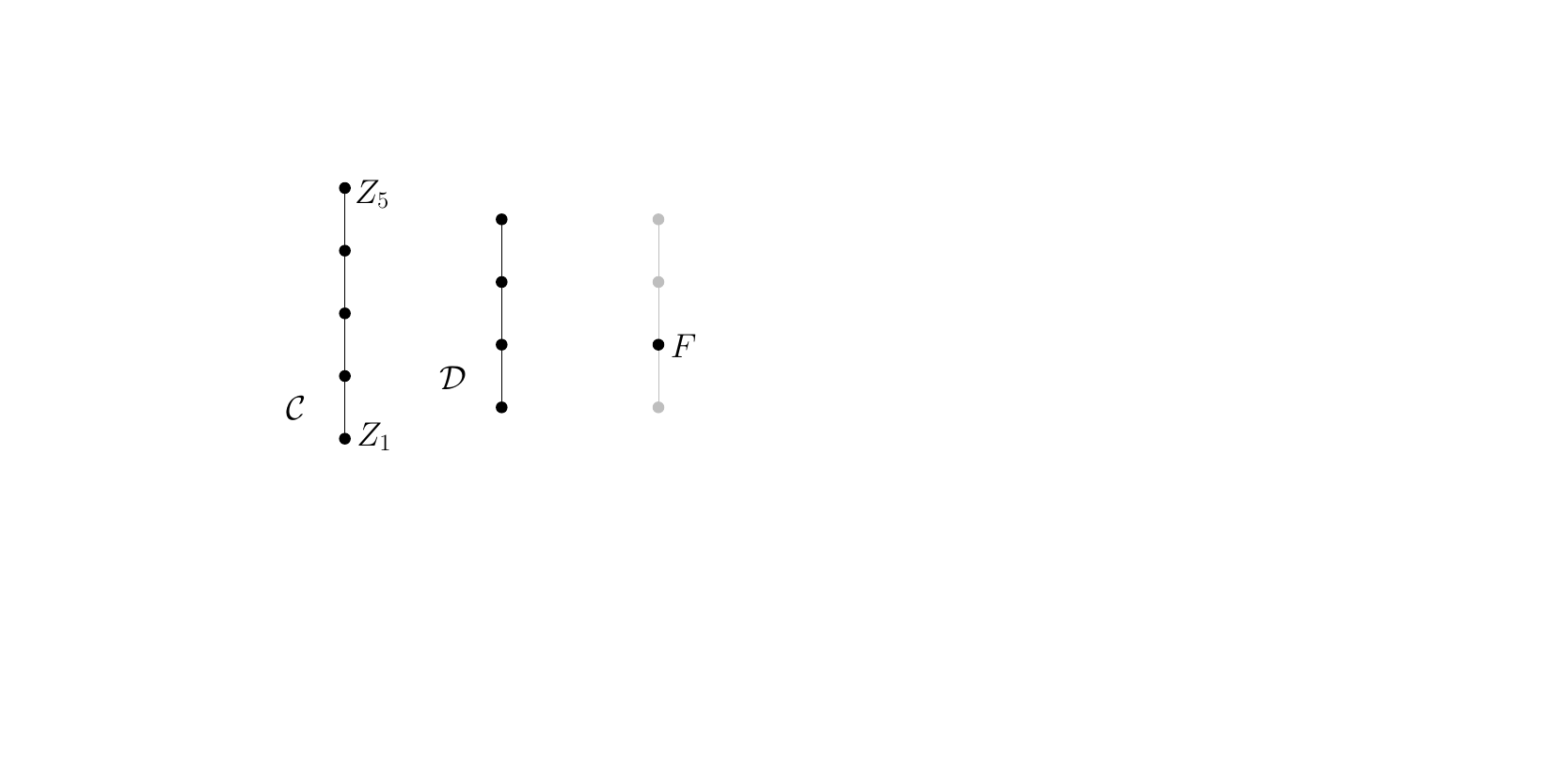}
\caption{Chains $\cC$ and $\cD$, and vertex $F$ in a copy of $C_{5,4,4}$.}
\label{fig:QnPA:CDF}
\end{figure}

Neither $\varnothing$ nor $[N]$ are in $\cP$, because both of these vertices are comparable to every other vertex in $\QQ^3$ and there is no such vertex in $\cP$.
Excluding these two vertices $\varnothing$ and $[N]$, there are precisely $t_1$ layers containing blue vertices, including layer $1$ and layer $N-1$.
Recall that $\cC$ is a blue chain of length $t_1$, thus the smallest vertex $Z_1$ of $\cC$ is in layer~$1$, while the largest vertex $Z_{t_1}$ of $\cC$ is in layer~$N-1$.
Therefore, we find two ground elements $a,b\in[N]$ such that $Z_1=\{a\}$ and $Z_{t_1}=[N]\backslash\{b\}$. Note that $a$ and $b$ are distinct.

Let $\QQ^4=\{Z\in\QQ^3 : ~ b\in Z ,\ a\notin Z\}$.
Since $F$ is incomparable to $Z_1$ and $Z_{t_1}$, we see that $a\notin F$ and $b\in F$. 
This implies that $F\in\QQ^4$, so in particular, 
$$\{b\}\subseteq F \subseteq [N]\backslash \{a\}.$$
Similarly, $\cD\subseteq\QQ^4$.
The blue/red coloring of $\QQ^4$ inherited from $\QQ^3$ is layered and has precisely $t_1$ blue layers.
Two of these blue layers in $\QQ^4$ are the one-element layers given by $\{b\}$ and $[N]\setminus\{a\}$.
Since the chain $\cD$ has height $h(\cD)\ge t_1-1$, either $\{b\}\in \cD$ or $[N]\backslash \{a\}\in \cD$. 
This is a contradiction, because $F$ is incomparable to every vertex in $\cD$, but both $\{b\}$ and $[N]\backslash \{a\}$ are comparable to $F$.
\end{proof}

\bigskip


\section{Concluding remarks}

In this chapter, we studied the poset Ramsey number $R(P,Q_n)$ for trivial posets $P$. 
We determined exact bounds if $P$ has small width or is an antichain.
In particular, Theorems \ref{thm:antichain} and \ref{thm:antichain2} state that $R(A_{\log\log n},Q_n)= n+3 $ and $R(A_{n+4},Q_n)\ge n+4$.
This raises the question of what the largest $t$ is such that $R(A_{t},Q_n)=n+3$. 
Here, we showed that $\log\log n\le t<n+4$, but the asymptotic behavior of $t$ in terms of $n$ remains unclear.
More generally, one can ask for the maximal $t(c)$ such that $R(A_{t(c)},Q_n)=n+c$ for any fixed $c$,
i.e., the largest $t$ such that any blue/red coloring of an $(n+c)$-dimensional Boolean lattice contains either a red copy of $Q_n$ or a blue $A_t$.
 

\newpage

\chapter{Erd\H{o}s-Hajnal problems for posets}\label{ch:QnEH}
\section{Introduction of Chapter \ref{ch:QnEH}}

The classic question in Ramsey theory is to quantify the size of a host structure such that in any coloring of its elements, a large monochromatic substructure exists.
In the setting of graphs, Erd\H{o}s and Hajnal \cite{EH} introduced a related problem: 
Given a fixed graph $H$ edge-colored with colors blue and red, 
determine the minimal order of a complete graph such that any blue/red coloring of its edges contains 
a subgraph isomorphic to $H$ with a matching color pattern, or a monochromatic complete graph on $n$ vertices.
The well-known Erd\H{o}s-Hajnal conjecture states that the answer to the above problem is at most $n^{c(H)}$ where $c(H)$ is a constant, depending on $H$.
This conjecture is wide-open for most graphs $H$. For more details, we refer to a survey by Chudnovsky \cite{Chudnovsky} and other recent results, e.g., \cite{MSZ, Weber, NSS}.
In this chapter, we propose a similar concept for posets.

A \textit{colored poset}\index{colored poset} is a pair $(P,c_P)$, where $P$ is a poset and $c_P\colon P\to\{\text{blue, red}\}$ is a blue/red coloring of the vertices of $P$.
If a poset $P$ has a fixed coloring $c_P$, we usually write $\dot P$ instead of $(P,c_P)$. The \textit{size} of a colored poset $\dot P$ is the size of the underlying poset $P$.
Occasionally, we specify the assigned coloring using an additional superscript. 
In particular, the poset $P$ which is colored monochromatically blue is denoted by ${\dot P^{(b)}}$. In this case, we say that $\dot P$ is \textit{blue}. 
Similarly, we refer to a poset $P$ colored monochromatically red as ${\dot P^{(r)}}$ and say that $\dot P$ is \textit{red}.

Recall that a \textit{copy} of a poset $P$ in $Q$ is an induced subposet $P'$ of $Q$ that is isomorphic to $P$.
Equivalently, a copy is the image of an \textit{embedding} $\phi\colon P\to Q$, 
i.e., a function such that for every $X,Y\in P$,\:\:$X\le_{P} Y$ if and only if $\phi(X)\le_{Q}\phi(Y)$.
Given a fixed blue/red coloring of $Q$, a \textit{colored copy}\index{colored copy}, or \textit{copy} for short, of a colored poset $\dot P$ in $Q$ is a copy $P'$ of $P$ in $Q$ such that each vertex $Z\in P'$ has the same color in $Q$ as its corresponding vertex in $\dot P$.
For any fixed colored poset $\dot P$, a blue/red coloring of $Q$ is \textit{$\dot P$-free}\index{$\dot P$-free}\index{free poset} if it contains no colored copy of $\dot P$.

For $n\in\N$, the \textit{poset Erd\H{o}s-Hajnal number}\index{poset Erd\H{o}s-Hajnal number} $\widetilde{R}(\dot P,Q_n)$ of a colored poset $\dot P$ is the smallest $N\in\N$ such that every blue/red coloring of $Q_N$ contains a copy of $\dot P$, $\dot Q_n^{(b)}$, or $\dot Q_n^{(r)}$.
In other words, $\widetilde{R}(\dot P,Q_n)$ is the minimal $N$ such that any $\dot P$-free blue/red coloring of $Q_N$ contains a monochromatic copy of $Q_n$.
In this chapter, we study the poset Erd\H{o}s-Hajnal number $\widetilde{R}(\dot P,Q_n)$ for a fixed colored poset $\dot P$, while $n$ is usually large. 

If $\dot P$ is monochromatic, then $\widetilde{R}(\dot P,Q_n)=R(P,Q_n)$ for large $n$.
This poset Ramsey setting has been addressed in Chapters \ref{ch:QnK} to \ref{ch:QnPA}.
Here, we focus on colored posets $\dot P$ in which both colors occur. 

We say that $\dot P$ is \textit{diverse}\index{diverse poset} if it contains two comparable vertices of distinct color. Otherwise, $\dot P$ is said to be \textit{non-diverse}\index{non-diverse poset}.
Our first results provide general bounds for the poset Erd\H{o}s-Hajnal number of diverse and non-diverse $\dot P$, respectively.
Recall that the \textit{height} $h(P)$ of a poset $P$ is the size of a largest chain in $P$, and the $2$-dimension $\dim_2(P)$ of $P$ is the smallest $N$ such that $Q_N$ contains a copy of $P$.

\begin{theorem}\label{thm:EHgenFUL}
Let $\dot P$ be a diverse colored poset. Let $n\in\N$.
Then $$2n\le \widetilde{R}(\dot P,Q_n)\le h(P)n+\dim_2(P).$$
\end{theorem}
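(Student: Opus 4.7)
To establish $\widetilde R(\dot P,Q_n)\ge 2n$ I intend to exhibit, for every diverse $\dot P$, a blue/red coloring of $Q_{2n-1}$ that contains neither a monochromatic $Q_n$ nor a colored copy of $\dot P$. Since $\dot P$ is diverse, it has two comparable vertices $v_1<v_2$ of different colors; after possibly swapping the palette I may assume that $v_1$ is blue and $v_2$ is red in $\dot P$. I will colour all vertices of $Q_{2n-1}$ of size at most $n-1$ red and all vertices of size at least $n$ blue. Each colour class then lies in $n$ consecutive layers and hence has height $n$, whereas $Q_n$ has height $n+1$; this rules out any monochromatic $Q_n$. Moreover, in this coloring every blue vertex strictly contains every red vertex, so no blue vertex lies below a red vertex; consequently the relation $v_1<v_2$ with $v_1$ blue and $v_2$ red has no colour-preserving realisation, forbidding any colored copy of $\dot P$.

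\textbf{Plan for the upper bound.} For $\widetilde R(\dot P,Q_n)\le h(P)n+\dim_2(P)$ my plan is to adapt the blob technique used by Axenovich and Walzer to prove Theorem~\ref{thm:general} (see Lemma~\ref{lem:blob_restated}). Writing $h=h(P)$, $d=\dim_2(P)$ and $N=hn+d$, I would fix an arbitrary blue/red coloring of $Q_N$ without a monochromatic $Q_n$ and partition the ground set as $[N]=A_1\cup\cdots\cup A_h\cup B$ with $|A_i|=n$ and $|B|=d$. For each $i\in\{0,\dots,h\}$ the ``blob at level $i$'' $\{A_1\cup\cdots\cup A_i\cup S : S\subseteq B\}$ is a copy of $Q_d$ and hence hosts a copy of $P$. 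I would apply the Chain Lemma (Lemma~\ref{lem:chain}) inside each block $A_i$ to the vertical chains $A_1\cup\cdots\cup A_{i-1}\cup S\subset A_1\cup\cdots\cup A_i\cup S$ indexed by $S\subseteq B$, and use a pigeon-hole across the $h+1$ blob-levels to extract a single copy of $P$ in which each vertex can be independently assigned either colour, in particular to match $\dot P$.

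\textbf{Main obstacle.} The hard part is to guarantee that the reconstructed copy of $P$ carries exactly the vertex-by-vertex colouring prescribed by $\dot P$, rather than merely a monochromatic pattern as in the classical Axenovich--Walzer argument. This is where the diversity hypothesis enters: when $\dot P$ is monochromatic the bound follows directly from Theorem~\ref{thm:general}, whereas for diverse $\dot P$ the absence of a monochromatic $Q_n$ must be exploited to force colour alternations along each vertical chain, consuming the $h(P)n$ block-dimensions of $N$ to provide independent colour choices at each vertex of $P$ while preserving the induced sub-poset structure. Ensuring that these independent colour selections remain compatible with the full poset relations of $P$ will be the technical core of the argument.
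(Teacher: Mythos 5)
Your lower bound argument is correct and is exactly the paper's: the layered colouring of $Q_{2n-1}$ with the lower $n$ layers one colour and the upper $n$ layers the other has no monochromatic $Q_n$ by a height count, and any comparable pair $v_1<v_2$ of different colours in $\dot P$ would need $\phi(v_1)$ in the upper half while $\phi(v_2)$ lies in the lower half, contradicting $\phi(v_1)\subset\phi(v_2)$.

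Your upper-bound plan, however, contains a genuine gap, which you flag but do not close. The Chain Lemma is the wrong tool here: it produces, from the absence of a red $\bX$-good copy of $\QQ(\bX)$, a blue $\bY$-chain for each linear ordering of $\bY$, and is used elsewhere to feed counting arguments over the $|\bY|!$ orderings; your ``vertical chains'' of length two are not $\bY$-chains in that sense and no counting is in play. More fundamentally, pigeonholing across the $h+1$ blob-levels does not deliver the free colour choices you want: the vertex $A_1\cup\cdots\cup A_i\cup S$ carries a single colour fixed by the given colouring, and nothing guarantees that at every vertex of $P$ both colours occur at levels compatible with the order of $P$. What works instead is one blob of dimension $n$ per vertex of $P$, placed according to a height function. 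Fix a copy $P'$ of $P$ in $\QQ(B)$, which exists since $|B|=d=\dim_2(P)$, and let $\ell\colon P'\to\{1,\dots,h\}$ send $C$ to the length of a longest chain of $P'$ ending at $C$, so that $C\subset C'$ forces $\ell(C)<\ell(C')$. For each $C\in P'$ form the $n$-dimensional blob $\{C\cup A_1\cup\cdots\cup A_{\ell(C)-1}\cup T:\ T\subseteq A_{\ell(C)}\}$, a copy of $Q_n$. If any such blob is monochromatic it is a monochromatic $Q_n$ and you are done; otherwise pick $Z_C$ in it of the colour prescribed by $\dot P$. Since $Z_C\cap B=C$, incomparable $C,C'$ yield incomparable $Z_C,Z_{C'}$, and the level constraint gives $Z_C\subset Z_{C'}$ whenever $C\subset C'$, so $\{Z_C\}_{C\in P'}$ is a coloured copy of $\dot P$. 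Note also that diversity plays no role in this upper bound, which holds for every coloured $\dot P$; diversity is needed only for the lower bound $2n$, which fails for non-diverse posets (cf.\ Theorem~\ref{thm:EHgenNON}).
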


\noindent This bound corresponds to the general bound on $R(P,Q_n)$ stated in Theorem \ref{thm:general}, and can be shown by a straightforward proof, similar to the proof of Theorem \ref{thm:general}:
The lower bound is obtained from a layered coloring of $Q_{2n-1}$, in which vertices $Z$ with $|Z|\le n-1$ are colored in one color, and vertices $Z$ such that $|Z|\ge n$ in the other color.
The upper bound follows from Lemma 3 in Axenovich and Walzer \cite{AW}. We omit the details.

Extending the concept of parallel compositions of (uncolored) posets, we define the \textit{parallel composition}\index{parallel composition} $\dot P_1 \opl \dot P_2$ of two colored posets $\dot P_1$ and $\dot P_2$ as the colored poset consisting of a copy of $\dot P_1$ and a copy of $\dot P_2$ that are \textit{parallel}\index{parallel posets}, i.e., element-wise incomparable.
Observe that a colored poset $\dot P$ is non-diverse if and only if $P$ has subposets $P_b$ and $P_r$ such that $\dot P=\dot P_b^{(b)}\opl \dot P_r^{(r)}$.

\begin{theorem}\label{thm:EHgenNON}
Let $\dot P$ be a non-diverse poset. Let $P_r$ and $P_b$ such that $\dot P=\dot P_b^{(b)}\opl \dot P_r^{(r)}$.
Let $n\in\N$ with $n\ge \max\{\dim_2(P_b), \dim_2(P_r)\}$.
Then 
$$\max\{R(P_b,Q_n),R(P_r,Q_n)\}\le \widetilde{R}(\dot P,Q_n)\le \max\{R(P_b,Q_n),R(P_r,Q_n)\} +2.$$
\end{theorem}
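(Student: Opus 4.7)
The plan is to handle the two inequalities separately; both are fairly direct consequences of the definitions once the right objects are set up, and the main novelty is in the upper bound.

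For the lower bound $\widetilde{R}(\dot P,Q_n)\ge \max\{R(P_b,Q_n),R(P_r,Q_n)\}$, I would argue symmetrically and treat one of the two terms, say $R(P_b,Q_n)$. Setting $N=R(P_b,Q_n)-1$, there exists by definition a blue/red coloring of $Q_N$ with no blue copy of $P_b$ and no red copy of $Q_n$. I claim this coloring is $\dot P$-free and contains no monochromatic $Q_n$: any copy of $\dot P$ contains a blue copy of $P_b$, which is forbidden; the red $Q_n$ is forbidden directly; and a blue $Q_n$ cannot exist either, because $n\ge \dim_2(P_b)$ means $Q_n$ contains an induced copy of $P_b$, which would be blue, again contradicting the assumption. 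Hence $\widetilde{R}(\dot P,Q_n)>N$, and the same argument with the roles of $P_b$ and $P_r$ (and the colors) swapped yields the matching bound for $R(P_r,Q_n)$.

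For the upper bound, let $M=\max\{R(P_b,Q_n),R(P_r,Q_n)\}$ and consider a host $\QQ(\bZ\cup\{a,b\})$ with $|\bZ|=M$, so the host is a copy of $Q_{M+2}$. The key step is to split it into four ``quadrants'' according to the presence of $a$ and $b$, and focus on the two incomparable ones,
\[
\QQ_{10}=\{Z\cup\{a\}:Z\in\QQ(\bZ)\}\quad\text{and}\quad\QQ_{01}=\{Z\cup\{b\}:Z\in\QQ(\bZ)\}.
\]
Each is an induced copy of $Q_M$, and every vertex in $\QQ_{10}$ contains $a$ but not $b$ while every vertex in $\QQ_{01}$ contains $b$ but not $a$, so the two quadrants are parallel in the sense required for a parallel composition.

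Now fix an arbitrary blue/red coloring of the host and assume there is no monochromatic copy of $Q_n$. Applying $M\ge R(P_b,Q_n)$ to $\QQ_{10}$: since $\QQ_{10}$ contains no red $Q_n$, it must contain a blue copy of $P_b$. Applying the color-symmetric version of $M\ge R(P_r,Q_n)$ to $\QQ_{01}$: since $\QQ_{01}$ contains no blue $Q_n$, it must contain a red copy of $P_r$. The blue $P_b$ in $\QQ_{10}$ together with the red $P_r$ in $\QQ_{01}$ form a copy of $\dot P_b^{(b)}\opl\dot P_r^{(r)}=\dot P$, as required. The only genuinely non-routine point to pin down is the $+2$ construction that yields two parallel copies of $Q_M$ inside $Q_{M+2}$; once this splitting is in place, the conclusion follows immediately from the definitions of the two poset Ramsey numbers.
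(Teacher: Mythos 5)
Your proof is correct and follows essentially the same approach as the paper: the upper bound constructs the two parallel copies of $Q_M$ inside $Q_{M+2}$ (your quadrants $\QQ_{10}$, $\QQ_{01}$ are exactly the paper's sublattices $\QQ\big|_{\{1\}}^{[N]\setminus\{2\}}$ and $\QQ\big|_{\{2\}}^{[N]\setminus\{1\}}$), and the lower bound hinges on the same observation that $n\ge\dim_2(P_b)$ lets a blue $Q_n$ be traded for a blue $P_b$. Your lower bound is phrased as an explicit witness coloring rather than the paper's chain of inequalities $R(P_b,Q_n)\le\widetilde{R}(\dot P_b^{(b)},Q_n)\le\widetilde{R}(\dot P,Q_n)$, but the content is the same, and you make the color-swap step in the upper bound a bit more explicit than the paper does.
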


\noindent The simplest non-diverse colored poset is an \textit{antichain} $A_t$, i.e., a poset consisting of $t$ pairwise incomparable vertices. 
We precisely determine the Erd\H{o}s-Hajnal number for antichains.

\begin{theorem}\label{thm:EHantichain}
Let $\dot A$ be a non-monochromatic antichain on at least $2$ vertices. Let $n$ be sufficiently large.
If there are no three vertices of the same color in $\dot A$, then $\widetilde{R}(\dot A, Q_n)=n+2$.
Otherwise, $\widetilde{R}(\dot A, Q_n)=n+3$.
\end{theorem}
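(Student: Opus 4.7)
The plan is to prove matching lower and upper bounds for both sub-cases, using layered constructions and the Chain Lemma.

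For the lower bounds, I first show $\widetilde{R}(\dot A, Q_n) \ge n+2$ for every non-monochromatic colored antichain $\dot A$, by exhibiting a coloring of $Q_{n+1}$ with no monochromatic $Q_n$ and no copy of $\dot A$: color $\varnothing$ and $[n+1]$ with one color and all other vertices with the opposite color. Since $\varnothing$ and $[n+1]$ are comparable to every vertex of $Q_{n+1}$, neither lies in any antichain of size $\ge 2$, so no antichain of $Q_{n+1}$ is non-monochromatic and no copy of $\dot A$ exists. The minority color consists of two comparable vertices, and the majority color occupies only $n$ interior layers, insufficient for a copy of $Q_n$. For $s \ge 3$ I strengthen the lower bound to $\widetilde{R}(\dot A, Q_n) \ge n+3$ by reusing the two-chain construction from the lower-bound proof of Theorem~\ref{thm:antichain}. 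Assuming without loss of generality $b = s \ge 3$, I color in $Q_{n+2}$ the two chains $\{[i] : i \in [n+1]\}$ and $\{[n+1] \setminus [i] : i \in [n+1]\}$ blue and every other vertex red. Theorem~\ref{thm:antichain} supplies that no red $Q_n$ exists; the blue subposet is a union of two chains, so has no antichain of size $3$ and hence no copy of $\dot A$; and the blue subposet has only $O(n)$ vertices, too few for a blue $Q_n$ when $n$ is large.

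For the upper bounds, the sub-case $(b,r) = (1,1)$ of $s \le 2$ is immediate from Theorem~\ref{thm:EHgenNON}, which gives $\widetilde{R} \le R(A_1, Q_n) + 2 = n+2$. A self-contained alternative proceeds by a layered argument: if no incomparable pair has different colors, then each layer of $Q_{n+2}$ is monochromatic (two distinct same-sized vertices are incomparable), and two different-color interior layers at levels $1 \le \ell_1 < \ell_2 \le n+1$ would force every blue $\ell_1$-subset to be contained in every red $\ell_2$-subset, impossible for non-extreme layers; hence all interior layers share one color and $\{X : 1 \in X,\ 2 \notin X\}$ is a monochromatic copy of $Q_n$. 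For the remaining $s \le 2$ sub-cases $(1,2),(2,1),(2,2)$, Theorem~\ref{thm:EHgenNON} only yields $n+4$; to reach $n+2$ I use the structural observation that forbidding, say, a $BBR$ antichain forces, for every red vertex $R$, the blue vertices incomparable to $R$ to form a chain, combined with the Chain Lemma (Lemma~\ref{lem:chain}) with $|\bY| = 2$, which in the absence of a monochromatic $Q_n$ yields many blue and red chains in every ordered direction and quickly contradicts the chain restriction. For $s \ge 3$, assume WLOG $b = s$. A key pigeonhole observation is that in any $\dot A$-free coloring of $Q_{n+3}$, every layer $L_\ell$ satisfies $b_\ell \le b-1$ or $r_\ell \le r-1$, because a layer with $b_\ell \ge b$ and $r_\ell \ge r$ already contains a copy of $\dot A$. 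Combining this layer-wise constraint with $R(A_s, Q_n) = n+3$ from Theorem~\ref{thm:antichain} and the Chain Lemma with $|\bY| = 3$ should, after pigeonholing over orderings of $\bY$, either produce the required monochromatic $Q_n$ or combine a blue $A_b$ with $r$ pairwise incomparable red vertices all incomparable to the chosen blue antichain.

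The principal obstacle is the simultaneous construction needed in the upper bounds for the sub-cases $(1,2),(2,1),(2,2)$ of $s \le 2$ and for the general $s \ge 3$ case: one must combine a blue antichain of size $b$ and a red antichain of size $r$ into a single antichain of size $b+r$. Each mono-colored antichain exists by the appropriate Ramsey number $R(A_b, Q_n)$ or $R(A_r, Q_n)$, but passing from individual existence to simultaneous pairwise incomparability across the two colors requires controlling how one color's antichain can be ``blocked'' by comparability with vertices of the other color. The plan is to leverage the Chain Lemma, which produces monochromatic chains in every $\bY$-direction and thus distributes blue and red structure globally, and then pigeonhole over these directions to find a direction whose associated blue and red chains can be combined into the desired mixed antichain.
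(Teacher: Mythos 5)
Your lower-bound arguments are sound in both sub-cases: the coloring that makes $\varnothing$ and the full ground set the unique vertices of the minority color indeed kills every non-monochromatic antichain in $\QQ([n+1])$, and the two-chain construction from the lower bound of Theorem~\ref{thm:antichain} kills any $\dot A$ with three same-colored vertices in $\QQ([n+2])$ (the paper instead just cites Theorem~\ref{thm:EHgenNON} to get $\widetilde{R}(\dot A,Q_n)\ge R(A_3,Q_n)=n+3$, which is slicker, but yours works). Likewise, the $(1,1)$ upper bound via Theorem~\ref{thm:EHgenNON} is correct.

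The genuine gap lies in both of the remaining upper bounds, and you have correctly identified it yourself in your final paragraph; the sketches you give do not close it. For the $s\le 2$ sub-cases you do not need separate arguments for $(1,2)$, $(2,1)$, and $(2,2)$: since any of the smaller patterns occurs inside $\dot A_2^{(b)}\opl\dot A_2^{(r)}$, monotonicity reduces everything to the single bound $\widetilde{R}(\dot A_2^{(b)}\opl\dot A_2^{(r)},Q_n)\le n+2$ (the paper makes exactly this reduction). Your Chain-Lemma sketch for that single case is not a proof: producing a $\bY$-chain for each of the two orderings of a $2$-element $\bY$ gives you only a handful of chains in one $N$-dimensional lattice, and it is not clear how this ``contradicts the chain restriction.'' The paper's Lemma~\ref{lem:EHantichain2} instead argues layer by layer: each interior layer must be either almost red or almost blue (otherwise it already contains the forbidden pattern), two consecutive layers of opposite types again force a copy, so WLOG every layer is almost red; one then takes a closest incomparable pair of blue vertices and produces either the forbidden pattern from that pair plus two red vertices in the right layer, or (in the boundary case $|X|=1$, $|Y|=N-1$) an all-red sublattice of dimension $n$. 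For $s\ge 3$, your observation that every layer has $b_\ell\le b-1$ or $r_\ell\le r-1$ is true but by itself it controls only one layer at a time, and you do not explain how to combine the blue and red structure across layers. The paper's Lemma~\ref{lem:EHantichain3} takes a quite different route: use $R(A_t,Q_n)=n+3$ for $t=s+2^{2s}$ to obtain large monochromatic antichains of \emph{both} colors, greedily select $2s$ of their members together with per-vertex witness elements $a_i\in Z_i$, $x_i\notin Z_i$ satisfying $a_i\ne x_j$ for all $i,j$, form the sublattice $\QQ'=\{X:\{x_i\}\subseteq X\subseteq[N]\setminus\{a_i\}\}$ which is parallel to all the $Z_i$, extract a monochromatic $A_s$ inside $\QQ'$, and pair it with the opposite-colored selected vertices. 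The ``simultaneous construction'' you flag as the principal obstacle is solved by this greedy separation argument, not by the Chain Lemma.
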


\noindent In particular, $\widetilde{R}({\dot A_2^{(b)}}\opl {\dot A_2^{(r)}},Q_n)=n+2=R(A_2,Q_n)$, and $\widetilde{R}({\dot A_1^{(b)}}\opl {\dot A_1^{(r)}},Q_n)=n+2=R(A_1,Q_n)+2$, 
which attain the lower and upper bound in Theorem \ref{thm:EHgenNON}, respectively.
We do not attempt to determine the smallest $n$ for which this bound holds. In our proof, we require $\log \log \log n=\Omega(|A|)$.
\\

Recall that a \textit{chain} $C_t$ is a poset on $t$ pairwise comparable vertices.
For colored chains, we introduce two specific colorings.
The \textit{red-alternating chain}\index{alternating chain} $\dot C_t^{(rbr)}$ is the chain~$C_t$ whose vertices are colored alternatingly in red and blue, such that the minimal vertex is red, see Figure \ref{fig:coloredQ2} for an illustration.
Similarly, the \textit {blue-alternating chain} $\dot C_t^{(brb)}$ is the chain~$C_t$ colored alternatingly, but the minimal vertex is blue.

Given a colored chain $\dot C$, let $\lambda(\dot C)$ be the largest integer $\ell$ such that $\dot C$ contains a copy of $\dot C_\ell^{(rbr)}$ or $\dot C_\ell^{(brb)}$.
Theorem \ref{thm:EHgenFUL} implies that $\widetilde{R}(\dot C,Q_n)$ is linear in terms of $n$.
In our next result, we show that the poset Erd\H{o}s-Hajnal number of any colored chain~$\dot C$ is determined by the poset Erd\H{o}s-Hajnal number of an alternating chain, up to an additive term independent of $n$.

\begin{theorem}\label{thm:EHreduction}
Let $n\in\N$. Let $\dot C_t$ be a colored chain of length $t$, and let $\lambda=\lambda(\dot C_t)$. Then
$$\widetilde{R}(\dot C^{(rbr)}_{\lambda},Q_n)\le \widetilde{R}(\dot C_t,Q_n)\le \widetilde{R}(\dot C^{(rbr)}_{\lambda},Q_n)+t-\lambda.$$
\end{theorem}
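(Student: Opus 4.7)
For the \emph{lower bound}, I would argue as follows: by the very definition of $\lambda=\lambda(\dot C_t)$, the colored chain $\dot C_t$ contains a colored copy of either $\dot C^{(rbr)}_\lambda$ or $\dot C^{(brb)}_\lambda$ as an induced subposet. Since $\widetilde{R}(\cdot,Q_n)$ is invariant under a global swap of the two colors, $\widetilde{R}(\dot C^{(brb)}_\lambda,Q_n)=\widetilde{R}(\dot C^{(rbr)}_\lambda,Q_n)$, so without loss of generality $\dot C_t$ contains $\dot C^{(rbr)}_\lambda$. Then any blue/red coloring of $\QQ([N])$ that avoids a copy of $\dot C^{(rbr)}_\lambda$ also avoids $\dot C_t$, so the largest $N$ admitting a blue/red coloring of $\QQ([N])$ that avoids both $\dot C^{(rbr)}_\lambda$ and every monochromatic $Q_n$ is at most the analogous maximum for $\dot C_t$, giving the inequality.

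For the \emph{upper bound}, I would proceed by induction on the excess $t-\lambda$. The \emph{base case} $t=\lambda$ is immediate: $\dot C_t$ is itself alternating, so $\widetilde{R}(\dot C_t,Q_n)=\widetilde{R}(\dot C^{(rbr)}_\lambda,Q_n)$ by color symmetry. For the \emph{inductive step}, assume $t>\lambda$. Then the color sequence $c_1,\ldots,c_t$ of $\dot C_t$ has some $c_i=c_{i+1}$. Let $\dot C'$ be the colored chain of length $t-1$ obtained by deleting position $i+1$; since the deleted vertex lies inside a maximal monochromatic run, $\lambda(\dot C')=\lambda$. Writing $M=\widetilde{R}(\dot C^{(rbr)}_\lambda,Q_n)$ and $N=M+(t-\lambda)$, the inductive hypothesis yields $\widetilde{R}(\dot C',Q_n)\le N-1$.

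Now fix any blue/red coloring of $\QQ([N])$ avoiding both monochromatic $Q_n$'s; the goal is to produce a copy of $\dot C_t$. The host lattice $\QQ([N])$ decomposes into two copies of $Q_{N-1}$, namely $\QQ_0=\{X : N\notin X\}$ and $\QQ_1=\{X : N\in X\}$; by the inductive hypothesis, $\QQ_0$ contains a copy of $\dot C'$ on a chain $W_1\subsetneq W_2\subsetneq\cdots\subsetneq W_{t-1}$ realizing colors $c_1,\ldots,c_i,c_{i+2},\ldots,c_t$. It remains to splice a vertex of color $c_i$ strictly between $W_i$ and $W_{i+1}$, producing the desired $\dot C_t$. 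Depending on the coloring, this splice may be taken either inside the interval $[W_i,W_{i+1}]\subseteq\QQ_0$, or by passing to $\QQ_1$ via the lifted chain $W_1\subsetneq\cdots\subsetneq W_i\subsetneq W_i\cup\{N\}\subsetneq W_{i+1}\cup\{N\}\subsetneq\cdots\subsetneq W_{t-1}\cup\{N\}$ when the colors of the lifted tail line up with those of $\dot C_t$.

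\emph{The main obstacle} is ensuring that at least one of these splicings succeeds: a naive attempt can fail if every candidate vertex in the relevant subinterval has the wrong color, or if lifting the tail produces a mismatched coloring. To address this, I would exploit the $Q_n$-free assumption: a monochromatic subinterval of $\QQ([N])$ of sufficiently large dimension must contain a monochromatic $Q_n$, yielding a contradiction. Combined with the freedom to choose which redundant index $i$ to delete (for instance, preferring a boundary position of a run so that the splice lies at an end of the chain where more room is available) and with the symmetric roles of $\QQ_0$ and $\QQ_1$, this should suffice. A cleaner alternative would be to strengthen the inductive hypothesis by recording the extremal vertices of the copy of $\dot C'$ inside $\QQ([N])$, allowing one to perform the splice at the cost of additional bookkeeping.
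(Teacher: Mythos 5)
Your lower bound argument is correct and is essentially the same as the paper's: since $\dot C_t$ contains a copy of $\dot C^{(rbr)}_\lambda$ (after a global color swap if necessary), any coloring avoiding the latter also avoids the former.

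For the upper bound, however, there is a genuine gap. Your induction-with-splice strategy gives you, via the inductive hypothesis applied to $\QQ_0$, a chain $W_1\subsetneq\cdots\subsetneq W_{t-1}$ realizing the colors of $\dot C'$ — but nothing in the inductive hypothesis controls the gaps $|W_{j+1}|-|W_j|$. If $|W_{i+1}|=|W_i|+1$, the open interval $(W_i,W_{i+1})$ in $\QQ_0$ is empty, so there is no vertex of $\QQ_0$ to splice. Your proposed fix of lifting the tail into $\QQ_1$ via $W_i\cup\{N\},W_{i+1}\cup\{N\},\ldots$ requires $W_i\cup\{N\}$ to have color $c_{i+1}=c_i$ and the remaining lifted vertices to recover the original colors $c_{i+2},\ldots,c_t$, but the coloring of $\QQ_1$ is entirely unconstrained by anything established so far. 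The $Q_n$-freeness assumption does not rescue this: the only monochromatic region you could hope to exploit is the interval $[W_i,W_{i+1}]$, whose dimension can be $1$, far too small to force a monochromatic $Q_n$. Neither choosing a different redundant index nor playing $\QQ_0$ against $\QQ_1$ removes the failure mode, and the suggestion to ``strengthen the inductive hypothesis by recording the extremal vertices'' addresses the wrong thing — it is the interior gap, not the endpoints, that is unconstrained.

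The paper proceeds quite differently, avoiding induction entirely. Starting from a $\dot C_t$-free coloring of $\QQ([N])$ with no monochromatic $Q_n$, it partitions $\QQ([N])$ into ``phases'' $\cF_1,\ldots,\cF_t$, where $X$ is in phase $i$ if the downset $\QQ|_\varnothing^X$ realizes the colored prefix of length $i-1$ but not length $i$. Lemma \ref{lem:phase_color} pins down the color of every vertex in each phase, except that the minimal vertices of phases at a repeated color of $\dot C_t$ form exceptional antichains; there are exactly $t-\lambda$ such antichains. Removing them via Corollary \ref{cor:chain} yields a sub-Boolean lattice $\QQ'$ of dimension $N-(t-\lambda)$ in which every phase is monochromatic, and because consecutive phases merge into at most $\lambda$ color-alternating blocks whose first block has the wrong color for a red-alternating chain, $\QQ'$ is $\dot C^{(rbr)}_\lambda$-free. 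This global structural control is precisely what your splice step lacks.
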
 

\noindent For alternating chains, we give the following bounds.
\smallskip
\begin{theorem}\label{thm:EHchain}
For every $n$,\:\:$\widetilde{R}(\dot C^{(rbr)}_{2},Q_n)=\widetilde{R}(\dot C^{(rbr)}_{3},Q_n)=2n$. 
For $t\ge 4$ and sufficiently large $n$, $$2.02n< \widetilde{R}(\dot C^{(rbr)}_{t},Q_n)\le (t-1)n.$$
\end{theorem}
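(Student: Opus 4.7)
The lower bound $\widetilde{R}(\dot C_2^{(rbr)},Q_n),\widetilde{R}(\dot C_3^{(rbr)},Q_n)\ge 2n$ is given by the layered coloring of $Q_{2n-1}$ with the bottom $n$ layers blue and the top $n$ layers red: along every chain the color pattern is $b^{\ast}r^{\ast}$, containing no $rb$ or $rbr$ subsequence, and each monochromatic class has height $n<h(Q_n)$. For the upper bound $\widetilde{R}(\dot C_2^{(rbr)},Q_n)\le 2n$, a $\dot C_2^{(rbr)}$-free coloring of $Q_{2n}$ places no red below a blue, so reds form an up-set and blues a down-set; any layer-$n$ vertex then produces a monochromatic $Q_n$ via its up-set (if red) or down-set (if blue).

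For $\widetilde{R}(\dot C_3^{(rbr)},Q_n)\le 2n$, in any $\dot C_3^{(rbr)}$-free coloring of $Q_{2n}$ each blue $B$ has either its full up-set blue (write $B\in U$) or its full down-set blue ($B\in D$), otherwise a chain with red below $B$ and red above $B$ gives $\dot C_3^{(rbr)}$. Then $U$ is up-closed and $D$ is down-closed. If $U$ meets layer $\le n$ or $D$ meets layer $\ge n$, the corresponding up/down-set is a blue $Q_n$; otherwise layer $n$ is entirely red. I would then apply the Chain Lemma (Lemma~\ref{lem:chain}) with $|\bX|=|\bY|=n$: either we get a red $\bX$-good $Q_n$ (done), or a blue $\bY$-chain $Z_0\subset\cdots\subset Z_n$ with $Z_0\subseteq\bX$, $Z_n\supseteq\bY$, each $Z_i\in U\cup D$. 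The cases $Z_0\in U$ (up-set of $Z_0$ is blue, dimension $\ge n$) and $Z_n\in D$ (down-set of $Z_n$ is blue, dimension $\ge n$) give a blue $Q_n$ immediately; otherwise let $j=\max\{i:Z_i\in D\}$, $k=\min\{i:Z_i\in U\}$. If $j\ge k$ then $Z_k\le Z_j$ combined with $Z_k\in U$, $Z_j\in D$ forces $|Z_k|\le n$ or $|Z_j|\ge n$ (else $|Z_j|<n<|Z_k|$ contradicts $Z_k\le Z_j$), yielding a blue $Q_n$. The residual case $k=j+1$ with $|Z_j|\le n-1$, $|Z_{j+1}|\ge n+1$ (so the chain ``jumps'' over layer $n$) is the main obstacle for $t=3$: one must either re-run the Chain Lemma on a different ordering of $\bY$ to enter the previous case, or directly construct a monochromatic $Q_n$ spanning the gap using the blue down-set of $Z_j$ together with the blue up-set of $Z_{j+1}$.

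For $t\ge 4$, I would prove $\widetilde{R}(\dot C_t^{(rbr)},Q_n)\le (t-1)n$ by induction on $t$ with base $t=3$. In a $\dot C_t^{(rbr)}$-free coloring of $Q_{(t-1)n}$ avoiding a red $Q_n$, the Chain Lemma with $|\bX|=n$, $|\bY|=(t-2)n$ yields a blue chain $\cC$ of length $(t-2)n+1$. Applying the induction hypothesis to a suitable Boolean sublattice attached to $\cC$, one obtains either a monochromatic $Q_n$ (done) or a red-starting $\dot C_{t-1}^{(brb)}$ pattern, whose bottom red vertex combined with the bottom vertices of $\cC$ assembles a copy of $\dot C_t^{(rbr)}$, contradicting $\dot C_t^{(rbr)}$-freeness.

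For the lower bound $\widetilde{R}(\dot C_t^{(rbr)},Q_n)>2.02n$ for $t\ge 4$, observe that $\dot C_4^{(rbr)}$ is a colored subposet of $\dot C_t^{(rbr)}$, so it suffices to prove $\widetilde{R}(\dot C_4^{(rbr)},Q_n)>2.02n$. The plan is to take the coloring of $Q_{\lfloor 2.02n\rfloor}$ from Corollary~\ref{cor:QnQnLB} that avoids any monochromatic $Q_n$, and verify that along every maximal chain its color pattern contains no $r,b,r,b$ subsequence. This is immediate if the construction is layered (then every chain has pattern $b^{\ast}r^{\ast}$ with at most one color change); otherwise a direct chain-pattern analysis of the construction is required, which I expect to be the main obstacle for the lower bound.
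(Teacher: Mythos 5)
Your proposal has gaps in all three parts.

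\textbf{Upper bound for $t=3$.} Your argument is considerably more complicated than the paper's, and you explicitly acknowledge an unresolved ``jump over layer $n$'' case. The paper avoids this entirely by a cleaner route: if the longest red chain in $Q_{2n}$ has length at most $n$, Corollary~\ref{cor:chain} gives a blue $Q_n$; otherwise fix a red chain $A\subseteq B$ with $|B|-|A|\ge n$, and then either the sublattice $\QQ\big|_A^B$ has a blue vertex $Z$ (so $A\subset Z\subset B$ is a copy of $\dot C_3^{(rbr)}$) or $\QQ\big|_A^B$ is a red $Q_n$. Your up-set/down-set approach may be repairable, but as written there is a hole, and even once patched it is more work than is needed.

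\textbf{Upper bound for $t\ge 4$.} Your induction step is not a valid argument. You invoke the induction hypothesis on ``a suitable Boolean sublattice attached to $\cC$'' without specifying it, and then claim this produces a ``red-starting $\dot C_{t-1}^{(brb)}$ pattern'' (internally inconsistent: red-starting is $(rbr)$, not $(brb)$) whose bottom vertex combines with $\cC$ to give a $\dot C_t^{(rbr)}$; the combination is never justified and the colors do not line up as stated. The paper's induction is different and much cleaner: fix a vertex $Z$ with $|Z|=(t-1)n$, apply the induction hypothesis to the $(t-1)n$-dimensional sublattice $\QQ\big|^Z_\varnothing$ to get either a monochromatic $Q_n$ or a copy $\dot\cD$ of $\dot C_t^{(rbr)}$, and in the latter case either the $n$-dimensional sublattice $\QQ\big|_Z^{[N]}$ is monochromatic or it contains a vertex of the opposite color to the top of $\dot\cD$, extending $\dot\cD$ to $\dot C_{t+1}^{(rbr)}$.

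\textbf{Lower bound $>2.02n$.} This is where your proposal fails structurally. You propose to reuse the coloring of $Q_{\lfloor 2.02n\rfloor}$ ``from Corollary~\ref{cor:QnQnLB},'' but in the paper that corollary is \emph{derived from} Theorem~\ref{thm:EHchain}: the inequality $R(Q_n,Q_n)\ge\widetilde{R}(\dot C_4^{(rbr)},Q_n)$ is immediate (a $\dot C_4^{(rbr)}$-free coloring with no monochromatic $Q_n$ is, in particular, a coloring with no monochromatic $Q_n$), so the $2.02n$ bound on $R(Q_n,Q_n)$ is a consequence, not an input. Your plan is therefore circular. Moreover, before this theorem no $2.02n$ construction existed anywhere to fall back on (the prior record was $2n+1$). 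The paper in fact builds a new, non-layered coloring from scratch (Lemma~\ref{lem:EHchain_main} and Construction~\ref{constr:EHchain}): two random ``dense'' antichains $\cS$ and $\cT$ at heights $(1-c)n$ and $(1+2c)n$, chosen so that every $\cS$-vertex is incomparable to every $\cT$-vertex and both hit every relevant ``cone''; the coloring is then designed so that along any chain the colors run blue, (red block), (blue block), red, with no $rbrb$ subpattern by construction, and the embedding-lemma analysis rules out a monochromatic $Q_n$. None of this is a verification of a pre-existing construction — the $\dot C_4^{(rbr)}$-freeness is built in, not discovered.
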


\noindent The lower bound on $\widetilde{R}(\dot C^{(rbr)}_{t},Q_n)$ shows the existence of a blue/red coloring of $Q_{2.02n}$ with no monochromatic $Q_n$. 

\begin{corollary}\label{cor:QnQnLB}
For sufficiently large $n$,\:\:$R(Q_n,Q_n)> 2.02n$.
\end{corollary}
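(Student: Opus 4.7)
The plan is to deduce the bound as an immediate consequence of the lower bound in Theorem \ref{thm:EHchain}, combined with the trivial monotonicity of the poset Erd\H{o}s-Hajnal number in its first argument.

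First I would record the following elementary fact: for every colored poset $\dot P$ and every positive integer $n$,
$$\widetilde{R}(\dot P, Q_n) \le R(Q_n, Q_n).$$
This is immediate from the definitions. If $N = R(Q_n,Q_n)$, then every blue/red coloring of $Q_N$ contains a monochromatic copy of $Q_n$, and therefore certainly contains either a copy of $\dot P$, or a monochromatic copy of $Q_n$, which is precisely the defining condition of $\widetilde{R}(\dot P, Q_n)$. Hence the smallest $N$ with this property is at most $R(Q_n, Q_n)$.

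Second, I would instantiate Theorem \ref{thm:EHchain} with any fixed $t \ge 4$, for example $t=4$. For $n$ sufficiently large, the theorem asserts
$$\widetilde{R}(\dot C^{(rbr)}_{4}, Q_n) > 2.02\, n.$$
Chaining the two inequalities gives
$$R(Q_n,Q_n) \;\ge\; \widetilde{R}(\dot C^{(rbr)}_{4}, Q_n) \;>\; 2.02\, n$$
for all sufficiently large $n$, which is exactly the statement of the corollary.

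The deduction itself is completely routine; no obstacle arises in this step. The actual substance of the argument lies entirely inside Theorem \ref{thm:EHchain}: one must exhibit, for large $n$, a blue/red coloring of $Q_N$ with $N > 2.02n$ that simultaneously avoids both monochromatic copies of $Q_n$ and any copy of the four-vertex alternating chain $\dot C^{(rbr)}_{4}$. This is a strictly stronger construction than the one needed merely to witness $R(Q_n,Q_n) > 2.02n$, but the added ``no alternating chain of length four'' constraint is what provides enough structural control over the coloring to break past the trivial layered bound $R(Q_n,Q_n) \ge 2n$. So while the corollary is a one-line consequence of Theorem \ref{thm:EHchain}, the real work has already been done there.
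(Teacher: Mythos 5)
Your proof is correct and matches the paper's own deduction: the paper likewise applies the elementary observation $\widetilde{R}(\dot P,Q_n)\le R(Q_n,Q_n)$ to the specific case $\dot P=\dot C^{(rbr)}_4$, invoking the lower bound $\widetilde{R}(\dot C^{(rbr)}_4,Q_n)>2.02n$ established in Lemma \ref{lem:EHchain3}. There is nothing further to add.
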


\begin{figure}[h]
\centering\label{fig:coloredQ2}
\includegraphics[scale=0.62]{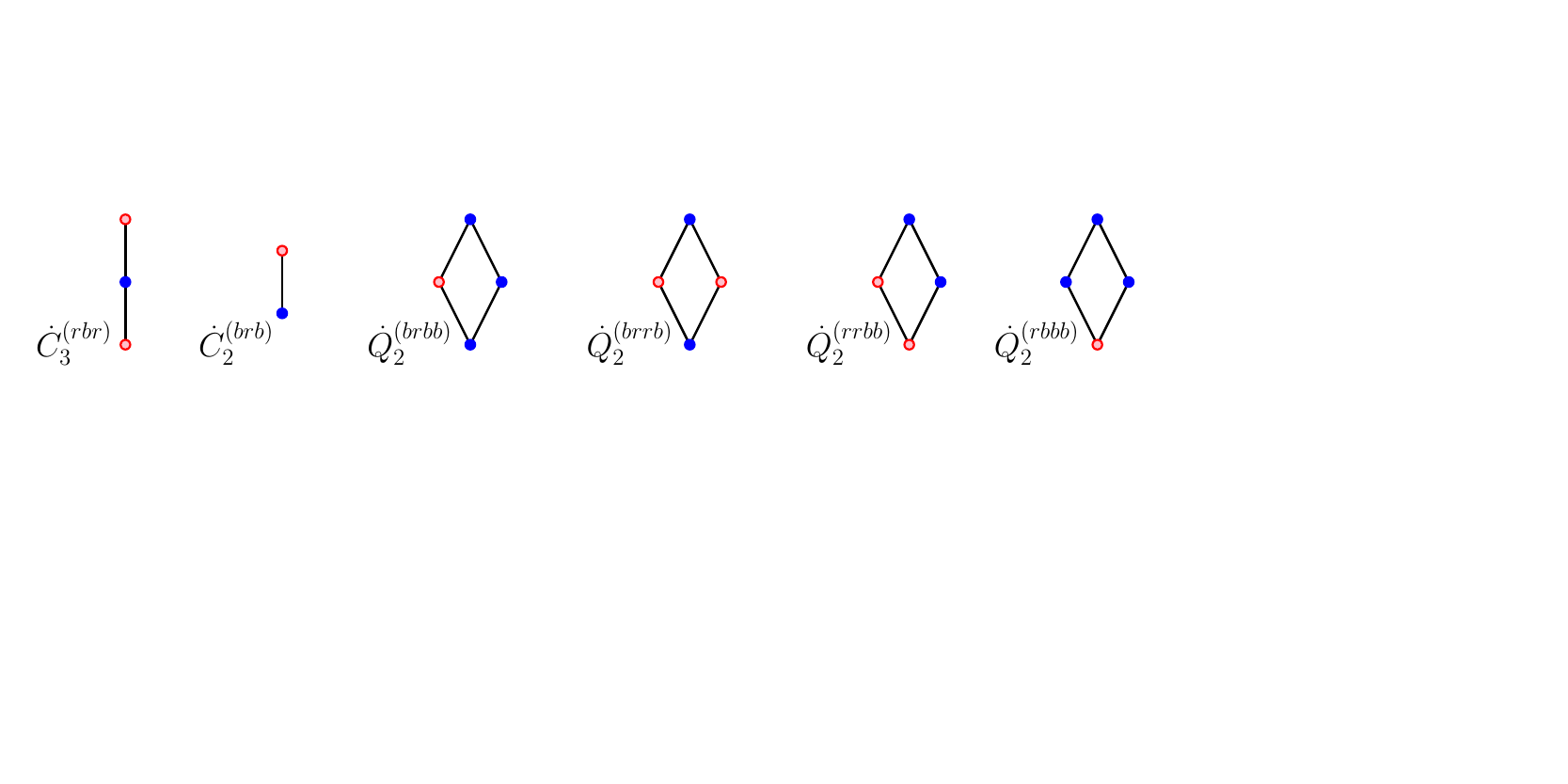}
\caption{Alternating chains and non-monochromatic colorings of $Q_2$.}
\end{figure}
\bigskip

In the final part of this chapter, we analyze the poset Erd\H{o}s-Hajnal number of small colored Boolean lattices.
Up to permutation of colors, the only non-monochromatic blue/red coloring of $Q_1$ is $\dot C^{(rbr)}_{2}$.
Theorem \ref{thm:EHchain} shows that $\widetilde{R}(\dot C^{(rbr)}_{2},Q_n)=2n$.
Moreover, we give bounds on $\widetilde{R}(\dot Q_2,Q_n)$ for every non-monochromatic blue/red coloring of~$Q_2$.
Up to symmetry and permutation of colors, the four non-monochromatic $Q_2$ are $\dot Q_2^{(brbb)}$, $\dot Q_2^{(brrb)}$, $\dot Q_2^{(rrbb)}$, and $\dot Q_2^{(rbbb)}$, each with the respective coloring as illustrated in Figure \ref{fig:coloredQ2}.

\begin{theorem}\label{thm:EHbool}
For every $n\in\N$,\:\:$\widetilde{R}(\dot Q_2^{(brbb)},Q_n)=\widetilde{R}(\dot Q_2^{(brrb)},Q_n)=\widetilde{R}(\dot Q_2^{(rrbb)},Q_n)=2n$, and
$2n\le \widetilde{R}(\dot Q_2^{(rbbb)},Q_n)\le 2n + O\big(\tfrac{n}{\log n}\big).$
\end{theorem}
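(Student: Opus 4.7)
Every non-monochromatic blue/red coloring of $Q_2$ is diverse, since both chains $\emptyset \subsetneq \{i\} \subsetneq \{1,2\}$ (for $i=1,2$) have length three and therefore contain a comparable pair of differing colors whenever both colors are used. Applying Theorem~\ref{thm:EHgenFUL} to each of the four colorings yields $\widetilde{R}(\dot Q_2^{(\cdot)}, Q_n) \ge 2n$, which handles all four lower bounds at once.

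For the matching upper bound $\widetilde{R}(\dot Q_2^{(X)}, Q_n) \le 2n$ with $X \in \{brbb, brrb, rrbb\}$, I take a blue/red coloring of $Q_{2n}$ with no monochromatic $Q_n$, partition the ground set as $\bX \cup \bY$ with $|\bX| = |\bY| = n$, and apply the Chain Lemma (Lemma~\ref{lem:chain}) to obtain a blue $\bY$-chain $B_0 \subsetneq \cdots \subsetneq B_n$ whose $\bY$-parts are the prefixes of a chosen linear ordering of $\bY$. Since $|B_n \setminus B_0| \ge n$, the interval $[B_0, B_n]$ contains a copy of $Q_n$, and because no blue $Q_n$ exists a red vertex $R$ must lie in $(B_0, B_n)$. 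For $X = brbb$, writing $R = B_0 \cup Y'$ with $Y' \subsetneq \bY$, I exploit that $R$ (being red) cannot coincide with any $B_t$ to locate an index $t$ for which $Y'$ is neither a subset nor a superset of the $t$-prefix of the ordering, giving $R \inc B_t$ and thus a $\dot Q_2^{(brbb)}$-copy on $\{B_0, B_t, R, B_n\}$. For $X = brrb$ two incomparable reds must be produced inside $(B_0, B_n)$; a case analysis handles this, with the delicate case being when the red vertices in the interval form a chain, where one applies the Chain Lemma again within a suitably refined sub-interval to extract the required incomparable pair. For $X = rrbb$ I apply the argument symmetrically: Corollary~\ref{cor:chain}, together with the absence of a blue $Q_n$, produces a red chain of length $n+1$; a dense red interval $[R_0, R_n]$ then contains a blue vertex which, combined with chain members $R_0, R_t, R_n$ in the correct orientation, gives the $rrbb$ pattern. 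Throughout, the main technical step is ensuring that the chosen chain is sufficiently ``dense'' that $[B_0, B_n]$ (respectively $[R_0,R_n]$) is genuinely a copy of $Q_n$, which is arranged by refining the choices of $\bX$, $\bY$, and the ordering of $\bY$.

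The bound for $\dot Q_2^{(rbbb)}$ exploits that the pattern is a blue copy of $\pLa$ sitting above a red vertex. Given a coloring of $Q_N$ with $N = 2n + C\tfrac{n}{\log n}$ for a sufficiently large constant $C$ and no monochromatic $Q_n$, Corollary~\ref{cor:chain} produces a red vertex $X$ as the minimum of a long red chain, and Corollary~\ref{cor:QnVs} applied within the up-set of $X$ produces either a red $Q_n$ (contradiction) or a blue copy of $\pLa$ above $X$; this blue $\pLa$ together with $X$ yields the forbidden $\dot Q_2^{(rbbb)}$-copy provided $X$ lies strictly below both middles of the $\pLa$. The main obstacle is the iterative step needed when no $X$ sits below a given blue $\pLa$ on the first attempt: one must descend through successive blue strata using the blocker framework developed in Chapter~\ref{ch:QnN}, and the slack of $C n/\log n$ dimensions is precisely what ensures termination of this recursion within the dimension budget.
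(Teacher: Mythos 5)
The lower bounds are handled correctly — Theorem~\ref{thm:EHgenFUL} gives $2n$ for all four patterns since each is diverse — and that matches the paper.

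The three exact upper bounds do not survive scrutiny. For $\dot Q_2^{(rrbb)}$ there is a concrete color-count error: that pattern has two red vertices and two blue vertices (bottom and one middle red, other middle and top blue), yet you propose assembling it from three red chain members $R_0, R_t, R_n$ together with one blue vertex. There is no way to recover a blue maximum from a monochromatic red chain, so the quadruple you name has the wrong color profile. The paper's actual argument is quite different and considerably more involved: it splits into two cases according to whether there are two red vertices $R_1\subset R_2$ with $|R_2|\le n$; in Case~1 it recolors a carefully chosen vertex and invokes the Chain Lemma on the resulting \emph{auxiliary} coloring of $\QQ|_U^{[N]}$ to pull out a blue pair $B_1\subset B_2$ with the right separation; in Case~2 the red vertices in the lower half form an antichain and a further case analysis produces a monochromatic~$Q_n$.

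For $\dot Q_2^{(brbb)}$ your Chain Lemma route also has a gap: a red vertex $R$ in $(B_0,B_n)$ need not have the form $B_0\cup Y'$ with $Y'\subseteq\bY$, since $B_n\setminus B_0$ may contain elements of~$\bX$; and even after restricting attention to the $n$-dimensional sub-interval $[B_0,B_0\cup\bY]$, an $R$ whose $\bY$-part is a prefix $\{y_1,\dots,y_k\}$ will be comparable to every $B_t$ whenever $X_0=X_1=\cdots=X_{k-1}$, so the ``locate an index $t$ with $Y'$ neither a subset nor a superset of the $t$-prefix'' step can fail. The paper avoids this by starting from a \emph{longest} blue chain $\cC$ with endpoints $A\subset B$ and, when a red vertex $R$ exists in $\QQ|_A^B$, either finding a blue vertex at layer $|R|$ directly or exploiting maximality of $\cC$ to extract a red vertex incomparable to a blue chain element. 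Your sketch of the $\dot Q_2^{(brrb)}$ case is too vague (``a case analysis handles this'') to check against the paper's two-case proof, whose second case constructs a red embedding of $Q_n$ via a nontrivial function defined in terms of the antichain~$\cF$.

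Finally, for $\dot Q_2^{(rbbb)}$ the core idea (a red vertex below a blue $\pLa$, using the upper bound on $R(\pLa,Q_n)$) is right, but the ``iterative step'' through the ``blocker framework developed in Chapter~\ref{ch:QnN}'' is unnecessary machinery and signals a misreading of the difficulty. The paper simply takes an \emph{inclusion-minimal} red vertex $R$: if $|R|>n$ the down-set of $R$ is a blue~$Q_n$; otherwise apply $R(\pLa,Q_n)$ inside $\QQ|_A^{[N]}$ for a cover $A$ of $R$, and the resulting blue $\pLa$ automatically sits strictly above $R$, so no iteration or blocker argument is needed. Your ``provided $X$ lies strictly below both middles of the $\pLa$'' caveat is guaranteed for free once $R$ is minimal among red vertices, which is exactly the device you overlooked.
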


The chapter is structured as follows.
In Section \ref{sec:EHgen}, we introduce supplementary notation. 
In Section \ref{sec:EHnoncolorful}, we study non-diverse posets and prove Theorems \ref{thm:EHgenNON} and \ref{thm:EHantichain}.
Afterwards, in Section \ref{sec:EHchain}, we focus on chains and present proofs for Theorems \ref{thm:EHreduction} and \ref{thm:EHchain}.
In the final Section \ref{sec:EHbool}, we verify Theorem \ref{thm:EHbool}.
The material presented in this chapter is based on a manuscript available as a preprint on arXiv \cite{QnEH}. 
\\


\section{Notation on subposets of the Boolean lattice}\label{sec:EHgen}


Recall that $\QQ(\bZ)$ denotes the Boolean lattice with ground set $\bZ$.
For $\ell\in\{0,\dots,|\bZ|\}$, \textit{layer $\ell$}\index{layer} of $\QQ(\bZ)$ refers to the subposet $\{Z\in\QQ(\bZ): ~ |Z|=\ell\}$.
Note that every layer of the Boolean lattice is an antichain.

Given a Boolean lattice $\QQ$ and vertices $A,B\in\QQ$ with $A\subseteq B$, 
the \textit{sub-Boolean lattice}, or \textit{sublattice}\index{sublattice} for short, between $A$ and $B$ is
$$\QQ\big|_A^B =\{X\in\QQ : A\subseteq X \subseteq B\}.$$
This subposet is isomorphic to a Boolean lattice of dimension $|B|-|A|$.
Note that a copy of a Boolean lattice in $\QQ$ is not necessarily a sublattice.
\\


\section{Forbidden non-diverse colored posets}\label{sec:EHnoncolorful}

\begin{proof}[Proof of Theorem \ref{thm:EHgenNON}]
For the lower bound, note that $P_b\subseteq Q_n$ by the choice of $n$. Thus, $R(P_b,Q_n)\le \widetilde{R}(\dot P^{(b)}_b,Q_n) \le \widetilde{R}(\dot P,Q_n)$.
A similar argument shows that $R(P_r,Q_n)\le  \widetilde{R}(\dot P,Q_n)$.

To establish the upper bound, let $m=\max\{R(P_b,Q_n), R(P_r,Q_n)\}$ and $N=m+2$.
Consider an arbitrary blue/red coloring of the Boolean lattice $\QQ=\QQ([N])$ which contains no monochromatic copy of $Q_n$.
We shall show that this coloring contains a copy of $\dot P$.
Note that the sublattices $\QQ\big|_{\{1\}}^{[N]\setminus\{2\}}$ and $\QQ\big|_{\{2\}}^{[N]\setminus\{1\}}$ are parallel.
The sublattice $\QQ\big|_{\{1\}}^{[N]\setminus\{2\}}$ is isomorphic to a Boolean lattice of dimension $N-2=m\ge R(P_b,Q_n)$, thus it contains a blue copy of $P_b$. 
Similarly, $\QQ\big|_{\{2\}}^{[N]\setminus\{1\}}$ contains a red copy of $P_r$.
By combining these two subposets, we obtain a copy of $\dot P$.
\end{proof}

Theorem \ref{thm:EHantichain} is a consequence of the following three lemmas.

\begin{lemma}\label{lem:EHantichain1}
For every $1\le s\le t< n$,\:\:$\widetilde{R}({\dot C_t^{(b)}}\opl {\dot C_s^{(r)}},Q_n)=n+t+1$. 
\end{lemma}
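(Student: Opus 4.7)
The plan is to prove the two inequalities separately, in both cases relying on Corollary~\ref{cor:chain}.

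For the upper bound $\widetilde{R}({\dot C_t^{(b)}}\opl {\dot C_s^{(r)}},Q_n) \le n+t+1$, I would set $N = n+t+1$ and fix a blue/red coloring of $\QQ([N])$ with no monochromatic copy of $Q_n$. Since $R(C_{t+2}, Q_n) = N$ by Corollary~\ref{cor:chain}, the absence of a red $Q_n$ forces a blue chain $Z_0 \subsetneq Z_1 \subsetneq \dots \subsetneq Z_{t+1}$, whose inner part $Z_1, \dots, Z_t$ is a blue copy of $C_t$. I would then pick $a \in Z_1$ and $b \in [N] \setminus Z_t$ (which exist because $Z_1 \neq \varnothing$ and $Z_t \neq [N]$) and consider the sublattice $\QQ\big|_{\{b\}}^{[N]\setminus\{a\}}$, which has dimension $n+t-1 \ge n+s-1$ and is parallel to $Z_1, \dots, Z_t$: every vertex of the sublattice contains $b$ but not $a$, while each $Z_i$ with $1 \le i \le t$ contains $a$ but not $b$. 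Finally, applying Corollary~\ref{cor:chain} inside this sublattice with the colors swapped, and using the absence of a blue $Q_n$, yields a red copy of $C_s$ parallel to $Z_1, \dots, Z_t$, producing the desired colored copy of ${\dot C_t^{(b)}}\opl {\dot C_s^{(r)}}$.

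For the lower bound $\widetilde{R} > n+t$, I would exhibit a layered coloring of $\QQ([n+t])$: color all vertices in layers $0, 1, \dots, t-1$ and in layer $n+t$ blue, and all remaining vertices red. Since $t < n$, the $t+1$ blue layers and the $n$ red layers are each fewer than $n+1 = h(Q_n)$, so no monochromatic $Q_n$ occurs. For the forbidden parallel composition, I would argue that any blue chain of length $t$ uses $t$ of the $t+1$ available blue layers, so it necessarily contains a vertex from layer $0$ or from layer $n+t$, namely $\varnothing$ or $[n+t]$. Both of these vertices are comparable to every vertex of the host Boolean lattice, so no blue $C_t$ can be parallel to any nonempty red subposet, in particular not to a red $C_s$.

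The main obstacle will be the lower bound: the construction must simultaneously rule out both monochromatic $Q_n$'s while forcing every blue $C_t$ through a universally comparable vertex. The specific layer pattern above achieves both, and once it is identified the verification reduces to a short case analysis on which of the $t+1$ blue layers is skipped by the hypothetical blue $C_t$.
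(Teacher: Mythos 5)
Your proof is correct, and both bounds are essentially the paper's. For the lower bound you use the same layered coloring (the paper marks $\varnothing$, $[n+t]$, and $t-1$ further arbitrary layers blue, of which your layers $0,\dots,t-1,\,n+t$ are a particular instance) and the identical pigeonhole argument forcing the blue chain through a universally comparable vertex. For the upper bound the paper simply cites Theorem~\ref{thm:EHgenNON} together with Corollary~\ref{cor:chain}, whereas you re-derive it explicitly by locating a blue $C_{t+2}$, passing to its interior, and then finding a red chain in a parallel sublattice---the same construction that drives the proof of Theorem~\ref{thm:QnCC}---but this is the same parallel-sublattice idea dressed differently.
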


\begin{proof}
The upper bound $\widetilde{R}({\dot C_t^{(b)}}\opl {\dot C_s^{(r)}},Q_n)\le R(C_t,Q_n)+2=n+t+1$ is implied by Theorem \ref{thm:EHgenNON} and Corollary \ref{cor:chain}.
We shall prove the lower bound by constructing a layered coloring of $\QQ([n+t])$ that contains neither a copy of $\dot C_t^{(b)}\opl \dot C_s^{(r)}$ nor a monochromatic copy of $Q_n$.
Assign the color blue to the two vertices $\varnothing$ and $[n+t]$ as well as to all vertices in $t-1$ arbitrarily chosen additional layers. Color all remaining vertices in red.
There are $t+1\le n$ blue layers and $n$ red layers in our coloring.
Since $Q_n$ has height $n+1$, i.e., contains a chain on $n+1$ vertices, there is no monochromatic copy of $Q_n$.
Next, assume towards a contradiction that there exists a copy $\dot \cP$ of $\dot C_t^{(b)}\opl \dot C_s^{(r)}$.
The subposet $\dot \cP$ contains $t$ pairwise comparable blue vertices. 
Since there are $t+1$ blue layers in our coloring, either $\varnothing$ or $[n+t]$ are contained in $\dot \cP$. 
Both of these vertices are comparable to every other vertex of the copy of $\dot \cP$.
However, every blue vertex of $\dot\cP$ is incomparable to every red vertex of $\dot\cP$, a contradiction.
\end{proof}
\medskip

\begin{lemma}\label{lem:EHantichain2}
For $n\ge 3$,\:\:$\widetilde{R}({\dot A_2^{(b)}}\opl {\dot A_2^{(r)}},Q_n)=n+2$.
\end{lemma}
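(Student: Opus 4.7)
The plan is to prove the lower and upper bounds separately.

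For the lower bound $\widetilde R \ge n+2$, I would exhibit a blue/red coloring of $Q_{n+1}$ containing no monochromatic copy of $Q_n$ and no colored copy of ${\dot A_2^{(b)}} \opl {\dot A_2^{(r)}}$. The natural candidate is to color $\varnothing$ and $[n+1]$ blue and every other vertex red. The blue subposet consists of two comparable vertices, so there are no two incomparable blue vertices and hence no colored copy of the antichain. No blue monochromatic $Q_n$ exists as blue has only two vertices. For the absence of a red $Q_n$, I would apply the Embedding Lemma: any copy of $Q_n$ in $Q_{n+1}$ arises from an $\bX$-good embedding $\phi$, so $\phi(X) = X \cup f(X)$ for a monotone map $f\colon \QQ(\bX) \to Q_1$, where $|\bX| = n$. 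An entirely red image forces $f(\varnothing) = \{a\}$, where $\{a\} = [n+1]\setminus \bX$, and $f(\bX) = \varnothing$, contradicting the monotonicity of $f$.

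For the upper bound $\widetilde R \le n+2$, I would argue by contradiction: fix a coloring of $Q_{n+2}$ without a monochromatic $Q_n$ and without a copy of ${\dot A_2^{(b)}} \opl {\dot A_2^{(r)}}$. Since each layer of $Q_{n+2}$ is itself an antichain of size at least $n+2 \ge 5$ for every internal layer, a layer containing at least two blue and at least two red vertices would immediately yield the 4-antichain. So every layer has at most one vertex of some color. I would also exploit the stronger cross-layer constraint: for any two incomparable blue vertices $B_1, B_2$, the red vertices incomparable to both must form a chain, and symmetrically. Combined with Corollary~\ref{cor:chain}, which guarantees a blue chain of length 3 and a red chain of length 3 in $Q_{n+2}$ whenever no monochromatic $Q_n$ exists, I aim to deduce that one color class is globally forced into a very restricted shape. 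Finally, I would pick a suitable $n$-element $\bX \subseteq [n+2]$ and a monotone $f\colon \QQ(\bX)\to Q_2$, via the Embedding Lemma framework, whose resulting image is an $\bX$-good copy of $Q_n$ avoiding the few vertices of the restricted color.

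The main obstacle is the upper bound, specifically translating the per-layer and cross-layer antichain restrictions into a global structural constraint on one color class. The per-layer sparsity alone is not enough since the minority color may switch between layers; a clean count of one global color class requires combining the layer condition with the constraint imposed by a chosen maximal blue 2-antichain (or red 2-antichain). Once one color class is shown to be ``thin'' enough---e.g., confined essentially to a union of two chains, or to a set of $O(n)$ vertices---then an averaging or explicit-embedding argument should locate a monochromatic $Q_n$ in the complementary color, completing the contradiction.
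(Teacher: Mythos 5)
Your lower bound argument is essentially correct (it re-proves the inequality $R(A_2,Q_n)\ge n+2$ directly, which the paper instead obtains by citing Theorems~\ref{thm:EHgenNON} and~\ref{thm:QnCC}), and your opening observation for the upper bound — that every layer of $\QQ([n+2])$ must have at most one vertex of some color — matches the paper's notion of a layer being \emph{almost red} or \emph{almost blue}. However, your upper-bound proof has a genuine gap, which you honestly flag yourself: you do not resolve the problem that ``the minority color may switch between layers.''

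The paper closes exactly this gap with a concrete argument that your proposal does not supply. First, one shows that if two \emph{consecutive} layers $i$ and $i+1$ have opposite majority colors, then a colored copy of $\dot A_2^{(b)}\opl\dot A_2^{(r)}$ can be extracted directly (a vertex of the minority color in each layer, plus two majority-colored vertices chosen incomparable to both), so WLOG \emph{every} layer is almost red. Second — and this is the step absent from your plan — one splits according to whether the blue vertices form a chain. If they do, a sublattice avoiding them is entirely red and yields a red $Q_n$. If not, one picks two incomparable blue vertices $X\subsetneq\hspace{-0.3em}\not\hspace{0.3em} Y$ minimizing $|Y|-|X|$ and does casework on $|X|\ge 2$, $|Y|\le N-2$, and the extremal boundary case $|X|=1$, $|Y|=N-1$; the first two cases produce the 4-antichain by an explicit counting argument inside a fixed layer, and the last produces a red sublattice copy of $Q_n$ using the minimality of $|Y|-|X|$. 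Your sketch gestures at ``one color class confined to two chains or $O(n)$ vertices'' followed by an averaging argument, but this is not established, and the minimality trick for the extremal pair is the ingredient you would actually need. Without it, the proposal does not yet constitute a proof of the upper bound.
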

\begin{proof}
The lower bound $\widetilde{R}({\dot A_2^{(b)}}\opl {\dot A_2^{(r)}},Q_n)\ge R(A_2,Q_n)=n+2$ follows from Theorems~\ref{thm:EHgenNON} and~\ref{thm:QnCC}.
For the upper bound, let $N=n+2$ and fix an arbitrary blue/red coloring of the Boolean lattice $\QQ=\QQ([N])$.
We shall show that there is either a colored copy of ${\dot A_2^{(b)}}\opl {\dot A_2^{(r)}}$ or a monochromatic copy of $Q_n$.

We say that a layer $\{Z\in\QQ: ~ |Z|=i\}$, $i\in\{1,\dots,n+1\}$, is \textit{almost red} if it contains at most one blue vertex, and \textit{almost blue} if it contains at most one red vertex.
We can suppose that every layer $i$, where $i\in\{1,\dots,n+1\}$, is almost red or almost blue; otherwise, such a layer contains a copy of ${\dot A_2^{(b)}}\opl {\dot A_2^{(r)}}$.
If there are consecutive layers $i$ and $i+1$, $i\in \{1,\dots, n\}$, such that one of them is almost red and one is almost blue,
then it is straightforward to find a copy of ${\dot A_2^{(b)}}\opl {\dot A_2^{(r)}}$, so suppose otherwise.
Without loss of generality, every layer is almost red.

First, assume that any two blue vertices in $\QQ$ are comparable, i.e., the blue vertices form a chain. 
Let $b\in[N]$ be a ground element contained in every blue vertex, except for possibly $\varnothing$.
Let $a\in[N]$ be a ground element contained in none of the blue vertices, except for possibly $[N]$. 
Note that the sublattice $\QQ\big|_{\{a\}}^{[N]\setminus\{b\}}$ contains no blue vertex. Since its dimension is $N-2=n$, the sublattice is a red copy of $Q_n$, as desired.

From now on, suppose there are two blue incomparable vertices. Pick two blue vertices $X,Y\in\QQ$ such that 
\vspace*{-1em}
\begin{itemize}
\item $X\inc Y$, i.e., $X$ and $Y$ are incomparable, 
\item $|X|\le |Y|$, and 
\item $|Y|-|X|$ is minimal among such pairs, i.e., there are no two $X',Y'\in\QQ$ such that $X'\inc Y'$, $|X'|\le |Y'|$, and $|Y'|-|X'|<|Y|-|X|$.
\end{itemize}

\indent Because layers $|X|$ and $|Y|$ are almost red, we see that $1\le |X|<|Y|\le N-1$.
We distinguish three cases, depending on whether $|X|=1$ and $|Y|=N-1$.
\\

\noindent \textbf{Case 1:} $|X|\ge 2$.\medskip\\
Since $X\not\subseteq Y$, there exists a ground element $a\in X\setminus Y$.
Let 
$$\cF=\{Z\in\QQ:~ |Z|=|X|,\ a\in Z\},$$
so $X\in\cF$. Note that $\cF$ is a layer of the $(N-1)$-dimensional sublattice $\QQ\big|_{\{a\}}^{[N]}$,
therefore the size of $\cF$ is 
$$|\cF|=\binom{N-1}{|X|-1}\ge \binom{N-1}{1}=N-1$$
In particular, there exist two distinct vertices $U_1,U_2\in\cF\setminus\{X\}$.
We claim that $X$, $Y$, $U_1$, and $U_2$ form a copy of $\dot A_2^{(b)}\opl \dot A_2^{(r)}$.
Indeed, $X$ and $Y$ are blue and, since layer $|X|$ is almost red, $U_1$ and $U_2$ are red.
Recall that $\cF$ is a layer of a sublattice and thus an antichain, so $U_1$, $U_2$, and $X$ are pairwise incomparable.
Furthermore, $Y$ is incomparable to each of $U_1$, $U_2$, and $X$, because on the one hand $|U_1|=|U_2|=|X|<|Y|$, and on the other hand $a$ is contained in each of $U_1$, $U_2$, and $X$, but $a\notin Y$.
\\

\noindent \textbf{Case 2:} $|Y|\le N-2$.\medskip\\
We proceed similarly to Case 1, so we only sketch the proof.
Let $a\in X\setminus Y$, and let $\cF=\{Z\in\QQ:~ |Z|=|Y|,\ a\notin Z\}$.
Observe that $|\cF|\ge N-1$, so we find vertices $U_1,U_2\in\cF$ such that $X$, $Y$, $U_1$, and $U_2$ form a copy of $\dot A_2^{(b)}\opl \dot A_2^{(r)}$.
\\

\noindent \textbf{Case 3:} $|X|=1$ and $|Y|=N-1$.\medskip\\
Since $X$ and $Y$ are incomparable, there is a ground element $a\in[N]$ such that $X=\{a\}$ and $Y=[N]\setminus \{a\}$.  
Fix some distinct ground elements $b,c\in [N]\setminus \{a\}$.
Assume that there is a blue vertex $U$ in the sublattice $\QQ\big|_{\{b\}}^{[N]\setminus\{c\}}$.
We shall find a contradiction to the minimality of $X$ and $Y$.
Since layer $1$ of the Boolean lattice $\QQ$ is almost red and $X$ is blue, the vertex $\{b\}$ is red, so $|U|\ge 2$. Similarly, $[N]\setminus\{c\}$ is red, which implies that $|U|\le N-2$.
\vspace*{-1em}
\begin{itemize}
\item If $a\in U$, then $U$ and $Y=[N]\setminus\{a\}$ are incomparable, and $|Y|-|U|<N-2=|Y|-|X|$, contradicting the minimality of $|Y|-|X|$.

\item However, if $a\notin U$, then $U$ and $X=\{a\}$ are incomparable, and $|U|-|X|<|Y|-|X|$, which also contradicts the minimality of $|Y|-|X|$.
\end{itemize}
Therefore, the sublattice $\QQ\big|_{\{b\}}^{[N]\setminus\{c\}}$ is a red copy of $Q_n$.
\end{proof}
\medskip

\begin{lemma}\label{lem:EHantichain3}
Let $\dot A$ be a colored antichain such that there are three vertices of the same color. Then for sufficiently large $n$,\:\:$\widetilde{R}(\dot A,Q_n)=n+3$.
\end{lemma}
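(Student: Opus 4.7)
The two inequalities are treated separately.

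\emph{Lower bound $\widetilde R(\dot A,Q_n)\ge n+3$.} Because $\dot A$ contains three vertices of the same color, say blue, any colored copy of $\dot A$ contains an induced blue $A_3$. Hence any coloring with no blue $A_3$ is automatically $\dot A$-free. My plan is to reuse the construction from the lower-bound proof of Theorem~\ref{thm:antichain}: color the two chains $\{[i]:i\in[n+2]\}$ and $\{[n+2]\setminus[i]:i\in[n+2]\}$ in $\QQ([n+2])$ blue, and all other vertices red. That argument already shows there is no red $Q_n$ and no blue $A_3$, and since the blue subposet has only $2(n+2)$ vertices while $|Q_n|=2^n$, for $n$ large the blue subposet cannot contain $Q_n$ as an induced subposet. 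So this coloring of $Q_{n+2}$ witnesses both the absence of a monochromatic $Q_n$ and the absence of a copy of $\dot A$.

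\emph{Upper bound $\widetilde R(\dot A,Q_n)\le n+3$.} Up to swapping colors, assume $\dot A$ has $b\ge 3$ blue and $r\ge 0$ red vertices. If $r=0$ then $\dot A=\dot A_b^{(b)}$ is monochromatic and the bound follows immediately from Theorem~\ref{thm:antichain}, since $|A|\le \log\log n$ for $n$ large. So from here on assume $r\ge 1$ and argue by contradiction: fix a $\dot A$-free blue/red coloring of $Q_{n+3}$ with no monochromatic $Q_n$.

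The first observation is that any antichain of $Q_{n+3}$ which contains $\ge b$ blue and $\ge r$ red vertices yields a copy of $\dot A$. Applied to individual layers $L_\ell$ of $Q_{n+3}$ (each an antichain), for $n$ large every layer $L_\ell$ with $1\le\ell\le n+2$ has size at least $b+r$ and is therefore either an \emph{R-layer} (with $<b$ blue vertices) or a \emph{B-layer} (with $<r$ red vertices), exclusively. Next I apply Theorem~\ref{thm:antichain} twice: no red $Q_n$ produces a blue antichain $\cB$ of size $t=\log\log n$, while no blue $Q_n$ produces a red antichain $\cR$ of the same size. The $\dot A$-free condition, applied to $b$-subsets of $\cB$, implies by double counting that each red vertex is incomparable to at most $b+r-2$ vertices of $\cB$; in particular, every red vertex is comparable to all but finitely many elements of $\cB$, so the red vertices are concentrated in the union of the ``comparability shadows'' $U_B\cup D_B$, where $U_B=\{Z:Z\subseteq B\}$ and $D_B=\{Z:Z\supseteq B\}$, ranging over $B\in\cB$.

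The main technical step is to convert this into a monochromatic $Q_n$. The plan is to apply an Erd\H os--Szekeres-type pigeonhole in the spirit of the proof of Theorem~\ref{thm:antichain}, as follows. The large blue antichain $\cB$ gives many candidate triples $\{B_1,B_2,B_3\}$; for each such triple I look for a Boolean sublattice $\QQ|_{A'}^{B'}$ of dimension $n$ in $Q_{n+3}$ such that $A'\not\subseteq B_i$ and $B_i\not\subseteq B'$ for $i\in\{1,2,3\}$. Any such sublattice lies entirely outside $\bigcup_{i}(U_{B_i}\cup D_{B_i})$, hence all of its vertices, with the exception of the at most $3(r+b-2)$ exceptional red vertices accumulated across the three $B_i$, must be blue, producing a blue $Q_{n'}$ with $n'$ slightly smaller than $n$; adjusting parameters ($|\bY|=3$, applying the Chain Lemma to the remaining coordinates) then upgrades this to a blue $Q_n$. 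The main obstruction will be the pathological case when every triple from $\cB$ satisfies $\bigcup_iB_i=[n+3]$ and $\bigcap_iB_i=\varnothing$ (for instance when $\cB$ lies in a single middle layer); here I would exploit the size $t=\log\log n$ of $\cB$ to use the ES-type argument from Lemma~\ref{lem:QnPA:ordering} to extract three $B_i$'s with consistent layer ordering, and then invoke the Chain Lemma on an auxiliary $\bY$ of size~3 to find the sublattice. Running the symmetric argument with the roles of blue and red exchanged (using $\cR$ in place of $\cB$) handles the remaining cases, producing the required monochromatic $Q_n$ and hence the desired contradiction.
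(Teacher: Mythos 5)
Your lower bound is fine and is an acceptable alternative to the paper's one-liner (which instead derives $\widetilde{R}(\dot A,Q_n)\ge R(A_3,Q_n)=n+3$ from Theorems~\ref{thm:EHgenNON} and~\ref{thm:antichain}): the two-chain coloring has no blue $A_3$, hence no copy of $\dot A$ under your WLOG that the three same-colored vertices are blue, and the blue subposet is far too small to contain an induced $Q_n$.

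The upper bound argument has a genuine gap. The key ``double-counting'' claim --- that each red vertex is incomparable to at most $b+r-2$ vertices of $\cB$ --- does not follow from $\dot A$-freeness unless $r\le 1$. A copy of $\dot A$ requires $b$ blue and $r$ red vertices that are all pairwise incomparable; a single red vertex incomparable to arbitrarily many blue vertices of $\cB$ produces an antichain with only one red vertex and hence no copy of $\dot A$ when $r\ge 2$. Without this concentration statement the ``comparability shadow'' picture and everything downstream collapses. Beyond that, the main construction step is only sketched: you want a sublattice $\QQ\big|_{A'}^{B'}$ of dimension $n$ in $Q_{n+3}$ parallel to a triple $\{B_1,B_2,B_3\}\subseteq\cB$, but that forces $|A'|\le 3$ and $|[n+3]\setminus B'|\le 3$, which is impossible when the triple is spread across the ground set, and the proposed Erd\H{o}s--Szekeres/Chain-Lemma repair for the ``pathological case'' is left at the level of a plan rather than a proof.

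The paper sidesteps both difficulties by aiming directly for a copy of $\dot A$ rather than a monochromatic $Q_n$. Letting $s\ge 3$ be the size of the majority color class, set $t=s+2^{2s}$ (a constant, not $\log\log n$). Theorem~\ref{thm:antichain} gives a red antichain $\cA'$ and a blue antichain $\cB'$, each of size $t$. One then greedily extracts $s$ vertices $Z_i$ from each, simultaneously picking ground elements $a_i\in Z_i$ and $x_i\notin Z_i$ so that the $a$'s and $x$'s are disjoint sets. The value $t=s+2^{2s}$ is exactly what makes the greedy step possible: at each stage the number of ``bad'' vertices to avoid is at most $(s-1)+2\cdot 2^{2s-1}<t$. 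The resulting sublattice $\QQ'=\{X:\{x_i\}_{i\in[2s]}\subseteq X\subseteq[N]\setminus\{a_i\}_{i\in[2s]}\}$ has dimension at least $n+3-4s$ and is element-wise incomparable to every $Z_i$. An antichain of size $2s-1$ inside $\QQ'$ has $s$ vertices of one color; combined with the $s$ chosen $Z_i$'s of the complementary color, this yields a copy of $\dot A_s^{(r)}\opl\dot A_s^{(b)}$, hence of $\dot A$. This bypasses the incomparability-concentration issue entirely and avoids having to manufacture a large sublattice inside $Q_{n+3}$.
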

\begin{proof}
The bound $\widetilde{R}(\dot A,Q_n)\ge R(A_3,Q_n)=n+3$ is a consequence of Theorems~\ref{thm:EHgenNON} and~\ref{thm:antichain}.
In the remainder of the proof, we bound $\widetilde{R}(\dot A,Q_n)$ from above.
Let $s$ be the number of vertices of $\dot A$ colored in the majority color, so $s\ge 3$. Let $t=s+2^{2s}$.
Let $N=n+3$, and fix an arbitrary blue/red coloring of the Boolean lattice $\QQ=\QQ([N])$ which contains no monochromatic copy of $Q_n$.
We show that there is a copy of $\dot A_s^{(r)} \opl \dot A_s^{(b)}$ in this coloring, so in particular, there is a copy of $\dot A$.
It was shown in Theorem \ref{thm:antichain} that for sufficiently large $n$, $$R(A_t,Q_n)=n+3=N.$$
Since there is neither a blue nor a red copy of $Q_n$, there exists a  red copy $\cA'$ of $A_t$  as well as a   blue copy $\cB'$ of $A_t$ in our coloring.
Note that neither $\varnothing$ nor $[N]$ are contained in the antichains $\cA'$ or $\cB'$, since each of $\varnothing$ and $[N]$ is comparable to every vertex of $\QQ$.

Our proof idea is to find $s$ red vertices in $\cA'$ and $s$ blue vertices in $\cB'$, denoted by $Z_i$, $i\in[2s]$, 
which are ``easily separable'', i.e., such that there exist ground elements $a_i\in Z_i$ and $x_i\notin Z_i$ with $a_i\neq x_j$ for any indices $i,j\in[2s]$.
While we cannot guarantee that the vertices $Z_i$, $i\in[2s]$, form a colored copy of the desired antichain, we shall show that there is a large sublattice $\QQ'$ parallel to the vertices $Z_i$, $i\in[2s]$.
Any antichain of size $2s-1$ in $\QQ'$ contains $s$ monochromatic vertices. 
These monochromatic vertices, together with all $Z_i$'s of the complementary color, shall form a copy of $\dot A_s^{(r)} \opl \dot A_s^{(b)}$, as desired.

Fix a vertex $Z_1\in\cA'$, and let $a_1\in Z_1$ and $x_1 \in [N]\setminus Z_1$ be chosen arbitrarily. We proceed iteratively.
For $i\in\{2,\dots,s\}$, assume that we selected distinct vertices $Z_1,\dots,Z_{i-1}\in\cA'$ and ground elements $a_1,\dots,a_{i-1}, x_1,\dots,x_{i-1}$ such that $a_j\in Z_j$, $x_j\in [N]\setminus Z_j$, and $a_{j}\neq x_{j'}$ for any $j, j'\in[i-1]$.
In the next iterative step, pick a vertex $Z_i\in\cA'$ such that 
\vspace*{-1em}
\begin{itemize}
\item $Z_i$ is distinct from $Z_1,\dots, Z_{i-1}$,
\item there is an $a_i\in Z_i$ with $a_i\notin \{x_1,\dots,x_{i-1}\}$, and
\item there is an $x_i\in [N]\setminus Z_i$ with $x_i\notin \{a_1,\dots,a_{i-1}\}$.
\end{itemize}

To show that $Z_i$ is well-defined, let $\cF_i$ be the set of vertices that fail at least one of these criteria. 
We need to verify that $|\cF_i|<|\cA'|$.
The vertices in $\cF_i$ are $Z_1,\dots,Z_{i-1}$ as well as all subsets of $\{x_1,\dots,x_{i-1}\}$ 
and all vertices of the form $[N]\setminus X$, where $X\subseteq\{a_1,\dots,a_{i-1}\}$. Thus, the size of $\cF_i$ is 
$$|\cF_i|\le (i-1)+2^{i-1}+2^{i-1}\le (s-1)+2^{s}<t=|\cA'|,$$ 
so a triple $(Z_i, a_i, x_i)$ with the desired properties exists in every step $i$.
After iteration step $i=s$, let $\cA=\{Z_1,\dots,Z_{s}\}$. This subposet of $\cA'$ is a red antichain.

We proceed similarly for $\cB'$, i.e., for $i\in[s]$, we select $Z_{s+i}$, $a_{s+i}$, and $x_{s+i}$.
Pick a vertex $Z_{s+1}\in\cB'$ such that there are $a_{s+1}\in Z_{s+1}$ with $a_{s+1}\notin\{x_1,\dots,x_{s}\}$ and $x_{s+1}\in [N]\setminus Z_{s+1}$ with $x_{s+1}\notin \{a_1,\dots,a_{s}\}$.
This is possible because the number of ``bad'' vertices is $2^{s}+2^{s}<|\cB'|$. Iteratively, let $i\in\{2,\dots,s\}$.
Assume that we defined distinct vertices $Z_{s+1},\dots,Z_{s+i-1}\in\cB'$ and $a_{s+1},\dots,a_{s+i-1}, x_{s+1},\dots,x_{s+i-1}$ such that $a_{j}\in Z_j$, $x_j\in [N]\setminus Z_j$ for $j\in\{s+1,\dots,s+i-1\}$, and $a_{j_1}\neq x_{j_2}$ for any $j_1,j_2\in[s+i-1]$.
We choose $Z_{s+i}\in\cB'$ such that 
\vspace*{-1em}
\begin{itemize}
\item $Z_{s+i}$ is distinct from $Z_{s+1},\dots, Z_{s+i-1}$,
\item there is an $a_{s+i}\in Z_{s+i}$ such that $a_{s+i}\notin \{x_1,\dots,x_{s+i-1}\}$, and
\item there is an $x_{s+i}\in [N]\setminus Z_{s+i}$ with $x_{s+i}\notin  \{a_1,\dots,a_{s+i-1}\}$.
\end{itemize}
\vspace*{-1em}
\indent The number of vertices for which one of these properties fails is at most $$(i-1)+2^{s+i-1}+2^{s+i-1}\le(s-1)+2^{2s-1}+2^{2s-1}<t=|\cB'|,$$ 
so $Z_{s+i}$, $a_{s+i}$, and $x_{s+i}$ can be chosen in every step.
Let $\cB=\{Z_{s+1}\dots,Z_{2s}\}$, and note that this is a blue antichain. 
We remark that $\cA$ and $\cB$ are disjoint, because $\cA$ is red and $\cB$ is blue. However, $\cA\cup\cB$ might contain comparable vertices.

Consider the sublattice
$\QQ'=\big\{X\in\QQ: ~ \{x_i :~ i\in [2s]\}\subseteq X \subseteq [N]\setminus \{a_i :~ i\in [2s]\}\big\}$.
This subposet is well-defined, because $a_i\neq x_j$ for any $i,j\in[2s]$.
We claim that $\QQ'$ is parallel to $\cA\cup \cB$.
Let $X\in\QQ'$ and $i\in[2s]$.
Since $x_i\in X\setminus Z_i$ and $a_i\in Z_i\setminus X$, we see that $X$ and $Z_i$ are incomparable, so $\QQ'$ is parallel to $\cA$ and $\cB$.
The dimension of $\QQ'$ is at least $n-4s$. 
For sufficiently large $n$, there exists an antichain $\cP'$ on $2s-1$ vertices in~$\QQ'$.
In particular, $\cP'$ contains a monochromatic antichain $\cP$ on $s$ vertices.
If $\cP$ is blue, then $\cA\cup\cP$ is a copy of $\dot A_s^{(r)} \opl \dot A_s^{(b)}$.
If $\cP$ is red, then $\cP\cup\cB$ is a copy of $\dot A_s^{(r)} \opl \dot A_s^{(b)}$.
\end{proof}

\begin{proof}[Proof of Theorem \ref{thm:EHantichain}]
Lemma \ref{lem:EHantichain1} implies that $\widetilde{R}({\dot A_1^{(b)}}\opl {\dot A_1^{(r)}},Q_n)=n+2$.
By Lemma \ref{lem:EHantichain2}, $\widetilde{R}({\dot A_2^{(b)}}\opl {\dot A_2^{(r)}},Q_n)=n+2$,
thus also
$$n+2= \widetilde{R}({\dot A_1^{(b)}}\opl {\dot A_1^{(r)}},Q_n) \le \widetilde{R}({\dot A_2^{(b)}}\opl {\dot A_1^{(r)}},Q_n) \le \widetilde{R}({\dot A_2^{(b)}}\opl {\dot A_2^{(r)}},Q_n)=n+2,$$
and similarly $\widetilde{R}({\dot A_1^{(b)}}\opl {\dot A_2^{(r)}},Q_n)=n+2$.
For any other non-monochromatically colored antichain, the poset Erd\H{o}s-Hajnal number is determined by Lemma \ref{lem:EHantichain3}.
\end{proof}
\bigskip


\section{Forbidden chains}\label{sec:EHchain}

\subsection{Proof of Theorem \ref{thm:EHreduction}}
Throughout this subsection, let $\dot  C$ be a fixed colored chain on $t$ vertices $Z_1< Z_2 < \dots < Z_t$. 
For $i\in[t]$, we denote by~$\dot C\big|^{Z_{i}}_{Z_{1}}$ the subposet of $C$ consisting of its $i$ smallest vertices $Z_1<\dots<Z_i$, colored as in~$\dot C$. 
Additionally, let $\dot C\big|^{Z_{0}}_{Z_{1}}$ be the empty colored poset.
In this subsection, $\QQ$ is a Boolean lattice with a fixed $\dot C$-free blue/red coloring.
We partition the vertices of $\QQ$ into so-called \textit{phases}.
The \textit{$i$-th phase}\index{phase} of $\QQ$ with respect to $\dot C$ is defined as the family of vertices
$$\cF^{\dot C}_{i}=\left\{X\in\QQ : ~ \QQ\big|^X_\varnothing\text{ contains a copy of }\dot C\big|^{Z_{i-1}}_{Z_{1}}
\text{, but no copy of }\dot C\big|^{Z_{i}}_{Z_{1}}\right\}.$$
Here, $\QQ\big|^X_\varnothing$ inherits the coloring from $\QQ$. See Figure \ref{fig:QnEH:phase} for an example of phases of $Q_4$.
We remark that $\cF^{\dot C}_{i}$ might be empty. 

\begin{figure}[h]
\centering
\includegraphics[scale=0.62]{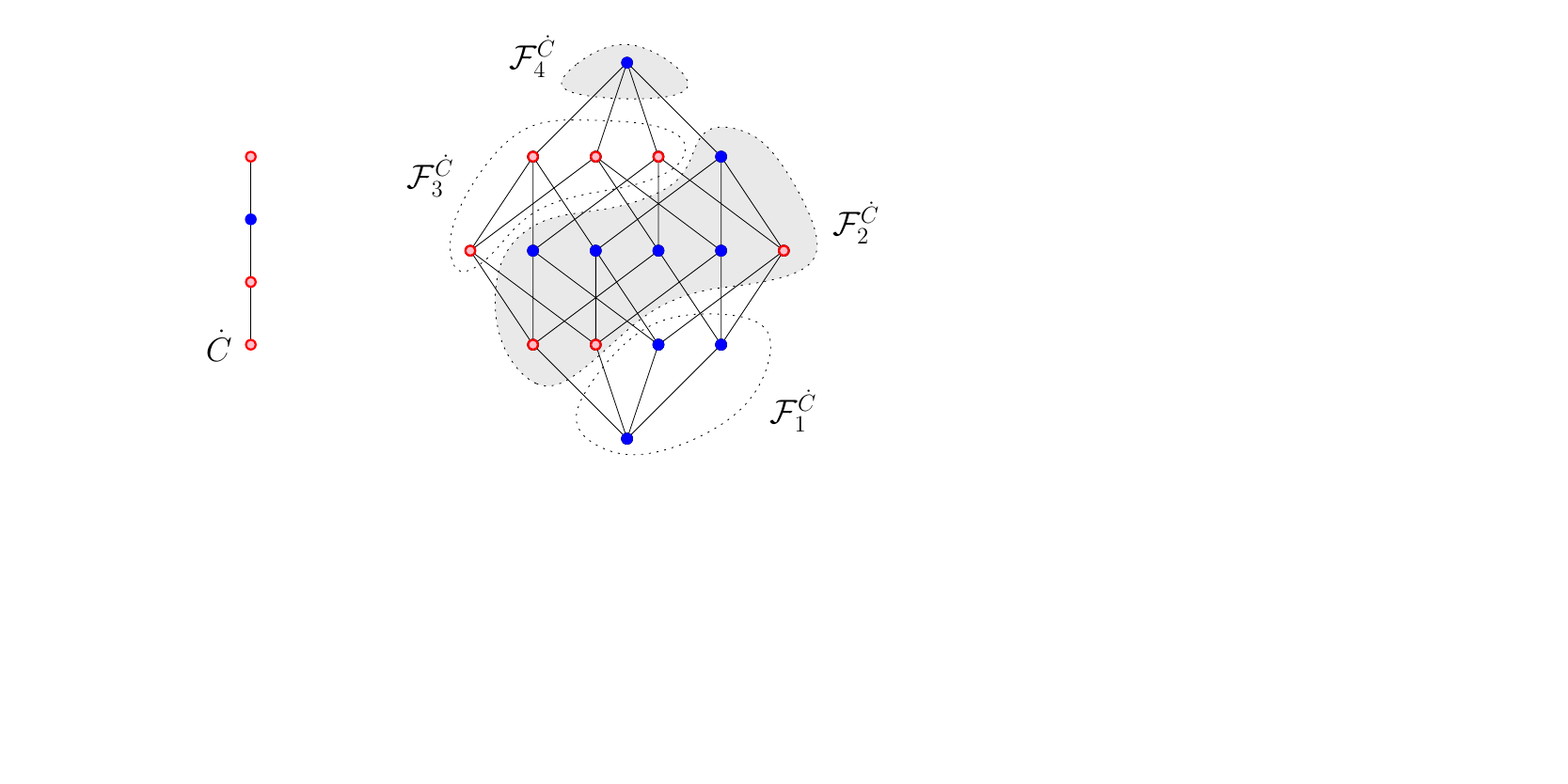}
\caption{A colored chain $\dot C$ and a $\dot C$-free blue/red coloring of $Q_4$ with sets $\cF^{\dot C}_{i}$, $i\in[4]$.}
\label{fig:QnEH:phase}
\end{figure}

Denote the color of $Z_i$, the $i$-th vertex of $\dot C$, by $c_i\in\{\text{blue}, \text{red}\}$, and let $\bar{c}_i$ be its complementary color.
Let $I(\dot C)$ be the set of indices for which there is no \textit{color switch} in $\dot C$, i.e.,
$$I(\dot C)=\big\{i\in\{2,\dots,t\} : ~ c_i= c_{i-1}\big\}.$$
In our example, $I(\dot C)=\{2\}$.
For $i\in [t]$, we define $\cA_i$ as the set of minimal vertices of~$\cF^{\dot C}_{i}$.
For example, in Figure \ref{fig:QnEH:phase}, the set $\cA_2$ consists of the three red vertices in $\cF^{\dot C}_{2}$.

The following properties are immediate, so we omit the proof.
\begin{lemma}\label{lem:QnEH:phase_basic} \ 
\vspace*{-1em}
\begin{itemize}
\item[(i)] The families $\cF^{\dot C}_{1},\dots,\cF^{\dot C}_{t}$ partition $\QQ$.
\item[(ii)] Let $X,Y\in\QQ$ with $X\in \cF^{\dot C}_{i}$ and $Y\in \cF^{\dot C}_{j}$ for some $i,j\in[t]$. If $X \subseteq Y$, then $i\le j$. 
\end{itemize}
\end{lemma}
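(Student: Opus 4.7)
The plan is to view the phases $\cF^{\dot C}_1,\dots,\cF^{\dot C}_t$ as labelling each vertex of $\QQ$ by the length of the longest prefix of $\dot C$ realised in the sublattice beneath it, and then read off both properties from this labelling.

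For part (i), I will introduce, for each $X\in\QQ$, the integer
$$i(X)=\max\left\{i\in\{0,\dots,t\}:~\QQ\big|^X_\varnothing\text{ contains a copy of }\dot C\big|^{Z_i}_{Z_1}\right\}.$$
This maximum exists because the empty colored poset $\dot C\big|^{Z_0}_{Z_1}$ is trivially a subposet of anything, and it satisfies $i(X)\le t-1$, since $i(X)=t$ would mean that $\QQ\big|^X_\varnothing\subseteq\QQ$ contains a copy of the full chain $\dot C$, contradicting the $\dot C$-freeness of the coloring of $\QQ$. In particular $i(X)+1\in[t]$. By the maximality of $i(X)$, the sublattice $\QQ\big|^X_\varnothing$ contains a copy of $\dot C\big|^{Z_{i(X)}}_{Z_1}$ but no copy of $\dot C\big|^{Z_{i(X)+1}}_{Z_1}$, so $X\in\cF^{\dot C}_{i(X)+1}$. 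Conversely, if $X\in\cF^{\dot C}_j$, then $j-1\le i(X)$ and $j>i(X)$ force $j=i(X)+1$, giving uniqueness. Hence the $\cF^{\dot C}_i$ partition $\QQ$.

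For part (ii), the key observation is that the map $X\mapsto i(X)$ is monotone with respect to inclusion. Indeed, if $X\subseteq Y$, then $\QQ\big|^X_\varnothing\subseteq \QQ\big|^Y_\varnothing$, so any copy of a prefix of $\dot C$ that witnesses the value $i(X)$ inside $\QQ\big|^X_\varnothing$ is also a copy inside $\QQ\big|^Y_\varnothing$. Thus $i(X)\le i(Y)$, and translating back via $i=i(X)+1$ and $j=i(Y)+1$ yields $i\le j$.

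No real obstacle is expected; the argument is pure bookkeeping around the ``longest realised prefix'' function. The only point that deserves to be recorded explicitly is that the global $\dot C$-freeness of $\QQ$ is precisely what ensures $i(X)\le t-1$, and hence that the phases $\cF^{\dot C}_1,\dots,\cF^{\dot C}_t$ actually exhaust $\QQ$ rather than leaving behind vertices below which a full copy of $\dot C$ would occur.
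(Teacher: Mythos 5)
Your argument is correct, and it is exactly the bookkeeping that the paper considers ``immediate'' and therefore omits: you parameterize each $X$ by the length $i(X)$ of the longest colored prefix of $\dot C$ realized inside $\QQ\big|^X_\varnothing$, note that this function is well-defined (the set of realizable prefix lengths is a down-set in $\{0,\dots,t\}$, nonempty since $\dot C\big|^{Z_0}_{Z_1}$ is empty) and bounded by $t-1$ via the $\dot C$-freeness of $\QQ$, identify $\cF^{\dot C}_{i(X)+1}$ as the unique phase containing $X$, and observe that $i(\cdot)$ is monotone under inclusion because $\QQ\big|^X_\varnothing\subseteq \QQ\big|^Y_\varnothing$ as colored subposets whenever $X\subseteq Y$. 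Nothing further is needed.
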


The next lemma shows that the color of each vertex in $\QQ$ is determined by its phase.
\begin{lemma}\label{lem:phase_color} \ 
\vspace*{-1em}
\begin{enumerate}
\item[(i)] Every vertex in $\cF^{\dot C}_{1}$ has color $\bar{c}_1$.
\item[(ii)] Let $2\le i \le t$ with $c_i\neq c_{i-1}$. Then every vertex in $\cF^{\dot C}_{i}$ has color $\bar{c}_i$.
\item[(iii)] Let $2\le i \le t$ with $c_i= c_{i-1}$. Then every vertex of $\cA_{i}$  has color $c_i$, and every vertex in $\cF^{\dot C}_{i}\setminus \cA_i$ has the complementary color $\bar{c}_i$.
\end{enumerate}
\end{lemma}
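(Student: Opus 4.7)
The three parts will be handled in order, each relying on unwrapping the defining property of $\cF_i^{\dot C}$ together with Lemma~\ref{lem:QnEH:phase_basic}(ii) to control how phases propagate along the subset relation. Throughout, if $\QQ\big|_\varnothing^X$ contains a copy of $\dot C\big|_{Z_1}^{Z_j}$, I will write this copy as a chain $W_1 \subsetneq \cdots \subsetneq W_j$ with $W_\ell$ colored $c_\ell$.

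Part~(i) is immediate from the definition: for $X \in \cF_1^{\dot C}$, the sublattice $\QQ\big|_\varnothing^X$ contains no copy of $\dot C\big|_{Z_1}^{Z_1}$, which is a single vertex of color $c_1$; since $X$ itself lies in $\QQ\big|_\varnothing^X$, it cannot have color $c_1$. For part~(ii), I will use the standard ``extend the chain'' argument. Fix $X \in \cF_i^{\dot C}$ and a copy $W_1 \subsetneq \cdots \subsetneq W_{i-1}$ of $\dot C\big|_{Z_1}^{Z_{i-1}}$ inside $\QQ\big|_\varnothing^X$. Suppose towards a contradiction that $X$ has color $c_i$. Because $c_i \neq c_{i-1}$, the vertices $X$ and $W_{i-1}$ have different colors and hence differ; combined with $W_{i-1} \subseteq X$ this gives $W_{i-1} \subsetneq X$. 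Then $W_1 \subsetneq \cdots \subsetneq W_{i-1} \subsetneq X$ is a copy of $\dot C\big|_{Z_1}^{Z_i}$ in $\QQ\big|_\varnothing^X$, contradicting $X \in \cF_i^{\dot C}$.

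Part~(iii) is the most delicate, since the color-mismatch trick of part~(ii) is unavailable: here $c_i = c_{i-1}$. I handle the two sub-claims separately. For $X \in \cA_i$, suppose towards a contradiction that $X$ has color $\bar c_i = \bar c_{i-1}$, and again pick a copy $W_1 \subsetneq \cdots \subsetneq W_{i-1}$ of $\dot C\big|_{Z_1}^{Z_{i-1}}$ in $\QQ\big|_\varnothing^X$. The color of $W_{i-1}$ is $c_{i-1}$, which differs from the color of $X$, forcing $W_{i-1} \subsetneq X$. On the one hand, $\QQ\big|_\varnothing^{W_{i-1}}$ already contains the copy $W_1 \subsetneq \cdots \subsetneq W_{i-1}$ of $\dot C\big|_{Z_1}^{Z_{i-1}}$, so by definition $W_{i-1}$ lies in some phase $\cF_j^{\dot C}$ with $j \geq i$. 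On the other hand, $W_{i-1} \subsetneq X$ together with the minimality of $X$ in $\cF_i^{\dot C}$ and Lemma~\ref{lem:QnEH:phase_basic}(ii) forces $W_{i-1} \in \cF_j^{\dot C}$ with $j \leq i-1$. This contradiction yields the first sub-claim. For the second sub-claim, let $X \in \cF_i^{\dot C} \setminus \cA_i$ and pick a witness $Y \subsetneq X$ with $Y \in \cF_i^{\dot C}$. A copy $W_1 \subsetneq \cdots \subsetneq W_{i-1}$ of $\dot C\big|_{Z_1}^{Z_{i-1}}$ sits inside $\QQ\big|_\varnothing^Y$, and in particular $W_{i-1} \subseteq Y \subsetneq X$; if $X$ had color $c_i$, appending $X$ would produce a copy of $\dot C\big|_{Z_1}^{Z_i}$ in $\QQ\big|_\varnothing^X$, again contradicting $X \in \cF_i^{\dot C}$.

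The main obstacle is the first sub-claim of~(iii): one must ensure that the minimality of $X$ in $\cF_i^{\dot C}$ really does push every proper subset of $X$ into a strictly earlier phase, which is exactly the content of Lemma~\ref{lem:QnEH:phase_basic}(ii) applied to $W_{i-1} \subsetneq X$. Once this directionality is in place, the proof is a clean pigeonhole between ``$W_{i-1}$ inherits an early phase from being below $X$'' and ``$W_{i-1}$ inherits a late phase from hosting the prefix copy.''
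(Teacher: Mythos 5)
Your proof is correct and follows essentially the same approach as the paper: unwrap the phase definition, and in parts~(ii) and~(iii) use Lemma~\ref{lem:QnEH:phase_basic}(ii) together with minimality to control the top vertex of a prefix copy. The only cosmetic difference is in the $\cA_i$ sub-case of~(iii): the paper observes directly that $X\in\cA_i$ is minimal with the property that $\QQ\big|_\varnothing^X$ contains a copy of $\dot C\big|_{Z_1}^{Z_{i-1}}$, so $X$ must itself be the top vertex of such a copy and hence has color $c_{i-1}$, while you argue by contradiction and pinch the phase index of $W_{i-1}$ between $\ge i$ and $\le i-1$ — equivalent reasoning phrased inversely.
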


\begin{proof}
Part (i) is immediate from the definition of $\cF^{\dot C}_{1}$.

For part (ii), consider an index $i\ge 2$ with $c_i\neq c_{i-1}$. Let $X$ be an arbitrary vertex in~$\cF^{\dot C}_{i}$. 
By definition of $\cF^{\dot C}_{i}$, there is a copy $\dot\cD$ of $\dot C\big|^{Z_{i-1}}_{Z_{1}}$ in $\QQ\big|^X_\varnothing$.
If $X$ has color $c_i=\bar{c}_{i-1}$, then $X$ has a different color than the maximal vertex of $\dot\cD$ and is larger than any vertex of $\dot\cD$, thus $X\notin \dot\cD$.
In particular, by adding the vertex $X$ to the colored chain $\dot\cD$, we obtain a copy of $\dot C\big|^{Z_{i}}_{Z_{1}}$ in $\QQ\big|^X_\varnothing$.
This is a contradiction to the assumption $X\in \cF^{\dot C}_{i}$.
Thus, the color of $X$ is $\bar{c}_i$.

For part (iii), let $i\ge 2$ with $c_i= c_{i-1}$, i.e., $i\in I(\dot C)$, and fix a vertex $X\in \cF^{\dot C}_{i}$.
\vspace*{-1em}
\begin{itemize}
\item If $X\in \cA_i$, then $X$ is minimal with the property that $\QQ\big|^X_\varnothing$ contains a copy of $\dot C\big|^{Z_{i-1}}_{Z_{1}}$. 
In particular, $X$ is contained in a copy $\dot\cD$ of $\dot C\big|^{Z_{i-1}}_{Z_{1}}$ in $\QQ\big|^X_\varnothing$. 
The vertex $X$ is the maximal vertex of $\QQ\big|^X_\varnothing$, thus $X$ is also the maximal vertex of $\dot\cD$.
In particular, $X$ has color $c_{i-1}=c_i$.

\item If $X\notin\cA_i$, then there is a vertex $A\in \cF^{\dot C}_{i}$ such that $A\subset X$. 
Let $\dot \cD$ be a copy of $\dot C\big|^{Z_{i-1}}_{Z_{1}}$ in $\QQ\big|^A_\varnothing$.  
If $X$ has color $c_i$, then $\dot \cD$ and $X$ form a copy of $\dot C\big|^{Z_{i}}_{Z_{1}}$ in $\QQ\big|^X_\varnothing$, contradicting that $X$ is a vertex of $\cF^{\dot C}_{i}$. 
Therefore, $X$ has color $\bar{c}_i$.
\end{itemize}
\vspace*{-2em}
\end{proof}



\begin{proof}[Proof of Theorem \ref{thm:EHreduction}] 
Let $\dot  C$ be a colored chain on vertices $Z_1<\dots<Z_t$.
Recall that $\lambda=\lambda(\dot  C)$ is the maximal integer $\ell$ such that $\dot  C$ contains a copy of $\dot C_\ell^{(rbr)}$ or $\dot C_\ell^{(brb)}$. 
By switching the colors, we can suppose without loss of generality that the minimal vertex $Z_1$ of $\dot  C$ is red.
In particular, this implies that the largest alternating chain in $\dot C$ is red-alternating, i.e., $\dot  C$ contains a copy of $\dot C_\lambda^{(rbr)}$.

For the lower bound on $\widetilde{R}(\dot  C,Q_n)$, note that any $\dot C^{(rbr)}_{\lambda}$-free colored Boolean lattice is also $\dot C$-free, so 
$\widetilde{R}(\dot  C,Q_n)\ge \widetilde{R}(\dot C^{(rbr)}_{\lambda},Q_n)$.

To show the upper bound on $\widetilde{R}(\dot  C,Q_n)$, we present a non-constructive lower bound on $\widetilde{R}(\dot C^{(rbr)}_{\lambda},Q_n)$, 
in terms of $\widetilde{R}(\dot  C,Q_n)$.
Let $N=\widetilde{R}(\dot C,Q_n)-1$ and $\QQ=\QQ([N])$. Select an arbitrary blue/red coloring of $\QQ$ which is $\dot C$-free and contains no monochromatic copy of $Q_n$. This coloring exists because $N<\widetilde{R}(\dot C,Q_n)$.
In $\QQ$, we shall find a copy $\QQ'$ of a Boolean lattice of dimension $N-t+\lambda$ which is colored $\dot C^{(rbr)}_{\lambda}$-free.
This proves that $\widetilde{R}(\dot C^{(rbr)}_{\lambda},Q_n)>N-t+\lambda$, implying the desired bound 
$\widetilde{R}(\dot  C,Q_n)=N+1\le \widetilde{R}(\dot C^{(rbr)}_{\lambda},Q_n)+t-\lambda$.

Next, we construct $\QQ'\subseteq \QQ$.
For $i\in[t]$, we denote by $\cF_i=\cF^{\dot C}_i$ the $i$-th phase of $\QQ$ with respect to $\dot C$.
Let $I=I(\dot C)$, i.e., the set of indices for which there is no color switch in $\dot C$. Observe that $|I|=t-\lambda$.
Recall that $\cA_i$ denotes the set of minimal vertices in $\cF_{i}$. Note that each $\cA_i$ is an antichain.
Given any $m$ antichains in $\QQ([N])$ for some $m\in\N$, Corollary \ref{cor:chain} implies that $\QQ([N])$ contains a copy of an $(N-m)$-dimensional Boolean lattice not containing a single vertex of any of the antichains.
Thus, there exists a copy~$\QQ'$ of a Boolean lattice of dimension $N-|I|=N-t+\lambda$ such that $\QQ'$ is disjoint from every $\cA_i$, $i\in I$.

For every $i\in[t]$, let $\cF'_i=\cF_i \cap \QQ'$, see Figure \ref{fig:QnEH_phaseF}. 
By Lemma \ref{lem:phase_color}, each $\cF'_i$, $i\in[t]$, is monochromatically colored with color $\bar{c}_i$.
Furthermore, by Lemma \ref{lem:QnEH:phase_basic} (i), we see that $\cF'_1,\dots,\cF'_t$ partition $\QQ'$.
\medskip

\begin{figure}[h]
\centering
\includegraphics[scale=0.62]{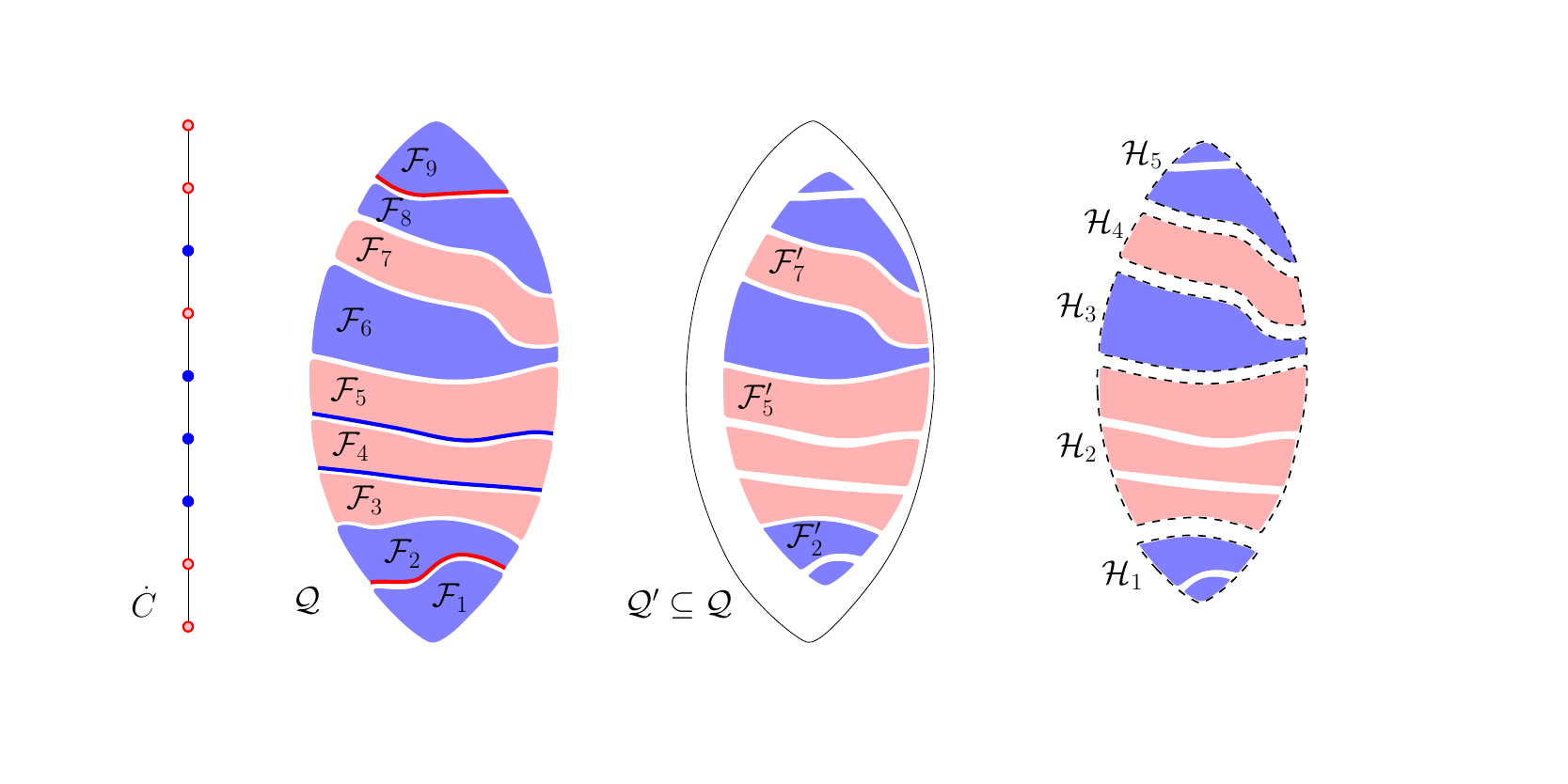}
\caption{A colored chain $\dot C$, families $\cF_i$ in $\QQ$, $\cF'_i$ in $\QQ'$, and $\cH_j$ partitioning $\QQ'$, where $t=9$, $s=5$, and $\lambda=5$.}
\label{fig:QnEH_phaseF}
\end{figure}
\medskip

Next, we define vertex families $\cH_1,\dots,\cH_s$ partitioning $\QQ'$, by merging families $\cF'_i$, $i\in[t]$.
That is, let each $\cH_j$ be the union of consecutive phases $\cF'_i$'s of the same color, such that for $j\ge 2$, $\cH_{j}$ and $\cH_{j-1}$ have different colors, 
and such that consecutive $H_j$'s contain consecutive phases. An illustration of this merging is given in Figure \ref{fig:QnEH_phaseF}. 
Observe that the number of color switches of $\cH_j$'s, i.e., indices $j\ge 2$ for which $\cH_{j}$ and $\cH_{j-1}$ have distinct colors, 
is equal to the number of color switches of $\cF'_i$'s.
Recalling that each $\cF'_i$ has color $\bar{c}_i$, this quantity is equal to the number of color switches in $\dot C$, which is $\lambda-1$.
Therefore, $s\le \lambda$.

Since the families $\cH_j$, $j\in[s]$, consist of consecutive phases and by Lemma~\ref{lem:QnEH:phase_basic}~(ii), we have that for any $X\in\cH_{j_1}$ and $Y\in\cH_{j_2}$,
\begin{equation}
\text{ if }\quad X\subseteq Y, \quad \text{ then }\quad j_1\le j_2.
\label{eq:QnEH:chunk}
\end{equation}

To show that $\QQ'$ is $\dot C^{(rbr)}_{\lambda}$-free, we assume that there is a red-alternating chain $\cU$ of length $\lambda$ in $\QQ'$, 
say on vertices $U_1\subset \dots \subset U_\lambda$.
\vspace*{-1em}
\begin{itemize}
\item If there is an $\cH_j$ which contains two vertices of $\cU$, say $U_{\ell}$ and $U_{\ell'}$ for some $\ell,\ell'\in[\lambda]$ with $\ell<\ell'$,
then (\ref{eq:QnEH:chunk}) implies that $U_{\ell +1}\in\cH_j$. Note that $U_{\ell}$ and $U_{\ell+1}$ have distinct colors. We arrive at a contradiction, because $\cH_j$ is monochromatic.

\item If every $\cH_j$, $j\in[s]$, contains at most one vertex of $\cU$, then every $\cH_j$ contains exactly one vertex of $\cU$, since $\cU$ has length $\lambda\ge s$.
In particular, $\cH_1\cap \cU$ is not empty. By (\ref{eq:QnEH:chunk}), $U_1\in \cH_1$.
The chain $\cU$ is red-alternating, so $U_1$ is red. However, $\cH_1$ has the color of $\cF'_1$, i.e., $\bar{c}_1$.
Recalling that $Z_1$, the minimal vertex of $\dot C$, is red, we conclude that $\cH_1$ is blue. This is a contradiction.
\end{itemize}
\vspace*{-2em}
\end{proof}


\subsection{Proof of Theorem \ref{thm:EHchain}}

We break down the proof of Theorem \ref{thm:EHchain} into three parts:
Theorem \ref{thm:EHchain} is immediate from Lemmas \ref{lem:EHchain1}, \ref{lem:EHchain2}, and \ref{lem:EHchain3}.
\medskip

\begin{lemma}\label{lem:EHchain1}
For every $n\in\N$,\:\:$\widetilde{R}(\dot C^{(rbr)}_{2},Q_n)=\widetilde{R}(\dot C^{(rbr)}_{3},Q_n)=2n$. 
\end{lemma}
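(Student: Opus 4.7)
The plan is to sandwich both numbers between $2n$ and $2n$ by proving the three inequalities
\[
2n \;\le\; \widetilde{R}(\dot C^{(rbr)}_{2},Q_n) \;\le\; \widetilde{R}(\dot C^{(rbr)}_{3},Q_n) \;\le\; 2n.
\]
The middle inequality is immediate: $\dot C^{(rbr)}_{2}$ embeds as the two bottom vertices of $\dot C^{(rbr)}_{3}$, so any coloring that contains a copy of $\dot C^{(rbr)}_{3}$ also contains a copy of $\dot C^{(rbr)}_{2}$, and hence $\dot C^{(rbr)}_{2}$-free implies $\dot C^{(rbr)}_{3}$-free in the opposite direction.

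For the lower bound I would use the reversed layered coloring of $\QQ([2n-1])$: color every vertex $X$ with $|X|\le n-1$ in blue and every $X$ with $|X|\ge n$ in red. A red vertex then has size at least $n$ while a blue vertex has size at most $n-1$, so no red vertex is a subset of any blue vertex and the coloring contains no copy of $\dot C^{(rbr)}_{2}$. Moreover, every embedded copy of $Q_n$ has height $n+1$ and therefore meets $n+1$ distinct layers, whereas each color occupies only $n$ layers; consequently there is no monochromatic copy of $Q_n$ either. This witnesses $\widetilde{R}(\dot C^{(rbr)}_{2},Q_n)\ge 2n$.

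For the upper bound $\widetilde{R}(\dot C^{(rbr)}_{3},Q_n)\le 2n$, I would fix an arbitrary blue/red coloring of $\QQ([2n])$ with no copy of $\dot C^{(rbr)}_{3}$ and produce a monochromatic $Q_n$. The structural observation that drives the argument is that for any two comparable red vertices $R\subseteq R'$, the entire sublattice interval $[R,R']$ must be red, since a blue vertex $Y$ with $R\subset Y\subset R'$ would, together with $R$ and $R'$, form a red-blue-red chain and thus a copy of $\dot C^{(rbr)}_{3}$. To exploit this, I apply Corollary~\ref{cor:chain} with the two colors swapped: $R(C_{n+1},Q_n)=2n$ guarantees that the coloring contains either a blue copy of $Q_n$ (in which case we are done) or a red chain of length $n+1$, say $R_0\subset R_1\subset\dots\subset R_n$. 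In the latter case $|R_n|-|R_0|\ge n$, and the structural observation promotes this red chain to an entirely red sublattice $[R_0,R_n]$ of dimension at least $n$, which contains a red copy of $Q_n$.

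I do not anticipate any serious obstacle. The only mild subtlety is recognising that the interval-convexity of the red set supplied by the $\dot C^{(rbr)}_{3}$-free condition upgrades the cheap red chain of length $n+1$ (provided by $R(C_{n+1},Q_n)=2n$) into a full red Boolean sublattice of dimension at least $n$. Combining the three displayed inequalities then yields $\widetilde{R}(\dot C^{(rbr)}_{2},Q_n)=\widetilde{R}(\dot C^{(rbr)}_{3},Q_n)=2n$, as required.
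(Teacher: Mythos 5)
Your proposal is correct and follows essentially the same approach as the paper: the lower bound is the layered coloring underlying Theorem~\ref{thm:EHgenFUL}, the upper bound applies Corollary~\ref{cor:chain} (with colors swapped) to extract a blue $Q_n$ or a red $(n+1)$-chain, and the $\dot C^{(rbr)}_{3}$-freeness forces the sublattice between the extremes of that chain to be entirely red. The only cosmetic difference is that you state the ``red interval convexity'' as a standalone structural observation before applying it to $R_0$ and $R_n$, while the paper reasons directly about the interval between the minimal and maximal elements $A$ and $B$ of the red chain; the underlying argument is identical.
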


\begin{proof}
The lower bound is a consequence of Theorem \ref{thm:EHgenFUL}. 
Since $\widetilde{R}(\dot C^{(rbr)}_{2},Q_n)\le \widetilde{R}(\dot C^{(rbr)}_{3},Q_n)$, 
it remains to show that $\widetilde{R}(\dot C^{(rbr)}_{3},Q_n)\le2n$.
Let $\QQ=\QQ([2n])$, and pick an arbitrary blue/red coloring of $\QQ$. We shall find a copy of $\dot C^{(rbr)}_{3}$ or a monochromatic copy of $Q_n$ in this coloring. 
If the longest red chain in $\QQ$ has length at most $n$, Corollary~\ref{cor:chain} guarantees the existence of a blue copy of a Boolean lattice with dimension at least $n$.
So, suppose that there exists a red chain of length $n+1$. 
We denote its minimal element by $A$ and its maximal element by $B$, i.e., $A\subseteq B$ and $|B|-|A|\ge n$.
If there is a blue vertex $Z$ in the sublattice $\QQ\big|_A^B$, then the vertices $A$, $Z$, and $B$ form a copy of $\dot C^{(rbr)}_{3}$.
Otherwise, $\QQ\big|_A^B$ is a red copy of a Boolean lattice of dimension $|B|-|A| \ge n$.
\end{proof}

\begin{lemma}\label{lem:EHchain2}
Let $n\in\N$ and $t\ge 3$. Then $\widetilde{R}(\dot C^{(rbr)}_{t},Q_n)\le (t-1)n$. 
\end{lemma}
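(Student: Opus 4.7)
The plan is to proceed by induction on $t$, with the base case $t=3$ already provided by Lemma \ref{lem:EHchain1}.
Observe first that by simply swapping the names of the two colors, the identity
$\widetilde{R}(\dot C^{(brb)}_{s},Q_n)=\widetilde{R}(\dot C^{(rbr)}_{s},Q_n)$
holds for every $s$, so the inductive hypothesis for $t-1$ applies to both alternating colorings.

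For the inductive step, fix $t\ge 4$ and consider an arbitrary blue/red coloring of $\QQ=\QQ([(t-1)n])$ that contains no monochromatic copy of $Q_n$; we need to locate a copy of $\dot C^{(rbr)}_t$. Since $(t-1)n = n + (t-2)n$ and there is no blue copy of $Q_n$, Corollary~\ref{cor:chain} yields a red chain of length $(t-2)n+1$, whose minimal vertex $A$ and maximal vertex $B$ satisfy $A\subseteq B$ and $|B|-|A|\ge (t-2)n$. Consider the sublattice $\QQ\big|_A^B$, which is isomorphic to a Boolean lattice of dimension at least $(t-2)n$ and, being a subposet of $\QQ$, contains no monochromatic copy of $Q_n$.

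By the induction hypothesis applied to $\dot C^{(brb)}_{t-1}$, the sublattice $\QQ\big|_A^B$ contains a copy of $\dot C^{(brb)}_{t-1}$, say on vertices $W_1\subset W_2\subset \dots \subset W_{t-1}$ whose colors alternate starting with blue. Since $A$ is red and $W_1$ is blue, $A\subsetneq W_1$, and similarly $W_{t-1}\subsetneq B$ is not needed but $W_{t-1}\subseteq B$ holds. Thus $A \subsetneq W_1 \subsetneq W_2 \subsetneq \dots \subsetneq W_{t-1}$ is a chain of $t$ vertices whose color sequence is red, blue, red, blue, $\dots$, i.e., a copy of $\dot C^{(rbr)}_t$, completing the induction.

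The argument is essentially a telescoping reduction: each application of Corollary~\ref{cor:chain} ``peels off'' a length-$n$ red segment at the bottom, leaving room to find a shorter alternating chain of the opposite starting color in a sublattice that still avoids monochromatic $Q_n$. The only potential subtlety is keeping track of parities so that the induction flips between the two starting colors of the alternating chain, which is exactly why invoking the color-swapped statement $\widetilde{R}(\dot C^{(brb)}_{t-1},Q_n)\le (t-2)n$ is essential; no other obstacle arises, so I expect this to be a short proof.
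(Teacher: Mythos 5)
Your proof is correct, but it takes a genuinely different route from the paper's. The paper inducts upward, from $t$ to $t+1$: it fixes an arbitrary vertex $Z$ of size $(t-1)n$ in $Q_{tn}$, applies the inductive hypothesis to the \emph{lower} sublattice $\QQ\big|_\varnothing^Z$ to find a copy of $\dot C^{(rbr)}_t$, and then extends the chain \emph{at the top} by choosing a vertex of the opposite color in the upper sublattice $\QQ\big|_Z^{[N]}$ (if none exists, that sublattice is itself a monochromatic $Q_n$). Since the extension is at the top, the starting color is preserved and no color-swap is needed. You instead first invoke Corollary~\ref{cor:chain} (with colors swapped) to locate a long red chain, restrict to the sublattice it spans, and extend \emph{at the bottom} with that chain's red minimal vertex; this flips the starting color of the remaining alternating chain, which is precisely why you need the observation $\widetilde{R}(\dot C^{(brb)}_{t-1},Q_n)=\widetilde{R}(\dot C^{(rbr)}_{t-1},Q_n)$. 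Both arguments are short and sound. The paper's version is marginally cleaner in that it keeps a single colored poset throughout and only needs the weaker local hypothesis ``no monochromatic $Q_n$ in the lower sublattice'' rather than the global hypothesis for the whole lattice; your version leans on the already-established Chain Lemma machinery and makes the role of long monochromatic chains more explicit.
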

\begin{proof}
We prove this statement using induction. The base case $t=3$ is shown in Lemma~\ref{lem:EHchain1}.
Suppose that $\widetilde{R}(\dot C^{(rbr)}_{t},Q_n)\le(t-1)n$ for some $t\ge3$.
We shall show that $\widetilde{R}(\dot C^{(rbr)}_{t+1},Q_n)\le tn.$
Let $N=tn$ and choose an arbitrary blue/red coloring of the host Boolean lattice $\QQ=\QQ([N])$.
Fix any vertex $Z\in\QQ([N])$ with $|Z|=N-n=(t-1)n$, and consider the sublattices $\QQ\big|^Z_\varnothing$ and $\QQ\big|_Z^{[N]}$. 
By induction, we find in $\QQ\big|^Z_\varnothing$ either a monochromatic copy of $Q_n$, which completes the proof, or a copy $\dot \cD$ of $\dot C^{(rbr)}_{t}$.
In the latter case, let $X\in \QQ\big|_Z^{[N]}$ be a vertex colored differently than the maximal vertex in $\dot \cD$.
Then $\dot \cD$ and $X$ form a copy of $\dot C^{(rbr)}_{t+1}$. 
If there exists no such vertex $X$, then the sublattice $\QQ\big|_Z^{[N]}$ is a monochromatic copy of $Q_n$.
\end{proof}
%

\begin{lemma}\label{lem:EHchain3}
For sufficiently large $n$,\:\:$\widetilde{R}(\dot C^{(rbr)}_{4},Q_n)> 2.02n$.
\end{lemma}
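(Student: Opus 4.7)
The plan is to establish the bound by constructing an explicit blue/red coloring of $\QQ([N])$ with $N=\lfloor 2.02n\rfloor$ (or slightly larger) that contains neither a colored copy of $\dot C^{(rbr)}_{4}$ nor a monochromatic copy of $Q_n$; since such a coloring is strictly stronger than merely avoiding the monochromatic copy, the construction also implies Corollary~\ref{cor:QnQnLB}.

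Two elementary observations guide the construction. First, a layered coloring of $\QQ([N])$ whose levels form four contiguous monochromatic blocks in the pattern blue--red--blue--red (from lowest to highest layer) is automatically $\dot C^{(rbr)}_{4}$-free, because every chain inherits a color sequence of the form $b^{*}r^{*}b^{*}r^{*}$, and such a sequence does not contain $r,b,r,b$ as a subsequence. Second, for any such four-block layered coloring, avoiding monochromatic $Q_n$ forces both the total number of red layers and the total number of blue layers to be at most $n$, so $N\le 2n-1$. Hence a purely layered construction cannot cross the threshold $2n-1$, and any genuine linear improvement requires a non-layered structure.

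The strategy is therefore to start from a coarse four-block blue--red--blue--red template and adjoin $\Theta(n)$ further coordinates along which certain ``middle'' layers are split in a structured combinatorial fashion. Within each perturbed layer the vertices are redistributed between blue and red in such a way that every $\bX$-good embedding of $\QQ(\bX)$ of dimension $n$ into the red (respectively blue) region is obstructed, while the coarse layered skeleton is preserved. The verification then proceeds in two steps:
\begin{itemize}
\item $\dot C^{(rbr)}_{4}$-freeness is established by inspecting maximal chains: because the global layered skeleton remains of the form blue--red--blue--red, every chain still has a color sequence of the form $b^{*}r^{*}b^{*}r^{*}$, and in particular contains no $r,b,r,b$ subsequence.
\item The absence of a monochromatic $Q_n$ is established through the Embedding Lemma (Lemma~\ref{lem:embed}), which reduces any candidate monochromatic copy of $Q_n$ to an $\bX$-good copy of $\QQ(\bX)$ for some $n$-subset $\bX\subseteq[N]$; such $\bX$-good copies are then ruled out by a careful analysis of how the redistribution within the perturbed layers destroys the vertices the embedding would need, in the spirit of Lemma~\ref{lem:QnV:shrub}.
\end{itemize}

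The principal obstacle is the simultaneous enforcement of the two constraints. Avoiding $\dot C^{(rbr)}_{4}$ rigidly restricts the color patterns admissible along chains and therefore limits how much the two colors can be mixed inside any single layer, whereas avoiding monochromatic $Q_n$ demands precisely such non-layered mixing in order to defeat every $\bX$-good embedding of $\QQ(\bX)$. The bound $2.02n$ exceeds the trivial $2n$ by only a small constant factor, so the entire gain must come from a structurally delicate portion of the construction, and the hard part is calibrating the perturbation of the middle layers so that it blocks every candidate monochromatic $Q_n$ without introducing any forbidden $r,b,r,b$ chain.
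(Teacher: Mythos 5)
Your high-level plan is well aligned with the paper's approach: both begin from a coarse four-block blue--red--blue--red layered template, both observe that preserving a $b^{*}r^{*}b^{*}r^{*}$ color pattern along every chain automatically excludes $\dot C^{(rbr)}_4$, both note that a purely layered coloring cannot exceed dimension $2n-1$, and both invoke the Embedding Lemma (Lemma~\ref{lem:embed}) to reduce the absence of a monochromatic $Q_n$ to defeating every $\bX$-good embedding. These framing observations are correct and not entirely trivial.

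What is absent is the entire technical core, which is precisely the part you flag as ``the hard part'' and then stop. You describe redistributing vertices in the perturbed middle layers so that every $\bX$-good embedding is obstructed while the $b^{*}r^{*}b^{*}r^{*}$ skeleton is preserved --- but this is a restatement of the goal, not a mechanism. The paper supplies two concrete ideas that do all the work and that your plan does not contain. First, the perturbation is not a layer-by-layer redistribution but is driven by two antichains $\cS$ and $\cT$, located in layers $(1-c)n$ and $(1+2c)n$ with $c=0.02$ and chosen to be element-wise incomparable; the blue middle region is the up-closure of $\cS$ together with the portion of the upper middle not below any member of $\cT$, and the red middle region is the complementary down-set, which is exactly what keeps every chain $b^{*}r^{*}b^{*}r^{*}$ (Construction~\ref{constr:EHchain}). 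Second, those antichains must simultaneously satisfy the incomparability condition and a ``cone-transversal'' density condition (properties (iii) and (iv) of Lemma~\ref{lem:EHchain_main}); establishing that such $\cS$ and $\cT$ exist requires a probabilistic argument with a Chernoff bound plus a greedy disjointification of neighborhoods, and the cone-transversal property is what the blocking argument against an $\bX$-good red copy of $Q_n$ ultimately exploits (via $\phi(\varnothing)$ and a suitable $S\in\cS$ with $S\subseteq A\cup\bX$ and small $S\cap\bX$). Your reference to Lemma~\ref{lem:QnV:shrub} as the model for the blocking is also somewhat misleading: the obstruction here is not a shrub-like subposet but a transversal of cones by a pair of antichains, which is a different construction with a different verification. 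As written, then, the proposal correctly identifies the outer shape of the argument but leaves a substantial gap where the construction and its probabilistic existence proof should be.
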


\noindent\textbf{Outline of the proof idea  for Lemma \ref{lem:EHchain3}:} Let $ c  =0.02$. Let $n$ be a natural number, and let $N=(2+ c )n$.
First, in Lemma \ref{lem:EHchain_main}, we use a probabilistic argument to find two families $\cS$ and $\cT$ of vertices in the Boolean lattice $\QQ([N])$ in layers $(1- c )n$ and $(1+2 c )n$, respectively, which have two properties:
\vspace*{-1em}
\begin{enumerate}
\item[(1)] every vertex in $\cS$ is incomparable to every vertex in $\cT$, and 
\item[(2)] both $\cS$ and $\cT$ are ``dense'' in their respective layer.
\end{enumerate}
\vspace*{-1em}
Afterwards, we formally define a blue/red coloring in Construction \ref{constr:EHchain}, as illustrated in Figure~\ref{fig:EHcoloring}. 
We need (1) to ensure that this construction is well-defined.
As a final step, we shall show that there is no monochromatic copy of $Q_n$ and no copy of  $\dot C^{(rbr)}_{4}$  in our construction, for which we use (2).
Recall that we omit floors and ceilings where appropriate.

\begin{figure}[h]
\centering
\includegraphics[scale=0.62]{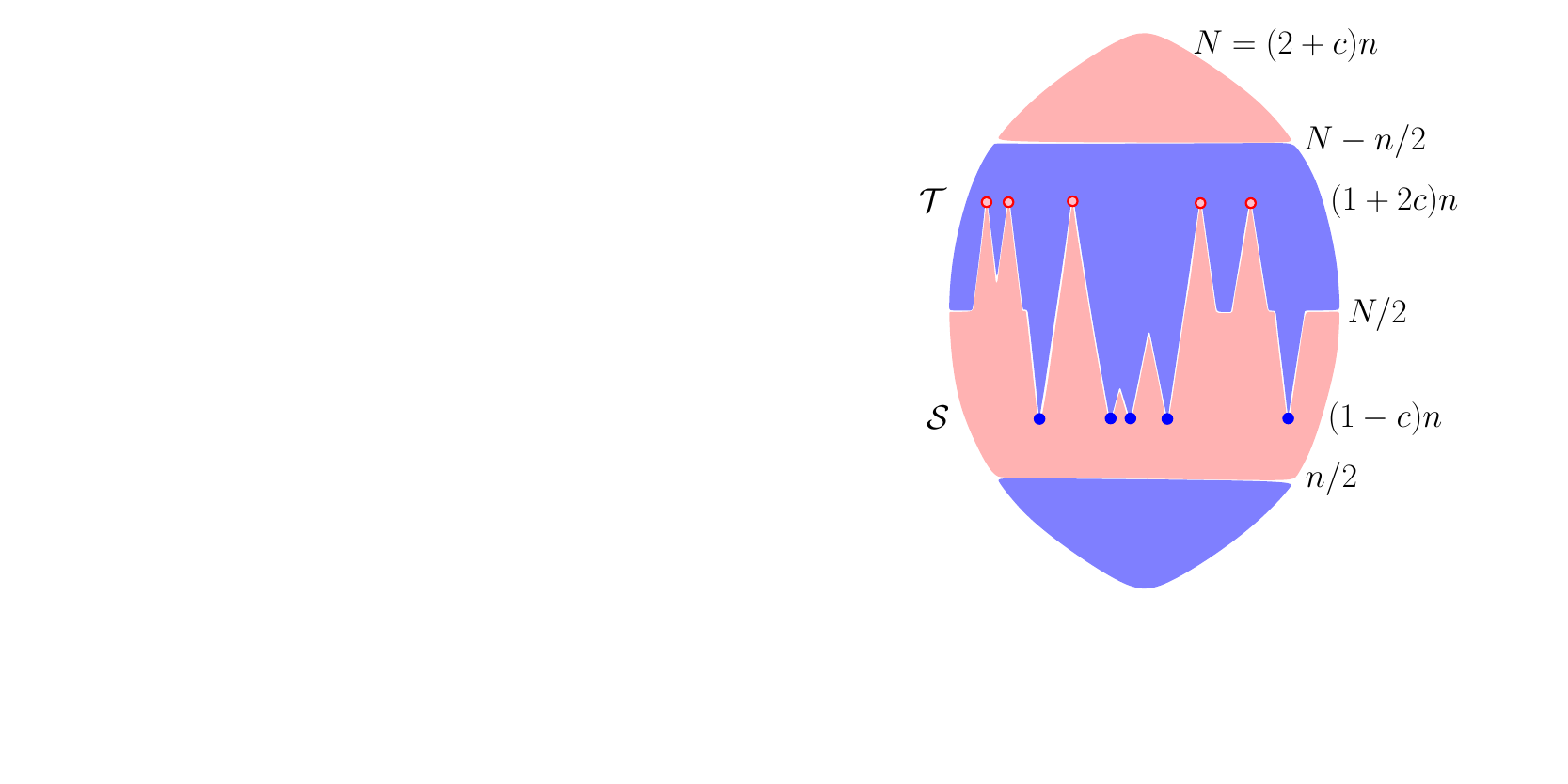}
\caption{Blue/red coloring of $\QQ([N])$ based on $\cS$ and $\cT$ in Construction \ref{constr:EHchain}.}
\label{fig:EHcoloring}
\end{figure}
\bigskip 

\begin{lemma}\label{lem:EHchain_main}
Let $ c =0.02$. Let $N=(2+ c )n$ for sufficiently large $n$.  
Then there exist families $\cS$ and $\cT$ of vertices in $\QQ([N])$ with the following properties:
\vspace*{-1em}
\begin{enumerate}
\item[(i)] For every $S\in\cS$,\:\:$|S|=(1- c )n$. For every $T\in\cT$,\:\:$|T|=(1+2 c )n$.
\item[(ii)] Every two vertices $S\in\cS$ and $T\in\cT$ are incomparable.
\item[(iii)] For every pair of disjoint sets $A,B\subseteq [N]$ with $|A|=\tfrac{n}{2}$ and $|B|=n$, there exists an $S\in\cS$ with $S \subseteq A\cup B$ and $|B\cap S|\le \tfrac{n}{2}$.
\item[(iv)] For every pair of disjoint sets $A,B\subseteq [N]$ with $|A|=\tfrac{n}{2}$ and $|B|=n$, there exists a $T\in\cT$ with $T\supseteq [N]\setminus (A\cup B)$ and $|B\setminus T|\le  \tfrac{n}{2}$.
\end{enumerate}
\end{lemma}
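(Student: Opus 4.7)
The plan is to use the probabilistic method. I sample $\cT$ by including each $(1+2c)n$-subset of $[N]$ independently with probability $p_T$, then define $\cS$ to be all $(1-c)n$-subsets of $[N]$ not contained in any $T\in\cT$. Condition (i) is built in, and condition (ii) follows automatically: since $|T|>|S|$, incomparability of a pair $(S,T)\in\cS\times\cT$ reduces to $S\not\subseteq T$, which is exactly what the deletion enforces.

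For condition (iv), I first estimate the number of valid $T$'s for a fixed pair $(A,B)$: writing $T=([N]\setminus(A\cup B))\cup T^*$ with $T^*\subseteq A\cup B$, $|T^*|=(1/2+c)n$, and $|T^*\cap B|\ge n/2$, Stirling's formula gives at least $\binom{n}{n/2}\binom{n/2}{cn}=2^{(1+H(2c)/2+o(1))n}$ such $T^*$. The number of ordered pairs $(A,B)$ is at most $\binom{N}{n/2}\binom{3n/2+cn}{n}=2^{(3+O(c))n}$. Setting $p_T=Kn\cdot 2^{-(1+H(2c)/2)n}$ for a sufficiently large constant $K$ gives $\Pr[\text{no valid }T\in\cT]\le e^{-Kn}$ for each pair, and the union bound delivers (iv) with probability $1-o(1)$.

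For condition (iii), I fix $(A,B)$ and let $\cV=\cV(A,B)$ be the family of its valid $(1-c)n$-subsets. Setting $S=A\cup S_B$ with $S_B\subseteq B$, $|S_B|=(1/2-c)n$, yields $|\cV|\ge\binom{n}{(1/2-c)n}=2^{(1-o(1))n}$. Each valid $S$ lies in $\binom{(1+c)n}{3cn}=2^{((1+c)H(3c/(1+c))+o(1))n}$ different $(1+2c)n$-supersets, so $\Pr[S\text{ is blocked}]\le p_T\binom{(1+c)n}{3cn}=2^{-\delta n+o(n)}$ with $\delta=1+H(2c)/2-(1+c)H(3c/(1+c))$, numerically $\delta\approx 0.79>0$ at $c=0.02$. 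Since a Markov bound on the number of blocked elements of $\cV$ only gives $\Pr[\text{every }S\in\cV\text{ blocked}]\le 2^{-\delta n}$, which is insufficient against the $2^{3n}$ union bound, I plan to apply Janson's inequality to the decreasing events $\{S\text{ unblocked}\}_{S\in\cV}$ in the independent Bernoulli family $(\mathbf{1}[T\in\cT])_T$. The correlation term $\Delta$ decomposes by $|S\cap S'|$ and is dominated by $p_T\binom{(2+c)n-|S\cup S'|}{(1+2c)n-|S\cup S'|}$, which decays as $|S\cup S'|$ grows. A stratified summation shows $\Delta=o(\sum_{S\in\cV}\Pr[S\text{ unblocked}])\approx o(|\cV|)$, so Janson produces $\Pr[\bigcap_{S\in\cV}\{S\text{ blocked}\}]\le\exp(-\Omega(|\cV|))=\exp(-\Omega(2^n))$, double-exponentially smaller than the $2^{3n}$ union bound.

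The hard part will be the tight numerical fit at $c=0.02$: both the separation $\delta>0$ needed for (iii) and the exponential gap between $1+H(2c)/2$ and the pair-count exponent $3+O(c)$ needed for (iv) must hold with constant-sized margin. In addition, controlling the Janson correlation term $\Delta$—in particular the contribution from pairs $(S,S')$ whose union approaches the critical size $(1+2c)n$ above which the superset count becomes trivial—requires a careful stratified estimate of binomial exponents, which I expect to be the most delicate step in turning the outline above into a rigorous argument.
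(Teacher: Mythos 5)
Your framework is a valid alternative to the paper's: where the paper samples \emph{both} $\cS'$ and $\cT$ with the same probability $p=0.77^n$ and then discards blocked vertices from $\cS'$, you sample only $\cT$ and take $\cS$ to be the full layer minus the blocked vertices. Conditions (i) and (ii) are then automatic, and your argument for (iv) (expected $\ge Kn$ valid $T$'s per cone, then Chernoff and a $2^{O(n)}$ union bound) is sound.

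The gap is in the Janson step for (iii). For the increasing events $D_S=\{S\text{ unblocked}\}$ (viewed in the complementary product space $R=\cT^c$), the standard Janson $\Delta$ is $\sum_{S\sim S'}\Pr[D_S\cap D_{S'}]$, where $S\sim S'$ means $|S\cup S'|\le (1+2c)n$. Since $\Pr[D_S]$ is close to $1$, so is $\Pr[D_S\cap D_{S'}]$ — it is \emph{not} the quantity $p_T\binom{(2+c)n-|S\cup S'|}{(1+2c)n-|S\cup S'|}$ you wrote, which is the expected number of shared blockers. Every $S\in\cV$ has exponentially many (about $2^{\Theta(n)}$) dependent partners, so $\Delta$ is exponentially \emph{larger} than $\mu\approx|\cV|$, and the claim $\Delta=o(\mu)$ fails; plain Janson $\Pr[\cap\bar{D_S}]\le e^{-\mu+\Delta/2}$ is therefore vacuous. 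The generalized Janson inequality $\Pr[\cap\bar{D_S}]\le e^{-\mu^2/(2\Delta)}$ would still rescue you, but the resulting bound is roughly $\exp(-\Omega(2^{cn}))$ for some $c<1$, not $\exp(-\Omega(2^n))$ as claimed, and the estimate of $\Delta$ must be redone from scratch.

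The paper sidesteps Janson entirely via a greedy disjointification that is cleaner here: within a cone $\cK$, greedily pick $S_1,S_2,\dots$ so that the neighborhoods $\cN_t(S_i)=\{T:T\supseteq S_i,\ |T|=(1+2c)n\}$ are pairwise disjoint. Each pick removes at most $\approx 1.26^{2n}$ candidates (those $S$ whose neighborhood meets $\cN_t(S_i)$), while $|\cK|\gtrsim 2.17^n$, so one extracts an exponentially large subfamily $\cK'$. The blocking events for distinct $S\in\cK'$ are now mutually independent, so $\Pr[\text{all of }\cK'\text{ blocked}]\le\big(\Pr[\text{one blocked}]\big)^{|\cK'|}$, which is doubly exponentially small and crushes the $2^{O(n)}$ union bound with room to spare. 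Replacing your Janson step with this greedy argument makes your (iii) go through; otherwise, you must use the generalized form of Janson and verify that $\mu^2/\Delta$ still grows exponentially with the numbers at $c=0.02$.
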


\begin{proof}
First, we introduce several families of vertices in $\QQ([N])$.
Let $s=(1- c )n$ and $t=(1+2 c )n$, and denote the corresponding layers of $\QQ([N])$ by 
$$\cL_s=\big\{Z\in\QQ([N]):~|Z|=s\big\}\quad \text{ and }\quad \cL_t=\big\{Z\in\QQ([N]):~|Z|=t\big\}.$$
Let 
$$\text{Cone}_s=\big\{ \cK_s(A,B) : ~ A,B\subseteq [N],\ A\cap B=\varnothing,\ |A|=\tfrac{n}{2},\ |B|=n\big\}$$
be a collection of \textit{cones} $\cK_s(A,B)$, which are defined as
$$\cK_s(A,B)=\big\{S\in \cL_s : ~ S \subseteq A\cup B,\ |B\cap S|\le \tfrac{n}{2}\big \},$$
as illustrated in Figure \ref{fig:QnEH_cones}. Similarly, let
$$\text{Cone}_t=\big\{ \cK_t(A,B) : ~ A,B\subseteq [N],\ A\cap B=\varnothing,\ |A|=\tfrac{n}{2},\ |B|=n\big\},$$
where a \textit{cone} $\cK_t(A,B)$ is a family of vertices given by
$$\cK_t(A,B)=\big\{T\in \cL_t : ~ T\supseteq [N]\setminus (A\cup B),\ |B\setminus T|\le  \tfrac{n}{2}\big \}.$$
Furthermore, we define the \textit{neighborhood} of a vertex $S\in\cL_s$ as 
$$\cN_t(S)=\big\{T\in \cL_t : ~ T\supseteq S \big\}.$$

\begin{figure}[h]
\centering
\includegraphics[scale=0.62]{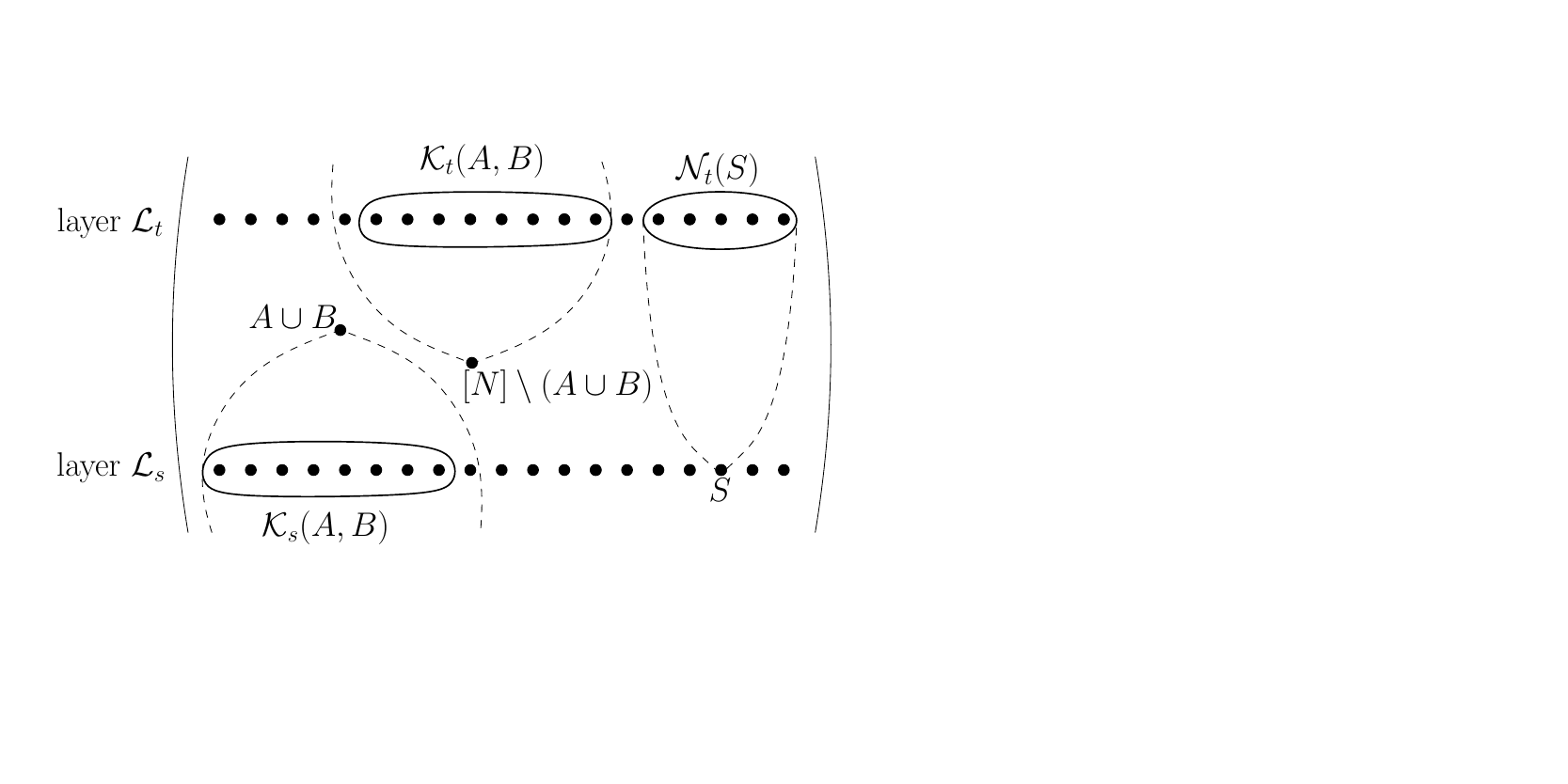}
\caption{Examples for families $\cK_s(A,B)$, $\cK_t(A,B)$, and $\cN_t(S)$.}
\label{fig:QnEH_cones}
\end{figure}

We shall find families $\cS$ and $\cT$ such that
\vspace*{-1em}
\begin{enumerate}
\item[(i')] $\cS\subseteq \cL_s$ and $\cT\subseteq \cL_t$,
\item[(ii')] for every $S\in \cS$,\:\:$\cN_t(S)\cap \cT=\varnothing$,
\item[(iii')] for every $\cK\in\text{Cone}_s$, there exists an $S\in\cK\cap \cS$, and
\item[(iv')] for every $\cK\in\text{Cone}_t$, there is a $T\in\cK\cap\cT$.
\end{enumerate}
Each property (i') to (iv') implies the respective property (i) to (iv).
Summarizing these properties, the subposet $\cS\cup\cT$ can be described as an antichain that is a ``transversal'' of $\text{Cone}_s\cup \text{Cone}_t$.

To find the desired $\cS$ and $\cT$, we consider the following two random families. Let $p=0.77^n$.
Randomly draw a family $\cS'$ by independently including each $S\in \cL_s$ with probability~$p$.
Similarly, draw a family $\cT$ by including each $T\in \cL_t$ independently with probability~$p$.

We say that an event $E(n)$ holds with \textit{high probability}, abbreviated by \textit{w.h.p.}, if $\PPP(E(n))\to 1$ for $n\to \infty$.
In the following, we shall show that with high probability, $\cS'\cup\cT$ has a \textit{large} intersection with every $\cK\in \text{Cone}_s\cup \text{Cone}_t$, 
i.e., $\cS'\cup\cT$ is a ``strong transversal'' of $\text{Cone}_s\cup \text{Cone}_t$.
Afterwards, we deterministically refine $\cS'$, by deleting vertices which are ``bad'' with respect to property (ii'), resulting in a family $\cS\subseteq\cS'$. 
Lastly, we shall verify that $\cS$ has a \textit{non-empty} intersection with every cone $\cK\in \text{Cone}_s$.


Recall that Stirling's formula, see (\ref{eq:stirlings}), implies that $N!=\Theta(\sqrt{N})\left(\frac{N}{e}\right)^N$.
Throughout this proof, we repeatedly apply the following consequence of Stirling's formula, which is a reformulation of Proposition \ref{prop:binom}. 
For positive constants $C> d$,
\begin{eqnarray}
\binom{C n}{d n}&=&\frac{\Theta(1)\sqrt{Cn}}{\sqrt{dn}\sqrt{(C-d)n}} \frac{(Cn)^{Cn}}{e^{Cn}} \frac{e^{dn}}{(dn)^{dn}}\frac{e^{(C-d)n}}{((C-d)n)^{(C-d)n}}\nonumber \\
&=&\Theta\left(\frac{1}{\sqrt{n}}\right) \left(\frac{C^C}{d^d (C-d)^{C-d}}\right)^n.\label{eq:QnEH:stirling}
\end{eqnarray}

\noindent \textbf{Claim 1:} With high probability, every cone $\cK\in\text{Cone}_s$ has an intersection with the (unrefined) family $\cS'$ of size
$|\cK\cap\cS'|\ge 1.66^n$. \medskip\\
\textit{Proof of Claim 1.}
For arbitrary fixed, disjoint $A,B\subseteq [N]$ with $|A|=\tfrac{n}{2}$ and $|B|=n$, let $\cK=\cK_s(A,B)\in\text{Cone}_s$.
Each element in $\cK$ is included in $\cS'$ independently with probability $p$. Thus,
$$|\cK\cap\cS'|\sim \text{Bin}\big(|\cK|,p\big),\quad \text{ and }\quad \mathbb{E}(|\cK\cap\cS'|)=|\cK|\cdot p= |\cK|\cdot 0.77^n.$$

We shall bound $|\cK|$ from below. 
If $S\in \cK_s(A,B)$, then $S$ consists of $s$ elements, so $|B\cap S|=|S|-|A\cap S|\ge s-|A|\ge (\tfrac12-c)n$.
Thus, $(\tfrac12-c)n\le |B\cap S|\le \tfrac{n}{2}$.
Using (\ref{eq:QnEH:stirling}) and $c=0.02$, we see that the size of $\cK$ is 
\begin{eqnarray*}
|\cK|&=&\sum_{i=0}^{ c  n} \binom{|A|}{s-(n/2-i)}\binom{|B|}{n/2-i}\\
&\ge &\binom{|A|}{ s-n/2 }\binom{|B|}{n/2} \\
&=& \binom{n/2}{ n/2 - c n}\binom{n}{n/2}\\
&=& \Theta\left(\frac{1}{n}\right) \left(\frac{1}{c ^{ c }\ (1/2- c )^{1/2- c }\ (1/2)^{1/2}} \right)^n \\
&\ge & 2.17^n,
\end{eqnarray*}
where the last bound holds for sufficiently large $n$. 
In particular, for large $n$,
$$
\mathbb{E}(|\cK\cap\cS'|)=|\cK|\cdot p \ge 2.17^n \cdot 0.77^n \ge 2 \cdot 1.66^n.
$$
The multiplicative form of Chernoff's inequality, see Corollary 23.7 in Frieze and Karo\'{n}ski \cite{FK}, provides that for a random variable $X$ with binomial distribution and for $0<a<1$,
$$
\PPP\big(X\le (1-a)\EEE(X)\big)\le \exp\left(-\frac{\EEE(X) a^2}{2}\right).
$$
Using this inequality for $X=|\cK\cap\cS'|$ and $a=\tfrac12$, 
\begin{eqnarray*}
\PPP\left(|\cK\cap\cS'|< 1.66^n\right) & \le & \PPP\left(|\cK\cap\cS'|\le \left(1-\frac{1}{2}\right) \mathbb{E}(|\cK\cap\cS'|)\right) \\
& \le & \exp\left(-\frac{\mathbb{E}(|\cK\cap\cS'|)}{8}\right)\\
&\le & \exp\left(-4\cdot 1.66^n\right)
\end{eqnarray*}
Let $X_{\cK\cap\cS'}$ be the random variable counting cones $\cK\in\text{Cone}_s$ such that $|\cK\cap\cS'|<1.66^n$.
The expected value of $X_{\cK\cap\cS'}$ is
\begin{eqnarray*}
\EEE(X_{\cK\cap\cS'}) & = & \sum_{\cK\in\text{Cone}_s} \PPP\left (|\cK\cap\cS'|<1.66^n\right)\\
&\le & \sum_{\substack{B\subseteq[N],\\ |B|=n}} \ \  \sum _{\substack{A\subseteq[N]\setminus B,\\ |A|=n/2}}   \exp\left(-4\cdot 1.66^n\right) \\
&\le &2^{2N} \exp\left(-4\cdot1.66^n\right)\\
&\le &2^{4.04n} \exp\left(-4\cdot1.66^n\right) \to 0\text{ for }n\to \infty,
\end{eqnarray*}
thus w.h.p., $X_{\cK\cap\cS'}=0$, i.e., every cone $\cK\in\text{Cone}_s$ has a large intersection with $\cS'$. This proves Claim 1.

\noindent \textbf{Claim 2:} With high probability, $|\cK\cap\cT|\ge 1.66^n$ for every $\cK\in\text{Cone}_t$. In particular, w.h.p., $\cT$ has property (iv'). \medskip\\
\textit{Proof of Claim 2.}
This claim can be shown similarly to Claim 1, so we only provide a sketch of the proof.
Fix a $\cK=\cK_t(A,B)\in\text{Cone}_t$. Note that 
$$|\cK\cap\cT|\sim \text{Bin}\big(|\cK|,p\big),\quad \text{ and }\quad \mathbb{E}(|\cK\cap\cT|)=|\cK|\cdot p= |\cK|\cdot 0.77^n.$$
The size of $\cK$ is bounded from below as follows:
\begin{eqnarray*}
|\cK|&=&  \sum_{i=0}^{ c  n} \binom{|A|}{t-\big|[N]\setminus (A\cup B)\big|-(n/2+i)}\binom{|B|}{n/2+i}\\
& \ge & \binom{n/2}{ c  n}\binom{n}{n/2} \ge  2.17^n.
\end{eqnarray*}
Thus, $\mathbb{E}(|\cK\cap\cT|)=|\cK|\cdot p \ge 2 \cdot 1.66^n$.
Analogously to Claim 1, this implies that w.h.p., $|\cK\cap\cT|\ge 1.66^n$ for every cone $\cK\in\text{Cone}_t$.
\\

We say that a family of vertices $\cK\subseteq \cL_s$ is \textit{bad} if for every $S\in\cK\cap\cS'$, the intersection $\cN_t(S)\cap \cT$ is non-empty.
We shall show that w.h.p., there exists no bad cone $\cK\in\text{Cone}_s$.
\\

\noindent \textbf{Claim 3:} Let $\cK\in\text{Cone}_s$ such that $|\cK\cap\cS'|\ge 1.66^n$.
Then $\PPP( \cK \text{ is bad})\le 0.98^{n(1.04)^{n}}$. \medskip\\
\textit{Proof of Claim 3.}
First, we evaluate $\PPP( \cK' \text{ is bad})$ for a subfamily $\cK'\subseteq\cK\cap\cS'$.
We construct~$\cK'$ such that the neighborhoods $\cN_t(S)$, $S\in\cK'$, are pairwise disjoint, by using a greedy process.
Let $\cK^{0}=\cK\cap\cS'$. Pick a vertex $S_1\in\cK^{0}$ to be added to~$\cK'$.
Let $\cK^{1}$ be the set of remaining vertices in $S\in\cK^{0}\setminus\{S_1\}$ for which the neighborhood $\cN_t(S)$ is disjoint from $\cN_t(S_1)$.
Iteratively for $i\ge 2$, as long as $\cK^{i-1}\neq \varnothing$, pick a vertex $S_{i}\in\cK^{i-1}$ to be added to $\cK'$.
Let $\cK^{i}\subseteq \cK^{i-1}$ be the set of vertices $S\in\cK^{i-1}\setminus\{S_{i}\}$ for which $\cN_t(S)\cap \cN_t(S_{i})=\varnothing$.

If $\cK^{i-1}=\varnothing$, we stop the process, and let $\cK'=\{S_1,\dots,S_{i-1}\}$. 
By construction, the families $\cN_t(S)$, $S\in\cK'$, are pairwise disjoint.
We shall bound $|\cK'|$ from below by overcounting the vertices excluded from $\cK'$ in every step $i$ of this process, 
i.e., those vertices $S\in \cK^{i-1}$ such that the neighborhoods of $S$ and $S_i$ have a non-empty intersection.
Recall that $N=(2+c)n$, $s=(1-c)n$, and $t=(1+2c)n$ for $c=0.02$. By~(\ref{eq:QnEH:stirling}), 
\begin{equation}
|\cN_t(S_i)|=\binom{N-s}{t-s}=\binom{(1+2 c )n}{3cn}\le \left(\frac{1.04^{1.04}}{0.06^{0.06} \cdot 0.98^{0.98}}\right)^n\le 1.26^{n}.
\label{eq:NtS}
\end{equation}

Similarly, there are at most $1.26^{n}$ vertices $S\in \cL_s$ such that $S\subseteq T$ for each $T\in \cN_t(S_i)$.
Thus, there are at most $1.26^{2n}$ vertices $S$ in $\cL_s$ such that $\cN_t(S)\cap \cN_t(S_1)\neq \varnothing$, see also Figure \ref{fig:QnEH_NtS}.
In particular, $|\cK^{i}\setminus \cK^{i-1}|\le 1.26^{2n}$, independently of $i$.
Using that $|\cK\cap\cS'|\ge 1.66^n$,
we can bound the number of steps in the greedy process from by
$$
|\cK'| \ge \frac{|\cK\cap\cS'|}{1.26^{2n}} \ge \frac{1.66^n}{1.26^{2n}}  \ge  \left(\frac{1.66}{1.26^2}\right)^{n} \ge   1.04^{n}.
$$
\begin{figure}[h]
\centering
\includegraphics[scale=0.62]{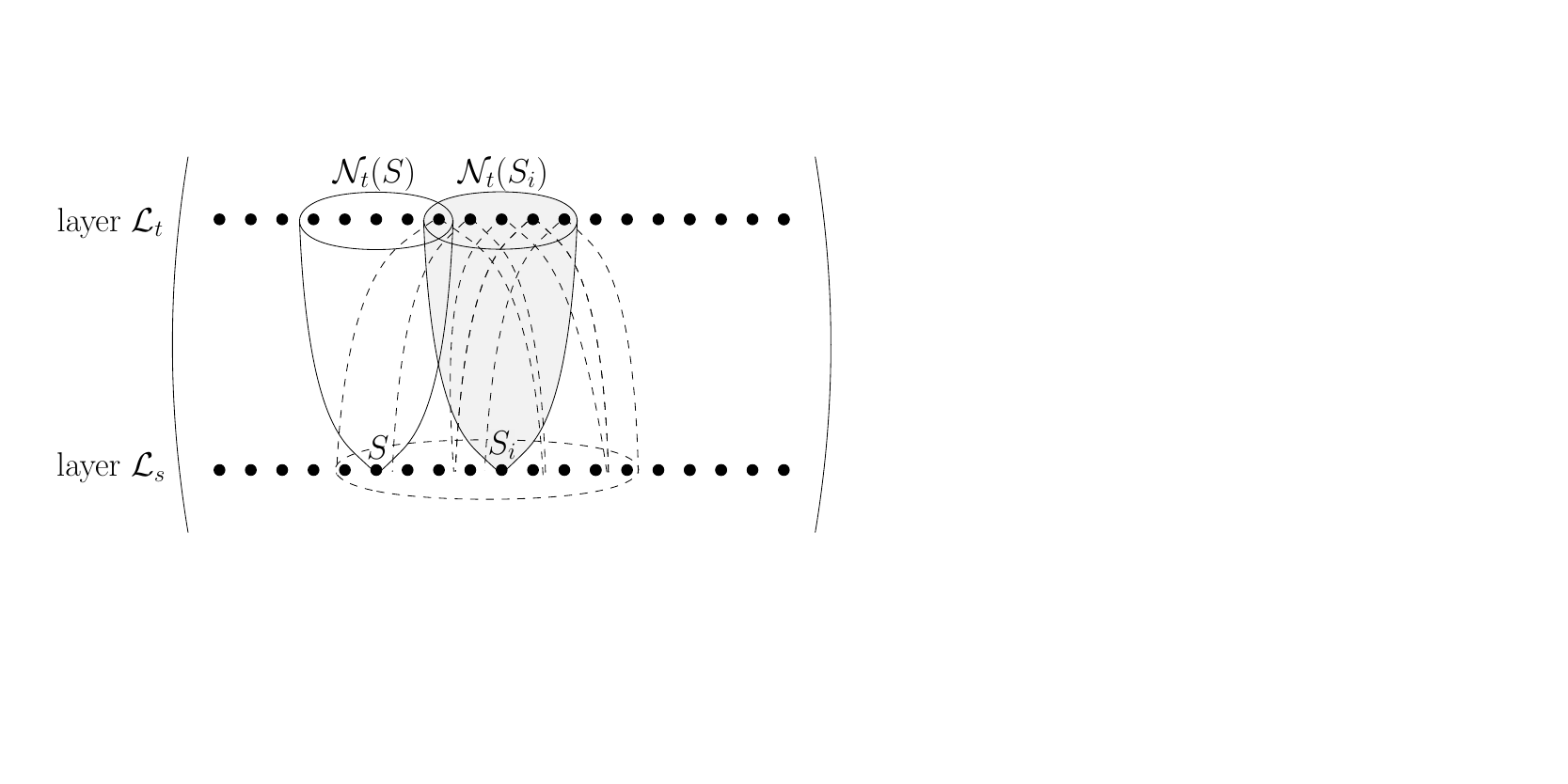}
\caption{Vertex $S\in\cL_s$ for which the neighborhood $\cN_t(S)$ intersects $\cN_t(S_i)$.}
\label{fig:QnEH_NtS}
\end{figure}

Our goal is to bound the probability that the cone $\cK$ is bad.
If $\cK$ is bad, then in particular $\cK'$ is bad, so
$$\PPP( \cK \text{ is bad}) \le  \PPP( \cK' \text{ is bad})=\PPP(\text{for any }S\in\cK'\text{,}\:\:\cN_t(S)\cap \cT\neq\varnothing),$$
where we used that $\cK'\subseteq\cS'$.
We defined $\cK'$ such that the neighborhoods $\cN_t(S)$, $S\in \cK'$, are pairwise disjoint.
In particular, the probability that a vertex $T\in \cN_t(S)$ is included in $\cT$ is independent of every $T'\in \cN_t(S')$, $S'\in \cK'$.
Thus,
$$
\PPP( \cK' \text{ is bad}) = \prod_{S\in \cK'} \PPP(\cN_t(S)\cap \cT\neq\varnothing).
$$
Next, we bound $\PPP(\cN_t(S)\cap \cT\neq\varnothing)$ for any fixed $S\in\cK'$. By (\ref{eq:NtS}),
$$
\PPP(\cN_t(S)\cap \cT\neq\varnothing)  \le  \sum_{T\in \cN_t(S)} \PPP(T\in \cT) = |\cN_t(S)| \cdot p  \le  \left(1.26 \cdot 0.77\right)^{n} \le  0.98^{n}.
$$
Therefore,
$$
\PPP( \cK \text{ is bad}) \le \PPP( \cK' \text{ is bad}) \le  \prod_{S\in \cK'}  0.98^{n} =  0.98^{n|\cK'|} \le 0.98^{n\left(1.04\right)^{n}}.
$$
\bigskip

\noindent \textbf{Claim 4:} With high probability, there is no bad cone $\cK\in\text{Cone}_s$. \medskip\\
\textit{Proof of Claim 4.}
By Claim 1, we have that with high probability, $|\cK\cap\cS'|\ge 1.66^n$ for every $\cK\in\text{Cone}_s$. From now on, suppose that $\cS'$ has this property.
Let $X_\text{bad}$ be the random variable counting the number of bad $\cK\in\text{Cone}_s$.
By Claim 3, the expected value of $X_\text{bad}$ is
\begin{eqnarray*}
\EEE(X_\text{bad}) & = & \sum_{\cK\in\text{Cone}_s} \PPP( \cK \text{ is bad})\\
&\le & \sum_{\substack{B\subseteq[N],\\ |B|=n}} \ \  \sum _{\substack{A\subseteq[N]\setminus B,\\ |A|=n/2}} \PPP( \cK \text{ is bad}) \\
&\le &2^{2N} 0.98^{n\left(1.04\right)^{n}} \\
&\le &2^{4.04n} 0.98^{n\left(1.04\right)^{n}} \to 0\text{ for }n\to \infty,
\end{eqnarray*}
thus, by Markov's inequality, $\PPP(X_\text{bad} \ge 1) \to 0$, and so, w.h.p., $X_\text{bad}=0$.
In particular, w.h.p., both conditions ~~ $|\cK\cap\cS'|\ge 1.66^n$ for every $\cK\in\text{Cone}_s$ ~~ and ~~ $X_\text{bad}=0$ ~~ are fulfilled, which proves Claim 4.
\\

By Claims 2 and 4, we know that w.h.p., for the randomly selected families $\cS'\subseteq\cL_s$ and $\cT\subseteq \cL_t$, there exists no bad cone in $\text{Cone}_s$,
and for every $\cK\in\text{Cone}_t$,\:\:$\cK\cap\cT\neq \varnothing$. This implies in particular the existence of two families $\cS'$ and $\cT$ with these properties.

For such fixed $\cS'$ and $\cT$, we refine the family $\cS'$ as follows.
Let $\cS$ be obtained from $\cS'$ by deleting all vertices $S\in\cS'$ for which $\cN_t(S)\cap \cT \neq\varnothing$, i.e., for which there is a $T\in\cT$ such that $S\subseteq T$.
By construction, $\cS$ and $\cT$ possess properties (i') and (ii').
Since there is no bad $\cK\in\text{Cone}_s$, there exists an $S\in\cK\cap\cS'$, for which the intersection $\cN_t(S)\cap \cT$ is non-empty.
Using the definition of $\cS$, we know that $S\in \cS$, thus $\cS$ has property (iii').
Furthermore, $\cT$ has property (iv').
Therefore, the families $\cS$ and $\cT$ are as desired.
\end{proof}

\begin{construction}\label{constr:EHchain}
Let $n$ and $N$ be integers such that $N\ge 2n$.
Let $\cS$ and $\cT$ be two families of vertices in $\QQ([N])$ such that for every $S\in\cS$ and $T\in\cT$, it holds that $|S|<|T|$ and $S\not \subseteq T$.
We define a blue/red coloring of the Boolean lattice $\QQ([N])$.

Let $\cV_\cT$ be the set of all vertices $Z\in\QQ([N])$ with $|Z|\ge \tfrac{n}{2}$ such that there exists a $T\in\cT$ with $Z\subseteq T$.
Similarly, let $\cV_\cS$ be the set of all vertices $Z\in\QQ([N])$ for which $|Z|\le N-\tfrac{n}{2}$ and there is an $S\in\cS$ with $Z\supseteq S$.
Observe that $\cV_\cT$ and $\cV_\cS$ are disjoint, since the vertices of $\cS$ and $\cT$ are pairwise incomparable.
Let $\cW_\cS$ be the set of vertices $Z\in\QQ([N])$ with $\tfrac{n}{2}\le|Z|\le \tfrac{N}{2}$ and $Z\notin \cV_\cS$.
Similarly, let $\cW_\cT$ be the set of vertices $Z\in\QQ([N])$ for which $\tfrac{N}{2}<|Z|\le N-\tfrac{n}{2}$ and $Z\notin \cV_\cT$.

As illustrated in Figure \ref{fig:EHVTWT}, we color $Z\in\QQ([N])$ in
\vspace*{-1em}
\begin{itemize}
\item blue if $|Z|< \tfrac{n}{2}$,
\item red if $Z\in \cV_\cT\cup\cW_\cS$,
\item blue if $Z\in \cV_\cS\cup\cW_\cT$,
\item red if $|Z|> N-\tfrac{n}{2}$.
\end{itemize}
\end{construction}

\noindent Note that this construction is well-defined if and only if $\cS$ and $\cT$ are element-wise incomparable.

\begin{figure}[h]
\centering
\includegraphics[scale=0.62]{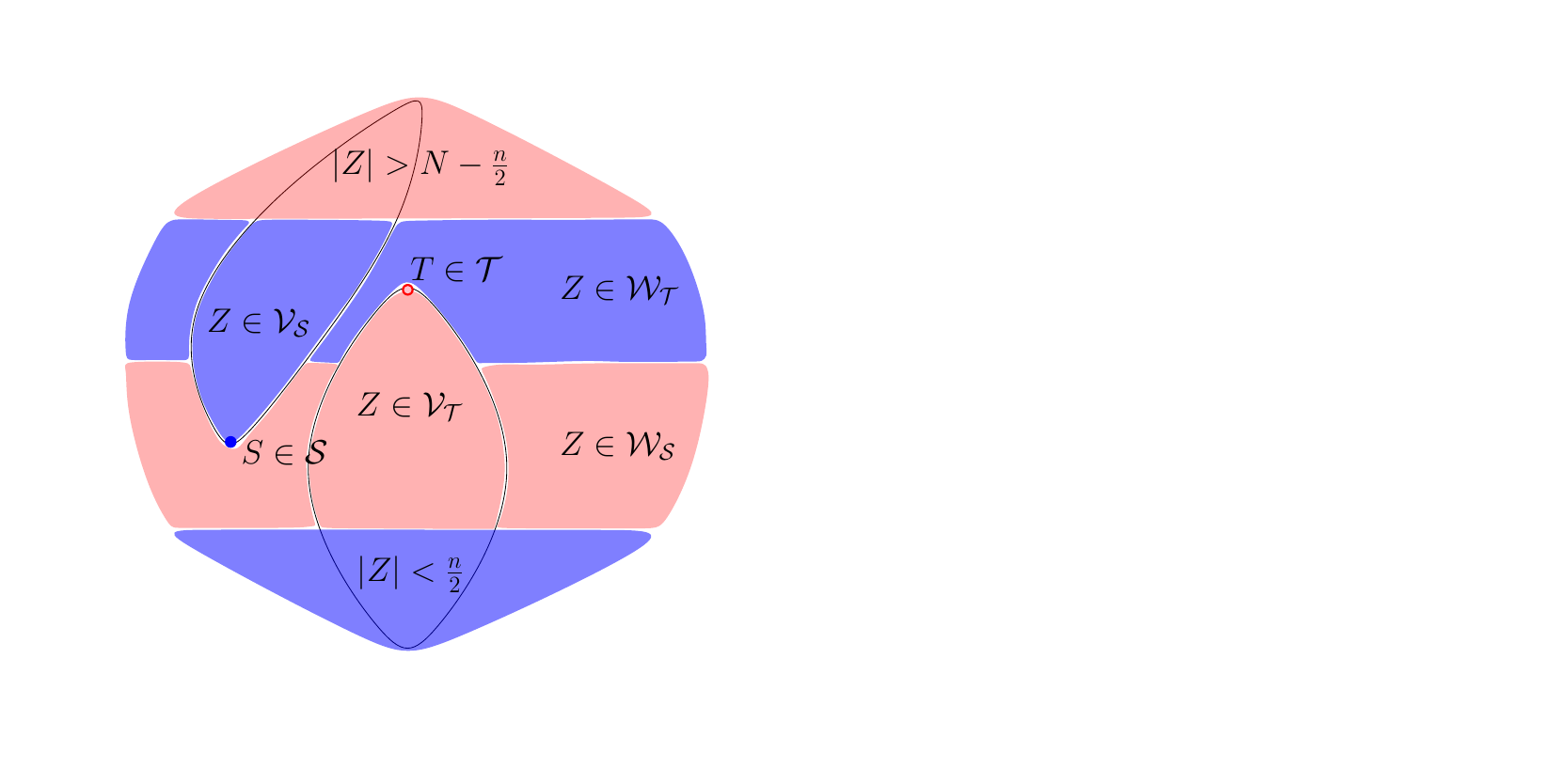}
\caption{Vertices in the sets $\cV_\cT, \cV_\cS, \cW_\cT$ and $\cW_\cS$ for exemplary $S\in \cS$ and $T\in\cT$.}
\label{fig:EHVTWT}
\end{figure}

\begin{proof}[Proof of Lemma \ref{lem:EHchain3}]
Let $ c =0.02$, and let $N=(2+ c )n$ for sufficiently large $n$. 
Let~$\cS$ and~$\cT$ be two families with properties as described in Lemma \ref{lem:EHchain_main}.
Color the Boolean lattice $\QQ([N])$ as defined in Construction \ref{constr:EHchain}, and let $\cV_\cS$ and $\cV_\cT$ as in Construction~\ref{constr:EHchain}.
It is easy to see that this coloring is $\dot C^{(rbr)}_{4}$-free, by using the observation that for every two vertices $A,B\in \cV_\cT\cup\cW_\cS$ with $A\subseteq B$, the subposet $\{Z\in\QQ([N]) : ~ A\subseteq Z \subseteq B\}$ is red.
We shall show that there is no monochromatic copy of $Q_n$, which implies that $\widetilde{R}(\dot C^{(rbr)}_{4},Q_n)> N=2.02n$.

Assume towards a contradiction that there exists a red copy $\QQ'$ of $Q_n$ in $\QQ([N])$. 
By the Embedding Lemma, Lemma \ref{lem:embed}, there is an $n$-element $\bX\subseteq[N]$ such that $\QQ'$ is the image of an $\bX$-good embedding $\phi$, i.e., an embedding
$\phi\colon \QQ(\bX)\to\QQ([N])$ such that $\phi(X)\cap\bX=X$ for every $X\in\bX$.
Note that $|\phi(\varnothing)|\ge n/2$, because $\phi(\varnothing)$ is red.
Let $A$ be an arbitrary subset of $\phi(\varnothing)$ of size $|A|=n/2$, see Figure \ref{fig:QnEH_phiS}. 
Since $\phi(\varnothing)\cap \bX=\varnothing$, the subsets $A$ and $\bX$ are disjoint.
By property (iii) in Lemma \ref{lem:EHchain_main}, we know that there exists an $S\in\cS$ with $S \subseteq A\cup \bX$ and $|S\cap \bX|\le n/2$. 

\begin{figure}[h]
\centering
\includegraphics[scale=0.62]{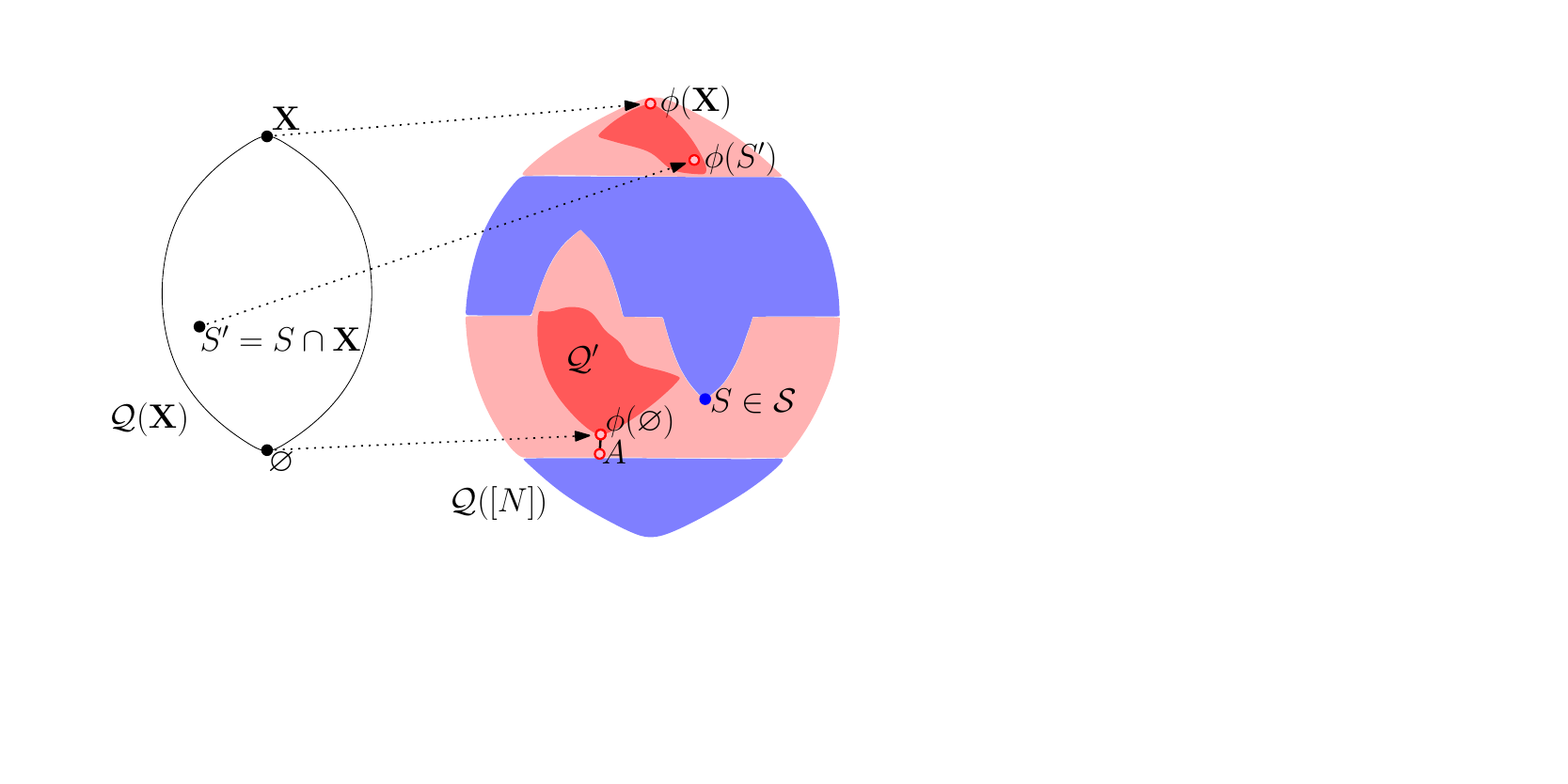}
\caption{Embedding $\phi$ of $\QQ(\bX)$ into $\QQ[N]$.}
\label{fig:QnEH_phiS}
\end{figure}

Let $S'=S\cap \bX$. We analyze $\phi(S')$ to find a contradiction.
First, we claim that $S\subseteq \phi(S')$. Indeed, using that $\phi$ is an embedding, we know that $S\cap A\subseteq A\subseteq \phi(\varnothing) \subseteq \phi(S')$.
Moreover, since $\phi$ is $\bX$-good, $S'\subseteq \phi(S')$. Therefore, $S=(S\cap A)\cup S'\subseteq \phi(S')$. 
Because $S\in\cS$ and $S\subseteq \phi(S')$, either $\phi(S')\in\cV_\cS$ or $|\phi(S')|> N-\tfrac{n}{2}$.
Recall that $\phi(S')$ is a vertex in the red poset $\QQ'$, but every vertex in $\cV_\cS$ is blue.
This implies that $\phi(S')\notin\cV_\cS$, so $|\phi(S')|>N-\tfrac{n}{2}$.
However, because $\phi$ is $\bX$-good, $\phi(S')\cap (\bX\setminus S')=\varnothing$, so 
$$|\phi(S')|\le N - |\bX\setminus S'|=N - |\bX|+|S\cap \bX|\le N-\tfrac{n}{2},$$ a contradiction.
By a symmetric argument, there exists no blue copy of $Q_n$. Therefore, $\widetilde{R}(\dot C^{(rbr)}_{4},Q_n)>N$.
\end{proof}

In particular, we find that $R(Q_n,Q_n)\ge \widetilde{R}(\dot C^{(rbr)}_{4},Q_n)>2.02n$.
We remark that with the here presented approach it is not possible to push the lower bound on $R(Q_n,Q_n)$ higher than $\widetilde{R}(\dot C^{(rbr)}_{4},Q_n)$, i.e., higher than $3n$.
\bigskip


\section{Forbidden Boolean lattices}\label{sec:EHbool}

\begin{proof}[Proof of Theorem \ref{thm:EHbool}]
Recall that $Q_2^{(brbb)}$, $Q_2^{(brrb)}$, $Q_2^{(rrbb)}$, and $Q_2^{(rbbb)}$ are defined as depicted in Figure \ref{fig:coloredQ2b}.
All four lower bounds follow directly from Theorem \ref{thm:EHgenFUL}.
\begin{figure}[h]
\centering
\includegraphics[scale=0.62]{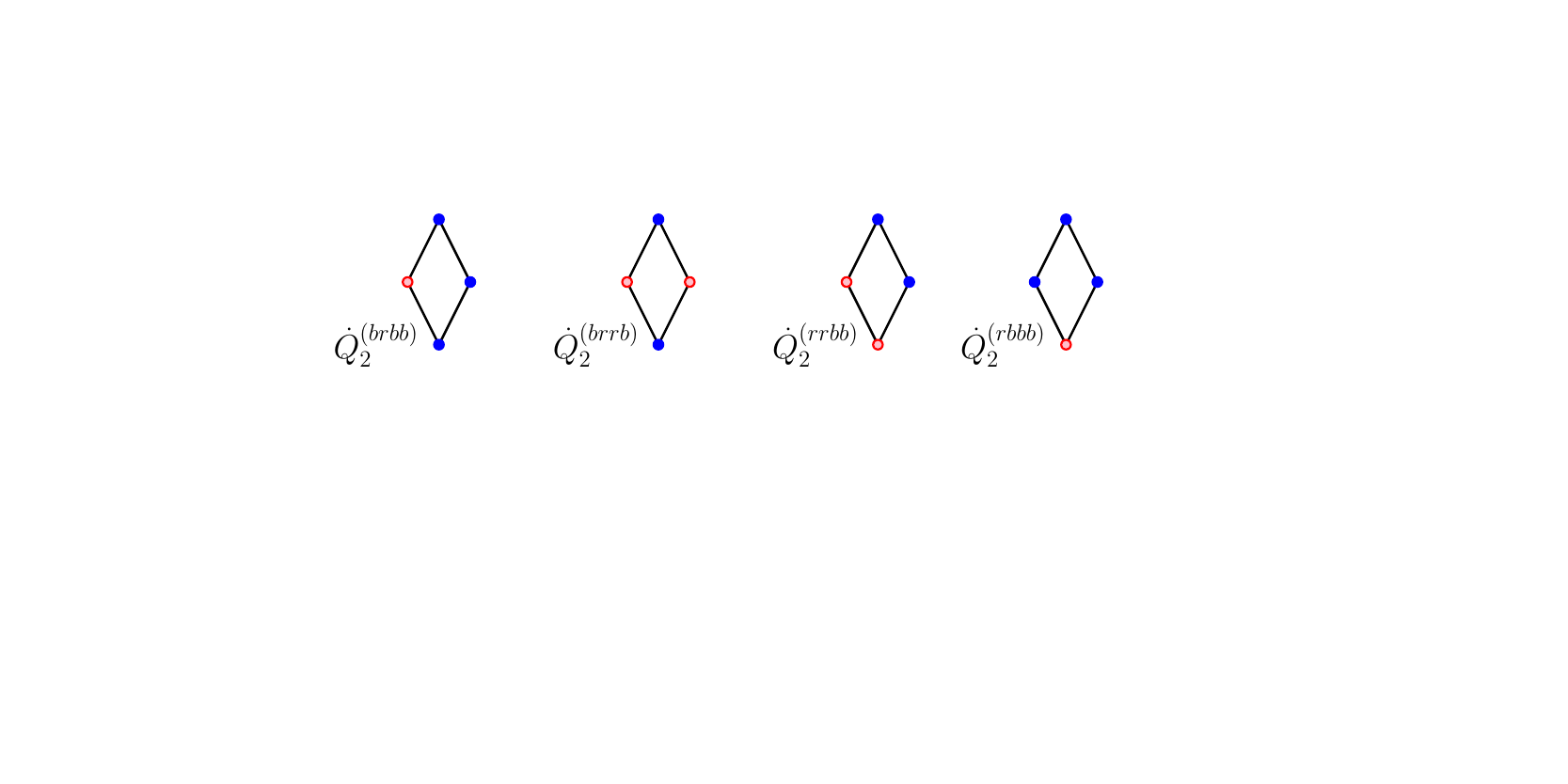}
\caption{Non-monochromatic blue/red colorings of the Boolean lattice $Q_2$.}
\label{fig:coloredQ2b}
\end{figure}

\noindent \textbf{(i): $\widetilde{R}(\dot Q_2^{(brbb)},Q_n)\le 2n$}.\medskip\\
For $n=1$, the statement is trivial, so suppose $n\ge 2$.
Fix an arbitrary blue/red coloring of $\QQ=\QQ([2n])$. 
We shall show that there exists a copy of $\dot Q_2^{(brbb)}$ or a monochromatic copy of $Q_n$.
Let $\ell$ be the length of a longest blue chain in $\QQ$. 
If $\ell\le n$, then we apply Corollary \ref{cor:chain}, i.e., the bound $R(C_{\ell+1},Q_n)=n+\ell \le 2n$, to find a red copy of $Q_n$.

So, suppose that $\ell >n$. In $\QQ$, fix a blue chain $\cC$ of length $\ell$ with minimal vertex $A$ and maximal vertex $B$.
The sublattice $\QQ\big|_A^B$ has dimension $|B|-|A|\ge \ell-1\ge n$. 
If there exists no red vertex in this sublattice, there is a blue copy of $Q_n$.
Otherwise, let $R\in\QQ\big|_A^B$ be a red vertex. 
We claim that there is a copy of $\dot Q_2^{(brbb)}$.
Note that $A$ and $B$ are blue and $R$ is red, so $|A|<|R|<|B|$.
\vspace*{-1em}
\begin{itemize}
\item If there is a blue vertex $Z\in\QQ\big|_A^B$ with $|Z|=|R|$, then $A$, $R$, $Z$, and $B$ form a copy of $\dot Q_2^{(brbb)}$.

\item Suppose that every vertex $Z\in\QQ\big|_A^B$ with $|Z|=|R|$ is red.
Since $\ell>n\ge 2$, there is a blue vertex $Z_1\in \cC$ such that $A\subset Z_1 \subset B$.
We shall find a red vertex incomparable to $Z_1$.
Let $a\in Z_1\setminus A$ and $b\in B\setminus Z_1$. Since $|A|<|R|<|B|$, there exists a vertex $R_1$ with $A\cup\{b\}\subseteq R_1 \subseteq B\setminus\{a\}$ and $|R_1|=|R|$.
Note that $R_1$ is red and incomparable to $Z_1$, so the four vertices $A$, $R_1$, $Z_1$, and $B$ form a copy of~$\dot Q_2^{(brbb)}$.
\end{itemize}

\noindent\textbf{(ii): $\widetilde{R}(\dot Q_2^{(brrb)},Q_n)\le 2n$}.\medskip\\
Let $N=2n$ and $\QQ=\QQ([N])$. Fix an arbitrary blue/red coloring of $\QQ$. We distinguish two cases.
\medskip

\noindent \textbf{Case 1:} There exist two blue vertices $A,B\in\QQ$ with $A\subseteq B$ and $|B|-|A|\ge n+1$.\medskip\\
If there is no red vertex in $\QQ\big|_A^B$, this sublattice contains a blue copy of $Q_n$, so let $R_1$ be a minimal red vertex in $\QQ\big|_A^B$.
Let $a\in R_1\setminus A$.
If there is no red vertex in the sublattice $\QQ\big|_A^{B\setminus\{a\}}$, again there is a blue copy of $Q_n$.
So, suppose that there exists a red vertex $R_2\in\QQ\big|_A^{B\setminus\{a\}}$. We claim that $A$, $R_1$, $R_2$, and $B$ form a copy of $\dot Q_2^{(brrb)}$.
Indeed, it is clear that $A\subset R_1\subset B$ and $A\subset R_2 \subset B$.
Furthermore, we know that $R_1\not \subseteq R_2$, because $a\in R_1\setminus R_2$, and $R_2\not \subset R_1$, because $R_1$ is chosen to be minimal.
\medskip

\noindent \textbf{Case 2:} For any two blue vertices $A,B\in\QQ$ with $A\subseteq B$, it holds that $|B|-|A|\le n$.\medskip\\
Pick an arbitrary $\bX\subseteq [N]$ with $|\bX|=n$. Let $\bY=[N]\setminus \bX$, so $|\bY|=n$.
Let $\cF$ be the family of all $X\subseteq \bX$ such that the vertices $X$ and $X\cup\bY$ are both blue. Note that possibly $\cF=\varnothing$.
Assume that for some $X\in\cF$, there is a vertex $X'\subset X$ which is blue.
Then $X'$ and $X\cup\bY$ are comparable blue vertices with $|X\cup\bY|-|X'|>|X\cup\bY|-|X|=n$, which is a contradiction. Therefore, every proper subset $X'$ of $X\in\cF$ is red. In particular, $\cF$ is an antichain (or empty).
If there is an $X\in\cF$ such that the sublattice $\QQ\big|_X^{X\cup\bY}$ is monochromatic, the proof is complete.
Thus, we can suppose that for every $X\in\cF$, there exists a red vertex $X\cup Y_X$ for some $Y_X\subseteq\bY$.

We shall construct an embedding of $Q_n$ with a red image. Define the function $\phi\colon \QQ(\bX)\to\QQ$ such that for $X\in\QQ(\bX)$,
$$\phi(X)=\begin{cases} 
X, &\quad\text{ if }\QQ\big|_\varnothing^X\text{ is monochromatic red},\\
X\cup Y_X, &\quad \text{ if }\QQ\big|_\varnothing^X\text{ is not monochromatic red and } X\in\cF,\\
X\cup \bY, &\quad \text{ if }\QQ\big|_\varnothing^X\text{ is not monochromatic red and } X\notin\cF.
\end{cases}$$
Using that $\cF$ is an antichain, it is easy to see that $\phi$ is an embedding of $\QQ(\bX)$.
To show that $\phi$ has a red image, let $X\in\QQ(\bX)$.
\vspace*{-1em}
\begin{itemize}
\item If $\QQ\big|_\varnothing^X$ is red, then $\phi(X)=X$ is red.
\item If $\QQ\big|_\varnothing^X$ is not monochromatic red and $X\in\cF$, then $\phi(X)=X\cup Y_X$ is red.
\item If $\QQ\big|_\varnothing^X$ is not monochromatic red and $X\notin\cF$, then assume towards a contradiction that $\phi(X)=X\cup\bY$ is blue.
If the vertex $X$ is blue as well, then $X\in\cF$, which is a contradiction. So, we conclude that $X$ is red.
However, $\QQ\big|_\varnothing^X$ is not monochromatic red, so there is a blue vertex $X'\subset X$.
In particular, $X'\subseteq X\cup \bY$ and $|X\cup\bY|-|X'|>|X\cup\bY|-|X|=n$, a contradiction.
\end{itemize}
Therefore, the image of $\phi$ is a red copy of $Q_n$.
\\

\noindent\textbf{(iii): $\widetilde{R}(\dot Q_2^{(rrbb)},Q_n)\le 2n$}.\medskip\\
Let $N=2n$, and fix an arbitrary blue/red coloring of $\QQ=\QQ([N])$. There are two cases.

\noindent \textbf{Case 1:} There exist two red vertices $R_1,R_2$ with $R_1\subset R_2$ and $|R_2|\le n$.\medskip\\
Fix an arbitrary $U$ with $R_1\subseteq U\subset R_2$ and $|U|=|R_2|-1$, see Figure \ref{fig:Q2rrbb}. 
Note that possibly $U=R_1$.
Let $a\in[N]$ with $R_2 \setminus U=\{a\}$.
The sublattice $\QQ\big|_U^{[N]}$ has dimension at least $N-|U|=2n-|R_2|+1\ge n+1$.
We consider the auxiliary coloring of $\QQ\big|_U^{[N]}$ obtained from the original coloring of $\QQ$ by recoloring $U$ in red.
We apply the Chain Lemma, Lemma \ref{lem:chain}, for $\bY=\{a\}$ to the auxiliary coloring of $\QQ\big|_U^{[N]}$. 
This lemma implies that in the auxiliary coloring, there is either a red copy of $Q_n$ or a chain on two blue vertices $B_1\subset B_2$ with $a\notin B_1$ and $a\in B_2$. 

In the first case, recall that $R_1\subseteq U$ is a red vertex. By replacing $U$ with $R_1$ in the copy of $Q_n$, we obtain a red copy of $Q_n$ in the original coloring of $\QQ$.
In the second case, observe that $B_1$ and $B_2$ are also blue in our original coloring.
We shall prove that the vertices $R_1$, $R_2$, $B_1$, and $B_2$ form a copy of $\dot Q_2^{(rrbb)}$ in the original coloring of $\QQ$.
Indeed, we see that $R_1\subseteq U\subset B_1\subset B_2$ and $R_1\subset R_2=U\cup\{a\}\subseteq B_2$. 
The vertices $R_1$, $R_2$, $B_1$, and $B_2$ are pairwise distinct.
Note that $R_2\not \subseteq B_1$, because $a\in R_2\setminus B_1$.
Furthermore, $B_1\not\subset R_2$, because otherwise $B_1=U$, which contradicts that $B_1$ and $U$ have distinct colors in the auxiliary coloring.
\\

\begin{figure}[h]
\centering
\includegraphics[scale=0.62]{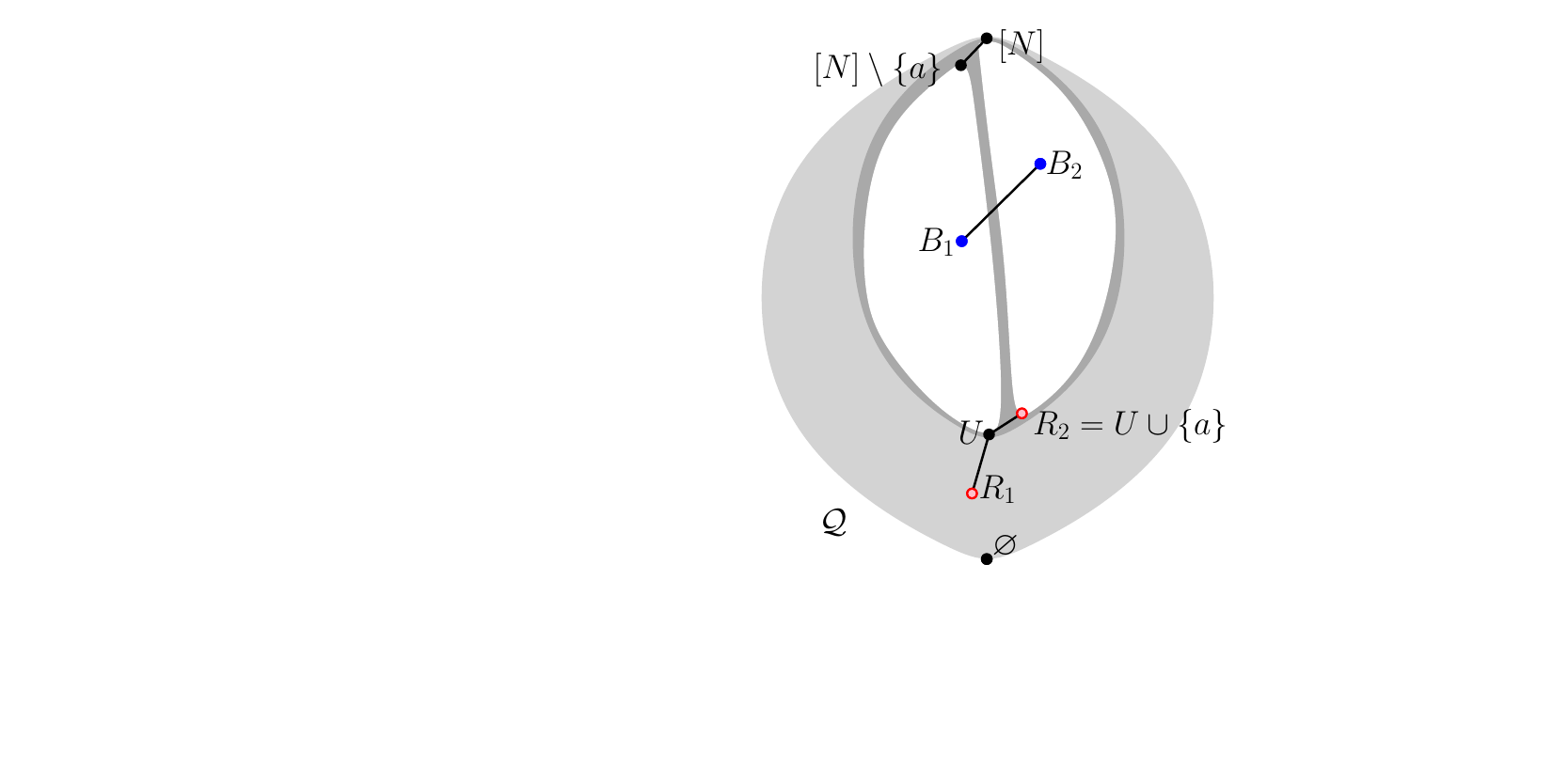}
\caption{Construction in Case 1 of (iii).}
\label{fig:Q2rrbb}
\end{figure}
\medskip

\noindent \textbf{Case 2:} The red vertices in layers $0,\dots,n$ of $\QQ$, i.e., in the ``lower half'', form an antichain~$\cA$.\medskip\\
First, suppose that there exists a blue vertex $B\in\QQ$ with $|B|\ge n+1$, i.e., in the ``upper half''.
In the sublattice $\QQ\big|_\varnothing^{B}$, every red vertex is either in $\cA$ or contained in some layer $n+1,n+2,\dots,|B|-1$.
Therefore, the length of the largest red chain in $\QQ\big|_\varnothing^{B}$ is at most $|B|-n$.
By Corollary \ref{cor:chain}, there exists a blue copy of a Boolean lattice with dimension at least $|B|-(|B|-n)=n$ in $\QQ\big|_\varnothing^{B}$.
From now on, suppose that the ``upper half'' is red, i.e., every vertex $Z\in\QQ$ with $|Z|\ge n+1$ is red.
\vspace*{-1em}
\begin{itemize}
\item If every vertex $Z\in\QQ$ with $|Z|\le n$ is blue, then for any $U\in\QQ$ with $|U|=n$, the sublattice $\QQ\big|_\varnothing^{U}$ is a blue copy of $Q_n$.

\item If there exists a red vertex $R\in\QQ$ with $|R|\le n$, then the sublattice $\QQ\big|_R^{[N]}$ contains $n+1$ entirely red layers, 
so the largest blue chain in $\QQ\big|_R^{[N]}$ has length at most $(N-|R|+1)-(n+1)=n-|R|$. 
Thus, Corollary \ref{cor:chain} guarantees the existence of a red copy of a Boolean lattice with dimension at least $(N-|R|)-(n-|R|)=n$.
\end{itemize}

\noindent\textbf{(iv):} $\widetilde{R}(\dot Q_2^{(rbbb)},Q_n)\le 2n + O\big(\tfrac{n}{\log n}\big)$.\medskip\\
Recall that $\pLa$ is the $3$-element $\Lambda$-shaped poset.
In Corollary \ref{cor:QnVs} we showed that $R(\pLa,Q_n)\le n+O\big(\tfrac{n}{\log n}\big)$.
Let $N=n+1+R(\pLa,Q_n)$, and let $\QQ=\QQ([N])$. We choose an arbitrary blue/red coloring of $\QQ$. Fix an inclusion-minimal red vertex $R\in \QQ$.
If $|R|>n$, there is a blue copy of $Q_n$ in the sublattice $\QQ\big|_\varnothing^R$,
so suppose that $|R|\le n$. Let $A\in \QQ$ such that $A$ is ``directly above'' $R$ in the Hasse diagram, i.e., $A\supset R$ with $|A|=|R|+1$.
Note that $\QQ\big|_A^{[N]}$ has dimension $N-|A|\ge N-n-1 \ge R(\pLa,Q_n)$, thus $\QQ\big|_A^{[N]}$ contains either a red copy of $Q_n$, as desired,
or a blue copy of $\pLa$.
In the latter case, the red vertex $R$ and the blue copy of $\pLa$ form a copy of $\dot Q_2^{(rbbb)}$.
Therefore, $\widetilde{R}(\dot Q_2^{(rbbb)},Q_n)\le N \le 2n + O\big(\tfrac{n}{\log n}\big)$.
\end{proof}

While we have precisely determined the poset Erd\H{o}s-Hajnal number for three of the four color patterns of $Q_2$, the upper bound $\widetilde{R}(\dot Q_2^{(rbbb)},Q_n)\le 2n + O\big(\tfrac{n}{\log n}\big)$ differs from the provided lower bound $2n$ by a sublinear margin.
Recall that in Theorem \ref{thm:QnV_LB} we proved the lower bound
$$R(\pLa,Q_n)\ge n+\Omega\big(\tfrac{n}{\log n}\big),$$
which matches the upper bound for $R(\pLa,Q_n)$ applied in our proof.
It remains open whether $\widetilde{R}(\dot Q_2^{(rbbb)},Q_n)\ge 2n + \Omega\big(\tfrac{n}{\log n}\big)$.
An intuitive approach for a lower bound construction would be the following: 
In the host Boolean lattice, assign the color blue to every vertex in layers $0,\dots,n-2$, and for all other layers mimic the $\dot\pLa^{(b)}$-free construction given in Theorem \ref{thm:QnV_random}.
However, the probabilistic argument used in Theorem \ref{thm:QnV_random} fails in this setting, because the $2$-dimension of the host Boolean lattice is too large.
\\

\newpage

\chapter{Diagonal poset Ramsey numbers}\label{ch:QnQn}
\section{Introduction of Chapter \ref{ch:QnQn}}

Recall that an \textit{induced copy} \index{induced copy}\index{copy} of $P$ is the image of an \textit{embedding}\index{embedding} of $P$ into $Q$, i.e., a function $\phi\colon P\to Q$ such that $X\le_{P} Y$ if and only if $\phi(X)\le_{Q}\phi(Y)$ for any two $X,Y\in P$.
The \textit{poset Ramsey number} $R(P,Q)$ \index{poset Ramsey number} of posets $P$ and $Q$ is the smallest $N$ such that any blue/red coloring of $Q_N$ contains either a blue induced copy of $P$ or a red induced copy of $Q$. 
In this chapter, we refer to $R(P,Q)$ also as the \textit{induced poset Ramsey number} \index{induced poset Ramsey number}.

In Chapters \ref{ch:QnK} to \ref{ch:QnPA}, we have studied $R(P,Q_n)$ for a fixed poset $P$ and large $n$, motivated by an Erd\H{o}s-Hajnal-type question.
However, from the perspective of classic Ramsey theory, the most fundamental question in the study of the poset Ramsey number is to estimate $R(Q_n,Q_n)$ for large $n$.
The focus of this chapter is to provide improved bounds on $R(Q_n,Q_n)$ and related diagonal poset Ramsey results.
The diagonal setting $R(Q_n,Q_n)$ has been actively studied in recent years.
First bounds were given by Axenovich and Walzer \cite{AW} who showed that $2n\le R(Q_n,Q_n)\le n^2+2n$.
The upper bound was improved by Walzer \cite{Walzer} to $n^2+1$, and then by Lu and Thompson \cite{LT} to the best previously known value $n^2-n+2$.
Cox and Stolee \cite{CS} improved the lower bound to $2n+1$ for $n\ge 13$, which was extended to all $n\ge 3$ by Bohman and Peng \cite{BP}.
The best known estimate from below is given in Corollary \ref{cor:QnQnLB}, where we showed that $R(Q_n,Q_n)\ge 2.02n$ for large $n$.
In particular, the best known lower bound is linear, while the best known upper bound is quadratic.
It remains an open question to determine the asymptotic behavior of $R(Q_n,Q_n)$, which we address in Conjecture \ref{conj:QmQn}.

Related to $R(Q_n,Q_n)$ is the off-diagonal setting $R(Q_m,Q_n)$, where both $m$ and $n$ are large.
Lu and Thompson \cite{LT} showed that for $n\ge m\ge 4$,
$$R(Q_m,Q_n)\le n\Big(m-2+O\big(\tfrac{1}{m}\big)\Big)+m+3.$$
Here, the $O$-notation is used in terms of $m$.
Our main result of this chapter is a strengthened upper bound on the poset Ramsey number of $Q_m$ and $Q_n$, when $m$ and $n$ are both large.
Recall that `$\log$' refers to the logarithm with base $2$.
\begin{theorem}\label{thm:QmQn}
Let $n,m\in\N$ with $2^{25}\le m \le n$. Then $$R(Q_m,Q_n)\le n\left(m-\big(1-\tfrac{2}{\sqrt{\log m}}\big)\log m\right).$$
More generally, if $n,m\in\N$ with $n\ge m$ and $\varepsilon\in\R$, $0<\varepsilon<1$, such that $\tfrac{n+m}{n}\cdot \tfrac{1}{(1-\varepsilon)\log m}+m^{-\varepsilon}\le \varepsilon$, then
$$R(Q_m,Q_n)\le n\big(m-(1-\varepsilon)^2\log m\big).$$
\end{theorem}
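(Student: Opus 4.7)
The plan is to prove the general statement (the second displayed inequality) and then verify that the choice $\varepsilon=2/\sqrt{\log m}$ satisfies its hypothesis for $m\ge 2^{25}$, which yields the main bound. With this $\varepsilon$ one has $(1-\varepsilon)\ge 3/5$ once $\log m\ge 25$, so the first term of the hypothesis is $\frac{n+m}{n(1-\varepsilon)\log m}\le\frac{2}{(1-\varepsilon)\log m}\le\frac{10}{3\log m}$, while the second term $m^{-\varepsilon}=2^{-2\sqrt{\log m}}$ is at most $2^{-10}$. Both are easily below $\varepsilon/2=1/\sqrt{\log m}$ when $m\ge 2^{25}$, so the hypothesis holds and the general bound specializes to $R(Q_m,Q_n)\le n(m-(1-2/\sqrt{\log m})^2\log m)$, which absorbs into the claimed form after a short algebraic simplification.

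For the general statement I would proceed by induction on $m$, establishing the telescoping inequality
\[
R(Q_m,Q_n)\le R(Q_{m-1},Q_n)+n-\frac{(1-\varepsilon)^2 n}{m\ln 2},
\]
so that summing over $j=2,\dots,m$ and using $\sum_{j\le m}1/(j\ln 2)=\log m+O(1)$ recovers the desired $N\le n(m-(1-\varepsilon)^2\log m)$. For the inductive step, set $N$ equal to the right-hand side above and fix any blue/red coloring of $\QQ([N])$ containing no red copy of $Q_n$; the goal is to force a blue $Q_m$. I would partition $[N]=\bX\cup\bY$ with $|\bX|=R(Q_{m-1},Q_n)$, so $|\bY|=n-(1-\varepsilon)^2 n/(m\ln 2)$ is only slightly smaller than $n$. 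By the induction hypothesis applied inside every $\bX$-good copy of $\QQ(\bX)$, together with the Embedding Lemma (Lemma~\ref{lem:embed}), one obtains many blue $\bX$-good copies of $Q_{m-1}$; the task is then to promote one of them to a blue $Q_m$ by locating a blue vertex $U$ in the $\bY$-slice sitting directly above its maximum so that the union is an induced $Q_m$.

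The main obstacle, and where the technical hypothesis on $\varepsilon$ is decisive, is producing this blue extension vertex. Here I would apply the Chain Lemma (Lemma~\ref{lem:chain}) to the relevant $\bY$-slice: for each of the $|\bY|!$ linear orderings of $\bY$, the absence of a red $Q_n$ forces a blue $\bY$-chain, and pigeonholing over chain endpoints in the style of Gr\'osz--Methuku--Tompkins produces a dense collection of candidate blue vertices sitting above each potential top of the inductive $Q_{m-1}$. The first term $\frac{n+m}{n(1-\varepsilon)\log m}$ of the hypothesis controls the density of compatible candidates required (it reflects the ratio of $|\bY|$-deficit to the chain-lengths available), while the second term $m^{-\varepsilon}$ bounds the fraction of inductive $Q_{m-1}$ embeddings for which this pigeonhole fails due to collisions between candidate sets. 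The hypothesis $\frac{n+m}{n(1-\varepsilon)\log m}+m^{-\varepsilon}\le\varepsilon$ is engineered so that the two error terms sum to less than the gain $\varepsilon$ per step, guaranteeing that at least one blue $Q_{m-1}$ admits a blue extension into $\bY$. Closing the induction and summing the single-step savings then yields the advertised bound $R(Q_m,Q_n)\le n(m-(1-\varepsilon)^2\log m)$.
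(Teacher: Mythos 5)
Your verification that $\varepsilon=2/\sqrt{\log m}$ satisfies the hypothesis when $m\ge 2^{25}$ is essentially sound and parallels the paper's Lemma~\ref{prop:epsilon}, and your remark that $(1-\varepsilon)^2\ge 1-2\varepsilon$ correctly reduces the general bound to the displayed one. The difficulty lies in the proof of the general statement, which is not repaired by the sketch you give.

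The decisive gap is in the inductive step. You propose to take a blue copy of $Q_{m-1}$ (obtained from the induction hypothesis inside $\QQ([N'])$ with $N'=R(Q_{m-1},Q_n)$) and ``promote'' it to a blue $Q_m$ by ``locating a blue vertex $U$ in the $\bY$-slice sitting directly above its maximum so that the union is an induced $Q_m$.'' This cannot work: $Q_m$ has $2^m$ vertices and $Q_{m-1}$ has $2^{m-1}$, so you must produce $2^{m-1}$ additional blue vertices, one for each $X\in Q_{m-1}$, arranged so that they form a parallel shifted copy of $Q_{m-1}$ sitting strictly above the original with no other crossing comparabilities. A single extension vertex is nowhere near enough, and the Chain Lemma/pigeonhole heuristic you invoke (``a dense collection of candidate blue vertices'') does not supply a mechanism for simultaneously embedding $2^{m-1}$ such vertices; the Gr\'osz--Methuku--Tompkins pigeonhole is designed to find a single small pattern ($Q_2$), not to extend a lattice. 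Moreover, you never actually establish the claimed telescoping inequality $R(Q_m,Q_n)\le R(Q_{m-1},Q_n)+n-\tfrac{(1-\varepsilon)^2 n}{m\ln 2}$; the per-step saving of $\Theta(n/m)$ is asserted via a vague density argument but not derived, and it is exactly this saving that carries the whole theorem.

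For comparison, the paper avoids induction on $m$ entirely. It sets $t_\mu=(1-\varepsilon)\log m$, shows that if no red $Q_n$ exists then every truncated blob $\cB(S;T;t_\eta)$ of dimension $n$ and bounded volume contains a blue vertex, and then builds a blue, $\bX$-good embedding of the truncated cube $\QQ(\bX)^{t_\mu}$ ($|\bX|=m$) layer by layer, carefully tracking the total volume. The punchline is part (ii) of the Truncated Blob Lemma (Lemma~\ref{lem:truncatedCompletion}): a blue truncated embedding of small enough volume can be completed to a full blue $Q_m$ (or else one finds a red $Q_n$). The savings come from the truncation: only the first $t_\mu\approx(1-\varepsilon)\log m$ layers need to be built before the Blob-Lemma-style completion kicks in, which is what produces the $n\log m$ term. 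If you want to salvage an inductive approach, you would need to prove the one-step inequality by an argument of comparable substance, and the completion would still have to produce an entire shifted copy of $Q_{m-1}$, not a single vertex.
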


\noindent Theorem \ref{thm:QmQn} is the first improvement of the initial basic upper bound by Axenovich and Walzer \cite{AW}, see Theorem \ref{thm:general},
by a superlinear additive term. Our result immediately provides an improved upper bound on $R(Q_n,Q_n)$.

\begin{corollary}\label{cor:QnQnUB}
For every $\varepsilon>0$ and for sufficiently large $n\in\N$, depending on $\varepsilon$, 
$$R(Q_n,Q_n)\le n^2- (1-\varepsilon)n\log n.$$
\end{corollary}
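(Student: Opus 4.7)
The plan is to derive Corollary \ref{cor:QnQnUB} as a direct specialization of Theorem \ref{thm:QmQn} by setting $m=n$. Concretely, I would work from the more general form of Theorem \ref{thm:QmQn}: given $\varepsilon>0$, choose an auxiliary parameter $\varepsilon'>0$ small enough that $(1-\varepsilon')^{2}\ge 1-\varepsilon$ (for instance any $\varepsilon'\le \varepsilon/2$ works for $\varepsilon$ small). Then verify that the hypothesis of Theorem \ref{thm:QmQn} holds for $m=n$ and this $\varepsilon'$, namely that
\[
\frac{n+n}{n}\cdot \frac{1}{(1-\varepsilon')\log n}+n^{-\varepsilon'}\;=\;\frac{2}{(1-\varepsilon')\log n}+n^{-\varepsilon'}\;\le\;\varepsilon'.
\]
Both summands on the left tend to $0$ as $n\to\infty$ while $\varepsilon'$ is a fixed positive constant, so the inequality is satisfied for all $n$ greater than some threshold $n_0=n_0(\varepsilon)$ (which will automatically be at least $2^{25}$ if we enlarge $n_0$ accordingly).

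Applying Theorem \ref{thm:QmQn} with these parameters then yields
\[
R(Q_n,Q_n)\;\le\;n\bigl(n-(1-\varepsilon')^{2}\log n\bigr)\;=\;n^{2}-(1-\varepsilon')^{2}\,n\log n\;\le\;n^{2}-(1-\varepsilon)\,n\log n,
\]
by the choice of $\varepsilon'$. This is exactly the desired bound. Alternatively, one could use the explicit form of Theorem~\ref{thm:QmQn} with the rate $1-2/\sqrt{\log m}$: setting $m=n$ gives $R(Q_n,Q_n)\le n^2-n\log n+2n\sqrt{\log n}$, and since $2n\sqrt{\log n}=o(n\log n)$, the extra term is absorbed into $\varepsilon\, n\log n$ for any fixed $\varepsilon>0$ once $n$ is large enough (specifically, as soon as $2/\sqrt{\log n}\le\varepsilon$).

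There is no real obstacle here; the entire content lies in Theorem~\ref{thm:QmQn}. The only step requiring any care is the elementary verification that the hypothesis of that theorem is eventually satisfied for $m=n$, which amounts to checking that $2/((1-\varepsilon')\log n)+n^{-\varepsilon'}\to 0$, a routine calculation.
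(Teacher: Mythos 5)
Your proof is correct and is exactly what the paper intends: Corollary \ref{cor:QnQnUB} is presented as an immediate specialization of Theorem \ref{thm:QmQn} with $m=n$, and your verification that the hypothesis $\tfrac{2}{(1-\varepsilon')\log n}+n^{-\varepsilon'}\le\varepsilon'$ holds eventually is the only nontrivial check. Both of your routes (via the general form with a suitable $\varepsilon'$, or via the explicit $1-2/\sqrt{\log m}$ form and absorbing the $2n\sqrt{\log n}$ error term) are valid and equivalent in spirit to the paper.
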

\medskip

Although the asymptotic behavior of $R(Q_n,Q_n)$ is unknown, the diagonal poset Ramsey number $R(P,P)$ is known exactly for some basic posets $P$.
Walzer \cite{Walzer} determined the diagonal poset Ramsey number for chains and antichains, 
and bounded this number for the standard example $S_n$,  i.e., the poset consisting of all singletons and co-singletons of $Q_n$, up to an additive constant $2$. 
Chen, Chen, Cheng, Li, and Liu \cite{CCCLL} showed an exact bound on $R(P,P)$ when $P$ is the poset which consists of two elementwise incomparable chains on a given number of vertices, with an added vertex smaller than all other vertices. 

Here, we study the diagonal poset Ramsey number $R(P,P)$ for further posets $P$.
The \textit{$n$-fork}\index{fork} $V_n$ is the poset consisting of an antichain on $n$ vertices with an added vertex smaller than all other vertices.
The \textit{$n$-diamond}\index{diamond} $D_n$ is the poset consisting of an antichain on $n$ vertices and a vertex smaller than all others as well as a vertex larger than all others.

Let $n\in\N$. Recall that the \textit{Sperner number} $\alpha(n)$\index{Sperner number}\index{$\alpha(n)$} is the minimal dimension $N$ such that $Q_N$ contains an antichain of size $n$.
Sperner \cite{Sperner} showed that $\alpha(n)$ is the minimal integer $N$ such that $\binom{N}{\lfloor N/2\rfloor}\ge n$.
It is a basic observation that $\alpha(n)\le \alpha(2n-1)\le \alpha(n)+2$, which we use repeatedly.
Stirling's formula, as stated in (\ref{eq:stirlings}), yields that $\alpha(n)=\big(1+o(1)\big)\log n$, see also Theorem \ref{thm:alpha}, where we state an almost exact bound on $\alpha(n)$ due to Habib, Nourine, Raynaud, and Thierry \cite{HNRT}.

\begin{theorem}
For every $n\in\N$,\:\:$2\alpha(n)\le R(D_n,D_n)\le  \alpha(n)+\alpha(2n-1)$. \label{thm:DnDn}
In particular, $2\alpha(n)\le R(D_n,D_n)\le 2\alpha(n)+2$, and thus $R(D_n,D_n)=\big(2+o(1)\big)\log n$.
\end{theorem}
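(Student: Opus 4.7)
My plan for $R(D_n,D_n) \ge 2\alpha(n)$ is the standard layered construction. On the Boolean lattice $\QQ([2\alpha(n)-1])$, color a vertex $Z$ blue if $|Z|\le \alpha(n)-1$ and red otherwise; this produces exactly $\alpha(n)$ blue layers and $\alpha(n)$ red layers. Suppose for contradiction there were a blue copy of $D_n$ with bottom $A$, top $B$, and $n$-element antichain $\mathcal{A}$ strictly between them. Then $|A|,|B|\le \alpha(n)-1$, so $|B|-|A|\le \alpha(n)-1$, and $\mathcal{A}$ lies in the sublattice $\QQ|_A^B\cong Q_{|B|-|A|}$. By the definition of $\alpha(n)$, a Boolean lattice of dimension strictly less than $\alpha(n)$ contains no antichain of size $n$, contradiction. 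The red case is symmetric.

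\textbf{Upper bound, first steps.} Set $N=\alpha(n)+\alpha(2n-1)$ and fix any blue/red coloring of $\QQ([N])$. First I would fix a maximal chain $\varnothing=X_0\subsetneq X_1\subsetneq\cdots\subsetneq X_N=[N]$, which has $N+1$ vertices. Using Theorem \ref{thm:alpha}, which gives $\alpha(2n-1)\le \alpha(n)+2$, one sees that $N+1 > 2\bigl(\alpha(2n-1)-1\bigr)$. Hence the two color classes of the chain cannot both be confined to intervals of index-width less than $\alpha(2n-1)$, and some color class contains two chain vertices $X_i\subsetneq X_j$ with $|X_j|-|X_i|\ge \alpha(2n-1)$. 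Setting $A:=X_i$, $B:=X_j$, the sublattice $\QQ|_A^B$ has dimension at least $\alpha(2n-1)$, so its middle layer is an antichain of size at least $\binom{\alpha(2n-1)}{\lfloor\alpha(2n-1)/2\rfloor}\ge 2n-1$. Pigeonhole then produces $n$ pairwise incomparable vertices of a common color; if this color matches the color of $A$ and $B$, we obtain a monochromatic $D_n$ and are done.

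\textbf{Main obstacle and completion.} The hard part will be the case where the $n$-antichain inside $\QQ|_A^B$ turns out to carry the opposite color from $\{A,B\}$. My plan to handle this is to apply the chain argument of the previous paragraph not just once but over a well-chosen family of maximal chains (or equivalently, to both colors simultaneously), so as to produce "big pairs" of both colors. Concretely, when $A,B$ are red with a blue middle antichain $\mathcal{B}\subseteq \QQ|_A^B$ of size $n$, I would consider its common meet $L=\bigcap_{V\in\mathcal{B}} V$ and join $U=\bigcup_{V\in\mathcal{B}} V$ in $\QQ([N])$. If $L$ and $U$ are both blue, the vertices $L,U,\mathcal{B}$ form a blue $D_n$; if one of them is red, one extends the red pair $(A,B)$ to a longer red chain and re-runs the sublattice pigeonhole, again invoking $\alpha(2n-1)\le \alpha(n)+2$ to stay within the dimension budget. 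The technical challenge is to carry out this color bookkeeping without losing dimension in the sublattice, and I expect that one single additional appeal to $\alpha(2n-1)\le \alpha(n)+2$, combined with the observation that $L\supseteq A$ and $U\subseteq B$, makes the analysis go through. Once both bounds are in place, the consequences $R(D_n,D_n)\le 2\alpha(n)+2$ and $R(D_n,D_n)=(2+o(1))\log n$ follow from Theorem \ref{thm:alpha} and Proposition \ref{prop:binom}.
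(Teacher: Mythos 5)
Your lower bound construction is exactly the paper's (a layered coloring with $\alpha(n)$ layers per color) and is correct. The upper bound, however, has two genuine gaps. First, the arithmetic in the chain pigeonhole is short by a constant: if both color classes along a maximal chain of $\QQ([N])$ are confined to index intervals of width strictly less than $\alpha(2n-1)$, then the $N+1$ indices are covered by two intervals of at most $\alpha(2n-1)$ indices each, which is only ruled out when $N+1>2\alpha(2n-1)$, i.e.\ $\alpha(2n-1)\le\alpha(n)$. The inequality you derive, $N+1>2(\alpha(2n-1)-1)$, is weaker and does not exclude confinement: since $\alpha(2n-1)$ can equal $\alpha(n)+2$, one can have a chain whose blue indices lie in $\{0,\dots,\alpha(2n-1)-1\}$ and whose red indices lie in $\{\alpha(2n-1),\dots,N\}$, a red span of only $\alpha(n)-1=\alpha(2n-1)-3$. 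In that situation the best monochromatic sublattice $\QQ\big|_A^B$ has dimension $\alpha(2n-1)-1$, whose middle layer by definition of $\alpha$ has fewer than $2n-1$ vertices, so your pigeonhole no longer yields $n$ same-colored antichain elements.

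Second, the ``main obstacle'' step is not a proof but an aspiration, and the sketched fix does not clearly terminate. If $A,B$ are red and the $n$-antichain $\mathcal B$ inside $\QQ\big|_A^B$ is blue, and if $L=\bigcap\mathcal B$ turns out red, you obtain a longer red chain $A\subseteq L\subset B$ inside the \emph{same} interval; re-running the pigeonhole on $\QQ\big|_A^B$ produces the same blue antichain, and restricting to $\QQ\big|_L^B$ shrinks the dimension below $\alpha(2n-1)$, so neither restart makes progress. The paper sidesteps this by a structurally different argument: it fixes the two layers $\cS$ (at height $\lfloor\alpha(n)/2\rfloor$) and $\cT$ (at height $N-\lfloor\alpha(n)/2\rfloor$) together with the extreme vertices $\varnothing$ and $[N]$, and arranges the case analysis so that whenever a large antichain is extracted from a sublattice $\QQ\big|_X^Y$, it already has at hand a \emph{non-monochromatic pair} below $X$ (namely $\varnothing$ and $X$) and a non-monochromatic pair above $Y$ (namely $Y$ and $[N]$). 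Thus whichever color the $n$ majority elements of the antichain take, a same-colored cap above and below is guaranteed. That built-in two-sided flexibility is exactly what your single monochromatic pair $(A,B)$ from a maximal chain lacks, and what makes the paper's proof close.
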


\noindent Note that  for infinitely many values of $n$,\:\:$\alpha(2n-1)\le \alpha(n)+1$, in which case the above upper and lower bounds on $R(Q_n,Q_n)$ differ by $1$. In the so-called \textit{weak} Ramsey setting, almost tight bounds for the diamond poset were determined by Cox and Stolee~\cite{CS}.

For the next result, we need two further extremal parameters. Given $n,N\in\N$ with $N\ge \alpha(n)$, let $\beta(N,n)$ and $N^*(n)$ be integers with
$$\beta(N,n)=\min\big\{\beta: ~ \tbinom{N}{\beta}\ge n\big\}\ \text{ and }\ N^*(n)=\max\big\{N\ge\alpha(n): ~ N-\beta(N,n)< \alpha(n)\big\},$$
as illustrated in Figure \ref{fig:QnQn:Nstar}. 
Note that $\binom{N}{0}<n \le \binom{\alpha(n)}{\lfloor \alpha(n)/2\rfloor}\le \binom{N}{\lfloor \alpha(n)/2\rfloor}$, so $1\le \beta(N,n) \le \alpha(n)/2$.
Thus, $\alpha(n)\le N^*(n)< \beta(N^*(n),n)+\alpha(n)\le 2\alpha(n)$, so in particular, $\beta(N,n)$ and $N^*(n)$ are well-defined.

\begin{theorem}\label{thm:VnVn}
For every $n\in\N$,\:\:$N^*(n)+1\le R(V_n,V_n)\le N^*(n)+3$. Moreover, let $d=\frac{1}{1-c}$, where $c$ is the unique real solution of $\log\big(c^{-c}(1-c)^{c-1}\big)=1-c$, i.e., $d\approx 1.29$.
Then $R(V_n,V_n)=(d+o(1))\log n.$
\end{theorem}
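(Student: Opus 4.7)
The plan is to prove Theorem~\ref{thm:VnVn} in three stages: an explicit layered construction for the lower bound, a Dilworth-style counting argument for the upper bound, and a direct entropy calculation for the asymptotic.

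For the lower bound $R(V_n,V_n)\ge N^*(n)+1$, I would set $N=N^*(n)$ and $\beta=\beta(N,n)$, and color $\QQ([N])$ by the layered rule: a vertex $Z$ is blue if $|Z|<\beta$ and red if $|Z|\ge\beta$. A hypothetical blue copy of $V_n$ with minimum $Z$ would require an $n$-antichain of blue vertices strictly above $Z$; after shifting by $Z$, this becomes an antichain in levels $1,\ldots,\beta-1-|Z|$ of $Q_{N-|Z|}$. By the LYM inequality, the maximum antichain in such a truncated Boolean lattice equals its top level $\binom{N-|Z|}{\beta-1-|Z|}$, which is monotonically non-increasing in $|Z|$ and bounded above by $\binom{N}{\beta-1}<n$; hence no blue $V_n$ exists. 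A hypothetical red copy of $V_n$ with minimum $Z$ forces $|Z|\ge\beta$, whence the upper lattice $\{X:Z\subsetneq X\}\cong Q_{N-|Z|}$ has dimension at most $N-\beta<\alpha(n)$ and cannot contain any $n$-antichain; hence no red $V_n$ exists either.

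For the upper bound $R(V_n,V_n)\le N^*(n)+3$, I would set $N=N^*(n)+3$ and $\beta=\beta(N,n)$, so that $N-\beta\ge\alpha(n)$ and $\binom{N}{\beta}\ge n$, and assume toward a contradiction that some coloring of $\QQ([N])$ avoids both a blue and a red copy of $V_n$. The defining structure of $V_n$ translates into the local condition that for every blue $Z$ the blue vertices strictly above $Z$ have antichain width $<n$, and symmetrically for red. Taking $\varnothing$ to be blue without loss of generality, this forces the whole blue subposet to have antichain width $<n$, so by Dilworth's theorem the blue vertices are covered by at most $n-1$ chains, and each level of $\QQ([N])$ contains at most $n-1$ blue vertices. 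The $\beta$-th layer therefore contains a red vertex $r$, and applying the no-$V_n$ hypothesis at $r$ yields that the red vertices of $[r,[N]]$ are also covered by $\le n-1$ chains. Combining the blue and red chain covers inside $[r,[N]]$ with the symmetric argument applied at the top extreme $[N]$, and using the minimality in the definition of $\beta$, will yield a counting contradiction of the form $2^{N-\beta}\le 2(n-1)(N-\beta+1)$ that, sharpened by the extreme-point symmetry, forces $N-\beta<\alpha(n)$, contradicting the choice $N>N^*(n)$.

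For the asymptotic, I would parametrize $N=d\log n$ and $\beta=cN$ with $c\in(0,1/2)$. Proposition~\ref{prop:binom} gives $\log\binom{N}{\beta}=(1+o(1))H(c)N$, so the threshold condition $\binom{N}{\beta}\ge n$ forces $dH(c)=1$, while $N-\beta=\alpha(n)(1+o(1))=(1+o(1))\log n$ forces $d(1-c)=1$, i.e.\ $d=1/(1-c)$. Eliminating $d$ yields $H(c)=1-c$, which is exactly the equation $\log(c^{-c}(1-c)^{c-1})=1-c$ stated in the theorem, whose unique root in $(0,1/2)$ determines $c$ and hence $d\approx 1.29$. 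Combined with $N^*(n)+1\le R(V_n,V_n)\le N^*(n)+3$ from the first two steps, this gives $R(V_n,V_n)=(d+o(1))\log n$.

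The main obstacle will be matching the additive constant $+3$ in the upper bound. A naive Dilworth-plus-counting argument already shows $N-\beta\le\log n+O(\log\log n)$, which is enough for $R(V_n,V_n)=O(\log n)$ but is \emph{not} enough to contradict $N=N^*(n)+3$ directly, since $N-\beta$ could sit at $\alpha(n)(1+o(1))$ without crossing $\alpha(n)$. Tightening the additive constant to $+3$ requires exploiting the symmetry between the bottom extreme $\varnothing$ and the top extreme $[N]$, tracking the interaction between the blue chain cover at layer $\beta$ and the red chain cover inside the sublattice $[r,[N]]$, and absorbing the discrete gap $\binom{N}{\beta-1}<n\le\binom{N}{\beta}$ into the $+3$ slack. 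The combinatorial bookkeeping required here is the delicate step where the proof is most at risk of losing additive constants to case analysis on the colors of $\varnothing$ and $[N]$.
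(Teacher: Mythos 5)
Your lower bound is sound and essentially the same as the paper's (with colors swapped): all vertices of one color lie in layers $0,\dots,\beta-1$, so the largest monochromatic antichain of that color has size $\binom{N^*}{\beta-1}<n$, and the other color is confined to a sublattice of dimension $N^*-\beta(N^*,n)<\alpha(n)$. Your shifting-by-$Z$ and LYM detour are unnecessary — the direct antichain count is enough — but the argument is correct. The asymptotic derivation $dH(c)=1$ and $d(1-c)=1$ is also exactly what the paper does, noting that $H(c)=1-c$ is the equation $\log(c^{-c}(1-c)^{c-1})=1-c$.

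Your upper bound sketch contains the right structural ingredients but takes a wrong turn at the end, and you are chasing a target that is both wrong and unnecessary. The intermediate quantity the paper works with is $N_+$, the smallest $N$ with $N-\beta(N,n)\ge\alpha(2n-1)$ (not $\alpha(n)$). Their forward argument: WLOG $\varnothing$ is red; either all of layer $\beta_+$ is red (giving a red $V_n$ with $\varnothing$), or there is a blue $X$ in layer $\beta_+$, and the sublattice above $X$ has dimension $\ge\alpha(2n-1)$ and so contains a $(2n-1)$-antichain, whose majority color together with $X$ or $\varnothing$ gives the monochromatic $V_n$. Your contrapositive route almost recovers this: WLOG $\varnothing$ blue, the blue subposet above $\varnothing$ has width $\le n-1$, so some $r$ in layer $\beta$ is red; then in $[r,[N]]\cong Q_{N-\beta}$ the blue vertices have width $\le n-1$ (being above $\varnothing$) and the red vertices have width $\le n-1$ (being above $r$, plus $r$ itself which is comparable to everything), so the total width is $\le 2n-2$. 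But you must then invoke Sperner's theorem, not the Dilworth-plus-chain-counting bound $2^{N-\beta}\le 2(n-1)(N-\beta+1)$: since the width of $Q_{N-\beta}$ is $\binom{N-\beta}{\lfloor(N-\beta)/2\rfloor}\le 2n-2<2n-1$, one gets $N-\beta<\alpha(2n-1)$ exactly. Your worry about "$N-\beta$ sitting at $\alpha(n)(1+o(1))$ without crossing $\alpha(n)$" is a symptom of using the wrong bound; the Sperner bound hits $\alpha(2n-1)$ on the nose. Your proposed sharpening to $N-\beta<\alpha(n)$ via "extreme-point symmetry" is not achievable (and would imply $R(V_n,V_n)\le N^*(n)+1$, stronger than the theorem claims). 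Finally, you still need the purely arithmetic reduction $N_+\le N^*(n)+3$, which the paper handles via $\beta(N^*+3,n)\le\beta(N^*+1,n)$ and $(N^*+1)-\beta(N^*+1,n)\ge\alpha(n)$ together with $\alpha(2n-1)\le\alpha(n)+2$; your proposal omits this step entirely.
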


\noindent Similarly to Theorem \ref{thm:DnDn}, by using more careful estimates, the proof of Theorem \ref{thm:VnVn} provides that $R(V_n,V_n)\le N^*(n)+2$, whenever $\alpha(2n-1)\le \alpha(n)+1$.


A variant of the induced poset Ramsey number $R(P,Q)$, which is based on induced copies of $P$ and $Q$, is the \textit{weak poset Ramsey number}, which deals with \textit{weak copies}.
Recall the following definitions.
Let $P$ and $Q$ be two posets. 
A \textit{homomorphism} \index{homomorphism} $\psi\colon P\to Q$ is a function such that for any two vertices $X$ and $Y$ in $P$,
$X\le_P Y$ implies that $\psi(X)\le_Q\psi(Y)$. In this setting, we allow that $\psi(X)\le_Q \psi(Y)$ even if $X\not \le_P Y$.
We say that $\psi$ is a \textit{weak embedding}\index{weak embedding} if it is an injective homomorphism.
The image of $\psi$ is a \textit{weak copy}\index{weak copy} of $P$ in $Q$.
The \textit{weak poset Ramsey number}\index{weak poset Ramsey number} of posets $P$ and $Q$ is
\begin{multline*}
\Rw (P,Q)=\min\{N\in\N : ~ \text{every blue/red coloring of $Q_N$ contains either}\\ 
\text{a blue weak copy of $P$ or a red weak copy of $Q$}\}.
\end{multline*}

It is a basic observation that $\Rw (P,Q)\le R(P,Q)$ for any posets $P$ and $Q$.
The best known bounds in the diagonal setting $P=Q=Q_n$ are a lower bound by Cox and Stolee \cite{CS} and an upper bound by Lu and Thompson \cite{LT}, stating that
$$2n+1\le \Rw(Q_n,Q_n)\le R(Q_n,Q_n)\le n^2-n+2.$$
Moreover, in the off-diagonal setting Gr\'osz, Methuku, and Tompkins \cite{GMT} showed that
$\Rw (Q_m,Q_n)\ge m+n+1$ for $m\ge 2$ and $n\ge 68$, and $\Rw(Q_m,Q_n)\le n+2^m-1$, where the second bound is derived from a result by Cox and Stolee \cite{CS}.

Our final result is an improvement of the upper bound on $\Rw(Q_n,Q_n)$. 

\begin{theorem}\label{thm:weakQnQn}
For sufficiently large $n$,\:\:$\Rw (Q_n,Q_n)\le 0.96n^2$.
\end{theorem}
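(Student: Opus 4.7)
The plan is to improve the Lu--Thompson recursive embedding, which establishes $\Rw(Q_n, Q_n) \le n^2 - n + 2$, by systematically exploiting the slack between induced and weak copies. In a weak copy $\{X_S : S \subseteq [n]\}$ of $Q_n$, incomparable pairs of $Q_n$ may be mapped to comparable host vertices, so each new ``branch'' of the growing copy requires substantially fewer fresh coordinates than in the induced case. The savings accrue linearly per stage of the embedding, so they accumulate to a positive fraction of $n^2$.

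Set $N = \lfloor 0.96\, n^2 \rfloor$ and fix a blue/red coloring of $Q_N = \QQ(\bZ)$ that contains no red weak $Q_n$; the goal is to produce a blue weak $Q_n$. I would proceed by induction on $i = 0, 1, \dots, n$, maintaining at stage $i$ a blue weak $Q_i$ realized by vertices $\{X_S : S \subseteq [i]\} \subseteq \QQ(\bZ)$, together with a reserve of unused coordinates whose size shrinks by about $0.96\, n$ per stage. To extend from stage $i-1$ to stage $i$, apply the Chain Lemma (Lemma \ref{lem:chain}) in the sublattice $\QQ\big|_{X_{[i-1]}}^{X_{[i-1]} \cup \bZ_{i-1}}$, partitioning the reserve $\bZ_{i-1}$ into a small ``frame'' part $\bX$ and a larger ``chain'' part $\bY$. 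The lemma either supplies the blue $\bY$-chains needed to place the new layer of the weak copy, or yields a red $\bX$-good copy of $\QQ(\bX)$ of the appropriate dimension.

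The decisive saving over the induced case is that the blue vertices forming the new layer of the weak $Q_n$ need only be comparable \emph{upward} to $X_{[i-1]}$; mutual comparabilities among them, and incidental comparabilities with the already-placed $X_S$ for $S \not\subseteq [i-1]$, are harmless. This reduces the required blue-chain length at stage $i$ from roughly $n$ to roughly $0.96\, n$, and summing over $n$ stages gives the total budget $0.96\, n^2$. The principal obstacle is controlling the red alternative: the red sublattices produced in failed stages live on disjoint coordinate blocks, and they must be stitched into a single red weak $Q_n$, contradicting the hypothesis. This requires choosing the frame coordinates at each stage so that the red pieces ``align''; the Embedding Lemma (Lemma \ref{lem:embed}) is then invoked to identify the aligned ground elements and package the pieces into one red weak copy. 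Optimizing the trade-off between the blue-extension budget and the red-aggregation threshold reduces to a one-variable optimization, whose optimum corresponds to the constant $0.96$ in the theorem.
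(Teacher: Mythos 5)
Your proposal is a genuinely different strategy from the paper's, but as written it has at least two substantive gaps that I cannot see how to close.

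The paper does not do a recursive, stage-by-stage extension. It fixes a one-shot decomposition of $Q_N$ into a ``sausage chain'': a nested sequence of half-open intervals $\cS_0 < \cS_1 < \dots < \cS_{\qn} < \cB < \cS'_{\qn} < \dots < \cS'_0$, where each sausage $\cS_i$ lives on only $\lceil \log\binom{n}{i}\rceil + 1$ fresh coordinates (so that it has size $\ge 2\binom{n}{i}-1$), and $\cB$ is the remaining large middle Boolean lattice. The weak copy it builds belongs to an explicit class $\cP(n,s,t)$ whose outer layers are \emph{arbitrary} posets of the right cardinalities, stacked in series order, with only the middle part being an honest truncated copy of $Q_n$. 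The pigeonhole step is that among the $2(\qn+1)$ sausages, at least $\qn+1$ have the same majority color, and a sausage of majority color red supplies, all by itself, an entire outer layer. The truncated Blob Lemma is then applied exactly once, to $\cB$, which has dimension $\ge (n - qn + 2)n$. The constant $0.96$ comes from minimizing $(1-q)n^2 + 2n^2\int_0^q H(s)\,ds$, where $H$ is the binary entropy function; the minimizer satisfies $H(q) = 1/2$ and gives roughly $0.956$.

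Against this, your plan has two gaps. First, the assertion that weakness ``reduces the required blue-chain length at stage $i$ from roughly $n$ to roughly $0.96\,n$'' is not justified anywhere, and I do not see where such a per-stage saving could come from in a chain-lemma framework. The standard blob/chain argument spends $\approx n$ fresh coordinates per layer to ensure that each new vertex has a red option; making the target copy merely weak relaxes which incomparabilities you must preserve, but does not obviously reduce the dimension of the blob you need at each step. The paper's saving comes from a structurally different mechanism: the outer $\qn$ layers never pass through a blob at all, but instead live in logarithmically small intervals, and that logarithmic compression, integrated over $i = 1, \dots, qn$, is exactly the quantity $2n\sum_i H(i/n) \approx 2n^2\int_0^q H$. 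Your stage-by-stage accounting never produces a $\log\binom{n}{i}$ term, so the optimization ``whose optimum corresponds to the constant $0.96$'' has no computation behind it. Second, the red-aggregation step (``choosing the frame coordinates at each stage so that the red pieces `align'\,'') is not a legitimate appeal to the Embedding Lemma: that lemma identifies the coordinate set of a single given induced copy of $Q_n$, it does not stitch together disjoint red sub-Boolean-lattices on unrelated coordinate blocks into one weak $Q_n$. As stated, this step would need its own new argument, and the natural attempt runs into the same difficulty that makes $R(Q_n,Q_n)$ hard in the first place.

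In short, the overall shape of your argument (incremental chain-lemma extension plus red aggregation) is closer in spirit to the Lu--Thompson $n^2 - n + 2$ argument for the induced case than to the paper's sausage-chain decomposition, and the two places where you claim to beat $n^2$ by a constant factor are precisely the places left unspecified. I would not accept the proposal without a concrete derivation of both the per-stage saving and the aggregation mechanism.
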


The structure of this chapter is as follows.
In Section \ref{sec:QnQn:prelim}, we introduce some notation and definitions, and discuss the \textit{Blob Lemmas}.
In Section \ref{sec:QnQn:QmQn}, we present a proof of Theorem \ref{thm:QmQn}. 
In Section \ref{sec:QnQn:DnVn}, we prove Theorems \ref{thm:DnDn} and \ref{thm:VnVn}.
A proof of Theorem \ref{thm:weakQnQn} is given in Section \ref{sec:QnQn:weak}.
The results of this chapter are joint work with Maria Axenovich and included in the arXiv preprint \cite{QnQn}, which is currently under peer-review.
\\

\section{Notation and preliminary results}\label{sec:QnQn:prelim}


Recall that $\QQ(\bZ)$ is the Boolean lattice on ground set $\bZ$ with \textit{dimension} $|\bZ|$.
Given a set~$\bZ$ with disjoint subsets $S,T\subseteq \bZ$, we define a \textit{blob}\index{blob} in a Boolean lattice $\QQ(\bZ)$ as 
$$\cB(S;T)=\{Z\subseteq \bZ : ~ S\subseteq Z\subseteq S\cup T\}.$$
We call $T$ the \textit{variable set} of this blob.
Note that $\cB(S;T)$ is a copy of a Boolean lattice of dimension $|T|$. We say that $|T|$ is the \textit{dimension} of the blob. 
We remark that every $Z\in\cB(S;T)$ has the form $S\cup T_Z$ where $T_Z\subseteq T$.

A \textit{$t$-truncated blob}\index{truncated blob}, denoted $\cB(S; T; t)$, is the poset $\{ Z \in \cB(S; T): ~ |Z\setminus S| \leq t\}$. We also say that $\cB(S;T; t)$ has \textit{dimension} $|T|$.
Given a Boolean lattice $\QQ(\bX)$ on ground set~$\bX$ and a non-negative integer $t$ with $t\leq |\bX|$,  let $\QQ(\bX)^{t}$ denote the \textit{$t$-truncated Boolean lattice}\index{truncated Boolean lattice}, 
that is the subposet $\{Z\in\QQ(\bX):~ |Z|\le t\}=\cB(\varnothing; \bX; t)$. 
Given two non-negative integers $s$ and $t$ with $0\leq s\leq t  \leq |\bX|$,   let $\QQ(\bX)_{s}^t$ denote the \textit{$(s,t)$-truncated Boolean lattice}, 
that is the subposet $\{Z\in\QQ(\bX):~  s \leq |Z|\le t\}$. In particular, $\QQ(\bX)^{t} = \QQ(\bX)^t_0$. 

For $\ell\in\{0,\dots,|\bZ|\}$, recall that \textit{layer $\ell$}\index{layer} of $\QQ(\bZ)$ is the set $\{X\in\QQ(\bZ):~ |X|=\ell\}$.
Similarly, the \textit{layer $\ell$} of $\QQ(\bZ)^{t}$ is $\{X\in\QQ(\bZ)^{t}:~ |X|=\ell\}$ for $0\le \ell\le t$.

Let $\bX$ and $\bY$ be disjoint, non-empty sets. Let $\phi\colon \QQ(\bX)^t\to \QQ(\bX\cup\bY)$ be an embedding.
Recall that $\phi$ is \textit{$\bX$-good}\index{$\bX$-good function} if $\phi(X)\cap\bX=X$ for every $X\in\QQ(\bX)^{t}$. 
Note that any $t$-truncated blob $\cB(S; \bX; t)$ in $\QQ(\bX\cup\bY)$, where $S\subseteq \bY$, is the image of the $\bX$-good embedding $\phi\colon \QQ(\bX)^t\to \QQ(\bX\cup\bY)$
defined by $\phi(X)=S\cup X$.

We say that $\phi$ is \textit{red} if its image is a red poset, i.e., $\phi$ maps only to red vertices, and \textit{blue} if its image is a blue poset.
If $\phi$ is an embedding of a poset $P$ into a Boolean lattice, we use the notation $\phi(P)$ for the set $\{\phi(X): ~X\in P\}$. 
For a subposet $\cF$ of a Boolean lattice, we say that the \textit{volume}\index{volume} of  $\cF$, denoted $\Vol(\cF)$, is the total number of ground elements in all vertices of $\cF$, i.e., $\Vol(\cF) = |\bigcup_{X\in \cF} X|$.
We shall use the notion of the volume to keep track of the number of ground elements in the image of an embedding, constructed iteratively. 
\\

In order to show an upper bound on $R(Q_m,Q_n)$, we have to find a blue copy of $Q_m$ or a red copy of $Q_n$ in every blue/red coloring of a host Boolean lattice.
Kierstead and Trotter \cite{KT} introduced the following proof technique in a related setting:
In the host Boolean lattice, we define many pairwise disjoint \textit{blobs}, arranged in a product structure. 
If any blob is monochromatically blue, we obtain a blue copy of $Q_m$. Otherwise, we find a red copy of $Q_n$ by choosing one red vertex in each blob.
This proof idea was utilized for previous improvements of the upper bound on $R(Q_m,Q_n)$, see Lemma 3 in \cite{AW} and Lemma 1 in \cite{LT}.
Using our notation, let us briefly reiterate this basic approach.

\begin{lemma}[Blob Lemma; Axenovich-Walzer \cite{AW}] \label{lem:blob_restated}
Let $n,m\in\N$ and $N=nm+n+m$.
Any blue/red coloring of $\QQ([N])$ contains a blue copy of $Q_m$ or a red copy of $Q_n$. 
\end{lemma}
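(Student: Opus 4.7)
The plan is to construct a product-structure decomposition of $\QQ([N])$ into $2^m$ pairwise disjoint blobs, indexed by the vertices of $Q_m$, so that either one blob already provides a red copy of $Q_n$ or else a single blue vertex taken from each blob assembles into a blue copy of $Q_m$. First I would partition $[N]$ into an $m$-element set $\bX$, identified with $[m]$ (the intended ground set of the target $Q_m$), together with $m+1$ further disjoint blocks $\bY_0, \bY_1, \ldots, \bY_m$, each of size $n$. Since $m + (m+1)n = nm+n+m = N$, such a partition exists. For each $X \subseteq [m]$ with $|X|=k$, define the blob
\[
\cB_X = \cB(U_X;\, \bY_k), \qquad U_X = X \cup \bigcup_{j<k} \bY_j,
\]
so that $\cB_X$ is a copy of a Boolean lattice of dimension $|\bY_k|=n$.

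The key structural step, to be verified directly from these definitions, is that the family $\{\cB_X : X \subseteq [m]\}$ faithfully encodes the poset $Q_m$ at the ``macro'' level. Namely: (i) the blobs are pairwise disjoint, since every $Z\in\cB_X$ satisfies $Z\cap\bX = X$; (ii) for $X\subsetneq X'$ with $|X|=k<k'=|X'|$, every vertex of $\cB_X$ is a proper subset of every vertex of $\cB_{X'}$, because on $\bX$ one has $X\subsetneq X'$, on each $\bY_j$ with $j\le k-1$ both vertices contain all of $\bY_j$, on $\bY_k$ the blob $\cB_{X'}$ already contains all of $\bY_k$ (using $k\le k'-1$) whereas $\cB_X$-vertices lie inside $U_X\cup\bY_k$, and on $\bY_j$ with $j>k$ the $\cB_X$-vertex is empty; and (iii) for incomparable $X,X'$, the $\bX$-parts are already incomparable, hence any selected vertices are incomparable.

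With this setup in place, the proof reduces to a clean case distinction. If some blob $\cB_X$ is monochromatically red, it is itself a red copy of $Q_n$ and we are done. Otherwise, each blob contains at least one blue vertex; pick any such $\phi(X)\in\cB_X$ arbitrarily and independently for every $X\subseteq [m]$. By (ii) and (iii), the resulting map $\phi\colon Q_m\to \QQ([N])$ is an embedding (comparability is forced by (ii), incomparability by (iii)), and its image is blue by construction. The only delicate point is the design of the blob indices, i.e.\ the choice of variable set $\bY_{|X|}$ together with the base $U_X = X\cup\bigcup_{j<|X|}\bY_j$: this is what simultaneously makes the variable sets across nested blobs use \emph{different} $\bY_k$'s and guarantees that each $\bY_{|X|}$ is fully absorbed into the base $U_{X'}$ of every strictly larger blob. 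I expect this structural design to be the main obstacle; once it is in place, the rest is essentially bookkeeping.
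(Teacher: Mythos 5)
Your proof is correct and follows the same blob-decomposition scheme as the paper's, with the roles of $m$ and $n$ swapped: you index the $2^m$ blobs of dimension $n$ by subsets of a size-$m$ set $\bX$ and assemble a blue $Q_m$ from blue representatives, whereas the paper indexes $2^n$ blobs of dimension $m$ by subsets of a size-$n$ set and assembles a red $Q_n$ from red representatives. Both work because $N=nm+n+m$ can be split either as $m+(m+1)n$ or as $n+(n+1)m$; the structural verifications (i)--(iii) are exactly the dual of what the paper checks (the paper shortcuts (ii)/(iii) via its Proposition on $\bX$-good homomorphisms, which you could also have invoked since your $\phi$ is $\bX$-good by (i)).
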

\begin{proof}
Partition $[N]$ arbitrarily into sets $\bX, \bY^{(0)}, \bY^{(1)},\dots, \bY^{(n)}$ such that $|\bX|=n$ and $|\bY^{(i)}|=m$, $i\in\{0,\dots,n\}$.
We construct a red embedding $\phi\colon \QQ(\bX)\to\QQ([N])$.
Let $\cB_\varnothing=\cB(\varnothing,\bY^{(0)})$.
For each $X\in \QQ(\bX)$, $X\neq\varnothing$, consider the blob $$\cB_X=\cB\left(X\cup \bigcup_{i=0}^{|X|-1} \bY^{(i)} ;\bY^{(|X|)}\right).$$
If one of the blobs is monochromatically blue, it is a blue copy of $Q_m$, as desired.

Suppose that there is a red vertex $Z_X\in\cB_X$ for every $X\in \QQ(\bX)$.
Then the function $\phi\colon \QQ(\bX)\to\QQ([N])$ with $\phi(X)=Z_X$ has a red image.
Observe that $\phi(X)\cap\bX=X$ for every $X\subseteq\bX$, i.e., $\phi$ is $\bX$-good.
Moreover, for any two $X,Y\in \QQ(\bX)$ with $X\subseteq Y$, we see that $\phi(X)=Z_X\subseteq Z_Y=\phi(Y)$.
Thus, $\phi$ is an $\bX$-good homomorphism. By Proposition~\ref{prop:good_embedding}, $\phi$ is an embedding. Therefore, there is a red copy of $Q_n$.
\end{proof}

The general proof idea for our bound on $R(Q_m,Q_n)$ is to refine Lemma \ref{lem:blob_restated} by 
considering truncated blobs instead of blobs, moreover those chosen based  on already embedded layers.
In addition, we control the volume of truncated blobs while constructing the embedding.
For this, we need parts (i), (ii), and (iii) of the following variant of Lemma \ref{lem:blob_restated}.
Part (iv) is applied in the final part of this chapter to achieve an upper bound on $\Rw(Q_n,Q_n)$.

\begin{lemma}[Truncated Blob Lemma]\label{lem:truncatedCompletion}
Let $n,m,t, N\in\N$. Fix a blue/red coloring of the Boolean lattice $\QQ([N])$. Let $\bX\subseteq [N]$.
\vspace*{-1em}
\begin{enumerate}
\item[(i)] If $|\bX|=n$, $t\leq n$, and there is a red, $\bX$-good embedding $\phi\colon \QQ(\bX)^t\to \QQ([N])$ such that $\Vol({\phi}(\QQ(\bX)^t)) \le N -(n-t)m$,
then $\QQ([N])$ contains a blue copy of $Q_m$ or a red copy of $Q_n$.
\item[(ii)] If $|\bX|=m$, $t\leq m$,  and there is a blue, $\bX$-good embedding $\phi\colon \QQ(\bX)^t\to \QQ([N])$ such that $\Vol({\phi}(\QQ(\bX)^t)) \le N -(m-t)n$,
then $\QQ([N])$ contains a blue copy of $Q_m$ or a red copy of $Q_n$.
\item[(iii)] If $|\bX|=n$, $t\leq n$,  and there is a set $S$ disjoint from $\bX$ and a red truncated blob $\cB(S; \bX; t)$ such that $|S\cup \bX|\leq N-(n-t)m$, then there is a blue copy of $Q_m$ or a red copy of $Q_n$.
\item[(iv)] If  $0\leq s\leq t \leq n$, and $N= (t-s+2)n$, then $\QQ([N])$ contains  a blue copy of $Q_n$ or a red copy of 
$\QQ([n])^{t}_{s}$, i.e., a red copy of the middle layers of $Q_n$.
\end{enumerate}
\end{lemma}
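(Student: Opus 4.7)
The plan is to lift the Blob Lemma (Lemma~\ref{lem:blob_restated}) into a relative statement: given a partial monochromatic embedding, complete it by placing fresh blobs on the unused portion of the ground set. Parts (ii) and (iii) will then reduce to (i). For (ii) one swaps the two colors and the roles of $m$ and $n$; for (iii) one observes that the inclusion $X \mapsto S \cup X$ is a red, $\bX$-good embedding of $\QQ(\bX)^t$ with image $\cB(S;\bX;t)$ and volume exactly $|S \cup \bX|$, so (iii) is the special case of (i) in which $\phi$ is this inclusion.

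The proof of (i) will go as follows. Let $\bU = \bigcup_{X \in \QQ(\bX)^t} \phi(X)$; the volume bound ensures $|[N] \setminus \bU| \ge (n-t)m$, so one can pick pairwise disjoint sets $\bY^{(t+1)}, \dots, \bY^{(n)} \subseteq [N]\setminus \bU$ each of size $m$. For every $Y \subseteq \bX$ with $|Y| = \ell > t$, set $\Phi(Y) = \bigcup_{X \subseteq Y,\, |X| = t}\phi(X)$ and define the dimension-$m$ blob
\[
\cB_Y \;=\; \cB\!\left(\Phi(Y) \cup \bigcup_{i=t+1}^{\ell-1} \bY^{(i)};\; \bY^{(\ell)}\right).
\]
If some $\cB_Y$ is monochromatically blue, it is the desired copy of $Q_m$; otherwise, pick a red vertex $Z_Y \in \cB_Y$ for each such $Y$ and extend $\phi$ by $\phi(Y) := Z_Y$. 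The extended map is a homomorphism: for $X \subseteq Y$ with $|X| \le t < |Y|$, any $t$-subset lying between $X$ and $Y$ shows $\phi(X) \subseteq \Phi(Y) \subseteq Z_Y$, and for $t < |X| < |Y|$ one uses $\Phi(X) \subseteq \Phi(Y)$ together with the layer-indexed inclusion of the $\bY^{(i)}$. Moreover $\phi$ remains $\bX$-good, because $\bY^{(i)} \cap \bX = \varnothing$ and because $\Phi(Y)\cap \bX = Y$ whenever $|Y| \ge t$ (each $\phi(X')$ meets $\bX$ in exactly $X'$, and the union of all $t$-subsets of $Y$ recovers $Y$). Proposition~\ref{prop:good_embedding} then promotes $\phi$ to an embedding of $\QQ(\bX)$ whose image is a red copy of $Q_n$.

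For (iv), the same blob construction will be applied directly on the middle layers $\QQ(\bX)_s^t$. With $|\bX| = n$, the set $[N]\setminus \bX$ of size $(t-s+1)n$ is partitioned into $\bY^{(s)}, \dots, \bY^{(t)}$ of size $n$, and for each $X$ with $|X| = \ell \in \{s,\dots,t\}$ one takes the dimension-$n$ blob $\cB(X \cup \bigcup_{i=s}^{\ell-1}\bY^{(i)};\, \bY^{(\ell)})$. Either some blob is blue, producing a blue $Q_n$, or selecting one red vertex per blob defines an $\bX$-good homomorphism $\QQ(\bX)_s^t \to \QQ([N])$ which, by Proposition~\ref{prop:good_embedding}, is automatically an embedding whose image is a red copy of $\QQ([n])^t_s$. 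The main technical point throughout is maintaining $\bX$-goodness so that Proposition~\ref{prop:good_embedding} can upgrade homomorphism to embedding; beyond this bookkeeping, the arguments are essentially reorganizations of the Blob Lemma and present no serious obstacle.
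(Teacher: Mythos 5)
Your proposal is correct and follows essentially the same strategy as the paper: extend the partial embedding layer by layer, attaching a fresh dimension-$m$ blob for each missing vertex $Y$, and invoking Proposition~\ref{prop:good_embedding} to upgrade an $\bX$-good homomorphism to an embedding. The only real deviation is in part (i), where you take the blob base to be $\Phi(Y)=\bigcup_{X\subseteq Y,\,|X|=t}\phi(X)$ rather than the paper's $X\cup(\bU\setminus\bX)$; your choice is slightly more economical in ground elements (and relies on $t\ge 1$, which the hypothesis $t\in\N$ provides, so that the $t$-subsets of $Y$ cover $Y$ and hence $\Phi(Y)\cap\bX=Y$), but both variants give the same conclusion, and your reduction of (ii) and (iii) to (i) and your direct argument for (iv) coincide with the paper's.
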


\begin{proof}
\textbf{Part (i):} We shall extend $\phi$ to a red embedding $\phi'\colon \QQ(\bX)\to \QQ([N])$.
As in the proof of Lemma \ref{lem:blob_restated}, we select pairwise disjoint sets of ground elements, and use them to define a blob for each not yet embedded $X\in\QQ(\bX)$.
Let $\bU = \bigcup_{X\in\QQ(\bX)^t} \phi(X)$ and note that $|\bU|= \Vol(\phi(\QQ(\bX)^t))$.
Since $\phi$ is $\bX$-good,  $\bX\subseteq \bU$. Thus, $$| [N]\setminus (\bU\cup \bX) |=|[N]\setminus \bU|=N - \Vol(\phi(\QQ(\bX)^t))\ge (n-t)m.$$
Fix $n-t$ pairwise disjoint $m$-element subsets $\bY^{(t+1)},\dots,\bY^{(n)}$ of $[N]\setminus \bU$.
For every $X\in\QQ(\bX)$ with $|X|>t$, consider the blob
$$\cB_X=\cB\left(X\cup (\bU\setminus \bX) \cup \bigcup_{i=t+1}^{|X|-1}\bY^{(i)}; \bY^{(|X|)}\right),$$
where we use the convention that $\bigcup_{i=0}^{-1}\bY^{(i)}=\varnothing$.
If $\cB_X$ is blue, it is a blue copy of $Q_m$, so suppose that there is a red vertex $Z_X\in\cB_X$.
Let $\phi'\colon \QQ(\bX)\to \QQ([N])$ with
$$\phi'(X)=\begin{cases}\phi(X) &\text{ if }|X|\le t,\\ Z_X & \text{ if }|X|> t.\end{cases}$$
The image of $\phi'$ is red. 
We shall verify that $\phi'$ is an embedding. Note that $\phi'(X)\cap\bX=X$ for every $X\in\QQ(\bX)$.
Let $X_1, X_2\in\QQ(\bX)$ with $X_1\subseteq X_2$.
\vspace*{-1em}
\begin{itemize}
\item If $|X_1|\le |X_2|\le t$, then $\phi'(X_1)=\phi(X_1)\subseteq \phi(X_2)=\phi'(X_2)$, because $\phi$ is an embedding.

\item If $|X_1|\le t < |X_2|$, then $\phi'(X_1)=X_1\cup (\phi(X_1)\setminus\bX)\subseteq X_2 \cup (\bU\setminus\bX) \subseteq Z_{X_2}=\phi'(X_2)$.

\item If $t<|X_1|\le|X_2|$, then $\phi'(X_1)\in \cB_{X_1}$, so $\phi'(X_1)\subseteq X_1\cup (\bU\setminus\bX) \cup \bY^{(t+1)}\cup\dots\cup \bY^{(|X_1|)}\subseteq Z_{X_2}=\phi'(X_2)$.
\end{itemize}
Therefore, $\phi'$ is an $\bX$-good homomorphism, so by Proposition \ref{prop:good_embedding}, $\phi'$ is an embedding.
\\

\noindent \textbf{Part (ii):} This part is proven analogously to part (i).\\

\noindent \textbf{Part (iii):} Observe that $\cB(S; \bX; t)$ is the image of an $\bX$-good embedding $\phi\colon \QQ(\bX)^t\to \QQ(S\cup\bX)$, $\phi(X)=S\cup X$, with $\Vol({\phi}(\QQ(\bX)^t)=|S\cup\bX|$, so the claim follows from part (i).
\\

\noindent \textbf{Part (iv):} 
Let $\bX=[n]$. Choose pairwise disjoint, $n$-element subsets $\bY^{(s)},\dots,\bY^{(t)}$ of $[N]\setminus \bX$.
For each $X\in\QQ(\bX)$ with $s\leq |X|\leq t$, we define a blob 
$$\cB_X=\cB\left(X\cup  \bigcup_{i=s}^{|X|-1}\bY^{(i)};\bY^{(|X|)}\right),$$
where $ \bigcup_{i=s}^{|X|-1}\bY^{(i)} = \varnothing$ for $|X|=s$.
If $\cB_X$ is blue, it corresponds to a blue copy of $Q_n$. 
If there is a red vertex $\phi(X)$ in every $\cB_X$, then $\{\phi(X): X\in \bX\}$ is a red copy of $\QQ(\bX)^{t}_{s}$.
\end{proof}
\bigskip

\section{Upper bound on Ramsey number  $R(Q_m,Q_n)$}\label{sec:QnQn:QmQn}
First, we need the following computational lemma.
\begin{lemma}\label{prop:epsilon}
Let $2^{25}\le m \le n$. Let $\varepsilon=\tfrac{1}{\sqrt{\log m}}$. Then $\tfrac{n+m}{n}\cdot \tfrac{1}{(1-\varepsilon)\log m}+m^{-\varepsilon}\le \varepsilon.$
\end{lemma}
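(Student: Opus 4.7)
The plan is to reduce the inequality to a single-variable estimate by exploiting the two bounds $n \ge m$ and $\varepsilon = 1/\sqrt{\log m}$. Since $m \le n$, we have $\frac{n+m}{n} \le 2$, so it suffices to verify
$$\frac{2}{(1-\varepsilon)\log m} + m^{-\varepsilon} \le \varepsilon.$$
Writing $L = \log m \ge 25$ and substituting $\varepsilon = 1/\sqrt{L}$, the left-hand side becomes $\frac{2}{(1 - 1/\sqrt{L})L} + 2^{-\sqrt{L}}$ and the right-hand side becomes $1/\sqrt{L}$. Multiplying both sides by $\sqrt{L}$, the task reduces to showing
$$\frac{2}{\sqrt{L}-1} + \sqrt{L}\cdot 2^{-\sqrt{L}} \le 1 \quad \text{for all } L \ge 25.$$

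I would then bound each of the two summands on the left separately. For the first term, $\sqrt{L} \ge 5$ immediately gives $\frac{2}{\sqrt{L}-1} \le \frac{2}{4} = \frac{1}{2}$, and this term is monotonically decreasing in $L$, so it stays at most $\tfrac12$ throughout the range. For the second term, one checks that the function $x \mapsto x \cdot 2^{-x}$ has derivative $2^{-x}(1 - x \ln 2)$, which is negative for $x > 1/\ln 2 \approx 1.44$. Hence $\sqrt{L}\cdot 2^{-\sqrt{L}}$ is decreasing for $\sqrt{L} \ge 5$, and its maximum on $L \ge 25$ is attained at $\sqrt{L}=5$, giving $5 \cdot 2^{-5} = \tfrac{5}{32} \le \tfrac12$.

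Adding the two bounds yields $\tfrac12 + \tfrac{5}{32} = \tfrac{21}{32} \le 1$, which establishes the reduced inequality and thus the original claim. There is essentially no obstacle here: the main (very mild) subtlety is checking the monotonicity of $x\cdot 2^{-x}$ so that evaluating at $\sqrt{L}=5$ genuinely bounds the whole range $L \ge 25$. The hypothesis $m \ge 2^{25}$ is precisely calibrated so that $\sqrt{L} \ge 5$, which is exactly what is needed to make both summands small enough.
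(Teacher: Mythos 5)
Your proof is correct and takes essentially the same approach as the paper: both bound $\tfrac{n+m}{n}\le 2$, use $\varepsilon=1/\sqrt{\log m}\le 1/5$, and show each of the two summands is at most half the right-hand side. The only cosmetic difference is in the second term, where the paper uses the inequality $1/\varepsilon\ge 1-\log\varepsilon$ to get $m^{-\varepsilon}\le\varepsilon/2$ directly, while you instead check the monotonicity of $x\cdot 2^{-x}$ and evaluate at $\sqrt{L}=5$; both are valid and equally simple.
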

\begin{proof}
The bound $m\ge 2^{25}$ implies that $\varepsilon=\tfrac{1}{\sqrt{\log m}}\le \tfrac{1}{5}$,
so in particular, $4\varepsilon\le 1-\varepsilon$.
Since $m\le n$ and $\log m=\varepsilon^{-2}$, we obtain that
$$
\frac{n+m}{n}\cdot\frac{1}{(1-\varepsilon)\log m}\le \frac{2n}{n}\cdot\frac{\varepsilon^2}{1-\varepsilon}
=\frac{\varepsilon}{2}\cdot \frac{4\varepsilon}{1-\varepsilon}
\le \frac{\varepsilon}{2}.
$$
For $\varepsilon\le \tfrac{1}{5}$, it is straightforward to check that $\tfrac{1}{\varepsilon}\ge 1-\log \varepsilon$. 
Thus, using again that $\log m=\varepsilon^{-2}$,
$$m^{-\varepsilon}=2^{-\varepsilon \log m}=2^{-\frac{1}{\varepsilon}}\le 2^{-1+\log \varepsilon}= \frac{\varepsilon}{2}.$$
Therefore,
$$\frac{n+m}{n}\cdot \frac{1}{(1-\varepsilon)\log m}+m^{-\varepsilon}\le \frac{\varepsilon}{2} + \frac{\varepsilon}{2} = \varepsilon.$$
\end{proof}

Next, we show our main result.

\begin{proof}[Proof of Theorem \ref{thm:QmQn}]
Fix $n$ and $m$ such that $n\ge m$. Fix an $\varepsilon\in\R$ with $0<\varepsilon<1$ which satisfies
\begin{equation}\label{eq:epsilon_condition}
\frac{n+m}{n}\cdot \frac{1}{(1-\varepsilon)\log m}+m^{-\varepsilon}\le \varepsilon.
\end{equation}
Let $$N=n\big(m-(1-\varepsilon)^2\log m\big).$$
We present a proof of the second statement of the theorem, i.e., we shall show that $R(Q_m,Q_n)\le N$.
The first statement is a corollary of this. Indeed, if $m\ge 2^{25}$, then Lemma \ref{prop:epsilon} shows that
inequality (\ref{eq:epsilon_condition}) holds for $\varepsilon=\tfrac{1}{\sqrt{\log m}}$, thus 
\begin{eqnarray*}
R(Q_m,Q_n)&\le& n\big(m-(1-\varepsilon)^2\log m\big)\\
& \le & n\big(m-(1-2\varepsilon)\log m\big)\\
& =  & n\left(m-\big(1-\tfrac{2}{\sqrt{\log m}}\big)\log m\right).
\end{eqnarray*}

Now, we proceed with the proof of the bound $R(Q_m,Q_n)\le N$. 
Let $t_\mu=(1-\varepsilon) \log m$ and $t_\eta=\tfrac{n}{m}t_\mu$. Note that $0\leq t_\mu \leq m$ and $0\leq t_\eta \leq n$.
In this proof, we consider $t_\mu$-truncated blobs of dimension $m$ and $t_\eta$-truncated blobs of dimension $n$. 
It follows from the definition of $N$, $t_\mu$, and $t_\eta$ that $$N= n(m-t_\mu)+\varepsilon nt_\mu=(n-t_\eta)m+\varepsilon mt_\eta.$$

Fix an arbitrary blue/red coloring of $\QQ([N])$. We shall find a blue copy of $Q_m$ or a red copy of $Q_n$ in this coloring.
More precisely, we show that there is a blue embedding $\phi$ of $\QQ([m])^{t_\mu}$ whose image has volume at most $N-n(m-t_\mu)$,
or a red embedding $\phi'$ of $\QQ([n])^{t_\eta}$ whose image has volume at most $N-(n-t_\eta)m$.
In either case, Lemma \ref{lem:truncatedCompletion} gives the desired monochromatic copy of a Boolean lattice.
\\

First, we suppose that there exist disjoint sets $S,T\subseteq[N]$ with $|S|\le \varepsilon mt_\eta-n$ and $|T|=n$, 
such that the truncated blob $\cB(S;T; t_\eta)$ is monochromatically red, i.e., there is a red embedding of $\QQ([n])^{t_\eta}$.
Note that $|S\cup T|\le \varepsilon mt_\eta=N-(n-t_\eta)m$. So, part~(iii) of  Lemma \ref{lem:truncatedCompletion} implies the existence of a blue copy of $Q_m$ or a red copy of $Q_n$, which completes the proof. So from now on, we can assume the following:

\noindent
{\bf Property $(\ast)$:}
In every truncated blob $\cB(S;T;  t_\eta)$ with dimension $|T|=n$ and volume $|S\cup T| \le \varepsilon mt_\eta$, there is a blue vertex.

Let $\bX$ be a fixed subset of $[N]$ of size $m$, and let $\bY=[N]\setminus\bX$.
In the remainder of the proof, we construct a blue, $\bX$-good embedding $\phi\colon \QQ(\bX)^{t_\mu}\to\QQ([N])$ such that its image has a small volume.

\subsection{Construction of a blue embedding $\phi$  of $\QQ(\bX)^{t_\mu}$}

We shall find a blue $\phi(X)$ for each $X\in\QQ(\bX)^{t_\mu}$ layer-by-layer such that $\phi(X)\cap \bX= X$. 
After stating the complete construction, we justify that the defined objects indeed exist.
Fix pairwise disjoint subsets $\bY^{(0)}, \bY^{(1)}, \ldots, \bY^{(t_\mu)}$ of $\bY$, where $|\bY^{(0)}|=n$ and $|\bY^{(i)}|=2^{i-1}t_\eta$, for $i\in[t_\mu]$.
In our construction, we shall make sure that $\phi(X)\cap \bY \subseteq \bY^{(0)}\cup \cdots \cup \bY^{(|X|)}$, 
which ensures that the volume of the embedded poset $\Vol({\phi}(\QQ(\bX)^{t_\mu})$ is at most $|\bX| + |\bY^{(0)}| + \cdots + |\bY^{(t_\mu)}|$.
For each selected $\phi(X)$, we denote the \textit{$\bY$-part} $\phi(X)\cap\bY$ by $Y_X$.
Our function $\phi$ is chosen to be $\bX$-good, so $\phi(X)=X\cup Y_X$ for every~$X$.
\\

\begin{figure}[h]
\centering
\includegraphics[scale=0.58]{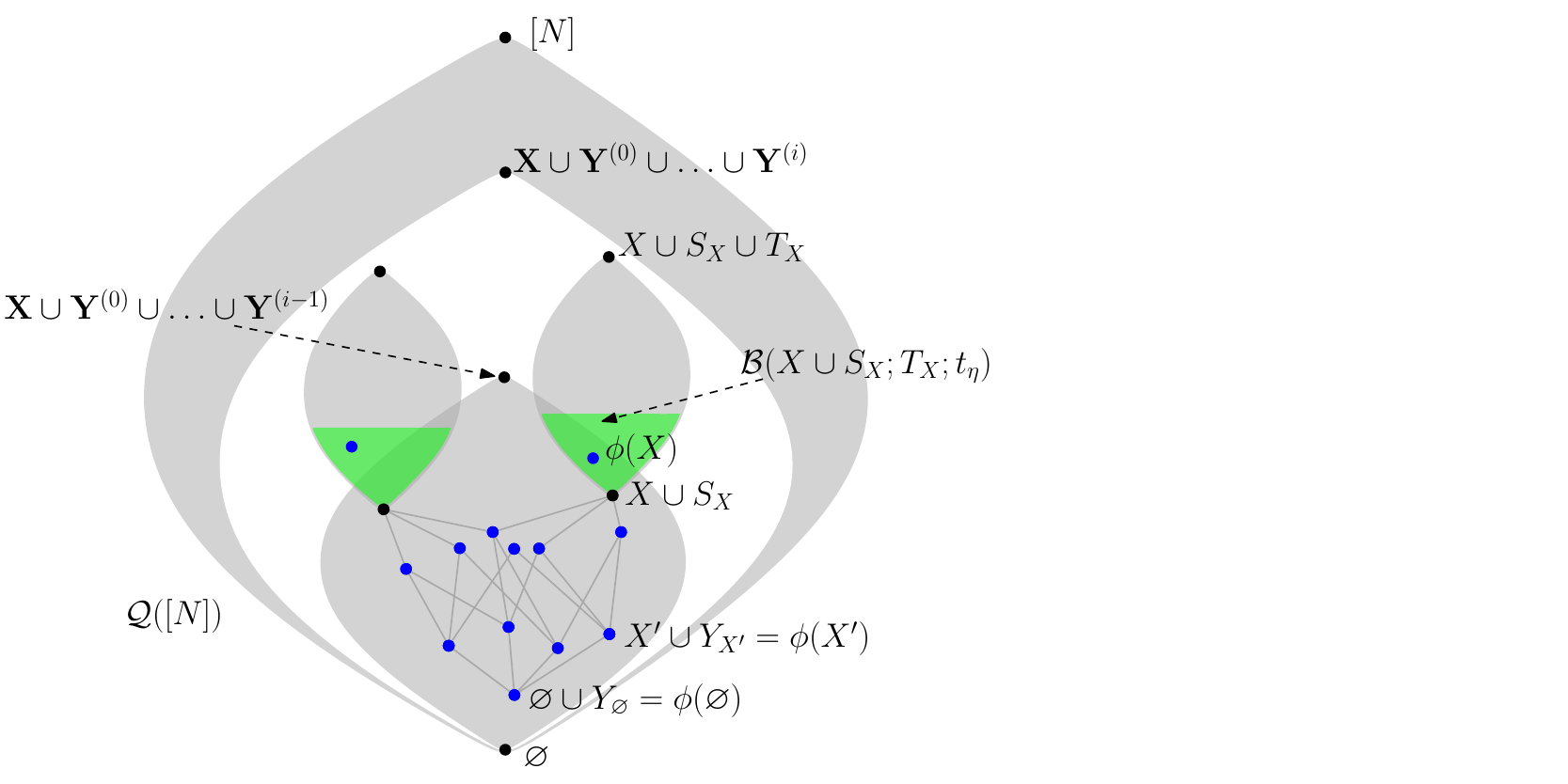}
\caption{The vertex $\phi(X)$ in $\cB(X\cup S_X;T_X;t_\eta)$ for $i=3$ and $|\bX|=4$.}
\label{fig:QnQn:QmQn}
\end{figure}

Let $\phi(\varnothing) $ be a blue vertex in the blob $\cB_\varnothing=\cB(\varnothing; \bY^{(0)}; t_\eta)$, which exists by $(\ast)$. 
To embed $X\in\QQ(\bX)^{t_\mu}$ with $|X|=i$, $1\leq i\leq t_\mu$, we shall define a truncated blob $\cB_X$ of dimension $n$ and embed $X$ to a blue vertex in this blob, as provided by property~$(\ast)$.
Recall that the \textit{variable set} of a blob is the set difference between its minimal and its maximal vertex.
The variable set of $\cB_X$ shall be the union of the set $\bY^{(i)}$ of ``new" ground elements and a set of ground elements of $\bY$ that were already used by the embedding of previous layers.

Let $i\geq 1$. Assume that we constructed $\phi(X)$ for every $X\in\QQ(\bX)^{t_\mu}$, $|X|\leq i-1$, such that 
 $\phi(X) \cap \bY\subseteq \bY^{(0)} \cup  \cdots \cup \bY^{(i-1)}.$  
For each $X\in\QQ(\bX)^{t_\mu}$, $|X|=i$, let
\begin{eqnarray*}
S_X &= &  \bigcup_{X'\subset X}\phi(X') \cap  \bY= \bigcup_{X'\subset X} Y_{X'}, \text{ and }\\
T_X&  \subseteq & \big(\bY^{(0)} \cup \cdots \cup  \bY^{(i)}\big) \setminus S_X,  ~\text{ with }~|T_X|=n. 
\end{eqnarray*}

\noindent
Let $\phi(X)$ be a blue vertex in the blob $\cB_X=\cB(X\cup S_X;  T_X; t_\eta)$,  that exists by property $(\ast)$. Let $Y_X=\phi(X)\cap\bY$.

We run this procedure for all $i\leq t_\mu$. 
It is immediate that $\phi$ is blue and $\phi(X)\cap \bX=X$ for every $X\in\QQ(\bX)^{t_\mu}$.
We shall verify that $\phi$ is an embedding.
For any two $X_1,X_2\in \QQ(\bX)^{t_\mu}$ with $X_1\subseteq X_2$, it follows from the construction that $Y_{X_1} \subseteq Y_{X_2}$. 
Thus, $$\phi(X_1)=X_1\cup Y_{X_1} \subseteq X_2 \cup Y_{X_2}=\phi(X_2),$$
i.e., $\phi$ is an $\bX$-good homomorphism of $\QQ(\bX)^{t_\mu}$.
Proposition \ref{prop:good_embedding} implies that $\phi$ is indeed an embedding.

\subsection{Verification that $\phi$ is well-defined and has small image volume}

We need to make sure that the sets $\bY^{(i)}$ and $T_X$ exist, i.e., that the sets from which these are selected as subsets are large enough.  
To see that $\bY^{(i)}$ exists for $i\leq t_\mu$, it is sufficient to verify $|\bY^{(0)} \cup \cdots \cup  \bY^{(t_\mu)}| \leq |\bY|$.
Note that for any $i\geq  1$, 
\begin{equation}\label{eq:Y2}
|\bY^{(0)} \cup \cdots \cup  \bY^{(i)}| = n + \sum_{j=1}^{i} 2^{j-1} t_\eta = n+ (2^i-1) t_\eta.
\end{equation} 
Recalling that $t_\mu=(1-\varepsilon) \log m$ and $t_\eta=\tfrac{n}{m}t_\mu$, we see that
\begin{eqnarray}
|\bY^{(0)} \cup \cdots \cup  \bY^{(t_\mu)}|  & = & n+ (2^{t_\mu}-1) t_\eta   \nonumber \\
&\le& n+m^{1-\varepsilon}\frac{n}{m}t_\mu \nonumber \\
&=&  \left(\frac{1}{t_\mu} +m^{-\varepsilon}\right)nt_\mu.  \nonumber
\end{eqnarray}
We picked $\varepsilon$ such that $\tfrac{n+m}{n}\cdot \tfrac{1}{(1-\varepsilon)\log m}+m^{-\varepsilon}\le \varepsilon$,
which implies that $\tfrac{1}{t_\mu} + \tfrac{m}{nt_\mu} +m^{-\varepsilon}\le \varepsilon$. Thus,
\begin{eqnarray}
|\bY^{(0)} \cup \cdots \cup  \bY^{(t_\mu)}|  & \leq&  \left(\varepsilon  - \frac{m}{nt_\mu}\right)nt_\mu \nonumber\\
&= & \varepsilon nt_\mu - m \nonumber\\
&= &N- n(m-t_\mu) -m   \label{eq:Y1}\\
&\le & N-m \nonumber\\
&=&  |\bY|  \label{eq:Y}.
\end{eqnarray}
It follows from (\ref{eq:Y}) that the sets $\bY^{(i)}$, $i\leq t_\mu$, exist.
\\

Next, we shall show that $T_X$ exists for every $X\in \QQ(\bX)^{t_\mu}$, $|X|=i$, with $i\in[t_\mu]$. 
For that, we need to verify that $|(\bY^{(0)} \cup \cdots \cup  \bY^{(i)}) \setminus \bigcup_{X'\subset X}  Y_{X'}|\ge n$.
Observe that in our construction $\phi(X)$ is chosen in a $t_\eta$-truncated blob, in which every vertex is larger than $S_X=\bigcup_{X'\subset X} Y_{X'}$.
Therefore, $|Y_X \setminus \bigcup_{ {X'\subset X }}  Y_{X'}|\leq  t_\eta$, i.e., we are introducing at most $t_\eta$ ``new'' elements from $\bY$ for $\phi(X)$, compared to the images of proper subsets of $X$. 
If $|X|=i$, then $X$ has $2^i-1$ proper subsets $X'$, and each $\phi(X')$ uses as most $t_\eta$ ``new'' elements of $\bY$ compared to its own subsets, so we have that  
\begin{equation}\label{eq:Y3}
|S_X|=\bigg|\bigcup_{X'\subset X}  Y_{X'}\bigg|
= \bigg|\bigcup_{X'\subset X}  \bigg(Y_{X'}\setminus  \bigcup_{X''\subset X'} Y_{X''}\bigg)\bigg| \leq (2^i-1) t_\eta.
\end{equation}
Using (\ref{eq:Y2}) and (\ref{eq:Y3}), we find that
$$\bigg|  (\bY^{(0)} \cup \cdots \cup  \bY^{(i)}) \setminus \bigcup_{X'\subset X}  Y_{X'}\bigg| \geq  (n+ (2^i-1)t_\eta) -  (2^i-1) t_\eta =n, $$
so we can select an $n$-element set $T_X$ from $(\bY^{(0)} \cup \cdots \cup  \bY^{(i)}) \setminus \bigcup_{X'\subset X}  Y_{X'}$.

\subsection{Completion of the proof}

Finally, we consider the volume of $\phi(\QQ(\bX)^{t_\mu})$, and obtain the following bound using~(\ref{eq:Y1}):
$$\Vol (\phi( \QQ(\bX)^{t_\mu}))\leq   |\bX| + |\bY^{(0)} \cup \cdots \cup \bY^{(t_\mu)}|  \leq m+  (N- n(m-t_\mu) -m)  = N-n(m-t_\mu).$$
We conclude that $\phi$ is a blue, $\bX$-good embedding of $\QQ(\bX)^t_\mu$ such that its image has volume $\Vol(\phi( \QQ(\bX)^{t_\mu}))\leq N - (m-t_\mu)n$. 
Thus, by part (ii) of Lemma \ref{lem:truncatedCompletion} with $t=t_\mu$, there is a blue copy of $Q_m$ or a red copy of $Q_n$.
\end{proof}

\bigskip

\section{Bounds on $R(D_n,D_n)$ and $R(V_n,V_n)$}\label{sec:QnQn:DnVn}

\subsection{Proof of Theorem \ref{thm:DnDn}}

Recall that the \textit{Sperner number} $\alpha(n)$ \index{Sperner number}\index{$\alpha(n)$} is the smallest $N$ such that $Q_N$ contains an antichain of size $n$,
and Sperner \cite{Sperner} showed that $\binom{\alpha(n)}{\lfloor\alpha(n)/2\rfloor}\ge n$.

\begin{proof}[Proof of Theorem \ref{thm:DnDn}]
We shall show that $2\alpha(n)-1< R(D_n,D_n)\le  \alpha(n)+\alpha(2n-1)$.
For the lower bound, color the Boolean lattice $\QQ^1=\QQ([2\alpha(n)-1])$ such that $Z\in\QQ^1$ is red if $|Z|<\alpha(n)$ and blue  if $|Z|\ge \alpha(n)$.
Assume that in this coloring there is a red copy $\cD$ of $D_n$ with maximal vertex $Y$, 
thus $\cD$ is contained in the subposet $\{Z\in\QQ^1:~ Z \subseteq Y\}$, which is a copy of a Boolean lattice of dimension $|Y|<\alpha(n)$. 
We know that $\cD$ contains an antichain on $n$ vertices, but by definition of $\alpha(n)$ there is no antichain on $n$ vertices in $\{Z\in\QQ^1:~ Z \subseteq Y\}$, a contradiction.
Similarly, we see that there is no blue copy of $D_n$. This implies that $R(D_n,D_n)>2\alpha(n)-1$.
\\

In order to bound $R(D_n,D_n)$ from above, let $N= \alpha(n)+\alpha(2n-1)$, and consider an arbitrary blue/red coloring of the Boolean lattice $\QQ^2=\QQ([N])$.
Let $$\cS=\Big\{Z\in\QQ^2:~ |Z|=\lfloor\alpha(n)/2\rfloor\Big\} \quad\text{ and } \quad\cT=\Big\{Z\in\QQ^2:~ |Z|=N-\lfloor\alpha(n)/2\rfloor\Big\}.$$
We distinguish two cases.
\bigskip\\

\noindent \textbf{Case 1:} ~~ At least one of $\cS\cup\{\varnothing\}$ or $\cT\cup\{[N]\}$ is not monochromatic.
\medskip\\
Suppose that $\cT\cup\{[N]\}$ is not monochromatic. Let $Y\in\cT$ such that $Y$ has a different color than the vertex $[N]$, see Figure \ref{fig:QnQn:DnDn}.
Let $\cS'=\{Z\in\cS:~ Z\subseteq Y\}$. 
\vspace*{-1em}
\begin{itemize}
\item If $\cS'\cup\{\varnothing\}$ is monochromatic, then one of $Y$ or $[N]$ has the same color as $\cS'$.
Note that $|\cS'|= \binom{|Y|}{\lfloor\alpha(n)/2\rfloor}\ge \binom{\alpha(n)}{\lfloor\alpha(n)/2\rfloor}\ge n$, where the last inequality follows from Sperner's theorem \cite{Sperner}.
This implies that the vertices $\cS'\cup\{\varnothing,Y, [N]\}$ contain a monochromatic copy of $D_n$.

\item If $\cS'\cup\{\varnothing\}$ is not monochromatic, then consider $X\in\cS'$ such that $X$ has a different color than the vertex $\varnothing$.
Note that $X\subset Y$.
The subposet $\{Z\in\QQ^2:~ X\subseteq Z\subseteq Y\}$ is a copy of a Boolean lattice of dimension $|Y|-|X|\ge N-\alpha(n)\ge \alpha(2n-1)$.
This implies that there is an antichain $\cA$ on $2n-1$ vertices such that for every vertex $Z\in\cA$, ~ $X\subseteq Z\subseteq Y$. 
Note that neither $X$ nor $Y$ are in $\cA$, because each of $X$ and $Y$ is comparable to every vertex in $\cA$.
In $\cA$, we find $n$ vertices with the same color, say without loss of generality, red. These $n$ vertices, together with the red vertex among $\varnothing$ and $X$ and the red vertex among $Y$ and $[N]$, form a red copy of $D_n$.
\end{itemize}

\begin{figure}[h]
\centering
\includegraphics[scale=0.58]{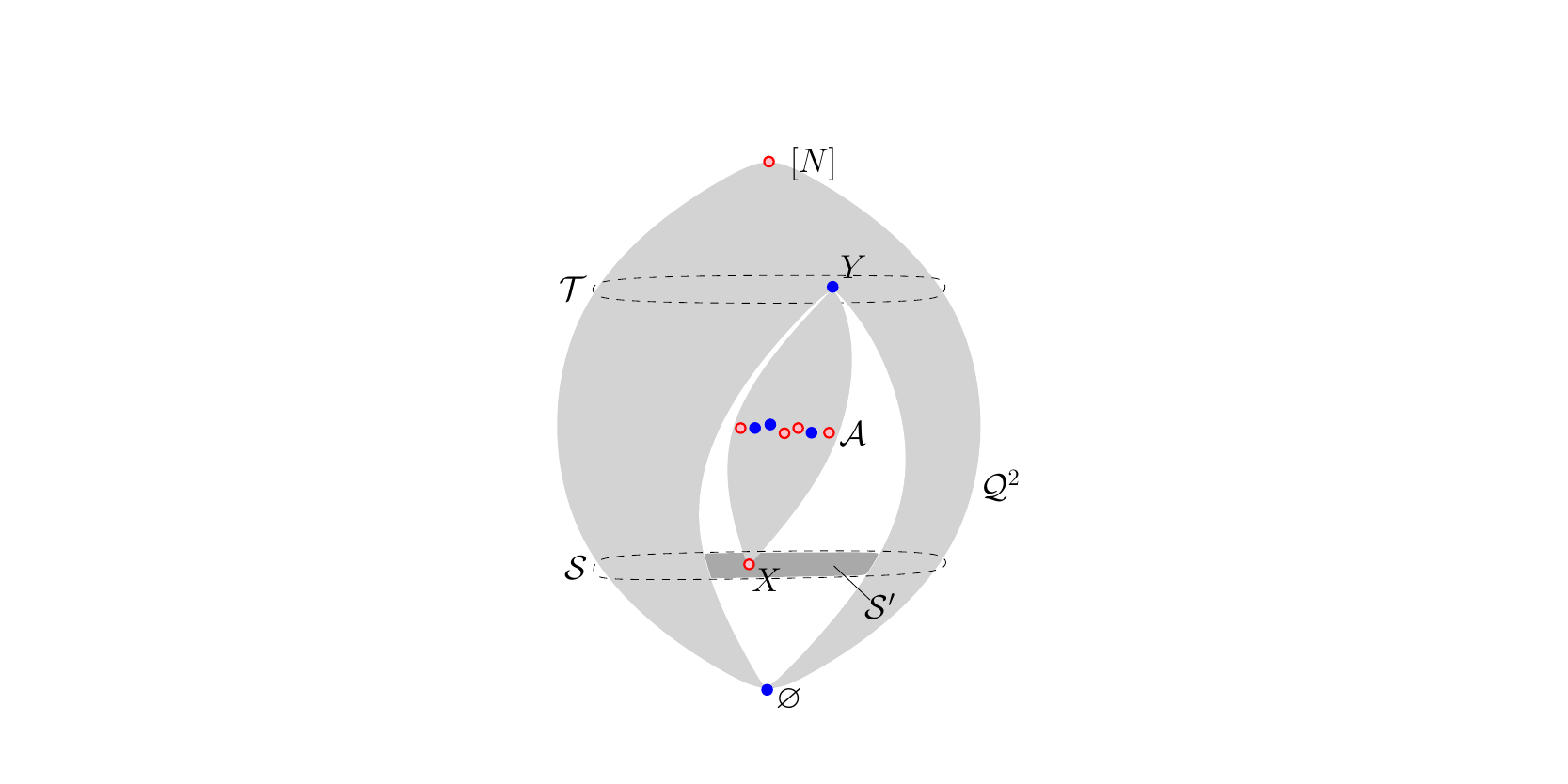}
\caption{Setting in Case 1 if $\cS'\cup\{\varnothing\}$ is not monochromatic.}
\label{fig:QnQn:DnDn}
\end{figure}
\medskip

\noindent \textbf{Case 2:} ~~ Both $\cS\cup\{\varnothing\}$ and $\cT\cup\{[N]\}$ are monochromatic.
\medskip\\
If $\cS\cup\{\varnothing\}$ and $\cT\cup\{[N]\}$ have the same color, then $\cS\cup\{\varnothing, [N]\}$ contains a monochromatic copy of $D_n$, because 
$|\cS|=\binom{N}{\lfloor\alpha(n)/2\rfloor}\ge \binom{\alpha(n)}{\lfloor\alpha(n)/2\rfloor}\ge n$.
So, suppose that $\cS\cup\{\varnothing\}$ and $\cT\cup\{[N]\}$ have distinct colors.
Fix the vertex $X=[\alpha(n)]\in\QQ^2$.
If $X$ has the same color as $\cS\cup\{\varnothing\}$, say red, then let $\cS''=\{Z\in\cS:~ Z\subseteq X\}$.
Note that $|\cS''|\ge \binom{|X|}{\lfloor\alpha(n)/2\rfloor}\ge n$, thus $\cS''\cup\{\varnothing,X\}$ contains a red copy of $D_n$.
If $X$ has the same color as $\cT\cup\{[N]\}$, we find a monochromatic copy of $D_n$ by a similar argument.
\end{proof}

\subsection{Proof of Theorem \ref{thm:VnVn}}

Let $n,N\in\N$ such that $N\ge \alpha(n)$.
Recall that
$$\beta(N,n)=\min\!\big\{\beta\in\N\hspace*{-1pt}: \tbinom{N}{\beta}\hspace*{-1pt}\ge\hspace*{-1pt} n\big\}\text{ and } N^*(n)=\max\!\big\{N\hspace*{-1pt}\ge\hspace*{-1pt}\alpha(n)\hspace*{-1pt}: N-\beta(N,n)\hspace*{-1pt}<\hspace*{-1pt} \alpha(n)\big\},$$
as illustrated in Figure \ref{fig:QnQn:Nstar}.
Both $\beta(N,n)$ and $N^*(n)$ are well-defined.

\begin{figure}[h]
\centering
\includegraphics[scale=0.58]{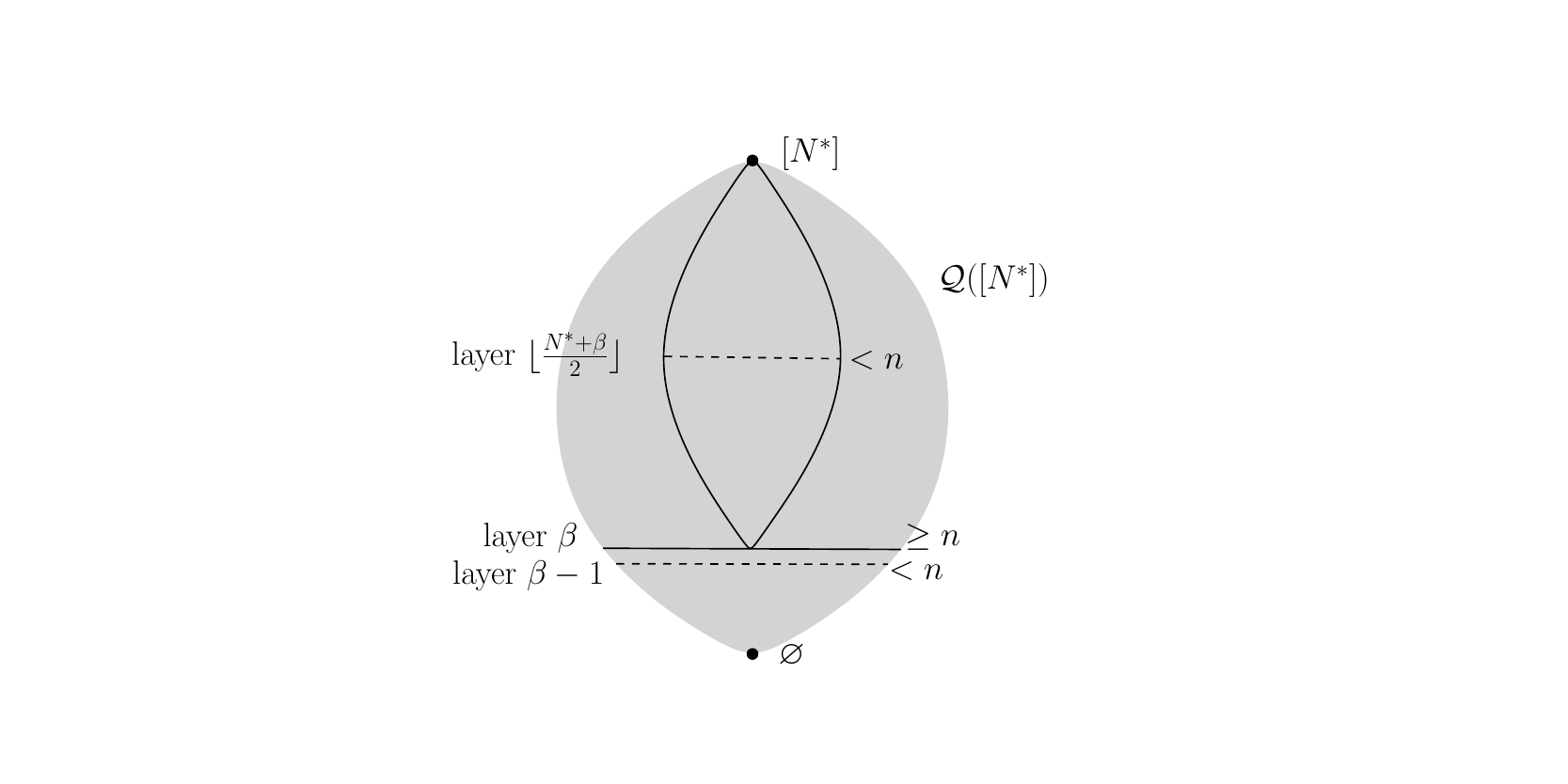}
\caption{Setting for $N^*=N^*(n)$ and $\beta=\beta(N^*,n)$.}
\label{fig:QnQn:Nstar}
\end{figure}

\begin{proof}[Proof of Theorem \ref{thm:VnVn}]
Let $N^*=N^*(n)$. First, we show the lower bound $R(V_n,V_n)>N^*$.
We construct a blue/red coloring of the Boolean lattice $\QQ^1=\QQ([N^*])$ which contains no monochromatic copy of $V_n$.
Color the vertices $Z$ with $|Z|<\beta(N^*,n)$ in red and all remaining vertices in blue.
There is no red antichain of size $n$, so in particular, there is no red copy of $V_n$.
Assume towards a contradiction that there is a blue copy of $V_n$ with minimal vertex $X$.
Note that $|X|\ge \beta(N^*,n)$, so the subposet $\{Z\in\QQ^1:~X\subseteq Z \subseteq [N^*]\}$ is a copy of a Boolean lattice of dimension at most $N^*-\beta(N^*,n)< \alpha(n)$.
Thus, there is no blue antichain of size $n$, and in particular no blue copy of $V_n$, a contradiction.

For the upper bound, we define $N_{+}$ to be the smallest integer such that $$N_{+}-\beta(N_{+},n)\ge \alpha(2n-1).$$
To show that $R(V_n,V_n)\le N_{+}$, we consider an arbitrary blue/red coloring of the Boolean lattice $\QQ^2=\QQ([N_{+}])$. 
We shall find a monochromatic copy of $V_n$.
Without loss of generality, the vertex $\varnothing$ is red.
Let $\beta_+=\beta(N_{+},n)$. We know that layer $\beta_+$ contains at least $n$ vertices.
If every vertex in this layer is red, then we find a red copy of $V_n$, so suppose that there exists a blue vertex $X$ with $|X|=\beta_+$.
The subposet $\QQ^3=\{Z\in\QQ^2:~X\subseteq Z\subseteq [N_{+}]\}$ is a copy of a Boolean lattice of dimension $N_{+}-\beta_+\ge \alpha(2n-1)$. 
Thus, we find an antichain $\cA$ of size $2n-1$ in $\QQ^3$. Note that $X\notin \cA$.
Each vertex in $\cA$ is either red or blue, so there are $n$ vertices of the same color in this antichain. These $n$ vertices, together with one of $X$ or $\varnothing$, form a monochromatic copy of $V_n$, as desired. Therefore, $R(V_n,V_n)\le N_+$.

We shall show that $N_+\le N^*+3$. 
Indeed, note that
$$
\binom{N^*+3}{\beta(N^*+1,n)}\ge \binom{N^*+1}{\beta(N^*+1,n)}\ge n,
$$ 
thus $\beta(N^*+1,n)\ge\beta(N^*+3,n)$.
The definition of $N^*(n)$ provides that $(N^*+1)-\beta(N^*+1,n)\ge \alpha(n)$, so
$$(N^*+3)-\beta(N^*+3,n)\ge (N^*+1)+2-\beta(N^*+1,n)\ge \alpha(n)+2\ge \alpha(2n-1).$$
Recall that $N_+$ is minimal such that $N_{+}-\beta(N_{+},n)\ge \alpha(2n-1)$, so $N_+\le N^*+3$.
This concludes the proof of the upper bound. 
We remark that if $\alpha(n)+1\ge \alpha(2n-1)$, a similar argument provides that $N_+\le N^*+2$.

It remains to show that $N^*(n)= (d+o(1))\log n$, where $d=\frac{1}{1-c}$ and  $c$ is the unique real solution of $\log\big(c^{-c}(1-c)^{c-1}\big)=1-c$, i.e., $d\approx 1.29$.
This follows from a technical computation given in the next subsection.

\subsection{Technical computation for the proof of Theorem \ref{thm:VnVn}}
We shall find $d$ such that $N^*= (d+o(1))\log n$, where 
$$N^*=\max\!\big\{N\ge\alpha(n): ~ N-\beta(N,n)< \alpha(n)\big\} \ \text{ and } \ \beta(N,n)=\min\!\big\{\beta\in\N: ~ \tbinom{N}{\beta}\ge n\big\}.$$
Recall Proposition \ref{prop:binom}, which claims that for arbitrary $N\in\N$ and $q$ with $0<q<1$, 
\begin{equation}\label{eq:Hq}
\log\binom{N}{qN}=\big(1+o(1)\big)H(q)N,
\end{equation}
where $H(q)=-\big(q\log q + (1-q)\log(1-q)\big)$\index{$H(q)$} is the \textit{binary entropy function}\index{binary entropy function}.
Let $c$ be the unique solution of $1-c=H(c)$, i.e., $c\approx0.2271$.
\\

We shall show that $N^*=\big(\tfrac{1}{1-c}+o(1)\big)\log n$.
Let $q\in\R$ such that $qN^*=\beta(N^*,n)$, and let $q'\in\R$ such that $q'(N^*+1)=\beta(N^*+1,n)$.
The definition of $\beta$ implies that
\begin{equation}\label{eq:beta}
\binom{N^*}{qN^*-1}<n\le\binom{N^*}{qN^*}\qquad\text{and} \qquad \binom{N^*+1}{q'(N^*+1)-1}<n\le\binom{N^*+1}{q'(N^*+1)}.
\end{equation}
By the definition of $N^*(n)$, we know that
\begin{equation}\label{eq:Nstar}
(1-q)N^*<\alpha(n)\le (1-q')(N^*+1).
\end{equation}

In the following, $o(1)$ always refers to the asymptotic behavior for large $n$, so equivalently for large $N^*$, since $\alpha(n)\le N^*\le 2\alpha(n)$.
Recall that 
\begin{equation}\label{eq:alpha}
\alpha(n)=\big(1+o(1)\big)\log n.
\end{equation}

Each step of the upcoming computation is labelled by an inequality from (\ref{eq:Hq}) to~(\ref{eq:alpha}) indicating which argument is being applied. 
For example,  `$\stackrel{(\ref{eq:Hq})}{=}$' means that the equality holds because of (\ref{eq:Hq}).
To highlight the idea of our computation, we give a two-line proof, where some steps are not yet justified:
$$(1-q)N^*\hspace*{-1pt}\stackrel{(\ref{eq:Nstar})}{\approx}\hspace*{-1pt}\alpha(n)\hspace*{-1pt}\stackrel{(\ref{eq:alpha})}{=}\hspace*{-1pt}\big(1+o(1)\big)\log n
\hspace*{-1pt}\stackrel{(\ref{eq:beta})}{\approx}\hspace*{-1pt} \big(1+o(1)\big)\log \binom{N^*}{qN^*} \hspace*{-1pt}\stackrel{(\ref{eq:Hq})}{=}\hspace*{-1pt} \big(1+o(1)\big)H(q)N^*,$$
which implies that $q=\big(1+o(1)\big)c$, where $c$ is the unique solution of $1-c=H(c)$. 
Thus, 
$$N^*\stackrel{(\ref{eq:Nstar})}{\approx}\tfrac{1}{1-q}\alpha(n)\stackrel{(\ref{eq:alpha})}{=}\left(\tfrac{1}{1-q}+o(1)\right)\log(n)=\left(\tfrac{1}{1-c}+o(1)\right)\log(n),$$
as desired.
However, some steps in the above computation require significant additional argumentation. 
In the following, we present a detailed proof that $N^*=\big(\tfrac{1}{1-c}+o(1)\big)\log n$.
\\

Observe that
$$(1-q)N^*\stackrel{(\ref{eq:Nstar})}{<}\alpha(n)\stackrel{(\ref{eq:alpha})}{=}\big(1+o(1)\big)\log n
\stackrel{(\ref{eq:beta})}{\le} \big(1+o(1)\big)\log \binom{N^*}{qN^*} \stackrel{(\ref{eq:Hq})}{=} \big(1+o(1)\big)H(q)N^*.$$
Thus, $1-q\le \big(1+o(1)\big)H(q)$, so $q\le  \big(1+o(1)\big) c$.
Next, we bound $q'$ from below. We see that
$$(1-q')(N^*+1)\stackrel{(\ref{eq:Nstar})}{\ge}\alpha(n)\stackrel{(\ref{eq:alpha})}{=}\big(1+o(1)\big)\log n
\stackrel{(\ref{eq:beta})}{>} \big(1+o(1)\big)\log \binom{N^*+1}{q'(N^*+1)-1}.$$
We shall show that $\log \binom{N^*+1}{q'(N^*+1)-1}\ge \big(1+o(1)\big)H(q')(N^*+1)$. For that, we first require a rough lower bound on $q'$.

We know from (\ref{eq:Nstar}) that $N^*-qN^*\le \alpha(n)-1$. Note that $qN^*=\beta(N^*,n)\le \alpha(n)$, so $N^*+1\le qN^*+ \alpha(n)\le 2\alpha(n)$. Therefore,
$$\binom{N^*+1}{\tfrac{1}{16}(N^*+1)}\le \binom{2\alpha(n)}{\tfrac{1}{8}\alpha(n)} \stackrel{(\ref{eq:Hq})}{=}
\left(\frac{2^2}{\left(\tfrac{1}{8}\right)^{1/8}\left(\tfrac{15}{8}\right)^{15/8}}\right)^{(1+o(1))\alpha(n)}\le 1.6^{(1+o(1))\log n}<n,$$
thus $q'\ge \tfrac{1}{16}$. This bound implies that
\begin{eqnarray*}
\binom{N^*+1}{q'(N^*+1)-1}&=&\frac{q'(N^*+1)}{(1-q')(N^*+1)+1}\binom{N^*+1}{q'(N^*+1)}\\
&\ge &\frac{q'}{2-q'}\binom{N^*+1}{q'(N^*+1)}\\
&\ge &\frac{1}{31}\binom{N^*+1}{q'(N^*+1)}.
\end{eqnarray*}
Thus, 
$$\log \binom{N^*+1}{q'(N^*+1)-1}\ge -\log(31) + \log \binom{N^*+1}{q'(N^*+1)}\stackrel{(\ref{eq:Hq})}{=}\big(1+o(1)\big)H(q')(N^*+1).$$
Therefore, $1-q'\ge \big(1+o(1)\big)H(q')$, which implies that $q'\ge\big(1+o(1)\big) c$. 

We see that
\begin{eqnarray*}
\alpha(n)&\stackrel{(\ref{eq:Nstar})}{\le}& (1-q')(N^*+1)\\
&\le & (1+o(1))(1- c)(N^*+1)\\
&\le & \big(1+o(1)\big)(1-q)N^*  \\
& \stackrel{(\ref{eq:Nstar})}{\le}& \big(1+o(1)\big)\alpha(n).
\end{eqnarray*}
In particular, $N^*=\tfrac{(1+o(1))}{1-c}\alpha(n)\stackrel{(\ref{eq:alpha})}{=}\tfrac{(1+o(1))}{1-c}\log n,$ as desired.
\end{proof}
\bigskip

\section{Upper bound on Ramsey number $\Rw (Q_n,Q_n)$}\label{sec:QnQn:weak}

\begin{proof} [Proof of  Theorem \ref{thm:weakQnQn}]
Consider an arbitrary blue/red coloring of the Boolean lattice $\QQ([N])$, where $N=0.96n^2$.
Our goal is to find a monochromatic weak copy of $Q_n$. While an induced copy of $Q_n$ has a rigid structure, there are many non-isomorphic weak copies of $Q_n$. 
In $\QQ([N])$, we shall find a member of a class $\cP(n,s, t)$ of special weak copies of $Q_n$.

\subsection{Definition of $\cP(n,s,t)$}
 Throughout this proof, we write $P'<P''$ for posets $P'$ and $P''$ if any element of a poset $P'$ is strictly smaller than any element of a poset $P''$.  
 We define the class $\cP(n,s,t)$ of posets, see Figure~\ref{fig:QnQn:weak}~(a),  such that each member of this class is of the form
  $$P_0 \cup  \dots \cup P_{s-1}\cup  Q_s^t \cup  P'_{t+1} \cup \dots \cup P'_n,$$
 where 
\vspace*{-1em}
 \begin{itemize}
 \item{} $P_i$ is an arbitrary poset with $|P_i|=\binom{n}{i}$, $i\in \{0, \ldots, s-1\}$,
 \item{} $P'_j$ is an arbitrary poset with $|P'_j|=\binom{n}{n-j}=\binom{n}{j}$,  $j\in \{t+1, \ldots, n\}$, 
 \item{} $Q_s^t$ is an induced copy of an $(s,t)$-truncated $Q_n$, i.e., $Q_s^t$ consists of layers $s,\dots, t$ of an $n$-dimension Boolean lattice $Q_n$,
 \item{} $P_0< P_1< \cdots < P_{s-1} < Q_s^t < P'_{t+1} < \cdots <P'_n$. 
\end{itemize}
 
Here, if $s=0$ or $t=n$, we use the convention that $P_0 \cup  \cdots \cup P_{s-1}=\varnothing$ or $P'_{t+1} \cup \cdots \cup P'_n=\varnothing$, respectively.
Observe that every member of $\cP(n,s,t)$ is indeed a weak copy of $Q_n$, where layer $i$ of $Q_n$ corresponds to $P_i$, for $i\in \{0, \ldots, s-1\}$, 
layer $j$ of $Q_n$ corresponds to $P'_{j}$, for $j\in \{t+1, \ldots, n\}$ and the remaining layers are contained in the \textit{middle part} $Q_s^t$.


\begin{figure}[h]
\centering
\includegraphics[scale=0.58]{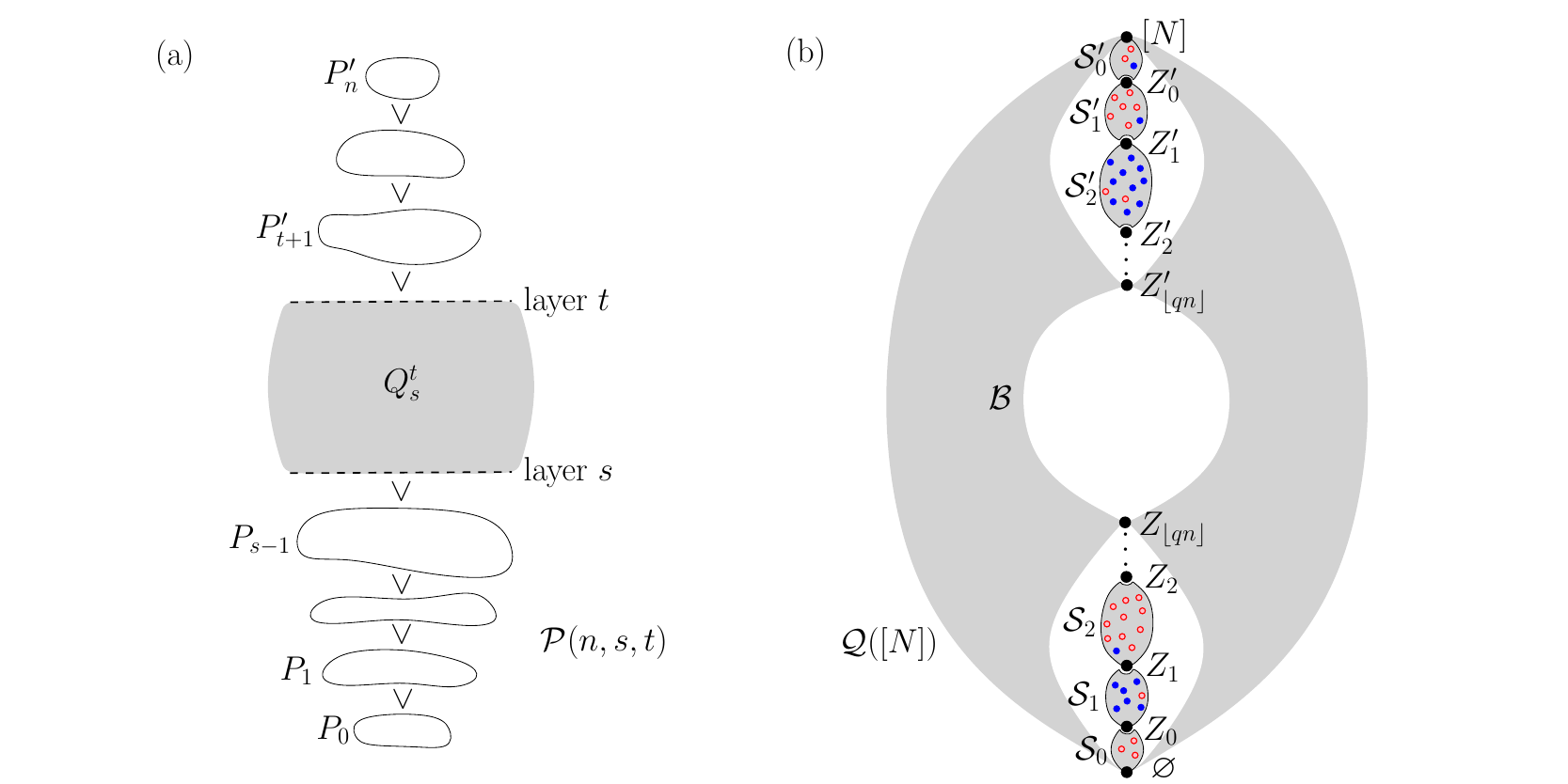}
\caption{(a) A $\cP(n,s,t)$ for $s=4$ and $t=n-3$.\ (b) Sausage chain in $\QQ([N])$.}
\label{fig:QnQn:weak}
\end{figure}

\subsection{Construction of a sausage chain in $\QQ([N])$}
Let $N=0.96n^2$.
For our proof, we need to define a constant $q$ that satisfies certain properties. Recall that for $0<p<1$, the \textit{binary entropy function}\index{binary entropy function}\index{$H(q)$} is defined as  
$$H(p)=-\big(p\log p + (1-p)\log(1-p)\big).$$ 
Let $q$ be the real number, $0<q<1/2$, which minimizes $(1-q) + 2\int_{0}^{q} H(s) ds$.  It can be verified by analyzing the first derivative that such a $q$ satisfies $H(q)=1/2$, i.e., $0.11<q<0.111$, and $\int_{0}^{q} H(s) ds\le 0.033$.
In particular,
\begin{equation}\label{eq:minimize}
(1-q)n^2 + 2n^2 \int_{0}^{q} H(s)  \operatorname{d}\!s \le 0.956n^2 \le N-\varepsilon n^2,
\end{equation}
for some constant $\varepsilon>0$.
\\

Next, we define a \textit{sausage chain}\index{sausage chain} in $\QQ([N])$, see Figure \ref{fig:QnQn:weak} (b). 
Let $$Z_0\subset Z_1 \subset \dots \subset Z_{\qn} \subset Z'_{\qn}\subset Z'_{\qn-1}\subset \dots \subset Z'_0$$ be vertices in $\QQ([N])$ such that for $0\leq i\leq \qn$,
$$|Z_i|=\sum_{j=0}^{i} \left(\left\lceil\log\binom{n}{j}\right\rceil+1\right) \quad\text{ and }\quad |Z'_i|=N-\sum_{j=0}^{i} \left(\left\lceil\log\binom{n}{j}\right\rceil+1\right).$$
We argue later that $|Z_{\qn}|\le|Z'_{\qn}|$, which implies that these vertices are well-defined.
We define subposets $\cS_i$ and $\cS'_i$, which we refer to as \textit{sausages}\index{sausage}. Let
$$\cS_i=\{X\in\QQ([N])\colon Z_{i-1}\subseteq X \subset Z_i\}\text{,}\quad \text{ for }1\le i\le \qn\text{,}$$
 and $\cS_0=\{X\in\QQ([N])\colon X\subset Z_0\}$. We remark that in the literature such subposets are usually referred to as \textit{half-open intervals}. 
Similarly, let sausages 
$$\cS'_i=\{X\in\QQ([N])\colon Z'_{i}\subset X \subseteq Z'_{i-1}\}\text{,}\quad \text{ for }1\le i\le \qn\text{,}$$
and $\cS'_0=\{X\in\QQ([N])\colon Z'_0\subset X\}$. Moreover, we define
$$\cB=\{X\in\QQ([N])\colon Z_{\qn}\subseteq X \subseteq Z'_{\qn}\}.$$
The subposet $\cB$ is isomorphic to a Boolean lattice of dimension $|Z'_{\qn}|-|Z_{\qn}|$.
Note that $$\cS_0<\cS_1<\dots<\cS_{\qn}<\cB<\cS'_{\qn}<\dots<\cS'_0.$$
We refer to the subposet $\cS_0\cup\dots \cup\cS_{\qn}\cup\cB\cup\cS'_{\qn}\cup\dots\cup\cS'_0$ as the \textit{sausage chain}.
The sausage chain is well-defined if all vertices $Z_i$ and $Z'_i$, $i\in\{0,\dots,\qn\}$, exist. 
Note that $|\cS_0|<\cdots < |\cS_{\qn}|$ and $|\cS'_{\qn}|>\cdots >|\cS_0|$.

\subsection{Finding a member of $\cP(n,s,t)$ in the sausage chain}
In the sausage chain, we shall find a monochromatic member of $\cP(n,s,t)$ for some parameters $s$ and $t$ depending on the coloring, such that the middle part $Q_s^t$ of $\cP(n,s,t)$ is embedded into $\cB$, each $P_i$ is embedded into its own $\cS_\ell$, and each $P'_i$ is embedded in its own  $\cS'_{\ell'}$ for some $\ell$ and $\ell'$.

Assume without loss of generality that among all sausages $$\cS_0, \ldots, \cS_{\qn}, \cS'_0, \ldots, \cS'_{\qn},$$ most sausages have majority color red.
Then at least $\QN$ sausages have this majority color.
Assume further, that there are $s$ sausages among  $\cS_0, \ldots, \cS_{\qn}$ with majority color red,
which we denote by $\cS_{i_0},\dots,\cS_{i_{s-1}}$, $i_0<\dots<i_{s-1}$. Note that possibly $s=0$.
Since $i_0 \ge 0$, we see that $i_1\ge 1$, and iteratively $i_j\ge j$ for any $j\in\{0,\dots,s-1\}$.

For any $i\in\{0,\dots,\qn\}$,
$$|\cS_i|=2^{|Z_i|-|Z_{i-1}|}-1=2^{\lceil\log\binom{n}{i}\rceil+1}-1\ge 2\binom{n}{i}-1,$$
so in particular, $|\cS_{i_j}|\ge 2\binom{n}{i_j}-1 \ge 2\binom{n}{j}-1$, i.e., there are at least $\binom{n}{j}$ vertices in the majority color red in $\cS_{i_j}$.
For each $j\in\{0, \ldots, s-1\}$, choose $P_j$ arbitrarily such that $P_j\subseteq \cS_{i_j}$,  $|P_j|=\binom{n}{j}$, and $P_j$ is red.

Similarly, we find $\QN-s$ sausages among $\cS'_0, \ldots, \cS'_{\qn}$ with majority color red, denoted by $\cS'_{i'_0},\dots,\cS'_{i'_{\QN-s-1}}$, $i'_0<\dots<i'_{\QN-s-1}$. Here, it is possible that $\QN-s=0$.
For $j\in\{0,\dots,\QN-s-1\}$, choose $P'_{n-j}$  arbitrarily, such that $P'_{n-j}\subseteq \cS'_{i'_j}$, $|P'_{n-j}|=\binom{n}{n-j}$, and $P'_{n-j}$ is red.
With a similar argument as before, we see that there are indeed at least $\binom{n}{n-j}$ distinct red vertices in $\cS'_{i'_j}$.

Let $t=n-\QN+s$.
So far, we have selected $P_j$ for $j\in\{0, \ldots, s-1\}$ and $P'_j$ for $j\in\{t+1,\dots,n\}$.
It remains to verify that $Q_s^t$ is contained in $\cB$. For that, we shall show that the dimension of $\cB$ is large enough to apply Lemma \ref{lem:truncatedCompletion} (iv).  

Recall that for any $N\in\N$ and $0<p<1$, Proposition \ref{prop:binom} provides that $$\log\binom{N}{pN}=\big(1+o(1)\big)H(p)N,$$ where $H(p)=-\big(p\log p + (1-p)\log(1-p)\big)$. Thus,
\begin{eqnarray}
{\rm dim} (\cB) & = & |Z'_{\qn}|-|Z_{\qn}|\nonumber\\
&=&N-2\sum_{i=0}^{\qn} \left(\left\lceil\log\binom{n}{i}\right\rceil+1\right)\nonumber\\
&\ge &N-4n-2\sum_{i=1}^{\qn} \log\binom{n}{i}\nonumber\\
&\ge & N-4n-\big(2+o(1)\big)n\sum_{i=1}^{\qn} H\left(\frac{i}{n}\right). \label{eq:H_in}
\end{eqnarray}

Since $H$ is an increasing and continuous function on the interval $(0,1/2)$ and is bounded by $1$, we have that
$$
\sum_{i=1}^{\qn} H\left(\frac{i}{n}\right)\le \int_{1}^{qn+1} H\left(\frac{t}{n}\right)\,\operatorname{d}\!t=\int_{1/n}^{q+1/n}H(s)n\,\operatorname{d}\!s\le n \int_{0}^{q}H(s)\,\operatorname{d}\!s + 1,
$$
Thus, using (\ref{eq:H_in}) and recalling the bound on $N$ from (\ref{eq:minimize}),
\begin{eqnarray*}
{\rm dim} (\cB) &\ge &  N- 4n - \big(2+o(1)\big) \left(n^2\int_{0}^{q} H(s) \operatorname{d}\!s +n\right)\\
& \ge & N-2n^2\int_{0}^{q} H(s) \operatorname{d}\!s -o(n^2)\\
& \ge & \left( (1-q)n^2 + 2n^2 \int_{0}^{q} H(s)  \operatorname{d}\!s +\varepsilon n^2\right) -2n^2\int_{0}^{q} H(s) \operatorname{d}\!s -o(n^2)\\
& \geq & (1-q)n^2+2n = (n-qn+2)n.
\end{eqnarray*}
In particular, $|Z'_{\qn}|-|Z_{\qn}|\ge 0$, which implies that the vertices $Z_{i}$ and $Z'_{i}$, $i\in\{0,\dots,\qn\}$, are well-defined.
Recall that $t=n-\QN+s$, so $${\rm dim} (\cB) \ge  (n-qn+2)n \ge (t-s+2)n.$$
By Lemma~\ref{lem:truncatedCompletion}~(iv), $\cB$ contains either a blue induced copy of $Q_n$ and we are done, or a red $Q_s^t$.
If there is a red $Q_s^t$ in $\cB$, we conclude that the sausage chain contains a red member of $\cP(n, s, t)$, and thus a red weak copy of $Q_n$.
\end{proof}
\bigskip

\section{Concluding remarks}
In this chapter, we provided an improved upper bound on the induced poset Ramsey number for large Boolean lattices, by showing that for $n\ge m$ and $0<\varepsilon<1$ with $\tfrac{n+m}{n}\cdot \tfrac{1}{(1-\varepsilon)\log m}+m^{-\varepsilon}\le \varepsilon$, 
$$R(Q_m,Q_n)\le n\left(m-(1-\varepsilon)^2\log m\right).$$
When applying this result to specific $\varepsilon$, there is a trade-off between the best Ramsey bound and the smallest value of $m$ for which the bound holds.
Our main result claims that $R(Q_m,Q_n)\le n\left(m-\big(1-\tfrac{2}{\sqrt{\log m}}\big)\log m\right)$ for $2^{25}\le m\le n$.
In addition, if $1024\le m\le n$ or if $32\le m\le \frac{n}{32}$, one could obtain the upper bound $R(Q_m,Q_n)\le n\left(m-\tfrac14\log m\right)$, using $\varepsilon=\frac12$. 

Theorem \ref{thm:QmQn} is an improvement of the basic upper bound on $R(Q_m,Q_n)$, see Lemma~\ref{lem:blob_restated}, by a superlinear additive term and a step towards the following conjecture raised by Lu and Thompson \cite{LT}.

\begin{conjecture}[Lu-Thompson \cite{LT}]\label{conj:LT}
For $n\ge m$,\:\:$R(Q_m,Q_n)=o(n^2)$.
\end{conjecture}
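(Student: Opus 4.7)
The starting point is the bound from Theorem \ref{thm:QmQn}, which saves $\Theta(n\log m)$ against the basic quadratic upper bound and hence already yields $R(Q_m,Q_n)=o(n^2)$ whenever $m=o(n/\log n)$. The genuine difficulty lies in the range $m=\Theta(n)$, where the single-truncation argument seems to hit its natural limit: the volume constraint $(2^{t_\mu}-1)t_\eta\le \varepsilon nt_\mu$ in that proof forces $t_\mu=O(\log m)$, and relaxing this constraint within one layer of the argument does not appear to be possible. My plan is to iterate the truncated-blob construction hierarchically. Starting from the embedding of $\QQ(\bX)^{t_1}$ produced by Theorem \ref{thm:QmQn}, I would attempt to define a second embedding extending to $\QQ(\bX)^{t_2}$ with $t_2>t_1$ by re-applying the same technique inside the sublattices $\QQ([N])\big|_{\phi(X)}^{[N]}$ associated to each maximal $X$ at the previous stage, using a fresh truncation parameter and a fresh reservoir of ground elements. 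If at any stage a monochromatic blue $Q_m$ appears in one of the truncated blobs, we are done; otherwise, the per-stage saving compounds over several stages.

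The first technical step is to formulate a relative version of the Truncated Blob Lemma, Lemma \ref{lem:truncatedCompletion}, that remains useful when the host sublattice already sits below a prescribed red partial embedding and must produce its own blue$/$red alternative. Next, one needs to choose the parameters $t_1<t_2<\cdots<t_k$ and the ground-set allocations so that (i) the blue substructure forbidden at each stage is genuinely $Q_m$, not merely a truncation thereof, and (ii) the cumulative volume of the red embedding remains below $N-(m-t_k)n$, so that Lemma \ref{lem:truncatedCompletion}\,(i) can close the argument globally. The key structural property $(\ast)$ from the proof of Theorem \ref{thm:QmQn}, which concentrates red vertices in low-volume truncated blobs, should persist across stages after an Embedding-Lemma-style normalization of the residual sublattice at each step.

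The main obstacle, and the reason the conjecture remains open, is that the truncated-blob argument is fundamentally a ``one red vertex per blob'' mechanism: the failure to find a blue $Q_m$ inside a blob produces only a single red vertex in prescribed position, which is barely enough to continue the embedding at that scale. Extracting additional structure from this failure, enough to feed a second round of the argument without exhausting the ground set, appears to require either a Sauer--Shelah-type density increment on the blue side or a nontrivial lower bound on the blocker extremal function $m_{Q_m}(k)$ from Chapter \ref{ch:QnN}. Indeed, by Theorem \ref{thm:mPk}, any bound of the form $m_{Q_m}(k)=\omega(km/\log m)$ for $k$ comparable to $n$ would already imply $R(Q_m,Q_n)=o(n^2)$, but no nontrivial lower bound on $m_{Q_m}$ is presently known. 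A genuinely new ingredient---plausibly a regularity decomposition or dependent-random-choice argument adapted to the Boolean lattice---seems necessary to cross the barrier uniformly in $m$.
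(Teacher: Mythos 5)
This is a conjecture, not a theorem: the paper does not prove it, and explicitly presents Theorem~\ref{thm:QmQn} as merely ``a step towards'' it. You correctly recognize this, and what you have written is a research roadmap rather than a proof, so there is nothing in the paper to compare it against.

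A few remarks on the roadmap itself. Your opening claim slightly understates the trivial regime: the basic bound $R(Q_m,Q_n)\le mn+n+m$ from Theorem~\ref{thm:general} already gives $o(n^2)$ for all $m=o(n)$, not just for $m=o(n/\log n)$, so the improvement in Theorem~\ref{thm:QmQn} does not actually enlarge the range in which the conjecture is known; its only content is the superlinear saving for $m=\Theta(n)$, which is still $\Theta(n^2)$ overall. Your diagnosis of where the truncated-blob argument stalls is accurate and matches the paper's own discussion: the volume constraint forces $t_\mu=O(\log m)$, and the ``one red vertex per blob'' extraction yields no surplus structure to reinvest. Your proposed hierarchical iteration has a concrete gap you do not resolve: at each stage you must re-normalize the residual coloring in a sublattice whose dimension has already shrunk by the volume consumed, and property $(\ast)$, which is the engine of the argument, is a statement about the \emph{whole} host coloring at a fixed scale; there is no reason it persists in the residual sublattices after the embedding has committed a positive fraction of the ground set, and the Embedding Lemma normalization you invoke does not supply it. Likewise, your appeal to $m_{Q_m}(k)$ via Theorem~\ref{thm:mPk} is the right object, but as you say, no nontrivial lower bound is known; so that route is also currently an aspiration rather than an argument. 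In short, the proposal is an honest assessment of the obstacles and does not constitute a proof, which is appropriate here because the statement is a genuine open problem (Conjecture~\ref{conj:LT}, strengthened by the paper's Conjecture~\ref{conj:QmQn}).
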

\noindent Recall that for fixed $m$, Theorem \ref{thm:general} implies that $R(Q_m,Q_n)=\Theta(n)$, so the interesting case here is that both $m$ and $n$ are large.
Axenovich and the author propose a stronger conjecture:
\begin{conjecture}\label{conj:QmQn}
For any $\varepsilon>0$, there is a large enough $m_0$ such that for any two $m,n\in\N$ with $n\ge m \ge m_0$,
$$R(Q_m,Q_n)\le n\cdot m^{\varepsilon}.$$
\end{conjecture}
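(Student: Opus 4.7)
The plan is to attack this conjecture by bootstrapping the truncated-blob strategy from the proof of Theorem \ref{thm:QmQn} into a recursive argument. That theorem already gives a saving of roughly $\log m$ compared with the naive upper bound $nm$, by noting that a truncation $\QQ(\bX)^{t_\mu}$ at level $t_\mu \approx (1-\varepsilon)\log m$ contains only a small fraction of the vertices of $Q_m$, and by embedding this truncation in blue via many low-dimensional blue targets. To reach $R(Q_m,Q_n) \le n\cdot m^\varepsilon$, one needs this saving to compound across many recursive stages, so that $m$ is replaced by $m^\varepsilon$ rather than $m - (1-o(1))\log m$.

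The first step of the plan would be to set up a strong induction. Assume $R(Q_k,Q_n) \le n\cdot k^\varepsilon$ (in a suitable blob-uniform form) for all $m_0 \le k < m$, and deduce the analogous bound for $m$. Given a blue/red coloring of $\QQ([N])$ with $N \le n\cdot m^\varepsilon$ and no red copy of $Q_n$, I would run a variant of the construction in the proof of Theorem \ref{thm:QmQn}, but replacing its Property $(\ast)$—``every $t_\eta$-truncated blob of dimension $n$ and small volume contains a blue vertex''—by a strengthened Property $(\ast\ast)$: every such blob of dimension at least $n\cdot k^\varepsilon$ (for a recursion parameter $k$ to be chosen) contains a blue copy of $Q_k$, by the inductive hypothesis. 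The blue $Q_m$ would then be assembled by stacking $m/k$ blue copies of $Q_k$ produced this way, glued together via the same $\bX$-good embedding mechanism and the Truncated Blob Lemma (Lemma \ref{lem:truncatedCompletion}~(ii)).

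The second step would be parameter tuning. With recursion parameter $k = m^{1-\delta}$ for some small $\delta>0$, each recursive sub-blob requires dimension $n\cdot k^\varepsilon = n\cdot m^{(1-\delta)\varepsilon}$, and only $m/k = m^{\delta}$ sub-cubes must be stitched. A careful balancing of the auxiliary sets $\bY^{(0)},\ldots,\bY^{(t_\mu)}$ at each recursion depth—mirroring the geometric doubling used in the proof of Theorem \ref{thm:QmQn}—should in principle keep the total volume subexponential in $m^\varepsilon$. Iterating this $O(\log(1/\varepsilon))$ times (say by halving $\varepsilon$ at each depth) should in principle reduce the leading factor from $m$ to $m^\varepsilon$.

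The main obstacle is the accounting: in Theorem \ref{thm:QmQn} the slack condition $\frac{n+m}{n(1-\varepsilon)\log m}+m^{-\varepsilon}\le \varepsilon$ is already tight, so compounding this slack across $O(\log(1/\varepsilon))$ recursive levels threatens to wipe out the sublinear savings, unless one can sharpen the analysis (perhaps via an entropy or container-type argument on the set of ``bad'' blobs, in the spirit of Lemma \ref{lem:EHchain_main}). A second obstacle is that the naive inductive statement $R(Q_k,Q_n)\le n\cdot k^\varepsilon$ is strictly weaker than the uniform Property $(\ast\ast)$ needed to recurse: one must prove a version that holds inside every truncated blob of sufficient dimension, which amounts to upgrading the conjecture itself to a blob-stable form. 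If this uniform upgrade can be established—perhaps by combining the approach outlined here with the blocker machinery developed in Chapter \ref{ch:QnN}, where one showed that many Ramsey bounds reduce to the single-$\bX$ case $m_P(k)$—the conjecture should follow; but such an upgrade is likely the crux of the difficulty, and it is conceivable that entirely new ideas are required to break past the $n\log m$ barrier.
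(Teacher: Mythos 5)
This statement is a \emph{conjecture}, not a theorem: the paper explicitly poses it as an open problem (strengthening Conjecture~\ref{conj:LT} of Lu--Thompson) and offers no proof. So there is nothing in the paper against which to compare your argument, and what you have written is a research plan, not a proof. You acknowledge this yourself, but it is worth being precise about where the plan breaks down, because the gaps are not merely technical bookkeeping.

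The central difficulty is that your inductive step does not actually produce a blue copy of $Q_m$. The Truncated Blob Lemma~\ref{lem:truncatedCompletion}(ii) stitches together a blue $\bX$-good embedding of a \emph{truncated} Boolean lattice $\QQ(\bX)^{t}$ layer by layer, where each layer is a single antichain and each ``blob'' contributes a single vertex; it does not provide a mechanism for stacking $m/k$ disjoint blue copies of $Q_k$ into a blue $Q_m$. The product $Q_m\cong Q_k\times Q_{m/k}$ is a very different decomposition, and the poset Ramsey number is not known to be submultiplicative in any form that would let such a stack be completed via the blob machinery. Concretely: given blue copies $\cC_0,\ldots,\cC_{m/k-1}$ of $Q_k$ living in successive sausages, the union is a parallel (or series) composition of $Q_k$'s, not a $Q_m$; to promote it you would need each vertex of $\cC_i$ to have a designated blue image in $\cC_j$ for every $j>i$, and that is a far stronger property than what ``each blob contains a blue $Q_k$'' gives you. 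This is precisely why Theorem~\ref{thm:QmQn} only truncates at $t_\mu\approx\log m$ and then completes the remaining $m-t_\mu$ layers by single-vertex blobs: the recursion you want requires controlling exponentially many compatibility constraints simultaneously, not just $m/k$ disjoint ones.

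The second gap you already name, namely upgrading $R(Q_k,Q_n)\le n\cdot k^\varepsilon$ to a ``blob-uniform'' statement, is genuinely the crux, and the blocker machinery of Chapter~\ref{ch:QnN} does not obviously supply it: Theorem~\ref{thm:mPk} reduces $R(P,Q_n)$ to controlling $P$-free $\bY$-blockers for a \emph{single} $\bX$, but here $P=Q_k$ is itself large, and the extremal function $m_{Q_k}(\cdot)$ is as mysterious as the Ramsey number you are trying to bound. Finally, the slack accounting: the condition in Theorem~\ref{thm:QmQn} is $\tfrac{n+m}{n}\cdot\tfrac{1}{(1-\varepsilon)\log m}+m^{-\varepsilon}\le\varepsilon$, which at best allows $\varepsilon\approx(\log m)^{-1/2}$; compounding $O(\log(1/\varepsilon))$ levels of this saving, even if each level were lossless, yields a factor like $m\cdot\prod_i(1-\tfrac{\log m}{m})$, which is still $m\cdot e^{-O(\log^2 m / m)}=m\cdot(1-o(1))$ and nowhere near $m^\varepsilon$. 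To reach $m^\varepsilon$ one needs each level to remove a \emph{constant fraction} of the remaining dimension, not an additive $\log m$, and no mechanism in the paper (or in your sketch) achieves that.

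In short: the proposal correctly identifies that the paper's machinery does not prove the conjecture, and correctly names the blob-uniform upgrade as a missing ingredient, but the stitching step is itself unsound as stated, and the quantitative gap between the per-level savings available and the savings needed is multiplicative versus additive. New structural ideas would be required.
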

\noindent Our suggested bound matches up nicely with Conjecture \ref{conj:QnP_equiv}, in which we conjectured that for fixed $m\in\N$ and large $n\in\N$,
$$R(Q_m,Q_n)=n+o(n).$$

In the last part of this chapter, we discussed weak poset Ramsey numbers and improved the previously known upper bound $\Rw(Q_n,Q_n)\le R(Q_n,Q_n)\le n^2-o(n^2)$ to $\Rw(Q_n,Q_n)\le 0.96n^2$. 
It is still open whether the weak poset Ramsey number is significantly smaller than the induced poset Ramsey number.
The author suggests the following conjecture.
\begin{conjecture}
For any $n\in\N$,\:\:$R(Q_n,Q_n)- O(n)\le\Rw(Q_n,Q_n)\le R(Q_n,Q_n)$.
\end{conjecture}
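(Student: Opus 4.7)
The upper bound $\Rw(Q_n,Q_n) \le R(Q_n,Q_n)$ is immediate: any induced copy of $Q_n$ in a Boolean lattice is in particular a weak copy, so any blue/red coloring forcing a monochromatic induced copy of $Q_n$ also forces a monochromatic weak copy. The whole content of the conjecture therefore lies in the lower bound $R(Q_n,Q_n) \le \Rw(Q_n,Q_n) + O(n)$.

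My plan would be a \emph{lifting} construction. Fix $M = \Rw(Q_n,Q_n) - 1$ and let $c$ be an extremal blue/red coloring of $\QQ(\bZ)$, $|\bZ| = M$, with no monochromatic weak copy of $Q_n$. I would aim to produce, for an auxiliary disjoint set $\bK$ with $|\bK| = k = O(n)$, a coloring $c'$ of $\QQ(\bZ \cup \bK)$ with no monochromatic induced copy of $Q_n$, which would give $R(Q_n,Q_n) > M + k$. The first step is to define $c'(Z \cup K)$ for $Z \subseteq \bZ$, $K \subseteq \bK$ as $c(Z)$ perturbed by a ``hash'' $h(K)$ valued in $\{\text{blue}, \text{red}\}$, chosen so that the restriction of $c'$ to individual $\bK$-slices realizes a pattern that itself contains no monochromatic induced $Q_n$. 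Given a hypothetical monochromatic induced copy $\phi(Q_n)$ in $\QQ(\bZ \cup \bK)$, the Embedding Lemma (Lemma~\ref{lem:embed}) provides a ``good'' representative: there is an $n$-element $\bX \subseteq \bZ \cup \bK$ such that $\phi$ can be taken to be $\bX$-good. The next step is to use the interaction between $h$ and $c$ to project this copy to a monochromatic weak copy of $Q_n$ inside $\QQ(\bZ)$, contradicting the choice of $c$, unless so many pre-images collapse onto the same $\bZ$-part that the $\bK$-coordinates alone must host an induced $Q_n$-copy of dimension close to $n$, which for $k = O(n)$ only slightly above $n$ can be shown impossible using the structure of $h$.

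The main obstacle is that induced copies of $Q_n$ inside the product $\QQ(\bZ \cup \bK)$ do not respect the product decomposition: if $\bX$ meets both $\bZ$ and $\bK$ in nontrivial proportions, the projection of $\phi(Q_n)$ to $\QQ(\bZ)$ is only a \emph{homomorphic} image and need not be a weak copy of $Q_n$. Controlling the shape of $\bX$ seems to require either a strengthening of the Embedding Lemma that forces $\bX$ to lie almost entirely on one side of the product, or an averaging argument that handles all distributions of $\bX$ simultaneously; the first appears to be false in full generality, and the second is technically delicate, which is likely why this conjecture remains open. A complementary angle would be to prove matching asymptotic upper bounds for both $R$ and $\Rw$ through a single construction, for instance by unifying the truncated-blob technique of Theorem~\ref{thm:QmQn} with the sausage-chain construction of Theorem~\ref{thm:weakQnQn}; achieving this would likely also settle Conjecture~\ref{conj:LT}.
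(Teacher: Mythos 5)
This statement is an open conjecture in the thesis, not a theorem; the paper offers no proof, only the observation that it is unknown whether $\Rw(Q_n,Q_n)$ is significantly smaller than $R(Q_n,Q_n)$. So there is no paper argument to compare your proposal against. Your observation that $\Rw(Q_n,Q_n)\le R(Q_n,Q_n)$ is trivial (every induced copy is a weak copy) is of course correct and matches what the paper says.

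However, your lifting construction is aimed at the wrong inequality. Starting from an extremal coloring $c$ of $\QQ(\bZ)$ with $|\bZ|=\Rw(Q_n,Q_n)-1$ and no monochromatic \emph{weak} copy, and extending it by $k$ auxiliary dimensions to a coloring $c'$ of $\QQ(\bZ\cup\bK)$ with no monochromatic \emph{induced} copy, would give the \emph{lower} bound $R(Q_n,Q_n) > \Rw(Q_n,Q_n) - 1 + k$. But the content of the conjecture is the \emph{upper} bound $R(Q_n,Q_n)\le \Rw(Q_n,Q_n)+O(n)$; your construction, if it succeeded with unbounded $k$, would actually \emph{disprove} it. To make progress in the required direction you would need the opposite kind of argument: given an arbitrary $R(Q_n,Q_n)$-extremal coloring (one with no monochromatic induced $Q_n$), produce from it a coloring of a Boolean lattice only $O(n)$ dimensions smaller that has no monochromatic \emph{weak} $Q_n$. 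Restriction alone does not do this, since a coloring with no monochromatic induced $Q_n$ can easily contain monochromatic chains of length $2^n$, hence monochromatic weak copies; and it is exactly the difficulty of ``removing'' weak copies at small dimension cost that makes the conjecture hard. The known upper bounds in the paper ($R(Q_n,Q_n)\le n^2-(1-o(1))n\log n$ from Theorem~\ref{thm:QmQn} and $\Rw(Q_n,Q_n)\le 0.96n^2$ from Theorem~\ref{thm:weakQnQn}) differ by $\Theta(n^2)$, which gives a sense of how far the current techniques are from establishing the conjectured $O(n)$ gap. Your closing remark that a unified truncated-blob / sausage-chain argument would be one route to settling this is a reasonable research direction, but it remains speculation; as stated, the proposal does not contain a proof, and its one concrete construction points in the wrong direction.
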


In this chapter, we focused on the upper bound on $\Rw(Q_n,Q_n)$. The only known lower bound in this setting is $\Rw(Q_n,Q_n)\ge 2n+1$ due to Cox and Stolee \cite{CS} and improves the trivial bound $2n$.  
It is a natural question whether Construction \ref{constr:EHchain}, used to obtain the best known lower bound $R(Q_n, Q_n) \geq 2.02n$, actually gives a blue/red coloring with no monochromatic weak copy of $Q_n$, and thus gives a non-trivial lower bound on $\Rw(Q_n,Q_n)$. 
\\

\chapter{Conclusion}\label{ch:conclusion}
The focus of this thesis was the study of the poset Ramsey number $R(P,Q)$ and its variations.
In the Erd\H{o}s-Hajnal-type setting $R(P,Q_n)$ for a fixed poset~$P$ and large~$n$, we provided bounds which are asymptotically tight in the linear and sublinear additive term, if $P$ belongs to one of several classes of posets.
To obtain these results, we extended a known proof method utilizing the \textit{Chain Lemma}, see Chapter \ref{ch:QnK}, and introduced a new approach based on \textit{blockers}, see Chapter \ref{ch:QnN}.
In Chapter \ref{ch:QnV}, we showed a sharp jump in the asymptotic behavior of $R(P,Q_n)$, depending on whether $P$ contains a copy of one of the small posets $\pV$ or $\pLa$, i.e., for large $n$,
$$R(P,Q_n)\begin{cases}\le n + c(P), & \quad \text{ if }P\text{ contains neither a copy of }\pV\text{ nor a copy of }\pLa\\ \ge n + \tfrac{n}{15 \log n}, & \quad \text{ otherwise.}\end{cases}$$
For the lower bound, we introduced a novel probabilistic construction involving parallel \textit{factorial trees}.
We conjecture that there is no linear improvement of this bound, i.e.,
$$R(P,Q_n)=n+o(n),\quad \text{ for any fixed poset $P$ and large $n$.}$$
It remains open whether there exists a poset $P$ such that $R(P,Q_n)=n + \omega\left(\frac{n}{\log n}\right).$
However, we were able to find a linear improvement on the trivial lower bound for the related poset Erd\H{o}s-Hajnal number $\tilde{R}(\dot P,Q_n)$, where $\dot P$ is a specific colored chain, see Chapter \ref{ch:QnEH}.

In the final chapters of this thesis, we contributed to a central question in the area of Ramsey theory for posets, 
that is, to determine the asymptotic behavior of the diagonal poset Ramsey number $R(Q_n,Q_n)$.
We presented the first linear improvement on the trivial lower bound and the first superlinear improvement on the basic upper bound on $R(Q_n,Q_n)$. 
More precisely, Corollaries \ref{cor:QnQnLB} and \ref{cor:QnQnUB} provide that
$$2.02n+o(1) \le R(Q_n,Q_n) \le n^2- \big(1-o(1)\big)n\log n.$$
We proved the lower bound by introducing a probabilistic construction built on two parallel, ``dense'' collections of vertices. The upper bound proof refines the previously known \textit{Blob Lemma}.
The exact asymptotic behavior of $R(Q_n,Q_n)$ remains unknown, though we conjecture that
$$R(Q_n,Q_n)=O\left(n^{1+o(1)}\right).$$

\newpage

\cleardoublepage
\phantomsection
\addcontentsline{toc}{chapter}{Index}
\printindex

\nocite{*}
\cleardoublepage
\phantomsection
\addcontentsline{toc}{chapter}{References}

\end{document}